\sloppy\pagestyle{plain}
\theoremstyle{definition}
\newtheorem{example}[equation]{Example}
\newtheorem*{example*}{Example}
\newtheorem{lemma}[equation]{Lemma}
\newtheorem{corollary}[equation]{Corollary}
\newtheorem{proposition}[equation]{Proposition}
\newtheorem*{conjecture*}{Conjecture}
\newtheorem*{maintheorem*}{Main Theorem}
\newtheorem*{corollary*}{Corollary}
\newtheorem*{question*}{Question}
\newtheorem*{problem*}{Problem}
\newtheorem*{theorem*}{Theorem}
\theoremstyle{remark}
\newtheorem{remark}[equation]{Remark}
\newtheorem*{remark*}{Remark}
\makeatletter\@addtoreset{equation}{subsection} \makeatother
\author{Ivan Cheltsov and Victor Przyjalkowski}
\title[Katzarkov--Kontsevich--Pantev Conjecture for Fano threefolds]{Katzarkov--Kontsevich--Pantev Conjecture\\ for Fano threefolds}
\address{\emph{Ivan Cheltsov}
\newline
\textnormal{School of Mathematics, The University of Edinburgh,  Edinburgh, UK.}
\newline
\textnormal{Laboratory of Algebraic Geometry, NRU HSE, 6 Usacheva street, Moscow, Russia, 119048.}
\newline
\textnormal{\texttt{I.Cheltsov@ed.ac.uk}}}
\address{\emph{Victor Przyjalkowski}
\newline
\textnormal{Steklov Mathematical Institute of Russian Academy of Sciences, 8 Gubkina street, Moscow, Russia.}
\newline
\textnormal{Laboratory of Mirror Symmetry, NRU HSE, 6 Usacheva street, Moscow, Russia, 119048.}
\newline
\textnormal{\texttt{victorprz@mi.ras.ru}}}
\begin{document}

\begin{abstract}
We verify Katzarkov--Kontsevich--Pantev conjecture for Landau--Ginzburg models of smooth Fano threefolds.
\end{abstract}

\maketitle

\section*{Introduction}
\label{section:intro}

For a smooth Fano variety $X$, its Landau--Ginzburg model is a smooth quasiprojective variety $Y$
equipped with a regular function $\mathsf{w}\colon Y\to\mathbb{C}$.
Homological Mirror Symmetry Conjecture predicts the equivalences between the derived category of coherent sheaves on~$X$
(the derived category of singularities of $(Y,\mathsf{w})$, respectively)
and the Fukaya--Seidel category of the pair $(Y,\mathsf{w})$ (the Fukaya category of $X$, respectively).

In~\cite{KKP}, Katzarkov, Kontsevich, and Pantev considered \emph{tame compactification} of the Landau--Ginzburg model (see~\cite[Definition 2.4]{KKP}), that is
a commutative diagram
$$
\xymatrix{
Y\ar@{^{(}->}[rr]\ar@{->}[d]_{\mathsf{w}}&&Z\ar@{->}[d]^{\mathsf{f}}\\
\mathbb{C}\ar@{^{(}->}[rr]&&\mathbb{P}^1}
$$
such that $Z$ is a smooth compact variety that satisfies certain natural geometric conditions,
and $\mathsf{f}$ is a morphism such that $\mathsf{f}^{-1}(\infty)=-K_Z$.
If exists, the compactification $Z$ is unique up to flops in the fibers of the morphism $\mathsf{f}$.
The pair $(Z,\mathsf{f})$ is usually called the compactified Landau--Ginzburg model of the Fano variety $X$.

Katzarkov, Kontsevich and Pantev also defined the Hodge-type numbers $f^{p,q}(Y,\mathsf{w})$
of the Landau--Ginzburg model $(Y,\mathsf{w})$ that come from the sheaf cohomology of certain logarithmic forms.
They posed the following conjecture.

\begin{conjecture*}[Katzarkov--Kontsevich--Pantev]
Let $(Y,\mathsf{w})$ be a Landau--Ginzburg model of the smooth Fano variety $X$ with $\dim (X)=\dim (Y)=d$.
Suppose that it admits a tame compactification.
Then
$$
h^{p,q}\big(X\big)=f^{q,d-p}\big(Y,\mathsf{w}\big).
$$
\end{conjecture*}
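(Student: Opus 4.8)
The plan is to reduce the conjecture (with $d=3$) to a very short list of numerical identities, and then to verify those using the explicit tame compactifications available for Fano threefolds. Since $X$ is a smooth Fano threefold the divisor $-K_X$ is ample, so Kodaira vanishing together with Serre duality gives $h^{p,0}(X)=h^{0}(X,\Omega^p_X)=0$ for $p>0$; hence the only Hodge numbers of $X$ that can be nonzero are $h^{0,0}=h^{3,3}=1$, $h^{1,1}=h^{2,2}$ and $h^{2,1}=h^{1,2}$. On the Landau--Ginzburg side the numbers $f^{p,q}(Y,\mathsf{w})$ satisfy a Serre-type duality $f^{p,q}=f^{d-p,d-q}$ coming from the polarization, which for $d=3$ reads $f^{1,2}=f^{2,1}$ and $f^{1,1}=f^{2,2}$. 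Substituting into the predicted equality $h^{p,q}(X)=f^{q,3-p}(Y,\mathsf{w})$, the whole statement collapses to the vanishing of $f^{p,q}$ outside the six slots $(0,3),(3,0),(1,2),(2,1),(1,1),(2,2)$, the corner normalization $f^{3,0}=f^{0,3}=1$, and the two essential identities
\[
f^{1,2}(Y,\mathsf{w})=h^{1,1}(X)\qquad\text{and}\qquad f^{1,1}(Y,\mathsf{w})=h^{2,1}(X).
\]
The normalization is immediate: in the top piece of the Hodge filtration only $H^0(Z,\Omega^3_Z(\log D))=H^0(Z,\mathcal{O}_Z(K_Z+D))=H^0(Z,\mathcal{O}_Z)=\mathbb{C}$ survives, using $D=\mathsf{f}^{-1}(\infty)\in|-K_Z|$, and the remaining vanishings follow from the nonconstancy of $\mathsf{w}$ together with tameness.

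To compute the surviving numbers I would work on a tame compactification $(Z,\mathsf{f})$ with reduced simple normal crossings fiber $D=\mathsf{f}^{-1}(\infty)$, whose existence is assumed and which is unique up to flops by the discussion above. By definition $f^{p,q}$ is the dimension of the graded piece $\mathrm{gr}_F^{p}\,\mathbb{H}^{p+q}$ of the Hodge filtration on the hypercohomology of the logarithmic de Rham complex of $(Z,D)$ twisted by $\mathsf{f}$. I would study this through two filtrations: the stupid (Hodge) filtration, whose first page consists of the logarithmic Hodge groups $H^{q}(Z,\Omega^p_Z(\log D))$, and the residue filtration along the components $D_i$ of $D$, which yields a weight/Gysin spectral sequence separating a contribution governed by the discrete geometry of $D$ (its components and their dual complex) from a contribution reflecting the variation of Hodge structure of the fibers of $\mathsf{f}$ and its limit at $\infty$. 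The essential point is that the twist by $\mathsf{f}$ makes this hypercohomology markedly different from the cohomology of $Z$, so both contributions must be tracked through the twisted differential rather than read off from $H^*(Z)$ directly.

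For the identity $f^{1,2}=h^{1,1}(X)$ I expect an essentially uniform, combinatorial treatment, extending the known del Pezzo computation: the number is controlled by the configuration of the fiber at infinity, and in the Landau--Ginzburg models of Fano threefolds built from Laurent polynomials the components of $D$ are visible on the Newton polytope, so that a direct count should reproduce the Picard number $\rho(X)=h^{1,1}(X)$. The identity $f^{1,1}=h^{2,1}(X)$ is of a different and genuinely Hodge-theoretic nature: when $h^{2,1}(X)=0$ -- the case for most of the $105$ deformation families -- the fibers of $\mathsf{f}$ are rational enough that $f^{1,1}=0$ follows, but when $h^{2,1}(X)>0$ one must produce exactly that many classes surviving the twisted differential, which I would do case by case, grouping the families by the construction of their Landau--Ginzburg model (Laurent-polynomial/toric models, complete intersections in toric varieties, and the few sporadic families).

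The principal obstacle is the identity $f^{1,1}=h^{2,1}(X)$. Here $h^{2,1}(X)$ is the transcendental invariant of $X$ -- the dimension of its intermediate Jacobian -- it takes large values across the classification, and it has no counterpart in the surface case, where the only nontrivial Landau--Ginzburg number is the Picard-type one. Matching it forces control of the limit mixed Hodge structure of the degeneration $\mathsf{f}$ at $\infty$, that is genuine period information about the fibers, for which no uniform closed formula is available. My strategy is therefore to reduce the computation, in each family, to a convenient explicit compactification -- the flop invariance recalled above guarantees independence of this choice -- and to match the outcome against the known Hodge numbers of smooth Fano threefolds. Keeping this transcendental computation compatible, across all $105$ families, with the combinatorial determination of $f^{1,2}$ is where I expect the genuine work to lie.
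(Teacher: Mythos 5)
Your reduction of the conjecture to the two identities $f^{1,2}(Y,\mathsf{w})=h^{1,1}(X)$ and $f^{1,1}(Y,\mathsf{w})=h^{2,1}(X)$ (plus the corner normalization and the vanishings) is correct and is exactly the reduction the paper uses, as is the overall plan of a case-by-case verification over the deformation families on explicit compactifications, with flop-invariance guaranteeing independence of choices. The gap is that you stop at the point where the actual work begins: you have no formula expressing $f^{1,1}$ and $f^{1,2}$ in terms of concrete geometry of $(Z,\mathsf{f})$, and your proposed substitute for one of them is not a proof strategy. The essential external input is Harder's theorem, which under the hypotheses satisfied here gives $f^{1,1}=f^{2,2}=\sum_{P\in\mathbb{C}}(\rho_P-1)$, where $\rho_P$ is the number of irreducible components of the fiber $\mathsf{w}^{-1}(P)$ over a \emph{finite} point, and $f^{1,2}=f^{2,1}=\dim\mathrm{coker}\big(H^2(Z,\mathbb{R})\to H^2(F,\mathbb{R})\big)-2+h^{1,2}(Z)$ for a \emph{general} fiber $F$. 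Saying "produce exactly $h^{2,1}(X)$ classes surviving the twisted differential, case by case" without such a formula is a restatement of the problem, not a method.

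Moreover, you have inverted which of the two identities is combinatorial and which is lattice-theoretic. The identity involving $h^{2,1}(X)$ — which you describe as requiring "control of the limit mixed Hodge structure at $\infty$" and "genuine period information" — is in fact the purely combinatorial one: by Harder's formula it amounts to counting extra irreducible components of the singular fibers over $\mathbb{C}$ (not at $\infty$), which the paper carries out by analyzing base loci, multiplicities and du Val versus non--du Val singular points of explicit pencils of quartic surfaces. Conversely, the identity $f^{1,2}=h^{1,1}(X)=\mathrm{rk}\,\mathrm{Pic}(X)$, which you expect to be a uniform count read off the fiber at infinity, is the one requiring lattice-theoretic input: it is computed from the rank of the intersection matrix of the base curves of the pencil on a general (singular) quartic member, together with the minimal resolution of its singularities — see the reduction to the identity $\mathrm{rk}\,\mathrm{Pic}(X)+\mathrm{rk}(M)+\mathrm{rk}\,\mathrm{Pic}(\widetilde{S}_{\Bbbk})-\mathrm{rk}\,\mathrm{Pic}(S_{\Bbbk})=20$ used throughout. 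The fiber at infinity enters only to verify the hypotheses of Harder's result and to normalize $-K_Z\sim\mathsf{f}^{-1}(\infty)$; no limit mixed Hodge structure at $\infty$ is ever computed. As written, your plan would send you looking for period data that is neither available nor needed, while missing the component count that actually carries $h^{2,1}(X)$.
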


In~\cite{LP16}, this conjecture was proved  for del Pezzo surfaces and their Landau--Ginzburg models
constructed by Auroux, Katzarkov, and Orlov in \cite{AKO06}.
In this paper, we verify Katzarkov--Kontsevich--Pantev Conjecture for smooth Fano threefolds
and their \emph{toric Landau--Ginzburg models} constructed in~\cite{P07,P13,ACGK,CCGK},
which satisfy all hypotheses of Katzarkov--Kontsevich--Pantev Conjecture by \cite[Theorem~1]{P16}.

From now on and until the end of this paper, we assume that $X$ is a smooth Fano threefold.
Its compactified Landau--Ginzburg model is given by the following commutative diagram
\begin{equation}
\label{equation:CCGK-compactification}\tag{$\maltese$}
\xymatrix{
(\mathbb C^*)^3\ar@{^{(}->}[rr]\ar@{->}[d]_{\mathsf{p}}&&Y\ar@{->}[d]^{\mathsf{w}}\ar@{^{(}->}[rr]&&Z\ar@{->}[d]^{\mathsf{f}}\\
\mathbb{C}\ar@{=}[rr]&&\mathbb{C}\ar@{^{(}->}[rr]&&\mathbb{P}^1}
\end{equation}
where $\mathsf{p}$ is a surjective morphism that is given by one of the Laurent polynomials explicitly described in \cite{ACGK,P16,CCGK},
the variety $Y$ is a smooth threefold with $K_Y\sim 0$,
and $Z$ is a smooth compact threefold such that
$$
-K_Z\sim \mathsf{f}^{-1}\big(\infty\big).
$$
Moreover, in every case, one also has $h^{1,2}(Z)=0$.

In \cite{Harder}, Harder showed how to compute the numbers $f^{p,q}(Y,\mathsf{w})$ using the global geometry of the compactification $Z$.
He showed that under some natural conditions one has $f^{3,0}(Y,\mathsf{w})=f^{0,3}(Y,\mathsf{w})=1$ and
\begin{equation}
\label{equation:Harder-1}\tag{$\clubsuit$}
f^{1,1}\big(Y,\mathsf{w}\big)=f^{2,2}\big(Y,\mathsf{w}\big)=\sum_{P\in\mathbb{C}^1}\big(\rho_P-1\big),
\end{equation}
where $\rho_P$ is the number of irreducible components of the fiber $\mathsf{w}^{-1}(P)$.
Moreover, he proved that
\begin{equation}
\label{equation:Harder-2}\tag{$\spadesuit$}
f^{1,2}\big(Y,\mathsf{w}\big)=f^{2,1}\big(Y,\mathsf{w}\big)=\dim\Bigg(\mathrm{coker}\Big(H^2\big(Z,\mathbb{R}\big)\to H^2\big(F,\mathbb{R}\big)\Big)\Bigg)-2+h^{1,2}\big(Z\big),
\end{equation}
where $F$ is a general fiber of the morphism $\mathsf{w}$.
Finally, he proved that the remaining $f^{p,q}$ numbers of the Landau--Ginzburg model $(Y,\mathsf{w})$ vanish.

Thus, to prove the Katzarkov--Kontsevich--Pantev Conjecture for smooth Fano threefolds,
one needs to compute the right-hand sides in~\eqref{equation:Harder-1} and~\eqref{equation:Harder-2}
and compare them with the well-known Hodge numbers of smooth Fano threefolds.
For smooth Fano threefolds of Picard rank one, this has been done in \cite{P13,ILP}.
The goal of this paper is to do the same for smooth Fano threefolds whose Picard rank is larger than one.

To be precise, we prove the following result.

\begin{maintheorem*}
\label{theorem:main}
Let $X$ be a smooth Fano threefold,
and let $\mathsf{f}\colon Z\to\mathbb{P}^1$ be its compactified Landau--Ginzburg model given by \eqref{equation:CCGK-compactification},
where $\mathsf{p}$ is a surjective morphism that is given by one of the Laurent polynomials described in \cite{ACGK,CCGK}.
Then
\begin{equation}
\label{equation:main-1}\tag{$\heartsuit$}
h^{1,2}\big(X\big)=\sum_{P\in\mathbb{C}^1}\big(\rho_P-1\big),
\end{equation}
where $\rho_P$ is the number of irreducible components of the fiber $\mathsf{w}^{-1}(P)$.
Moreover, one has
\begin{equation}
\label{equation:main-2}\tag{$\diamondsuit$}
\mathrm{rk}\,\mathrm{Pic}(X)=\dim\Bigg(\mathrm{coker}\Big(H^2\big(Z,\mathbb{R}\big)\to H^2\big(F,\mathbb{R}\big)\Big)\Bigg)-2,
\end{equation}
where $F$ is a general fiber of the morphism $\mathsf{f}$.
\end{maintheorem*}

Using \eqref{equation:Harder-1} and~\eqref{equation:Harder-2}, we obtain the following corollary.

\begin{corollary*}
\label{corollary:main}
Let $X$ be a smooth Fano threefold.
Then Katzarkov--Kontsevich--Pantev Conjecture holds for its compactified Landau--Ginzburg model \eqref{equation:CCGK-compactification},
where $\mathsf{p}$ a morphism that is given by one of the Laurent polynomials described in \cite{ACGK,P16,CCGK}.
\end{corollary*}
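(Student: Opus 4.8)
The plan is to verify both identities family by family, running through the Mori--Mukai classification of smooth Fano threefolds of Picard rank at least two and using, in each case, the explicit Laurent polynomial describing $\mathsf{p}$ together with the compactification $\mathsf{f}\colon Z\to\mathbb{P}^1$ from \cite{ACGK,CCGK}. In every case the general fiber $F$ of $\mathsf{f}$ is a K3 surface, so the two sides of \eqref{equation:main-1} and \eqref{equation:main-2} can be read off, respectively, from the reducible fibers of $\mathsf{w}$ and from the lattice $H^2(F,\mathbb{R})$, while the invariants $h^{1,2}(X)$ and $\mathrm{rk}\,\mathrm{Pic}(X)$ on the left are taken from the classification tables.

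For \eqref{equation:main-2} I would first replace the cokernel by an intrinsic invariant of the fibration. Since $Z$ is smooth and compact and $\mathsf{f}$ is proper with smooth K3 general fiber $F$, the global invariant cycle theorem of Deligne identifies the image of the restriction $H^2(Z,\mathbb{R})\to H^2(F,\mathbb{R})$ with the monodromy invariants $H^2(F,\mathbb{R})^{\pi_1}$. As $F$ is a K3 surface this gives
$$
\dim\Bigl(\mathrm{coker}\bigl(H^2(Z,\mathbb{R})\to H^2(F,\mathbb{R})\bigr)\Bigr)=22-\dim H^2\bigl(F,\mathbb{R}\bigr)^{\pi_1},
$$
so that \eqref{equation:main-2} becomes the statement $\dim H^2(F,\mathbb{R})^{\pi_1}=20-\mathrm{rk}\,\mathrm{Pic}(X)$. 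Because the family of fibers is non-isotrivial the holomorphic two-form is not monodromy invariant, hence the invariant part is algebraic, and I would compute it as the monodromy-invariant sublattice of $\mathrm{NS}(F)$ directly from the toric description of $F$ and the Picard--Lefschetz transformations attached to the degenerate fibers; this is where the explicit Laurent polynomial is essential.

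For \eqref{equation:main-1} the cleanest route is to locate the reducible fibers of $\mathsf{w}$ directly. Over a general point the affine fiber $\{\mathsf{p}=\lambda\}\subset(\mathbb{C}^*)^3$ is irreducible, so $\rho_P>1$ only at the finitely many values $\lambda$ at which either the polynomial acquires a factor or the closure in $Y$ picks up extra components; for each such $P$ one reads off $\rho_P$ from the explicit geometry and sums $\rho_P-1$. As an organizing and cross-checking device I would also use the Shioda--Tate-type decomposition
$$
0\longrightarrow V\longrightarrow H^2(Z,\mathbb{R})\longrightarrow H^2(F,\mathbb{R})^{\pi_1}\longrightarrow 0,
$$
in which the kernel $V$ of vertical classes has dimension $1+\sum_{P\in\mathbb{P}^1}(\rho_P-1)$; together with $h^{2,0}(Z)=0$ and the given vanishing $h^{1,2}(Z)=0$ this expresses $\sum_{P\in\mathbb{C}^1}(\rho_P-1)$ through $h^{1,1}(Z)$, the monodromy invariants, and the number of components of the fiber over $\infty$, providing an independent check against the direct count of reducible fibers and against the tabulated value of $h^{1,2}(X)$.

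The main obstacle is the case-by-case nature of the computation across the roughly ninety families of Picard rank at least two: for each one I must determine all reducible fibers of the given Laurent polynomial and their component counts, and identify the monodromy-invariant sublattice of the K3 fiber, both of which are delicate in the families where $Z$ is not itself toric or where the anticanonical fiber over $\infty$ is highly reducible. I expect the count of fiber components in \eqref{equation:main-1} to be the more laborious half and the precise determination of $H^2(F,\mathbb{R})^{\pi_1}$ in \eqref{equation:main-2} to be the more conceptually delicate one; the two are linked by the exact sequence above, which I would use throughout to keep the separate computations mutually consistent.
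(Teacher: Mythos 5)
Your overall strategy---reduce the Conjecture to the two identities \eqref{equation:main-1} and \eqref{equation:main-2} via Harder's formulas $(\clubsuit)$ and $(\spadesuit)$, and then verify these family by family through the Mori--Mukai list using the explicit Laurent polynomials---is exactly the strategy of the paper. But your plan for \eqref{equation:main-1} has a genuine gap. The number $\rho_P$ is the number of irreducible components of the fiber of the \emph{compactified} model, and this fiber is not just the closure of $\{\mathsf{p}=\lambda\}$: it also contains those exceptional divisors of the resolution $\pi$ of the base locus of the anticanonical pencil that appear with positive coefficient in $\pi^*(K_{\mathbb{P}^3}+S_\lambda)$, as in \eqref{equation:number-of-irredubicle-components-simple}. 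Saying that one ``reads off $\rho_P$ from the explicit geometry'' does not supply a method for counting these extra components, and they dominate the answer in the hard cases: for family \textnumero $2.1$ (Subsection~\ref{section:r-2-n-1}) the relevant fiber has $22$ components, of which $19$ come from the defect of a single fixed singular point of the pencil. The machinery of Lemmas~\ref{lemma:main} and \ref{lemma:main-2} together with the defect formula \eqref{equation:D-A-B}---relating the correction term to the multiplicities $\mathbf{m}_i$, $\mathbf{M}_i^\lambda$ along base curves and to local log discrepancies at fixed singular points---is precisely the missing ingredient, and without it (or an equivalent) the component count will be wrong for a large fraction of the families.

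Your treatment of \eqref{equation:main-2} is correct as a reformulation (the global invariant cycle theorem does identify the image of $H^2(Z,\mathbb{R})\to H^2(F,\mathbb{R})$ with the monodromy invariants, which are algebraic since the family is non-isotrivial), but the proposed computation of $H^2(F,\mathbb{R})^{\pi_1}$ via Picard--Lefschetz transformations is not practicable here: the degenerate fibers are highly non-nodal (unions of several surfaces, non-isolated singularities), so the local monodromies are not given by Picard--Lefschetz reflections. The paper avoids monodromy altogether and computes the image of restriction directly as the span of the hyperplane class, the base curves of the quartic pencil, and the exceptional curves resolving the fixed du Val singularities of a general member $S_\lambda$; this reduces \eqref{equation:main-2} to the rank of an explicit intersection matrix on a singular quartic surface together with the difference $\mathrm{rk}\,\mathrm{Pic}(\widetilde{S}_{\Bbbk})-\mathrm{rk}\,\mathrm{Pic}(S_{\Bbbk})$, as in Lemma~\ref{lemma:cokernel} and condition \eqref{equation:main-2-simple}. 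You would need to either adopt this direct description of the invariant classes or explain how to determine the monodromy around the non-nodal fibers; as written, that step would stall.
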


The proof of Main Theorem gives an explicit description of the fiber $\mathsf{f}^{-1}(\infty)$ in \eqref{equation:CCGK-compactification},
which show that the conditions of Harder's result are satisfied.
This has been already verified in~\cite[Corollary 35]{P16} for smooth Fano threefolds with very ample anticanonical divisor.
The~proof of Main Theorem also gives an explicit description of (isolated and non-isolated) singularities
of the fibers of the morphism $\mathsf{w}$ in \eqref{equation:CCGK-compactification}
in the case when $\mathsf{p}$ is given by one of the Laurent polynomials from \cite{ACGK,P16,CCGK}.
It~seems possible to use this description to check that the Jacobian rings of these Landau--Ginzburg models
are isomorphic to the quantum cohomology rings of the corresponding smooth Fano threefolds,
which reflects Homological Mirror Symmetry on the Hochschild cohomology level.
Perhaps, one can also use the proof of Main Theorem to compute the derived categories of singularities of our compactified Landau--Ginzburg models.

This paper is organized as follows. In Section~\ref{section:scheme} we give a detailed scheme of the proof of our Main Theorem.
We illustrate each step of the scheme by an appropriate example,
see Examples~\ref{example:r-3-n-27}, \ref{example:r-3-n-2}, \ref{example:r-8-n1},  \ref{example:r-3-n-11}, and \ref{example:r-2-n-34}.
In~Sections~\ref{section:rank-2}, \ref{section:rank-3},
\ref{section:rank-4}, \ref{section:rank-5}, \ref{section:rank-6}, \ref{section:rank-7}, \ref{section:rank-8},
\ref{section:rank-9}, \ref{section:rank-10}, we prove Main Theorem for smooth Fano threefolds of Picard rank
$2$, $3$, $4$, $5$, $6$, $7$, $8$, $9$, $10$, respectively.
These sections are split by subsections whose numbers matche the
numbers of families of smooth Fano threefolds given in \cite{IP}.
For instance, in Subsection~\ref{section:r-3-n-20}, we prove Main Theorem
in the case when $X$ is a blow up of a smooth quadric threefold in a disjoint union of two lines.
This is family \textnumero $3.20$.
Likewise, in Subsection~\ref{section:r-2-n-24}, we prove Main Theorem for family \textnumero $2.24$,
which consists of divisors of bidegree $(1,2)$ in $\mathbb{P}^2\times\mathbb{P}^2$.
Finally, in Appendix~\ref{section:intersection}, we review basic intersection theory for smooth curves on surfaces with du Val singularities,
which is probably well known to experts.

\medskip

\textbf{Notation and conventions.}
We assume that all varieties are defined over the field of complex numbers $\mathbb{C}$ unless it is specially mentioned.
For a (non necessary reduced) variety $V$, we denote the number of its irreducible components by $[V]$.
To denote Laurent polynomials from the database~\cite{fanosearch}, we use the notation
$P.N$, where $P$ is the number of the Newton polytope of the polynomial, and $N$ is the number
of polynomial for the polytope. If the polynomial for given polytope is unique, we
just say that it is the polynomial number $P$.

\medskip

\textbf{Acknowledgements.}
We would like to thank Andrew Harder for useful comments.
Ivan Cheltsov was supported by Royal Society grant No. 	IES\textbackslash R1\textbackslash 180205,
and by Russian Academic Excellence Project~5-100.
Victor Przyjalkowski was partially supported by Laboratory of Mirror Symmetry NRU
HSE, RF government grant, ag. No. 14.641.31.0001.
He is a Young Russian Mathematics award winner and would like to thank its sponsors and jury.

\section{The proof}
\label{section:scheme}

To prove Main Theorem, we fix a smooth Fano threefold $X$.
Then $X$ is contained in one of $105$ deformation families described in~Iskovskikh and Prokhorov's book~\cite{IP}.
We~add the variety found in~\cite{MM04} to the end of the list of Picard rank 4 threefolds.
We always assume that $X$ is a general threefold in its deformation family.

For each family, we have the commutative diagram \eqref{equation:CCGK-compactification},
where $\mathsf{p}$ is given by a Laurent polynomial, which we identify with $\mathsf{p}$.
Then we proceed as follows.

\subsection{Mirror partners.}
\label{subsection:scheme-step-1}
The polynomial $\mathsf{p}$ is not uniquely determined by $X$.
However, Akhtar, Coates, Galkin, and Kasprzyk proved in \cite{ACGK} that
all of them are related by birational transformations, called mutations.
Mutations preserve the right hand sides of \eqref{equation:main-1} and \eqref{equation:main-2} in Main Theorem.
Thus, to prove Main Theorem, we may choose any Laurent polynomial $\mathsf{p}$ from~\cite{fanosearch} among mirror partners for $X$.

\subsection{Rank of Picard group.}
\label{subsection:scheme-step-2}
If $X$ is a smooth Fano threefold such that $\mathrm{rk}\,\mathrm{Pic}(X)=1$,
then~\eqref{equation:main-1} in Main Theorem is already established in~\cite{P13,P18},
and \eqref{equation:main-2} in Main Theorem follows from the proof of \cite[Theorem~4.1]{ILP}.
Thus, we will always assume that $\mathrm{rk}\,\mathrm{Pic}(X)\geqslant 2$.
This leaves us with $88$ deformation families described in~\cite{IP}.

\subsection{Minkowski polynomials.}
\label{subsection:scheme-step-3}

If $-K_{X}$ is very ample, then $X$ admits a Gorenstein toric degeneration.
In this case, the Newton polytope of the Laurent polynomial $\mathsf{p}$ is a reflexive lattice polytope
which is a \emph{fan polytope} of the toric degeneration,
and the coefficients of  $\mathsf{p}$ correspond to expansions of its facets to Minkowski sums of elementary polygons.
Because of this, the Laurent polynomial $\mathsf{p}$ is usually called \emph{Minkowski polynomial} (see \cite{ACGK}).

The divisor $-K_{X}$ is very ample except for  $5$ special families.
These are the deformation families \textnumero $2.1$, $2.2$, $2.3$, $9.1$, $10.1$ in~\cite{IP}.
To prove Main Theorem, we deal with these cases separately.
Thus, in the remaining part of this section, we assume that $-K_X$ is very ample,
and $\mathsf{p}$ is one of the corresponding Minkowski polynomials.

\subsection{Pencil of quartic surfaces.}
\label{subsection:scheme-step-4}

For every smooth Fano threefold $X$ such that its anticanonical $-K_X$ is very ample,
we can always choose the corresponding Minkowski polynomial $\mathsf{p}$ in \cite{ACGK} such that
there is a pencil $\mathcal{S}$ of quartic surfaces on $\mathbb{P}^3$ given~by
\begin{equation}
\label{equation:quartic}
f_4(x,y,z,t)=\lambda xyzt
\end{equation}
that expands \eqref{equation:CCGK-compactification} to the following commutative diagram:
\begin{equation}
\label{equation:diagram}
\xymatrix{
(\mathbb{C}^*)^3\ar@{^{(}->}[rr]\ar@{->}[d]_{\mathsf{p}}&&Y\ar@{->}[d]^{\mathsf{w}}\ar@{^{(}->}[rr]&&Z\ar@{->}[d]^{\mathsf{f}}&& V\ar@{-->}[ll]_{\chi}\ar@{->}[d]^{\mathsf{g}}\ar@{->}[rr]^{\pi}&&\mathbb{P}^3\ar@{-->}[lld]^{\phi}\\
\mathbb{C}\ar@{=}[rr]&&\mathbb{C}\ar@{^{(}->}[rr]&&\mathbb{P}^1\ar@{=}[rr]&&\mathbb{P}^1&&}
\end{equation}
where $\phi$ is a rational map given by the pencil $\mathcal{S}$,
the map $\pi$ is a birational morphism to be explicitly constructed later in this section,
the threefold $V$ is smooth, and $\chi$ is a composition of flops.
Here $f_4(x,y,z,t)$ is a quartic homogeneous polynomial and \mbox{$\lambda\in\mathbb{C}\cup\{\infty\}$},
where $\lambda=\infty$ corresponds to the fiber $\mathsf{f}^{-1}(\infty)$.

\subsection{Fibers of the Landau--Ginzburg model.}
\label{subsection:scheme-step-5}

By \cite[Corollary~35]{P16}, we have
$$
\big[\mathsf{f}^{-1}(\infty)\big]=\frac{4-K_X^3}{2}.
$$
To verify \eqref{equation:main-1} in Main Theorem, we must find $[\mathsf{f}^{-1}(\lambda)]$ for every $\lambda\ne\infty$.
This can be done by checking basic properties of the pencil $\mathcal{S}$. Let us show how to do this in easy cases.

Let $S_\lambda$ be the quartic surface given by \eqref{equation:quartic},
let $\widetilde{S}_\lambda$ be its proper transform on the threefold $V$,
and let $E_1,\ldots,E_n$ be the $\pi$-exceptional divisors.
Then
$$
K_{V}+\widetilde{S}_\lambda+\sum_{i=1}^{n}\mathbf{a}_i^\lambda E_i\sim\pi^*\big(K_{\mathbb{P}^3}+S_\lambda\big)\sim 0
$$
for some non-negative integers $\mathbf{a}_1^\lambda,\ldots,\mathbf{a}_n^\lambda$.
Hence, since $-K_V\sim\mathsf{g}^{-1}(\infty)$, we conclude that
\begin{equation}
\label{equation:fiber}
\mathsf{g}^{-1}(\lambda)=\widetilde{S}_\lambda+\sum_{i=1}^{n}\mathbf{a}_i^\lambda E_i.
\end{equation}

Since $\chi$ in \eqref{equation:diagram} is a composition of  flops,
it follows from \eqref{equation:fiber} that
\begin{equation}
\label{equation:number-of-irredubicle-components-simple}
\big[\mathsf{f}^{-1}(\lambda)\big]=\big[S_\lambda\big]+\boxed{\text{the number of indices $i\in\{1,\ldots,n\}$ such that $\mathbf{a}_i^\lambda>0$}}\,.
\end{equation}

The number $[S_{\lambda}]$ is easy to compute.
How to determine the {correction} term in \eqref{equation:number-of-irredubicle-components-simple}?
One way to do this is to explicitly describe the birational morphism $\pi$ in \eqref{equation:diagram} and then compute the numbers $\mathbf{a}_1^\lambda,\ldots,\mathbf{a}_n^\lambda$.
However, this method is usually very time consuming.
Our main goal is to show how to do the same with less efforts.
We start with the following.

\begin{lemma}
\label{lemma:irreducible-fibers}
Let $P$ be a point in the base locus of the pencil $\mathcal{S}$.
Suppose that the quartic surface $S_\lambda$ has at most du Val singularity at~$P$.
If $P\in\pi(E_i)$, then $\mathbf{a}_i^\lambda=0$.
\end{lemma}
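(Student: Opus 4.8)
The plan is to read off the integers $\mathbf{a}_i^\lambda$ as the negative discrepancies of the log pair $(\mathbb{P}^3, S_\lambda)$, and then to exploit the fact that du Val singularities are canonical. Concretely, writing $K_V = \pi^*K_{\mathbb{P}^3} + \sum_i d_i E_i$ with $d_i > 0$ and $\pi^* S_\lambda = \widetilde S_\lambda + \sum_i m_i E_i$ with $m_i \geq 0$, and substituting into the displayed relation $K_V + \widetilde S_\lambda + \sum_i \mathbf{a}_i^\lambda E_i \sim \pi^*(K_{\mathbb{P}^3}+S_\lambda)$, I would compare coefficients along the $\pi$-exceptional divisors to get $\mathbf{a}_i^\lambda = m_i - d_i$. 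Equivalently, rewriting the relation as $K_V + \widetilde S_\lambda \sim \pi^*(K_{\mathbb{P}^3}+S_\lambda) + \sum_i(-\mathbf{a}_i^\lambda)E_i$ exhibits $-\mathbf{a}_i^\lambda = a(E_i; \mathbb{P}^3, S_\lambda)$ directly as the discrepancy of the pair $(\mathbb{P}^3, S_\lambda)$ along $E_i$. Note that $K_{\mathbb{P}^3} + S_\lambda \sim 0$, so this pair has well-defined discrepancies and the relation is consistent.

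The second step is to bound this discrepancy from below using the hypothesis. Since $S_\lambda$ has at most a du Val singularity at $P$, it is normal near $P$ with canonical singularities there (in the surface case du Val is the same as canonical). By the standard equivalence between canonicity of a reduced divisor and canonicity of the corresponding pair on a smooth ambient variety (inversion of adjunction; see Koll\'ar--Mori), the log pair $(\mathbb{P}^3, S_\lambda)$ is canonical in a neighbourhood $U$ of $P$. Because $P \in \pi(E_i)$, the centre of $E_i$ meets $U$, and discrepancies are local on the base; hence $a(E_i; \mathbb{P}^3, S_\lambda) = a(E_i; U, S_\lambda\cap U) \geq 0$, so $\mathbf{a}_i^\lambda = -a(E_i; \mathbb{P}^3, S_\lambda) \leq 0$.

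Finally, I would combine this with the fact, already recorded in the setup, that $\mathbf{a}_i^\lambda$ is a non-negative integer. Together with $\mathbf{a}_i^\lambda \leq 0$ this forces $\mathbf{a}_i^\lambda = 0$, which is the assertion. The only genuinely nontrivial ingredient, and the step I expect to be the main obstacle, is the passage from du Val singularities of the divisor $S_\lambda$ to canonicity of the pair $(\mathbb{P}^3, S_\lambda)$: one must either cite the inversion-of-adjunction statement in exactly the form needed, or verify it directly by resolving the du Val point through successive blow-ups and checking that each is crepant for the pair (multiplicity $2$ at a node matching the discrepancy $2$ of the point blow-up, and so on down the resolution graph). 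A minor point requiring care is the local-on-the-base reduction when $\pi(E_i)$ is a curve through $P$ rather than the point $P$ itself; this is immediate, however, since the discrepancy of $E_i$ is a single number computed at the generic point of its centre, and that generic point lies in $U$ because $U$ meets the irreducible centre $\pi(E_i)$.
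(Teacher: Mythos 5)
Your argument is correct and coincides with the paper's own proof: the paper likewise identifies $-\mathbf{a}_i^\lambda$ with the discrepancy of the pair $(\mathbb{P}^3,S_\lambda)$ along $E_i$ and invokes \cite[Theorem~7.9]{Kollar} (the inversion-of-adjunction statement you describe) to conclude that the pair is canonical at $P$, so that $\mathbf{a}_i^\lambda\leqslant 0$, which together with non-negativity forces $\mathbf{a}_i^\lambda=0$. The only difference is that you spell out the discrepancy bookkeeping and the local-on-the-base reduction, which the paper leaves implicit.
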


\begin{proof}
By \cite[Theorem~7.9]{Kollar}, the log pair $(\mathbb{P}^3,S_\lambda)$ has canonical singularities at $P$,
so that $\mathbf{a}_i^\lambda=0$ for every $E_i$ such that $P\in\pi(E_i)$.
\end{proof}

\begin{corollary}
\label{corollary:irreducible-fibers}
Suppose that $S_\lambda$ has du Val singularities in every point of the base locus of the pencil $\mathcal{S}$.
Then $\mathsf{f}^{-1}(\lambda)$ is irreducible.
\end{corollary}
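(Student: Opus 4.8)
The plan is to read off $[\mathsf{f}^{-1}(\lambda)]$ directly from \eqref{equation:number-of-irredubicle-components-simple} and to show that both contributions on the right are as small as possible: the boxed correction term vanishes, and $[S_\lambda]=1$.

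First I would dispose of the correction term. The morphism $\pi$ in \eqref{equation:diagram} resolves the indeterminacy of the rational map $\phi$ given by $\mathcal{S}$, hence it is an isomorphism over the complement of the base locus of the pencil; consequently $\pi(E_i)$ is contained in this base locus for every exceptional divisor $E_i$. Choosing any point $P\in\pi(E_i)$, the hypothesis guarantees that $S_\lambda$ has at most du Val singularity at $P$, so Lemma~\ref{lemma:irreducible-fibers} yields $\mathbf{a}_i^\lambda=0$. Since this applies uniformly to every $i$, the boxed term in \eqref{equation:number-of-irredubicle-components-simple} equals $0$.

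Next I would check that $S_\lambda$ is irreducible, in fact integral, so that $[S_\lambda]=1$. Suppose not: then $S_\lambda$ either has two distinct irreducible components or a non-reduced component, and in either case its singular locus contains a curve $C\subset\mathbb{P}^3$. As $\{xyzt=0\}$ is an effective divisor, it meets $C$ in some point $P$; since $P\in S_\lambda$ and $xyzt(P)=0$, the equation \eqref{equation:quartic} forces $f_4(P)=0$, so $P$ lies in the base locus $\{f_4=0\}\cap\{xyzt=0\}$ of $\mathcal{S}$. But $S_\lambda$ is singular along $C$ at $P$, hence has a non-isolated singularity there, which is not du Val, contradicting the hypothesis. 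Therefore $[S_\lambda]=1$, and combining with the previous step gives $[\mathsf{f}^{-1}(\lambda)]=1$, i.e. $\mathsf{f}^{-1}(\lambda)$ is irreducible.

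The genuine content is merely the bookkeeping that every $\pi$-exceptional divisor sits over the base locus, which is exactly what lets Lemma~\ref{lemma:irreducible-fibers} kill all of the $\mathbf{a}_i^\lambda$ at once. The step I expect to require the most care is the irreducibility of $S_\lambda$ itself: the du Val hypothesis is imposed only along the base locus, so a priori $S_\lambda$ might still break up, and the crux is the observation that any extra component would force a non-isolated singularity to reappear on the base locus, where it is forbidden.
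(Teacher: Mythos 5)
Your proof is correct and follows essentially the same route as the paper: kill the correction term in \eqref{equation:number-of-irredubicle-components-simple} via Lemma~\ref{lemma:irreducible-fibers}, and rule out reducibility of $S_\lambda$ by noting that any extra or multiple component would be a hypersurface and hence would produce a singular curve meeting the base locus. The only difference is that you spell out the irreducibility step (the curve of singularities must hit $\{f_4=0\}\cap\{xyzt=0\}$), which the paper leaves as a one-line remark.
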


\begin{proof}
The surface $S_\lambda$ is irreducible,
because $S_\lambda$ has du Val singularities in every point of the base locus of the pencil $\mathcal{S}$.
This follows from the fact that irreducible components of the surface $S_\lambda$ are hypersurfaces in $\mathbb P^3$.
By Lemma~\ref{lemma:irreducible-fibers}, we have
$$
\mathbf{a}_1^\lambda=\mathbf{a}_2^\lambda=\cdots=\mathbf{a}_n^\lambda=0,
$$
so that the fiber $\mathsf{f}^{-1}(\lambda)$ is irreducible by \eqref{equation:number-of-irredubicle-components-simple}.
\end{proof}

Let us show how to apply this result to prove \eqref{equation:main-1} in Main Theorem in one simple case.
Before doing this, let us fix handy notation that will be used throughout the whole paper.

\subsection{Handy notation}
\label{subsection:notations}

We will use $[x:y:z:t]$ as homogeneous coordinates on $\mathbb{P}^3$.
For distinct non-empty subsets $I$, $J$, and $K$ in $\{x,y,z,t\}$,
we will write $H_I$ for the plane in~$\mathbb{P}^3$ that is defined by setting the sum of coordinates in $I$ equal to zero.
For instance, we denote by $H_{\{ x\}} $ the plane in $\mathbb{P}^3$  that is given by  $x=0$.
Similarly, we denote by $H_{\{y,t\}}$ the plane in $\mathbb{P}^3$ that is given by
$$
y+t=0.
$$

We also write $L_{I,J}=H_I\cap H_J$. Likewise, we write $P_{I,J,K}=H_I \cap H_J \cap H_K $.
For instance, the symbol $L_{\{x\},\{y,z,t\}}$ denotes the line in $\mathbb{P}^3$ that is given by
$$
\left\{\aligned
&x=0,\\
&y+z+t=0.\\
\endaligned
\right.
$$
Similarly, we have $P_{\{x\},\{y\},\{z\}}=[0:0:0:1]$ and $P_{\{x\},\{y\},\{z,t\}}=[0:0:1:-1]$.

If the quartic surface $S_\lambda$ has du Val singularities, we always denote by $H_\lambda$ its general hyperplane section
or its class in $\mathrm{Pic}(S_\lambda)$.
We will use this often to compute the intersection form of some curves on $S_\lambda$ in the proof of \eqref{equation:main-2} in Main Theorem.

\subsection{Ap\'ery--Fermi pencil}
\label{subsection:two-examples}

Let us use Corollary~\ref{corollary:irreducible-fibers} in the case when $X=\mathbb{P}^1\times\mathbb{P}^1\times\mathbb{P}^1$.
In this case, the pencil $\mathcal{S}$ has been studied by Peters and Stienstra in \cite{PS89}.

\begin{example}
\label{example:r-3-n-27}
Suppose that $X=\mathbb{P}^1\times\mathbb{P}^1\times\mathbb{P}^1$.
This is family \textnumero $3.27$.
One its mirror partner is given by the Laurent polynomial
$$
x+y+z+\frac{1}{x}+\frac{1}{y}+\frac{1}{z}.
$$
This is the Minkowski polynomial \textnumero $30$.
The corresponding pencil $\mathcal{S}$ is given by
$$
x^2yz+y^2xz+z^2xy+t^2xy+t^2xz+t^2yz=\lambda xyzt,
$$
Its base locus consists of the lines $L_{\{x\},\{y\}}$, $L_{\{x\},\{z\}}$, $L_{\{x\},\{t\}}$,
$L_{\{y\},\{z\}}$, $L_{\{y\},\{t\}}$, $L_{\{z\},\{t\}}$, and $L_{\{t\},\{x,y,z\}}$.
If $\lambda\ne\infty$, then the singular points of the surface $S_\lambda$ contained in one of these lines
are the points $P_{\{y\},\{z\},\{t\}}$, $P_{\{x\},\{z\},\{t\}}$, and $P_{\{x\},\{y\},\{t\}}$,
which are du Val singular points of type $\mathbb{A}_3$,
and the points $P_{\{x\},\{y\},\{z\}}$, $P_{\{x\},\{t\},\{y,z\}}$, $P_{\{y\},\{t\},\{x,z\}}$,
and $P_{\{z\},\{t\},\{x,y\}}$, which are isolated ordinary double points.
Then $[\mathsf{f}^{-1}(\lambda)]=1$ for every $\lambda\ne\infty$ by Corollary~\ref{corollary:irreducible-fibers}.
Since $h^{1,2}(X)=0$, this proves \eqref{equation:main-1} in Main Theorem in this case.
\end{example}

This approach works for $55$ deformation families of smooth Fano threefolds.

\subsection{Base points and base curves.}
\label{subsection:scheme-step-6}

In many cases, we cannot apply Corollary~\ref{corollary:irreducible-fibers} to prove \eqref{equation:main-1} in Main Theorem,
simply because the pencil $\mathcal{S}$ contains surfaces that have non-du Val singularities in its base locus.
In fact, quite often, the pencil  $\mathcal{S}$ contains reducible surfaces, so that they have non-isolated singularities.
To deal with these cases, we have to refine the formula \eqref{equation:number-of-irredubicle-components-simple}.
Let us do first step in this direction.

Let $C_1,\ldots,C_r$ be irreducible curves contained in the base locus of the pencil $\mathcal{S}$.
With very few exceptions (see Subsections~\ref{section:r-3-n-8}, \ref{section:r-3-n-22}, \ref{section:r-3-n-24}, \ref{section:r-3-n-29}, \ref{section:r-7-n-1} and \ref{section:r-8-n-1}),
these curves are either lines or conics.
For every base curve $C_j$, we let
\begin{equation}
\label{equation:C-P3}
\mathbf{C}_j^\lambda=\boxed{\text{the number of indices $i\in\{1,\ldots,n\}$ such that $\mathbf{a}_i^\lambda>0$ and $\pi(E_i)=C_j$}}\,.
\end{equation}
Let $\Sigma$ be the (finite) subset of the base locus of the pencil $\mathcal{S}$ such that for every $P\in\Sigma$ there is an index $i\in\{1,\ldots,n\}$ such that $\pi(E_i)=P$.
For every $P\in\Sigma$, we let
\begin{equation}
\label{equation:D}
\mathbf{D}_P^\lambda=\boxed{\text{the number of indices $i\in\{1,\ldots,n\}$ such that $\mathbf{a}_i^\lambda>0$ and $\pi(E_i)=P$}}\,.
\end{equation}
We say that $\mathbf{D}_P^\lambda$ is the \emph{defect} of the \emph{fixed} singular point $P$.

Using \eqref{equation:number-of-irredubicle-components-simple}, we see that
\begin{equation}
\label{equation:equation:number-of-irredubicle-components-refined}
\big[\mathsf{f}^{-1}(\lambda)\big]=\big[S_{\lambda}\big]+\sum_{i=1}^{r}\mathbf{C}_j^\lambda+\sum_{P\in\Sigma}\mathbf{D}_P^\lambda.
\end{equation}
If $P$ is a point in $\Sigma$ such that the quartic surface $S_\lambda$ has du Val singularity at $P$,
then its defect vanishes by Lemma~\ref{lemma:irreducible-fibers}.
However, the defect may also vanish if $S_\lambda$ has worse than du Val singularity at the point $P$.

\begin{remark}
\label{remark:fixed-singular-points}
For a general $\lambda\in\mathbb{C}$, the singular points of the surface $S_\lambda$ are all du Val.
Moreover, they are of two kinds: those whose coordinates depend on $\lambda$,
and those whose coordinates do not depend on $\lambda$.
We call the latter ones \emph{fixed} singular points, and we call the former ones \emph{floating} singular points.
The set $\Sigma$ consists of {fixed} singular points.
\end{remark}

For every point $P\in\Sigma$, the number $\mathbf{D}_P^\lambda$ can be computed locally near $P$.
We will show how to do this later, see formula~\eqref{equation:D-A-B} below.
Now let us show how to compute the number $\mathbf{C}_i^\lambda$ defined in \eqref{equation:C-P3}.
For every $\lambda\in\mathbb{C}\cup\{\infty\}$ and every $i\in\{1,\ldots,r\}$, we let
$$
\mathbf{M}_i^\lambda=\mathrm{mult}_{C_i}\big(S_\lambda\big).
$$
For any two distinct quartic surfaces $S_{\lambda_1}$ and $S_{\lambda_2}$ in the pencil $\mathcal{S}$,
we have
$$
S_{\lambda_1}\cdot S_{\lambda_2}=\sum_{i=1}^{r}\textbf{m}_iC_i
$$
for some positive numbers $\textbf{m}_1,\ldots,\textbf{m}_r$.
Then $\textbf{m}_i\geqslant\mathbf{M}_i^\lambda$ for every $\lambda\in\mathbb{C}\cup\{\infty\}$.

\begin{lemma}
\label{lemma:main}
Fix $\lambda\in\mathbb{C}\cup\{\infty\}$ and $a\in\{1,\ldots,r\}$. Then
$$
\mathbf{C}_a^\lambda=\left\{\aligned
&0\ \text{if}\ \mathbf{M}_a^\lambda=1,\\
&\textbf{m}_a-1\ \text{if}\ \mathbf{M}_a^\lambda\geqslant 2.\\
\endaligned
\right.
$$
\end{lemma}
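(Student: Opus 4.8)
The plan is to read off $\mathbf{C}_a^\lambda$ from a discrepancy interpretation of the coefficients $\mathbf{a}_i^\lambda$ and then localise everything along $C_a$. Writing $\pi^*S_\lambda=\widetilde{S}_\lambda+\sum_i \mathrm{mult}_{E_i}(\pi^*S_\lambda)E_i$ and $K_V-\pi^*K_{\mathbb{P}^3}=\sum_i\mathrm{ord}_{E_i}(K_V-\pi^*K_{\mathbb{P}^3})E_i$, the relation defining the $\mathbf{a}_i^\lambda$ yields $\mathbf{a}_i^\lambda=\mathrm{mult}_{E_i}(\pi^*S_\lambda)-\mathrm{ord}_{E_i}(K_V-\pi^*K_{\mathbb{P}^3})$, so $\mathbf{a}_i^\lambda>0$ holds exactly when $E_i$ is a non-canonical place of the pair $(\mathbb{P}^3,S_\lambda)$. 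Since $\chi$ is a composition of flops and these quantities are computed at the generic point of $\pi(E_i)$, I would cut $\mathbb{P}^3$ by a general surface transverse to $C_a$ at a general point: then $C_a$ becomes a smooth point $p$ on a smooth surface, the pencil $\mathcal{S}$ restricts to a pencil of plane curve germs with $\mathbf{M}_a^\lambda$ the multiplicity at $p$ of the restricted member and $\mathbf{m}_a$ the local intersection number at $p$ of two general members, and the divisors $E_i$ with $\pi(E_i)=C_a$ correspond to the exceptional curves over $p$ with the same discrepancies and total-transform multiplicities. Thus $\mathbf{C}_a^\lambda$ equals the number of these exceptional curves with positive coefficient.

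If $\mathbf{M}_a^\lambda=1$, then $S_\lambda$ is smooth at a general point $P$ of $C_a$, hence has at worst du Val singularity there, and $P\in\pi(E_i)$ for every $E_i$ with $\pi(E_i)=C_a$. Lemma~\ref{lemma:irreducible-fibers} then forces $\mathbf{a}_i^\lambda=0$ for all such $E_i$, giving $\mathbf{C}_a^\lambda=0$ at once.

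If $\mathbf{M}_a^\lambda\geqslant 2$, I would use that a general member of $\mathcal{S}$ has only isolated (du Val) singularities, so it is smooth at $p$; two general members are then smooth branches meeting with contact $\mathbf{m}_a=m$, sharing exactly $m$ free infinitely near points $q_0,\dots,q_{m-1}$. Resolving the pencil blows these up in a chain $e_1,\dots,e_m$, and for a free chain one has $\mathrm{mult}_{e_k}(\pi^*g)=\sum_{j=0}^{k-1}\mathrm{mult}_{q_j}(g)$ and $\mathrm{ord}_{e_k}(K)=k$. Applying this to a general member $h$, which is smooth and passes through every $q_j$, gives $\mathrm{ord}_{e_k}(K)=\mathrm{mult}_{e_k}(\pi^*h)=k$, so for the special member $f=S_\lambda$ we obtain $\mathbf{a}_k^\lambda=\sum_{j=0}^{k-1}\big(\mathrm{mult}_{q_j}(f)-1\big)$. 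The numbers $d_j=\mathrm{mult}_{q_j}(f)$ form a non-increasing sequence with $d_0=\mathbf{M}_a^\lambda\geqslant 2$ and, by Noether's formula applied to $(f\cdot h)_p$, with $\sum_j d_j=\mathbf{m}_a=m$. A short bookkeeping with these partial sums then shows $\mathbf{a}_k^\lambda>0$ for $k=1,\dots,m-1$ and $\mathbf{a}_m^\lambda=0$, whence $\mathbf{C}_a^\lambda=m-1=\mathbf{m}_a-1$.

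The hard part will be the local reduction rather than the final count. I must check that the restriction of $\pi$ over a neighbourhood of $C_a$ is the minimal resolution turning the pencil into a morphism, namely the chain of $m$ blow-ups at the common branch, and that the number of exceptional components with positive coefficient is intrinsic, so that it is unaffected by any superfluous blow-ups in $\pi$ or by the flops $\chi$. I also need the two structural inputs used above: that general members are smooth at the generic point of $C_a$ with pairwise contact exactly $\mathbf{m}_a$, and that the resulting infinitely near base points are free, so that the chain multiplicity formula applies. Granting these, the computation with the non-increasing sequence $d_j$ and Noether's formula is routine.
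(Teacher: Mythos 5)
Your argument is correct and is essentially the paper's proof: the paper likewise reduces to a general point of $C_a$, resolves the pencil there by a chain of $\mathbf{m}_a$ blow-ups (of smooth curves over $C_a$, which in your transverse slice become exactly the free chain of infinitely near base points), derives the same formula $\mathbf{a}_b^\lambda=\sum_{i=0}^{j-1}\bigl(\mathrm{mult}_{C_a^{i}}(S_\lambda^i)-1\bigr)$ for the $j$-th exceptional divisor, and concludes via the same dichotomy on the non-increasing multiplicity sequence summing to $\mathbf{m}_a$. The only cosmetic differences are that you phrase the computation via discrepancies and Noether's formula in a two-dimensional slice and dispatch the case $\mathbf{M}_a^\lambda=1$ by Lemma~\ref{lemma:irreducible-fibers}, whereas the paper treats both cases uniformly from the multiplicity sequence.
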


\begin{proof}
The required assertion can be checked in a general point of the curve $C_a$.
Because of this, we may assume that $C_a$ is smooth.
To resolve the base locus of the pencil $\mathcal{S}$ at general point of the curve $C_a$,
we observe that general surfaces in this pencil are smooth at general point of the curve $C_a$.
This implies that there exists a composition of $\mathbf{m}_a\geqslant 1$ blow ups of smooth curves
$$
\xymatrix{V_{\mathbf{m}_a}\ar@{->}[rr]^{\gamma_{\mathbf{m}_a}}&&V_{\mathbf{m}_a-1}\ar@{->}[rr]^{\gamma_{\mathbf{m}_a-1}}&&\cdots\ar@{->}[rr]^{\gamma_{2}}&&V_{1}\ar@{->}[rr]^{\gamma_{1}}&&\mathbb{P}^3}
$$
such that $\gamma_1$ is the blow up of the curve $C_a$,
for $i>1$ the morphism $\gamma_i$ is a blow up of a smooth curve $C_a^{i-1}\subset V_{i-1}$ such that
$$
\gamma_{i-1}\big(C_a^{i-1}\big)=C_a^{i-2}\subset V_{i-2}
$$
and the curve $C_a^{i-1}$ is contained in the proper transform of general surface in $\mathcal{S}$ on the threefold $V_{i-1}$.
Here, we have $V_0=\mathbb{P}^3$ and $C_a^0=C_a$.

For every index $i\in\{1,\ldots,\mathbf{m}_a\}$, let $F_i$ be the exceptional surface of the morphism $\gamma_i$.
Then $C_a^{i}\subset F_i$, and the curve $C_a^{i}$ is a section of the $\mathbb{P}^1$-bundle $G_i\to C_a^{i-1}$ induced by $\gamma_i$.
Note that $C_a^{i}$ is not contained in the proper transform of the surface $F_{i-1}$.

For every $i\in\{0,1,\ldots,\mathbf{m}_a\}$, denote by $S_\lambda^i$ the proper transform of the surface $S_\lambda$ on the threefold $V_i$.
Then
$$
\sum_{i=0}^{\mathbf{m}_a-1}\mathrm{mult}_{C_a^{i}}\big(S_\lambda^i\big)=\mathbf{m}_a.
$$
Moreover, for every $b\in\{1,\ldots,n\}$ such that
$\beta(E_b)=C_a$ there is $j\in\{1,\ldots,\mathbf{m}_a-1\}$ such that $E_b$ is the proper transform of the divisor $F_j$ on the threefold $V$
in diagram~\eqref{equation:diagram}.
Vice versa, for every $j\in\{1,\ldots,\mathbf{m}_a-1\}$, there is $b\in\{1,\ldots,n\}$ such that
$\beta(E_b)=C_a$, and $E_b$ is the proper transform of the divisor $F_j$ on the threefold $V$,
which implies that
$$
\mathbf{a}_b^\lambda=\sum_{i=0}^{j-1}\Big(\mathrm{mult}_{C_a^{i}}\big(S_\lambda^i\big)-1\Big).
$$
On the other hand, we also have
$$
\mathbf{M}_a^\lambda=\mathrm{mult}_{C_a}\big(S_\lambda\big)\geqslant\mathrm{mult}_{C_a^{1}}\big(S_\lambda^1\big)\geqslant\mathrm{mult}_{C_a^{2}}\big(S_\lambda^2\big)\geqslant\cdots\geqslant\mathrm{mult}_{C_a^{j-1}}\big(S_\lambda^{j-1}\big)\geqslant 0.
$$
Using this, we obtain a dichotomy:
\begin{itemize}
\item either $\mathbf{M}_a^\lambda=1$ and $\mathbf{a}_b^\lambda=0$ for every $b\in\{1,\ldots,n\}$ such that $\beta(E_b)=C_a$,

\item or $\mathbf{M}_a^\lambda\geqslant 2$ and $\mathbf{a}_b^\lambda>0$ for every $b\in\{1,\ldots,n\}$ such that $\beta(E_b)=C_a$ with a single exception:
when $E_b$ is a proper transform of the divisor $F_{\mathbf{m}_a}$ on the threefold $V$.
\end{itemize}
This immediately implies the required assertion.
\end{proof}

Let us show how to apply Lemma~\ref{lemma:main} to prove \eqref{equation:main-1} in Main Theorem in one case.

\begin{example}
\label{example:r-3-n-2}
Suppose that $X$ is a smooth Fano threefold in the family \textnumero $3.2$.
Then one its mirror partner is given by the Laurent polynomial
$$
\frac{z^{2}}{xy}+z+\frac{3z}{y}+\frac{3z}{x}+x+y+\frac{z}{xy}+\frac{3x}{y}+\frac{3y}{x}+\frac{1}{y}+\frac{1}{x}+\frac{x^{2}}{yz}+\frac{3x}{z}+\frac{3y}{z}+\frac{y^{2}}{xz}.
$$
This is the Minkowski polynomial \textnumero $2569$.
The pencil $\mathcal{S}$ is given by
\begin{multline*}
z^3t+xyz^2+3z^2xt+3z^2yt+x^2yz+y^2xz+z^2t^2+3x^2tz+3y^2tz+\\
+t^2xz+t^2yz+x^3t+3x^2yt+3y^2xt+y^3t=\lambda xyzt.
\end{multline*}
Suppose that $\lambda\ne\infty$.
Let $\mathcal{C}_1$ and $\mathcal{C}_2$ be conics that are given by $x=y^2+2yz+z^2+tz=0$ and
$y=x^2+2xz+z^2+tz=0$, respectively.
Then
$$
S_{\infty}\cdot S_{\lambda}=2L_{\{x\},\{t\}}+2L_{\{y\},\{t\}}+2L_{\{z\},\{t\}}+L_{\{x\},\{y,z\}}+L_{\{y\},\{x,z\}}+3L_{\{z\},\{x,z\}}+L_{\{t\},\{x,y,z\}}+\mathcal{C}_1+\mathcal{C}_2.
$$
Thus, we have $r=9$, and may assume that
$C_1=\mathcal{C}_1$,  $C_2=\mathcal{C}_2$,
$C_3=L_{\{x\},\{t\}}$, $C_4=L_{\{y\},\{t\}}$, $C_5=2L_{\{z\},\{t\}}$,
$C_6=L_{\{x\},\{y,z\}}$, \mbox{$C_7=L_{\{y\},\{x,z\}}$}, $C_8=L_{\{z\},\{x,z\}}$, and $C_9=L_{\{t\},\{x,y,z\}}$.
Then~$\textbf{m}_1=\textbf{m}_2=\textbf{m}_6=\textbf{m}_7=\textbf{m}_9=1$,
$\textbf{m}_3=\textbf{m}_4=\textbf{m}_5=2$, and $\textbf{m}_8=3$.
We have
$$
\Sigma=\Big\{P_{\{x\},\{y\},\{z\}},P_{\{x\},\{t\},\{y,z\}},P_{\{y\},\{t\},\{x,z\}},P_{\{z\},\{t\},\{x,y\}}\Big\}.
$$
If~$\lambda\ne -6$, then $S_\lambda$ is irreducible and has isolated singularities.
In this case, the surface~$S_\lambda$ has du Val singularities at
$P_{\{x\},\{y\},\{z\}}$, $P_{\{x\},\{t\},\{y,z\}}$, $P_{\{y\},\{t\},\{x,z\}}$, $P_{\{z\},\{t\},\{x,y\}}$,
and it does not have other singular points in the base locus of the pencil $\mathcal{S}$.
Then $[\mathsf{f}^{-1}(\lambda)]=1$ for every $\lambda\ne -6$ by Corollary~\ref{corollary:irreducible-fibers}.
On the other hand, we have
$$
S_{-6}=H_{\{x,y,z\}}+\mathbf{S},
$$
where $\mathbf{S}$ is a cubic surface that is given by $zt^2+x^2t+xyt+2xzt+y^2t+2yzt+z^2t+xyz=0$.
We have
$\mathbf{M}_1^{-6}=\mathbf{M}_2^{-6}=\mathbf{M}_3^{-6}=\mathbf{M}_4^{-6}=\mathbf{M}_5^{-6}=\mathbf{M}_6^{-6}=\mathbf{M}_7^{-6}=\mathbf{M}_9^{-6}=1$
and $\mathbf{M}_8^{-6}=2$. Thus, it follows from Lemma~\ref{lemma:main} that $\mathbf{C}_8^{-6}=2$ and
$$
\mathbf{C}_1^{-6}=\mathbf{C}_2^{-6}=\mathbf{C}_3^{-6}=\mathbf{C}_4^{-6}=\mathbf{C}_5^{-6}=\mathbf{C}_6^{-6}=\mathbf{C}_7^{-6}=\mathbf{C}_9^{-6}=0.
$$
Note that $S_{-6}$ has du Val singularities of type $\mathbb{A}$ or non-isolated ordinary double singularities
at the points of the set~$\Sigma$.
We will see in Lemma~\ref{lemma:normal-crossing} that this gives $\mathbf{D}_P^{-6}=0$ for each \mbox{$P\in\Sigma$}.
Then $[\mathsf{f}^{-1}(-6)]=4$ by \eqref{equation:equation:number-of-irredubicle-components-refined},
which gives \eqref{equation:main-1} in Main~Theorem.
\end{example}

Unlike what we just saw in Example~\ref{example:r-3-n-2},
the numbers $\mathbf{D}_P^\lambda$ in \eqref{equation:equation:number-of-irredubicle-components-refined} do not always vanish for every $P\in\Sigma$.
Thus, we have to provide an algorithm how to compute them.
To do this, we should choose a suitable birational morphism $\pi$ in \eqref{equation:diagram}.

\subsection{Blowing up fixed singular points.}
\label{subsection:scheme-step-7}

We can (partially) resolve all {fixed} singular points of the surfaces in the pencil $\mathcal{S}$
by consecutive blow ups of $\mathbb{P}^3$ in finitely many points.
This gives a birational map $\alpha\colon U\to\mathbb{P}^3$
such that the proper transform of the pencil $\mathcal{S}$ on the threefold $U$ does not have {fixed} singular points.
Let $\widehat{\mathcal{S}}$ be the proper transform of the pencil $\mathcal{S}$ on the threefold $U$.
Then we can (uniquely) choose $\alpha$ such that $\widehat{\mathcal{S}}\sim -K_{U}$.

\begin{remark}
\label{remark:U-main-property}
By construction, for every point $P$ in the base locus of the pencil $\widehat{\mathcal{S}}$, there exists
a surface in $\widehat{\mathcal{S}}$ that is smooth at $P$.
Note that a general surface in $\widehat{\mathcal{S}}$ is not necessarily smooth. However in most of the cases it is smooth.
In the remaining cases, it has du Val singular points of type $\mathbb{A}$ by \cite[Theorem~4.4]{Kollar}.
\end{remark}

Denote by $\widehat{C}_1,\ldots,\widehat{C}_r$ proper transforms of the curves $C_1,\ldots,C_r$ on the threefold~$U$, respectively.
Then these curves are contained in the base locus of the pencil $\widehat{\mathcal{S}}$.
However, the pencil $\widehat{\mathcal{S}}$ always has other base curves.
Denote them by $\widehat{C}_{r+1},\ldots,\widehat{C}_s$, where $s>r$.
A~posteriori, all base curves of the pencil $\widehat{\mathcal{S}}$ are smooth rational curves.

For any two distinct surfaces $\widehat{S}_{\lambda_1}$ and $\widehat{S}_{\lambda_2}$ in the pencil $\widehat{\mathcal{S}}$, we have
\begin{equation}
\label{equation:multiplicities}
\widehat{S}_{\lambda_1}\cdot\widehat{S}_{\lambda_2}=\sum_{i=1}^{s}\textbf{m}_i\widehat{C}_i
\end{equation}
for some positive numbers $\textbf{m}_1,\ldots,\textbf{m}_s$.
Since general surfaces in $\widehat{\mathcal{S}}$ are smooth at general points of the curves $\widehat{C}_1,\ldots,\widehat{C}_s$,
we can resolve the base locus of the pencil $\widehat{\mathcal{S}}$ by
$$
\textbf{m}_{1}+\textbf{m}_2+\textbf{m}_3+\cdots+\textbf{m}_{s}
$$
consecutive blow ups of smooth rational curves (cf. Remarks~\ref{remark:r-2-n-1-singular-curve} and \ref{remark:r-10-n-1-singular-curve}).
This gives a birational morphism
$\beta\colon V^\prime\to U$ such that there exists a commutative diagram
$$
\xymatrix{
U\ar@{->}[d]_{\alpha}&&V^\prime\ar@{->}[d]^{\mathsf{g}^\prime}\ar@{->}[ll]_{\beta} \\
\mathbb{P}^3\ar@{-->}[rr]_{\phi}&&\mathbb{P}^1}
$$
where $\mathsf{g}^\prime$ is a morphism whose general fibers are smooth $K3$ surfaces.

By construction, the threefold $V^\prime$ is smooth, and the anticanonical divisor $-K_{V^\prime}$ is rationally equivalent to a scheme fiber of the fibration $\mathsf{g}^\prime$.
This immediately implies that there exists a composition of flops $\eta\colon V\dasharrow V^\prime$ that makes the following diagram commutative:
$$
\xymatrix{
&V\ar@{-->}[rrrr]^{\eta}\ar@{->}[drr]_{\mathsf{g}}\ar@{->}[dd]_{\pi}&&&& V^\prime\ar@{->}[dll]^{\mathsf{g}^\prime}\ar@{->}[dd]^{\beta}\\
&&&\mathbb{P}^1&&&&\\
&\mathbb{P}^3\ar@{-->}[rru]_{\phi}&&&&U\ar@{->}[llll]^{\alpha}}
$$
Hence, in the following, we will always assume that $V=V^\prime$, $\pi=\alpha\circ\beta$, $\eta=\mathrm{Id}$ and $\mathsf{g}^\prime=\mathsf{g}$.
This gives us the commutative diagram
\begin{equation}
\label{equation:main-diagram}
\xymatrix{
&&U\ar@{->}[dd]_{\alpha}&&V\ar@{-->}[rr]^{\chi}\ar@{->}[dd]_{\mathsf{g}}\ar@{->}[ll]_{\beta}\ar@{->}[ddll]^{\pi}&&Z\ar@{->}[dd]^{\mathsf{f}}\\
&&&&&&&&\\
&&\mathbb{P}^3\ar@{-->}[rr]_{\phi}&&\mathbb{P}^1\ar@{=}[rr]&&\mathbb{P}^1.&&}
\end{equation}
Let $k=\mathrm{rk}\,\mathrm{Pic}(U)-1$.
For simplicity, we assume that $\beta(E_1),\ldots,\beta(E_k)$ are exceptional surfaces of the morphism $\alpha$,
while the surfaces $E_{k+1},\ldots,E_n$ are contracted by $\beta$.

\subsection{Counting multiplicities.}
\label{subsection:scheme-step-8}

Let us show how to explicitly compute $\mathbf{D}_P^\lambda$ in \eqref{equation:equation:number-of-irredubicle-components-refined} for every point $P\in\Sigma$.
To do this, we denote by $\widehat{E}_1,\ldots,\widehat{E}_k$
the proper transforms of the surfaces  $E_1,\ldots,E_k$ on the threefold $U$, respectively.
For every $\lambda\in\mathbb{C}\cup\{\infty\}$, we let
\begin{equation}
\label{equation:log-pull-back}
\widehat{D}_\lambda=\widehat{S}_\lambda+\sum_{i=1}^{k}\mathbf{a}_i^\lambda\widehat{E}_i.
\end{equation}
Then $\widehat{D}_\lambda\sim -K_{U}$, and the numbers $\mathbf{a}_1^\lambda,\ldots,\mathbf{a}_k^\lambda$ are uniquely determined by this rational equivalence.
Furthermore, we have $\widehat{D}_\lambda\in\widehat{\mathcal{S}}$ by construction.

\begin{lemma}
\label{lemma:log-pull-back}
Let $P$ be a point in the set $\Sigma$. If $\mathrm{mult}_{P}(S_\lambda)=2$, then
$$
\mathbf{a}_i^\lambda=0
$$
for every $i\in\{1,\ldots,k\}$ such that $\alpha(\widehat{E}_i)=P$.
\end{lemma}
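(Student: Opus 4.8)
The plan is to read off the numbers $\mathbf{a}_i^\lambda$ as the coefficients of the log pull-back of the pair $(\mathbb{P}^3,S_\lambda)$ under $\alpha$, and to run an induction on the tower of point blow-ups that constitute $\alpha$ over the point $P$. First I would record that, since $\alpha$ is a composition of blow-ups of points, every exceptional divisor $\widehat{E}_i$ has $\alpha(\widehat{E}_i)$ equal to a single point, and the divisors lying over distinct points of $\mathbb{P}^3$ have disjoint supports. Hence it suffices to examine the sub-tower of those blow-ups whose centres lie over $P$ and to prove that $\mathbf{a}_i^\lambda=0$ for the corresponding exceptional divisors. Writing $\alpha$ as a composition of blow-ups $\sigma_m$ of points $Q_{m-1}$ lying over $P$, the defining relation $\widehat{S}_\lambda+\sum_i\mathbf{a}_i^\lambda\widehat{E}_i\sim -K_U$ propagates step by step: if $\mathcal{D}_{m-1}$ denotes the log pull-back already built on the intermediate threefold, then the coefficient of the new exceptional divisor equals $\mathrm{mult}_{Q_{m-1}}(\mathcal{D}_{m-1})-2$, because the discrepancy of the blow-up of a smooth point of a smooth threefold is $2$.

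The induction then runs as follows. For the first blow-up the centre is $P$ itself and $\mathcal{D}_0=S_\lambda$, so the coefficient is $\mathrm{mult}_P(S_\lambda)-2=0$ by hypothesis. For a later centre $Q$ lying over $P$, the inductive hypothesis gives that every previously created exceptional divisor over $P$ has coefficient $0$, while the exceptional divisors over the other points of $\Sigma$ do not pass through $Q$; hence $\mathrm{mult}_Q(\mathcal{D}_{m-1})$ equals $\mathrm{mult}_Q$ of the proper transform of $S_\lambda$. Since multiplicity does not increase under blow-ups of points, this proper transform has multiplicity at most $\mathrm{mult}_P(S_\lambda)=2$ at $Q$, so the new coefficient is at most $0$. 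Combined with the already-established fact that all $\mathbf{a}_i^\lambda$ are non-negative integers, this forces it to equal $0$, which closes the induction and yields the lemma.

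I expect the only point requiring genuine care to be the bookkeeping of the log pull-back across the tower: one must check that at each infinitely near centre $Q$ the sole divisor of $\mathcal{D}_{m-1}$ contributing positive multiplicity is the proper transform of $S_\lambda$, the earlier exceptional coefficients over $P$ vanishing by induction and the divisors over the remaining points of $\Sigma$ being disjoint from the fibre over $P$. This is where the hypothesis that $\alpha$ blows up only points (so that no curve-centred exceptional divisor contributes) is essential; without it a divisor with centre a curve through $P$ could carry a negative discrepancy and break the estimate. Everything else reduces to the discrepancy formula for the blow-up of a smooth point, the standard non-increase of multiplicity of the proper transform of $S_\lambda$ under point blow-ups, and the non-negativity of the $\mathbf{a}_i^\lambda$.
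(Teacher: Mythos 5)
Your argument is correct, and it is essentially the only reasonable reading of the paper's proof, which consists of the single word ``Straightforward'': the coefficient of each exceptional divisor over $P$ is $\mathrm{mult}_Q(\mathcal{D}_{m-1})-2$, the first of these is $\mathrm{mult}_P(S_\lambda)-2=0$, and non-increase of multiplicity under point blow-ups together with the non-negativity of the $\mathbf{a}_i^\lambda$ forces all the later ones to vanish as well. Your attention to the fact that $\alpha$ blows up only points (so the discrepancy is always $2$ and no curve-centred divisor interferes) is exactly the hypothesis that makes the lemma work.
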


\begin{proof}
Straightforward.
\end{proof}

For every {fixed} singular point $P\in\Sigma$, we let
\begin{equation}
\label{equation:A}
\mathbf{A}_{P}^\lambda=\boxed{\text{the number of indices $i\in\{1,\ldots,k\}$ such that $\mathbf{a}_i^\lambda>0$ and $\alpha(\widehat{E}_i)=P$}}\,.
\end{equation}
Then the assertion of Lemma~\ref{lemma:log-pull-back} can be restates as follows.

\begin{corollary}
\label{corollary:log-pull-back}
If $\mathrm{mult}_{P}(S_\lambda)=2$ for $P\in\Sigma$, then $\mathbf{A}_{P}^\lambda=0$.
\end{corollary}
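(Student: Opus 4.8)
The plan is to obtain the Corollary directly from Lemma~\ref{lemma:log-pull-back}: the two statements carry exactly the same content, the only difference being that the Lemma controls each individual coefficient $\mathbf{a}_i^\lambda$, whereas the Corollary records the consequence for the counting quantity $\mathbf{A}_P^\lambda$ introduced in~\eqref{equation:A}.

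First I would unwind the definition. By~\eqref{equation:A}, the number $\mathbf{A}_P^\lambda$ is the cardinality of the set
$$
\big\{\, i\in\{1,\ldots,k\}\ :\ \mathbf{a}_i^\lambda>0\ \text{and}\ \alpha(\widehat{E}_i)=P\,\big\},
$$
so it suffices to prove that this set is empty under the hypothesis $\mathrm{mult}_P(S_\lambda)=2$. This is precisely what Lemma~\ref{lemma:log-pull-back} supplies: it asserts that every index $i\in\{1,\ldots,k\}$ with $\alpha(\widehat{E}_i)=P$ satisfies $\mathbf{a}_i^\lambda=0$. Consequently no index can simultaneously satisfy $\alpha(\widehat{E}_i)=P$ and $\mathbf{a}_i^\lambda>0$, the displayed set is empty, and hence $\mathbf{A}_P^\lambda=0$, as required.

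At the level of the Corollary there is no genuine obstacle: it is a pure reformulation, and I would expect to dispose of it in a single line once Lemma~\ref{lemma:log-pull-back} is in hand. All of the geometric content sits in that Lemma, whose proof rests on the discrepancy bookkeeping behind~\eqref{equation:log-pull-back}. Indeed, since $S_\lambda\sim -K_{\mathbb{P}^3}$, comparing $\widehat{S}_\lambda+\sum_{i=1}^{k}\mathbf{a}_i^\lambda\widehat{E}_i\sim -K_U$ with $\alpha^*S_\lambda\sim\alpha^*(-K_{\mathbb{P}^3})$ shows that $\mathbf{a}_i^\lambda$ is the multiplicity of $S_\lambda$ along the center of $\widehat{E}_i$ minus the discrepancy of $\widehat{E}_i$ over $\mathbb{P}^3$; for the divisor produced by the first blow up centered at $P$ this equals $\mathrm{mult}_P(S_\lambda)-2$, which vanishes exactly under the multiplicity-two hypothesis. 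Combined with the a priori nonnegativity of the integers $\mathbf{a}_i^\lambda$, this pins the relevant coefficients to $0$, the remaining point blow ups of $\alpha$ lying over $P$ being handled by the same normalization $\widehat{\mathcal{S}}\sim -K_U$. Thus the only place where anything must genuinely be verified is that multiplicity-two estimate, which we are entitled to assume.
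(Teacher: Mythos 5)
Your proposal is correct and matches the paper exactly: the paper introduces $\mathbf{A}_{P}^\lambda$ in \eqref{equation:A} and then states that Corollary~\ref{corollary:log-pull-back} is simply a restatement of Lemma~\ref{lemma:log-pull-back}, which is precisely your one-line deduction that the counting set is empty. The additional discrepancy computation you sketch for the Lemma itself is consistent with the paper, which dismisses that proof as "straightforward."
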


For every $\lambda\in\mathbb{C}\cup\{\infty\}$ and every $a\in\{r+1,\ldots,s\}$, we let
\begin{equation}
\label{equation:C}
\mathbf{C}_{a}^\lambda=\boxed{\text{the number of indices $i\in\{1,\ldots,n\}$ such that $\mathbf{a}_i^\lambda>0$ and $\beta(E_i)=\widehat{C}_a$}}\,.
\end{equation}
For every $\lambda$ and every $a\in\{1,\ldots,s\}$, we let
\begin{equation}
\label{equation:M}
\mathbf{M}_a^\lambda=\mathrm{mult}_{\widehat{C}_a}\big(\widehat{D}_\lambda\big).
\end{equation}

\begin{lemma}
\label{lemma:main-2}
Fix $\lambda\in\mathbb{C}\cup\{\infty\}$ and $a\in\{1,\ldots,s\}$. Then
$$
\mathbf{C}_a^\lambda=\left\{\aligned
&0\ \text{if}\ \mathbf{M}_a^\lambda=1,\\
&\textbf{m}_a-1\ \text{if}\ \mathbf{M}_a^\lambda\geqslant 2.\\
\endaligned
\right.
$$
\end{lemma}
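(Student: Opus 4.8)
The plan is to mirror the proof of Lemma~\ref{lemma:main}, working now over the threefold $U$ in place of $\mathbb{P}^3$ and tracking the log pullback $\widehat{D}_\lambda$ in place of the quartic $S_\lambda$. The essential observation that makes this legitimate is that $\widehat{D}_\lambda$ is an honest member of the pencil $\widehat{\mathcal{S}}$ (this is the content of $\widehat{D}_\lambda\in\widehat{\mathcal{S}}$ recorded after \eqref{equation:log-pull-back}), so that intersecting it with a second general member reproduces precisely the coefficients $\mathbf{m}_a$ of \eqref{equation:multiplicities}.

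First I would reduce to a general point of the curve $\widehat{C}_a$, which allows me to assume $\widehat{C}_a$ is smooth---a posteriori every base curve of $\widehat{\mathcal{S}}$ is a smooth rational curve. Since general members of $\widehat{\mathcal{S}}$ are smooth at a general point of $\widehat{C}_a$, I can resolve the base locus along $\widehat{C}_a$ by a chain of $\mathbf{m}_a$ blow ups of smooth curves
$$\xymatrix{V_{\mathbf{m}_a}\ar@{->}[rr]^{\gamma_{\mathbf{m}_a}}&&V_{\mathbf{m}_a-1}\ar@{->}[rr]^{\gamma_{\mathbf{m}_a-1}}&&\cdots\ar@{->}[rr]^{\gamma_{2}}&&V_{1}\ar@{->}[rr]^{\gamma_{1}}&&U},$$
where $V_0=U$, the morphism $\gamma_1$ blows up $\widehat{C}_a$, and each subsequent $\gamma_i$ blows up the curve $\widehat{C}_a^{\,i-1}$ that lies on the exceptional surface $F_{i-1}$, maps to $\widehat{C}_a^{\,i-2}$, and is contained in the proper transform of a general member of $\widehat{\mathcal{S}}$. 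Exactly as before, the $\beta$-exceptional divisors $E_b$ with $\beta(E_b)=\widehat{C}_a$ are the proper transforms on $V$ of the exceptional surfaces $F_j$ of this chain.

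Next I would carry out the multiplicity bookkeeping verbatim. Writing $\widehat{D}_\lambda^{\,i}$ for the proper transform of $\widehat{D}_\lambda$ on $V_i$, the intersection identity $\widehat{D}_\lambda\cdot\widehat{D}_\mu=\sum_i\mathbf{m}_i\widehat{C}_i$ for a general second member yields the telescoping relation
$$\sum_{i=0}^{\mathbf{m}_a-1}\mathrm{mult}_{\widehat{C}_a^{\,i}}\big(\widehat{D}_\lambda^{\,i}\big)=\mathbf{m}_a,$$
the discrepancy of the divisor over $F_j$ is $\mathbf{a}_b^\lambda=\sum_{i=0}^{j-1}\big(\mathrm{mult}_{\widehat{C}_a^{\,i}}(\widehat{D}_\lambda^{\,i})-1\big)$, and the multiplicities are non-increasing, $\mathbf{M}_a^\lambda=\mathrm{mult}_{\widehat{C}_a}(\widehat{D}_\lambda)\geqslant\mathrm{mult}_{\widehat{C}_a^{\,1}}(\widehat{D}_\lambda^{\,1})\geqslant\cdots\geqslant 0$. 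These three facts force the same dichotomy as in Lemma~\ref{lemma:main}: if $\mathbf{M}_a^\lambda=1$ then every multiplicity equals $1$ and all discrepancies vanish, so $\mathbf{C}_a^\lambda=0$; whereas if $\mathbf{M}_a^\lambda\geqslant 2$ then $\mathbf{a}_b^\lambda>0$ for every $b$ with $\beta(E_b)=\widehat{C}_a$ save the single divisor coming from the last blow up, giving $\mathbf{C}_a^\lambda=\mathbf{m}_a-1$.

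The only point I expect to require genuine care---rather than a transcription of Lemma~\ref{lemma:main}---is checking that nothing in the local resolution argument used that the curve lived on $\mathbb{P}^3$ rather than on $U$, so that the new base curves $\widehat{C}_{r+1},\ldots,\widehat{C}_s$ created by $\alpha$ are treated on exactly the same footing as the proper transforms $\widehat{C}_1,\ldots,\widehat{C}_r$ of the original base curves. This is immediate because the entire argument is local along a general point of $\widehat{C}_a$ and uses only that a general member of $\widehat{\mathcal{S}}$ is smooth there, together with the value $\mathbf{m}_a$ of the intersection multiplicity; neither ingredient distinguishes between old and new base curves, nor between $U$ and $\mathbb{P}^3$.
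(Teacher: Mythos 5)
Your proposal is correct and is essentially the paper's own argument: the paper proves this lemma simply by referring back to the proof of Lemma~\ref{lemma:main}, and your write-up is a faithful transcription of that proof with $\mathbb{P}^3$ replaced by $U$, $S_\lambda$ by $\widehat{D}_\lambda$, and $\mathcal{S}$ by $\widehat{\mathcal{S}}$. The two ingredients you single out as making the transfer legitimate --- that $\widehat{D}_\lambda$ is a genuine member of $\widehat{\mathcal{S}}$ so that \eqref{equation:multiplicities} supplies the coefficients $\mathbf{m}_a$, and that general members of $\widehat{\mathcal{S}}$ are smooth at general points of every base curve --- are exactly the facts the paper sets up in Subsections~\ref{subsection:scheme-step-7} and~\ref{subsection:scheme-step-8} to justify the citation.
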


\begin{proof}
See the proof of Lemma~\ref{lemma:main}.
\end{proof}

On the other hand, it follows from \eqref{equation:number-of-irredubicle-components-simple} that
\begin{equation}
\label{equation:equation:number-of-irredubicle-components-refined-2}
\big[\mathsf{f}^{-1}(\lambda)\big]=\big[\widehat{D}_\lambda\big]+\sum_{i=1}^{s}\mathbf{C}_i^\lambda=\big[S_{\lambda}\big]+\sum_{P\in\Sigma}\mathbf{A}_P^\lambda+\sum_{i=1}^{s}\mathbf{C}_i^\lambda.
\end{equation}

Comparing the formulas \eqref{equation:equation:number-of-irredubicle-components-refined}
and \eqref{equation:equation:number-of-irredubicle-components-refined-2}, we obtain the formula for the \emph{defect}
\begin{equation}
\label{equation:D-A-B}
\mathbf{D}_{P}^\lambda=\mathbf{A}_{P}^\lambda+\sum_{\substack{i=r+1\\\alpha(\widehat{C}_i)=P}}^s\mathbf{C}_i^\lambda
\end{equation}
for every point $P\in\Sigma$.
Similarly, using \eqref{equation:equation:number-of-irredubicle-components-refined-2} and Lemma~\ref{lemma:main-2}, we get

\begin{corollary}
\label{corollary:main-2}
If $\mathbf{M}_i^\lambda=1$ for every $i\in\{1,\ldots,s\}$, then $[\mathsf{f}^{-1}(\lambda)]=[\widehat{D}_\lambda]$.
\end{corollary}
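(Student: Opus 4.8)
The plan is to deduce this directly from the master counting formula \eqref{equation:equation:number-of-irredubicle-components-refined-2} together with the dichotomy of Lemma~\ref{lemma:main-2}; no new geometric input is needed, since all of the resolution-theoretic work is already packaged into those two statements.

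First I would isolate the first equality in \eqref{equation:equation:number-of-irredubicle-components-refined-2}, namely
$$
\big[\mathsf{f}^{-1}(\lambda)\big]=\big[\widehat{D}_\lambda\big]+\sum_{i=1}^{s}\mathbf{C}_i^\lambda.
$$
The whole content of the corollary is then that the correction sum on the right vanishes under the stated hypothesis. By the definition \eqref{equation:M} of $\mathbf{M}_i^\lambda$, the assumption $\mathbf{M}_i^\lambda=1$ for every $i\in\{1,\ldots,s\}$ says exactly that $\widehat{D}_\lambda$ has multiplicity one along each base curve $\widehat{C}_i$ of the pencil $\widehat{\mathcal{S}}$. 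This places every index into the first branch of the dichotomy in Lemma~\ref{lemma:main-2}, so that lemma forces $\mathbf{C}_i^\lambda=0$ for all $i\in\{1,\ldots,s\}$.

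Substituting these vanishings back into the displayed formula yields $[\mathsf{f}^{-1}(\lambda)]=[\widehat{D}_\lambda]$ at once, which is the desired conclusion. The only point that warrants a moment's care is purely bookkeeping: one must apply Lemma~\ref{lemma:main-2} over the full index range $\{1,\ldots,s\}$, covering not only the proper transforms $\widehat{C}_1,\ldots,\widehat{C}_r$ of the original base curves of $\mathcal{S}$ but also the new base curves $\widehat{C}_{r+1},\ldots,\widehat{C}_s$ created by the blow-ups $\alpha$ of the fixed singular points. Since Lemma~\ref{lemma:main-2} is stated precisely for this range, there is no genuine obstacle here; the corollary is a one-line consequence of the two preceding results.
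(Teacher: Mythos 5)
Your proof is correct and follows exactly the paper's route: the corollary is introduced there with the phrase "using \eqref{equation:equation:number-of-irredubicle-components-refined-2} and Lemma~\ref{lemma:main-2}", and your argument fills this in by noting that $\mathbf{M}_i^\lambda=1$ puts every index into the zero branch of the dichotomy in Lemma~\ref{lemma:main-2}, killing the correction sum. Nothing further is needed.
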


Let us show how to apply this handy result.

\begin{example}
\label{example:r-8-n1}
Suppose that $X=\mathbb{P}^1\times\mathbf{S}_3$, where $\mathbf{S}_3$ is a smooth cubic surface in~$\mathbb{P}^3$.
This is the family \textnumero $8.1$ in \cite{IP}.
One of its mirror partner is given by the Minkowski polynomial \textnumero $768$,
which is the Laurent polynomial
$$
\frac{1}{yz}+\frac{3}{y}+\frac {3z}{y}+x+\frac{{z}^{2}}{y}+\frac{3}{z}+3z+\frac{1}{x}+\frac {3y}{z}+3y+\frac{{y}^{2}}{z}.
$$
Then the corresponding quartic pencil $\mathcal{S}$ is given by
$$
t^3x+3t^2xz+3z^2xt+x^2zy+z^3x+3t^2xy+3z^2xy+t^2zy+3y^2xt+3y^2xz+y^3x=\lambda xyzt,
$$
and it has $6$ base curves: $C_1=L_{\{x\},\{y\}}$, $C_2=L_{\{x\},\{z\}}$, $C_3=L_{\{x\},\{t\}}$, $C_4=L_{\{y\},\{t,z\}}$, $C_5=L_{\{z\},\{t,y\}}$, and $C_6$
is the singular cubic curve $t=xyz+y^3+3y^2z+3yz^2+z^3=0$.
Suppose that $\lambda\ne\infty$. Then
$$
S_\lambda\cdot S_\infty=2C_1+2C_2+3C_3+3C_4+3C_5+C_6,
$$
and the surface $S_\lambda$ is irreducible.
Moreover, if $\lambda\ne -4$ and $\lambda\ne -8$, then the singularities of the surface $S_\lambda$ are du Val,
so that $[\mathsf{f}^{-1}(\lambda)]=1$ by Corollary~\ref{corollary:irreducible-fibers}.
However, the singular locus of the surface $S_{-4}$ consists of the point $P_{\{x\},\{y\},\{z\}}$ and the line $x-t=y+z+t=0$.
Similarly, the singular locus of the surface $S_{-8}$ consists of the point $P_{\{x\},\{y\},\{z\}}$ and the line $x+t=y+z+t=0$.
Thus, we cannot apply Corollary~\ref{corollary:irreducible-fibers} when $\lambda=-4$ or $\lambda=-8$.
Nevertheless, we have $[\mathsf{f}^{-1}(-4)]=1$ and $[\mathsf{f}^{-1}(-8)]=1$.
To show this, observe that
$$
\Sigma=\Big\{P_{\{y\},\{z\},\{t\}},P_{\{x\},\{t\},\{y,z\}}\Big\}.
$$
Moreover, if $\lambda\ne -4$ and $\lambda\ne-8$, then $P_{\{y\},\{z\},\{t\}}$ is a singular point of the surface $S_\lambda$ of type $\mathbb{A}_2$,
and the point $P_{\{x\},\{t\},\{y,z\}}$ is a singular point of the surface $S_\lambda$ of type $\mathbb{A}_5$.
The~birational morphism $\alpha\colon U\to\mathbb{P}^3$ can be decomposed as follows:
$$
\xymatrix{
&&U_2\ar@{->}[lld]_{\alpha_2}&&U_3\ar@{->}[ll]_{\alpha_3}&&\\
U_1\ar@{->}[drrr]_{\alpha_1}&&&&&&U\ar@{->}[dlll]^\alpha\ar@{->}[llu]_{\alpha_4}.\\
&&&\mathbb{P}^3&&& }
$$
Here $\alpha_1$ is the blow up of the point $P_{\{y\},\{z\},\{t\}}$,
the morphism $\alpha_2$ is the blow up of the preimage of the point $P_{\{x\},\{t\},\{y,z\}}$,
the morphism $\alpha_3$ is the blow up of a point in $\alpha_2$-exceptional surface,
and $\alpha_4$ is the blow up of a point in $\alpha_3$-exceptional surface.
We may assume that $\widehat{E}_4$ is $\alpha_4$-exceptional surface.
Likewise, we may assume that $\widehat{E}_1$, $\widehat{E}_2$, and $\widehat{E}_3$ are proper transforms on $U$ of the exceptional surfaces of the morphisms $\alpha_1$, $\alpha_2$, and $\alpha_3$, respectively.
Then
$$
\widehat{D}_\infty=\widehat{S}_{\infty}+\widehat{E}_1\sim \widehat{S}_{\lambda}\sim -K_U.
$$
One can show that  $\widehat{E}_2$, $\widehat{E}_3$, and $\widehat{E}_4$ do not contain base curves of the pencil $\widehat{\mathcal{S}}$,
and the surface  $\widehat{E}_1$ contains two base curves of the pencil $\widehat{\mathcal{S}}$.
They are cut out on $\widehat{E}_1$ by the proper transforms on $U$ of the planes $H_{\{y\}}$ and $H_{\{z\}}$.
Let us denote them by $\widehat{C}_7$ and $\widehat{C}_8$, respectively.
Then $\widehat{S}_{\lambda}$ and $\widehat{S}_{\infty}+\widehat{E}_1$ generate the pencil $\widehat{\mathcal{S}}$ and
$$
\widehat{S}_{\lambda}\cdot\Big(\widehat{S}_{\infty}+\widehat{E}_1\Big)=2\widehat{C}_1+2\widehat{C}_2+3\widehat{C}_3+3\widehat{C}_4+3\widehat{C}_5+\widehat{C}_6+2\widehat{C}_7+2\widehat{C}_8.
$$
Note that
$\mathbf{M}_1^\lambda=\mathbf{M}_2^\lambda=\mathbf{M}_3^\lambda=\mathbf{M}_4^\lambda=\mathbf{M}_5^\lambda=\mathbf{M}_6^\lambda=\mathbf{M}_7^\lambda=\mathbf{M}_8^\lambda=1$ for every $\lambda\in\mathbb{C}$.
Therefore, using Corollary~\ref{corollary:main-2}, we conclude that $[\mathsf{f}^{-1}(\lambda)]=1$ for every $\lambda\in\mathbb{C}$.
Thus, we see that \eqref{equation:main-1} in Main Theorem holds in this case, since $h^{1,2}(X)=0$.
\end{example}

\subsection{Extra notation}
\label{subsection:scheme-extra-notations}

In Example~\ref{example:r-8-n1}, we explicitly decomposed the birational morphism $\alpha$ in \eqref{equation:main-diagram} as a composition of blow ups.
To verify \eqref{equation:main-1} in Main Theorem, we have to do the same many times.
To save space, let us introduce common notations that will be used in all these decompositions.

Recall that $\alpha$ is a composition of $k\geqslant 1$ blow ups of points.
Suppose that we have the following commutative diagram:
$$
\xymatrix{
&U_2\ar@{->}[d]_{\alpha_2}&&U_3\ar@{->}[ll]_{\alpha_3}&&\cdots\ar@{->}[ll]_{\alpha_4}&&U_a\ar@{->}[ll]_{\alpha_a}&&\\
&U_1\ar@{->}[drrr]_{\alpha_1}&&&&&&\\
&&&&\mathbb{P}^3&&U\ar@{->}[ll]^\alpha\ar@{->}[uur]_{\gamma}& }
$$
where $a\leqslant k$, each $\alpha_i$ is a blow up of a point, and $\gamma$ is a (possibly biregular) birational morphism.
Then we denote the exceptional divisor of $\alpha_i$ by $\mathbf{E}_i$.
Moreover, for every $j\geqslant i$, we denote by $\mathbf{E}_i^j$ the proper transform of the divisor $\mathbf{E}_i$ on $U_j$.
Furthermore, we will always assume that the proper transform of the surface $\mathbf{E}_i$ on $U$ is the divisor $\widehat{E}_i$.

For every $\lambda\in\mathbb{C}\cup\{\infty\}$ and $i\leqslant a$, we denote by $S_\lambda^i$ the proper transform
of the quartic surface $S_\lambda$ on the threefold $U_i$.
Similarly, we denote by $\mathcal{S}^i$ the proper transform on $U_i$ of the pencil $\mathcal{S}$,
and we denote by $D_\lambda^i$ the divisor in the pencil $\mathcal{S}^i$ that contains the surface~$S_\lambda^i$.
Then $D_\lambda^i$ is just the image of the divisor $\widehat{D}_\lambda$ on the threefold $U_i$.

We denote by $C_1^i,\ldots,C_r^i$ the proper transforms on $U_i$ of the curves $C_1,\ldots,C_r$, respectively.
Similarly, if the surface $\mathbf{E}_i$ contains a base curve of the pencil $\mathcal{S}^i$,
then we denote this curve by $C_j^i$ for an appropriate $j>r$.
We will always assume that its proper transform on the threefold $U$ is the base curve $\widehat{C}_j$, which we introduced earlier.

\subsection{Good double points.}
\label{subsection:scheme-step-9}

As we already saw in Example~\ref{example:r-3-n-2},
in some cases all defects $\mathbf{D}_P^\lambda$ in \eqref{equation:equation:number-of-irredubicle-components-refined} vanish,
so that we do not need to blow up $\mathbb{P}^3$ to compute $[\mathsf{f}^{-1}(\lambda)]$.
A handy observation is that
$$
\mathbf{D}_P^\lambda=0
$$
for $P\in\Sigma$ if the rank of the quadratic form of the (local) defining
equation of the quartic surface $S_\lambda$ at the point $P$ is at least $2$.
We will call such points \emph{good} double points.
This unifies du Val singular points of type $\mathbb{A}$ and non-isolated ordinary double points.

\begin{lemma}
\label{lemma:normal-crossing}
Let $P$ be a {fixed} singular point in $\Sigma$.
Suppose that $P$ is a \emph{good} double point of the surface $S_\lambda$.
Then $\mathbf{D}_{P}^\lambda=0$.
\end{lemma}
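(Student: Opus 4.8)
The plan is to compute the defect through formula~\eqref{equation:D-A-B} and to show that each of its two contributions vanishes. Since a good double point is in particular a double point, we have $\mathrm{mult}_P(S_\lambda)=2$, so Corollary~\ref{corollary:log-pull-back} gives at once $\mathbf{A}_P^\lambda=0$. Thus~\eqref{equation:D-A-B} reduces the assertion to proving that $\mathbf{C}_a^\lambda=0$ for every base curve $\widehat{C}_a$ of the pencil $\widehat{\mathcal S}$ with $a>r$ and $\alpha(\widehat{C}_a)=P$. By Lemma~\ref{lemma:main-2} this is in turn equivalent to the equality $\mathbf{M}_a^\lambda=\mathrm{mult}_{\widehat{C}_a}(\widehat{D}_\lambda)=1$ for each such curve.

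To verify this, I would first simplify $\widehat{D}_\lambda$ near the curves in question. By Lemma~\ref{lemma:log-pull-back} every $\alpha$-exceptional divisor $\widehat{E}_i$ with $\alpha(\widehat{E}_i)=P$ appears with coefficient $\mathbf{a}_i^\lambda=0$ in the log pull-back~\eqref{equation:log-pull-back}. Since each base curve $\widehat{C}_a$ over $P$ is contained in one of these exceptional divisors, all of which carry vanishing coefficient, one has $\widehat{D}_\lambda=\widehat{S}_\lambda$ in a neighbourhood of $\widehat{C}_a$, and hence $\mathbf{M}_a^\lambda=\mathrm{mult}_{\widehat{C}_a}(\widehat{S}_\lambda)$. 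It therefore suffices to show that the proper transform $\widehat{S}_\lambda$ is generically reduced along every base curve lying over $P$. The point of the hypothesis is that $\mathrm{rank}\geqslant 2$ forces the projectivised tangent cone of $S_\lambda$ at $P$ to be a reduced conic: after blowing up $P$, the proper transform $S_\lambda^1$ meets the exceptional plane $\mathbf{E}_1\cong\mathbb{P}^2$ precisely along this conic, which is a smooth conic when the rank equals $3$ and a pair of distinct lines when it equals $2$. In both situations $S_\lambda^1$ is generically smooth along each component, so it has multiplicity $1$ there; the base curves of $\widehat{\mathcal S}$ over $P$ are exactly the proper transforms of these components, whence $\mathbf{M}_a^\lambda=1$ and $\mathbf{C}_a^\lambda=0$ by Lemma~\ref{lemma:main-2}.

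The hard part will be the rank~$2$ case, where $S_\lambda^1$ can still be singular at the point of $\mathbf{E}_1$ in which the two lines of the tangent cone meet, so that the analysis must be iterated and its termination controlled. Here the two types of good double point have to be distinguished. If $P$ is a du Val point of type $\mathbb{A}$, then $(\mathbb{P}^3,S_\lambda)$ is canonical at $P$ by Lemma~\ref{lemma:irreducible-fibers}, so every divisorial valuation centred at $P$ has nonnegative discrepancy and all the relevant coefficients $\mathbf{a}_i^\lambda$ vanish; concretely, each successive blow-up of the infinitely near $\mathbb{A}$-points again meets a double point with reduced tangent cone, and the multiplicity argument above applies verbatim. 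If instead $P$ is a non-isolated ordinary double point, then the singularity of $S_\lambda^1$ survives only along the proper transform of the singular base curve $C=C_j$ with $j\leqslant r$; no further point over $P$ needs to be blown up, the two lines are the only base curves over $P$, and each has multiplicity $1$ in $\widehat{S}_\lambda$. In either sub-case the sum in~\eqref{equation:D-A-B} is zero, which yields $\mathbf{D}_P^\lambda=0$.
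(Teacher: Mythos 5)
Your argument is correct and takes essentially the same route as the paper's proof: $\mathbf{A}_{P}^\lambda=0$ via Corollary~\ref{corollary:log-pull-back}, reduction through \eqref{equation:D-A-B} and Lemma~\ref{lemma:main-2} to showing $\mathbf{M}_a^\lambda=1$ for the base curves over $P$, and the observation that the rank~$\geqslant 2$ hypothesis makes the projectivised tangent cone a reduced conic, so that $\widehat{S}_\lambda$ is generically smooth along each of its components. Your third paragraph merely elaborates the iterated-blow-up situation that the paper compresses into the single phrase ``we may assume that $\alpha$ is the blow up of the point $P$''.
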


\begin{proof}
By Corollary~\ref{corollary:log-pull-back}, we have $\mathbf{A}_{P}=0$.
Therefore, it follows from \eqref{equation:D-A-B} that we have to show that $\mathbf{C}_j^\lambda=0$
for every $j>r$ such that $\alpha(\widehat{C}_j)=P$.
Let $\widehat{E}_i$ be $\alpha$-exceptional surface such that $\alpha(\widehat{E}_i)=P$,
and let $\widehat{C}_j$ be a base curve of the pencil $\widehat{\mathcal{S}}$ that is contained in $\widehat{E}_i$.
By Lemma~\ref{lemma:main-2}, it is enough to show that
$$
\mathbf{M}_j^\lambda=\mathrm{mult}_{\widehat{C}_j}\big(\widehat{D}_\lambda\big)=1.
$$
To do this, we may assume that $\alpha\colon U\to\mathbb{P}^3$ is the blow up of the point $P$,
and $\widehat{E}_i$ is the exceptional divisor of this blow up.
Then the restriction $\widehat{D}_\lambda\vert_{\widehat{E}_i}$ is a union of two distinct lines in $\widehat{E}_i\cong\mathbb{P}^2$.
In particular, the surface $\widehat{D}_\lambda=\widehat{S}_\lambda$ is smooth at general points of any of these lines and the assertion follows.
\end{proof}

\begin{corollary}
\label{corollary:normal-crossing-simple}
Suppose that every fixed singular point of the pencil $\mathcal{S}$ is a {good} double point of the surface $S_\lambda$.
Then
$$
\big[\mathsf{f}^{-1}(\lambda)\big]=\big[S_{\lambda}\big]+\sum_{i=1}^{r}\mathbf{C}_i^\lambda.
$$
\end{corollary}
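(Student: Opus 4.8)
The plan is to read the claim directly off the refined component count \eqref{equation:equation:number-of-irredubicle-components-refined}. That formula expresses $[\mathsf{f}^{-1}(\lambda)]$ as $[S_\lambda]+\sum_{i=1}^{r}\mathbf{C}_i^\lambda$ plus the defect sum $\sum_{P\in\Sigma}\mathbf{D}_P^\lambda$, and the desired identity is exactly the statement that this last sum contributes nothing. So the whole task reduces to showing that every defect $\mathbf{D}_P^\lambda$ with $P\in\Sigma$ vanishes under the stated hypothesis.

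First I would recall, via Remark~\ref{remark:fixed-singular-points}, that the set $\Sigma$ consists entirely of \emph{fixed} singular points of the pencil $\mathcal{S}$; indeed, by its very definition $\Sigma$ is the subset of fixed singular points that get blown up by $\pi$. Since the hypothesis asserts that \emph{every} fixed singular point of $\mathcal{S}$ is a good double point of the surface $S_\lambda$, each $P\in\Sigma$ is in particular a good double point of $S_\lambda$. This is the only compatibility check the argument requires, and it is immediate from the definitions.

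Then I would invoke Lemma~\ref{lemma:normal-crossing}, which tells us precisely that $\mathbf{D}_P^\lambda=0$ whenever $P\in\Sigma$ is a good double point of $S_\lambda$. Applying this to each point of $\Sigma$ and summing gives $\sum_{P\in\Sigma}\mathbf{D}_P^\lambda=0$. Substituting this into \eqref{equation:equation:number-of-irredubicle-components-refined} yields
$$
\big[\mathsf{f}^{-1}(\lambda)\big]=\big[S_{\lambda}\big]+\sum_{i=1}^{r}\mathbf{C}_i^\lambda,
$$
as claimed.

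I expect no genuine obstacle in this corollary: all the geometric content, namely the local analysis of the exceptional curves over a good double point and the multiplicity computation $\mathbf{M}_j^\lambda=1$, has already been carried out in the proof of Lemma~\ref{lemma:normal-crossing}. This corollary is a pure bookkeeping consequence, packaging that lemma into a form that can be applied fiber-by-fiber in the case analysis of the later sections.
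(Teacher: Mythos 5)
Your proof is correct and follows exactly the route the paper intends: the corollary is stated without proof precisely because it is the combination of \eqref{equation:equation:number-of-irredubicle-components-refined} with Lemma~\ref{lemma:normal-crossing} applied to each point of $\Sigma$, which is what you do. Nothing to add.
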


Let us show how to apply this corollary in one simple example.

\begin{example}
\label{example:r-3-n-11}
Suppose that $X$ is contained in the family \textnumero $3.11$ in \cite{IP}.
Then its mirror partner is given by the Minkowski polynomial \textnumero $1518$,
which is the Laurent polynomial
$$
x+y+z+{\frac {z}{x}}+{\frac {z}{y}}+{\frac {y}{x}}+{\frac {z}{xy}}+{\frac {y}{z}}+\frac{1}{x}+\frac{1}{y}+{\frac {y}{xz}}+{\frac {1}{xy}}.
$$
Thus, the pencil $\mathcal{S}$ is given by the equation
$$
xyz^2+x^2yz+xy^2z+xz^2t+yz^2t+y^2zt+z^2t^2+xy^2t+xzt^2+yzt^2+y^2t^2+zt^3=\lambda xyzt,
$$
and its base locus consists of the lines $L_{\{x\},\{t\}}$, $L_{\{y\},\{z\}}$,
$L_{\{y\},\{t\}}$, $L_{\{z\},\{t\}}$, $L_{\{x\},\{z,t\}}$, $L_{\{y\},\{x,t\}}$, $L_{\{y\},\{z,t\}}$,
$L_{\{z\},\{x,t\}}$, $L_{\{t\},\{x,y,z\}}$, and the conic $\{x=y^2+yz+zt=0\}$.
If $\lambda\ne-2$ and $\lambda\ne\infty$, then the surface $S_\lambda$ has at most du Val singularities,
so that $[\mathsf{f}^{-1}(\lambda)]=1$ by Corollary~\ref{corollary:irreducible-fibers}.
On the other hand, we have $S_{-2}=H_{\{x,t\}}+\mathbf{S}$,
where $\mathbf{S}$ is an irreducible cubic surface that is given by $xyz+yz^2+z^2t+zt^2+y^2z+y^2t+yzt=0$.
Note also that $S_{-2}$ is smooth at general point of every base curve of the pencil $\mathcal{S}$.
Thus, it follows from  \eqref{equation:equation:number-of-irredubicle-components-refined} that
$$
\big[\mathsf{f}^{-1}(-2)\big]=2+\sum_{P\in\Sigma}\mathbf{D}_P^{-2}.
$$
Furthermore, the set $\Sigma$ consists of the points
$P_{\{y\},\{z\},\{t\}}$, $P_{\{x\},\{z\},\{t\}}$, $P_{\{x\},\{y\},\{t\}}$,  $P_{\{x\},\{t\},\{y,z\}}$, and $P_{\{y\},\{z\},\{x,t\}}$,
and the quadratic terms of the Taylor expansions of the surface $S_{-2}$ at these points can be described as follows:
\begin{itemize}\setlength{\itemindent}{3cm}
\item[$P_{\{y\},\{z\},\{t\}}$:] quadratic term $yz$;

\item[$P_{\{x\},\{z\},\{t\}}$:] quadratic term $(x+t)(z+t)$;

\item[$P_{\{x\},\{y\},\{t\}}$:] quadratic term $(x+t)(y+t)$;

\item[$P_{\{x\},\{t\},\{y,z\}}$:] quadratic term $z(x+t)$;

\item[$P_{\{y\},\{z\},\{x,t\}}$:] quadratic term $z(x+t)$.
\end{itemize}
By Corollary~\ref{corollary:normal-crossing-simple}, we have $\mathbf{D}_P^{-2}=0$ for every $P\in\Sigma$,
so that $[\mathsf{f}^{-1}(-2)]=2$.
Thus, we see that \eqref{equation:main-1} in Main Theorem holds in this case, since $h^{1,2}(X)=1$.
\end{example}

\subsection{Curves on singular quartic surfaces.}
\label{subsection:scheme-step-10}

We will prove \eqref{equation:main-2} in Main Theorem by computing the intersections  form of the curves $C_1,\ldots,C_r$ on a general surface in~the~pencil~$\mathcal{S}$.
To~do this, let $\Bbbk=\mathbb{C}(\lambda)$, let $S_{\Bbbk}$ be the quartic surface in  $\mathbb{P}^3_{\Bbbk}$
that is given by~\eqref{equation:quartic}, and let $\nu\colon\widetilde{S}_{\Bbbk}\to S_{\Bbbk}$
be the minimal resolution of singularities of the surface~$S_{\Bbbk}$.

\begin{lemma}
\label{lemma:cokernel}
Suppose that $\lambda$ is a general element of $\mathbb{C}$.
Then the surface $S_\lambda$ is singular, and it has du Val singularities.
Let $M$ be the $r\times r$ matrix with entries $M_{ij}\in\mathbb{Q}$ that are given by $M_{ij}=C_i\cdot C_j$,
where $C_i\cdot C_j$ is the intersection of the curves $C_i$ and $C_j$
on the surface $S_\lambda$.
Then the right hand side of \eqref{equation:main-2} is equal to
$$
22-\mathrm{rk}\,\mathrm{Pic}\big(\widetilde{S}_{\Bbbk}\big)+\mathrm{rk}\,\mathrm{Pic}\big(S_{\Bbbk}\big)-\mathrm{rk}(M).
$$
\end{lemma}

\begin{proof}
Let $F$ be a general fiber of the morphism $\mathsf{f}$.
Then $H^2(F,\mathbb{R})\cong\mathbb{Z}^{22}$, since $F$ is a smooth $K3$ surface.
This easily implies the required assertion.
\end{proof}

Thus, to verify \eqref{equation:main-2} in Main Theorem, it is enough to show that
\begin{equation}
\label{equation:main-2-simple}\tag{$\bigstar$}
\mathrm{rk}\,\mathrm{Pic}(X)+\mathrm{rk}(M)+\mathrm{rk}\,\mathrm{Pic}\big(\widetilde{S}_{\Bbbk}\big)-\mathrm{rk}\,\mathrm{Pic}\big(S_{\Bbbk}\big)=20,
\end{equation}
where $M$ is the intersection matrix defined in Lemma~\ref{lemma:cokernel}.
For basic properties of the intersection of curves on surfaces with du Val singularities,
see Appendix~\ref{section:intersection}.

Let us show how to check \eqref{equation:main-2-simple} in one case.

\begin{example}
\label{example:r-2-n-34}
Suppose that $X=\mathbb{P}^1\times\mathbb{P}^2$. This is the family \textnumero $2.34$ in \cite{IP}.
One of its mirror partners is given by the Minkowski polynomial \textnumero $4$,
which is the Laurent polynomial $x+y+z+\frac{1}{x}+{\frac {1}{yz}}$.
Then the pencil $\mathcal{S}$ is given by
$$
x^2yz+y^2xz+z^2xy+t^2yz+t^3x=\lambda xyzt,
$$
and its base locus consists of the curves $L_{\{x\},\{y\}}$, $L_{\{x\},\{z\}}$,
$L_{\{x\},\{t\}}$, $L_{\{y\},\{t\}}$, $L_{\{z\},\{t\}}$, and $L_{\{t\},\{x,y,z\}}$.
Suppose that $\lambda\ne\infty$.
Then the singular points of the surface $S_\lambda$ contained in one of these lines
are $P_{\{x\},\{y\},\{t\}}$ and $P_{\{x\},\{z\},\{t\}}$, which are singular points of type $\mathbb{A}_4$,
the points $P_{\{y\},\{z\},\{t\}}$, $P_{\{y\},\{t\},\{x+z\}}$, and $P_{\{z\},\{t\},\{x+y\}}$,
which are singular points  of type~$\mathbb{A}_2$,
and the point $P_{\{x\},\{t\},\{y+z\}}$, which is an isolated ordinary double point of the surface $S_\lambda$.
In particular, we see that \eqref{equation:main-1} in Main Theorem holds by Corollary~\ref{corollary:irreducible-fibers}.
Resolving the singularities of the quartic surface $S_{\Bbbk}$, we also see that
$$
\mathrm{rk}\,\mathrm{Pic}\big(\widetilde{S}_{\Bbbk}\big)=\mathrm{rk}\,\mathrm{Pic}\big(S_{\Bbbk}\big)+15.
$$
Thus, to verify \eqref{equation:main-2-simple}, we have to compute the rank of the intersection matrix of the lines
$L_{\{x\},\{y\}}$, $L_{\{x\},\{z\}}$, $L_{\{x\},\{t\}}$, $L_{\{y\},\{t\}}$, $L_{\{z\},\{t\}}$, and $L_{\{t\},\{x,y,z\}}$ on the surface $S_\lambda$.
This matrix has the same rank as the  intersection matrix of the curves $L_{\{x\},\{y\}}$, $L_{\{x\},\{z\}}$, and $H_\lambda$, since
\begin{multline*}
L_{\{x\},\{y\}}+L_{\{x\},\{z\}}+2L_{\{x\},\{t\}}\sim L_{\{x\},\{y\}}+3L_{\{y\},\{t\}}\sim\\
\sim L_{\{x\},\{z\}}+3L_{\{z\},\{t\}}\sim L_{\{x\},\{t\}}+L_{\{y\},\{t\}}+L_{\{z\},\{t\}}+L_{\{t\},\{x,y,z\}}\sim H_\lambdaþ
\end{multline*}
These rational equivalences follow from
\begin{equation*}
\begin{split}
H_{\{x\}}\cdot S_\lambda&=L_{\{x\},\{y\}}+L_{\{x\},\{z\}}+2L_{\{x\},\{t\}},\\
H_{\{y\}}\cdot S_\lambda&=L_{\{x\},\{y\}}+3L_{\{y\},\{t\}},\\
H_{\{z\}}\cdot S_\lambda&=L_{\{x\},\{z\}}+3L_{\{z\},\{t\}},\\
H_{\{t\}}\cdot S_\lambda&=L_{\{x\},\{t\}}+L_{\{y\},\{t\}}+L_{\{z\},\{t\}}+L_{\{t\},\{x,y,z\}}.
\end{split}
\end{equation*}
On the other hand,
using Propositions~\ref{proposition:du-Val-intersection} and~\ref{proposition:du-Val-self-intersection},
we see that the intersection form of the curves $L_{\{x\},\{y\}}$, $L_{\{x\},\{z\}}$ and $H_\lambda$
on the surface $S_\lambda$ is given by
\begin{center}
\renewcommand\arraystretch{1.42}
\begin{tabular}{|c||c|c|c|}
\hline
 $\bullet$  & $L_{\{x\},\{y\}}$ & $L_{\{x\},\{z\}}$ & $H_{\lambda}$ \\
\hline\hline
 $L_{\{x\},\{y\}}$ &  $-\frac{4}{5}$ & $1$ & $1$ \\
\hline
 $L_{\{x\},\{z\}}$ &  $1$ & $-\frac{4}{5}$ & $1$ \\
\hline
 $H_{\lambda}$  & $1$ & $1$ & $4$ \\
\hline
\end{tabular}
\end{center}
This matrix has rank~$3$, so that \eqref{equation:main-2-simple} holds in this case.
\end{example}

\subsection{Scheme of the proof.}
\label{subsection:scheme-step-11}

In the remaining part of the paper, we prove \eqref{equation:main-1} and \eqref{equation:main-2} in Main Theorem
for every deformation family of smooth Fano threefolds
similar to what we did in Examples~\ref{example:r-3-n-27}, \ref{example:r-3-n-2}, \ref{example:r-8-n1},  \ref{example:r-3-n-11}, and \ref{example:r-2-n-34}.
We will do this case by case reserving one subsection per deformation family.
For convenience, we align the number of the family in~\cite{IP} with the corresponding subsection's number,
and we group families with the same Picard rank in one section.
For example, Subsection~\ref{section:r-4-n-1} contains the proof of Main Theorem for the family \textnumero $4.1$ in \cite{IP},
which consists of smooth divisors of multidegree $(1,1,1,1)$ on $\mathbb{P}^1\times\mathbb{P}^1\times\mathbb{P}^1\times\mathbb{P}^1$.

In every case when $-K_X$ is very ample, we proceed as follows.
First, we choose an appropriate toric Landau--Ginzburg model for the threefold $X$ such that
\eqref{equation:diagram} exists for some pencil $\mathcal{S}$, which is given~by the equation \eqref{equation:quartic}.
Second, we describe the base locus of this pencil.
Third, we describe the singularities of every surface $S_\lambda$ in the pencil $\mathcal{S}$ that are contained in the base locus of this pencil.
This also gives us explicit construction of the birational map $\alpha$ in \eqref{equation:main-diagram},
which can be used to describe the minimal resolution of singularities $\nu\colon\widetilde{S}_{\Bbbk}\to S_{\Bbbk}$.
Using it, we compute $\mathrm{rk}\,\mathrm{Pic}(\widetilde{S}_{\Bbbk})-\mathrm{rk}\,\mathrm{Pic}(S_{\Bbbk})$,
and verify \eqref{equation:main-2-simple} using intersection theory on $S_\lambda$ for general $\lambda\in\mathbb{C}$.
To do this more efficiently, we use basic results about intersection of curves on singular surfaces,
which we present in Appendix~\ref{section:intersection}.

If singular points of the surface $S_\lambda$ contained in the base locus of the pencil $\mathcal{S}$ are all du Val for every $\lambda\ne\infty$,
then we apply Corollary~\ref{corollary:irreducible-fibers} to deduce \eqref{equation:main-1} in Main Theorem.
Similarly, if every {fixed} singular point is a {good} double point of every non-du Val surface~$S_\lambda$ in the pencil $\mathcal{S}$,
then we can apply Corollary~\ref{corollary:normal-crossing-simple} together
with Lemma~\ref{lemma:main} to compute the right hand side of \eqref{equation:main-1} in Main Theorem.

If the pencil $\mathcal{S}$ contains a non-du Val quartic surface $S_\lambda$ that has \emph{bad} singularity at some {fixed} singular point~$P\in\Sigma$,
then we can compute the number of irreducible components of the fiber $\mathsf{f}^{-1}(\lambda)$ using \eqref{equation:equation:number-of-irredubicle-components-refined}.
This gives us
$$
\big[\mathsf{f}^{-1}(\lambda)\big]=\big[S_{\lambda}\big]+\sum_{i=1}^{r}\mathbf{C}_j^\lambda+\sum_{P\in\Sigma}\mathbf{D}_P^\lambda.
$$
Here, the term $[S_{\lambda}]$ is easy to compute.
Likewise, the second term in this formula can be computed using Lemma~\ref{lemma:main}.
Therefore, for every {fixed} singular point $P\in\Sigma$ that is neither du Val nor a {good} double point of the surface $S_{\lambda}$,
we must compute its {defect}~$\mathbf{D}_P^\lambda$.

To compute the defect $\mathbf{D}_P^\lambda$,
we describe the birational morphism $\alpha\colon U\to\mathbb{P}^3$ in \eqref{equation:main-diagram}.
This can be done locally in a neighborhood of the point $P$.
Then we describe the divisor
$$
\widehat{D}_\lambda=\widehat{S}_\lambda+\sum_{i=1}^{k}\mathbf{a}_i^\lambda\widehat{E}_i
$$
in \eqref{equation:log-pull-back}.
In many cases, we can use Lemma~\ref{lemma:log-pull-back} to show that some (or all) of the numbers $\mathbf{a}_1^\lambda,\ldots,\mathbf{a}_k^\lambda$ vanish.
But it is not hard to compute them in general.

Then we describe the base curves of the pencil $\widehat{\mathcal{S}}$,
and compute the intersection multiplicities $\mathbf{m}_1,\ldots,\mathbf{m}_s$ in \eqref{equation:multiplicities},
and the multiplicities $\mathbf{M}_1^\lambda,\ldots,\mathbf{M}_s^\lambda$ in \eqref{equation:M}.
For the proper transforms of the base curves of the pencil~$\mathcal{S}$,
these computations should have been already done at the previous steps.
For the remaining base curves of the pencil $\widehat{\mathcal{S}}$,
we can compute these numbers locally near every point in $\Sigma$.
For each such point $P\in\Sigma$, we can compute its {defect} $\mathbf{D}_P^\lambda$ arguing as in Subsection~\ref{subsection:scheme-step-8}.
If the surface $S_\lambda$ has du Val singularity or non-isolated ordinary double singularity at $P$,
we can use Lemma~\ref{lemma:normal-crossing} to deduce that its {defect} $\mathbf{D}_P^\lambda$ vanishes.
This allows us to skip many local computations.

Finally, we use \eqref{equation:equation:number-of-irredubicle-components-refined}
to compute $[\mathsf{f}^{-1}(\lambda)]$ for every $\lambda\ne\infty$.
This gives \eqref{equation:main-1} in Main Theorem and completes the proof of Main Theorem in the case when $-K_X$ is very~ample.

\begin{example}
\label{example:r-3-n-6}
Suppose that the threefold $X$ is contained in the family \textnumero $3.6$ in \cite{IP}.
Then $X$ can be obtained by blowing up $\mathbb{P}^3$ at a disjoint union of a line and a smooth elliptic curve of degree $4$, so that $h^{1,2}(X)=1$.
A~toric Landau--Ginzburg model of the threefold $X$ is given by the Minkowski polynomial \textnumero $1899$, which is
$$
x+z+{\frac {x}{z}}+{\frac {1}{xy}}+{\frac {z}{x}}+\frac{1}{y}+\frac{1}{z}+\frac{2}{y}+\frac{3}{x}+{\frac {yz}{x}}+{\frac {y}{z}}+\frac{3y}{x}+{\frac {{y}^{2}}{x}}.
$$
Then the corresponding pencil $\mathcal{S}$ is given by
$$
{x}^{2}yz+xz{t}^{2}+xy{z}^{2}+{x}^{2}yt+z{t}^{3}+y{z}^{2}t+xy{t}^{2}+2x{y}^{2}z+3yz{t}^{2}+{y}^{2}{z}^{2}+x{y}^{2}t+3{y}^{2}zt+{y}^{3}z=\lambda xyzt.
$$
Suppose that $\lambda\ne\infty$. Let $\mathcal{C}$ be the conic $x=yz+(y+t)^2=0$.
Then
\begin{equation}
\label{equation:r-3-n-6}
\begin{split}
H_{\{x\}}\cdot S_\lambda&=L_{\{x\},\{z\}}+L_{\{x\},\{y,t\}}+\mathcal{C},\\
H_{\{y\}}\cdot S_\lambda&=L_{\{y\},\{z\}}+2L_{\{y\},\{t\}}+L_{\{y\},\{x,t\}},\\
H_{\{z\}}\cdot S_\lambda&=L_{\{x\},\{z\}}+L_{\{y\},\{z\}}+L_{\{z\},\{t\}}+L_{\{z\},\{x,y,t\}},\\
H_{\{t\}}\cdot S_\lambda&=L_{\{y\},\{t\}}+L_{\{z\},\{t\}}+L_{\{t\},\{x,y\}}+L_{\{t\},\{x,y,z\}}.
\end{split}
\end{equation}
Let $\mathbf{S}$ be an irreducible cubic surface that given by $zt^2+2yzt+xyt+yz^2+xyz+y^2z=0$.
Then $S_{-3}=H_{\{x+y+t\}}+\mathbf{S}$.
If $\lambda\ne-3$, then $S_\lambda$ is irreducible, and its singularities contained in the base locus of the pencil $\mathcal{S}$
can be described as follows:
\begin{itemize}\setlength{\itemindent}{3cm}
\item[$P_{\{y\},\{z\},\{t\}}$:] type $\mathbb{A}_2$ with quadratic term $y(z+t)$;

\item[$P_{\{x\},\{y\},\{t\}}$:] type $\mathbb{A}_3$ with quadratic term $y(x+y+t)$;

\item[$P_{\{x\},\{z\},\{y,t\}}$:] type $\mathbb{A}_2$ with quadratic term $x(x+y+t-3z-\lambda z)$;

\item[$P_{\{y\},\{z\},\{x,t\}}$:] type $\mathbb{A}_1$ with quadratic term $4zy-(x+t)(y+z)-y^{2}+\lambda yz$;

\item[$P_{\{y\},\{t\},\{x,z\}}$:] type $\mathbb{A}_1$ with quadratic term $2yt-t^2-(x+z)y-y^{2}+\lambda ty$;

\item[$P_{\{z\},\{t\},\{x,y\}}$:] type $\mathbb{A}_2$ with quadratic term
$$
t(x+y+t-3z-\lambda z)
$$
for $\lambda\neq -4$, and type $\mathbb{A}_3$ for $\lambda=-4$.
\end{itemize}
These are the {fixed} singular points of the pencil $\mathcal{S}$.
All of them are {good} double points of the surface $S_{-3}$.
Now using Corollaries~\ref{corollary:irreducible-fibers} and \ref{corollary:normal-crossing-simple},
we obtain \eqref{equation:main-1} in Main Theorem.
To verify \eqref{equation:main-2-simple}, we observe that $\mathrm{rk}\,\mathrm{Pic}(\widetilde{S}_{\Bbbk})=\mathrm{rk}\,\mathrm{Pic}(S_{\Bbbk})+11$.
Now we must compute the rank of the intersection matrix $M$ in Lemma~\ref{lemma:cokernel}.
We may assume that $\lambda\not\in\{-4,-3\}$.
Using \eqref{equation:r-3-n-6}, we see that  $M$ has the same rank as the intersection matrix of the curves $L_{\{x\},\{z\}}$, $L_{\{x\},\{y,t\}}$, $L_{\{y\},\{z\}}$, $L_{\{y\},\{x,t\}}$, $L_{\{z\},\{x,y,t\}}$, $L_{\{t\},\{x,y,z\}}$ and $H_\lambda$,
which is given by
\begin{center}\renewcommand\arraystretch{1.42}
\begin{tabular}{|c||c|c|c|c|c|c|c|}
\hline
 $\bullet$  & $L_{\{x\},\{z\}}$ & $L_{\{x\},\{y,t\}}$ & $L_{\{y\},\{z\}}$ & $L_{\{y\},\{x,t\}}$ & $L_{\{z\},\{x,y,t\}}$ & $L_{\{t\},\{x,y,z\}}$ &  $H_{\lambda}$ \\
\hline\hline
$L_{\{x\},\{z\}}$ & $-\frac{4}{3}$ & $\frac{2}{3}$ & $1$ & $0$ & $\frac{1}{3}$ & $0$ & $1$ \\
\hline
$L_{\{x\},\{y,t\}}$ & $\frac{2}{3}$ & $-\frac{7}{12}$ & $0$ & $\frac{1}{2}$ & $\frac{1}{3}$ & $0$ & $1$ \\
\hline
$L_{\{y\},\{z\}}$ & $1$ & $0$ & $-\frac{5}{6}$ & $\frac{1}{2}$ & $\frac{1}{2}$ & $0$ & $1$ \\
\hline
$L_{\{y\},\{x,t\}}$ & $0$ & $\frac{1}{2}$ & $\frac{1}{2}$ & $-\frac{1}{2}$ & $\frac{1}{2}$ & $0$ & $1$ \\
\hline
$L_{\{z\},\{x,y,t\}}$ & $\frac{1}{3}$ & $\frac{1}{3}$ & $\frac{1}{2}$ & $\frac{1}{2}$ & $-\frac{1}{6}$ & $\frac{1}{3}$ & $1$ \\
\hline
$L_{\{t\},\{x,y,z\}}$ & $0$ & $0$ & $0$ & $0$ & $\frac{1}{3}$ & $-\frac{5}{6}$ & $1$ \\
\hline
 $H_{\lambda}$  & $1$ & $1$ & $1$ & $1$ & $1$ & $1$ & $4$ \\
\hline
\end{tabular}
\end{center}
It has rank $6$, so that \eqref{equation:main-2-simple} holds,
which gives \eqref{equation:main-2} in Main Theorem by Lemma~\ref{lemma:cokernel}.
\end{example}

In the remaining part of this paper, we will always use notations of this section except for $5$ families of smooth Fano threefolds
whose anticanonical divisors are not very ample.
These are the families \textnumero $2.1$, $2.2$, $2.3$, $9.1$, and $10.1$ in \cite{IP}.
We will deal with them in Subsections~\ref{section:r-2-n-1}, \ref{section:r-2-n-2}, \ref{section:r-2-n-3}, \ref{section:r-9-n-1}, and \ref{section:r-10-n-1},
respectively.
The proof of Main Theorem in these cases is  similar to the case when $-K_X$ is very ample.
For instance, if $X=\mathbb{P}^1\times\mathsf{S}_1$, where~$\mathsf{S}_1$ is a smooth del Pezzo surface of degree $1$,
the commutative diagram \eqref{equation:quartic} also exists.
But now by~\cite[Proposition 29]{P16} the pencil $\mathcal{S}$ is given by
$$
x^3y=(\lambda yz-y^2-z^2)(xt-xz-t^2),
$$
where $\lambda\in\mathbb{C}\cup\{\infty\}$.
In this case, which is the family \textnumero $9.1$,
we still can apply all steps described above to prove Main Theorem.

\section{Fano threefolds of Picard rank $2$}
\label{section:rank-2}

\subsection{Family \textnumero $2.1$}
\label{section:r-2-n-1}

In this case, the threefold $X$ can be obtained as a blow up of a smooth sextic hypersurface in $\mathbb{P}(1,1,1,2,3)$
along a smooth elliptic curve.
This implies that $h^{1,2}(X)=22$.
Note that $-K_X$ is not very ample.
Because of this, there exists no Laurent polynomial with reflexive Newton polytope that gives
the toric Landau--Ginzburg model of this deformation family.
However, there are Laurent polynomials with non-reflexive Newton polytopes that give the commutative diagram \eqref{equation:CCGK-compactification}.
One of them is
$$
\frac{(r+s+1)^6(t+1)^6}{rs^2}+\frac{1}{t},
$$
which we also denote by $\mathsf{p}$.

Let $\gamma\colon\mathbb{C}^3\dasharrow\mathbb{C}^\ast\times\mathbb{C}^\ast\times\mathbb{C}^\ast$
be a birational transformation
that is given by the change of coordinates
$$
\left\{\aligned
&r=\frac{1}{b}-\frac{1}{b^2c}-1,\\
&s=\frac{1}{b^2c},\\
&t=-\frac{1}{y}-1.\\
\endaligned
\right.
$$
Arguing as in Subsection~\ref{subsection:scheme-step-7}, we can expand \eqref{equation:CCGK-compactification} to the commutative diagram
\begin{equation}
\label{equation:diagram-2-1}
\xymatrix{
&&\mathbb{P}^2\times\mathbb{P}^1\ar@{-->}[dd]^{\phi}&\mathbb{C}^3\ar@{_{(}->}[l]\ar@{->}[dd]^{\mathsf{q}}\ar@{-->}[rr]^{\gamma}&&\mathbb{C}^\ast\times\mathbb{C}^\ast\times\mathbb{C}^\ast\ar@{^{(}->}[r]\ar@{->}[dd]_{\mathsf{p}}&Y\ar@{->}[dd]^{\mathsf{w}}\ar@{^{(}->}[r]&Z\ar@{->}[dd]^{\mathsf{f}}\\
V\ar@{->}[drr]_{\mathsf{g}}\ar@{->}[urr]^{\pi}&&&&&&&\\
&&\mathbb{P}^1&\mathbb{C}^1\ar@{_{(}->}[l]\ar@{=}[rr]&&\mathbb{C}^1\ar@{=}[r]&\mathbb{C}^1\ar@{^{(}->}[r]&\mathbb{P}^1}
\end{equation}
where $\mathsf{q}$ is a surjective morphism, $\pi$ is a birational morphism, the threefold $V$ is smooth,
the map $\mathsf{g}$ is a surjective morphism such that
$-K_{V}\sim\mathsf{g}^{-1}(\infty)$,
and $\phi$ is a rational map that is given by the pencil
\begin{equation}
\label{equation:2-1-pencil}
x(x+y)c^3=y\Big((x+y)\lambda+y\Big)\Big(abc-b^2c-a^3\Big),
\end{equation}
where $([x:y],[a:b:c])$ is a point in $\mathbb{P}^1\times\mathbb{P}^2$, and $\lambda\in\mathbb{C}\cup\{\infty\}$.

The commutative diagram \eqref{equation:diagram-2-1} is similar to the commutative diagram \eqref{equation:diagram} presented in Subsection~\ref{subsection:scheme-step-4}.
Like in \eqref{equation:diagram}, there exists a composition of flops $\chi\colon V\dasharrow Z$ that makes the following diagram commuting:
$$
\xymatrix{
V\ar@{->}[d]_{\mathsf{g}}\ar@{-->}[rr]^{\chi}&&Z\ar@{->}[d]^{\mathsf{f}}\\
\mathbb{P}^1&&\mathbb{P}^1\ar@{=}[ll]}
$$
So, to prove Main Theorem in this case, we will follow the scheme described in~Section~\ref{section:scheme}.
Moreover, we will use the same assumptions and notation as in the case when $-K_X$ is very ample.
The only difference is that $\mathbb{P}^3$ is now replaced by $\mathbb{P}^1\times\mathbb{P}^2$.
For instance, we denote by $\mathcal{S}$ the pencil \eqref{equation:2-1-pencil},
and we denote by $S_\lambda$ the surface in $\mathcal{S}$ given by \eqref{equation:2-1-pencil},
where $\lambda\in\mathbb{C}\cup\{\infty\}$.
Likewise, we extend handy notation in Subsection~\ref{subsection:notations}
to bilinear sections of $\mathbb{P}^1\times\mathbb{P}^2$.
Note that the curve $H_\lambda$ is not defined in this case.

Let $\mathsf{S}$ be the surface in $\mathbb{P}^1\times\mathbb{P}^2$ given by $abc-b^2c-a^3=0$.
Then $\mathsf{S}$ is irreducible and
$$
S_\infty=H_{\{y\}}+H_{\{x,y\}}+\mathsf{S}.
$$
Let $\mathbf{S}$ be the surface in $\mathbb{P}^1\times\mathbb{P}^2$ that is given by the equation $xc^3+yc^3-yabc+yb^2c+ya^3=0$.
Then $\mathbf{S}$ is irreducible and $S_{-1}=H_{\{x\}}+\mathbf{S}$.
These are all reducible surfaces in $\mathcal{S}$.

To describe the base locus of the pencil $\mathcal{S}$, we observe that
\begin{equation}
\label{equation:2-1}
\begin{split}
H_{\{x,y\}}\cdot S_{-1}&=\mathcal{C}_1,\\
H_{\{y\}}\cdot S_{-1}&=3L_{\{y\},\{c\}},\\
\mathsf{S}\cdot S_{-1}&=\mathcal{C}_1+\mathcal{C}_2+9L_{\{a\},\{c\}},
\end{split}
\end{equation}
where $\mathcal{C}_1$ is the curve in $\mathbb{P}^1\times\mathbb{P}^2$ that is given by $x+y=abc-b^2c-a^3=0$,
and $\mathcal{C}_2$ is the curve in $\mathbb{P}^1\times\mathbb{P}^2$ that is given by $x=abc-b^2c-a^3=0$.
Thus, we have
$$
S_{-1}\cdot S_{\infty}=2\mathcal{C}_1+\mathcal{C}_2+3L_{\{y\},\{c\}}+9L_{\{a\},\{c\}},
$$
so that the base locus of the pencil $\mathcal{S}$ consists of
the curves $\mathcal{C}_1$, $\mathcal{C}_2$, $L_{\{y\},\{c\}}$, and $L_{\{a\},\{c\}}$.

To match the notation used in Subsection~\ref{subsection:scheme-step-6},
we let $C_1=\mathcal{C}_1$, $C_2=\mathcal{C}_2$, $C_3=L_{\{y\},\{c\}}$, and $C_4=L_{\{a\},\{c\}}$.
Then $\mathbf{m}_{1}=2$, $\mathbf{m}_{2}=2$, $\mathbf{m}_{3}=3$, and $\mathbf{m}_{4}=9$.

Observe that $S_{0}$ is singular along the curve $L_{\{y\},\{c\}}$.
Moreover, if $\lambda\not\in\{0,-1,\infty\}$, then the surface $S_\lambda$ has isolated singularities.
In this case the singular points of the surface $S_\lambda$
contained in the base locus of the pencil $\mathcal{S}$ are du Val and can be described as follows:
\begin{itemize}\setlength{\itemindent}{6cm}
\item[$P_{\{y\},\{a\},\{c\}}$:] type $\mathbb{A}_8$;
\item[{$[\lambda+1:-\lambda]\times [0:1:0]$:}] type $\mathbb{A}_8$.
\end{itemize}

Applying Corollary~\ref{corollary:irreducible-fibers}, we obtain the following.

\begin{corollary}
\label{corollary:r-2-n-1-irreducible}
The fiber $\mathsf{f}^{-1}(\lambda)$ is irreducible for every $\lambda\not\in\{0,-1,\infty\}$.
\end{corollary}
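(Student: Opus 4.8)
The plan is to reduce the statement directly to Corollary~\ref{corollary:irreducible-fibers}, so that the entire task becomes the verification of its single hypothesis for each $\lambda\not\in\{0,-1,\infty\}$: that $S_\lambda$ has at worst du Val singularities at every point of the base locus of the pencil $\mathcal{S}$. Since the base locus is the union $C_1\cup C_2\cup C_3\cup C_4$ described above, I would split the verification into three parts: (i) irreducibility of $S_\lambda$; (ii) the assertion that the only singular points of $S_\lambda$ lying on these four base curves are the two points $P_{\{y\},\{a\},\{c\}}$ and $[\lambda+1:-\lambda]\times[0:1:0]$; and (iii) that each of these two points is du Val.

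For (i), I would invoke the fact, established just before the statement, that the only reducible members of $\mathcal{S}$ are $S_\infty$ and $S_{-1}$; hence $S_\lambda$ is irreducible for every $\lambda\ne -1,\infty$, which covers our range. Here one subtlety must be addressed: the proof of Corollary~\ref{corollary:irreducible-fibers} deduces irreducibility from the fact that the components of $S_\lambda$ are hypersurfaces in $\mathbb{P}^3$, whereas the ambient space is now $\mathbb{P}^1\times\mathbb{P}^2$. I would therefore supply irreducibility directly from the classification of reducible members, after which the remaining part of the corollary's argument---Lemma~\ref{lemma:irreducible-fibers} forcing every $\mathbf{a}_i^\lambda=0$ and hence $\mathsf{f}^{-1}(\lambda)$ irreducible via \eqref{equation:number-of-irredubicle-components-simple}---goes through verbatim once $\mathbb{P}^3$ is replaced by $\mathbb{P}^1\times\mathbb{P}^2$, exactly as this section stipulates.

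For (ii) and (iii), the heart of the matter is the local analysis along the base curves. Since $S_0$ is the unique member singular along a curve (namely $L_{\{y\},\{c\}}$), every $S_\lambda$ with $\lambda\ne 0,-1,\infty$ has isolated singularities; consequently a general point of each $C_j$ is a smooth point of $S_\lambda$, and I need only examine the finitely many points where the base curves meet or where the pencil forces a singularity. Passing to affine charts on $\mathbb{P}^1\times\mathbb{P}^2$ around $P_{\{y\},\{a\},\{c\}}$ and $[\lambda+1:-\lambda]\times[0:1:0]$ and expanding \eqref{equation:2-1-pencil} to leading order, I would confirm that each local equation defines an $\mathbb{A}_8$ singularity and that no other point of the base curves is singular on $S_\lambda$. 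This is the main obstacle: the defining equation is only bilinear across the two factors but cubic in the $\mathbb{P}^2$-coordinates, so exhibiting the $\mathbb{A}_8$ type cleanly---and in particular checking that it does not degenerate to a worse, non-du Val type for special $\lambda$ in our range---requires care, and is precisely where the exclusion of $\lambda=0$ is essential.

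Having established (i)--(iii), Corollary~\ref{corollary:irreducible-fibers} applies and yields that $\mathsf{f}^{-1}(\lambda)$ is irreducible for every $\lambda\not\in\{0,-1,\infty\}$, as claimed.
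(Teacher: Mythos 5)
Your proposal is correct and follows essentially the same route as the paper: the paper likewise records that $S_\lambda$ has isolated singularities for $\lambda\not\in\{0,-1,\infty\}$, that the only singular points on the base locus are $P_{\{y\},\{a\},\{c\}}$ and $[\lambda+1:-\lambda]\times[0:1:0]$, both of type $\mathbb{A}_8$, and then simply applies Corollary~\ref{corollary:irreducible-fibers}. Your extra care about transporting that corollary from $\mathbb{P}^3$ to $\mathbb{P}^1\times\mathbb{P}^2$ (supplying irreducibility from the classification of reducible members rather than from the hypersurface argument) is a sensible refinement of exactly what the paper stipulates in this subsection.
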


Observe that the point $P_{\{y\},\{a\},\{c\}}$ is the only {fixed} singular point of the pencil $\mathcal{S}$.

\begin{remark}
\label{remark:r-2-n-1-singular-curve}
The base curve $C_1$ is singular at the point $P_{\{x,y\},\{a\},\{b\}}$.
Similarly, the base curve $C_2$ is singular at the point $P_{\{x\},\{a\},\{b\}}$.
Thus, in the notation of Subsection~\ref{subsection:scheme-step-7}, both curves $\widehat{C}_1$ and $\widehat{C}_2$ are singular.
This implies that the threefold $V$ in \eqref{equation:diagram-2-1} is singular: it has isolated ordinary double points.
But this is not important for the proof of Main Theorem in this case,
because these singular points are contained in the fiber $\mathbf{g}^{-1}(\infty)$.
Note that we can resolve them by composing the birational morphism $\pi$ in \eqref{equation:diagram-2-1} with small resolution of these double points.
However, the resulting smooth threefold would not be projective (cf. the proof of \cite[Proposition~29]{P16}).
\end{remark}

First, let us prove \eqref{equation:main-2} in Main Theorem.
By Lemma~\ref{lemma:cokernel}, it follows from

\begin{lemma}
\label{lemma:r-2-n-1-intersection}
The equality \eqref{equation:main-2-simple} holds.
\end{lemma}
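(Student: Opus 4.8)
Since $\mathrm{rk}\,\mathrm{Pic}(X)=2$ in this family, combining Lemma~\ref{lemma:cokernel} with \eqref{equation:main-2-simple} reduces the assertion to the single numerical identity
$$
\mathrm{rk}(M)+\Big(\mathrm{rk}\,\mathrm{Pic}\big(\widetilde{S}_{\Bbbk}\big)-\mathrm{rk}\,\mathrm{Pic}\big(S_{\Bbbk}\big)\Big)=18.
$$
The plan is to evaluate the two summands separately. For the second one I would first confirm, by a direct computation with the partial derivatives of \eqref{equation:2-1-pencil}, that for general $\lambda$ the surface $S_\lambda$ has no singular points besides the two $\mathbb{A}_8$ points $P_{\{y\},\{a\},\{c\}}$ and $[\lambda+1:-\lambda]\times[0:1:0]$ already recorded; in particular there are no floating singularities. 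Granting this, the minimal resolution introduces $8+8=16$ exceptional $(-2)$-curves, so that $\mathrm{rk}\,\mathrm{Pic}(\widetilde{S}_{\Bbbk})-\mathrm{rk}\,\mathrm{Pic}(S_{\Bbbk})=16$, and it remains to prove $\mathrm{rk}(M)=2$.

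To compute $\mathrm{rk}(M)$ I would not write out the full $4\times4$ matrix, but instead express all four base curves in $\mathrm{Pic}(S_\lambda)_{\mathbb{Q}}$ in terms of the two Cartier classes $H_1$ and $H_2$ obtained by restricting $\pi_1^*\mathcal{O}_{\mathbb{P}^1}(1)$ and $\pi_2^*\mathcal{O}_{\mathbb{P}^2}(1)$ to $S_\lambda$. Since $S_\lambda$ is a surface of bidegree $(2,3)$, these classes satisfy $H_1^2=0$, $H_1\cdot H_2=3$ and $H_2^2=2$, so $\langle H_1,H_2\rangle$ is a rank-$2$ lattice of determinant $-9$. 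Intersecting the surfaces $H_{\{x,y\}}$, $H_{\{y\}}$ and $\mathsf{S}$ with a general $S_\lambda$ — essentially the computations recorded in \eqref{equation:2-1} — yields the $\mathbb{Q}$-linear equivalences $C_1\sim H_1$, $3C_3\sim H_1$ and $C_1+C_2+9C_4\sim 3H_2$. Thus $C_1$ and $C_3$ are multiples of $H_1$, while $C_2$ is determined by $C_4$, so the whole question reduces to showing that $C_4\in\langle H_1,H_2\rangle_{\mathbb{Q}}$.

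The one genuinely geometric input is the self-intersection of $C_4=L_{\{a\},\{c\}}=\mathbb{P}^1\times[0:1:0]$ on the singular surface $S_\lambda$. Because $H_1$ and $H_2$ are Cartier, the projection formula gives at once $C_4\cdot H_1=1$ and $C_4\cdot H_2=0$, which force $C_4\sim_{\mathbb{Q}}-\tfrac{2}{9}H_1+\tfrac{1}{3}H_2$, a class of self-intersection $-\tfrac{2}{9}$. I would confirm this value independently using the du Val intersection theory of Appendix~\ref{section:intersection}: the curve $C_4$ passes through both $\mathbb{A}_8$ points and meets, at each of them, a terminal component of the corresponding exceptional chain, so Propositions~\ref{proposition:du-Val-intersection} and~\ref{proposition:du-Val-self-intersection} give $C_4^2=-2+\tfrac{1\cdot8}{9}+\tfrac{1\cdot8}{9}=-\tfrac{2}{9}$. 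This agreement certifies $C_4\in\langle H_1,H_2\rangle_{\mathbb{Q}}$, whence all of $C_1,\dots,C_4$ lie in this rank-$2$ sublattice and, since $C_1=H_1$ and $C_4$ are linearly independent, $\mathrm{rk}(M)=2$. Together with the previous paragraph this gives $\mathrm{rk}(M)+16=18$, as required. The main obstacle, and the step deserving the most care, is the local analysis at the two $\mathbb{A}_8$ points: one must check both that $C_4$ is tangent to the correct end component of each exceptional chain and that no floating du Val singularity has been overlooked, since an error in either place would shift the two summands in ways that look compensating but are in fact inconsistent.
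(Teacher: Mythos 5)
Your argument is correct and lands on the same two numbers as the paper ($\mathrm{rk}(M)=2$ and $\mathrm{rk}\,\mathrm{Pic}(\widetilde{S}_{\Bbbk})-\mathrm{rk}\,\mathrm{Pic}(S_{\Bbbk})=16$), but it treats the curve $C_4=L_{\{a\},\{c\}}$ by a genuinely different and heavier route than the paper does. The paper adds one more hyperplane-section relation to the list \eqref{equation:2-1}: restricting $H_{\{x\}}$ to a general $S_\lambda$ gives $\mathcal{C}_2$, so $C_2\sim C_1\sim 3C_3\sim H_1$, and then $9C_4\sim 3H_2-2H_1$ drops out of your third relation by pure linear algebra; the whole computation reduces to the Gram matrix of $C_1$ and $H_\lambda$, with no local analysis at the singular points. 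You instead compute $C_4\cdot H_1=1$, $C_4\cdot H_2=0$ and $C_4^2=-\tfrac{2}{9}$ via Propositions~\ref{proposition:du-Val-intersection} and~\ref{proposition:du-Val-self-intersection} and infer $C_4\in\langle H_1,H_2\rangle_{\mathbb{Q}}$ from the matching self-intersections. That inference is sound, but two steps need to be made explicit. First, agreement of $C_4^2$ with the self-intersection of the orthogonal projection of $C_4$ onto $\langle H_1,H_2\rangle_{\mathbb{Q}}$ certifies membership only because $\langle H_1,H_2\rangle$ has signature $(1,1)$, so its orthogonal complement in $\mathrm{Pic}(S_\lambda)\otimes\mathbb{Q}$ is negative definite by the Hodge index theorem and the residual class with square zero must vanish; this is the hidden engine of your "certification" and should be stated. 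Second, the claim that $\widetilde{C}_4$ meets an end component of each $\mathbb{A}_8$ chain must actually be verified (it is true: in the chart $x=b=1$ the quadratic term at $P_{\{y\},\{a\},\{c\}}$ is $-\lambda yc$ and $L_{\{a\},\{c\}}$ lies in the branch $c=0$ of the tangent cone; alternatively, Hodge index gives $C_4^2\leqslant-\tfrac{2}{9}$ while Proposition~\ref{proposition:du-Val-self-intersection} gives $C_4^2\geqslant-2+\tfrac{8}{9}+\tfrac{8}{9}$, forcing equality and hence terminal components). What your route buys is a self-contained check that never uses $H_{\{x\}}\cdot S_\lambda=\mathcal{C}_2$; what the paper's route buys is the complete avoidance of both the Hodge-index step and the local analysis, at the cost of one extra (and very cheap) linear equivalence.
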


\begin{proof}
Suppose that $\lambda\not\in\{0,-1,\infty\}$.
Let $H_\lambda$ be the intersection of the surface $S_\lambda$ with a general surface in $\mathbb{P}^1\times\mathbb{P}^2$ of bi-degree $(0,1)$.
Then it follows from \eqref{equation:2-1} that
$$
C_1+C_2+9C_4\sim H_\lambda
$$
and $C_1\sim C_2\sim 3C_3$ on the surface $S_\lambda$.
Thus, the intersection matrix of the curves $C_1$, $C_2$, $C_3$, $C_4$ on the surface $S_\lambda$
has the same rank as the intersection matrix
$$
\left(
    \begin{array}{cc}
      C_1^2 & H_{\lambda}\cdot C_1 \\
      H_{\lambda}\cdot C_1 & H_{\lambda}^2 \\
    \end{array}
  \right)=
\left(
    \begin{array}{cc}
      0 & 1 \\
      1 & 2 \\
    \end{array}
  \right).
$$
One the other hand, we have $\mathrm{rk}\,\mathrm{Pic}(\widetilde{S}_{\Bbbk})=\mathrm{rk}\,\mathrm{Pic}(S_{\Bbbk})+16$.
This shows that \eqref{equation:main-2-simple} holds.
\end{proof}

In the remaining part of this subsection, we will show that \eqref{equation:main-1} in Main Theorem also holds in this case.
To do this, we have to compute $[\mathsf{f}^{-1}(-1)]$ and $[\mathsf{f}^{-1}(0)]$.
We start with

\begin{lemma}
\label{lemma:r-2-n-1-irreducible-1}
One has $[\mathsf{f}^{-1}(-1)]=2$.
\end{lemma}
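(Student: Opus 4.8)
The plan is to evaluate the refined count \eqref{equation:equation:number-of-irredubicle-components-refined}, which here reads
$$\big[\mathsf{f}^{-1}(-1)\big]=\big[S_{-1}\big]+\sum_{i=1}^{4}\mathbf{C}_i^{-1}+\sum_{P\in\Sigma}\mathbf{D}_P^{-1},$$
and to show that every correction term vanishes. Since $S_{-1}=H_{\{x\}}+\mathbf{S}$ with $\mathbf{S}$ irreducible, we have $[S_{-1}]=2$, so it will suffice to prove that $\mathbf{C}_1^{-1}=\dots=\mathbf{C}_4^{-1}=0$ and that the defect of the unique fixed singular point $P=P_{\{y\},\{a\},\{c\}}$ vanishes.

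First I would dispose of the fixed singular point. Since $H_{\{x\}}$ does not pass through $P=[1:0]\times[0:1:0]$, the surface $S_{-1}$ coincides with $\mathbf{S}$ in a neighbourhood of $P$. Setting $x=b=1$ to obtain affine coordinates $(y,a,c)$ centred at $P$, the equation of $\mathbf{S}$ acquires the leading quadratic term $yc$, a form of rank $2$. Hence $P$ is a \emph{good} double point of $S_{-1}$, and Lemma~\ref{lemma:normal-crossing} gives $\mathbf{D}_P^{-1}=0$. Equivalently, Corollary~\ref{corollary:normal-crossing-simple} now reduces the problem to showing $\sum_{i=1}^{4}\mathbf{C}_i^{-1}=0$.

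Next I would compute $\mathbf{M}_i^{-1}=\mathrm{mult}_{C_i}(S_{-1})$ for each of the four base curves and invoke Lemma~\ref{lemma:main}, according to which $\mathbf{C}_i^{-1}=0$ as soon as $\mathbf{M}_i^{-1}=1$. For this it is enough to decide, for each $C_i$, which of the two components $H_{\{x\}}$ and $\mathbf{S}$ contains it. A direct substitution using the defining equations shows that $C_1\subset\mathbf{S}$ but $C_1\not\subset H_{\{x\}}$, while $C_2\subset H_{\{x\}}$ but $C_2\not\subset\mathbf{S}$ (restricting $\mathbf{S}$ to $C_2$ leaves the nonzero function $yc^3$). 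The lines $C_3=L_{\{y\},\{c\}}$ and $C_4=L_{\{a\},\{c\}}$ both lie on $\mathbf{S}$ and avoid $H_{\{x\}}$; a leading-term computation in coordinates transverse to each line, the surviving linear terms being $ya^3$ along $C_3$ and $yb^2c$ along $C_4$, shows that $\mathbf{S}$ is smooth at a general point of each. Hence $\mathbf{M}_i^{-1}=1$ for all $i$, so every $\mathbf{C}_i^{-1}=0$ and $[\mathsf{f}^{-1}(-1)]=2$.

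The only delicate point is the multiplicity count along $C_1$ and $C_2$, which are \emph{singular} base curves (Remark~\ref{remark:r-2-n-1-singular-curve}): one must verify that $\mathbf{S}$ is generically reduced and smooth along $C_1$, i.e.\ that $C_1$ is not contained in $\mathrm{Sing}(\mathbf{S})$, so that $\mathbf{M}_1^{-1}=1$ rather than $\mathbf{M}_1^{-1}\geqslant 2$. Since Lemma~\ref{lemma:main} needs only the dichotomy between $\mathbf{M}_i^{-1}=1$ and $\mathbf{M}_i^{-1}\geqslant 2$, it suffices to confirm smoothness at a single general point of each curve, a routine Jacobian check; I do not expect any genuine obstruction here.
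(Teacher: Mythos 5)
Your proposal is correct and follows essentially the same route as the paper: the paper likewise observes that $P_{\{y\},\{a\},\{c\}}$ is the only fixed singular point (where $S_{-1}$ has an $\mathbb{A}_8$, hence good double, point) and that $\mathbf{M}_1^{-1}=\mathbf{M}_2^{-1}=\mathbf{M}_3^{-1}=\mathbf{M}_4^{-1}=1$, then applies Corollary~\ref{corollary:normal-crossing-simple} to get $[\mathsf{f}^{-1}(-1)]=[S_{-1}]=2$. You merely supply the component-by-component multiplicity checks and the rank-$2$ quadratic term $yc$ explicitly, and the deferred Jacobian check along $C_1$ does go through.
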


\begin{proof}
As we already mentioned, the point $P_{\{y\},\{a\},\{c\}}$ is the only {fixed} singular point of the pencil $\mathcal{S}$.
The surface $S_{-1}$ has a du Val singularity of type $\mathbb{A}_8$ at it.
Since
$$
\mathbf{M}_{1}^{-1}=\mathbf{M}_{2}^{-1}=\mathbf{M}_{3}^{-1}=\mathbf{M}_{4}^{-1}=1,
$$
we use Corollary~\ref{corollary:normal-crossing-simple} to deduce that $[\mathsf{f}^{-1}(-1)]=[S_{-1}]=2$.
\end{proof}

To compute $[\mathsf{f}^{-1}(0)]$, observe that $\mathbf{M}_{1}^0=1$, $\mathbf{M}_{2}^0=1$, $\mathbf{M}_{3}^0=2$, and $\mathbf{M}_{4}^0=1$.
Thus, it follows from \eqref{equation:equation:number-of-irredubicle-components-refined} and Lemma~\ref{lemma:main} that
\begin{equation}
\label{equation:r-2-n-1-D}
[\mathsf{f}^{-1}(0)]=3+\mathbf{D}_{P_{\{y\},\{a\},\{c\}}}^0,
\end{equation}
where $\mathbf{D}_{P_{\{y\},\{a\},\{c\}}}^0$ is the {defect} of the singular point $P_{\{y\},\{a\},\{c\}}$ defined in Subsection~\ref{subsection:scheme-step-6}.
The defect $\mathbf{D}_{P_{\{y\},\{a\},\{c\}}}^0$ can be computed locally near the point $P_{\{y\},\{a\},\{c\}}$.
The recipe how to compute it is given in Subsection~\ref{subsection:scheme-step-8}.
Let us use it.

Suppose that $\lambda\ne\infty$.
Consider a local chart $x=b=1$. Then the surface $S_\lambda$ in this chart is given by
$$
-\lambda yc+c(c^2+\lambda ya-\lambda y^2-y^2)+y(c^3-\lambda a^3+\lambda yac+yac)-(\lambda+1)(y^2a^3)=0.
$$
Let $\alpha_1\colon U_1\to\mathbb{P}^1\times\mathbb{P}^2$ be the blow up of the point $P_{\{y\},\{a\},\{c\}}$.
A chart of the blow up $\alpha_1$ is given by the coordinate change $a_1=a$, $y_1=\frac{y}{a}$, and $c_1=\frac{c}{a}$.
In this chart, the surface~$D^1_\lambda$ is given by the equation
$$
-\lambda y_1c_1+\lambda y_1a_1(c_1-a_1)+a_1c_1(c_1^2-\lambda y_1^2-y_1^2)+y_1^2 a_1^2 (\lambda+1)(c_1-a_1)+a_1^2c_1^3y_1=0.
$$
where $a_1=0$ defines the exceptional surface $\mathbf{E}_1$.
Then $\mathbf{E}_1$ contains two base curves of the pencil $\mathcal{S}^1$.
One of them given by  $a_1=y_1=0$, and another one is given by $a_1=c_1=0$.
Denote the former curve by $C_5^1$, and denote the latter curve by $C_6^1$.

If $\lambda\ne 0$, then the point $(a_1,y_1,c_1)=(0,0,0)$ is the only singular point of the surface~$D^1_\lambda$ that is contained in $\mathbf{E}_1$.
Let $\alpha_2\colon U_2\to U_1$ be the blow up of this point.
A chart of the blow up $\alpha_2$ is given by the coordinate change $a_2=a_1$, $y_2=\frac{y_1}{a_1}$, $c_2=\frac{c_1}{a_1}$.
Let $\hat{y}_2=y_2$, $\hat{a}_2=a_2$, and $\hat{c}_2=a_2+c_2$.
Then $D_\lambda^2$ is given by
\begin{multline*}
-\lambda\hat{y}_2\hat{c}_2+\lambda\hat{y}_2\hat{a}_2\big(\hat{c}_2-\hat{a}_2\big)+\\
+\hat{a}_2^2\Big(\hat{c}_2^3-\hat{a}_2^3+3\hat{a}_2^2\hat{c}_2-3\hat{c}_2^2\hat{a}_2-\lambda\hat{y}_2^2\hat{c}_2-\hat{y}_2^2\hat{c}_2\Big)+\\
+(\lambda+1)\hat{y}_2^2\hat{a}_2^3\big(\hat{c}_2-\hat{a}_2\big)+\hat{a}_2^4 \hat{y}_2\big(\hat{c}_2-\hat{a}_2\big)^3=0,
\end{multline*}
and $\mathbf{E}_2$ is given by $\hat{a}_2=0$.
Then $\mathbf{E}_2$ contains two base curves of the pencil $\mathcal{S}^2$.
One of them given by  $\hat{a}_2=\hat{y}_2=0$, and another one is given by $\hat{a}_2=\hat{c}_2=0$.
Denote the former curve by $C_7^2$, and denote the latter curve by $C_8^2$.

If $\lambda\ne 0$, then $(\hat{a}_2,\hat{y}_2,\hat{c}_2)=(0,0,0)$ is the only singular point of the surface~$D^2_\lambda$ that is contained in $\mathbf{E}_2$.
Let $\alpha_3\colon U_3\to U_2$ be the blow up of this point.
A chart of this blow up is given by the coordinate change
$\hat{a}_3=\hat{a}_2$, $\hat{y}_3=\frac{\hat{y}_2}{\hat{a}_2}$, $\hat{c}_3=\frac{\hat{c}_2}{\hat{a}_2}$
Let $\bar{y}_3=\hat{y}_3$, $\bar{a}_3=\hat{a}_3$, $\bar{c}_3=\hat{a}_3+\hat{c}_3$.
Denote by $\mathbf{E}_2$ the exceptional surface of the blow up $\alpha_2$.
Then  $D_\lambda^3$ is given~by
\begin{multline*}
-\lambda \bar{y}_3\bar{c}_3+\lambda\bar{a}_3\bar{y}_3\bar{c}_3-\bar{a}_3^3-\lambda \bar{y}_3\bar{a}_3^2+3\bar{a}_3^3\big(\bar{c}_3-\bar{a}_3\big)-3\bar{a}_3^3\big(\bar{c}_3-\bar{a}_3\big)^2+\\
+\bar{a}_3^3\Big(\bar{c}_3^3-\bar{a}_3^3+3\bar{a}_3^2\bar{c}_3-3\bar{a}_3\bar{c}_3^2-\lambda \bar{y}_3^2\bar{c}_3-\bar{y}_3^2\bar{c}_3\Big)+\bar{y}_3\bar{a}_3^4\Big(\lambda \bar{y}_3\bar{c}_3+\bar{y}_3\bar{c}_3-\bar{a}_3^2-\lambda \bar{y}_3\bar{a}_3-\bar{y}_3\bar{a}_3\Big)+\\
+3\bar{y}_3\bar{a}_3^6(\bar{c}_3-\bar{a}_3)-3\bar{y}_3\bar{a}_3^6(\bar{c}_3-\bar{a}_3)^2+\bar{y}_3\bar{a}_3^6\big(\bar{c}_3-\bar{a}_3\big)^3=0,
\end{multline*}
and $\mathbf{E}_3$ is given by $\bar{a}_3=0$.
Then $\mathbf{E}_3$ contains two base curves of the pencil $\mathcal{S}^3$.
One of them given by  $\bar{a}_3=\bar{y}_3=0$, and another one is given by $\bar{a}_3=\bar{c}_3=0$.
Denote the former curve by $C_9^3$, and denote the latter curve by $C_{10}^3$.

There exists a commutative diagram
$$
\xymatrix{
&U_2\ar@{->}[ld]_{\alpha_2}&&U_3\ar@{->}[ll]_{\alpha_3}\\
U_1\ar@{->}[drr]_{\alpha_1}&&&&U\ar@{->}[dll]^\alpha\ar@{->}[lu]_{\alpha_4}\\
&&\mathbb{P}^3&& }
$$
where $\alpha_4$ be the blow up of the point $(\bar{a}_3,\bar{y}_3,\bar{c}_3)=(0,0,0)$.
Note that $\widehat{E}_4$ contains two base curves of the pencil $\widehat{\mathcal{S}}$.
Denote them by $\widehat{C}_{11}$ and $\widehat{C}_{12}$.
Then $\widehat{C}_1$, $\widehat{C}_2$, $\widehat{C}_3$, $\widehat{C}_4$, $\widehat{C}_5$, $\widehat{C}_6$, $\widehat{C}_7$, $\widehat{C}_8$,
$\widehat{C}_9$, $\widehat{C}_{10}$, $\widehat{C}_{11}$, and $\widehat{C}_{12}$ are all base curves of the pencil $\widehat{\mathcal{S}}$,
because
$$
\widehat{S}_{\lambda_1}\cdot\widehat{S}_{\lambda_2}=2\widehat{C}_1+\widehat{C}_2+3\widehat{C}_3+9\widehat{C}_4+\widehat{C}_5+7\widehat{C}_6+2\widehat{C}_7+5\widehat{C}_8+3\widehat{C}_9+3\widehat{C}_{10}+\widehat{C}_{11}+\widehat{C}_{12}
$$
for two general $\lambda_1$ and $\lambda_2$ in $\mathbb{C}$.
This also shows that $\mathbf{m}_{5}=1$, $\mathbf{m}_{6}=7$, $\mathbf{m}_{7}=2$, $\mathbf{m}_{8}=5$, $\mathbf{m}_{9}=3$, $\mathbf{m}_{10}=3$, $\mathbf{m}_{11}=1$, and $\mathbf{m}_{12}=1$.

Let us compute the term $\mathbf{A}_{P_{\{y\},\{a\},\{c\}}}$ in \eqref{equation:D-A-B}.
We have
$\widehat{D}_{0}=\widehat{S}_{0}+\widehat{E}_1+2\widehat{E}_2+3\widehat{E}_3+\widehat{E}_4$.
This gives $\mathbf{A}_{P_{\{y\},\{a\},\{c\}}}^0=4$.
Note also that $\mathbf{M}_{5}^0=1$, $\mathbf{M}_{6}^0=2$, $\mathbf{M}_{7}^0=2$, $\mathbf{M}_{8}^0=3$, $\mathbf{M}_{9}^0=3$, $\mathbf{M}_{10}^0=3$, $\mathbf{M}_{11}^0=1$, $\mathbf{M}_{12}^0=1$.
Thus, it follows from \eqref{equation:D-A-B} that
$$
\mathbf{D}_{P}^0=4+\sum_{i=1}^{12}\mathbf{C}_i^0,
$$
where $\mathbf{C}_i^0$ is the number defined in \eqref{equation:C}. By Lemma~\ref{lemma:main-2}, we have
$$
\mathbf{C}_i^0=\left\{\aligned
&0\ \text{if}\ \mathbf{M}_i^0=1,\\
&\textbf{m}_i-1\ \text{if}\ \mathbf{M}_i^0\geqslant 2.\\
\endaligned
\right.
$$
Therefore, we have $\mathbf{D}_{P}^0=19$. Now using \eqref{equation:r-2-n-1-D}, we deduce that $[\mathsf{f}^{-1}(0)]=22$.
Keeping in mind that $h^{1,2}(X)=22$ and $[\mathsf{f}^{-1}(-1)]=2$,
we see that \eqref{equation:main-1} in Main Theorem holds.

\subsection{Family \textnumero $2.2$}
\label{section:r-2-n-2}

In this case, the threefold $X$ is a double cover of $\mathbb{P}^1\times\mathbb{P}^2$ ramified in a surface of bidegree $(2,4)$.
This implies that $h^{1,2}(X)=20$.
As in the previous case, the divisor $-K_X$ is not very ample,
and there are no toric Landau--Ginzburg models with reflexive Newton polytope in this case.
However, we can find a Laurent polynomial $\mathsf{p}$ with non-reflexive Newton polytope that gives the commutative diagram \eqref{equation:CCGK-compactification}.
For instance, we can choose $\mathsf{p}$ to be the Laurent polynomial
$$
\frac{(a+b+c+1)^2}{a}+\frac{(a+b+c+1)^4}{bc}.
$$

Let $\gamma\colon\mathbb{C}^3\dasharrow\mathbb{C}^\ast\times\mathbb{C}^\ast\times\mathbb{C}^\ast$
be a birational transformation
that is given by the change of coordinates
$$
\left\{\aligned
&a=xy,\\
&b=yz,\\
&c=z-xy-yz-1.\\
\endaligned
\right.
$$
By \cite[Proposition 16]{P16}, we can expand \eqref{equation:CCGK-compactification} to the commutative diagram
\begin{equation}
\label{equation:diagram-2-2}
\xymatrix{
&&\mathbb{P}^3\ar@{-->}[dd]^{\phi}&\mathbb{C}^3\ar@{_{(}->}[l]\ar@{->}[dd]^{\mathsf{q}}\ar@{-->}[rr]^{\gamma}&&\mathbb{C}^\ast\times\mathbb{C}^\ast\times\mathbb{C}^\ast\ar@{^{(}->}[r]\ar@{->}[dd]_{\mathsf{p}}&Y\ar@{->}[dd]^{\mathsf{w}}\ar@{^{(}->}[r]&Z\ar@{->}[dd]^{\mathsf{f}}\\
V\ar@{->}[drr]_{\mathsf{g}}\ar@{->}[urr]^{\pi}&&&&&&&\\
&&\mathbb{P}^1&\mathbb{C}^1\ar@{_{(}->}[l]\ar@{=}[rr]&&\mathbb{C}^1\ar@{=}[r]&\mathbb{C}^1\ar@{^{(}->}[r]&\mathbb{P}^1}
\end{equation}
where $\mathsf{q}$ is a surjective morphism, $\pi$ is a birational morphism, the threefold $V$ is smooth,
the map $\mathsf{g}$ is a surjective morphism such that $-K_{V}\sim\mathsf{g}^{-1}(\infty)$,
and $\phi$ is a rational map that is given by a pencil of quartic surfaces $\mathcal{S}$ given by
\begin{equation}
\label{equation:2-2-pencil}
xz^3=(zt-xy-yz-t^2)(\lambda xy-z^2),
\end{equation}
where $\lambda\in\mathbb{C}\cup\{\infty\}$.
Note that a general fiber of the morphism $\mathsf{g}$ is a smooth $K3$ surface.
Thus, a general surface in the pencil \eqref{equation:2-1-pencil} has at most du Val singularities.

The diagram \eqref{equation:diagram-2-2} is very similar to the diagram \eqref{equation:diagram} presented in Subsection~\ref{subsection:scheme-step-4}.
The only difference is that the pencil $\mathcal{S}$ is now given by the equation~\eqref{equation:2-1-pencil}.
Because of this, we will follow the scheme described in~Section~\ref{section:scheme},
and we will use the assumptions and the notation introduced in this section.

As in Section~\ref{section:scheme}, we denote by $S_\lambda$ the surface in $\mathcal{S}$ given by \eqref{equation:2-2-pencil}.
Then
$$
S_\infty=H_{\{x\}}+H_{\{y\}}+\mathbf{Q},
$$
where $\mathbf{Q}$  is the quadric in $\mathbb{P}^3$ given by  $zt-xy-yz-t^2=0$.
If $\lambda\ne\infty$, then
\begin{equation}
\label{equation:2-2}
\begin{split}
H_{\{x\}}\cdot S_\lambda&=2L_{\{x\},\{z\}}+\mathcal{C}_1,\\
H_{\{y\}}\cdot S_\lambda&=2L_{\{y\},\{z\}}+\mathcal{C}_2,\\
\mathbf{Q}\cdot S_\lambda&=\mathcal{C}_1+3\mathcal{C}_3,
\end{split}
\end{equation}
where $\mathcal{C}_1$, $\mathcal{C}_2$, and $\mathcal{C}_3$ are conics in $\mathbb{P}^3$ that are given by
the equations $x=zt-yz-t^2=0$, $y=xz-zt+t^2=0$, and $z=xy-t^2=0$, respectively.
It follows from \eqref{equation:2-2} that
the base locus of the pencil $\mathcal{S}$ consists of the curves
$L_{\{x\},\{z\}}$, $L_{\{y\},\{z\}}$,  $\mathcal{C}_1$, $\mathcal{C}_2$, and $\mathcal{C}_3$.

We already know that the surface $S_\infty$ is reducible.
The surface $S_0$ is also reducible. In fact, it is not reduced. Indeed, we have
$S_{0}=2H_{\{z\}}+\mathsf{Q}$,
where $\mathsf{Q}$ is a quadric surface that is given by $xz-yz+zt-t^2-xy=0$.
On the other hand, if $\lambda\ne\infty$ and $\lambda\ne 0$, then the surface $S_\lambda$ has isolated singularities,
which implies that it is irreducible.

If $\lambda\ne\infty$ and $\lambda\ne 0$, then the singular points of the surface $S_\lambda$ contained in the base locus of the pencil $\mathcal{S}$ can be described as follows:
\begin{itemize}\setlength{\itemindent}{3cm}
\item[$P_{\{x\},\{y\},\{z\}}$:] type $\mathbb{A}_1$ with quadratic term $\lambda xy-z^2$;

\item[$P_{\{x\},\{z\},\{t\}}$:] type $\mathbb{A}_9$ (see the proof of Lemma~\ref{lemma:2-2-P-x-z-t});

\item[$P_{\{y\},\{z\},\{t\}}$:] type $\mathbb{E}_6$ (see the proof of Lemma~\ref{lemma:2-2-P-y-z-t}).
\end{itemize}

If $\lambda\ne 0$ and $\lambda\ne\infty$, then the intersection
matrix of the curves $L_{\{x\},\{z\}}$, $L_{\{y\},\{z\}}$,  $\mathcal{C}_1$, $\mathcal{C}_2$, and $\mathcal{C}_3$
on the surface $S_{\lambda}$ has the same rank as the intersection
matrix of the curves $L_{\{x\},\{z\}}$, $L_{\{y\},\{z\}}$, and $H_{\lambda}$, because
$$
H_\lambda\sim 2L_{\{x\},\{z\}}+\mathcal{C}_1\sim 2L_{\{y\},\{z\}}+\mathcal{C}_2\sim_{\mathbb{Q}} \frac{1}{2}\mathcal{C}_1+\frac{3}{2}\mathcal{C}_3.
$$
on the surface $S_\lambda$. This follows from \eqref{equation:2-2}. On the other hand, we have

\begin{lemma}
\label{lemma:r2-n2-intersection}
Suppose that $\lambda\ne 0$ and $\lambda\ne\infty$.
Then the intersection matrix of the curves $L_{\{x\},\{z\}}$, $L_{\{y\},\{z\}}$, and $H_{\lambda}$
on the surface $S_\lambda$ is given by
\begin{center}
\renewcommand\arraystretch{1.42}
\begin{tabular}{|c||c|c|c|}
\hline
 $\bullet$  & $L_{\{x\},\{z\}}$ & $L_{\{y\},\{z\}}$ & $H_{\lambda}$ \\
\hline\hline
$L_{\{x\},\{z\}}$ & $\frac{1}{10}$ & $\frac{1}{2}$ & $1$ \\
\hline
$L_{\{y\},\{z\}}$ & $\frac{1}{2}$ & $-\frac{1}{6}$ & $1$ \\
\hline
 $H_{\lambda}$  & $1$ & $1$ & $4$ \\
\hline
\end{tabular}
\end{center}

\end{lemma}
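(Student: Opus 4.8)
The plan is to obtain the six entries by combining two elementary observations with the intersection theory of curves on surfaces with du Val singularities from Appendix~\ref{section:intersection}. The two entries involving $H_\lambda$ are immediate. Since $H_\lambda$ is a general hyperplane section of the quartic surface $S_\lambda$, we have $H_\lambda^2=4$; and a general hyperplane meets each of the lines $L_{\{x\},\{z\}}$ and $L_{\{y\},\{z\}}$ transversally in a single point lying away from $\mathrm{Sing}(S_\lambda)$, so $H_\lambda\cdot L_{\{x\},\{z\}}=H_\lambda\cdot L_{\{y\},\{z\}}=1$. These require no correction terms.

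For the remaining entries I would first record where the two lines meet the singular locus. From the classification of singular points stated just before the lemma, $L_{\{x\},\{z\}}=\{x=z=0\}$ passes through $P_{\{x\},\{y\},\{z\}}$ (type $\mathbb{A}_1$) and through $P_{\{x\},\{z\},\{t\}}$ (type $\mathbb{A}_9$), while $L_{\{y\},\{z\}}=\{y=z=0\}$ passes through $P_{\{x\},\{y\},\{z\}}$ (type $\mathbb{A}_1$) and through $P_{\{y\},\{z\},\{t\}}$ (type $\mathbb{E}_6$); in $\mathbb{P}^3$ these two lines meet only at $P_{\{x\},\{y\},\{z\}}$. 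Passing to the minimal resolution $\nu\colon\widetilde{S}_\lambda\to S_\lambda$, which is a smooth $K3$ surface, the proper transforms of both lines are smooth rational curves, hence have self-intersection $-2$ on $\widetilde{S}_\lambda$.

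Next I would apply Proposition~\ref{proposition:du-Val-self-intersection}: the self-intersection of a line on $S_\lambda$ equals $-2$ plus, for each du Val point it passes through, a positive rational correction equal to the relevant diagonal entry of the inverse Cartan matrix of that singularity, the precise entry being dictated by which exceptional curve of $\nu$ the proper transform meets. For $L_{\{x\},\{z\}}$ the $\mathbb{A}_1$ point contributes $\tfrac12$ and the $\mathbb{A}_9$ point contributes $\tfrac85$ (the strict transform meeting an appropriate interior $(-2)$-curve of the $\mathbb{A}_9$ chain), giving $L_{\{x\},\{z\}}^2=-2+\tfrac12+\tfrac85=\tfrac1{10}$; for $L_{\{y\},\{z\}}$ the $\mathbb{A}_1$ point contributes $\tfrac12$ and the $\mathbb{E}_6$ point contributes $\tfrac43$, giving $L_{\{y\},\{z\}}^2=-2+\tfrac12+\tfrac43=-\tfrac16$. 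These two correction values are the only nonformal input, and pinning them down is the main obstacle: they must be extracted from the explicit local resolutions of the $\mathbb{A}_9$ and $\mathbb{E}_6$ points built in the proofs of Lemmas~\ref{lemma:2-2-P-x-z-t} and~\ref{lemma:2-2-P-y-z-t}, since one must determine exactly which component of each exceptional configuration the strict transform of the line meets.

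Finally, for the off-diagonal term I would use Proposition~\ref{proposition:du-Val-intersection}. Because $L_{\{x\},\{z\}}$ and $L_{\{y\},\{z\}}$ meet only at the $\mathbb{A}_1$ point $P_{\{x\},\{y\},\{z\}}$ and their strict transforms separate after the single blow up resolving $\mathbb{A}_1$ (hence are disjoint on $\widetilde{S}_\lambda$), the whole intersection number is the local contribution at that point, namely the entry $\tfrac12$ of the inverse $\mathbb{A}_1$ Cartan matrix, so $L_{\{x\},\{z\}}\cdot L_{\{y\},\{z\}}=\tfrac12$. Assembling these six numbers yields the asserted matrix.
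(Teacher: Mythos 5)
Your proposal reaches all six entries correctly and agrees with the paper on four of them ($H_\lambda^2$, the two intersections with $H_\lambda$, $L_{\{y\},\{z\}}^2$ via $-2+\tfrac12+\tfrac43$, and the off-diagonal $\tfrac12$ at the $\mathbb{A}_1$ point), but it diverges on the one entry you flag as the main obstacle, and the divergence is instructive. The paper never computes the $\mathbb{A}_9$ local contribution to $L_{\{x\},\{z\}}^2$ at all: it uses the hyperplane-section decompositions in \eqref{equation:2-2} to derive the $\mathbb{Q}$-linear equivalence $L_{\{x\},\{z\}}\sim_{\mathbb{Q}}\tfrac{2}{5}H_\lambda-\tfrac{3}{5}L_{\{y\},\{z\}}$ on $S_\lambda$ (combining $H_\lambda\sim L_{\{x\},\{z\}}+L_{\{y\},\{z\}}+\mathcal{C}_3$, $\mathbf{Q}\cdot S_\lambda=\mathcal{C}_1+3\mathcal{C}_3$, and $H_{\{x\}}\cdot S_\lambda=2L_{\{x\},\{z\}}+\mathcal{C}_1$), and then reads off $L_{\{x\},\{z\}}^2=\tfrac{2}{5}\cdot 1-\tfrac{3}{5}\cdot\tfrac12=\tfrac{1}{10}$ from the two entries already known. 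This is precisely engineered to sidestep the step you identify as the sticking point, namely determining which curve in the $\mathbb{A}_9$ chain the strict transform of $L_{\{x\},\{z\}}$ meets (your claimed contribution $\tfrac{8}{5}$ forces $k\in\{2,8\}$ in the formula $\tfrac{k(10-k)}{10}$ of Proposition~\ref{proposition:du-Val-self-intersection}, and verifying this does require chasing the four blow-ups in the proof of Lemma~\ref{lemma:2-2-P-x-z-t}). So your route is workable but leaves that determination undone, whereas the paper's route costs nothing extra. One small correction to your framing: the $\mathbb{E}_6$ contribution $\tfrac43$ is \emph{not} part of the obstacle — Lemma~\ref{lemma:E6} gives $C^2=\widetilde{C}^2+\tfrac43$ for every curve smooth at an $\mathbb{E}_6$ point, with no dependence on which exceptional curve is met — so only the $\mathbb{A}_9$ datum genuinely needs the explicit resolution in your approach.
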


\begin{proof}
The equalities $H_\lambda^2=4$ and $H_\lambda\cdot L_{\{x\},\{z\}}=H_\lambda\cdot L_{\{y\},\{z\}}=1$ are obvious.
Note that
$$
H_{\{z\}}\cdot S_\lambda=L_{\{x\},\{z\}}+L_{\{y\},\{z\}}+\mathcal{C}_3.
$$
Thus, on the surface $S_\lambda$, we have
\begin{multline*}
H_\lambda\sim L_{\{x\},\{z\}}+L_{\{y\},\{z\}}+\mathcal{C}_3\sim_{\mathbb{Q}} L_{\{x\},\{z\}}+L_{\{y\},\{z\}}+\frac{1}{3}\Big(2H_\lambda-\mathcal{C}_1\Big)\sim_{\mathbb{Q}}\\
\sim_{\mathbb{Q}} L_{\{x\},\{z\}}+L_{\{y\},\{z\}}+\frac{1}{3}\Big(H_\lambda+2L_{\{x\},\{z\}}\Big)\sim_{\mathbb{Q}}\frac{5}{3}L_{\{x\},\{z\}}+L_{\{y\},\{z\}}+\frac{1}{3}H_\lambda,
\end{multline*}
so that $L_{\{x\},\{z\}}\sim_{\mathbb{Q}}\frac{2}{5}H_\lambda-\frac{3}{5}L_{\{y\},\{z\}}$.
Therefore, to complete the proof of the lemma, it is enough to compute the numbers $L_{\{y\},\{z\}}^2$ and $L_{\{y\},\{z\}}\cdot L_{\{x\},\{z\}}$.

Observe that $P_{\{y\},\{z\},\{t\}}$ and $P_{\{x\},\{y\},\{z\}}$ are the only singular points of the surface $S_\lambda$ contained in the line $L_{\{y\},\{z\}}$.
So, using Proposition~\ref{proposition:du-Val-self-intersection}, we get
\mbox{$L_{\{y\},\{z\}}^2=-2+\frac{4}{3}+\frac{1}{2}=-\frac{1}{6}$.}

Since $L_{\{y\},\{z\}}\cap L_{\{x\},\{z\}}=P_{\{x\},\{y\},\{z\}}$, Proposition~\ref{proposition:du-Val-intersection} gives $L_{\{y\},\{z\}}\cdot L_{\{x\},\{z\}}=\frac{1}{2}$.
\end{proof}

The matrix in Lemma~\ref{lemma:r2-n2-intersection} has rank $2$.
Moreover, it follows from the proofs of Lemmas~\ref{lemma:2-2-P-x-z-t} and \ref{lemma:2-2-P-y-z-t} below that
$\mathrm{rk}\,\mathrm{Pic}(\widetilde{S}_{\Bbbk})=\mathrm{rk}\,\mathrm{Pic}(S_{\Bbbk})+16$.
Thus, we see that \eqref{equation:main-2-simple} holds.
Therefore, by Lemma~\ref{lemma:cokernel}, we see that \eqref{equation:main-2} in Main Theorem also holds.

To prove \eqref{equation:main-1} in Main Theorem, we observe that
$[\mathsf{f}^{-1}(\lambda)]=1$ for every $\lambda\not\in\{0,\infty\}$.
This follows from Lemma~\ref{corollary:irreducible-fibers}.
Therefore, to verify \eqref{equation:main-1} in Main Theorem, we have to show that $[\mathsf{f}^{-1}(0)]=21$.
We will do this in the remaining part of this subsection.

To match the notation introduced in Subsection~\ref{subsection:scheme-step-6}, we let
$C_1=\mathcal{C}_1$, $C_2=\mathcal{C}_2$, $C_3=\mathcal{C}_3$, $C_4=L_{\{x\},\{z\}}$, and $C_5=L_{\{y\},\{z\}}$.
Then \eqref{equation:2-2} gives
$$
S_0\cdot S_\infty=2C_1+C_2+3C_3+2C_4+2C_5,
$$
so that $\mathbf{m}_{1}=2$, $\mathbf{m}_{2}=1$, $\mathbf{m}_{3}=3$, and $\mathbf{m}_{4}=\mathbf{m}_{5}=2$.
Moreover, one has $\mathbf{M}_{1}^{0}=\mathbf{M}_{2}^{0}=1$ and $\mathbf{M}_{3}^{0}=\mathbf{M}_{4}^{0}=\mathbf{M}_{5}^{0}=2$.
Then $\mathbf{C}_1^0=\mathbf{C}_1^0=0$, $\mathbf{C}_3^0=2$, and $\mathbf{C}_4^0=\mathbf{C}_5^0=1$ by Lemma~\ref{lemma:main}.
Thus, using $[S_0]=2$ and \eqref{equation:equation:number-of-irredubicle-components-refined}, we see that
\begin{equation}
\label{equation:2-2-S-0}
[\mathsf{f}^{-1}(0)]=6+\mathbf{D}_{P_{\{x\},\{y\},\{z\}}}^0+\mathbf{D}_{P_{\{x\},\{z\},\{t\}}}^0+\mathbf{D}_{P_{\{y\},\{z\},\{t\}}}^0,
\end{equation}
where $\mathbf{D}_{P_{\{x\},\{y\},\{z\}}}^0$,  $\mathbf{D}_{P_{\{x\},\{z\},\{t\}}}^0$, and $\mathbf{D}_{P_{\{y\},\{z\},\{t\}}}^0$
are defects of the singular points $P_{\{x\},\{y\},\{z\}}$, $P_{\{x\},\{z\},\{t\}}$, and $P_{\{y\},\{z\},\{t\}}$, respectively.
For precise definition of defects, see \eqref{equation:D}.

\begin{lemma}
\label{lemma:2-2-P-x-y-z}
One has $\mathbf{D}_{P_{\{x\},\{y\},\{z\}}}^0=0$.
\end{lemma}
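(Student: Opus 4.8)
The plan is to localize at $P=P_{\{x\},\{y\},\{z\}}=[0:0:0:1]$ and feed the result into the defect formula \eqref{equation:D-A-B}. First I would observe that the quadric $\mathsf{Q}$ does not pass through $P$: in the affine chart $t=1$ it is given by $xz-yz+z-1-xy$, which equals $-1$ at the origin. Since $S_0=2H_{\{z\}}+\mathsf{Q}$ and $P$ is a smooth point of the plane $H_{\{z\}}$, it follows that in a neighbourhood of $P$ the surface $S_0$ is the double plane $z^2=0$ multiplied by a unit, so that $\mathrm{mult}_P(S_0)=2$. Corollary~\ref{corollary:log-pull-back} then immediately gives $\mathbf{A}_{P_{\{x\},\{y\},\{z\}}}^0=0$, so the contribution of the $\alpha$-exceptional divisor over $P$ to the defect vanishes.

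By \eqref{equation:D-A-B} it remains to treat the term $\sum_{\alpha(\widehat{C}_i)=P}\mathbf{C}_i^0$, and for this it suffices to prove that no base curve of the pencil $\widehat{\mathcal{S}}$ lies over $P$ at all. Since $P$ is an $\mathbb{A}_1$ point of the general member $S_\lambda$, near $P$ the morphism $\alpha$ is the single blow up of $P$, with exceptional divisor $\mathbf{E}\cong\mathbb{P}^2$ carrying homogeneous coordinates $[x:y:z]$. The proper transform of $S_\lambda$ meets $\mathbf{E}$ along its projectivized tangent cone, which by the singularity table is the conic $\{\lambda xy-z^2=0\}\subset\mathbf{E}$. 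As $\lambda$ varies these conics form a pencil: two distinct members $\{\lambda_1 xy-z^2=0\}$ and $\{\lambda_2 xy-z^2=0\}$ differ by $(\lambda_1-\lambda_2)xy$, so they meet only where $z=0$ and $xy=0$, namely at the two points $[1:0:0]$ and $[0:1:0]$. Hence two general members of $\widehat{\mathcal{S}}$ share no curve inside $\mathbf{E}$, and $\mathbf{E}$ contains no base curve of $\widehat{\mathcal{S}}$.

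Combining the two steps, the sum in \eqref{equation:D-A-B} is empty and $\mathbf{A}_{P}^0=0$, which yields $\mathbf{D}_{P_{\{x\},\{y\},\{z\}}}^0=0$. The only point that needs care is the claim that a single blow up of $P$ already removes $P$ from the fixed locus of the proper transform of $\mathcal{S}$; this is exactly where the $\mathbb{A}_1$ description of the general member is used. It also clarifies the role of the two points $[1:0:0]$ and $[0:1:0]$ on $\mathbf{E}$: they are precisely the points where the proper transforms of the base lines $L_{\{y\},\{z\}}$ and $L_{\{x\},\{z\}}$ meet $\mathbf{E}$, so their contribution is accounted for by $\mathbf{C}_5^0$ and $\mathbf{C}_4^0$ along those base curves rather than by the defect at $P$.
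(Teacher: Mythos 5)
Your proof is correct and follows essentially the same route as the paper's (much terser) argument: the paper likewise observes that $P_{\{x\},\{y\},\{z\}}$ is a double point of $S_0$, so Corollary~\ref{corollary:log-pull-back} kills $\mathbf{A}_{P}^{0}$, and that the quadratic term $\lambda xy-z^2$ gives a pencil of conics on the exceptional plane with no fixed component, so no base curve of $\widehat{\mathcal{S}}$ lies over $P$ and the sum in \eqref{equation:D-A-B} is empty. Your extra checks — that $\mathsf{Q}$ misses $P$, and that the two base points $[1:0:0]$, $[0:1:0]$ of the conic pencil are where the proper transforms of $L_{\{y\},\{z\}}$ and $L_{\{x\},\{z\}}$ meet the exceptional divisor — are accurate and just make explicit what the paper leaves implicit.
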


\begin{proof}
The required assertion follows from \eqref{equation:D-A-B}, because $P_{\{x\},\{y\},\{z\}}$ is a double point of the surface $S_0$,
and the quadratic term of the surface $S_\lambda$ at this point is $\lambda xy-z^2$.
\end{proof}

\begin{lemma}
\label{lemma:2-2-P-x-z-t}
One has $\mathbf{D}_{P_{\{x\},\{z\},\{t\}}}^0=10$.
\end{lemma}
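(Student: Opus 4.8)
The plan is to compute the defect $\mathbf{D}_{P_{\{x\},\{z\},\{t\}}}^0$ locally, following the recipe of Subsection~\ref{subsection:scheme-step-8}, exactly as in the analysis of family \textnumero $2.1$. Writing $P=P_{\{x\},\{z\},\{t\}}=[0:1:0:0]$ and working in the affine chart $y=1$ with coordinates $(x,z,t)$, the pencil \eqref{equation:2-2-pencil} becomes $xz^3=(zt-x-z-t^2)(\lambda x-z^2)$; for $\lambda=0$ the left-hand side cancels against the $z^2$ term and the local equation of $S_0$ factors as $z^2(xz+zt-x-z-t^2)$, so that $S_0=2H_{\{z\}}+\mathsf{Q}$ near $P$ and $\mathrm{mult}_P(S_0)=3$, while a general $S_\lambda$ has an ordinary double point there. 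Since $\mathrm{mult}_P(S_0)=3\ne 2$, Corollary~\ref{corollary:log-pull-back} does not apply, so I must genuinely carry out the local resolution. By \eqref{equation:D-A-B} the defect equals $\mathbf{A}_P^0+\sum_{\alpha(\widehat{C}_i)=P}\mathbf{C}_i^0$, so the two quantities to pin down are the coefficient term $\mathbf{A}_P^0$ of \eqref{equation:A} and the curve contributions computed by Lemma~\ref{lemma:main-2}.

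First I would construct the birational morphism $\alpha$ over $P$ explicitly as a chain of point blow-ups $\alpha_1,\ldots,\alpha_k$ resolving the $\mathbb{A}_9$ singularity of the general member, introducing the exceptional surfaces $\mathbf{E}_1,\ldots,\mathbf{E}_k$ and their proper transforms $\widehat{E}_i$ as in Subsection~\ref{subsection:scheme-extra-notations}. Writing the local equation of $D_\lambda^i$ in a suitable chart at each stage and peeling off the exceptional divisor verifies that, for general $\lambda$, the singularity type at $P$ is indeed $\mathbb{A}_9$; this is what supplies the summand $9$ in the identity $\mathrm{rk}\,\mathrm{Pic}(\widetilde{S}_{\Bbbk})=\mathrm{rk}\,\mathrm{Pic}(S_{\Bbbk})+16$ used above (together with $\mathbb{E}_6$ and $\mathbb{A}_1$ from the other two fixed points). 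Simultaneously I track the proper transform $\widehat{S}_0$ and read off from the multiplicities of $S_0$ along the successive centres the coefficients $\mathbf{a}_i^0$ in $\widehat{D}_0=\widehat{S}_0+\sum_{i}\mathbf{a}_i^0\widehat{E}_i\sim -K_U$ of \eqref{equation:log-pull-back}; already the first blow-up gives $\mathbf{a}_1^0=\mathrm{mult}_P(S_0)-2=1>0$, reflecting the non-reduced factor $z^2$. Counting the indices with $\mathbf{a}_i^0>0$ yields $\mathbf{A}_P^0$.

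Finally I would identify the base curves $\widehat{C}_i$ of the pencil $\widehat{\mathcal{S}}$ lying in the exceptional surfaces over $P$, compute the intersection multiplicities $\mathbf{m}_i$ from $\widehat{S}_{\lambda_1}\cdot\widehat{S}_{\lambda_2}$ in \eqref{equation:multiplicities} and the multiplicities $\mathbf{M}_i^0=\mathrm{mult}_{\widehat{C}_i}(\widehat{D}_0)$, and then apply Lemma~\ref{lemma:main-2}, which returns $\mathbf{C}_i^0=0$ when $\mathbf{M}_i^0=1$ and $\mathbf{C}_i^0=\mathbf{m}_i-1$ when $\mathbf{M}_i^0\geqslant 2$. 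Summing $\mathbf{A}_P^0$ and these $\mathbf{C}_i^0$ through \eqref{equation:D-A-B} gives $\mathbf{D}_{P}^0=10$. The main obstacle is purely the length and bookkeeping of this local computation: the $\mathbb{A}_9$ point forces a chain of roughly five blow-ups, and because $S_0$ is non-reduced along $H_{\{z\}}$ the multiplicities $\mathbf{M}_i^0$ and the coefficients $\mathbf{a}_i^0$ behave quite differently from the generic $\lambda$ (where the defect vanishes), so the delicate part is keeping the chart computations and the assignment of $\mathbf{m}_i$, $\mathbf{M}_i^0$ to each exceptional base curve correct — exactly the kind of extended local analysis already carried out for family \textnumero $2.1$.
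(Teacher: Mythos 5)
Your setup is exactly the paper's: work in the chart $y=1$, observe that $S_0$ factors there as $z^2(xz+zt-x-z-t^2)$ so that $\mathrm{mult}_P(S_0)=3$ and Corollary~\ref{corollary:log-pull-back} gives no shortcut, and then compute $\mathbf{D}_{P}^0=\mathbf{A}_{P}^0+\sum\mathbf{C}_i^0$ via \eqref{equation:D-A-B} by resolving the point with a chain of point blow-ups and feeding the multiplicities into Lemma~\ref{lemma:main-2}. The identities \eqref{equation:log-pull-back}, \eqref{equation:A} and \eqref{equation:C} are invoked correctly, and your first coefficient $\mathbf{a}_1^0=\mathrm{mult}_P(S_0)-2=1$ agrees with the paper's $D_0^1=S_0^1+\mathbf{E}_1$.

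But the proof stops where the lemma begins: everything after ``I would construct'' is a description of what must be done, and the value $10$ is asserted rather than derived. The entire content of the statement is the outcome of the bookkeeping you defer. Concretely, one must exhibit the four successive point blow-ups $\alpha_1,\dots,\alpha_4$ over $P$, check at each stage whether $\mathbf{E}_i$ enters $\widehat{D}_0$ (it does for $\mathbf{E}_1$, $\mathbf{E}_2$, $\mathbf{E}_3$ but not for $\mathbf{E}_4$, giving $\mathbf{A}_{P}^0=3$), locate the two base curves of $\mathcal{S}^i$ in each exceptional plane, and compute $\mathbf{m}_i$ and $\mathbf{M}_i^0$ for each of them. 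The paper finds that exactly four of these curves contribute, with contributions $1$, $1$, $3$ and $2$; the contribution $3$ comes from a curve with $\mathbf{M}^0=3$ but $\mathbf{m}=4$, a case where the distinction between $\mathbf{m}_a-1$ and $\mathbf{M}_a^0-1$ in Lemma~\ref{lemma:main-2} genuinely matters and is easy to get wrong. The total $3+1+1+3+2=10$ is precisely what has to be produced, and none of these numbers appears in your argument, so the equality $\mathbf{D}_{P}^0=10$ is not established. Two smaller slips: you call the singularity of a general $S_\lambda$ at $P$ an ordinary double point, whereas it is a double point of type $\mathbb{A}_9$ (as you correctly state two sentences later); and the resolution actually used consists of four blow-ups over $P$, not ``roughly five''.
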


\begin{proof}
In the chart $y=1$, the surface $S_\lambda$ is given by the equation
$$
\lambda x(x+z)-\big(xz^2+z^3+\lambda xzt-\lambda xt^2\big)+z^2(xz+zt-t^2)=0,
$$
where $P_{\{x\},\{z\},\{t\}}=(0,0,0)$.
We can rewrite this equation as
$$
\lambda\hat{x}\hat{z}+\Big(\lambda\hat{x}\hat{t}^2-\lambda\hat{x}\hat{z}\hat{t}+\lambda\hat{x}^2\hat{t}+2\hat{x}\hat{z}^2-\hat{x}^2\hat{z}-\hat{z}^3\Big)+\big(\hat{x}-\hat{z}\big)^2\Big(\hat{x}\hat{z}+\hat{z}\hat{t}-\hat{x}^2-\hat{x}\hat{t}-\hat{t}^2\Big)=0,
$$
where $\hat{x}=x$, $\hat{z}=x+z$, and $\hat{t}=t$.

Let $\alpha_1\colon U_1\to\mathbb{P}^3$ be the blow up of the point  $P_{\{x\},\{z\},\{t\}}$.
A chart of the blow up $\alpha_1$ is given by the coordinate change
$\hat{x}_1=\frac{\hat{x}}{\hat{t}}$, $\hat{z}_1=\frac{\hat{z}}{\hat{t}}$, $\hat{t}_1=\hat{t}$.
Let $\bar{x}_1=\hat{x}_1$, $\bar{z}_1=\hat{z}_1+\hat{t}_1$, and $\bar{t}_1=\hat{t}_1$.
Then $S^1_\lambda$ is given by the equation
\begin{multline*}
\lambda\bar{x}_1\bar{z}_1+\lambda\bar{x}_1\bar{t}_1(\bar{x}_1-\bar{z}_1+\bar{t}_1)-\bar{z}_1\bar{t}_1(\bar{x}_1-\bar{z}_1+\bar{t}_1)^2-\bar{t}_1^2(\bar{x}_1-\bar{z}_1+\bar{t}_1)^3-\bar{x}_1\bar{t}_1^2(\bar{x}_1-\bar{z}_1+\bar{t}_1)^3=0.
\end{multline*}
for every $\lambda\ne 0$. If $\lambda=0$, this equation defines $D_0^1=S^1_0+\mathbf{E}_1$.
By~\eqref{equation:A} and \eqref{equation:D-A-B}, this contributes~$\textbf{\textcircled{1}}$ to the defect $\mathbf{D}_{P_{\{x\},\{z\},\{t\}}}^0$.
Here and below we circle each contribution for reader's convenience.

Note that $\mathbf{E}_1$ is given by $\bar{t}_1=0$.
This shows that $\mathbf{E}_1$ contains two base curves of the pencil $\mathcal{S}^1$.
One of them is given by $\bar{x}_1=\bar{t}_1=0$, and another one is given by $\bar{z}_1=\bar{t}_1=0$.
We denote the former curve by $C_6^1$, and we denote the latter curve by $C_{7}^1$.
Then $S^1_0+\mathbf{E}_1$ is smooth at general point of the curve $C_6$,
so that this base curve does not give an extra addition to the defect by Lemma~\ref{lemma:main-2} and \eqref{equation:D-A-B}.
On the other hand, we have
$$
\mathrm{mult}_{C_{7}^1}\big(S^1_0+\mathbf{E}_1\big)=\mathbf{M}_7^0=\mathbf{m}_7=\mathrm{mult}_{C_{7}^1}\Big(\big(S^1_0+\mathbf{E}_1\big)\cdot S_\lambda^1\Big)=2,
$$
where $\lambda\ne 0$. By Lemma~\ref{lemma:main-2} and \eqref{equation:D-A-B}, the curve $C_{7}^1$ contributes $\textbf{\textcircled{1}}$ to the defect.

Let $\alpha_2\colon U_2\to U_1$ be the blow up of the point $C_{6}^1\cap C_{7}^1$.
Then $D_0^2=S^2_0+\mathbf{E}_1^2+2\mathbf{E}_2$.
By~\eqref{equation:A} and \eqref{equation:D-A-B}, this contributes $\textbf{\textcircled{1}}$ to $\mathbf{D}_{P_{\{x\},\{z\},\{t\}}}^0$.

A chart of the blow up $\alpha_2$ is given by the coordinate change
$\bar{x}_2=\frac{\bar{x}_1}{\bar{t}_1}$, $\bar{z}_2=\frac{\bar{z}_1}{\bar{t}_1}$, $\bar{t}_2=\bar{t}_1$.
Let $\check{x}_2=\bar{x}_2$, $\check{z}_2=\bar{z}_2+\bar{t}_2$, and $\check{t}_2=\bar{t}_2$.
Then $\mathbf{E}_2$ is given by $\bar{t}_2=0$,
and $D^2_\lambda$ is given by
\begin{multline*}
\lambda \check{x}_2\check{z}_2+\check{t}_2(\lambda \check{x}_2\check{t}_2-\check{z}_2\check{t}_2+\lambda \check{x}_2^2-\lambda \check{x}_2\check{z}_2)-\check{t}_2^2(\check{t}_2+2\check{z}_2)(\check{x}_2-\check{z}_2+\check{t}_2)-\\
-\check{t}_2^2(\check{x}_2^2\check{z}_2-3\check{z}_2\check{t}_2^2+2\check{t}_2^3-2\check{x}_2\check{z}_2^2+\check{z}_2^3-2\check{x}_2\check{z}_2\check{t}_2+2\check{x}_2^2\check{t}_2+5\check{x}_2\check{t}_2^2)-\\
-\check{t}_2^3(\check{x}_2-\check{z}_2+\check{t}_2)(\check{z}_2^2-2\check{x}_2\check{z}_2-2\check{z}_2\check{t}_2+\check{t}_2^2+5\check{x}_2\check{t}_2+\check{x}_2^2)-\\
-3\check{x}_2\check{t}_2^4(\check{x}_2-\check{z}_2+\check{t}_2)^2-\check{x}_2\check{t}_2^4(\check{x}_2-\check{z}_2+\check{t}_2)^3=0.
\end{multline*}

The pencil $\mathcal{S}^2$ has two base curves contained in the surface $\mathbf{E}_2$.
One of them is given by the equation $\bar{x}_2=\bar{t}_2=0$,
and another one is given by the equation $\bar{t}_2+\bar{z}_2=\bar{t}_2=0$.
Denote the former curve by $C_{8}^2$, and denote the latter curve by $C_{9}^2$.
Then
$$
\mathrm{mult}_{C_{8}^2}\big(S^2_0+\mathbf{E}_1^2+2\mathbf{E}_2\big)=\mathbf{M}_8^0=\mathbf{m}_8=\mathrm{mult}_{C_{8}^2}\Big(\big(S^2_0+\mathbf{E}_1^2+2\mathbf{E}_2\big)\cdot S_\lambda^2\Big)=2,
$$
where $\lambda\ne 0$. Thus, this curve contributes $\textbf{\textcircled{1}}$ to the defect by Lemma~\ref{lemma:main-2} and \eqref{equation:D-A-B}.
On the other hand, we have $\mathbf{M}_9^0=3$ and $\mathbf{m}_9=4$,
because $S^2_0+\mathbf{E}_1^2+2\mathbf{E}_2$ is given by
$$
\check{t}_2^4\check{x}_2\Big(\check{x}_2+\check{t}_2-\check{z}_2+1\Big)^3+\check{t}_2^2\Big(\check{t}_2^2+\check{t}_2\check{x}_2-\check{t}_2\check{z}_2+\check{z}_2\Big)\Big(\check{x}_2+\check{t}_2-\check{z}_2+1\Big)^2=0,
$$
and $S^2_\infty$ is given by
$\check{x}_2(\check{t}_2^2+\check{t}_2\check{x}_2-\check{t}_2\check{z}_2+\check{z}_2)=0$.
Thus, by Lemma~\ref{lemma:main-2} and \eqref{equation:D-A-B}, the curve $C_{9}^2$ contributes $\textbf{\textcircled{3}}$ to the defect $\mathbf{D}_{P_{\{x\},\{z\},\{t\}}}^0$.

Let $\alpha_3\colon U_3\to U_2$ be the blow up of the point $C_{8}^2\cap C_{9}^2$.
Then $D_0^3=S^3_0+\mathbf{E}_1^3+2\mathbf{E}_2^3+\mathbf{E}_3$.
By~\eqref{equation:A} and \eqref{equation:D-A-B}, this contributes $\textbf{\textcircled{1}}$ to $\mathbf{D}_{P_{\{x\},\{z\},\{t\}}}^0$.

A chart of the blow up $\alpha_3$ is given by the coordinate change
$\check{x}_3=\frac{\check{x}_2}{\check{t}_2}$, $\check{z}_3=\frac{\check{z}_2}{\bar{t}_2}$, $\check{t}_3=\check{t}_2$.
In this chart, the surface $\mathbf{E}_3$ is given by $\check{t}_3=0$,
and the surface $S^3_\lambda$ is given by
\begin{multline*}
\big(\lambda\check{x}_3-\check{t}_3\big)\big(\check{t}_3+\check{z}_3\big)+\lambda\check{t}_3\check{x}_3^2-\lambda\check{t}_3\check{x}_3\check{z}_3-2\check{t}_3^3-\check{t}_3^2\check{x}_3-\check{t}_3^2\check{z}_3+3\check{t}_3^3\check{z}_3-\check{t}_3^4-5\check{t}_3^3\check{x}_3-\\
-2\check{t}_3^2\check{x}_3\check{z}_3+2\check{t}_3^2\check{z}_3^2+3\check{t}_3^4\check{z}_3-6\check{t}_3^4\check{x}_3-2\check{t}_3^3\check{x}_3^2+2\check{t}_3^3\check{x}_3\check{z}_3+9\check{t}_3^4\check{x}_3\check{z}_3-3\check{t}_3^5\check{x}_3-6\check{t}_3^4\check{x}_3^2-3\check{t}_3^4\check{z}_3^2-\check{t}_3^3\check{x}_3^2\check{z}_3+\\
+2\check{t}_3^3\check{x}_3\check{z}_3^2-\check{t}_3^3\check{z}_3^3+6\check{t}_3^5\check{x}_3\check{z}_3-\check{t}_3^6\check{x}_3-6\check{t}_3^5\check{x}_3^2-\check{t}_3^4\check{x}_3^3+3\check{t}_3^4\check{x}_3^2\check{z}_3-3\check{t}_3^4\check{x}_3\check{z}_3^2+\check{t}_3^4\check{z}_3^3+3\check{t}_3^6\check{x}_3\check{z}_3-3\check{t}_3^6\check{x}_3^2-\\
-3\check{t}_3^5\check{x}_3^3+6\check{t}_3^5\check{x}_3^2\check{z}_3-3\check{t}_3^5\check{x}_3\check{z}_3^2+6\check{t}_3^6\check{x}_3^2\check{z}_3-3\check{t}_3^6\check{x}_3^3-3\check{t}_3^6\check{x}_3\check{z}_3^2+3\check{t}_3^6\check{x}_3^3\check{z}_3-\check{t}_3^6\check{x}_3^4-3\check{t}_3^6\check{x}_3^2\check{z}_3^2+\check{t}_3^6\check{x}_3\check{z}_3^3=0.
\end{multline*}
for $\lambda\ne 0$. If $\lambda=0$, then this equation defines $D_0^3=S^3_0+\mathbf{E}_1^3+2\mathbf{E}_2^3+\mathbf{E}_3$.

The pencil $\mathcal{S}^3$ has two base curves contained in the surface $\mathbf{E}_3$.
One of them is given by the equation $\check{t}_3=\check{z}_3=0$,
and another one is given by the equation $\check{t}_3=\check{x}_3=0$.
Denote the former curve by $C_{10}^3$, and denote the latter curve by $C_{11}^2$.
Then $\mathbf{M}_{10}^0=2$. Similarly, we have $\mathbf{m}_{10}=3$,
because (in general point of the curve $C_{10}^3$) the surface $S^3_0+\mathbf{E}_1^3+2\mathbf{E}_3+\mathbf{E}_3$ is given by
$$
\check{x}_3\check{t}_3^3\big(\check{t}_3\check{x}_3-\check{t}_3\check{z}_3+\check{t}_3+1\big)+\check{t}_3\big(\check{t}_3\check{x}_3-\check{t}_3\check{z}_3+\check{t}_3+\check{z}_3\big)=0,
$$
and $S^3_\infty$ is given by $\check{t}_3\check{x}_3-\check{t}_3\check{z}_3+\check{t}_3+\check{z}_3=0$.
Thus, the curve $C_{10}^3$ contributes $\textbf{\textcircled{2}}$ to the defect by Lemma~\ref{lemma:main-2}~and~\eqref{equation:D-A-B}.
On the other hand, we have $\mathbf{M}_{11}^0=1$.
Thus, by Lemma~\ref{lemma:main-2} and \eqref{equation:D-A-B}, the curve $C_{11}^3$ does not contribute to the defect.

Let  $\alpha_4\colon U_4\to U_3$ be the blow up of the intersection point $C_{10}^3\cap C_{11}^3$.
Then the birational map $\alpha\colon U\to\mathbb{P}^3$ in \eqref{equation:main-diagram}
can be decomposed via the following commutative diagram:
$$
\xymatrix{
&U_2\ar@{->}[ld]_{\alpha_2}&&U_3\ar@{->}[ll]_{\alpha_3}\\
U_1\ar@{->}[dr]_{\alpha_1}&&&&U_4\ar@{->}[lu]_{\alpha_4}\\
&\mathbb{P}^3&&U\ar@{->}[ll]^\alpha\ar@{->}[ru]_{\gamma}& }
$$
where $\gamma$ is a birational morphism that is an isomorphism along the exceptional locus of the composition $\alpha_1\circ\alpha_2\circ\alpha_3\circ\alpha_4$.

The surface $\mathbf{E}_4$ contains one base curve of the pencil $\mathcal{S}^4$.
Denote this curve by $C_{12}^4$.
Simple computations imply that neither  $\mathbf{E}_4$  nor the curve $C_{12}^4$ contribute to the defect.
Thus, summarizing, we see that $\mathbf{D}_{P_{\{x\},\{z\},\{t\}}}^0=10$.
\end{proof}

\begin{lemma}
\label{lemma:2-2-P-y-z-t}
One has $\mathbf{D}_{P_{\{y\},\{z\},\{t\}}}^0=5$.
\end{lemma}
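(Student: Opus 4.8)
The plan is to compute $\mathbf{D}_{P_{\{y\},\{z\},\{t\}}}^0$ by exactly the local procedure used in the proof of Lemma~\ref{lemma:2-2-P-x-z-t}: an explicit tower of blow ups over the point $P=P_{\{y\},\{z\},\{t\}}=[1:0:0:0]$, circling each contribution dictated by \eqref{equation:D-A-B}. First I would pass to the affine chart $x=1$, where $S_\lambda$ is cut out by
$$
z^3-\lambda yzt+\lambda y^2+\lambda y^2z+\lambda yt^2+z^3t-yz^2-yz^3-z^2t^2=0 .
$$
For general $\lambda$ the quadratic part is $\lambda y^2$, a rank one form, in accordance with the $\mathbb{E}_6$ singularity recorded above; in particular $P$ is \emph{not} a good double point, so neither Lemma~\ref{lemma:normal-crossing} nor Corollary~\ref{corollary:log-pull-back} applies. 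When $\lambda=0$ the equation factors as $z^2\big(z+zt-y-yz-t^2\big)=0$, the local form of $S_0=2H_{\{z\}}+\mathsf{Q}$; thus $S_0$ has multiplicity $3$ at $P$, whereas the general member has multiplicity $2$.

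The first blow up $\alpha_1\colon U_1\to\mathbb{P}^3$ of $P$ already yields a positive log coefficient: since $\mathrm{mult}_P(S_0)=3$ and $K_{U_1}=\alpha_1^*K_{\mathbb{P}^3}+2\mathbf{E}_1$, the equivalence $\widehat{D}_0\sim -K_{U_1}$ forces $\mathbf{a}_1^0=1$, so $\mathbf{E}_1$ contributes $\textbf{\textcircled{1}}$ to $\mathbf{A}_{P}^0$. I would then find the base curves of $\mathcal{S}^1$ lying in $\mathbf{E}_1$ and, for each, compute the intersection number $\mathbf{m}_a$ of two general members from \eqref{equation:multiplicities} together with the multiplicity $\mathbf{M}_a^0$ of the log transform $D_0^1$ along that curve, reading off its contribution from Lemma~\ref{lemma:main-2}. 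Iterating this over the tower $\alpha_2,\alpha_3,\dots$ resolving the $\mathbb{E}_6$ point -- at each stage recording the coefficient of the new exceptional divisor in $D_0^i$ (which contributes $1$ to $\mathbf{A}_P^0$ whenever that coefficient is positive) and the new base curves with their Lemma~\ref{lemma:main-2} contributions -- and adding all circled numbers, I expect to reach $\mathbf{D}_{P_{\{y\},\{z\},\{t\}}}^0=5$ via \eqref{equation:D-A-B}.

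The main obstacle will be the bookkeeping forced by two features absent from the $\mathbb{A}_9$ computation of Lemma~\ref{lemma:2-2-P-x-z-t}. First, because $S_0$ carries the component $H_{\{z\}}$ with multiplicity $2$, I must track how this non-reduced plane, and its tangency behaviour against $\mathsf{Q}$, propagates through the tower; this is what is responsible for the nonzero coefficients $\mathbf{a}_i^0$ and for the elevated multiplicities $\mathbf{M}_a^0$ along several base curves. Second, the resolution graph of $\mathbb{E}_6$ branches at its trivalent node, so the centres do not form a single chain of points on one exceptional curve: I will have to blow up points on more than one exceptional surface and keep careful account of which exceptional divisors and base curves actually lie over $P$ (and hence enter $\mathbf{A}_P^0$ and the sum in \eqref{equation:D-A-B}). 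Once the local charts are laid out exactly as in Lemma~\ref{lemma:2-2-P-x-z-t}, the verification that $\mathbf{D}_{P_{\{y\},\{z\},\{t\}}}^0=5$ reduces to routine multiplicity computations.
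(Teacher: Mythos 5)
Your setup is exactly the paper's: the paper also passes to the chart $x=1$, obtains the same local equation, uses the factorization $S_0=2H_{\{z\}}+\mathsf{Q}$ to see that $\mathrm{mult}_P(S_0)=3$, and deduces that the first blow up over $P$ gives $D_0^5=S_0^5+\mathbf{E}_5$, contributing $\textbf{\textcircled{1}}$. Up to that point your computation is correct. The gap is that everything after the first blow up is only announced, not done, and the entire content of the lemma lives there: in the paper the tower continues with three further point blow ups $\alpha_6,\alpha_7,\alpha_8$, producing $D_0^6=S_0^6+\mathbf{E}_5^6+2\mathbf{E}_6$ and $D_0^7=S_0^7+\mathbf{E}_5^7+2\mathbf{E}_6^7+\mathbf{E}_7$ (so $\mathbf{A}_P^0=3$, since $\mathbf{E}_8$ enters with coefficient $0$), together with five base curves $\widehat{C}_{13},\dots,\widehat{C}_{17}$ lying over $P$, of which exactly two ($C_{14}^6$ and $C_{15}^6$, each with $\mathbf{M}^0=\mathbf{m}=2$) contribute $1$ apiece via Lemma~\ref{lemma:main-2}. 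The total $3+2=5$ in \eqref{equation:D-A-B} cannot be predicted from the first chart alone; you would need to exhibit these charts and multiplicities to justify the number, rather than state that you "expect" to reach $5$.

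One of your anticipated obstacles is also misplaced: the morphism $\alpha$ is only required to make the proper transform of the pencil free of fixed singular points, not to be the minimal resolution of the $\mathbb{E}_6$ point, so the branching of the $\mathbb{E}_6$ dual graph never enters. In the paper each of the four blow ups over $P$ is centred at a single point of the most recently created exceptional surface (the intersection of the two new base curves, when there are two), so the centres do form one chain and no bookkeeping across several exceptional components is needed. Your other concern, tracking the non-reduced component $2H_{\{z\}}$ and its tangency with $\mathsf{Q}$, is the right one and is exactly what produces the positive coefficients $\mathbf{a}_i^0$ and the multiplicities $\mathbf{M}^0=2$ along $C_{14}$ and $C_{15}$.
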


\begin{proof}
Let us use the notation of the proof of Lemma~\ref{lemma:2-2-P-x-z-t}.
In a neighborhood of the preimage of the point  $P_{\{y\},\{z\},\{t\}}$ on the threefold $U_4$,
we can identify the threefold $U_4$ with the chart of $\mathbb{P}^3$ that is given by $x=1$.
In this chart, the surface $S_\lambda^4$ is given by
$$
\lambda y^2+z^3+z^3t-yz^2-\lambda yzt+\lambda y^2z+\lambda yt^2-yz^3-z^2t^2=0,
$$
and $(0,0,0)$ is the preimage of the point  $P_{\{y\},\{z\},\{t\}}$.

Let $\alpha_5\colon U_5\to U_4$ be the blow up of the point  $(0,0,0)$.
Then $D_0^5=S^5_0+\mathbf{E}_5$.
By~\eqref{equation:A} and \eqref{equation:D-A-B}, this contributes~$\textbf{\textcircled{1}}$ to the defect $\mathbf{D}_{P_{\{y\},\{z\},\{t\}}}^0$.

A chart of the blow up $\alpha_5$ is given by the coordinate change $y_5=\frac{y}{t}$, $z_5=\frac{z}{t}$, $t_5=t$.
In~this chart, the surface $D^5_\lambda$ is given by the equation
$$
\lambda y_5(t_5+y_5)-\lambda t_5y_5z_5+\big(\lambda t_5y_5^2z_5-t_5^2z_5^2-t_5y_5z_5^2+t_5z_5^3\big)+t_5^2z_5^3-t_5^2y_5z_5^3=0.
$$
We can rewrite this equation as
\begin{multline*}
\lambda \hat{y}_5\hat{t}_5+\lambda \hat{y}_5\hat{z}_5(\hat{y}_5-\hat{t}_5)-\hat{z}_5(\hat{y}_5-\hat{t}_5)(\lambda \hat{y}_5^2+\hat{z}_5^2-\hat{z}_5\hat{t}_5)+\hat{z}_5^3(\hat{y}_5-\hat{t}_5)^2-\hat{y}_5\hat{z}_5^3(\hat{y}_5-\hat{t}_5)^2=0,
\end{multline*}
where $\hat{y}_5=y_5$, $\hat{z}_5=z_5$, and $\hat{t}_5=y_5+t_5$.
Then $\mathbf{E}_5$ is given by $\hat{y}_5=\hat{t}_5$.

The surface $\mathbf{E}_5$ contains one base curve of the pencil $\mathcal{S}^5$.
Denote it by $C_{13}^5$. Then $C_{13}^5$ is given by $\hat{y_5}=\hat{t}_5=0$.
One has $\mathbf{M}_{13}^0=1$.
By Lemma~\ref{lemma:main-2} and \eqref{equation:D-A-B},
the curve~$C_{13}^5$ does not contribute to the defect of the singular point $P_{\{y\},\{z\},\{t\}}$.

Let $\alpha_6\colon U_6\to U_5$ be the blow up of the point $(\hat{y}_5,\hat{z}_5,\hat{t}_5)=(0,0,0)$.
Then
$$
D_0^6=S^6_0+\mathbf{E}_5^6+2\mathbf{E}_6.
$$
Thus, by~\eqref{equation:A} and \eqref{equation:D-A-B}, this contributes~$\textbf{\textcircled{1}}$ to the defect $\mathbf{D}_{P_{\{y\},\{z\},\{t\}}}^0$.

One (local) chart of the blow up $\alpha_6$ is given by $\hat{y}_6=\frac{\hat{y}_5}{\hat{z}_5}$, $\hat{z}_6=\hat{z}_5$, $\hat{t}_6=\frac{\hat{t}_5}{\hat{z}_5}$.
Thus, if $\lambda\ne 0$, then $S^6_\lambda$ is given by the equation
\begin{multline*}
\lambda \hat{y}_6\hat{t}_6+\hat{z}_6(\hat{y}_6-\hat{t}_6)(\lambda \hat{y}_6-\hat{z}_6)+\hat{z}_6^2\hat{t}_6(\hat{y}_6-\hat{t}_6)-\hat{z}_6^2(\hat{y}_6-\hat{t}_6)(\lambda \hat{y}_6^2-\hat{y}_6\hat{z}_6+\hat{z}_6\hat{t}_6)-\hat{y}_6\hat{z}_6^4(\hat{y}_6-\hat{t}_6)^2=0.
\end{multline*}
The surface $\mathbf{E}_6$ is given by $\hat{z}_6=0$.
It contains two base curves of the pencil $\mathcal{S}^6$.
One of them is given by $\hat{y}_6=\hat{z}_6=0$,
and another is given by $\hat{t}_6=\hat{z}_6=0$.
Denote the former one by $C_{14}^6$, and denote the latter one by $C_{15}^6$.
If $\lambda\ne 0$, then
$$
\mathrm{mult}_{C_{14}^6}\big(S^6_0+\mathbf{E}_5^6+2\mathbf{E}_6\big)=\mathbf{M}_{14}^0=\mathbf{m}_{14}=\mathrm{mult}_{C_{14}^6}\Big(\big(S^6_0+\mathbf{E}_5^6+2\mathbf{E}_6\big)\cdot S_\lambda^6\Big)=2.
$$
Thus, the curve $C_{14}^6$ contributes $\textbf{\textcircled{1}}$ to the defect by Lemma~\ref{lemma:main-2} and \eqref{equation:D-A-B}.
Similarly, we see that the curve $C_{15}^6$ contributes $\textbf{\textcircled{1}}$ to the defect of the singular point $P_{\{y\},\{z\},\{t\}}$.

Let $\alpha_7\colon U_7\to U_6$ be the blow up of the point $C_{14}^6\cap C_{15}^6$.
Then $D_0^7=S^7_0+\mathbf{E}_5^7+2\mathbf{E}_6^7+\mathbf{E}_7$.
By~\eqref{equation:A} and \eqref{equation:D-A-B}, this contributes~$\textbf{\textcircled{1}}$ to the defect $\mathbf{D}_{P_{\{y\},\{z\},\{t\}}}^0$.

One (local) chart of the blow up $\alpha_7$ is given by
$\hat{y}_7=\frac{\hat{y}_6}{\hat{z}_6}$, $\hat{t}_7=\frac{\hat{t}_6}{\hat{z}_6}$, $\hat{z}_7=\hat{z}_6$
If $\lambda\ne 0$, then the surface $S^7_\lambda$ is given by the equation
\begin{multline*}
\hat{z}_7\hat{t}_7-\hat{y}_7\hat{z}_7+\lambda\hat{y}_7\hat{t}_7+\lambda \hat{y}_7\hat{z}_7(\hat{y}_7-\hat{t}_7)+\hat{z}_7^2\hat{t}_7 (\hat{y}_7-\hat{t}_7)+\\
+\hat{z}_7^3(\hat{y}_7-\hat{t}_7)^2-\lambda \hat{y}_7^2\hat{z}_7^3(\hat{y}_7-\hat{t}_7)-\hat{y}_7\hat{z}_7^5(\hat{y}_7-\hat{t}_7)^2=0.
\end{multline*}
The surface $\mathbf{E}_7$ is given by $\hat{z}_7=0$.
It contains two base curves of the pencil $\mathcal{S}^7$.
One of them is given by $\hat{y}_7=\hat{z}_7=0$,
and another is given by $\hat{t}_7=\hat{z}_7=0$.
Denote the former one by $C_{16}^7$, and denote the latter one by $C_{17}^7$.
Then $\mathbf{M}_{16}^0=\mathbf{M}_{17}^0=1$,
so that $C_{16}^7$ and $C_{17}^7$ do not contribute anything to $\mathbf{D}_{P_{\{y\},\{z\},\{t\}}}^0$
by Lemma~\ref{lemma:main-2} and \eqref{equation:D-A-B}.

Let  $\alpha_8\colon U_8\to U_8$ be the blow up of the intersection point $C_{16}^7\cap C_{17}^7$.
Then the birational map $\alpha\colon U\to\mathbb{P}^3$ in \eqref{equation:main-diagram}
can be decomposed via the following commutative diagram:
$$
\xymatrix{
&U_3\ar@{->}[dl]_{\alpha_3}&&U_4\ar@{->}[ll]_{\alpha_4}&&U_5\ar@{->}[ll]_{\alpha_5}&&U_6\ar@{->}[ll]_{\alpha_6}&&\\
U_2\ar@{->}[rd]_{\alpha_2}&&&&&&&&U_7\ar@{->}[lu]_{\alpha_7}\\
&U_1\ar@{->}[rr]_{\alpha_1}&&\mathbb{P}^3&&U\ar@{->}[ll]^\alpha\ar@{->}[rr]_{\delta}&&U_8\ar@{->}[ru]_{\alpha_8}}
$$
where $\delta$ is a birational morphism that is an isomorphism along the exceptional locus of the composition $\alpha_5\circ\alpha_6\circ\alpha_7\circ\alpha_8$.

Arguing as above, we see that $\mathbf{E}_8$
does not contribute anything to the computation of defect.
Moreover, the surface $\mathbf{E}_8$ does not contain
base curves of the pencil $\mathcal{S}^8$.
Thus, summarizing, we see that $\mathbf{D}_{P_{\{x\},\{z\},\{t\}}}^0=5$.
\end{proof}

Using \eqref{equation:2-2-S-0} and Lemmas~\ref{lemma:2-2-P-x-y-z}, \ref{lemma:2-2-P-x-z-t}, and \ref{lemma:2-2-P-y-z-t},
we conclude that $[\mathsf{f}^{-1}(0)]=21$, so that \eqref{equation:main-1} in Main Theorem holds in this case.

\subsection{Family \textnumero $2.3$}
\label{section:r-2-n-3}

In this case, the threefold $X$ can be obtained from a smooth quartic hypersurface in $\mathbb{P}(1,1,1,1,2)$ by blowing up a smooth elliptic curve.
In particular, we have $h^{1,2}(X)=11$.
Let $\mathsf{p}$ be the Laurent polynomial
$$
\frac{(a+b+1)^4(c+1)}{abc}+c+1.
$$
Then $\mathsf{p}$ gives the commutative diagram \eqref{equation:CCGK-compactification} by \cite[Proposition 16]{P16}.

Let $\gamma\colon\mathbb{C}^3\dasharrow\mathbb{C}^\ast\times\mathbb{C}^\ast\times\mathbb{C}^\ast$
be a birational transformation that is given by the change of coordinates
$$
\left\{\aligned
&a=-xz,\\
&b=x+xz-1,\\
&c=-\frac{y}{z}-1.\\
\endaligned
\right.
$$
Like in Subsection~\ref{section:r-2-n-2}, we can use $\gamma$ to expand
\eqref{equation:CCGK-compactification} to the commutative diagram \eqref{equation:diagram-2-2}.
The only difference is that now the pencil $\mathcal{S}$ is given by the equation
\begin{equation}
\label{equation:2-3-pencil}
x^3y+(\lambda z+y)(y+z)(xz+xt-t^2)=0,
\end{equation}
where $\lambda\in\mathbb{C}\cup\{\infty\}$.
As in Subsection~\ref{section:r-2-n-2}, we will follow the scheme described in~Section~\ref{section:scheme},
and we will use assumptions and notation introduced in this section.
But now $S_\lambda$ denotes the quartic surface in $\mathbb{P}^3$ that is given by \eqref{equation:2-3-pencil}.

Let $\mathbf{Q}$ be the quadric given by  $xz+xt-t^2=0$. Then $S_\infty=H_{\{z\}}+H_{\{y,z\}}+\mathbf{Q}$.
Similarly, let $\mathbf{S}$ be the cubic surface in $\mathbb{P}^3$ that is given by the equation
$$
x^3+xyz+xyt-yt^2+xz^2+xzt-zt^2=0.
$$
Then $S_{0}=H_{\{y\}}+\mathbf{S}$.
Thus, we see that both $S_\infty$ and $S_0$ are reducible.
In fact, these are the only reducible surfaces in $\mathcal{S}$.
Indeed, if $\lambda\ne\infty$, $\lambda\ne 0$, and $\lambda\ne 1$, then $S_\lambda$ has isolated singularities,
which implies that it is irreducible.
Moreover, the surface $S_1$ is also irreducible, but it is singular along the line $L_{\{x\},\{y,z\}}$.

If $\lambda\ne\infty$, then
\begin{equation}
\label{equation:2-3}
\begin{split}
H_{\{z\}}\cdot S_\lambda&=L_{\{y\},\{z\}}+\mathcal{C}_1,\\
H_{\{y,z\}}\cdot S_\lambda&=L_{\{y\},\{z\}}+3L_{\{x\},\{y,z\}},\\
\mathbf{Q}\cdot S_\lambda&=6L_{\{x\},\{t\}}+\mathcal{C}_2,
\end{split}
\end{equation}
where $\mathcal{C}_1$ and $\mathcal{C}_2$ are the curves in $\mathbb{P}^3$ that are given by
the equations $z=x^3+xyt-yt^2=0$ and $y=xz+xt-t^2=0$, respectively.
Thus, if $\lambda\ne\infty$, then
$$
S_\infty\cdot S_\lambda=6L_{\{x\},\{t\}}+2L_{\{y\},\{z\}}+3L_{\{x\},\{y,z\}}+\mathcal{C}_1+\mathcal{C}_2.
$$
Hence, the base curves of the pencil $\mathcal{S}$ are $L_{\{x\},\{t\}}$,
$L_{\{y\},\{z\}}$, $L_{\{x\},\{y,z\}}$,  $\mathcal{C}_1$, and $\mathcal{C}_2$.

If $\lambda\ne 0$ and $\lambda\ne 1$, then the singular points of the surface $S_\lambda$
contained in the base locus of the pencil $\mathcal{S}$ can be described as follows:
\begin{itemize}\setlength{\itemindent}{3cm}
\item[$P_{\{x\},\{z\},\{t\}}$:] type $\mathbb{A}_1$ with quadratic term $xz+xt-t^2$;

\item[$P_{\{x\},\{y\},\{z\}}$:] type $\mathbb{A}_5$ with quadratic term $(y+\lambda z)(y+z)$;

\item[$P_{\{x\},\{t\},\{y,z\}}$:] type $\mathbb{A}_5$ with quadratic term $(\lambda-1)x(y+z)$;

\item[{$[0:\lambda:-1:0]$}:] type $\mathbb{A}_5$ with quadratic term $(\lambda-1)x(y+\lambda z)$.
\end{itemize}

If $\lambda\not\in\{\infty,0,1\}$, then it follows from \eqref{equation:2-3} that
$$
H_\lambda\sim L_{\{y\},\{z\}}+\mathcal{C}_1\sim L_{\{y\},\{z\}}+3L_{\{x\},\{y,z\}}\sim_{\mathbb{Q}} 3L_{\{x\},\{t\}}+\frac{1}{2}\mathcal{C}_2
$$
on the (singular) quartic surface $S_\lambda$.
Therefore, if $\lambda\not\in\{\infty,0,1\}$, then the intersection matrix of the curves $L_{\{x\},\{t\}}$, $L_{\{y\},\{z\}}$, $L_{\{x\},\{y,z\}}$,  $\mathcal{C}_1$, and $\mathcal{C}_2$
on the surface $S_{\lambda}$ has the same rank as the intersection
matrix of the curves $L_{\{y\},\{z\}}$, $L_{\{x\},\{t\}}$, and $H_{\lambda}$.
In this case, we also have
$$
H_{\{y\}}\cdot S_\lambda=2L_{\{y\},\{z\}}+\mathcal{C}_2,
$$
so that $2L_{\{y\},\{z\}}+\mathcal{C}_2\sim H_\lambda$, which gives $2L_{\{y\},\{z\}}+H_\lambda\sim 6L_{\{x\},\{t\}}$.

If $\lambda\not\in\{\infty,0,1\}$, then the intersection matrix of the curves $L_{\{y\},\{z\}}$, $L_{\{x\},\{t\}}$, and $H_{\lambda}$
on the surface $S_\lambda$ is given by
\begin{center}
\renewcommand\arraystretch{1.42}
\begin{tabular}{|c||c|c|c|}
\hline
 $\bullet$  & $L_{\{y\},\{z\}}$ & $L_{\{x\},\{t\}}$ & $H_{\lambda}$ \\
\hline\hline
$L_{\{y\},\{z\}}$ & $-\frac{1}{2}$ & $0$ & $1$ \\
\hline
$L_{\{x\},\{t\}}$ & $0$ & $\frac{1}{6}$ & $1$ \\
\hline
 $H_{\lambda}$  & $1$ & $1$ & $4$ \\
\hline
\end{tabular}
\end{center}
Its rank is $2$.
On the other hand, the description of singular points of the surface~$S_\lambda$ easily implies that
$\mathrm{rk}\,\mathrm{Pic}(\widetilde{S}_{\Bbbk})=\mathrm{rk}\,\mathrm{Pic}(S_{\Bbbk})+16$,
so that \eqref{equation:main-2-simple} holds.
Thus, by Lemma~\ref{lemma:cokernel}, we see that \eqref{equation:main-2} in Main Theorem holds.

Let us prove \eqref{equation:main-1} in Main Theorem.
Observe that $[\mathsf{f}^{-1}(\lambda)]=1$ for every $\lambda\not\in\{\infty,0,1\}$.
This follows from Lemma~\ref{corollary:irreducible-fibers}.
Thus, to verify \eqref{equation:main-1} in Main Theorem, we have to show that
$[\mathsf{f}^{-1}(0)]+[\mathsf{f}^{-1}(1)]=13$. We start with

\begin{lemma}
\label{lemma:2-3-S-0}
One has $[\mathsf{f}^{-1}(0)]=2$.
\end{lemma}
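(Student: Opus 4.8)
The plan is to apply Corollary~\ref{corollary:normal-crossing-simple} together with Lemma~\ref{lemma:main}, exactly as in Examples~\ref{example:r-3-n-2} and~\ref{example:r-3-n-11}. I would first record that $S_0=H_{\{y\}}+\mathbf{S}$ with $\mathbf{S}$ the irreducible cubic displayed above, so that $[S_0]=2$. It then suffices to show that both correction terms in \eqref{equation:equation:number-of-irredubicle-components-refined} vanish for $\lambda=0$, i.e. that $\mathbf{D}_P^0=0$ for every $P\in\Sigma=\big\{P_{\{x\},\{z\},\{t\}},P_{\{x\},\{y\},\{z\}},P_{\{x\},\{t\},\{y,z\}}\big\}$ and that $\mathbf{C}_i^0=0$ for every base curve.

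For the defects I would check that each fixed point of $\Sigma$ is a \emph{good} double point of $S_0$, so that Lemma~\ref{lemma:normal-crossing} applies. Writing $S_0$ as the product $y\cdot\mathbf{S}$ and expanding in the appropriate affine chart (the chart $y=1$ at $P_{\{x\},\{z\},\{t\}}$ and $P_{\{x\},\{t\},\{y,z\}}=[0:1:-1:0]$, the chart $t=1$ at $P_{\{x\},\{y\},\{z\}}$, with the local shift $z\mapsto z+1$ at the third point), I expect the quadratic terms to specialize to $xz+xt-t^2$, to $-y(y+z)$, and to $-xz$ after the shift — precisely the $\lambda=0$ values of the quadratic terms already tabulated for general $\lambda$. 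Each of these has rank at least $2$, so Lemma~\ref{lemma:normal-crossing} gives $\mathbf{D}_P^0=0$ for every $P\in\Sigma$, and Corollary~\ref{corollary:normal-crossing-simple} reduces the count to $[\mathsf{f}^{-1}(0)]=2+\sum_i\mathbf{C}_i^0$.

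It then remains to see that $\mathbf{C}_i^0=0$ for every base curve, via Lemma~\ref{lemma:main}. For $\mathcal{C}_1$ and $\mathcal{C}_2$ this is automatic, since $\mathbf{m}_{\mathcal{C}_1}=\mathbf{m}_{\mathcal{C}_2}=1$ already forces $\mathbf{M}^0\leq 1$ (and in particular $\mathcal{C}_2\not\subset\mathbf{S}$). For the remaining curves I would compute $\mathbf{M}_i^0=\mathrm{mult}_{C_i}(S_0)$ by deciding which of the two components $H_{\{y\}}$, $\mathbf{S}$ contains each curve: the lines $L_{\{x\},\{t\}}$ and $L_{\{x\},\{y,z\}}$ lie on $\mathbf{S}$ but not on $H_{\{y\}}$ (indeed $\mathbf{S}$ restricts to $-(y+z)t^2$ on $x=0$), whereas $L_{\{y\},\{z\}}$ lies on $H_{\{y\}}$ but not on $\mathbf{S}$ (there $\mathbf{S}=x^3\not\equiv 0$). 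Hence each is contained in a single reduced component, so $\mathbf{M}_i^0=1$ and $\mathbf{C}_i^0=0$ by Lemma~\ref{lemma:main}. Combining, $[\mathsf{f}^{-1}(0)]=2$.

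The main obstacle I foresee is the good-double-point verification at $P_{\{x\},\{t\},\{y,z\}}$: because this point is not a coordinate vertex, the quadratic term becomes visible only after the linear change $z\mapsto z+1$, and one must confirm that the genuinely quadratic part does not degenerate to a perfect square or a single repeated line, which would make the point \emph{bad} and force a local blow-up computation of a nonzero defect as in Subsection~\ref{subsection:scheme-step-8}. Everything else is routine containment and multiplicity bookkeeping.
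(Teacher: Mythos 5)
Your argument is correct and follows the paper's own proof essentially verbatim: identify $[S_0]=2$, check that the three fixed singular points $P_{\{x\},\{z\},\{t\}}$, $P_{\{x\},\{y\},\{z\}}$, $P_{\{x\},\{t\},\{y,z\}}$ are good double points of $S_0$ (your local quadratic terms agree with the $\lambda=0$ specializations tabulated in the paper), verify that $S_0$ is smooth at general points of the five base curves so that all $\mathbf{C}_i^0$ vanish, and conclude via Corollary~\ref{corollary:normal-crossing-simple}. The only hair to split is that containment in a single reduced component gives $\mathbf{M}_i^0=1$ only after also noting that $\mathbf{S}$ is generically smooth along each such line (e.g.\ $\partial\mathbf{S}/\partial x$ restricts to $z(y+z)\not\equiv 0$ on $L_{\{x\},\{t\}}$), which holds here.
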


\begin{proof}
Note that $[S_0]=2$, and $S_0$ is smooth at general points of the curves
$L_{\{x\},\{t\}}$, $L_{\{y\},\{z\}}$, $L_{\{x\},\{y,z\}}$,  $\mathcal{C}_1$, and $\mathcal{C}_2$.
Furthermore, the points $P_{\{x\},\{z\},\{t\}}$, $P_{\{x\},\{y\},\{z\}}$, and $P_{\{x\},\{t\},\{y,z\}}$
are good double points of the surface $S_0$.
Then $[\mathsf{f}^{-1}(0)]=2$ by Corollary~\ref{corollary:normal-crossing-simple}.
\end{proof}

Let us show that $[\mathsf{f}^{-1}(1)]=11$.
Let $C_1=\mathcal{C}_1$, $C_2=\mathcal{C}_2$, $C_3=L_{\{x\},\{t\}}$, $C_4=L_{\{y\},\{z\}}$, $C_5=L_{\{x\},\{y,z\}}$.
Then $\mathbf{m}_{1}=\mathbf{m}_{2}=1$, $\mathbf{m}_{3}=6$, $\mathbf{m}_{4}=2$, and $\mathbf{m}_{5}=3$.
Moreover, one has $\mathbf{M}_{1}^{1}=\mathbf{M}_{2}^{1}=\mathbf{M}_{3}^{1}=\mathbf{M}_{4}^{1}=1$ and $\mathbf{M}_{5}^{0}=2$.
Then $\mathbf{C}_1^1=\mathbf{C}_1^1=\mathbf{C}_3^1=\mathbf{C}_4^1=0$ and $\mathbf{C}_5^1=2$ by Lemma~\ref{lemma:main}.
Thus, using \eqref{equation:equation:number-of-irredubicle-components-refined}, we see that
\begin{equation}
\label{equation:2-3-S-1}
\big[\mathsf{f}^{-1}(1)\big]=3+\mathbf{D}_{P_{\{x\},\{z\},\{t\}}}^1+\mathbf{D}_{P_{\{x\},\{y\},\{z\}}}^1+\mathbf{D}_{P_{\{x\},\{t\},\{y,z\}}}^1.
\end{equation}

\begin{lemma}
\label{lemma:2-3-P-x-z-t}
One has $\mathbf{D}_{P_{\{x\},\{z\},\{t\}}}^1=0$.
\end{lemma}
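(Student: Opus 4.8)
The plan is to prove $\mathbf{D}_{P_{\{x\},\{z\},\{t\}}}^1=0$ by showing that $P_{\{x\},\{z\},\{t\}}$ is a \emph{good} double point of the surface $S_1$ and then invoking Lemma~\ref{lemma:normal-crossing}. This is the same strategy that settled the analogous fixed singular points in Families \textnumero $2.2$ and \textnumero $3.11$, and it avoids any explicit blow-up computation over this point.

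First I would pin down the location of the point and its relation to the non-normal locus of $S_1$. We have $P_{\{x\},\{z\},\{t\}}=[0:1:0:0]$. Although $S_1$ is singular along the line $L_{\{x\},\{y,z\}}$, this point does \emph{not} lie on that line, since $y+z=1\neq 0$ at $[0:1:0:0]$. Hence near $P_{\{x\},\{z\},\{t\}}$ the surface $S_1$ has at worst an isolated singularity, and the local analysis there is insensitive to the degenerate behaviour of $S_1$ elsewhere.

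Next I would compute the quadratic term. In the affine chart $y=1$ the equation \eqref{equation:2-3-pencil} with $\lambda=1$ reads
\[
x^3+(1+z)^2\big(xz+xt-t^2\big)=0,
\]
and since $(1+z)^2$ has constant term $1$ while $x^3$ is cubic, the quadratic part at the origin is exactly $xz+xt-t^2$ — the same form that appears for general $\lambda$, because the $\lambda$-dependence of \eqref{equation:2-3-pencil} enters only in higher order at this point. Writing $xz+xt-t^2=x(z+t)-t^2$ and diagonalizing, this quadratic form is equivalent to three independent squares, so it has rank $3$. In particular its rank is at least $2$, which means $P_{\{x\},\{z\},\{t\}}$ is a good double point (indeed, an ordinary double point) of $S_1$.

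With this in hand, Lemma~\ref{lemma:normal-crossing} applies verbatim and yields $\mathbf{D}_{P_{\{x\},\{z\},\{t\}}}^1=0$. There is no real obstacle here: the only subtlety is the reflex to worry that $\lambda=1$ is a degenerate member of the pencil, but since that degeneration occurs away from $P_{\{x\},\{z\},\{t\}}$, the rank-$3$ computation of the quadratic term is unaffected and the good-double-point criterion settles the claim immediately.
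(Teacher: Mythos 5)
Your proof is correct and follows essentially the same route as the paper: the paper's own argument simply observes that $P_{\{x\},\{z\},\{t\}}$ is an isolated ordinary double point of $S_1$ and applies Lemma~\ref{lemma:normal-crossing}. Your explicit verification that the quadratic term $xz+xt-t^2$ has rank $3$ and that the point avoids the non-isolated singular locus along $L_{\{x\},\{y,z\}}$ just fills in the details the paper leaves implicit.
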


\begin{proof}
Observe that $P_{\{x\},\{z\},\{t\}}$ is an isolated ordinary double point of the surface $S_1$.
Thus, we have $\mathbf{D}_{P_{\{x\},\{z\},\{t\}}}^1=0$ by Lemma~\ref{lemma:normal-crossing}.
\end{proof}

\begin{lemma}
\label{lemma:2-3-P-x-y-z}
One has $\mathbf{D}_{P_{\{x\},\{y\},\{z\}}}^1=1$.
\end{lemma}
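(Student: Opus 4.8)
The plan is to compute the defect $\mathbf{D}_{P_{\{x\},\{y\},\{z\}}}^1$ by the local recipe of Subsection~\ref{subsection:scheme-step-8}, exactly as in Lemmas~\ref{lemma:2-2-P-x-z-t} and~\ref{lemma:2-2-P-y-z-t}. This point is genuinely \emph{bad} at $\lambda=1$: the quadratic term of $S_\lambda$ degenerates from $(y+\lambda z)(y+z)$ to $(y+z)^2$, which has rank $1$, so $P_{\{x\},\{y\},\{z\}}$ is not a good double point of $S_1$ and Lemma~\ref{lemma:normal-crossing} does not apply. However $\mathrm{mult}_{P_{\{x\},\{y\},\{z\}}}(S_1)=2$, so Corollary~\ref{corollary:log-pull-back} gives $\mathbf{A}_{P_{\{x\},\{y\},\{z\}}}^1=0$, and by \eqref{equation:D-A-B} the whole defect comes from the new base curves of $\widehat{\mathcal{S}}$ lying over $P_{\{x\},\{y\},\{z\}}$. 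I would work throughout in the affine chart $t=1$.

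First I would blow up the point $P_{\{x\},\{y\},\{z\}}=(0,0,0)$. Near it the base locus of $\mathcal{S}$ consists of the three curves $L_{\{y\},\{z\}}$, $\mathcal{C}_1$ and $L_{\{x\},\{y,z\}}$, the first two of which are tangent to the $x$-axis; so I would use the chart $x=x_1$, $y=x_1y_1$, $z=x_1z_1$ adapted to that tangent direction. In it the proper transform is $S_\lambda^1=x_1^2y_1+(\lambda z_1+y_1)(y_1+z_1)(x_1^2z_1+x_1-1)=0$, with $\mathbf{E}_1=\{x_1=0\}$. Because the tangent cone is the double line $(y_1+z_1)^2$ at $\lambda=1$, the fixed part of $\widehat{\mathcal{S}}\vert_{\mathbf{E}_1}$ is the line $C_6^1=\{x_1=0,\,y_1+z_1=0\}$, a new base curve. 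A short computation at a general point of $C_6^1$ gives $\mathbf{m}_6=2$ and $\mathbf{M}_6^\lambda=1$ for $\lambda\neq1$ but $\mathbf{M}_6^1=2$; hence $C_6^1$ contributes $\mathbf{m}_6-1=1$ to the defect by Lemma~\ref{lemma:main-2}.

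The main obstacle is to show that nothing else over $P_{\{x\},\{y\},\{z\}}$ contributes, since the blow-ups do not terminate at once. The point $(0,0,0)$ on $\mathbf{E}_1$, where $C_6^1$ meets the proper transforms of the two tangent base curves, is again a fixed singular point with a rank-one quadratic term, so I would blow it up once more; in the chart $x_1=x_2$, $y_1=x_2y_2$, $z_1=x_2z_2$ the proper transform becomes $x_2y_2+(\lambda z_2+y_2)(y_2+z_2)(x_2^3z_2+x_2-1)=0$, whose exceptional line $C_7^2=\{x_2=0,\,y_2+z_2=0\}$ now has $\mathbf{M}_7^1=1$ and therefore contributes nothing by Lemma~\ref{lemma:main-2}. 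Moreover the new center acquires the quadratic term $x_2y_2-(y_2+z_2)^2$ of rank $3$, i.e.\ it has become a good double point, so the recursion stops with no further contribution. Finally I would check that the remaining special point $[0:1:-1]$ of $C_6^1$ is only a \emph{floating} singular point of $S_1$ (a general member of $\mathcal{S}$ is smooth there), so it is not a fixed singular point, $\alpha$ need not blow it up, and it does not alter $\mathbf{M}_6^1$. Collecting the contributions through \eqref{equation:D-A-B} then gives $\mathbf{D}_{P_{\{x\},\{y\},\{z\}}}^1=\mathbf{A}_{P_{\{x\},\{y\},\{z\}}}^1+\mathbf{C}_6^1+\mathbf{C}_7^1=0+1+0=1$. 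The delicate part is the bookkeeping of these iterated tangential, rank-one degenerate blow-ups and confirming that only the first exceptional line $C_6^1$ carries a multiplicity jump at $\lambda=1$.
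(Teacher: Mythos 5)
Your proof is correct and follows the same route as the paper's: a single blow-up of $P_{\{x\},\{y\},\{z\}}$ produces exactly one base curve $C_6^1=\{x_1=y_1+z_1=0\}$ in $\mathbf{E}_1$ with $\mathbf{m}_6=2$ and $\mathbf{M}_6^1=2$, so $\mathbf{C}_6^1=1$ by Lemma~\ref{lemma:main-2}, while $\mathbf{A}_{P_{\{x\},\{y\},\{z\}}}^1=0$ by Corollary~\ref{corollary:log-pull-back}, giving $\mathbf{D}_{P_{\{x\},\{y\},\{z\}}}^1=1$ via \eqref{equation:D-A-B}. The only difference is that the paper stops after the first blow-up, asserting that $\mathcal{S}^1$ has no fixed singular points in $\mathbf{E}_1$, whereas you blow up the origin of the chart once more and check directly that the resulting curve $C_7^2$ has $\mathbf{M}_7^1=1$ and contributes nothing --- a harmless, and in fact slightly more careful, addition.
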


\begin{proof}
In the chart $t=1$, one has $P_{\{x\},\{y\},\{z\}}=(0,0,0)$, and the surface $S_\lambda$ is given by
$$
(y+\lambda z)(y+z)-x(y+\lambda z)(y+z)-x(x^2y+\lambda yz^2+\lambda z^3+y^2z+yz^2)=0.
$$
Let $\alpha_1\colon U_1\to\mathbb{P}^3$ be the blow up of the point  $P_{\{x\},\{y\},\{z\}}$.
Then $S^1_\lambda\sim -K_{U_1}$ for every~$\lambda\in\mathbb{C}$.
A chart of the blow up $\alpha_1$ is given by the coordinate change $x_1=x$, $y_1=\frac{y}{x}$, $z_1=\frac{z}{x}$.
In~this chart, the surface $\mathbf{E}_1$ is given by $x_1=0$, and the surface $S^1_\lambda$ is given by
$$
(y_1+\lambda z_1)(y_1+z_1)-x_1(x_1y_1+(y_1+\lambda z_1)(y_1+z_1))-x_1^2z_1(y_1+z_1)(y_1+\lambda z_1)=0.
$$
This shows that  $\mathbf{E}_1$ contains one base curve of the pencil $\mathcal{S}^1$.
It is given by $x_1=y_1+z_1=0$.
Denote this curve by $C_6^1$.
Then $\mathbf{M}_{6}^1=\mathbf{m}_{6}=2$.
But surfaces in the pencil $\mathcal{S}^1$ do not have fixed singular points in $\mathbf{E}_1$.
Thus, keeping in mind the construction of the birational morphism~$\alpha$,
we see that $\mathbf{D}_{P_{\{x\},\{y\},\{z\}}}^1=1$ by \eqref{equation:D-A-B}, \eqref{equation:A}, and Lemma~\ref{lemma:main-2}.
\end{proof}

\begin{lemma}
\label{lemma:2-3-P-x-t-yz}
One has $\mathbf{D}_{P_{\{x\},\{t\},\{y,z\}}}^1=7$.
\end{lemma}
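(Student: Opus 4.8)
The plan is to compute the defect locally at $P=P_{\{x\},\{t\},\{y,z\}}=[0:1:-1:0]$ by explicitly resolving the base locus of the pencil $\mathcal{S}$ there, exactly as in the proofs of Lemmas~\ref{lemma:2-2-P-x-z-t} and~\ref{lemma:2-2-P-y-z-t}. First I would pass to the affine chart $y=1$ and set $w=z+1$, so that $P$ becomes the origin. Substituting $\lambda=1$ into~\eqref{equation:2-3-pencil} shows that $S_1$ is given near $P$ by $x^3-xw^2+xw^3+xw^2t-w^2t^2=0$, whose lowest-order part is $x(x-w)(x+w)$. Thus, unlike for general $\lambda$ (where $P$ is an $\mathbb{A}_5$ point with quadratic term $(\lambda-1)x(y+z)$), the point $P$ is a \emph{triple} point of $S_1$ whose tangent cone is a union of three distinct planes; in particular $P$ is neither du Val nor a good double point, so neither Corollary~\ref{corollary:log-pull-back} nor Lemma~\ref{lemma:normal-crossing} applies and the defect must be computed by hand. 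I would also record at the outset that, among the base curves of $\mathcal{S}$, only $C_3=L_{\{x\},\{t\}}$ and $C_5=L_{\{x\},\{y,z\}}$ pass through $P$; their contributions are already accounted for in $\sum_{i=1}^{r}\mathbf{C}_i^1$ in~\eqref{equation:2-3-S-1}, so in computing $\mathbf{D}_P^1$ via~\eqref{equation:D-A-B} I only need the term $\mathbf{A}_P^1$ together with the $\mathbf{C}_i^1$ attached to the \emph{new} base curves created over $P$.

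Next I would carry out the resolution as a chain of point blow-ups $\alpha_1,\alpha_2,\dots$, bookkeeping two kinds of contributions at each stage. Let $\alpha_1\colon U_1\to\mathbb{P}^3$ be the blow up of $P$ with exceptional divisor $\mathbf{E}_1$. Since $\mathrm{mult}_P(S_1)=3$, the log pullback~\eqref{equation:log-pull-back} gives $\widehat{D}_1=\widehat{S}_1+\mathbf{E}_1$, so $\mathbf{a}_1^1=1$ and $\mathbf{E}_1$ contributes $\textbf{\textcircled{1}}$ to $\mathbf{A}_P^1$. I would then intersect $\mathbf{E}_1\cong\mathbb{P}^2$ with the proper transforms $\widehat{S}_{\lambda}$ of a general member and of $S_\infty=H_{\{z\}}+H_{\{y,z\}}+\mathbf{Q}$ to locate the base curves of $\mathcal{S}^1$ on $\mathbf{E}_1$: the line $\{x=0\}\subset\mathbf{E}_1$ is common to the tangent cones of $S_1$ and of the general member, hence is a new base curve. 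For every such new curve $\widehat{C}_j$ I would compute $\mathbf{M}_j^1=\mathrm{mult}_{\widehat{C}_j}(\widehat{D}_1)$ and the intersection multiplicity $\mathbf{m}_j$ in~\eqref{equation:multiplicities}, and read off its contribution $\mathbf{m}_j-1$ (when $\mathbf{M}_j^1\geqslant2$) from Lemma~\ref{lemma:main-2}. I would then blow up the next point—typically the intersection of two base curves on the exceptional divisor, or the remaining singular point of $D_1^i$ on it—and repeat, each time recording the circled contribution of the new exceptional surface to $\mathbf{A}_P^1$ via~\eqref{equation:A} and of the newly created base curves to $\sum\mathbf{C}_i^1$.

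The main obstacle is that, because $P$ is a triple point rather than a double point, several blow-ups are required before the proper transform of $\mathcal{S}$ becomes free near $P$, and the bookkeeping is more delicate than in the good-double-point cases: at each stage one must correctly compute the coefficients $\mathbf{a}_i^1$ in the log pullback and, crucially, distinguish the behaviour of the degenerate member $S_1$ from that of a general member $S_\lambda$ when evaluating $\mathbf{M}_j^1$ against $\mathbf{m}_j$ (the difference $\mathbf{m}_j-\mathbf{M}_j^1$ is precisely where the reducibility of $S_1$ enters). Summing all circled contributions—those to $\mathbf{A}_P^1$ from the exceptional divisors and those to $\sum\mathbf{C}_i^1$ from the new base curves—then yields $\mathbf{D}_{P_{\{x\},\{t\},\{y,z\}}}^1=7$ by~\eqref{equation:D-A-B}, which completes the proof.
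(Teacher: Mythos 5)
Your setup is sound and follows the same strategy as the paper: you correctly compute that in the chart $y=1$ with $w=z+1$ the surface $S_1$ has a triple point at $P=P_{\{x\},\{t\},\{y,z\}}$ with tangent cone $x(x-w)(x+w)$, you correctly conclude that Corollary~\ref{corollary:log-pull-back} and Lemma~\ref{lemma:normal-crossing} do not apply, you correctly identify $C_3$ and $C_5$ as the only old base curves through $P$, and the first contribution $\textbf{\textcircled{1}}$ from $\widehat{D}_1=\widehat{S}_1+\mathbf{E}_1$ is right.

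The gap is that after this first step the proof stops being a computation and becomes a description of one. Everything that actually produces the number $7$ is missing: you never determine the multiplicities $\mathbf{m}_j$ and $\mathbf{M}_j^1$ of any of the new base curves, never decide how many further blow-ups are needed, and never exhibit the sum of circled contributions. In the paper's resolution the total is $1+3+1+2$: after the first blow-up of $P$ the exceptional plane contains \emph{two} base curves (you mention only the line $\{x=0\}$), and the decisive term is the $\textbf{\textcircled{3}}$ contributed by that line, for which one must check $\mathbf{M}^1=2$ but $\mathbf{m}=4$ (the intersection $S_\infty\cdot S_1$ picks up this curve with multiplicity $4$) so that Lemma~\ref{lemma:main-2} gives $\mathbf{m}-1=3$; a second blow-up at the intersection of the two base curves then contributes $\textbf{\textcircled{1}}$ via \eqref{equation:A}, and the two base curves on the new exceptional plane each have $\mathbf{M}^1=\mathbf{m}=2$ and contribute $\textbf{\textcircled{1}}$ apiece. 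Without carrying out these local computations the claim $\mathbf{D}_{P_{\{x\},\{t\},\{y,z\}}}^1=7$ is asserted, not proved; one also needs the final check that no further exceptional divisors or base curves contribute, which is what turns the lower bound $\geqslant 7$ into an equality. (A minor slip: the contribution of a base curve with $\mathbf{M}_j^1\geqslant 2$ is $\mathbf{m}_j-1$, so it is not the difference $\mathbf{m}_j-\mathbf{M}_j^1$ that matters but the value of $\mathbf{m}_j$ itself once the threshold $\mathbf{M}_j^1\geqslant 2$ is met.)
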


\begin{proof}
Let us use the notation of the proof of Lemma~\ref{lemma:2-3-P-x-y-z}.
In a neighborhood of the preimage of the point $P_{\{x\},\{t\},\{y,z\}}$,
we can identify $U_1$ with the chart of $\mathbb{P}^3$ that is given by $z=1$.
In this chart, the surface $S_\lambda^1$ is given by the equation
$$
(\lambda-1)\hat{x}\hat{y}+ \left((\lambda-1) (\hat{x}\hat{y}\hat{t}-\hat{y}\hat{t}^2)+\hat{x}\hat{y}^2-\hat{x}^3\right)+\hat{y}(\hat{x}^3+\hat{x}\hat{y}\hat{t}-\hat{y}\hat{t}^2)=0.
$$
where $\hat{x}=x$, $\hat{t}=t$, $\hat{y}=y+z$.
In these coordinates, the point $(0,0,0)$ is the preimage of the point $P_{\{x\},\{t\},\{y,z\}}$.

Let $\alpha_2\colon U_2\to U_1$ be the blow up of the point $(0,0,0)$.
Then $D_1^2=S^2_1+\mathbf{E}_2$.
Thus, by~\eqref{equation:A} and \eqref{equation:D-A-B}, the surface $\mathbf{E}_2$ contributes $\textbf{\textcircled{1}}$ to $\mathbf{D}_{P_{\{x\},\{t\},\{y,z\}}}^1$.

One chart of the blow up $\alpha_2$ is given by the coordinate change $\hat{x}_2=\frac{\hat{x}}{\hat{t}}$, $\hat{y}_2=\frac{\hat{y}}{\hat{t}}$, $\hat{t}_2=\hat{t}$.
In this chart, the surface $S_\lambda^2$ is given by
$$
(\lambda-1)\hat{y}_2(\hat{x}_2-\hat{t}_2)+\Big(\lambda\hat{t}_2\hat{x}_2\hat{y}_2-\hat{t}_2\hat{x}_2\hat{y}_2\Big)+\Big(\hat{t}_2\hat{x}_2\hat{y}_2^2-\hat{t}_2^2\hat{y}_2^2-\hat{t}_2\hat{x}_2^3\Big)+\hat{t}_2^2\hat{x}_2\hat{y}_2^2+\hat{t}_2^2\hat{x}_2^3\hat{y}_2=0
$$
for $\lambda\ne 1$.
Let $\bar{x}_2=\hat{x}_2-\hat{t}_2$, $\bar{y}_2=\hat{z}_2$ and $\bar{t}_2=\hat{t}_2$.
We can rewrite the latter equation as
$$
(\lambda-1)\Big(\bar{x}_2\bar{y}_2+\bar{y}_2\bar{t}_2(\bar{x}_2+\bar{t}_2)\Big)=\bar{x}_2^3\bar{t}_2+3\bar{x}_2^2\bar{t}_2^2+3\bar{x}_2\bar{t}_2^3+\bar{t}_2^4-\bar{x}_2\bar{y}_2^2\bar{t}_2-\bar{y}_2^2\bar{t}_2^2\big(\bar{x}_2+\bar{t}_2\big)-\bar{y}_2\bar{t}_2^2(\bar{x}_2+\bar{t}_2)^3.
$$
For $\lambda=1$, this equation defines $D_2^2=S^2_1+\mathbf{E}_2$.

The surface $\mathbf{E}_2$ is given by $\bar{t}_2=0$.
It contains two base curves of the pencil~$\mathcal{S}^2$.
One of them is given by $\bar{x}_2=\bar{t}_2=0$,
and another one is given by $\bar{z}_2=\bar{t}_2=0$.
Denote the former curve by $C_{7}^2$, and denote the latter curve by $C_{8}^2$.
Then $\mathbf{M}_7^1=2$. Note that $\mathbf{m}_7=4$, because $S_\infty$ is given by $\bar{y}_2(\bar{t}_2^2+t\bar{x}_2+\bar{x}_2)=0$,
and $S^2_1+\mathbf{E}_2$ is given by
$$
\Big(\bar{t}_2^4+3\bar{t}_2^3\bar{x}_2+3\bar{t}_2^2\bar{x}_2^2+\bar{t}_2\bar{x}_2^3\Big)\big(\bar{t}_2\bar{y}_2-1\big)+\bar{y}_2\big(\bar{t}_2^2+\bar{t}_2\bar{x}_2+\bar{x}_2\big)\big(\bar{t}_2\bar{y}_2-1\big)=0.
$$
Thus, the curve $C_{7}^2$ contributes $\textbf{\textcircled{3}}$ to the defect by Lemma~\ref{lemma:main-2} and \eqref{equation:D-A-B}.
On the other hand, one has $\mathbf{M}_8^1=1$, so that $C_{8}^2$ does not contribute to the defect.

Let $\alpha_3\colon U_3\to U_2$ be the blow up of the point $C_{7}^2\cap C_{8}^2$.
Then $D_0^3=S^3_0+\mathbf{E}_2^3+2\mathbf{E}_3$.
By~\eqref{equation:A} and \eqref{equation:D-A-B}, the surface $\mathbf{E}_3$ contributes $\textbf{\textcircled{1}}$ to the defect $\mathbf{D}_{P_{\{x\},\{t\},\{y,z\}}}^1$.

A chart of the blow up $\alpha_3$ is given by the coordinate change
$\bar{x}_3=\frac{\bar{x}_2}{\bar{t}_2}$, $\bar{y}_3=\frac{\bar{y}_2}{\bar{t}_2}$, $\bar{t}_3=\bar{t}_2$.
In this chart, the surface $\mathbf{E}_3$ is given by $\bar{t}_3$.
Similarly, if $\lambda\ne 1$, then $S_\lambda^3$ is given by
\begin{multline*}
(\lambda-1)\bar{y}_3(\bar{x}_3+\bar{t}_3)-t^2_2+\bar{x}_3\bar{t}_3\Big((\lambda-1)\bar{y}_3-3\bar{t}_3\Big)-3\bar{x}_3^2\bar{t}_3^2-\\
-\bar{t}_3^2\Big(\bar{x}_3^3-\bar{y}_3\bar{t}_3^2-\bar{x}_3\bar{y}_3^2-\bar{y}_3^2\bar{t}_3\Big)+\bar{x}_3\bar{y}_3t^3_2\big(3\bar{t}_3+\bar{y}_3\big)+3\bar{x}_3^2\bar{y}_3\bar{t}_3^4+\bar{x}_3^3\bar{y}_3\bar{t}_3^4=0.
\end{multline*}
Then $\mathbf{E}_3$ contains two base curves of the pencil $\mathcal{S}^3$.
One of them is given by $\bar{t}_3=\bar{x}_3=0$,
and another one is given by $\bar{t}_3=\bar{y}_3=0$.
Denote the former curve by $C_{9}^3$, and denote the latter curve by $C_{10}^2$.
Then $\mathbf{M}_9^1=\mathbf{M}_{10}^1=2$ and $\mathbf{m}_9=\mathbf{m}_{10}=2$.
Thus, by Lemma~\ref{lemma:main-2} and \eqref{equation:D-A-B}, the curves $C_{9}^3$ and $C_{10}^3$ contribute $\textbf{\textcircled{2}}$ to the defect
$\mathbf{D}_{P_{\{x\},\{t\},\{y,z\}}}^1$.

Summarizing, we see that $\mathbf{D}_{P_{\{x\},\{t\},\{y,z\}}}^1\geqslant 7$.
Looking at the defining equation of the surface $S_\lambda^3$, one can easily see that $\mathbf{D}_{P_{\{x\},\{t\},\{y,z\}}}^1=7$.
\end{proof}

Using \eqref{equation:2-3-S-1} and Lemmas~\ref{lemma:2-3-P-x-z-t}, \ref{lemma:2-3-P-x-y-z}, \ref{lemma:2-3-P-x-t-yz},
we see that \eqref{equation:main-1} in Main Theorem holds.

\subsection{Family \textnumero $2.4$}
\label{section:r-2-n-4}

In this case, the threefold $X$ is a blow up of $\mathbb{P}^3$ along the smooth complete intersection of two cubic surfaces,
which implies that $h^{1,2}(X)=10$.
A mirror partner of the threefold $X$ is given by Minkowski polynomial \textnumero $3963.1$,
which is
\begin{multline*}
\frac{z^{2}}{x}+\frac{3z}{x}+\frac{3}{x}+\frac{yz}{x}+\frac{z^{2}}{y}+\frac{1}{xz}+\frac{2y}{x}+\frac{2z}{y}+\frac{y}{xz}+\frac{1}{y}+\\
+4z+\frac{3}{z}+2y+2\frac{xz}{y}+\frac{2y}{z}+\frac{2x}{y}+4x+3\frac{x}{z}+\frac{xy}{z}+\frac{x^{2}}{y}+\frac{x^{2}}{z}.
\end{multline*}
The quartic pencil $\mathcal{S}$ is given by
\begin{multline*}
z^3y+3z^2ty+3t^2yz+y^2z^2+z^3x+t^3y+2y^2tz+2z^2xt+\\
+y^2t^2+t^2xz+4xyz^2+3t^2xy+2y^2xz+2x^2z^2+2y^2xt+\\
+2x^2tz+4x^2yz+3x^2ty+x^2y^2+x^3z+x^3y=\lambda xyzt.
\end{multline*}
This equation is invariant with respect to the swap $x\leftrightarrow z$.

Suppose that $\lambda\ne\infty$.
Let $\mathcal{C}$ be the conic $t=xy+xz+yz=0$. Then
\begin{itemize}
\item $H_{\{x\}}\cdot S_\infty=L_{\{x\},\{y\}}+2L_{\{x\},\{z,t\}}+L_{\{x\},\{y,z,t\}}$,
\item $H_{\{y\}}\cdot S_\infty=L_{\{x\},\{y\}}+L_{\{y\},\{z\}}+2L_{\{y\},\{x,z,t\}}$,
\item $H_{\{z\}}\cdot S_\infty=L_{\{y\},\{z\}}+2L_{\{z\},\{x,t\}}+L_{\{z\},\{x,y,t\}}$,
\item $H_{\{t\}}\cdot S_\infty=L_{\{t\},\{x,z\}}+L_{\{t\},\{x,y,z\}}+\mathcal{C}$.
\end{itemize}
This shows that
\begin{multline*}
S_\infty\cdot S_\lambda=2L_{\{x\},\{y\}}+2L_{\{y\},\{z\}}+2L_{\{x\},\{z,t\}}+2L_{\{z\},\{x,t\}}+L_{\{t\},\{x,z\}}+\\
+2L_{\{y\},\{x,z,t\}}+L_{\{x\},\{y,z,t\}}+L_{\{z\},\{x,y,t\}}+L_{\{t\},\{x,y,z\}}+\mathcal{C}.
\end{multline*}
Hence, the base locus of the pencil $\mathcal{S}$ consists of the curves
$L_{\{x\},\{y\}}$, $L_{\{y\},\{z\}}$, $L_{\{x\},\{z,t\}}$, $L_{\{z\},\{x,t\}}$, $L_{\{t\},\{x,z\}}$,
$L_{\{y\},\{x,z,t\}}$, $L_{\{x\},\{y,z,t\}}$, $L_{\{z\},\{x,y,t\}}$, $L_{\{t\},\{x,y,z\}}$, and $\mathcal{C}$.

Observe that $S_{-7}=H_{\{x,z,t\}}+H_{\{x,y,z,t\}}+\mathsf{Q}$,
where $\mathsf{Q}$ is an irreducible quadric surface that is given by $xy+xz+yz+yt=0$.
If $\lambda\ne -7$ and $\lambda\ne\infty$, then the surface $S_\lambda$ has isolated singularities,
which implies that it is irreducible.

The singular locus of the surface $S_{-7}$ contained in the base locus of the pencil $\mathcal{S}$
consists of the lines $L_{\{x\},\{z,t\}}$, $L_{\{z\},\{x,t\}}$, and $L_{\{y\},\{x,z,t\}}$.

\begin{lemma}
\label{lemma:r2-n4-singularities}
Suppose that $\lambda\ne -7$.
Then singular points of the surface $S_\lambda$ contained in the base locus of the pencil $\mathcal{S}$ can be describes as follows:
\begin{itemize}\setlength{\itemindent}{3cm}
\item[$P_{\{x\},\{y\},\{z,t\}}$:] type $\mathbb{A}_4$ with quadratic term $(\lambda+7)xy$;
\item[$P_{\{x\},\{z\},\{t\}}$:] type $\mathbb{D}_4$ with quadratic term $(x+z+t)^2$;
\item[$P_{\{y\},\{t\},\{x,z\}}$:] type $\mathbb{A}_1$ with quadratic term $(\lambda+6)ty-t^2-(x-z)(y+x-z+2t)$;
\item[$P_{\{y\},\{z\},\{x,t\}}$:] type $\mathbb{A}_4$ with quadratic term $(\lambda+7)yz$.
\end{itemize}
\end{lemma}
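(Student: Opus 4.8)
The plan is to verify the four assertions one fixed singular point at a time, by passing to a local analytic model at each point, reading off the quadratic part of the dehomogenized defining equation, and then classifying the du Val type from the rank of that quadratic part together with a small amount of higher-order information. Before computing anything, I would record that the pencil equation is invariant under the involution $x\leftrightarrow z$. This involution interchanges the points $P_{\{x\},\{y\},\{z,t\}}$ and $P_{\{y\},\{z\},\{x,t\}}$, and it fixes both $P_{\{x\},\{z\},\{t\}}$ and $P_{\{y\},\{t\},\{x,z\}}$. Consequently it suffices to analyze the three points $P_{\{x\},\{y\},\{z,t\}}$, $P_{\{x\},\{z\},\{t\}}$, and $P_{\{y\},\{t\},\{x,z\}}$ in detail, and the claim for $P_{\{y\},\{z\},\{x,t\}}$ (type $\mathbb{A}_4$) will follow formally from the claim for $P_{\{x\},\{y\},\{z,t\}}$.

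First I would confirm that these four points are \emph{exactly} the singular points of $S_\lambda$ lying on the base locus. Since the base locus has already been computed as an explicit union of the lines $L_{\{x\},\{y\}}$, $L_{\{y\},\{z\}}$, $L_{\{x\},\{z,t\}}$, $L_{\{z\},\{x,t\}}$, $L_{\{t\},\{x,z\}}$, $L_{\{y\},\{x,z,t\}}$, $L_{\{x\},\{y,z,t\}}$, $L_{\{z\},\{x,y,t\}}$, $L_{\{t\},\{x,y,z\}}$ and the conic $\mathcal{C}$, I would apply the Jacobian criterion to $S_\lambda$ restricted to each of these components. Away from the four listed intersection points the surface is smooth along every base curve for $\lambda\neq -7$, while at each of the four points all partial derivatives of the defining form vanish for every such $\lambda$; this pins down the singular locus on the base locus and shows no further singular points occur there.

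Next comes the point-by-point classification, organized by the rank of the quadratic term $q$ of $S_\lambda$ at the point in question. At $P_{\{y\},\{t\},\{x,z\}}$ the stated quadratic term $(\lambda+6)ty-t^2-(x-z)(y+x-z+2t)$ has full rank $3$ for every $\lambda\neq -7$, which one checks by computing the determinant of its symmetric matrix and observing it is a nonzero polynomial in $\lambda$ on this range; a nondegenerate quadratic part means an ordinary double point, i.e.\ type $\mathbb{A}_1$. At $P_{\{x\},\{y\},\{z,t\}}$ the quadratic term $(\lambda+7)xy$ has rank $2$, so the singularity is of type $\mathbb{A}_n$ with $n\geq 2$; to find $n$ I would diagonalize the rank-$2$ part, bring the equation to the normal form $uv+h(w)$ by the implicit function theorem, and identify $n+1$ with the order of vanishing of $h$, the point being $\mathbb{A}_4$ precisely when this order equals $5$. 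Finally, at $P_{\{x\},\{z\},\{t\}}$ the quadratic term $(x+z+t)^2$ has rank $1$, so the type is $\mathbb{D}$ or $\mathbb{E}$; setting $u=x+z+t$ and expanding, I would read off the binary cubic in the two transverse coordinates, and the point is $\mathbb{D}_4$ exactly when this cubic is nondegenerate, i.e.\ has three distinct roots (nonzero discriminant), as opposed to a double or triple root which would give $\mathbb{D}_{>4}$ or an $\mathbb{E}$-type.

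The main obstacle is the determination of the exact index $n=4$ at the two $\mathbb{A}$-type points of higher order: this forces one to carry the Taylor expansion up to order $5$ and to verify that, after eliminating the two rank-$2$ directions, the surviving one-variable term vanishes to order exactly $5$ and not higher. Equivalently one may compute the Milnor number and confirm $\mu=4$. The $\mathbb{D}_4$ verification is comparatively short, requiring only the cubic term and its discriminant, but one must choose the chart and the coordinate $u=x+z+t$ carefully so that the square genuinely captures the full corank of the quadratic part. Once these three model computations are complete, the $x\leftrightarrow z$ symmetry disposes of the fourth point and finishes the proof.
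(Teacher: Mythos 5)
Your proposal is correct, and the reduction via the $x\leftrightarrow z$ involution, the identification of the singular points on the base locus, and the rank-of-quadratic-part triage all match what the paper does. Where you genuinely diverge is in how the exact type is pinned down once the corank is positive. The paper resolves both non-nodal points by explicit blow-ups: at $P_{\{y\},\{z\},\{x,t\}}$ it blows up once, rewrites the strict transform as $\hat{x}_2\hat{z}_2+(\lambda+7)^2\hat{y}_2^3+\cdots$ in quasihomogeneous coordinates to recognize an $\mathbb{A}_2$ point, and concludes $\mathbb{A}_4$; at $P_{\{x\},\{z\},\{t\}}$ it blows up once and exhibits, across two charts, three ordinary double points on the exceptional curve, which is the configuration characterizing $\mathbb{D}_4$. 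You instead use intrinsic normal-form criteria: the splitting lemma to reach $uv+h(w)$ with $\operatorname{ord}h=5$ for the $\mathbb{A}_4$ points, and nondegeneracy (nonzero discriminant) of the residual binary cubic $(\lambda+7)\tilde{x}\tilde{z}(\tilde{x}+\tilde{z})$ for the $\mathbb{D}_4$ point. Both are standard and both work; your route is arguably cleaner for pure classification, but the paper's choice is not incidental — the explicit blow-up data (which exceptional divisors appear, which base curves of $\widehat{\mathcal{S}}$ they contain, and their multiplicities) is reused verbatim in the proof of Lemma~\ref{lemma:r2-n4-irreducible-special} to build the morphism $\alpha$ and compute the defects $\mathbf{D}_P^{-7}$, so following your approach one would still have to carry out those resolutions later. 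Your caveat about ensuring the square $u=x+z+t$ captures the full corank is well placed: the linear change eliminating the $u\cdot(\text{linear})$ cross terms only perturbs the $(v,w)$-expansion in order $\ge 4$, so the cubic you read off is genuinely the cubic of the split-off function, and the discriminant test is legitimate.
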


\begin{proof}
First let us describe the singularity of the surface $S_\lambda$ at the point $P_{\{y\},\{z\},\{x,t\}}$.
In the chart $t=1$, the surface $S_\lambda$ is given by
\begin{multline*}
(\lambda+7)\bar{z}\bar{y}-\bar{x}^2\bar{z}-(\lambda+8)\bar{x}\bar{y}\bar{z}-2\bar{z}^2\bar{x}-\bar{z}^2\bar{y}-\bar{z}^3+\bar{x}^3\bar{y}+\bar{x}^3\bar{z}+\\
+\bar{x}^2\bar{y}^2+4\bar{x}^2\bar{y}\bar{z}+2\bar{x}^2\bar{z}^2+2\bar{x}\bar{y}^2\bar{z}+4\bar{x}\bar{y}\bar{z}^2+\bar{z}^3\bar{x}+\bar{y}^2\bar{z}^2+\bar{y}\bar{z}^3=0,
\end{multline*}
where $\bar{x}=x+1$, $\bar{y}=y$, and $\bar{z}=z$.
Introducing new coordinates $\bar{x}_2=\bar{x}$, $\bar{y}_2=\frac{\bar{y}}{\bar{x}}$, and $\bar{z}_2=\frac{\bar{z}}{\bar{x}}$,
we rewrite this equation (after dividing by $\bar{x}_2^2$) as
\begin{multline*}
\bar{z}_2\big((\lambda+7)\bar{y}_2-\bar{x}_2\big)+\bar{x}_2^2\bar{y}_2+\bar{x}_2^2\bar{z}_2-(\lambda+8)\bar{x}_2\bar{y}_2\bar{z}_2-2\bar{z}_2^2\bar{x}_2+\bar{x}_2^2\bar{y}_2^2+4\bar{x}_2^2\bar{y}_2\bar{z}_2+\\
+2\bar{x}_2^2\bar{z}_2^2-\bar{x}_2\bar{y}_2\bar{z}_2^2-\bar{z}_2^3\bar{x}_2+2\bar{x}_2^2\bar{y}_2^2\bar{z}_2+4\bar{x}_2^2\bar{y}_2\bar{z}_2^2+\bar{x}_2^2\bar{z}_2^3+\bar{x}_2^2\bar{y}_2^2\bar{z}_2^2+\bar{x}_2^2\bar{y}_2\bar{z}_2^3=0.
\end{multline*}
This equation defines (a chart of) the blow up of the surface $S_\lambda$ at the point $P_{\{y\},\{z\},\{x,t\}}$.
The two exceptional curves of the blow up are given by the equations $\bar{x}_2=\bar{z}_2=0$ and $\bar{x}_2=\bar{y}_2=0$, respectively.
They intersect by the point $(0,0,0)$, which is singular point of the obtained surface.
Introducing new coordinates $\hat{x}_2=(\lambda+7)\bar{y}_2-\bar{x}_2$, $\hat{y}_2=\bar{y}_2$, and $\hat{z}_2=\bar{z}_2$,
we can rewrite the latter equations as
$$
\hat{x}_2\hat{z}_2+(\lambda+7)^2\hat{y}_2^3+\text{higher order terms}=0
$$
with respect to the weights $\mathrm{wt}(\hat{x}_2)=3$, $\mathrm{wt}(\hat{z}_2)=2$, and $\mathrm{wt}(\hat{z}_2)=3$.
This shows that the blown up surface has singularity of type $\mathbb{A}_2$ at the point $(0,0,0)$,
so that $P_{\{y\},\{z\},\{x,t\}}$ is a singular point of the surface $S_\lambda$ of type $\mathbb{A}_4$.

Since the equation of the surface $S_\lambda$ is invariant with respect to the swap $x\leftrightarrow z$,
we see that $P_{\{y\},\{x\},\{z,t\}}$ is a singular point of the surface $S_\lambda$ of type $\mathbb{A}_4$,
and the quadratic term of its defining equation is $(\lambda+7)xy$.

To show that $P_{\{y\},\{t\},\{x,z\}}$ is an ordinary double point of the surface $S_\lambda$,
we simply observe that the quadratic part of the Taylor expansion of the defining equation of the surface $S_\lambda$ at  the point $P_{\{y\},\{t\},\{x,z\}}$ in the chart $z=1$ is
$$
(\lambda+6)\acute{t}\acute{y}-t^2-2\acute{t}\acute{x}-\acute{x}^2-\acute{x}\acute{y}.
$$
where $\acute{x}=x-1$, $\acute{y}=y$, and $\acute{t}=t$.
This quadratic form has rank $3$, so that $P_{\{y\},\{t\},\{x,z\}}$ is an ordinary double point of the surface $S_\lambda$.

Finally, let us show that $P_{\{x\},\{z\},\{t\}}$ is a singular point of the surface $S_\lambda$ of type $\mathbb{D}_4$.
Let us consider the chart $y=1$ and introduce new coordinates $\tilde{x}=x$, $\tilde{z}=z$, and $\tilde{t}=t+x+z$.
Then $S_\lambda$ is given by
$$
\tilde{t}^2+(\lambda+7)\tilde{x}^2\tilde{z}+(\lambda+7)\tilde{z}^2\tilde{x}-(\lambda+6)\tilde{t}\tilde{x}\tilde{z}+\tilde{t}^3+\tilde{t}^2\tilde{x}\tilde{z}=0,
$$
where $P_{\{x\},\{z\},\{t\}}=(0,0,0)$.
Let us blow up $S_\lambda$ at this point.
Introducing new coordinates $\tilde{x}_6=\tilde{x}$, $\tilde{z}_6=\frac{\tilde{z}}{\tilde{x}}$, $\tilde{t}_6=\frac{\tilde{t}}{\tilde{x}}$,
we rewrite this equation (after dividing by $\tilde{x}_6^2$) as
$$
\tilde{t}_6^2+(\lambda+7)\tilde{x}_6\tilde{z}_6-(\lambda+6)\tilde{t}_6\tilde{x}_6\tilde{z}_6+(\lambda+7)\tilde{z}_6^2\tilde{x}_6+\tilde{t}_6^3\tilde{x}_6+\tilde{t}_6^2\tilde{x}_6^2\tilde{z}_6=0.
$$
This equation defines (a chart of) the blow up of the surface $S_\lambda$ at the point $P_{\{x\},\{z\},\{t\}}$.
The exceptional curve of this birational map is given by $\tilde{x}_6=\tilde{t}_6=0$.
The obtained surface has an ordinary double point at $(0,0,0)$, since its quadratic form $\tilde{t}_6^2+(\lambda+7)\tilde{x}_6\tilde{z}_6$ is of rank~$3$.
Note, however, that this surface is also singular at the point $(\tilde{x}_6,\tilde{z}_6,\tilde{t}_6)=(0,-1,0)$,
and is smooth along the curve $\tilde{x}_6=\tilde{t}_6=0$ away from these two points.
Introducing new coordinates $\check{x}_6=\tilde{x}_6$, $\check{z}_6=\tilde{z}_6+1$, and $\check{t}_6=\tilde{t}_6$,
we rewrite the latter equation as
$$
\check{t}_6^2-(\lambda+7)\check{x}_6\check{z}_6+(\lambda+6)\check{t}_6\check{x}_6-(\lambda+6)\check{t}_6\check{x}_6\check{z}_6+\check{t}_6^3\check{x}_6-\check{t}_6^2\check{x}_6^2+(\lambda+7)\check{z}_6^2\check{x}_6+\check{t}_6^2\check{x}_6^2\check{z}_6=0.
$$
Since $\lambda\ne -7$, the quadratic form $\check{t}_6^2-(\lambda+7)\check{x}_6\check{z}_6+(\lambda+6)\check{t}_6\check{x}_6$ has rank $3$,
so that the second singular point is also an ordinary double point of the obtained surface.

Now let us consider another chart of the blow up of the surface $S_\lambda$ at the point $P_{\{x\},\{z\},\{t\}}$.
To do this, we introduce coordinates $\tilde{x}_6^\prime=\frac{\tilde{x}}{\tilde{z}}$, $\tilde{z}_6^\prime=\tilde{z}$ and $\tilde{t}_6^\prime=\frac{\tilde{t}}{\tilde{z}}$.
After dividing by $(\tilde{x}_6^\prime)^2$, we obtain the equation
$$
(\tilde{t}_6^\prime)^2+(\lambda+7)\tilde{x}_6^\prime\tilde{z}_6^\prime-(\lambda+6)\tilde{t}_6^\prime\tilde{x}_6^\prime\tilde{z}_6^\prime+(\lambda+7)(\tilde{x}_6^\prime)^2\tilde{z}_6^\prime+(\tilde{t}_6^\prime)^3\tilde{z}_6^\prime+(\tilde{t}_6^\prime)^2\tilde{x}_6^\prime(\tilde{z}_6^\prime)^2=0.
$$
This surface is smooth along the curve $\tilde{x}_6^\prime=\tilde{t}_6^\prime=0$ except for two points:
the point $(\tilde{x}_6^\prime,\tilde{z}_6^\prime,\tilde{t}_6^\prime)=(0,0,0)$ and the point $(\tilde{x}_6^\prime,\tilde{z}_6^\prime,\tilde{t}_6^\prime)=(-1,0,0)$.
Both these points are ordinary double points of the obtained surface.
Note also that the point $(\tilde{x}_6^\prime,\tilde{z}_6^\prime,\tilde{t}_6^\prime)=(-1,0,0)$ is the point $(\tilde{x}_6,\tilde{z}_6,\tilde{t}_6)=(0,-1,0)$ in the first chart of the blow up.
This shows that $P_{\{x\},\{z\},\{t\}}$ is a singular point of the surface $S_\lambda$ of type $\mathbb{D}_4$.
\end{proof}

The surface $S_{\Bbbk}$ is singular at the points $P_{\{x\},\{y\},\{z,t\}}$, $P_{\{x\},\{z\},\{t\}}$, $P_{\{y\},\{t\},\{x,z\}}$,~$P_{\{y\},\{z\},\{x,t\}}$.
Their minimal resolutions are described in the proof of Lemma~\ref{lemma:r2-n4-singularities}.
This gives

\begin{corollary}
\label{corollary:r2-n4-singularities}
One has $\mathrm{rk}\,\mathrm{Pic}(\widetilde{S}_{\Bbbk})=\mathrm{rk}\,\mathrm{Pic}(S_{\Bbbk})+13$.
\end{corollary}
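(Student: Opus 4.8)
The plan is to read the corollary directly off the singularity classification in Lemma~\ref{lemma:r2-n4-singularities}, combined with the standard resolution theory of du Val points. Recall that for a du Val singularity of type $\mathbb{A}_n$, $\mathbb{D}_n$, or $\mathbb{E}_n$, the exceptional locus of the minimal resolution is a configuration of smooth rational $(-2)$-curves whose dual graph is the corresponding Dynkin diagram; in particular the number of such curves equals the rank of the associated root lattice, namely $n$ in each of these three cases. Since each exceptional curve contributes one new independent class to the Picard group, while contracting all of them recovers $\mathrm{Pic}(S_{\Bbbk})$, the difference $\mathrm{rk}\,\mathrm{Pic}(\widetilde{S}_{\Bbbk})-\mathrm{rk}\,\mathrm{Pic}(S_{\Bbbk})$ is exactly the sum, over all singular points of $S_{\Bbbk}$, of these ranks.

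First I would confirm that the four points listed just before the corollary constitute the \emph{entire} singular locus of the generic surface $S_{\Bbbk}$; equivalently, that $S_\lambda$ has no floating singular points for general $\lambda$ (in the sense of Remark~\ref{remark:fixed-singular-points}). This is the only genuinely nontrivial input. It can be checked by applying the Jacobian criterion to the defining quartic over the field $\Bbbk=\mathbb{C}(\lambda)$ and verifying that, away from the base locus of $\mathcal{S}$, a general member of the pencil is smooth; the four singular points all lie on the base locus and already appear in Lemma~\ref{lemma:r2-n4-singularities}.

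Granting this, the computation is immediate. By Lemma~\ref{lemma:r2-n4-singularities}, the point $P_{\{x\},\{y\},\{z,t\}}$ is of type $\mathbb{A}_4$, the point $P_{\{x\},\{z\},\{t\}}$ is of type $\mathbb{D}_4$, the point $P_{\{y\},\{t\},\{x,z\}}$ is an ordinary double point, i.e.\ of type $\mathbb{A}_1$, and the point $P_{\{y\},\{z\},\{x,t\}}$ is of type $\mathbb{A}_4$. Summing the ranks of the corresponding root lattices gives
$$
\mathrm{rk}\,\mathrm{Pic}\big(\widetilde{S}_{\Bbbk}\big)-\mathrm{rk}\,\mathrm{Pic}\big(S_{\Bbbk}\big)=4+4+1+4=13,
$$
which is the asserted equality.

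The main obstacle is precisely the completeness of the singular locus: Lemma~\ref{lemma:r2-n4-singularities} classifies only those singularities lying on the base locus of $\mathcal{S}$, whereas the Picard-rank difference counts \emph{every} du Val point of $S_{\Bbbk}$, including any floating one. Verifying that no floating singularity occurs, so that the four base-locus points exhaust the singular set, is the step that requires care, and it is what legitimizes reading the value $13$ off the four Dynkin types alone. Once that is settled, the corollary is a bookkeeping consequence of the classification already carried out in the proof of the lemma.
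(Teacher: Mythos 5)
Your proposal is correct and matches the paper's own (very brief) justification: the paper simply records that $S_{\Bbbk}$ is singular exactly at the four listed points, whose minimal resolutions are described in the proof of Lemma~\ref{lemma:r2-n4-singularities}, and the count $4+4+1+4=13$ follows. Your extra remark that one must check the absence of floating singular points (so that the four base-locus points exhaust $\mathrm{Sing}(S_{\Bbbk})$) is a legitimate point that the paper leaves implicit, and your treatment of it is appropriate.
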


The base locus of the pencil $\mathcal{S}$ consists of the curves
$L_{\{x\},\{y\}}$, $L_{\{y\},\{z\}}$, $L_{\{x\},\{z,t\}}$, $L_{\{z\},\{x,t\}}$, $L_{\{t\},\{x,z\}}$,
$L_{\{y\},\{x,z,t\}}$, $L_{\{x\},\{y,z,t\}}$, $L_{\{z\},\{x,y,t\}}$, $L_{\{t\},\{x,y,z\}}$, and $\mathcal{C}$.
To describe the rank of their intersection matrix on the surface $S_\lambda$ for $\lambda\ne -7$ and $\lambda\ne\infty$,
it is enough to compute the rank of the intersection matrix of the curves
$L_{\{x\},\{y\}}$, $L_{\{y\},\{z\}}$, $L_{\{x\},\{y,z,t\}}$, $L_{\{z\},\{x,y,t\}}$, $L_{\{t\},\{x,y,z\}}$
$L_{\{t\},\{x,z\}}$, and $H_\lambda$, because
\begin{multline*}
H_\lambda\sim L_{\{x\},\{y\}}+2L_{\{x\},\{z,t\}}+L_{\{x\},\{y,z,t\}}\sim L_{\{x\},\{y\}}+L_{\{y\},\{z\}}+2L_{\{y\},\{x,z,t\}}\sim\\
\sim L_{\{y\},\{z\}}+2L_{\{z\},\{x,t\}}+L_{\{z\},\{x,y,t\}}\sim L_{\{t\},\{x,z\}}+L_{\{t\},\{x,y,z\}}+\mathcal{C}.
\end{multline*}
Moreover, if $\lambda\ne -7$, then
$$
H_{\{x,y,z,t\}}\cdot S_\lambda=L_{\{y\},\{x,z,t\}}+L_{\{x\},\{y,z,t\}}+L_{\{z\},\{x,y,t\}}+L_{\{t\},\{x,y,z\}},
$$
so that $H_\lambda\sim L_{\{y\},\{x,z,t\}}+L_{\{x\},\{y,z,t\}}+L_{\{z\},\{x,y,t\}}+L_{\{t\},\{x,y,z\}}$.
Thus, if $\lambda\ne -7$, then the rank of the intersection matrix
of the curves $L_{\{x\},\{y\}}$, $L_{\{y\},\{z\}}$, $L_{\{x\},\{z,t\}}$, $L_{\{z\},\{x,t\}}$, $L_{\{t\},\{x,z\}}$,
$L_{\{y\},\{x,z,t\}}$, $L_{\{x\},\{y,z,t\}}$, $L_{\{z\},\{x,y,t\}}$, $L_{\{t\},\{x,y,z\}}$, and $\mathcal{C}$
on the surface $S_\lambda$ is the same as the rank of the intersection matrix of the curves
$L_{\{x\},\{y\}}$, $L_{\{y\},\{z\}}$, $L_{\{x\},\{y,z,t\}}$, $L_{\{z\},\{x,y,t\}}$, $L_{\{t\},\{x,z\}}$, and $H_\lambda$.
Moreover, we have the following.

\begin{lemma}
\label{lemma:r2-n4-intersection}
Suppose that $\lambda\ne -7$.
Then the intersection matrix of the curves
$L_{\{x\},\{y\}}$, $L_{\{y\},\{z\}}$, $L_{\{x\},\{y,z,t\}}$, $L_{\{z\},\{x,y,t\}}$,
$L_{\{t\},\{x,z\}}$, and $H_\lambda$ on the surface $S_\lambda$ is given by
\begin{center}
\renewcommand\arraystretch{1.42}
\begin{tabular}{|c||c|c|c|c|c|c|}
\hline
 $\bullet$  & $L_{\{x\},\{y\}}$ & $L_{\{y\},\{z\}}$ & $L_{\{x\},\{y,z,t\}}$ & $L_{\{z\},\{x,y,t\}}$ & $L_{\{t\},\{x,z\}}$ & $H_\lambda$ \\
\hline\hline
 $L_{\{x\},\{y\}}$ &  $-\frac{4}{5}$ & $1$ & $\frac{3}{5}$ & $0$ & $0$ & $1$ \\
\hline
 $L_{\{y\},\{z\}}$ &  $1$ & $-\frac{4}{5}$ & $0$ & $\frac{3}{5}$ & $0$ & $1$ \\
\hline
 $L_{\{x\},\{y,z,t\}}$ &  $\frac{3}{5}$ & $0$ & $-\frac{6}{5}$ & $1$ & $0$ & $1$ \\
\hline
 $L_{\{z\},\{x,y,t\}}$ &  $0$ & $\frac{3}{5}$ & $1$ & $-\frac{6}{5}$ & $0$ & $1$ \\
\hline
 $L_{\{t\},\{x,z\}}$ &  $0$ & $0$ & $0$ & $0$ & $-\frac{1}{2}$ & $1$ \\
\hline
 $H_\lambda$  & $1$ & $1$ & $1$ & $1$ & $1$ & $4$ \\
\hline
\end{tabular}
\end{center}
\end{lemma}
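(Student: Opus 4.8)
The plan is to split the six-by-six form into the trivial $H_\lambda$ row/column and the genuinely computational $5\times 5$ block of the lines. The entries involving $H_\lambda$ are immediate: since $H_\lambda$ is a general hyperplane section of the quartic $S_\lambda$ we have $H_\lambda^2=\deg S_\lambda=4$, and $H_\lambda\cdot L=\deg L=1$ for each of the five lines. So all the real work is the intersection form of the lines $L_{\{x\},\{y\}}$, $L_{\{y\},\{z\}}$, $L_{\{x\},\{y,z,t\}}$, $L_{\{z\},\{x,y,t\}}$, $L_{\{t\},\{x,z\}}$, which I would compute with the du Val intersection theory of Appendix~\ref{section:intersection}, i.e.\ Propositions~\ref{proposition:du-Val-intersection} and~\ref{proposition:du-Val-self-intersection}.

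The first step is bookkeeping: record which fixed singular points of $S_\lambda$ (classified in Lemma~\ref{lemma:r2-n4-singularities}) lie on each line, simply by substituting the coordinates of the four points $P_{\{x\},\{y\},\{z,t\}}$, $P_{\{x\},\{z\},\{t\}}$, $P_{\{y\},\{t\},\{x,z\}}$, $P_{\{y\},\{z\},\{x,t\}}$ into the defining equations. This yields that $L_{\{x\},\{y\}}$ and $L_{\{x\},\{y,z,t\}}$ pass through the $\mathbb{A}_4$ point $P_{\{x\},\{y\},\{z,t\}}$, that (by the $x\leftrightarrow z$ symmetry of $S_\lambda$) $L_{\{y\},\{z\}}$ and $L_{\{z\},\{x,y,t\}}$ pass through the $\mathbb{A}_4$ point $P_{\{y\},\{z\},\{x,t\}}$, and that $L_{\{t\},\{x,z\}}$ passes through the $\mathbb{D}_4$ point $P_{\{x\},\{z\},\{t\}}$ and the $\mathbb{A}_1$ point $P_{\{y\},\{t\},\{x,z\}}$. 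Two lines sharing no singular point and meeting nowhere in $\mathbb{P}^3$ have intersection $0$; this immediately accounts for every vanishing off-diagonal entry and for all entries in the $L_{\{t\},\{x,z\}}$ row except its self-intersection. The surviving off-diagonal incidences happen at smooth points of $S_\lambda$ (for instance $L_{\{x\},\{y\}}\cap L_{\{y\},\{z\}}=P_{\{x\},\{y\},\{z\}}$, which is not among the four singular points), so these are transversal and contribute $1$.

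For the remaining entries I would apply the two Appendix propositions. Each line is a smooth rational curve whose proper transform on the minimal resolution $\widetilde{S}_\lambda$ has self-intersection $-2$, and each du Val point adds a positive rational correction determined by which exceptional $(-2)$-curve the proper transform meets; the requisite positional data is read off the explicit blow-ups already performed in the proof of Lemma~\ref{lemma:r2-n4-singularities}. At each $\mathbb{A}_4$ point the two incident lines meet the chain at the first and second nodes, giving self-intersection corrections $\tfrac{4}{5}$ and $\tfrac{6}{5}$ (so $L_{\{x\},\{y\},z,t\}}^{\,2}=-\tfrac{6}{5}$ and $L_{\{x\},\{y\}}^2=-\tfrac{4}{5}$) and mutual intersection $\tfrac{1\cdot(5-2)}{5}=\tfrac{3}{5}$; at the $\mathbb{A}_1$ point the single exceptional curve contributes $\tfrac{1}{2}$, and at the $\mathbb{D}_4$ point a leaf of the star-shaped dual graph contributes $1$, whence $L_{\{t\},\{x,z\}}^2=-2+1+\tfrac{1}{2}=-\tfrac{1}{2}$. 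Assembling these reproduces the stated table.

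The main obstacle is the $\mathbb{D}_4$ point. Unlike the $\mathbb{A}_n$ case, whose correction is indexed by a single position along a chain, the $\mathbb{D}_4$ correction comes from the inverse of the $D_4$ intersection matrix, so one must verify from the resolution in Lemma~\ref{lemma:r2-n4-singularities} that the proper transform of $L_{\{t\},\{x,z\}}$ meets a \emph{leaf} of the dual graph (inverse-matrix diagonal entry $1$) rather than the central curve; getting this wrong changes the self-intersection. The secondary difficulty is fixing the two positions along each $\mathbb{A}_4$ chain consistently, so that the distinct corrections $\tfrac{4}{5}$ and $\tfrac{6}{5}$ and the mutual value $\tfrac{3}{5}$ all agree. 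Here the $x\leftrightarrow z$ symmetry is a genuine labor-saving device: it identifies the two $\mathbb{A}_4$ configurations, so only one of them needs to be analyzed locally.
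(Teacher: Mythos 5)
Your computation is correct and follows essentially the paper's route: the $H_\lambda$ row is trivial, the zero and transversal entries come from incidence bookkeeping against the singular points of Lemma~\ref{lemma:r2-n4-singularities}, the $x\leftrightarrow z$ symmetry halves the work, and the remaining entries come from Propositions~\ref{proposition:du-Val-intersection} and~\ref{proposition:du-Val-self-intersection}. Two adjustments to where you locate the difficulty. First, your ``main obstacle'' at the $\mathbb{D}_4$ point is not an obstacle at all: for $n=4$ the two alternatives of Proposition~\ref{proposition:du-Val-self-intersection}(iii) both give correction $1$, and the fundamental-cycle argument in Lemma~\ref{lemma:D4} shows that a branch smooth at a $\mathbb{D}_4$ point can only meet a leaf of the dual graph, never the central curve, so $L_{\{t\},\{x,z\}}^2=-2+1+\tfrac{1}{2}=-\tfrac{1}{2}$ requires no positional verification. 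Second, the genuinely delicate entry is $L_{\{x\},\{y\}}\cdot L_{\{x\},\{y,z,t\}}=\tfrac{3}{5}$: your direct evaluation $\tfrac{1\cdot(5-2)}{5}$ tacitly assumes the two branches land at the \emph{same} end of the $\mathbb{A}_4$ chain (positions $1$ and $2$ up to relabelling); if they sat at opposite ends the formula would give $\tfrac{2}{5}$, so this adjacency must actually be read off from the resolution. The paper sidesteps exactly this point by a different device: it first shows, via Remark~\ref{remark:transversal}, that $L_{\{x\},\{z,t\}}$ and $L_{\{x\},\{y,z,t\}}$ meet the same end curve, giving $L_{\{x\},\{z,t\}}\cdot L_{\{x\},\{y,z,t\}}=\tfrac{4}{5}$, and then intersects the relation $L_{\{x\},\{y\}}+2L_{\{x\},\{z,t\}}+L_{\{x\},\{y,z,t\}}\sim H_\lambda$ with $L_{\{x\},\{y,z,t\}}$ to force the value $\tfrac{3}{5}$ without ever deciding where $L_{\{x\},\{y\}}$ sits relative to $L_{\{x\},\{y,z,t\}}$. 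Your route works, but only after supplying that adjacency check; the linear-equivalence trick buys you out of it.
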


\begin{proof}
By definition, we have $H_\lambda^2=4$ and
$$
H_\lambda\cdot L_{\{x\},\{y\}}=H_\lambda\cdot L_{\{y\},\{z\}}=H_\lambda\cdot L_{\{x\},\{y,z,t\}}=H_\lambda\cdot L_{\{z\},\{x,y,t\}}=H_\lambda\cdot L_{\{t\},\{x,z\}}=1.
$$

Let us compute $L_{\{x\},\{y\}}^2$.
The only singular point of the surface $S_\lambda$ contained in $L_{\{x\},\{y\}}$ is the point $P_{\{x\},\{y\},\{z,t\}}$.
Moreover, the surface $S_\lambda$ has du Val singularity of type $\mathbb{A}_4$ at this point by Lemma~\ref{lemma:r2-n4-singularities}.
Let us use the notation of Remark~\ref{remark:transversal} with $S=S_\lambda$, $O=P_{\{x\},\{y\},\{z,t\}}$, $n=4$, and $C=L_{\{x\},\{y\}}$.
Then $\overline{C}$ passes through the point $\overline{G}_1\cap\overline{G}_4$,
so that $\widetilde{C}\cap G_2\ne\varnothing$ or $\widetilde{C}\cap G_3\ne\varnothing$.
In both cases, we get $L_{\{x\},\{y\}}^2=-\frac{4}{5}$ by Proposition~\ref{proposition:du-Val-self-intersection}.

Likewise, using Remark~\ref{remark:transversal} with $S=S_\lambda$, $O=P_{\{x\},\{y\},\{z,t\}}$, $n=4$, and $C=L_{\{x\},\{y,z,t\}}$,
we see that $\overline{C}$ does not pass through the point $\overline{G}_1\cap\overline{G}_4$, so that
$L_{\{x\},\{y,z,t\}}^2=-\frac{6}{5}$ by Proposition~\ref{proposition:du-Val-self-intersection}.
Keeping  in mind the symmetry $x\leftrightarrow z$, we see that
$L_{\{y\},\{z\}}^2=L_{\{x\},\{y\}}^2=-\frac{4}{5}$, and $L_{\{z\},\{x,y,t\}}^2=L_{\{x\},\{y,z,t\}}^2=-\frac{6}{5}$.
Using Proposition~\ref{proposition:du-Val-self-intersection} again, we see that
$L_{\{t\},\{x,z\}}^2=-\frac{1}{2}$, because $P_{\{x\},\{z\},\{t\}}$ and $P_{\{y\},\{t\},\{x,z\}}$ are the only singular points of the surface $S_\lambda$
that are contained in the curve $L_{\{t\},\{x,z\}}$.

Observe that $L_{\{x\},\{y\}}\cap L_{\{y\},\{z\}}=P_{\{x\},\{y\},\{z\}}$, which is a smooth point of the surface $S_\lambda$.
This gives $L_{\{x\},\{y\}}\cdot L_{\{y\},\{z\}}=1$.
We also have
$$
L_{\{x\},\{y\}}\cdot L_{\{z\},\{x,y,t\}}=L_{\{x\},\{y\}}\cdot L_{\{t\},\{x,z\}}=0,
$$
because $L_{\{x\},\{y\}}\cap L_{\{z\},\{x,y,t\}}=L_{\{x\},\{y\}}\cap L_{\{t\},\{x,z\}}=\varnothing$.

To compute $L_{\{x\},\{y\}}\cdot L_{\{x\},\{y,z,t\}}$,
recall that $L_{\{x\},\{y\}}+2L_{\{x\},\{z,t\}}+L_{\{x\},\{y,z,t\}}\sim H_{\lambda}$.
Then
\begin{multline*}
L_{\{x\},\{y\}}\cdot L_{\{x\},\{y,z,t\}}+2L_{\{x\},\{z,t\}}\cdot L_{\{x\},\{y,z,t\}}-\frac{6}{5}=\\
=L_{\{x\},\{y\}}\cdot L_{\{x\},\{y,z,t\}}+2L_{\{x\},\{z,t\}}\cdot L_{\{x\},\{y,z,t\}}+L_{\{x\},\{y,z,t\}}^2=H_{\lambda}\cdot L_{\{x\},\{y,z,t\}}=1.
\end{multline*}
Using Remark~\ref{remark:transversal} with $S=S_\lambda$, $O=P_{\{x\},\{y\},\{z,t\}}$, $n=4$, $C=L_{\{x\},\{z,t\}}$, $Z=L_{\{x\},\{y,z,t\}}$,
we see that neither $\overline{C}$ nor $\overline{Z}$ contains the point $\overline{G}_1\cap\overline{G}_4$,
and either $\overline{C}\cap\overline{G}_1\ne\varnothing\ne\overline{Z}\cap\overline{G}_1$ or $\overline{C}\cap\overline{G}_4\ne\varnothing\ne\overline{Z}\cap\overline{G}_4$.
In both cases, we have $L_{\{x\},\{z,t\}}\cdot L_{\{x\},\{y,z,t\}}=\frac{4}{5}$ by Proposition~\ref{proposition:du-Val-self-intersection},
which implies that $L_{\{x\},\{y\}}\cdot L_{\{x\},\{y,z,t\}}=\frac{3}{5}$.

Using the symmetry $x\leftrightarrow z$, we see that
$L_{\{y\},\{z\}}\cdot L_{\{z\},\{x,y,t\}}=L_{\{x\},\{y\}}\cdot L_{\{x\},\{y,z,t\}}=\frac{3}{5}$.
Since $L_{\{y\},\{z\}}\cap L_{\{x\},\{y,z,t\}}=\varnothing$, and $L_{\{y\},\{z\}}\cap L_{\{t\},\{x,z\}}=\varnothing$,
we have $L_{\{y\},\{z\}}\cdot L_{\{x\},\{y,z,t\}}=0$ and $L_{\{y\},\{z\}}\cdot L_{\{t\},\{x,z\}}=0$, respectively.

Note that $L_{\{x\},\{y,z,t\}}\cap L_{\{z\},\{x,y,t\}}=P_{\{x\},\{z\},\{y,t\}}$,
and $P_{\{x\},\{z\},\{y,t\}}$ is a smooth point of the surface $S_\lambda$.
This shows that $L_{\{x\},\{y,z,t\}}\cdot L_{\{z\},\{x,y,t\}}=1$.
Since $L_{\{x\},\{y,z,t\}}\cap L_{\{t\},\{x,z\}}=\varnothing$, we have $L_{\{x\},\{y,z,t\}}\cdot L_{\{t\},\{x,z\}}=0$.
Likewise, we have $L_{\{z\},\{x,y,t\}}\cdot L_{\{t\},\{x,z\}}=0$.
\end{proof}

The rank of the matrix in Lemma~\ref{lemma:r2-n4-intersection} is $5$, so that \eqref{equation:main-2-simple} holds by Corollary~\ref{corollary:r2-n4-singularities}.
Thus, we see that \eqref{equation:main-2} in Main Theorem holds in this case.

Using Lemma~\ref{lemma:r2-n4-singularities} and Corollary~\ref{corollary:irreducible-fibers},
we see that $[\mathsf{f}^{-1}(\lambda)]=1$ for every $\lambda\not\in\{\infty,-7\}$.
Moreover, we have the following.

\begin{lemma}
\label{lemma:r2-n4-irreducible-special}
One has $[\mathsf{f}^{-1}(-7)]=11$.
\end{lemma}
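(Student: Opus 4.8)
The plan is to evaluate the number of components of the special fiber directly from \eqref{equation:equation:number-of-irredubicle-components-refined}, exactly as in the proofs of Lemmas~\ref{lemma:2-2-P-x-z-t} and \ref{lemma:2-3-P-x-t-yz}. Since $S_{-7}=H_{\{x,z,t\}}+H_{\{x,y,z,t\}}+\mathsf{Q}$, we have $[S_{-7}]=3$. The surface $S_{-7}$ has multiplicity $2$ exactly along those base curves that are pairwise intersections of its three components: one checks that $H_{\{x,z,t\}}\cap H_{\{x,y,z,t\}}=L_{\{y\},\{x,z,t\}}$ and $H_{\{x,z,t\}}\cap\mathsf{Q}=L_{\{x\},\{z,t\}}\cup L_{\{z\},\{x,t\}}$, while $H_{\{x,y,z,t\}}\cap\mathsf{Q}$ is a conic that does not lie in the base locus of $\mathcal{S}$. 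Thus $\mathbf{M}_i^{-7}=2$ precisely for the curves $L_{\{x\},\{z,t\}}$, $L_{\{z\},\{x,t\}}$, $L_{\{y\},\{x,z,t\}}$, each of which has $\mathbf{m}_i=2$, and $\mathbf{M}_i^{-7}=1$ otherwise, so Lemma~\ref{lemma:main} gives $\sum_i\mathbf{C}_i^{-7}=3$. Hence \eqref{equation:equation:number-of-irredubicle-components-refined} reduces the problem to showing $\sum_{P\in\Sigma}\mathbf{D}_P^{-7}=5$.

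Next I would analyze the four fixed singular points of Lemma~\ref{lemma:r2-n4-singularities} by recording which components of $S_{-7}$ pass through each. All three components pass through $P_{\{x\},\{y\},\{z,t\}}$ and through $P_{\{y\},\{z\},\{x,t\}}$, and in local coordinates the three tangent planes are linearly independent, so these are ordinary triple points. At $P_{\{x\},\{z\},\{t\}}$ only $H_{\{x,z,t\}}$ and $\mathsf{Q}$ pass through, and they are tangent with common tangent plane $x+z+t=0$; this is the degeneration of the $\mathbb{D}_4$ point, with quadratic term $(x+z+t)^2$. Finally, at $P_{\{y\},\{t\},\{x,z\}}$ only the planes $H_{\{x,z,t\}}$ and $H_{\{x,y,z,t\}}$ pass through, and they meet transversally, so this is a good double point and $\mathbf{D}_{P_{\{y\},\{t\},\{x,z\}}}^{-7}=0$ by Lemma~\ref{lemma:normal-crossing}. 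Since the equation of $\mathcal{S}$ is invariant under $x\leftrightarrow z$, which exchanges $P_{\{x\},\{y\},\{z,t\}}$ with $P_{\{y\},\{z\},\{x,t\}}$ and fixes $P_{\{x\},\{z\},\{t\}}$, the two triple points have equal defects, so it suffices to show $2\mathbf{D}_{P_{\{x\},\{y\},\{z,t\}}}^{-7}+\mathbf{D}_{P_{\{x\},\{z\},\{t\}}}^{-7}=5$.

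The two remaining defects I would compute by explicit local blow ups following Subsection~\ref{subsection:scheme-step-8}, exactly as in the proofs of Lemmas~\ref{lemma:2-2-P-x-z-t}, \ref{lemma:2-2-P-y-z-t}, and \ref{lemma:2-3-P-x-t-yz}. At $P_{\{x\},\{z\},\{t\}}$ one has $\mathrm{mult}_P(S_{-7})=2$, so $\mathbf{A}_{P_{\{x\},\{z\},\{t\}}}^{-7}=0$ by Corollary~\ref{corollary:log-pull-back} and the whole defect comes from the new base curves created over $P$; blowing up the tangent point and then the singularities it produces on the exceptional surface (the same tower that resolves the $\mathbb{D}_4$ point in Lemma~\ref{lemma:r2-n4-singularities}) and tracking the multiplicities $\mathbf{M}$ and $\mathbf{m}$ of these curves gives the contribution. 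At $P_{\{x\},\{y\},\{z,t\}}$, blowing up the triple point once produces an exceptional divisor $\widehat{E}_1$ whose coefficient in $\widehat{D}_{-7}$ is forced to equal $1$ by the relation $\widehat{D}_{-7}\sim -K_U$ together with $\mathrm{mult}_P(S_{-7})=3$, so $\mathbf{A}_{P_{\{x\},\{y\},\{z,t\}}}^{-7}=1$; the remaining contribution comes from the base curves cut out on $\widehat{E}_1$ by the three sheets. Assembling these local counts yields $2\mathbf{D}_{P_{\{x\},\{y\},\{z,t\}}}^{-7}+\mathbf{D}_{P_{\{x\},\{z\},\{t\}}}^{-7}=5$, and therefore $[\mathsf{f}^{-1}(-7)]=3+3+5=11$.

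The main obstacle is the bookkeeping in these two local resolutions: because $S_{-7}$ is reducible and its sheets are mutually tangent at $P_{\{x\},\{z\},\{t\}}$, the exceptional surfaces carry several base curves whose multiplicities $\mathbf{M}_i^{-7}$ and $\mathbf{m}_i$ must be read off carefully chart by chart, and one must confirm that no additional fixed singular point has been overlooked. As a consistency check, $\lambda=-7$ is the only finite value for which $\mathsf{f}^{-1}(\lambda)$ is reducible, so $\sum_{P\in\mathbb{C}^1}(\rho_P-1)=[\mathsf{f}^{-1}(-7)]-1=10=h^{1,2}(X)$, which is precisely \eqref{equation:main-1} in this case.
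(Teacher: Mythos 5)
Your strategy is exactly the paper's: reduce via \eqref{equation:equation:number-of-irredubicle-components-refined} to $[S_{-7}]+\sum_i\mathbf{C}_i^{-7}+\sum_{P\in\Sigma}\mathbf{D}_P^{-7}$, and your computation of the first two terms ($3+3$) is correct and complete — the base curves along which $S_{-7}$ has multiplicity $2$ are indeed $L_{\{x\},\{z,t\}}$, $L_{\{z\},\{x,t\}}$, $L_{\{y\},\{x,z,t\}}$, each with $\mathbf{m}_i=2$. Your identification of $P_{\{y\},\{t\},\{x,z\}}$ as a good double point with vanishing defect, the values $\mathbf{A}_P^{-7}=1,1,0$ at the other three fixed singular points, and the use of the $x\leftrightarrow z$ symmetry to equate the two triple-point defects are all correct; the symmetry remark is a small economy over the paper, which treats the two triple points separately.

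The gap is that the two nonzero defects are never actually computed: you reduce the claim to $2\mathbf{D}_{P_{\{x\},\{y\},\{z,t\}}}^{-7}+\mathbf{D}_{P_{\{x\},\{z\},\{t\}}}^{-7}=5$ and then assert that ``assembling these local counts'' yields $5$, which is the statement to be proved rather than a consequence of anything you have established. To close it you must, as \eqref{equation:D-A-B} and Lemma~\ref{lemma:main-2} require, list the base curves of $\widehat{\mathcal{S}}$ lying over each point and pin down their $\mathbf{m}_i$ and $\mathbf{M}_i^{-7}$. Over $P_{\{x\},\{y\},\{z,t\}}$ the four base curves of $\mathcal{S}$ through that point give $\mathrm{mult}_P(S_{\lambda_1}\cdot S_{\lambda_2})=7$, of which $\mathrm{mult}_P(S_{\lambda_1})\mathrm{mult}_P(S_{\lambda_2})=4$ is accounted for, leaving $3$ to distribute between the two base curves on the first exceptional plane; since $\widehat{D}_{-7}$ is singular along exactly one of them, that one has $\mathbf{m}=\mathbf{M}^{-7}=2$ and contributes $1$, which together with $\mathbf{A}=1$ gives defect $2$. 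Over $P_{\{x\},\{z\},\{t\}}$ the single base curve on the exceptional plane has $\mathbf{m}=\mathbf{M}^{-7}=2$ and $\mathbf{A}=0$, giving defect $1$. These are exactly the numbers your outline presupposes; without deriving them the argument is a correct frame around an unproved core. (Your closing consistency check also leans on \eqref{equation:main-1}, i.e.\ on the Main Theorem itself, so it cannot substitute for the computation.)
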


\begin{proof}
Let $C_1=L_{\{x\},\{y\}}$, $C_2=L_{\{y\},\{z\}}$, $C_3=L_{\{x\},\{z,t\}}$, $C_4=L_{\{z\},\{x,t\}}$, $C_5=L_{\{t\},\{x,z\}}$,
$C_6=L_{\{y\},\{x,z,t\}}$, $C_7=L_{\{x\},\{y,z,t\}}$, $C_8=L_{\{z\},\{x,y,t\}}$, \mbox{$C_9=L_{\{t\},\{x,y,z\}}$}, and $C_{10}=\mathcal{C}$.
Then
$$
\mathbf{M}_1^{-7}=\mathbf{M}_2^{-7}=\mathbf{M}_5^{-7}=\mathbf{M}_7^{-7}=\mathbf{M}_8^{-7}=\mathbf{M}_9^{-7}=\mathbf{M}_{10}^{-7}=1
$$
and $\mathbf{M}_3^{-7}=\mathbf{M}_4^{-7}=\mathbf{M}_6^{-7}=2$.
On the other hand, we have
$$
\mathbf{m}_1^{-7}=\mathbf{m}_2^{-7}=\mathbf{m}_3^{-7}=\mathbf{m}_4^{-7}=\mathbf{m}_6^{-7}=2,
$$
and $\mathbf{m}_5^{-7}=\mathbf{m}_7^{-7}=\mathbf{m}_8^{-7}=\mathbf{m}_9^{-7}=\mathbf{m}_{10}^{-7}=1$.
Using Lemma~\ref{lemma:main} and \eqref{equation:equation:number-of-irredubicle-components-refined},
we see that
$$
\big[\mathsf{f}^{-1}(-7)\big]=6+\mathbf{D}_{P_{\{x\},\{y\},\{z,t\}}}^{-7}+\mathbf{D}_{P_{\{x\},\{z\},\{t\}}}^{-7}+\mathbf{D}_{P_{\{y\},\{t\},\{x,z\}}}^{-7}+\mathbf{D}_{P_{\{y\},\{z\},\{x,t\}}}^{-7}.
$$

It follows from the proof of Lemma~\ref{lemma:r2-n4-singularities} that
the surface $S_{-7}$ has an isolated ordinary double singularity at the point
$P_{\{y\},\{t\},\{x,z\}}$.
Thus, it follows from Lemma~\ref{lemma:normal-crossing} that its {defect} is zero, so that  $\mathbf{D}_{P_{\{y\},\{t\},\{x,z\}}}^{-7}=0$.
Hence, we conclude that
$$
\big[\mathsf{f}^{-1}(-7)\big]=6+\mathbf{D}_{P_{\{x\},\{y\},\{z,t\}}}^{-7}+\mathbf{D}_{P_{\{y\},\{z\},\{x,t\}}}^{-7}+\mathbf{D}_{P_{\{x\},\{z\},\{t\}}}^{-7}.
$$

The numbers $\mathbf{D}_{P_{\{x\},\{y\},\{z,t\}}}^{-7}$, $\mathbf{D}_{P_{\{y\},\{z\},\{x,t\}}}^{-7}$, and $\mathbf{D}_{P_{\{x\},\{z\},\{t\}}}$
can be computed using algorithm described in Section~\ref{subsection:scheme-step-8}.
To use it, we have to know the structure of the birational morphism $\alpha$ in \eqref{equation:main-diagram}.
Implicitly, it has been described in the proof of Lemma~\ref{lemma:r2-n4-singularities}.
To be precise, we proved that there exists a commutative diagram
$$
\xymatrix{
&&U_3\ar@{->}[lld]_{\alpha_3}&&U_4\ar@{->}[ll]_{\alpha_4}&&U_5\ar@{->}[ll]_{\alpha_5}&\\
U_2\ar@{->}[drr]_{\alpha_2}&&&&&&&&U_6\ar@{->}[llu]_{\alpha_6}\\
&&U_1\ar@{->}[rr]_{\alpha_1}&&\mathbb{P}^3&&U\ar@{->}[ll]^\alpha\ar@{->}[rru]_{\gamma}&& }
$$
Here $\alpha_1$ is the blow up of the point $P_{\{y\},\{t\},\{x,z\}}$,
the morphism $\alpha_2$ is the blow up of the preimage of the point $P_{\{y\},\{z\},\{x,t\}}$,
the morphism $\alpha_3$ is the blow up of a point in~$\mathbf{E}_2$,
the morphism $\alpha_4$ is the blow up of the preimage of the point $P_{\{x\},\{y\},\{z,t\}}$,
the morphism $\alpha_5$ is the blow up of a point in $\mathbf{E}_4$,
the morphism $\alpha_6$ is the blow up of the preimage of the point $P_{\{x\},\{z\},\{t\}}$,
and $\gamma$ is the blow ups of three distinct points in $\mathbf{E}_6$,
which are described in the very end of the proof of Lemma~\ref{lemma:r2-n4-singularities}.
In the notation used in the proof of Lemma~\ref{lemma:r2-n4-singularities},
these are the points $(\tilde{x}_6,\tilde{z}_6,\tilde{t}_6)=(0,0,0)$, $(\tilde{x}_6,\tilde{z}_6,\tilde{t}_6)=(0,-1,0)$
and $(\tilde{x}_6^\prime,\tilde{z}_6^\prime,\tilde{t}_6^\prime)=(0,0,0)$.

Using Lemma~\ref{lemma:main-2} and \eqref{equation:D-A-B}, we can find
$\mathbf{D}_{P_{\{x\},\{y\},\{z,t\}}}^{-7}$, $\mathbf{D}_{P_{\{y\},\{z\},\{x,t\}}}^{-7}$, and $\mathbf{D}_{P_{\{x\},\{z\},\{t\}}}^{-7}$
by analyzing the base curves of the pencil $\widehat{\mathcal{S}}$.
Implicitly, this has been already done in the proof of Lemma~\ref{lemma:r2-n4-singularities},
so that we will use the notation introduced in this proof.

Observe that $\widehat{E}_1$ does not contain base curves of the pencil $\widehat{\mathcal{S}}$.
To describe the base curves in the surface $\mathbf{E}_2$,
note that $\mathcal{S}^2\vert_{\mathbf{E}_2}$ consists of two lines in $\mathbf{E}_2\cong\mathbb{P}^2$.
These curves are given by  $\bar{x}_2=\bar{z}_2=0$ and $\bar{x}_2=\bar{y}_2=0$.
Denote them by $C_{11}^2$ and $C_{12}^2$, respectively.
Note that
$D_{-7}^2=S_{-7}^2+\mathbf{E}_2$,
the surface $S_{-7}^2$ contains $C_{11}^2$, and it does not contain $C_{12}^2$.

Similarly, the restriction $\mathcal{S}^3\vert_{\mathbf{E}_3}$ contains one base curve,
which is a line in $\mathbf{E}_3\cong\mathbb{P}^2$.
Denote this curve by $C_{13}^3$.
Then $C_{13}^3$ is contained in $S_{-7}^3$,
and it is not contained in $\mathbf{E}_2^3$.
Moreover, the surface $S_{-7}^3$ is smooth at general point of the curve $C_{13}^3$.

The restriction $\mathcal{S}^4\vert_{\mathbf{E}_4}$ consists of two lines in $\mathbf{E}_4\cong\mathbb{P}^2$,
which we denote by $C_{14}^4$ and~$C_{15}^4$.
One of them is contained in the surface $S_{-7}^4$.
We may assume that this curve is $C_{14}^4$.
Similarly, the restriction $\mathcal{S}^5\vert_{\mathbf{E}_5}$ contains one base curve,
which is a line in $\mathbf{E}_5\cong\mathbb{P}^2$.
Let~us~denote this curve by $C_{16}^5$. It is contained in $S_{-7}^5$,
and it is not contained in $\mathbf{E}_4^5$.
By~construction, we have $D_{-7}^5=S_{-7}^5+\mathbf{E}_2^5+\mathbf{E}_4^5$.

The restriction $\mathcal{S}^6\vert_{\mathbf{E}_6}$ consists of a single line in $\mathbf{E}_6\cong\mathbb{P}^2$ (taken with multiplicity $2$).
Denote this line by $C_{17}^6$.
Note that the surface $S_{-7}^6$ is singular along the curve $C_{17}^6$.

Finally, we observe that $\widehat{E}_7$, $\widehat{E}_8$
and $\widehat{E}_9$ does not contain base curves of the pencil $\widehat{\mathcal{S}}$.

Now we are ready to compute $\mathbf{D}_{P_{\{x\},\{y\},\{z,t\}}}^{-7}$, $\mathbf{D}_{P_{\{y\},\{z\},\{x,t\}}}^{-7}$, $\mathbf{D}_{P_{\{x\},\{z\},\{t\}}}^{-7}$.
First, we observe that
$\widehat{D}_{-7}=\widehat{S}_{-7}+\widehat{E}_{2}+\widehat{E}_4$,
so that $\mathbf{A}_{P_{\{x\},\{y\},\{z,t\}}}^{-7}=\mathbf{A}_{P_{\{y\},\{z\},\{x,t\}}}^{-7}=1$ and $\mathbf{A}_{P_{\{x\},\{z\},\{t\}}}^{-7}=0$.
Second, we observe that the curves $\widehat{C}_{11}$, $\widehat{C}_{12}$, $\widehat{C}_{13}$, $\widehat{C}_{14}$,
$\widehat{C}_{15}$, $\widehat{C}_{16}$, and $\widehat{C}_{17}$ are all base curves of the pencil $\widehat{\mathcal{S}}$
that are contained in $\alpha$-exceptional divisors.
The curves $\widehat{C}_{11}$, $\widehat{C}_{12}$, and $\widehat{C}_{13}$ are mapped to the point $P_{\{x\},\{y\},\{z,t\}}$,
the curves  $\widehat{C}_{14}$, $\widehat{C}_{15}$, and $\widehat{C}_{16}$ are mapped to the point $P_{\{y\},\{z\},\{x,t\}}$
and the curve $\widehat{C}_{17}$ is mapped to the point  $P_{\{x\},\{z\},\{t\}}$.
Thus, to find $\mathbf{D}_{P_{\{x\},\{y\},\{z,t\}}}^{-7}$,
$\mathbf{D}_{P_{\{y\},\{z\},\{x,t\}}}^{-7}$, and
$\mathbf{D}_{P_{\{x\},\{z\},\{t\}}}^{-7}$,
we have to compute the numbers $\mathbf{C}_{11}^{-7}$, $\mathbf{C}_{12}^{-7}$, $\mathbf{C}_{13}^{-7}$,
$\mathbf{C}_{14}^{-7}$, $\mathbf{C}_{15}^{-7}$, $\mathbf{C}_{16}^{-7}$, and $\mathbf{C}_{17}^{-7}$ defined in \eqref{equation:C}.
This can be done using Lemma~\ref{lemma:main-2}.

Observe that  $\mathbf{M}_{12}^{-7}=\mathbf{M}_{13}^{-7}=\mathbf{M}_{15}^{-7}=\mathbf{M}_{16}^{-7}=1$
and $\mathbf{M}_{11}^{-7}=\mathbf{M}_{14}^{-7}=\mathbf{M}_{17}^{-7}=2$.
Let us find the numbers
$\mathbf{m}_{11}$, $\mathbf{m}_{12}$, $\mathbf{m}_{13}$, $\mathbf{m}_{14}$, $\mathbf{m}_{15}$, $\mathbf{m}_{16}$, and $\mathbf{m}_{17}$.

Among base curves of the pencil $\mathcal{S}$, only $C_2$, $C_4$, $C_6$, $C_8$ contain the point $P_{\{y\},\{z\},\{x,t\}}$.
This shows that
\begin{multline*}
7=\mathrm{mult}_{P_{\{y\},\{z\},\{x,t\}}}\Big(2C_2+2C_4+2C_6+C_8\Big)=\mathrm{mult}_{P_{\{y\},\{z\},\{x,t\}}}\Big(S_{\lambda_1}\cdot S_{\lambda_2}\Big)=\\
=\mathrm{mult}_{P_{\{y\},\{z\},\{x,t\}}}\big(S_{\lambda_1}\big)\mathrm{mult}_{P_{\{y\},\{z\},\{x,t\}}}\big(S_{\lambda_2}\big)+\mathbf{m}_{11}+\mathbf{m}_{12}=4+\mathbf{m}_{11}+\mathbf{m}_{12}.
\end{multline*}
Moreover, we have $\mathbf{m}_{11}\geqslant 2$, because $\widehat{D}_{-7}$ is singular along $\widehat{C}_{11}$.
This shows that $\mathbf{m}_{11}=2$ and $\mathbf{m}_{12}=1$.
Similarly, we see that $\mathbf{m}_{13}=1$.
Using symmetry $x\leftrightarrow z$, we deduce that $\mathbf{m}_{14}=2$ and $\mathbf{m}_{15}=\mathbf{m}_{16}=1$.
To find $\mathbf{m}_{17}$, we observe that $C_3$, $C_4$, $C_5$, and $C_{10}$ are the only base curves of the pencil $\mathcal{S}$ that
contain the point $P_{\{x\},\{z\},\{t\}}$.
This shows that
$$
\mathrm{mult}_{P_{\{x\},\{z\},\{t\}}}\Big(S_{\lambda_1}\cdot S_{\lambda_2}\Big)=\mathrm{mult}_{P_{\{x\},\{z\},\{t\}}}\Big(2C_3+2C_4+C_5+C_{10}\Big)=6,
$$
which implies that $\mathbf{m}_{17}=2$.

Recall from \eqref{equation:D-A-B} that
$$
\mathbf{D}_{P_{\{x\},\{y\},\{z,t\}}}^{-7}=\mathbf{A}_{P_{\{x\},\{y\},\{z,t\}}}^{-7}+\mathbf{C}_{11}^{-7}+\mathbf{C}_{12}^{-7}+\mathbf{C}_{13}^{-7}=1+\mathbf{C}_{11}^{-7}+\mathbf{C}_{12}^{-7}+\mathbf{C}_{13}^{-7},
$$
where each term $\mathbf{C}_i^{-7}$ is defined in \eqref{equation:C} and can be found using Lemma~\ref{lemma:main-2}.
This gives $\mathbf{D}_{P_{\{x\},\{y\},\{z,t\}}}^{-7}=\mathbf{C}_{11}^{-7}=2$.
Similarly, we see that $\mathbf{D}_{P_{\{y\},\{z\},\{y,t\}}}^{-7}=\mathbf{C}_{14}^{-7}=2$,
Likewise, we have $\mathbf{D}_{P_{\{x\},\{z\},\{t\}}}^{-7}=\mathbf{C}_{17}^{-7}=1$.
Thus, we see that
$$
\big[\mathsf{f}^{-1}(-7)\big]=6+\mathbf{D}_{P_{\{x\},\{y\},\{z,t\}}}^{-7}+\mathbf{D}_{P_{\{y\},\{z\},\{y,t\}}}^{-7}+\mathbf{D}_{P_{\{x\},\{z\},\{t\}}}^{-7}=11,
$$
which completes the proof of the lemma.
\end{proof}

Since $h^{1,2}(X)=10$, we see that  \eqref{equation:main-1} in Main Theorem also holds in this case.

\subsection{Family \textnumero $2.5$}
\label{section:r-2-n-5}

In this case, the threefold $X$ is a blow up of a smooth cubic threefold in $\mathbb{P}^4$ along a smooth plane cubic curve.
Note that $h^{1,2}(X)=6$.
A~toric Landau--Ginzburg model is given by Minkowski polynomial \textnumero $3452$, which is
\begin{multline*}
x+y+z+x^2y^{-1}z^{-1}+3xz^{-1}+3yz^{-1}+x^{-1}y^2z^{-1}+3xy^{-1}+3x^{-1}y+3y^{-1}z+\\
+3x^{-1}z+x^{-1}y^{-1}z^2+xy^{-1}z^{-1}+2z^{-1}+x^{-1}yz^{-1}+2y^{-1}+2x^{-1}+x^{-1}y^{-1}z.
\end{multline*}
The corresponding quartic pencil $\mathcal{S}$ is given by the equation
\begin{multline*}
x^2yz+y^2zx+z^2yx+x^3t+3x^2ty+3y^2tx+y^3t+3x^2tz+3y^2tz+3z^2tx+\\
+3z^2ty+z^3t+x^2t^2+2t^2yx+t^2y^2+2t^2zx+2t^2yz+t^2z^2=\lambda xyzt.
\end{multline*}
Observe that this equation is invariant with respect to any permutations of the coordinates $x$, $y$, and $z$.
To describe the base locus of the pencil $\mathcal{S}$, we observe that
\begin{equation*}
\begin{split}
H_{\{x\}}\cdot S_0&=L_{\{x\},\{t\}}+2L_{\{x\},\{y,z\}}+L_{\{x\},\{y,z,t\}},\\
H_{\{y\}}\cdot S_0&=L_{\{y\},\{t\}}+2L_{\{y\},\{x,z\}}+L_{\{y\},\{x,z,t\}},\\
H_{\{z\}}\cdot S_0&=L_{\{z\},\{t\}}+2L_{\{z\},\{x,y\}}+L_{\{z\},\{x,y,t\}},\\
H_{\{t\}}\cdot S_0&=L_{\{x\},\{t\}}+L_{\{y\},\{t\}}+L_{\{z\},\{t\}}+L_{\{t\},\{x,y,z\}}.
\end{split}
\end{equation*}

For every $\lambda\not\in\{-6,-7,\infty\}$, the surface $S_\lambda$ has isolated singularities, so that it is irreducible.
On the other hand, we have
$S_{-6}=H_{\{x,y,z\}}+\mathsf{S}$,
where $\mathsf{S}$ is a  cubic surface that is given by $t^2x+t^2y+t^2z+tx^2+2txy+2txz+ty^2+2tyz+tz^2+xyz=0$.
Likewise, we have
$S_{-7}=H_{\{x,y,z,t\}}+\mathbf{S}$,
where $\mathbf{S}$ is a cubic surface  that is given by $t(x+y+z)^2+xyz=0$.

One can show that $\mathsf{S}$ is smooth.
On the other hand, the surface $\mathbf{S}$ has a unique singular point $P_{\{x\},\{y\},\{z\}}$. The surface $\mathbf{S}$ has du Val singularity of type $\mathbb{D}_4$ at this point.
Observe also that
$H_{\{x,y,z\}}\cdot\mathsf{S}=L_{\{x\},\{y,z\}}+L_{\{y\},\{x,z\}}+L_{\{z\},\{x,y\}}$,
so that $S_{-6}$ is singular along the lines $L_{\{x\},\{y,z\}}$, $L_{\{y\},\{x,z\}}$, $L_{\{z\},\{x,y\}}$.
Note also that the intersection $H_{\{x,y,z,t\}}\cap\mathbf{S}$ is a smooth cubic curve,
which is not contained in the base locus of the pencil $\mathcal{S}$.

\begin{lemma}
\label{lemma:r2-n5-singularities}
Suppose that $\lambda\not\in\{-6,-7,\infty\}$.
Then singular points of the surface $S_\lambda$ contained in the base locus of the pencil $\mathcal{S}$ can be describes as follows:
\begin{itemize}\setlength{\itemindent}{3cm}
\item[$P_{\{x\},\{y\},\{z\}}$:] type $\mathbb{D}_4$ with quadratic term $(x+z+t)^2$;
\item[$P_{\{x\},\{t\},\{y,z\}}$:] type $\mathbb{A}_3$ with quadratic term $x(x+z+y-(\lambda+6)t)$;
\item[$P_{\{y\},\{t\},\{x,z\}}$:] type $\mathbb{A}_3$ with quadratic term $y(x+z+y-(\lambda+6)t)$;
\item[$P_{\{z\},\{t\},\{x,y\}}$:] type $\mathbb{A}_3$ with quadratic term $z(x+z+y-(\lambda+6)t)$.
\end{itemize}
\end{lemma}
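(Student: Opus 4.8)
The plan is to exploit the $\mathfrak{S}_3$-symmetry of the defining equation, which permutes the coordinates $x$, $y$, $z$. Under this symmetry the point $P_{\{x\},\{y\},\{z\}}$ is fixed, whereas the three points $P_{\{x\},\{t\},\{y,z\}}$, $P_{\{y\},\{t\},\{x,z\}}$ and $P_{\{z\},\{t\},\{x,y\}}$ form a single orbit. Hence it is enough to analyse the singularity of $S_\lambda$ at $P_{\{x\},\{y\},\{z\}}$ and at one representative of the orbit, say $P_{\{z\},\{t\},\{x,y\}}$; the remaining two $\mathbb{A}_3$ points, together with their quadratic terms, are then obtained by applying the coordinate permutations. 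Before this, I would first confirm that these four points are the only singular points of $S_\lambda$ lying in the base locus of $\mathcal{S}$, by inspecting the Jacobian of $S_\lambda$ along each of the base curves listed above: for generic $\lambda$ the surface $S_\lambda$ is a du Val $K3$ by Lemma~\ref{lemma:cokernel}, so its singular locus is finite, and a direct check rules out every other intersection point of the base curves.

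For the three $\mathbb{A}_3$ points I would pass to the affine chart $x=1$ and move $P_{\{z\},\{t\},\{x,y\}}=[1:-1:0:0]$ to the origin via $y=-1+\tilde y$. Expanding the quartic to second order, the quadratic part is the product $z\cdot\big(\tilde y+z-(\lambda+6)t\big)$ of two independent linear forms, a rank-$2$ form; this already shows that the singularity is of type $\mathbb{A}_n$ for some $n\geqslant 1$. To pin down $n=3$, I would carry out the standard resolution of an $\mathbb{A}$-point: one blows up the origin repeatedly, and at each stage the proper transform retains a double point on the exceptional line until the length of the singularity is exhausted, so that $n$ equals the number of such blow ups. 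The higher Taylor coefficients here are explicit, and the assumption $\lambda\notin\{-6,-7\}$ ensures that they do not degenerate; three blow ups will suffice before the surface becomes smooth, giving exactly $\mathbb{A}_3$.

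For the fixed point $P_{\{x\},\{y\},\{z\}}=[0:0:0:1]$ I would work in the chart $t=1$, where the quadratic part of the local equation is a perfect square $(x+y+z)^2$, a form of rank $1$; thus the singularity is strictly worse than $\mathbb{A}_1$. I would then blow up the origin and examine the cubic tangent cone restricted to the exceptional $\mathbb{P}^2$: together with the doubled line coming from the quadratic term, it should produce three distinct ordinary double points on the blown-up surface, with the proper transform otherwise smooth near the exceptional curve. This configuration is precisely the resolution graph of a $\mathbb{D}_4$ singularity. Verifying that the three points are indeed distinct, and that neither the cubic nor the quartic terms make one of them worse, is the delicate part and the step I expect to be the main obstacle, since distinguishing $\mathbb{D}_4$ from $\mathbb{D}_{\geqslant 5}$ or $\mathbb{E}_6$ requires controlling terms of order higher than two rather than just the quadratic form. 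Finally, I would record the contributions of these four resolutions to $\mathrm{rk}\,\mathrm{Pic}(\widetilde{S}_{\Bbbk})-\mathrm{rk}\,\mathrm{Pic}(S_{\Bbbk})$, since this is the input needed for \eqref{equation:main-2-simple}.
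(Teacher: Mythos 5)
Your plan matches the paper's proof: it uses the same $\mathfrak{S}_3$-symmetry to reduce to $P_{\{x\},\{y\},\{z\}}$ and one representative of the $\mathbb{A}_3$ orbit, and resolves the rank-one point by a single blow up exhibiting three ordinary double points on the exceptional curve, exactly as the paper does. The only difference of technique is that for the $\mathbb{A}_3$ points the paper, instead of iterating blow ups, rewrites the local equation in the quasihomogeneous normal form $\check{x}\check{y}-(\lambda+7)(\lambda+6)^2\check{t}^4+\cdots=0$ with weights $(2,2,1)$, which makes explicit that the excluded values $\lambda=-6,-7$ are precisely where the leading coefficient vanishes --- the degeneration your plan asserts but leaves uncomputed.
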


\begin{proof}
First let us describe the singularity of the surface $S_\lambda$ at the point $P_{\{x\},\{y\},\{z\}}$.
In the chart $t=1$, the surface $S_\lambda$ is given by
$$
\hat{z}^2+\Big((\lambda+6)\hat{x}^2\hat{y}+(\lambda+6)\hat{x}\hat{y}^2-(\lambda+6)\hat{x}\hat{y}\hat{z}+\hat{z}^3\Big)+\Big(\hat{z}^2\hat{y}\hat{x}-\hat{x}^2\hat{y}\hat{z}-\hat{y}^2\hat{z}\hat{x}\Big)=0,
$$
where $\hat{x}=x$, $\hat{y}=y$, $\hat{z}=x+y+z$.
Introducing coordinates $\hat{x}_4=\hat{x}$, $\hat{y}_4=\frac{\hat{y}}{\hat{x}}$, $\hat{z}_4=\frac{\hat{z}}{\hat{x}}$,
we can rewrite this equation (after dividing by $\hat{x}_4^2$) as
$$
(\lambda+6)\hat{x}_4\hat{y}_4+\hat{z}_4^2+\Big((\lambda+6)\hat{x}_4\hat{y}_4^2-(6+\lambda)\hat{x}_4\hat{y}_4\hat{z}_4\Big)+\Big(\hat{z}_4^3\hat{x}_4-\hat{x}_4^2\hat{y}_4\hat{z}_4\Big)+\Big(\hat{x}_4^2\hat{y}_4\hat{z}_4^2-\hat{x}_4^2\hat{y}_4^2\hat{z}_4\Big)=0.
$$
This equation defines (a chart of) the blow up of the surface $S_\lambda$ at the point $P_{\{x\},\{y\},\{z\}}$.
The exceptional curve of the blow up is given by the equations $\hat{x}_4=\hat{z}_4=0$.
Observe that the point $(\hat{x}_4,\hat{y}_4,\hat{z}_4)=(0,0,0)$ is an ordinary double point of the obtained surface, because $\lambda\ne -6$.
The obtained surface is also singular at the point $(\hat{x}_4,\hat{y}_4,\hat{z}_4)=(0,-1,0)$.
This point is also an ordinary double point of this surface.
These are all singular points of the obtained surface at this chart of the blow up.
Keeping in mind the symmetry $x\leftrightarrow y$, we see that the exceptional curve of the blow up of the surface $S_\lambda$ at the point $P_{\{x\},\{y\},\{z\}}$ contains three ordinary double points of this surface.
This shows that $P_{\{x\},\{y\},\{z\}}$ is a singular point of type $\mathbb{D}_4$ of the surface $S_\lambda$.

To complete the proof, it is enough to show that $P_{\{y\},\{t\},\{x,z\}}$ is a singular point of the surface $S_\lambda$ of type $\mathbb{A}_3$,
because $S_\lambda$ is invariant with respect to the permutations of the coordinates $x$, $y$, and $z$.
In the chart $z=1$, the surface $S_\lambda$ is given by
$$
\bar{y}(\bar{x}+\bar{y}-(6+\lambda)\bar{t})=\bar{x}^2\bar{y}+\bar{x}\bar{y}^2-(6+\lambda)\bar{t}\bar{x}\bar{y}+\bar{t}^2\bar{x}^2+2\bar{t}^2\bar{x}\bar{y}+\bar{t}^2\bar{y}^2+\bar{t}\bar{x}^3+3\bar{t}\bar{x}^2\bar{y}+3\bar{t}\bar{x}\bar{y}^2+\bar{t}\bar{y}^3,
$$
where $\bar{x}=x+1$, $\bar{y}=y$, and $\bar{t}=t$.
Introducing new coordinates $\check{x}=\bar{x}+\bar{y}-(6-\lambda)\bar{t}$, $\check{y}=\bar{y}$, and $\check{t}=\bar{t}$,
we can rewrite this equation as
$$
(\lambda+7)(\lambda+6)^2\check{t}^4=\check{x}\check{y}-(\lambda+6)\big(\check{t}\check{x}\check{y}+(3\lambda+20)\check{t}^3\check{x}\big)-\check{x}^2\check{y}+\check{x}\check{y}^2-(3\lambda+19)\check{t}^2\check{x}^2-\check{t}\check{x}^3,
$$
where we grouped together monomials of the same quasihomogeneous degree
with respect to the weights $\mathrm{wt}(\check{x})=2$, $\mathrm{wt}(\check{y})=2$, and $\mathrm{wt}(\check{t})=1$.
This shows that the surface $S_\lambda$ has singularity of type $\mathbb{A}_3$ at the point $P_{\{y\},\{t\},\{x,z\}}$.
This complete the proof of the lemma.
\end{proof}

The proof of Lemma~\ref{lemma:r2-n5-singularities} implies that $\mathrm{rk}\,\mathrm{Pic}(\widetilde{S}_{\Bbbk})=\mathrm{rk}\,\mathrm{Pic}(S_{\Bbbk})+13$.

\begin{lemma}
\label{lemma:r2-n5-intersection}
Suppose that $\lambda\not\in\{-6,-7,\infty\}$.
Then the intersection matrix of the lines $L_{\{x\},\{t\}}$, $L_{\{y\},\{t\}}$, $L_{\{z\},\{t\}}$, $L_{\{x\},\{y,z,t\}}$, $L_{\{y\},\{x,z,t\}}$, $L_{\{z\},\{x,y,t\}}$, and $L_{\{t\},\{x,y,z\}}$
on the surface~$S_\lambda$ is given by
\begin{center}\renewcommand\arraystretch{1.42}
\begin{tabular}{|c||c|c|c|c|c|c|c|}
\hline
$\bullet$ & $L_{\{x\},\{t\}}$ & $L_{\{y\},\{t\}}$ & $L_{\{z\},\{t\}}$ & $L_{\{x\},\{y,z,t\}}$ & $L_{\{y\},\{x,z,t\}}$ & $L_{\{z\},\{x,y,t\}}$ & $L_{\{t\},\{x,y,z\}}$\\
\hline\hline
$L_{\{x\},\{t\}}$ & $-\frac{5}{4}$ &  $1$ &  $1$ &  $\frac{3}{4}$ &  $0$ &  $0$ &  $\frac{1}{4}$\\
\hline
$L_{\{y\},\{t\}}$ & $1$ &  $-\frac{5}{4}$ &  $1$ &  $0$ &  $\frac{3}{4}$ &  $0$ &  $\frac{1}{4}$\\
\hline
$L_{\{z\},\{t\}}$ & $1$ &  $1$ &  $-\frac{5}{4}$ &  $0$ &  $0$ &  $\frac{3}{4}$ &  $\frac{1}{4}$\\
\hline
$L_{\{x\},\{y,z,t\}}$ & $\frac{3}{4}$ &  $0$ &  $0$ &  $-\frac{5}{4}$ &  $1$ &  $1$ &  $\frac{1}{4}$\\
\hline
$L_{\{y\},\{x,z,t\}}$ & $0$ &  $\frac{3}{4}$ &  $0$ &  $1$ &  $-\frac{5}{4}$ &  $1$ &  $\frac{1}{4}$\\
\hline
$L_{\{z\},\{x,y,t\}}$ & $0$ &  $0$ &  $\frac{3}{4}$ &  $1$ &  $1$ &  $-\frac{5}{4}$ &  $\frac{1}{4}$\\
\hline
$L_{\{t\},\{x,y,z\}}$ & $\frac{1}{4}$ &  $\frac{1}{4}$ &  $\frac{1}{4}$ &  $\frac{1}{4}$ &  $\frac{1}{4}$ &  $\frac{1}{4}$ &  $\frac{1}{4}$\\
\hline
\end{tabular}
\end{center}
\end{lemma}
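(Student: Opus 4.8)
The plan is to fill in the $7\times 7$ table entry by entry, by locating the intersection point of each pair of lines and then applying the intersection calculus for curves on du Val surfaces from Appendix~\ref{section:intersection}. The first thing I would exploit is the full $S_3$-symmetry: the defining equation of $S_\lambda$ is invariant under all permutations of $x$, $y$, $z$, and this symmetry acts on the seven lines by cyclically permuting $L_{\{x\},\{t\}},L_{\{y\},\{t\}},L_{\{z\},\{t\}}$, permuting $L_{\{x\},\{y,z,t\}},L_{\{y\},\{x,z,t\}},L_{\{z\},\{x,y,t\}}$, and fixing $L_{\{t\},\{x,y,z\}}$. Thus it suffices to compute one representative from each orbit of entries, for instance $L_{\{x\},\{t\}}^2$, $L_{\{t\},\{x,y,z\}}^2$, $L_{\{x\},\{t\}}\cdot L_{\{y\},\{t\}}$, $L_{\{x\},\{t\}}\cdot L_{\{x\},\{y,z,t\}}$, $L_{\{x\},\{t\}}\cdot L_{\{t\},\{x,y,z\}}$, $L_{\{x\},\{y,z,t\}}\cdot L_{\{y\},\{x,z,t\}}$, and $L_{\{x\},\{y,z,t\}}\cdot L_{\{t\},\{x,y,z\}}$, and to propagate the remaining entries by symmetry.

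Next I would record which singular points of $S_\lambda$ lie on which lines. A direct substitution shows that the $\mathbb{D}_4$ point $P_{\{x\},\{y\},\{z\}}=[0:0:0:1]$ lies on none of the seven lines, so it contributes nowhere; this is what removes any $\mathbb{D}_4$ correction from the table. Each $\mathbb{A}_3$ point, on the other hand, lies on exactly three of the lines: $P_{\{x\},\{t\},\{y,z\}}=[0:1:-1:0]$ is contained in $L_{\{x\},\{t\}}$, $L_{\{x\},\{y,z,t\}}$, and $L_{\{t\},\{x,y,z\}}$, and cyclically for the other two $\mathbb{A}_3$ points, so that $L_{\{t\},\{x,y,z\}}$ passes through all three. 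Every remaining pairwise meeting point (such as $[0:0:1:0]$ for $L_{\{x\},\{t\}}\cap L_{\{y\},\{t\}}$, or $[0:0:1:-1]$ for $L_{\{x\},\{y,z,t\}}\cap L_{\{y\},\{x,z,t\}}$) is a smooth point of $S_\lambda$, and some pairs are disjoint. At the smooth points I would check transversality to obtain the value $1$ (and $0$ for disjoint pairs), while the self-intersections and the fractional entries follow from Propositions~\ref{proposition:du-Val-self-intersection} and~\ref{proposition:du-Val-intersection}.

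The heart of the computation, and the step I expect to be the main obstacle, is the local analysis at the $\mathbb{A}_3$ points. Using the explicit blow-up of $S_\lambda$ at $P_{\{y\},\{t\},\{x,z\}}$ already carried out in the proof of Lemma~\ref{lemma:r2-n5-singularities}, together with Remark~\ref{remark:transversal}, I would determine which of the two end curves $G_1$, $G_3$ of the $\mathbb{A}_3$ chain the proper transform of each of the three lines through that point meets. The expected configuration is that two of the three lines meet the same end and the third meets the opposite end; this yields a self-intersection correction of $+\tfrac34$ per $\mathbb{A}_3$ point (so $L_{\{x\},\{t\}}^2=-2+\tfrac34=-\tfrac54$ and $L_{\{t\},\{x,y,z\}}^2=-2+3\cdot\tfrac34=\tfrac14$), a same-end intersection value of $\tfrac34$, and an opposite-end value of $\tfrac14$. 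Matching these against the three lines at each $\mathbb{A}_3$ point reproduces the entries $\tfrac34$ and $\tfrac14$ in the table. The delicate point is pinning down the same-end versus opposite-end assignment reliably: the $S_3$-action alone does not distinguish the two ends of an $\mathbb{A}_3$ chain, so this genuinely requires tracking the proper transforms of the lines through the resolution rather than invoking symmetry, and it is exactly this bookkeeping that I expect to consume most of the effort.
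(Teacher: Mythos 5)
Your proposal is correct and follows essentially the same route as the paper: exploit the $S_3$-symmetry in $x,y,z$ to reduce to a handful of representative entries, locate the singular points lying on each line, and evaluate the corrections via Propositions~\ref{proposition:du-Val-intersection} and~\ref{proposition:du-Val-self-intersection} together with Remark~\ref{remark:transversal}, where the factor of the quadratic term $x(x+y+z-(\lambda+6)t)$ containing each line decides the same-end versus opposite-end dichotomy at each $\mathbb{A}_3$ point. The only small inaccuracy is that the proof of Lemma~\ref{lemma:r2-n5-singularities} does not actually carry out a blow-up at the $\mathbb{A}_3$ points (only at the $\mathbb{D}_4$ point); it merely records their quadratic terms, but that is exactly the input Remark~\ref{remark:transversal} needs, so nothing is lost.
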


\begin{proof}
Keeping in mind that the equation of surface $S_\lambda$ is invariant with respect to the permutations of the coordinates $x$, $y$, and $z$,
it is enough to compute
$L_{\{x\},\{t\}}^2$, $L_{\{x\},\{y,z,t\}}^2$, $L_{\{t\},\{x,y,z\}}^2$,
$L_{\{x\},\{t\}}\cdot L_{\{y\},\{t\}}$, $L_{\{x\},\{t\}}\cdot L_{\{x\},\{y,z,t\}}$,
$L_{\{x\},\{t\}}\cdot L_{\{y\},\{x,z,t\}}$, $L_{\{x\},\{t\}}\cdot L_{\{t\},\{x,y,z\}}$,
and  $L_{\{x\},\{y,z,t\}}\cdot L_{\{t\},\{x,y,z\}}$.

Using Lemma~\ref{lemma:r2-n5-singularities},  Proposition~\ref{proposition:du-Val-self-intersection} and Remark~\ref{remark:transversal},
we see that
$L_{\{x\},\{t\}}^2=-\frac{5}{4}$,
because $P_{\{x\},\{t\},\{y,z\}}$ is the only singular point of the surface $S_\lambda$ that is contained in the line $L_{\{x\},\{t\}}$.
Likewise, we see that $L_{\{x\},\{y,z,t\}}^2=-\frac{5}{4}$.
Similarly, we get $L_{\{t\},\{x,y,z\}}^2=\frac{1}{4}$,
because the line $L_{\{t\},\{x,y,z\}}$ contains the points $P_{\{x\},\{t\},\{y,z\}}$,
$P_{\{y\},\{t\},\{x,z\}}$, and $P_{\{z\},\{t\},\{x,y\}}$.
We have $L_{\{x\},\{t\}}\cdot L_{\{y\},\{t\}}=1$, because
$L_{\{x\},\{t\}}\cap L_{\{y\},\{t\}}=P_{\{x\},\{y\},\{t\}}$,
which is a smooth point of the surface $S_\lambda$ by Lemma~\ref{lemma:r2-n5-singularities}.

Now applying Remark~\ref{remark:transversal} with $S=S_\lambda$, $O=P_{\{x\},\{t\},\{y,z\}}$, $n=3$, $C=L_{\{x\},\{t\}}$
and $Z=L_{\{x\},\{y,z,t\}}$, we see that $L_{\{x\},\{t\}}\cdot L_{\{x\},\{y,z,t\}}=\frac{3}{4}$ by Proposition~\ref{proposition:du-Val-intersection}.
Likewise, we have
$L_{\{x\},\{y,z,t\}}\cdot L_{\{t\},\{x,y,z\}}=L_{\{x\},\{y,z,t\}}\cdot L_{\{t\},\{x,y,z\}}=\frac{1}{4}$.
Finally, we have $L_{\{x\},\{t\}}\cdot L_{\{y\},\{x,z,t\}}=0$,
because $L_{\{x\},\{t\}}\cap L_{\{y\},\{x,z,t\}}=\varnothing$.
\end{proof}

If $\lambda\not\in\{-6,-7,\infty\}$, then the intersection matrix of the lines
$L_{\{x\},\{t\}}$, $L_{\{y\},\{t\}}$, $L_{\{z\},\{t\}}$, $L_{\{x\},\{y,z\}}$, $L_{\{y\},\{x,z\}}$, $L_{\{z\},\{x,y\}}$,
$L_{\{x\},\{y,z,t\}}$, $L_{\{y\},\{x,z,t\}}$, $L_{\{z\},\{x,y,t\}}$, and $L_{\{t\},\{x,y,z\}}$
on the surface $S_\lambda$ has the same rank as the intersection matrix of the curves
$L_{\{x\},\{t\}}$, $L_{\{y\},\{t\}}$, $L_{\{z\},\{t\}}$, $L_{\{x\},\{y,z,t\}}$, $L_{\{y\},\{x,z,t\}}$, $L_{\{z\},\{x,y,t\}}$, and $L_{\{t\},\{x,y,z\}}$.
This follows from
\begin{multline*}
H_\lambda\sim L_{\{x\},\{t\}}+2L_{\{x\},\{y,z\}}+L_{\{x\},\{y,z,t\}}\sim L_{\{y\},\{t\}}+2L_{\{y\},\{x,z\}}+L_{\{y\},\{x,z,t\}}\sim\\
\sim L_{\{z\},\{t\}}+2L_{\{z\},\{x,y\}}+L_{\{z\},\{x,y,t\}}\sim L_{\{x\},\{t\}}+L_{\{y\},\{t\}}+L_{\{z\},\{t\}}+L_{\{t\},\{x,y,z\}}.
\end{multline*}
The rank of the intersection matrix in Lemma~\ref{lemma:r2-n5-intersection} is $5$.
Thus, we see that \eqref{equation:main-2-simple} holds.
This proves \eqref{equation:main-2} in Main Theorem holds in this case.

To verify \eqref{equation:main-1} in Main Theorem,
observe that $[\mathsf{f}^{-1}(\lambda)]=1$ for every $\lambda\not\in\{-6,-7,\infty\}$.
This follows from Lemma~\ref{lemma:r2-n5-singularities} and  Corollary~\ref{corollary:irreducible-fibers}.
Moreover, we have

\begin{lemma}
\label{lemma:r2-n5-irreducible-special}
One has $[\mathsf{f}^{-1}(-7)]=2$ and $[\mathsf{f}^{-1}(-6)]=6$.
\end{lemma}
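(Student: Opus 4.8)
The plan is to read both numbers off the master formula \eqref{equation:equation:number-of-irredubicle-components-refined}, using that $[S_{-7}]=[S_{-6}]=2$ and that every fiber over $\lambda\notin\{-6,-7,\infty\}$ is irreducible by Lemma~\ref{lemma:r2-n5-singularities} and Corollary~\ref{corollary:irreducible-fibers}. The relevant data are already in place: the fixed singular points are $\Sigma=\{P_{\{x\},\{y\},\{z\}},P_{\{x\},\{t\},\{y,z\}},P_{\{y\},\{t\},\{x,z\}},P_{\{z\},\{t\},\{x,y\}}\}$, and summing the four displayed products $H_{\{\cdot\}}\cdot S_0$ gives $S_0\cdot S_\infty$, whence $\mathbf m=2$ along each of $L_{\{x\},\{t\}},L_{\{y\},\{t\}},L_{\{z\},\{t\}},L_{\{x\},\{y,z\}},L_{\{y\},\{x,z\}},L_{\{z\},\{x,y\}}$ and $\mathbf m=1$ along the remaining four base lines.

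I would first dispose of $\lambda=-7$, where $S_{-7}=H_{\{x,y,z,t\}}+\mathbf S$. A direct substitution shows that each of the ten base lines lies on exactly one of the two components, so $\mathbf M_i^{-7}=1$ for all $i$ and hence $\mathbf C_i^{-7}=0$ by Lemma~\ref{lemma:main}. The point $P_{\{x\},\{y\},\{z\}}$ lies only on $\mathbf S$, where it is the du Val ($\mathbb D_4$) point, so its defect vanishes by Lemma~\ref{lemma:irreducible-fibers}. Each of $P_{\{x\},\{t\},\{y,z\}}$, $P_{\{y\},\{t\},\{x,z\}}$, $P_{\{z\},\{t\},\{x,y\}}$ lies on both components; there $\mathbf S$ is smooth, with tangent plane spanned by the two base lines of $\mathbf S$ through the point (which lie in a common coordinate plane) and hence transverse to $H_{\{x,y,z,t\}}$. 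Thus these are good double points and their defects vanish by Lemma~\ref{lemma:normal-crossing}, giving $[\mathsf f^{-1}(-7)]=[S_{-7}]=2$.

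Next, for $\lambda=-6$ we have $S_{-6}=H_{\{x,y,z\}}+\mathsf S$ with $\mathsf S$ smooth and $H_{\{x,y,z\}}\cdot\mathsf S=L_{\{x\},\{y,z\}}+L_{\{y\},\{x,z\}}+L_{\{z\},\{x,y\}}$, so $\mathbf M^{-6}=2$ exactly along these three lines and $1$ along the rest; by Lemma~\ref{lemma:main} this yields $\mathbf C^{-6}=1$ for each of the three lines and $0$ otherwise, a base-curve contribution of $3$. The three points $P_{\{x\},\{t\},\{y,z\}}$, $P_{\{y\},\{t\},\{x,z\}}$, $P_{\{z\},\{t\},\{x,y\}}$ are again good double points of $S_{-6}$ (the base lines of $\mathsf S$ through each are coplanar, so $\mathsf S$ is tangent to a coordinate plane transverse to $H_{\{x,y,z\}}$), so their defects vanish by Lemma~\ref{lemma:normal-crossing}. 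What remains is the defect of $P_{\{x\},\{y\},\{z\}}$, the crux of the proof: here the three double lines meet, $\mathsf S$ and $H_{\{x,y,z\}}$ are tangent, and the quadratic term of $S_{-6}$ has rank one, so neither Lemma~\ref{lemma:irreducible-fibers} nor Lemma~\ref{lemma:normal-crossing} applies.

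To treat $P_{\{x\},\{y\},\{z\}}$ I would follow Subsection~\ref{subsection:scheme-step-8}. Since $\mathrm{mult}_P(S_{-6})=2$, Corollary~\ref{corollary:log-pull-back} gives $\mathbf A_P^{-6}=0$, so the defect equals the sum of the $\mathbf C$-contributions of the base curves of $\widehat{\mathcal S}$ lying over $P$. In the chart $t=1$ one writes $S_{-6}$ as $\hat z^2+\hat z^3+\hat x\hat y\hat z^2-\hat x^2\hat y\hat z-\hat x\hat y^2\hat z=\hat z\bigl(\hat z+\hat z^2+\hat x\hat y\hat z-\hat x^2\hat y-\hat x\hat y^2\bigr)$, where $\hat z=x+y+z$, exhibiting the two components and the rank-one quadratic term $\hat z^2$. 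Blowing up $P$, every member of the pencil meets the exceptional $\mathbf E_1\cong\mathbb P^2$ in the double line $\{\hat z=0\}$, so there is a single base curve $\widehat C$ over $P$; for $\lambda=-6$ both $H_{\{x,y,z\}}$ and $\mathsf S$ contain it, whence $\mathbf M^{-6}(\widehat C)=2$, while two general members are each tangent to $\mathbf E_1$ to order two along $\{\hat z=0\}$ and so meet along $\widehat C$ with multiplicity two, giving $\mathbf m(\widehat C)=2$; Lemma~\ref{lemma:main-2} then contributes $1$. The further blow-ups resolving the $\mathbb D_4$, namely the blow-ups of the three $\mathbb A_1$-points on $\{\hat z=0\}$, introduce no new base curve over $P$: at each such point the tangent cone of a general member is a smooth $\lambda$-dependent conic, and the base locus of the resulting pencil of conics on the exceptional $\mathbb P^2$ is zero-dimensional. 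The main obstacle is precisely this last bookkeeping — carefully distinguishing $\{\hat z=0\}$, which lies on $S_{-6}$ but not on a general member, from genuine base curves of $\widehat{\mathcal S}$, and confirming the double structure $\mathbf m(\widehat C)=2$. Granting $\mathbf D_{P_{\{x\},\{y\},\{z\}}}^{-6}=1$, formula \eqref{equation:equation:number-of-irredubicle-components-refined} yields $[\mathsf f^{-1}(-6)]=2+3+1=6$.
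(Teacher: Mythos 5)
Your proof is correct and follows essentially the same route as the paper: both reduce everything to the multiplicity data $\mathbf m_i$, $\mathbf M_i^\lambda$ along base curves via Lemmas~\ref{lemma:main} and \ref{lemma:main-2}, kill the easy defects with Lemmas~\ref{lemma:irreducible-fibers} and \ref{lemma:normal-crossing}, and isolate the same key computation, namely the single base curve with $\mathbf m=\mathbf M^{-6}=2$ in the exceptional divisor over $P_{\{x\},\{y\},\{z\}}$ contributing $1$, yielding $2+3+1=6$. The only cosmetic difference is bookkeeping: the paper works with $[\widehat D_\lambda]+\sum\mathbf C_i$ from \eqref{equation:equation:number-of-irredubicle-components-refined-2} after describing the full morphism $\alpha$, whereas you use the equivalent defect formula \eqref{equation:equation:number-of-irredubicle-components-refined} and only analyze the resolution locally at the one problematic point.
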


\begin{proof}
Let $C_1=L_{\{x\},\{t\}}$, $C_2=L_{\{y\},\{t\}}$, $C_3=L_{\{z\},\{t\}}$,
$C_4=L_{\{x\},\{y,z\}}$, $C_5=L_{\{y\},\{x,z\}}$, $C_6=L_{\{z\},\{x,y\}}$,
$C_7=L_{\{x\},\{y,z,t\}}$, $C_8=L_{\{y\},\{x,z,t\}}$, \mbox{$C_9=L_{\{z\},\{x,y,t\}}$}, and $C_{10}=L_{\{t\},\{x,y,z\}}$.
Then $\mathbf{m}_1=\mathbf{m}_2=\mathbf{m}_3=\mathbf{m}_4=\mathbf{m}_5=\mathbf{m}_6=2$
and $\mathbf{m}_7=\mathbf{m}_8=\mathbf{m}_9=\mathbf{m}_{10}=1$.

Recall that $S_{\infty}$ is singular along the curves $C_1$, $C_2$, and $C_3$,
and the surface $S_{-6}$ is singular along the curves $C_4$, $C_5$, and $C_6$.
Thus, we have $\mathbf{M}_4^{-6}=\mathbf{M}_5^{-6}=\mathbf{M}_6^{-6}=2$,
$$
\mathbf{M}_1^{-6}=\mathbf{M}_2^{-6}=\mathbf{M}_3^{-6}=\mathbf{M}_7^{-6}=\mathbf{M}_8^{-6}=\mathbf{M}_9^{-6}=\mathbf{M}_{10}^{-6}=1,
$$
and $\mathbf{M}_1^{-7}=\mathbf{M}_2^{-7}=\mathbf{M}_3^{-7}=\mathbf{M}_4^{-7}=\mathbf{M}_5^{-7}=\mathbf{M}_6^{-7}=\mathbf{M}_7^{-7}=\mathbf{M}_8^{-7}=\mathbf{M}_9^{-7}=\mathbf{M}_{10}^{-7}=1$.

The birational morphism $\alpha$ in \eqref{equation:main-diagram} is described in the proof of Lemma~\ref{lemma:r2-n5-singularities}.
Namely, it is given by the commutative diagram
$$
\xymatrix{
U_2\ar@{->}[d]_{\alpha_2}&&U_3\ar@{->}[ll]_{\alpha_3}&&U_4\ar@{->}[ll]_{\alpha_4}\\
U_1\ar@{->}[rr]_{\alpha_1}&&\mathbb{P}^3&&U\ar@{->}[ll]^\alpha\ar@{->}[u]_{\gamma}}
$$
Here $\alpha_1$ is the blow up of the point $P_{\{x\},\{t\},\{y,z\}}$,
the morphism $\alpha_2$ is the blow up of the preimage of the point $P_{\{y\},\{t\},\{x,z\}}$,
the morphism $\alpha_3$ is the blow up of the preimage of the point $P_{\{z\},\{t\},\{x,y\}}$,
the morphism $\alpha_4$ is the blow up of the preimage of the point $P_{\{x\},\{y\},\{z\}}$,
and $\gamma$ is the blow ups of three distinct points in $\mathbf{E}_4$.

If $\lambda\ne\infty$, then $\widehat{D}_\lambda=\widehat{S}_\lambda$.
This follows from the proof of Lemma~\ref{lemma:r2-n5-singularities}.
It should be pointed out that the surface $\widehat{S}_\lambda$ is singular for every $\lambda\in\mathbb{C}$.

The curves $\widehat{C}_1$, $\widehat{C}_2$, $\widehat{C}_3$, $\widehat{C}_4$, $\widehat{C}_5$,
$\widehat{C}_6$, $\widehat{C}_7$, $\widehat{C}_8$, $\widehat{C}_9$, and $\widehat{C}_{10}$ are base curves of the pencil $\widehat{\mathcal{S}}$.
Let us describe the remaining base curves of the pencil $\widehat{\mathcal{S}}$ using the data collected in the proof of Lemma~\ref{lemma:r2-n5-singularities}.

For every $\lambda\ne\infty$, the restriction $S_\lambda^2\vert_{\mathbf{E}_2}$ is given by
$$
\bar{y}(\bar{x}+\bar{y}-(6+\lambda)\bar{t})=0
$$
in the appropriate homogeneous coordinates $\bar{x}$, $\bar{y}$, and $\bar{t}$ on $\mathbf{E}_2\cong\mathbb{P}^2$.
This gives us the pencil of conics in $\mathbf{E}_2$ that has a unique base curve, which is given by $\bar{y}=0$.
Thus, the restriction $\mathcal{S}^2\vert_{\mathbf{E}_2}$ has one base curve.
This gives us the base curve of the pencil $\widehat{\mathcal{S}}$ that is contained in $\widehat{E}_2$.
Let us denote it by $\widehat{C}_{12}$.
Similarly, we see that one base curve of the pencil $\widehat{\mathcal{S}}$ is contained in the surface $\widehat{E}_1$,
and one base curve of the pencil $\widehat{\mathcal{S}}$ is contained in the surface $\widehat{E}_3$.
Let us denote them by $\widehat{C}_{11}$ and $\widehat{C}_{13}$, respectively.

The restriction $\mathcal{S}^4\vert_{\mathbf{E}_4}$ consists of one line (taken with multiplicity two).
This gives us one base curve of the pencil $\mathcal{S}^4$ that is contained in the surface $\mathbf{E}_4$.
Denote it by $C_{14}^4$.
Observe that the surface $S_{-6}^4$ is singular at general point of this curve.
Moreover, it follows from the proof of Lemma~\ref{lemma:r2-n4-irreducible-special} that
the curves $\widehat{C}_{1}$, $\widehat{C}_{2}$, $\widehat{C}_{3}$, $\widehat{C}_{4}$,
$\widehat{C}_{5}$, $\widehat{C}_{6}$, $\widehat{C}_{7}$,  $\widehat{C}_{8}$,  $\widehat{C}_{9}$,  $\widehat{C}_{10}$,
$\widehat{C}_{11}$, $\widehat{C}_{12}$, $\widehat{C}_{13}$, and $\widehat{C}_{14}$ are all base curves of the pencil $\widehat{\mathcal{S}}$.

Let us compute $\mathbf{m}_{11}$, $\mathbf{m}_{12}$, $\mathbf{m}_{13}$, and $\mathbf{m}_{14}$.
Among base curves of the pencil $\mathcal{S}$,
only the curves $C_2$, $C_5$, $C_8$, $C_{10}$ contain the point $P_{\{y\},\{t\},\{x,z\}}$,
This gives
\begin{multline*}
6=\mathrm{mult}_{P_{\{y\},\{t\},\{x,z\}}}\Big(2C_2+2C_5+C_8+C_{10}\Big)=\mathrm{mult}_{P_{\{y\},\{t\},\{x,z\}}}\Big(S_{\lambda_1}\cdot S_{\lambda_2}\Big)=\\
=\mathrm{mult}_{P_{\{y\},\{t\},\{x,z\}}}\big(S_{\lambda_1}\big)\mathrm{mult}_{P_{\{y\},\{t\},\{x,z\}}}\big(S_{\lambda_2}\big)+\mathbf{m}_{11}=4+\mathbf{m}_{11},
\end{multline*}
so that $\mathbf{m}_{11}=2$. Similarly, we get $\mathbf{m}_{12}=\mathbf{m}_{13}=\mathbf{m}_{14}=2$.

Observe that $\mathbf{M}_{11}^{-7}=\mathbf{M}_{12}^{-7}=\mathbf{M}_{13}^{-7}=\mathbf{M}_{14}^{-7}=1$ and
$\widehat{D}_{-7}=\widehat{S}_{-7}$ in \eqref{equation:log-pull-back}.
Thus, it follows from Corollary~\ref{corollary:main-2} that $[\mathsf{f}^{-1}(-7)]=2$.

Likewise, we see that $\mathbf{M}_{11}^{-6}=\mathbf{M}_{12}^{-6}=\mathbf{M}_{13}^{-6}=1$, $\mathbf{M}_{14}^{-6}=2$, and $\widehat{D}_{-6}=\widehat{S}_{-6}$.
Therefore, it follows from \eqref{equation:equation:number-of-irredubicle-components-refined-2} and Lemma~\ref{lemma:main-2} that
$[\mathsf{f}^{-1}(-6)]=6$.
\end{proof}

Since $h^{1,2}(X)=6$, we see that \eqref{equation:main-1} in Main Theorem holds in this case.

\subsection{Family \textnumero $2.6$}
\label{section:r-2-n-6}

In this case, the threefold $X$ is a divisor of bidegree $(2,2)$ in $\mathbb{P}^2\times\mathbb{P}^2$,
so that $h^{1,2}(X)=9$.
A toric Landau--Ginzburg model is given by
$$
x+y+{\frac {x}{z}}+{\frac {y}{z}}+{\frac {xz}{y}}+2z+{\frac {yz}{x}}+\frac{2x}{y}+{\frac {2y}{x}}+{\frac {x}{yz}}+\frac{2}{z}+{\frac {y}{xz}}+{\frac {{z}^{2}}{y}}+{\frac {{z}^{2}}{x}}+{\frac{3z}{y}}+{\frac {3z}{x}}+\frac{3}{y}+\frac{3}{x}+\frac{1}{yz}+\frac{1}{xz},
$$
which is Minkowski polynomial \textnumero $3873.2$.
The pencil $\mathcal{S}$ is given by
\begin{multline*}
x^2zy+y^2zx+x^2ty+y^2tx+x^2z^2+2z^2yx+y^2z^2+2x^2tz+2y^2tz+x^2t^2+\\
+2t^2yx+t^2y^2+z^3x+z^3y+3z^2tx+3z^2ty+3t^2zx+3t^2zy+t^3x+t^3y=\lambda xyzt.
\end{multline*}
This equation is invariant with respect to the swaps $x\leftrightarrow y$ and $z\leftrightarrow t$.

To describe the base locus of the pencil $\mathcal{S}$, we observe that
\begin{itemize}
\item $H_{\{x\}}\cdot S_0=L_{\{x\},\{y\}}+2L_{\{x\},\{z,t\}}+L_{\{x\},\{y,z,t\}}$,
\item $H_{\{y\}}\cdot S_0=L_{\{x\},\{y\}}+2L_{\{y\},\{z,t\}}+L_{\{y\},\{x,z,t\}}$,
\item $H_{\{z\}}\cdot S_0=L_{\{z\},\{t\}}+L_{\{z\},\{x,y\}}+L_{\{z\},\{y,t\}}+L_{\{z\},\{x,t\}}$,
\item $H_{\{t\}}\cdot S_0=L_{\{z\},\{t\}}++L_{\{t\},\{x,y\}}+L_{\{t\},\{y,z\}}+L_{\{t\},\{x,z\}}$.
\end{itemize}

We let $C_1=L_{\{x\},\{y\}}$,~\mbox{$C_2=L_{\{z\},\{t\}}$},~\mbox{$C_3=L_{\{x\},\{z,t\}}$},~\mbox{$C_4=L_{\{y\},\{z,t\}}$},~\mbox{$C_5=L_{\{x\},\{y,z,t\}}$},
\mbox{$C_6=L_{\{y\},\{x,z,t\}}$},~\mbox{$C_7=L_{\{z\},\{x,y\}}$},~\mbox{$C_8=L_{\{z\},\{y,t\}}$},~\mbox{$C_{9}=L_{\{z\},\{x,t\}}$},~\mbox{$C_{10}=L_{\{t\},\{x,y\}}$},~\mbox{$C_{11}=L_{\{t\},\{y,z\}}$}, and $C_{12}=L_{\{t\},\{x,z\}}$.
Then
$\mathbf{m}_{5}=\mathbf{m}_{6}=\mathbf{m}_{7}=\mathbf{m}_{8}=\mathbf{m}_{9}=\mathbf{m}_{10}=\mathbf{m}_{11}=\mathbf{m}_{12}=1$
and $\mathbf{m}_{1}=\mathbf{m}_{2}=\mathbf{m}_{3}=\mathbf{m}_{4}=2$.
Likewise, we have
$$
\mathbf{M}_{1}^{-4}=\mathbf{M}_{2}^{-4}=\mathbf{M}_{5}^{-4}=\mathbf{M}_{6}^{-4}=\mathbf{M}_{7}^{-4}=\mathbf{M}_{8}^{-4}=\mathbf{M}_{9}^{-4}=\mathbf{M}_{10}^{-4}=\mathbf{M}_{11}^{-4}=\mathbf{M}_{12}^{-4}=1
$$
and $\mathbf{M}_{3}^{-4}=\mathbf{M}_{4}^{-4}=2$, so that $S_{-4}$ is singular along the lines $L_{\{x\},\{z,t\}}$ and $L_{\{y\},\{z,t\}}$.

For every $\lambda\not\in\{-4,\infty\}$, the surface $S_\lambda$ has isolated singularities,
which implies, in particular, that $S_\lambda$ is irreducible.
One the other hand, the surface $S_{-4}$ is reducible:
$$
S_{-4}=H_{\{x,y\}}+H_{\{z,t\}}+H_{\{y,z,t\}}+H_{\{x,z,t\}},
$$

If $\lambda\not\in\{-4,\infty\}$, then
the singular points of the surface $S_\lambda$ contained in the base locus of the pencil $\mathcal{S}$
are the points $P_{\{x\},\{z\},\{t\}}$, $P_{\{y\},\{z\},\{t\}}$, $P_{\{x\},\{y\},\{z,t\}}$, and $P_{\{z\},\{t\},\{x,y\}}$.
These are the {fixed} singular points of the surfaces in $\mathcal{S}$.
Lets us describe their singularity types and explicitly construct the birational morphism $\alpha$ in \eqref{equation:main-diagram}.
We start with $P_{\{x\},\{z\},\{t\}}$.

In the chart $y=1$, the surface $S_\lambda$ is given by
\begin{multline*}
(z+t)(x+z+t)+\Big(t^3+2t^2x+3t^2z+x^2t+3z^2t+x^2z+2xz^2+z^3-\lambda txz\Big)+\\
+\Big(xt^3+x^2t^2+3t^2xz+2tx^2z+3txz^2+x^2z^2+z^3x\Big)=0.
\end{multline*}
For convenience, we rewrite the defining equation of the surface $S_\lambda$ as
$$
\hat{x}\hat{t}+\Big(4\hat{t}^2\hat{z}+\hat{t}\hat{x}^2-4\hat{t}\hat{x}\hat{z}-4\hat{t}\hat{z}^2+4\hat{x}\hat{z}^2+\lambda\hat{t}^2\hat{z}-\lambda\hat{t}\hat{x}\hat{z}-\lambda\hat{t}\hat{z}^2+\lambda\hat{x}\hat{z}^2\Big)+\Big(\hat{t}^2\hat{x}^2-\hat{t}^3\hat{x}\Big)=0,
$$
where $\hat{x}=x+z+t$, $\hat{z}=z$, and $\hat{t}=z+t$.

Let $\alpha_1\colon U_1\to\mathbb{P}^3$ be the blow up of the point $P_{\{x\},\{z\},\{t\}}$.
A chart of this blow up is given by the coordinate change $\hat{x}_1=\frac{\hat{x}}{\hat{z}}$, $\hat{z}_1=\hat{z}$, $\hat{t}_1=\frac{\hat{t}}{\hat{z}}$.
In this chart, the surface $D^1_\lambda$~is~given~by
$$
\hat{t}_1\hat{x}_1-(\lambda+4)\hat{t}_1\hat{z}_1+(\lambda+4)\hat{z}_1\hat{x}_1+(\lambda+4)\Big(\hat{t}_1^2\hat{z}_1-\hat{t}_1\hat{x}_1\hat{z}_1\Big)+\hat{t}_1\hat{x}_1^2\hat{z}_1+\Big(\hat{t}_1^2\hat{x}_1^2\hat{z}_1^2-\hat{t}_1^3\hat{x}_1\hat{z}_1^2\Big)=0,
$$
where $\hat{z}_1=0$ defines the surface $\mathbf{E}_1$.
Then $(\hat{x}_1,\hat{z}_1,\hat{t}_1)=(0,0,0)$ is the only singular point of the surface $S^1_\lambda$ that is contained in $\mathbf{E}_1$.
If $\lambda\not\in\{-4,\infty\}$, then this point is an ordinary double point of the surface $S_\lambda$.
Hence, if $\lambda\not\in\{-4,\infty\}$, then $P_{\{x\},\{z\},\{t\}}$ is a du Val singular point of the surface $S_\lambda$ of type $\mathbb{A}_3$.

Notice also that the pencil $\mathcal{S}^1$ has exactly two base curves contained in the surface $\mathbf{E}_1$.
Indeed, the restriction $\mathcal{S}^1\vert_{\mathbf{E}_1}$ consists of the curves $\{\hat{z}_1=\hat{x}_1=0\}$ and $\{\hat{z}_1=\hat{t}_1=0\}$.
Let~us denote these curves by $C_{13}^1$ and $C_{14}^1$, respectively.

Let $\alpha_2\colon U_2\to U_1$ be the blow up of the point $(\hat{x}_1,\hat{z}_1,\hat{t}_1)=(0,0,0)$.
Then
$D_\lambda^2=S^2_\lambda$ for every $\lambda\in\mathbb{C}$.
Moreover, the restriction $\mathcal{S}^2\vert_{\mathbf{E}_2}$ is a pencil of conics in $\mathbf{E}_2\cong\mathbb{P}^2$
that is given by the equation
$$
\hat{t}_1\hat{x}_1-(\lambda+4)\hat{t}_1\hat{z}_1+(\lambda+4)\hat{z}_1\hat{x}_1=0,
$$
where we consider $\hat{x}_1$, $\hat{z}_1$, $\hat{t}_1$ as projective coordinates on $\mathbf{E}_2$.
This pencil does not have base curves, which implies that $\mathcal{S}^2$ does not have base curves in $\mathbf{E}_2$ either.

Since the defining equation of the surface $S_\lambda$ is
invariant with respect to the swap $x\leftrightarrow y$,
the point $P_{\{y\},\{z\},\{t\}}$ is also a du Val singular point of the surface $S_\lambda$ of type $\mathbb{A}_3$
provided that $\lambda\not\in\{-4,\infty\}$.
Let $\alpha_{3}\colon U_3\to U_2$ be the blow up of the preimage of this point.
Then $\mathbf{E}_3$ contains two base curves of the pencil $\mathcal{S}^3$.
Denote them by $C_{15}^3$ and~$C_{16}^3$.
Let~$\alpha_4\colon U_4\to U_3$ be the blow up of the point $C_{15}^3\cap C_{16}^3$.
Then $D_\lambda^4=S^4_\lambda$ for every $\lambda\in\mathbb{C}$.
Moreover,  the surface $\mathbf{E}_4$ does not contain base curves of the pencil $\mathcal{S}^4$.

Now let us describe the singularity of the surface $S_\lambda$ at the point $P_{\{x\},\{y\},\{z,t\}}$.
In the chart $t=1$, the surface $S_\lambda$ is given by
$$
(\lambda+4)\bar{x}\bar{y}-(\lambda+4)\bar{x}\bar{y}\bar{z}+\Big(\bar{x}^2\bar{z}\bar{y}+\bar{x}^2\bar{z}^2+\bar{y}^2\bar{z}\bar{x}+2\bar{z}^2\bar{y}\bar{x}+\bar{z}^3\bar{x}+\bar{y}^2\bar{z}^2+\bar{y}\bar{z}^3\Big)=0,
$$
where $\bar{x}=x$, $\bar{y}=y$, and $\bar{z}=z+1$.
Let $\alpha_{5}\colon U_5\to U_4$ be the blow up of the preimage of the point $P_{\{x\},\{y\},\{z,t\}}$.
In a neighborhood of the point $P_{\{x\},\{y\},\{z,t\}}$,
one chart of this blow up is given by the coordinate change
$\bar{x}_5=\frac{\bar{x}}{\bar{z}}$, $\bar{y}_5=\frac{\bar{y}}{\bar{z}}$, and $\bar{z}_5=\bar{z}$.
Then $D_\lambda^5$ is given by
$$
(\lambda+4)\bar{x}_5\bar{y}_5+\Big(\bar{x}_5\bar{z}_5^2+\bar{z}_5^2\bar{y}_5-(\lambda+4)\bar{x}_5\bar{y}_5\bar{z}_5\Big)+\Big(\bar{x}_5^2\bar{z}_5^2+2\bar{z}_5^2\bar{y}_5\bar{x}_5+\bar{y}_5^2\bar{z}_5^2\Big)+\Big(\bar{x}_5^2\bar{y}_5\bar{z}_5^2+\bar{x}_5\bar{y}_5^2\bar{z}_5^2\Big)=0,
$$
and $\mathbf{E}_5$ is given by $\bar{z}_5=0$.
Note that $\mathbf{E}_5$ contains one singular point of this surface: the point $(\bar{x}_5,\bar{y}_5,\bar{z}_5)=(0,0,0)$.
Note also that
$D_{-4}^5=S^5_{-4}+2\mathbf{E}_5$, and $\mathcal{S}^5\vert_{\mathbf{E}_5}$ is a union of the curves $\{\bar{z}_5=\bar{x}_5=0\}$ and $\{\bar{z}_5=\bar{y}_5=0\}$.
Denote them by $C_{17}^5$ and $C_{18}^5$, respectively.

Let $\alpha_{6}\colon U_6\to U_5$ be the blow up of the point $C_{17}^5\cap C_{18}^5$.
Locally, one chart of this blow up is given by the coordinate change
$\bar{x}_6=\frac{\bar{x}_5}{\bar{z}_5}$, $\bar{y}_6=\frac{\bar{y}_5}{\bar{z}_5}$, and $\bar{z}_6=\bar{z}_5$.
Moreover, if $\lambda\ne -4$, then $S^6_\lambda$ in this chart is given by
$$
(\lambda+4)\bar{y}_6\bar{x}_6+\bar{z}_6\bar{x}_6+\bar{z}_6\bar{y}_6-(\lambda+4)\bar{x}_6\bar{y}_6\bar{z}_6+
\Big(\bar{x}_6^2\bar{z}_6^2+2\bar{z}_6^2\bar{y}_6\bar{x}_6+\bar{y}_6^2\bar{z}_6^2\Big)+\Big(\bar{x}_6^2\bar{y}_6\bar{z}_6^3+\bar{x}_6\bar{y}_6^2\bar{z}_6^3\Big)=0.
$$
Here, the surface $\mathbf{E}_6$ is given by $\bar{z}_6=0$.
If $\lambda\ne -4$, then $S^6_\lambda$ has ordinary double singularity at the point $(\bar{x}_6,\bar{y}_6,\bar{z}_6)=(0,0,0)$.
Therefore, if $\lambda\ne -4$, then $P_{\{x\},\{y\},\{z,t\}}$ is a du Val singular point of the surface $S_\lambda$ of type $\mathbb{A}_5$.

By construction, we have $D_\lambda^6=S^6_\lambda\sim -K_{U^6}$ for every $\lambda$ such that $\lambda\ne -4$ and $\lambda\ne\infty$.
One the other hand, we have
$D^6_{-4}=S^6_{-4}+2\mathbf{E}_5^6$.
This follows from the fact that $S^5_{-4}$ contains the point $C_{17}^5\cap C_{18}^5$ and is smooth at it.

\begin{remark}
\label{remark:r2-n6-Lxy}
Our computations implies that the proper transform of the line $L_{\{x\},\{y\}}$ on the threefold $U_6$
passes through the point $(\bar{x}_6,\bar{y}_6,\bar{z}_6)=(0,0,0)$.
\end{remark}

The restriction $\mathcal{S}^6\vert_{\mathbf{E}_6}$ consists of the curves $\{\bar{z}_6=\bar{x}_6=0\}$ and $\{\bar{z}_6=\bar{y}_6=0\}$.
Let us denote these curves by $C_{19}^6$ and $C_{20}^6$, respectively.
Let $\alpha_7\colon U_7\to U_6$ be the blow up of the point $(\bar{x}_6,\bar{y}_6,\bar{z}_6)=(0,0,0)$.
Then
$D^7_{-4}=S^7_{-4}+2\mathbf{E}_5^7+\mathbf{E}_6^{7}$.
Moreover, the restriction $\mathcal{S}^7\vert_{\mathbf{E}_7}$ is a pencil of conics in $\mathbf{E}_7\cong\mathbb{P}^2$
that is given by
$$
(\lambda+4)\bar{y}_6\bar{x}_6+\bar{z}_6\bar{x}_6+\bar{z}_6\bar{y}_6=0,
$$
where we consider $\bar{x}_6$, $\bar{y}_6$, $\bar{z}_6$ as projective coordinates on $\mathbf{E}_7$.
This pencil does not have base curves, so that $\mathcal{S}^7$ also does not have base curves contained in the surface $\mathbf{E}_7$.

If $\lambda\ne-4$, then $P_{\{z\},\{t\},\{x,y\}}$ is an ordinary double point of the surface $S_\lambda$.
Indeed, in the chart $y=1$, the surface $S_\lambda$ is given by
$$
\tilde{x}(\tilde{z}+\tilde{t})-(\lambda+4)\tilde{z}\tilde{t}=\tilde{x}^2\tilde{t}-(\lambda+4)\tilde{t}\tilde{x}\tilde{z}+\tilde{x}^2\tilde{z}+\tilde{t}^3\tilde{x}+\tilde{x}^2\tilde{t}^2+3\tilde{t}^2\tilde{x}\tilde{z}+2\tilde{t}\tilde{x}^2\tilde{z}+3\tilde{t}\tilde{x}\tilde{z}^2+\tilde{x}^2\tilde{z}^2+\tilde{z}^3\tilde{x},
$$
where $\tilde{x}=x-1$, $\tilde{z}=z$, $\tilde{t}=t$.
The quadratic form $\tilde{x}(\tilde{z}+\tilde{t})-(\lambda+4)\tilde{z}\tilde{t}$
is not degenerate for $\lambda\ne-4$, so that $P_{\{z\},\{t\},\{x,y\}}$ is an ordinary double point of the surface $S_\lambda$.

\begin{corollary}
\label{corollary:r2-n6-singularities}
Suppose that $\lambda\not\in\{-4,\infty\}$.
Then singular points of the surface $S_\lambda$ contained in the base locus of the pencil $\mathcal{S}$ can be describes as follows:
\begin{itemize}\setlength{\itemindent}{3cm}
\item[$P_{\{x\},\{z\},\{t\}}$:] type $\mathbb{A}_3$ with quadratic term $(z+t)(x+z+t)$;
\item[$P_{\{y\},\{z\},\{t\}}$:] type $\mathbb{A}_3$ with quadratic term $(z+t)(y+z+t)$;
\item[$P_{\{x\},\{y\},\{z,t\}}$:] type $\mathbb{A}_5$ with quadratic term $(\lambda+4)xy$;
\item[$P_{\{z\},\{t\},\{x,y\}}$:] type $\mathbb{A}_1$ with quadratic term $(\lambda+4)zt-(x+y)(z+t)$.
\end{itemize}
\end{corollary}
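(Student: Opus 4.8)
The plan is to read this statement as the assembly of four independent local analyses, one for each of the four fixed singular points that were already singled out as the only singular points of $S_\lambda$ lying in the base locus of $\mathcal{S}$ for $\lambda\notin\{-4,\infty\}$. For each such point $P$ I would pass to the affine chart in which $P$ is the origin, translate coordinates so that $P=(0,0,0)$, and write the Taylor expansion of the defining equation of $S_\lambda$ at $P$; the claimed quadratic term is then simply its degree-two part, which can be read off directly and recorded.

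The first key step is to dispose of the ordinary double point. For $P_{\{z\},\{t\},\{x,y\}}$ I would only need to check that the quadratic part $(\lambda+4)zt-(x+y)(z+t)$ is a nondegenerate quadratic form (rank $3$) for $\lambda\ne -4$; this identifies the singularity as type $\mathbb{A}_1$ with no further work, since a double point with nondegenerate quadratic form is an ordinary double point.

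The second, and main, step is the resolution of the two deeper singularities. For $P_{\{x\},\{z\},\{t\}}$ I would blow up the point (the morphism $\alpha_1$), verify on the exceptional $\mathbf{E}_1\cong\mathbb{P}^2$ that the proper transform $S_\lambda^1$ acquires a single new singular point, check that this point is an ordinary double point for $\lambda\notin\{-4,\infty\}$, and conclude that the resulting exceptional configuration is an $\mathbb{A}_3$-chain; a second blow up $\alpha_2$ then yields a smooth model, confirming type $\mathbb{A}_3$ with quadratic term $(z+t)(x+z+t)$. For $P_{\{x\},\{y\},\{z,t\}}$ the same method requires two successive blow ups $\alpha_5$ and $\alpha_6$: after $\alpha_5$ the exceptional divisor carries one singular point, after blowing up the intersection $C_{17}^5\cap C_{18}^5$ the surface $S_\lambda^6$ has an ordinary double point for $\lambda\ne -4$, and counting the length of the resulting chain gives type $\mathbb{A}_5$ with quadratic term $(\lambda+4)xy$. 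Throughout, I would exploit the $x\leftrightarrow y$ symmetry of the pencil to deduce the statement at $P_{\{y\},\{z\},\{t\}}$ (type $\mathbb{A}_3$, quadratic term $(z+t)(y+z+t)$) directly from the computation at $P_{\{x\},\{z\},\{t\}}$, halving the work.

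The hard part is the bookkeeping in this second step rather than any single calculation: one must verify at each intermediate stage that the \emph{only} new singular point on the exceptional $\mathbb{P}^2$ is the predicted one, that its quadratic form has the rank forcing the next link of the $\mathbb{A}_n$-chain, and that no extra singular points appear along the exceptional divisors inside the base locus. Once these local resolutions are checked and their lengths tallied, the statement follows by collecting the four results, the identification of the four points as the complete list of base-locus singularities for $\lambda\notin\{-4,\infty\}$ having already been established.
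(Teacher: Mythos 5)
Your proposal is correct and follows essentially the same route as the paper: the paper establishes this corollary by exactly the chart-by-chart computation of quadratic terms, the rank check at $P_{\{z\},\{t\},\{x,y\}}$, the successive blow-ups $\alpha_1,\alpha_2$ (resp. $\alpha_5,\alpha_6$) tracking the single new ordinary double point on each exceptional divisor to identify the $\mathbb{A}_3$ (resp. $\mathbb{A}_5$) chains, and the $x\leftrightarrow y$ symmetry to transfer the result from $P_{\{x\},\{z\},\{t\}}$ to $P_{\{y\},\{z\},\{t\}}$. No gaps.
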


Let us finish the description of the birational morphism $\alpha$.
It is given by the following commutative diagram
$$
\xymatrix{
&&U_3\ar@{->}[dll]_{\alpha_3}&&U_4\ar@{->}[ll]_{\alpha_4}&&U_5\ar@{->}[ll]_{\alpha_5}&&U_6\ar@{->}[ll]_{\alpha_6}&&\\
U_2\ar@{->}[rrd]_{\alpha_2}&&&&&&&&&&U_7\ar@{->}[llu]_{\alpha_7}\\
&&U_1\ar@{->}[rr]_{\alpha_1}&&\mathbb{P}^3&&&&U\ar@{->}[llll]^\alpha\ar@{->}[urr]_{\alpha_8} }
$$
Here $\alpha_8$ is the blow up of the preimage of the point $P_{\{z\},\{t\},\{x,y\}}$.

If $\lambda\ne -4$, then $\widehat{D}_\lambda=\widehat{S}_\lambda\sim -K_{U}$.
On the other  hand, we have
$\widehat{D}_{-4}=\widehat{S}_{-4}+2\widehat{E}_5+\widehat{E}_6$.
Moreover, the curves $\widehat{C}_1$, $\widehat{C}_2$, $\widehat{C}_3$, $\widehat{C}_4$, $\widehat{C}_5$,
$\widehat{C}_6$, $\widehat{C}_7$, $\widehat{C}_8$, $\widehat{C}_9$, $\widehat{C}_{10}$, $\widehat{C}_{11}$, $\widehat{C}_{12}$,
$\widehat{C}_{13}$, $\widehat{C}_{14}$, $\widehat{C}_{15}$, $\widehat{C}_{16}$, $\widehat{C}_{17}$,
$\widehat{C}_{18}$, $\widehat{C}_{19}$, and $\widehat{C}_{20}$ are all base curves of the pencil $\widehat{\mathcal{S}}$.

\begin{lemma}
\label{lemma:r2-n6-m13-m14-m15-m16-m17-m18-m19-m20}
One has $\mathbf{m}_{13}=\mathbf{m}_{14}=\mathbf{m}_{15}=\mathbf{m}_{16}=\mathbf{m}_{19}=\mathbf{m}_{20}=1$ and $\mathbf{m}_{17}=\mathbf{m}_{18}=2$.
\end{lemma}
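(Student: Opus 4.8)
The plan is to compute each of the multiplicities $\mathbf{m}_{13},\ldots,\mathbf{m}_{20}$ appearing in \eqref{equation:multiplicities} locally, at the generic point of the corresponding base curve. For two general surfaces the cycle $\widehat{S}_{\lambda_1}\cdot\widehat{S}_{\lambda_2}$ decomposes as $\sum_i\mathbf{m}_i\widehat{C}_i$, so $\mathbf{m}_i$ is the length of the scheme $\widehat{S}_{\lambda_1}\cap\widehat{S}_{\lambda_2}$ at the generic point of $\widehat{C}_i$. By construction of $U$, a general member of $\widehat{\mathcal{S}}$ is smooth at a general point of each base curve; hence, writing the two coordinates transverse to $\widehat{C}_i$ as $(u,v)$ in the appropriate chart, the multiplicity is governed by the linear parts $\ell_\lambda(u,v)$ of the general member $\widehat{S}_\lambda$ along $\widehat{C}_i$. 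If $\ell_{\lambda_1}$ and $\ell_{\lambda_2}$ are linearly independent, the two surfaces meet transversally and $\mathbf{m}_i=1$; if all $\ell_\lambda$ are proportional, a higher-order computation is required. I will read $\ell_\lambda$ directly off the explicit equations of the proper transforms $D^j_\lambda$ produced while constructing $\alpha=\alpha_1\circ\cdots\circ\alpha_8$, in the spirit of the proof of Lemma~\ref{lemma:main-2}.

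For the curves $\widehat{C}_{13}$, $\widehat{C}_{14}$ in $\widehat{E}_1$, I would use the equation of $D^1_\lambda$ in the chart $(\hat{x}_1,\hat{z}_1,\hat{t}_1)$. Along $C_{13}^1=\{\hat{z}_1=\hat{x}_1=0\}$ the linear part is proportional to $\hat{x}_1-(\lambda+4)\hat{z}_1$, and along $C_{14}^1=\{\hat{z}_1=\hat{t}_1=0\}$ it is proportional to $\hat{t}_1+(\lambda+4)\hat{z}_1$; in both cases the tangent direction genuinely moves with $\lambda$, so two general members are transversal and $\mathbf{m}_{13}=\mathbf{m}_{14}=1$. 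Since the defining equation of $S_\lambda$ is invariant under the swap $x\leftrightarrow y$, which interchanges $P_{\{x\},\{z\},\{t\}}$ with $P_{\{y\},\{z\},\{t\}}$, the identical computation gives $\mathbf{m}_{15}=\mathbf{m}_{16}=1$. For $\widehat{C}_{19}$, $\widehat{C}_{20}$ in $\widehat{E}_6$ I would argue the same way from the equation of $S^6_\lambda$: along $C_{19}^6=\{\bar{z}_6=\bar{x}_6=0\}$ and $C_{20}^6=\{\bar{z}_6=\bar{y}_6=0\}$ the transverse linear parts are proportional to $(\lambda+4)\bar{x}_6+\bar{z}_6$ and $(\lambda+4)\bar{y}_6+\bar{z}_6$ respectively, again $\lambda$-dependent, whence $\mathbf{m}_{19}=\mathbf{m}_{20}=1$.

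The only genuinely nontrivial cases are $\widehat{C}_{17}$ and $\widehat{C}_{18}$ in $\widehat{E}_5$, and this is where I expect the main work. From the equation of $D^5_\lambda$, along $C_{17}^5=\{\bar{z}_5=\bar{x}_5=0\}$ the linear part of every member with $\lambda\ne -4$ is proportional to $\bar{x}_5$, independently of $\lambda$; thus all members share the tangent plane $\{\bar{x}_5=0\}$ and one must pass to second order. Here the plan is to replace one member by the difference $\widehat{S}_{\lambda_1}-\widehat{S}_{\lambda_2}$, whose local equation has coefficient of $\lambda$ equal to $\bar{x}_5\bar{y}_5(1-\bar{z}_5)$, a unit multiple of $\bar{x}_5$ at the generic point of $C_{17}^5$, and then to restrict the remaining member to $\{\bar{x}_5=0\}$, where it becomes $\bar{z}_5^2\bar{y}_5(1+\bar{y}_5)$ and so vanishes to order exactly $2$ in $\bar{z}_5$. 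This yields $(\widehat{S}_{\lambda_1},\widehat{S}_{\lambda_2})=(\bar{x}_5,\bar{z}_5^2)$ locally, hence $\mathbf{m}_{17}=2$, and the swap $x\leftrightarrow y$ gives $\mathbf{m}_{18}=2$. As a consistency check this matches the earlier description $\widehat{D}_{-4}=\widehat{S}_{-4}+2\widehat{E}_5+\widehat{E}_6$, in which $\widehat{E}_5$ enters with coefficient $2$. Combining the three steps proves the lemma; the subtle point is confirming the exact order-$2$ contact along $C_{17}^5$, since a careless count could conflate the contribution of $\mathbf{E}_5$ with that of the base curve itself.
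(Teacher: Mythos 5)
Your argument is correct, but it takes a genuinely different route from the paper's. You compute each $\mathbf{m}_i$ directly at the generic point of the corresponding exceptional base curve, reading the transverse linear (or, for $\widehat{C}_{17}$ and $\widehat{C}_{18}$, quadratic) behaviour of the members of the pencil off the chart equations of $D^1_\lambda$, $D^5_\lambda$ and $S^6_\lambda$. The only inaccuracy is that along $C_{13}^1$ the transverse linear part at the generic point is proportional to $\hat{x}_1+(\lambda+4)(\hat{t}_1-1)\hat{z}_1$ (the term $(\lambda+4)\hat{t}_1^2\hat{z}_1$ also contributes) rather than to $\hat{x}_1-(\lambda+4)\hat{z}_1$; since this still varies with $\lambda$ at the generic point, your conclusion $\mathbf{m}_{13}=1$ is unaffected. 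The paper instead avoids any further local computation: it evaluates $\mathrm{mult}_P(S_{0}\cdot S_{1})$ at each blown-up point $P$ from the already-known decomposition $S_{0}\cdot S_{1}=\sum_i\mathbf{m}_iC_i$ of the base locus downstairs, subtracts $\mathrm{mult}_P(S_{0})\,\mathrm{mult}_P(S_{1})=4$, and identifies the remainder with the sum of the $\mathbf{m}_i$ over the exceptional base curves lying over $P$ --- so $6=4+\mathbf{m}_{13}+\mathbf{m}_{14}$ forces $\mathbf{m}_{13}=\mathbf{m}_{14}=1$, and similarly for $\mathbf{m}_{15},\mathbf{m}_{16}$ and $\mathbf{m}_{19},\mathbf{m}_{20}$ (the latter after passing to $U_5$ and the point $C_{17}^5\cap C_{18}^5$); for $\mathbf{m}_{17}+\mathbf{m}_{18}=4$ it supplements the count with the lower bounds $\mathbf{m}_{17}\geqslant 2$ and $\mathbf{m}_{18}\geqslant 2$ coming from $\widehat{D}_{-4}=\widehat{S}_{-4}+2\widehat{E}_5+\widehat{E}_6$. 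The paper's counting is shorter and reuses global data, but it only pins down sums and therefore needs that extra input to split $4$ as $2+2$; your computation determines each multiplicity individually and exhibits the order-two contact along $\widehat{C}_{17}$ explicitly, at the cost of more chart work.
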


\begin{proof}
To find $\mathbf{m}_{13}$ and $\mathbf{m}_{14}$, we use
\begin{multline*}
6=\mathrm{mult}_{P_{\{x\},\{z\},\{t\}}}\Big(2C_2+2C_3+C_9+C_{12}\Big)=\mathrm{mult}_{P_{\{x\},\{z\},\{t\}}}\Big(S_{0}\cdot S_{1}\Big)=\\
=\mathrm{mult}_{P_{\{x\},\{z\},\{t\}}}\big(S_{0}\big)\mathrm{mult}_{P_{\{x\},\{z\},\{t\}}}\big(S_{1}\big)+\mathbf{m}_{13}+\mathbf{m}_{14}=4+\mathbf{m}_{13}+\mathbf{m}_{14},
\end{multline*}
so that $\mathbf{m}_{13}=\mathbf{m}_{14}=1$. Similarly, we see that  $\mathbf{m}_{15}=\mathbf{m}_{16}=1$.

Recall that $\widehat{D}_{-4}=\widehat{S}_{-4}+2\widehat{E}_5+\widehat{E}_6$,
so that $\mathbf{m}_{17}\geqslant 2$ and $\mathbf{m}_{18}\geqslant 2$.
But
\begin{multline*}
8=\mathrm{mult}_{P_{\{x\},\{y\},\{z,t\}}}\Big(2C_1+2C_3+2C_4+C_5+C_{6}\Big)=\mathrm{mult}_{P_{\{x\},\{z\},\{t\}}}\Big(S_{0}\cdot S_{1}\Big)=\\
=\mathrm{mult}_{P_{\{x\},\{y\},\{z,t\}}}\big(S_{0}\big)\mathrm{mult}_{P_{\{x\},\{y\},\{z,t\}}}\big(S_{1}\big)+\mathbf{m}_{17}+\mathbf{m}_{18}=4+\mathbf{m}_{17}+\mathbf{m}_{18},
\end{multline*}
which implies that $\mathbf{m}_{17}=2$ and $\mathbf{m}_{18}=2$.

To find $\mathbf{m}_{19}$ and $\mathbf{m}_{20}$, recall that $\alpha_{6}\colon U_6\to U_5$ is the blow up of the point $C_{17}^5\cap C_{18}^5$.
Let $P=C_{17}^5\cap C_{18}^5$. Then
$$
6=\mathrm{mult}_{P}\Big(2C_{17}^5+2C_{18}^5+2C^5_1\Big)=\mathrm{mult}_{P}\Big(S_{0}^5\cdot S_{1}^5\Big)=4+\mathbf{m}_{19}+\mathbf{m}_{20},
$$
which gives us $\mathbf{m}_{19}=1$ and $\mathbf{m}_{20}=1$.
\end{proof}

For every $\lambda\not\in\{-4,\infty\}$, we have $[\mathsf{f}^{-1}(\lambda)]=1$ by Corollaries~\ref{corollary:irreducible-fibers} and \ref{corollary:r2-n6-singularities}.

\begin{lemma}
\label{lemma:r2-n6-irreducible-special}
One has $[\mathsf{f}^{-1}(-4)]=10$.
\end{lemma}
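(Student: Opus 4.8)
The plan is to compute $[\mathsf{f}^{-1}(-4)]$ directly from formula \eqref{equation:equation:number-of-irredubicle-components-refined-2}, namely $[\mathsf{f}^{-1}(-4)]=[\widehat{D}_{-4}]+\sum_{i=1}^{20}\mathbf{C}_i^{-4}$, since all the ingredients have already been assembled: the blow-up tower $\alpha_1,\ldots,\alpha_8$, the full list of base curves $\widehat{C}_1,\ldots,\widehat{C}_{20}$ of $\widehat{\mathcal{S}}$, the intersection multiplicities $\mathbf{m}_i$ from Lemma~\ref{lemma:r2-n6-m13-m14-m15-m16-m17-m18-m19-m20}, and the log pull-back $\widehat{D}_{-4}=\widehat{S}_{-4}+2\widehat{E}_5+\widehat{E}_6$. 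First I would record that $[\widehat{D}_{-4}]=6$: the divisor $\widehat{D}_{-4}$ consists of the proper transforms of the four planes in $S_{-4}=H_{\{x,y\}}+H_{\{z,t\}}+H_{\{y,z,t\}}+H_{\{x,z,t\}}$ together with the two exceptional surfaces $\widehat{E}_5$ and $\widehat{E}_6$, giving six distinct irreducible components.

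Next I would evaluate every $\mathbf{C}_i^{-4}$ using Lemma~\ref{lemma:main-2}, which reduces each value to whether $\mathbf{M}_i^{-4}=1$ or $\mathbf{M}_i^{-4}\geqslant 2$. For the proper transforms $\widehat{C}_1,\ldots,\widehat{C}_{12}$ of the original base curves, none is contained in $\widehat{E}_5$ or $\widehat{E}_6$, so $\mathbf{M}_i^{-4}=\mathrm{mult}_{C_i}(S_{-4})$; thus $\mathbf{M}_3^{-4}=\mathbf{M}_4^{-4}=2$ and all others equal $1$, which by Lemma~\ref{lemma:main-2} gives $\mathbf{C}_3^{-4}=\mathbf{C}_4^{-4}=1$ and $\mathbf{C}_i^{-4}=0$ for the remaining $i\leqslant 12$. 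For the six exceptional base curves with $\mathbf{m}_i=1$, namely $i\in\{13,14,15,16,19,20\}$, the bound $1\leqslant\mathbf{M}_i^{-4}\leqslant\mathbf{m}_i=1$ forces $\mathbf{M}_i^{-4}=1$ and hence $\mathbf{C}_i^{-4}=0$.

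The crux is the pair $\widehat{C}_{17},\widehat{C}_{18}$, which satisfy $\mathbf{m}_{17}=\mathbf{m}_{18}=2$. Both curves lie on the surface $\widehat{E}_5$, and $\widehat{E}_5$ enters $\widehat{D}_{-4}=\widehat{S}_{-4}+2\widehat{E}_5+\widehat{E}_6$ with coefficient $2$; since $\widehat{E}_5$ is smooth along each of them this already yields $\mathbf{M}_{17}^{-4}\geqslant 2$ and $\mathbf{M}_{18}^{-4}\geqslant 2$, while the general inequality $\mathbf{M}_i^{-4}\leqslant\mathbf{m}_i=2$ gives the reverse bound, so $\mathbf{M}_{17}^{-4}=\mathbf{M}_{18}^{-4}=2$. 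By Lemma~\ref{lemma:main-2} this gives $\mathbf{C}_{17}^{-4}=\mathbf{C}_{18}^{-4}=\mathbf{m}_i-1=1$. Summing, $\sum_{i=1}^{20}\mathbf{C}_i^{-4}=(1+1)+(1+1)=4$, and therefore $[\mathsf{f}^{-1}(-4)]=6+4=10$.

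The only genuinely delicate point is pinning down $\mathbf{M}_{17}^{-4}$ and $\mathbf{M}_{18}^{-4}$ — equivalently, the entire nonzero contribution is the defect concentrated over the quadruple point $P_{\{x\},\{y\},\{z,t\}}$, and everything else is bookkeeping resting on the blow-up data already computed. As a consistency check I would also run \eqref{equation:equation:number-of-irredubicle-components-refined} through \eqref{equation:D-A-B}: the points $P_{\{x\},\{z\},\{t\}}$ and $P_{\{y\},\{z\},\{t\}}$ have vanishing defect because $\mathbf{a}_i^{-4}=0$ and $\mathbf{M}_i^{-4}=1$ on their exceptional curves, while $P_{\{z\},\{t\},\{x,y\}}$ has vanishing defect by Lemma~\ref{lemma:normal-crossing}, since $S_{-4}$ has a good double point there with quadratic term $(x+y)(z+t)$ of rank $2$; on the other hand $\mathbf{D}_{P_{\{x\},\{y\},\{z,t\}}}^{-4}=\mathbf{A}_{P_{\{x\},\{y\},\{z,t\}}}^{-4}+\mathbf{C}_{17}^{-4}+\mathbf{C}_{18}^{-4}=2+1+1=4$, so that $[\mathsf{f}^{-1}(-4)]=[S_{-4}]+\sum_{i=1}^{12}\mathbf{C}_i^{-4}+4=4+2+4=10$, in agreement with the computation above.
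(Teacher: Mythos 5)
Your proposal is correct and follows essentially the same route as the paper's proof: apply \eqref{equation:equation:number-of-irredubicle-components-refined-2} with $[\widehat{D}_{-4}]=6$ and evaluate each $\mathbf{C}_i^{-4}$ via Lemma~\ref{lemma:main-2}, the only nonzero contributions coming from $\widehat{C}_3,\widehat{C}_4,\widehat{C}_{17},\widehat{C}_{18}$. Your explicit justification that $\mathbf{M}_{17}^{-4}=\mathbf{M}_{18}^{-4}=2$ (from the coefficient $2$ of $\widehat{E}_5$ in $\widehat{D}_{-4}$ together with $\mathbf{M}_i^{-4}\leqslant\mathbf{m}_i$), which the paper merely asserts, and your cross-check through \eqref{equation:D-A-B} are both sound additions.
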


\begin{proof}
Recall that $[S_{-4}]=4$ and $[\widehat{D}_{-4}]=6$.
Thus, it follows from \eqref{equation:equation:number-of-irredubicle-components-refined-2} that
$$
\big[\mathsf{f}^{-1}(-4)\big]=6+\sum_{i=1}^{18}\mathbf{C}_i^{-4}.
$$
On the other hand, we have $\mathbf{M}_{3}^{-4}=\mathbf{M}_{4}^{-4}=\mathbf{M}_{17}^{-4}=\mathbf{M}_{18}^{-4}=2$ and
\begin{multline*}
\mathbf{M}_{1}^{-4}=\mathbf{M}_{2}^{-4}=\mathbf{M}_{5}^{-4}=\mathbf{M}_{6}^{-4}=\mathbf{M}_{7}^{-4}=\mathbf{M}_{8}^{-4}=\mathbf{M}_{9}^{-4}=\\
=\mathbf{M}_{10}^{-4}=\mathbf{M}_{11}^{-4}=\mathbf{M}_{12}^{-4}=\mathbf{M}_{13}^{-4}=\mathbf{M}_{14}^{-4}=\mathbf{M}_{15}^{-4}=\mathbf{M}_{16}^{-4}=1.
\end{multline*}
But $\mathbf{m}_{3}=\mathbf{m}_{4}=2$, and $\mathbf{m}_{17}=\mathbf{m}_{18}=2$ by Lemma~\ref{lemma:r2-n6-m13-m14-m15-m16-m17-m18-m19-m20}.
This shows that
$$
\big[\mathsf{f}^{-1}(\lambda)\big]=6+\sum_{i=1}^{18}\mathbf{C}_i^{-4}=6+\mathbf{C}_3^{-4}+\mathbf{C}_4^{-4}+\mathbf{C}_{17}^{-4}+\mathbf{C}_{18}^{-4}=10,
$$
since $\mathbf{C}_3^{-4}=\mathbf{C}_4^{-4}=\mathbf{C}_{17}^{-4}=\mathbf{C}_{18}^{-4}=1$ by Lemma~\ref{lemma:main-2}.
\end{proof}

Thus, we see that \eqref{equation:main-1} in Main Theorem holds in this case.

To prove \eqref{equation:main-2} in Main Theorem, we have to check \eqref{equation:main-2-simple}.
To do this, note that
$$
\mathrm{rk}\,\mathrm{Pic}\big(\widetilde{S}_{\Bbbk}v)=\mathrm{rk}\,\mathrm{Pic}\big(S_{\Bbbk}\big)+12.
$$
This follows from the proof of Corollary~\ref{corollary:r2-n6-singularities}
Moreover, if $\lambda\not\in\{-4,\infty\}$, then the intersection matrix
of the base curves of the pencil $\mathcal{S}$ on the surface $S_\lambda$ has the same rank as
the intersection matrix of the curves
$L_{\{x\},\{y\}}$, $L_{\{z\},\{t\}}$, $L_{\{x\},\{z,t\}}$,
$L_{\{y\},\{z,t\}}$, $L_{\{z\},\{y,t\}}$, $L_{\{z\},\{x,t\}}$,
$L_{\{t\},\{y,z\}}$, $L_{\{t\},\{x,z\}}$, and $H_\lambda$, because
\begin{multline*}
H_\lambda\sim L_{\{x\},\{y\}}+2L_{\{x\},\{z,t\}}+L_{\{x\},\{y,z,t\}}\sim H_{\{y\}}\cdot S_0=L_{\{x\},\{y\}}+2L_{\{y\},\{z,t\}}+L_{\{y\},\{x,z,t\}}\sim\\
\sim L_{\{z\},\{t\}}+L_{\{z\},\{x,y\}}+L_{\{z\},\{y,t\}}+L_{\{z\},\{x,t\}}\sim L_{\{z\},\{t\}}+L_{\{t\},\{x,y\}}+L_{\{t\},\{y,z\}}+L_{\{t\},\{x,z\}}.
\end{multline*}
This implies \eqref{equation:main-2-simple},
because the rank of the intersection matrix in the following lemma is~$6$.

\begin{lemma}
\label{lemma:r2-n6-intersection}
Suppose that $\lambda\not\in\{-4,\infty\}$.
Then the intersection form of the curves
$L_{\{x\},\{y\}}$, $L_{\{z\},\{t\}}$, $L_{\{x\},\{z,t\}}$,
$L_{\{y\},\{z,t\}}$, $L_{\{z\},\{y,t\}}$, $L_{\{z\},\{x,t\}}$,
$L_{\{t\},\{y,z\}}$, $L_{\{t\},\{x,z\}}$, and $H_\lambda$ on the surface $S_\lambda$ is given by
\begin{center}\renewcommand\arraystretch{1.42}
\begin{tabular}{|c||c|c|c|c|c|c|c|c|c|}
\hline
$\bullet$ & $L_{\{x\},\{y\}}$ & $L_{\{z\},\{t\}}$ &  $L_{\{x\},\{z,t\}}$ & $L_{\{y\},\{z,t\}}$ &  $L_{\{z\},\{y,t\}}$ &  $L_{\{z\},\{x,t\}}$ & $L_{\{t\},\{y,z\}}$ &  $L_{\{t\},\{x,z\}}$ & $H_\lambda$\\
\hline\hline
$L_{\{x\},\{y\}}$& $-\frac{1}{2}$ & $0$ & $\frac{1}{2}$ & $\frac{1}{2}$ & $0$ & $0$ & $0$ & $0$ & $1$\\
\hline
$L_{\{z\},\{t\}}$& $0$ & $0$ & $\frac{1}{2}$ & $\frac{1}{2}$ & $\frac{1}{4}$ & $\frac{1}{4}$ & $\frac{1}{4}$ & $\frac{1}{4}$ & $1$\\
\hline
$L_{\{x\},\{z,t\}}$& $\frac{1}{2}$ & $\frac{1}{2}$ & $-\frac{1}{6}$ & $\frac{1}{6}$ & $0$ & $\frac{1}{2}$ & $0$ & $\frac{1}{2}$ & $1$\\
\hline
$L_{\{y\},\{z,t\}}$& $\frac{1}{2}$ & $\frac{1}{2}$ & $\frac{1}{6}$ & $-\frac{1}{6}$ & $\frac{1}{2}$ & $0$ & $\frac{1}{2}$ & $0$ & $1$\\
\hline
$L_{\{z\},\{y,t\}}$& $0$ & $\frac{1}{4}$ & $0$ & $\frac{1}{2}$ & $-\frac{5}{4}$ & $1$ & $\frac{3}{4}$ & $0$ & $1$\\
\hline
$L_{\{z\},\{x,t\}}$& $0$ & $\frac{1}{4}$ & $\frac{1}{2}$ & $0$ & $1$ & $-\frac{5}{4}$ & $0$ & $\frac{3}{4}$ & $1$\\
\hline
$L_{\{t\},\{y,z\}}$& $0$ & $\frac{1}{4}$ & $0$ & $\frac{1}{2}$ & $\frac{3}{4}$ & $0$ & $-\frac{5}{4}$ & $1$ & $1$\\
\hline
$L_{\{t\},\{x,z\}}$& $0$ & $\frac{1}{4}$ & $\frac{1}{2}$ & $0$ & $0$ & $\frac{3}{4}$ & $1$ & $-\frac{5}{4}$ & $1$\\
\hline
$H_\lambda$& $1$ & $1$ & $1$ & $1$ & $1$ & $1$ & $1$ & $1$ & $4$\\
\hline
\end{tabular}
\end{center}
\end{lemma}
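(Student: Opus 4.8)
The plan is to compute the nine-by-nine Gram matrix entry by entry, exactly as in the proofs of Lemmas~\ref{lemma:r2-n4-intersection} and~\ref{lemma:r2-n5-intersection}, but first exploiting the two involutions $x\leftrightarrow y$ and $z\leftrightarrow t$, which preserve the defining equation of $S_\lambda$ and hence act on $\mathrm{Pic}(S_\lambda)$ by isometries. Under the resulting $\mathbb{Z}/2\times\mathbb{Z}/2$-action the eight lines fall into a few orbits: $L_{\{x\},\{y\}}$ and $L_{\{z\},\{t\}}$ are fixed, the pair $L_{\{x\},\{z,t\}}$, $L_{\{y\},\{z,t\}}$ is interchanged by $x\leftrightarrow y$, and the four lines $L_{\{z\},\{y,t\}}$, $L_{\{z\},\{x,t\}}$, $L_{\{t\},\{y,z\}}$, $L_{\{t\},\{x,z\}}$ form a single orbit. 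Thus only a handful of self-intersections and pairwise products must be computed from scratch, the rest being forced by symmetry. The last row and column are immediate: $H_\lambda^2=4$, and $H_\lambda\cdot C=1$ for each of the eight lines $C$.

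The off-diagonal entries split into three kinds, and the first step is to sort all pairs accordingly using the coordinate descriptions. First, whenever two of the lines are disjoint (for instance $L_{\{x\},\{y\}}$ and $L_{\{z\},\{y,t\}}$) the product is $0$, which accounts for most of the matrix. Second, when two lines meet at a point of $S_\lambda$ that is \emph{smooth} (for instance $L_{\{z\},\{y,t\}}$ and $L_{\{z\},\{x,t\}}$, which meet at $[1:1:0:-1]$), the product is the ordinary local intersection number $1$. Third, and this is where the work lies, when two lines meet at one of the four fixed singular points of Corollary~\ref{corollary:r2-n6-singularities}, the product is a proper fraction computed by Proposition~\ref{proposition:du-Val-intersection} from the manner in which the two proper transforms attach to the exceptional chain there; for example $L_{\{x\},\{y\}}$ and $L_{\{x\},\{z,t\}}$ meet at the $\mathbb{A}_5$ point $P_{\{x\},\{y\},\{z,t\}}$ and should yield $\tfrac{1}{2}$.

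For the diagonal I would locate, for each line, the fixed singular points on it, using that $P_{\{x\},\{z\},\{t\}}$ and $P_{\{y\},\{z\},\{t\}}$ are $\mathbb{A}_3$ points, $P_{\{x\},\{y\},\{z,t\}}$ is an $\mathbb{A}_5$ point, and $P_{\{z\},\{t\},\{x,y\}}$ is an $\mathbb{A}_1$ point. Proposition~\ref{proposition:du-Val-self-intersection} then gives the self-intersections: for instance $L_{\{z\},\{y,t\}}$ passes transversally through a single $\mathbb{A}_3$ point at an end of its chain, so $L_{\{z\},\{y,t\}}^2=-2+\tfrac{3}{4}=-\tfrac{5}{4}$, while $L_{\{z\},\{t\}}$ passes through both $\mathbb{A}_3$ points and the $\mathbb{A}_1$ point, so $L_{\{z\},\{t\}}^2=-2+\tfrac{3}{4}+\tfrac{3}{4}+\tfrac{1}{2}=0$. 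The main obstacle is precisely the lines through the $\mathbb{A}_5$ point $P_{\{x\},\{y\},\{z,t\}}$ and those meeting an $\mathbb{A}_3$ chain at an \emph{interior} node: the fractions $-\tfrac{1}{6}$, $\tfrac{1}{6}$, $\tfrac{1}{4}$, $\tfrac{3}{4}$ depend sensitively on which exceptional component each proper transform meets, and this cannot be read off from the singularity type alone. Here I would invoke Remark~\ref{remark:transversal} together with the explicit resolution $\alpha=\alpha_1\circ\cdots\circ\alpha_8$ constructed above and the incidence recorded in Remark~\ref{remark:r2-n6-Lxy}, tracking the local blow-up charts to pin down each attachment point.

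Once the matrix is assembled, the final step is to verify by row reduction that its rank equals $6$; combined with the identity $\mathrm{rk}\,\mathrm{Pic}(\widetilde{S}_{\Bbbk})=\mathrm{rk}\,\mathrm{Pic}(S_{\Bbbk})+12$ established from Corollary~\ref{corollary:r2-n6-singularities}, this is exactly the input needed to confirm \eqref{equation:main-2-simple} for family \textnumero$2.6$.
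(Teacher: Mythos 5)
Your proposal follows essentially the same route as the paper's proof: the last row and column are immediate, the diagonal entries come from Proposition~\ref{proposition:du-Val-self-intersection} after listing the fixed singular points on each line, the off-diagonal entries split into disjoint pairs, pairs meeting at a smooth point, and pairs meeting at a du~Val point handled by Proposition~\ref{proposition:du-Val-intersection}, with Remark~\ref{remark:transversal} and Remark~\ref{remark:r2-n6-Lxy} (together with the explicit blow-up charts) used to pin down which exceptional component each proper transform meets; the paper likewise exploits the symmetries $x\leftrightarrow y$ and $z\leftrightarrow t$ to avoid recomputing orbit-equivalent entries. Your sample values ($L_{\{z\},\{t\}}^2=0$, $L_{\{z\},\{y,t\}}^2=-\tfrac{5}{4}$, $L_{\{x\},\{y\}}\cdot L_{\{x\},\{z,t\}}=\tfrac{1}{2}$, $L_{\{z\},\{y,t\}}\cdot L_{\{z\},\{x,t\}}=1$) all agree with the paper's computations.
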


\begin{proof}
The entries in last raw of the intersection matrix are obvious.
Let us compute its diagonal.
Using Proposition~\ref{proposition:du-Val-self-intersection} and Remark~\ref{remark:r2-n6-Lxy}, we obtain
$L_{\{x\},\{y\}}^2=-\frac{1}{2}$,
because $P_{\{x\},\{y\},\{z,t\}}$ is the only singular point of the surface $S_\lambda$ that is contained in $L_{\{x\},\{y\}}$.
Likewise, it follows from Proposition~\ref{proposition:du-Val-self-intersection} and Remark~\ref{remark:transversal} that
$$
L_{\{z\},\{t\}}^2=-2+\frac{3}{4}+\frac{3}{4}+\frac{1}{2}=0
$$
because the line $L_{\{z\},\{t\}}$ contains the points $P_{\{x\},\{z\},\{t\}}$, $P_{\{y\},\{z\},\{t\}}$, and $P_{\{z\},\{t\},\{x,y\}}$.

To compute $L_{\{x\},\{z,t\}}^2$, observe that the line $L_{\{x\},\{z,t\}}$ contains the points $P_{\{x\},\{z\},\{t\}}$ and $P_{\{x\},\{y\},\{z,t\}}$.
Applying Remark~\ref{remark:transversal} with $S=S_\lambda$, $O=P_{\{x\},\{z\},\{t\}}$, $n=3$, and $C=L_{\{x\},\{z,t\}}$,
we see that $\overline{C}$ contains the point $\overline{G}_1\cap\overline{G}_2$.
Similarly, applying Remark~\ref{remark:transversal} with $S=S_\lambda$, $O=P_{\{x\},\{y\},\{z,t\}}$, $n=5$, and $C=L_{\{x\},\{z,t\}}$,
we see that $\overline{C}$ does not contain the point $\overline{G}_1\cap\overline{G}_5$.
Thus, applying Proposition~\ref{proposition:du-Val-self-intersection}, we get
$L_{\{x\},\{z,t\}}^2=-2+1+\frac{5}{6}=-\frac{1}{6}$.

To compute $L_{\{z\},\{y,t\}}^2$, notice that $P_{\{y\},\{z\},\{t\}}$ is the only singular point of the surface $S_\lambda$
that is contained in $L_{\{z\},\{y,t\}}$.
Applying Proposition~\ref{proposition:du-Val-self-intersection}, we see that $L_{\{z\},\{y,t\}}^2=-\frac{5}{4}$.

Using the symmetry $x\leftrightarrow y$, we get $L_{\{x\},\{z,t\}}^2=-\frac{1}{6}$ and $L_{\{z\},\{x,t\}}^2=-\frac{5}{4}$.
Similarly, using the symmetry $z\leftrightarrow t$, we see that $L_{\{t\},\{y,z\}}^2=L_{\{t\},\{x,z\}}=-\frac{5}{4}$.

Now let us fill in the remaining entries in the first raw of the  table.
Clearly, we have
$L_{\{x\},\{y\}}\cdot L_{\{z\},\{t\}}=0$, $L_{\{x\},\{y\}}\cdot L_{\{z\},\{y,t\}}=0$,
$L_{\{x\},\{y\}}\cdot L_{\{z\},\{x,t\}}=0$, $L_{\{x\},\{y\}}\cdot L_{\{t\},\{y,z\}}=0$, and $L_{\{x\},\{y\}}\cdot L_{\{t\},\{x,z\}}=0$,
because $L_{\{x\},\{y\}}$ does not intersect the lines
$L_{\{z\},\{t\}}$,  $L_{\{z\},\{y,t\}}$, $L_{\{z\},\{x,t\}}$, $L_{\{t\},\{y,z\}}$, and $L_{\{t\},\{x,z\}}$.
Using symmetry $x\leftrightarrow y$, we see that
$$
L_{\{x\},\{y\}}\cdot L_{\{x\},\{z,t\}}=L_{\{x\},\{y\}}\cdot L_{\{y\},\{z,t\}}.
$$
To find $L_{\{x\},\{y\}}\cdot L_{\{x\},\{z,t\}}$, we observe that $L_{\{x\},\{y\}}\cap L_{\{x\},\{z,t\}}=P_{\{x\},\{y\},\{z,t\}}$.
Applying Proposition~\ref{proposition:du-Val-intersection} and Remark~\ref{remark:r2-n6-Lxy}, we see that $L_{\{x\},\{y\}}\cdot L_{\{x\},\{z,t\}}=\frac{1}{2}$.

Let us compute the remaining entries in the second raw of the intersection matrix.
Since $L_{\{z\},\{t\}}\cap L_{\{x\},\{z,t\}}=P_{\{x\},\{z\},\{t\}}$, we have
$L_{\{z\},\{t\}}\cdot L_{\{x\},\{z,t\}}=\frac{1}{2}$ by Proposition~\ref{proposition:du-Val-intersection} and Remark~\ref{remark:transversal}.
Using symmetry $x\leftrightarrow y$, we get $L_{\{z\},\{t\}}\cdot L_{\{y\},\{z,t\}}=\frac{1}{2}$.

Observe that $L_{\{z\},\{t\}}\cap L_{\{z\},\{x,t\}}=P_{\{x\},\{z\},\{t\}}$.
Applying Remark~\ref{remark:transversal} with $S=S_\lambda$, $O=P_{\{x\},\{z\},\{t\}}$, $n=3$, $C=L_{\{z\},\{t\}}$
and $Z=L_{\{z\},\{x,t\}}$, we see that $\overline{C}$ and $\overline{Z}$ intersect
different curves among $\overline{G}_1$ and $\overline{G}_3$.
This implies $L_{\{z\},\{t\}}\cdot L_{\{z\},\{x,t\}}=\frac{1}{4}$ by Proposition~\ref{proposition:du-Val-intersection}.
Using symmetry $x\leftrightarrow y$, we get $L_{\{z\},\{t\}}\cdot L_{\{z\},\{y,t\}}=\frac{1}{4}$.
Using symmetry $z\leftrightarrow t$, we get
$$
L_{\{z\},\{t\}}\cdot L_{\{t\},\{y,z\}}=L_{\{z\},\{t\}}\cdot L_{\{t\},\{x,z\}}=\frac{1}{4}.
$$
This gives us all entries in the second raw of the intersection matrix.

Let us compute the third raw.
Observe that $L_{\{x\},\{z,t\}}\cap L_{\{y\},\{z,t\}}=P_{\{x\},\{y\},\{z,t\}}$.
Applying Remark~\ref{remark:transversal} with $S=S_\lambda$, $O=P_{\{x\},\{y\},\{z,t\}}$, $n=5$, $C=L_{\{x\},\{z,t\}}$
and $Z=L_{\{y\},\{z,t\}}$, we see that $\overline{C}$ and $\overline{Z}$ intersect different curves among $\overline{G}_1$ and $\overline{G}_5$.
Then $L_{\{x\},\{z,t\}}\cdot L_{\{y\},\{z,t\}}=\frac{1}{6}$ by Proposition~\ref{proposition:du-Val-intersection}.

Since $L_{\{x\},\{z,t\}}\cap L_{\{z\},\{y,t\}}=\varnothing$, we have $L_{\{x\},\{z,t\}}\cdot L_{\{z\},\{y,t\}}=0$.
Using symmetry~$z\leftrightarrow t$, we get $L_{\{x\},\{z,t\}}\cap L_{\{t\},\{y,z\}}=0$.
Since $L_{\{x\},\{z,t\}}\cap L_{\{z\},\{x,t\}}=P_{\{x\},\{z\},\{t\}}$,
we get
$$
L_{\{x\},\{z,t\}}\cdot L_{\{z\},\{x,t\}}=\frac{1}{2}
$$
by Proposition~\ref{proposition:du-Val-intersection}.
Using symmetry $z\leftrightarrow t$, we get $L_{\{x\},\{z,t\}}\cdot L_{\{t\},\{x,z\}}=\frac{1}{2}$.

Let us compute the remaining four entries in the fourth raw of the intersection matrix.
Using symmetries $x\leftrightarrow y$ and $z\leftrightarrow t$, we get
$$
L_{\{y\},\{z,t\}}\cdot L_{\{z\},\{y,t\}}=L_{\{x\},\{z,t\}}\cdot L_{\{z\},\{x,t\}}=L_{\{y\},\{z,t\}}\cdot L_{\{t\},\{y,z\}}=L_{\{y\},\{z,t\}}\cdot L_{\{z\},\{y,t\}}=\frac{1}{2},
$$
and $L_{\{y\},\{z,t\}}\cdot L_{\{z\},\{x,t\}}=L_{\{x\},\{z,t\}}\cdot L_{\{z\},\{y,t\}}=L_{\{y\},\{z,t\}}\cdot L_{\{t\},\{x,z\}}=L_{\{y\},\{z,t\}}\cdot L_{\{z\},\{x,t\}}=0$.

Let us compute the remaining three entries in the fifth raw of the intersection matrix.
First, we have $L_{\{z\},\{y,t\}}\cdot L_{\{t\},\{x,z\}}=0$, because $L_{\{z\},\{y,t\}}\cap L_{\{t\},\{x,z\}}=\varnothing$.
Second, we have $L_{\{z\},\{y,t\}}\cdot L_{\{z\},\{x,t\}}=1$, because
$L_{\{z\},\{y,t\}}\cap L_{\{z\},\{x,t\}}$ is a smooth point of the surface $S_\lambda$.
Third, we compute $L_{\{z\},\{y,t\}}\cdot L_{\{t\},\{y,z\}}$.
Observe that $L_{\{z\},\{y,t\}}\cap L_{\{t\},\{y,z\}}=P_{\{y\},\{z\},\{t\}}$.
Applying Remark~\ref{remark:transversal} with $S=S_\lambda$, $O=P_{\{y\},\{z\},\{t\}}$, $n=3$, $C=L_{\{z\},\{y,t\}}$
and $Z=L_{\{t\},\{y,z\}}$, we see that $\overline{C}$ and $\overline{Z}$ intersect the same curve among $\overline{G}_1$ and $\overline{G}_3$,
and none of them contains the point $\overline{G}_1\cap\overline{G}_3$.
Thus, we have $L_{\{z\},\{y,t\}}\cdot L_{\{t\},\{y,z\}}=\frac{3}{4}$ by Proposition~\ref{proposition:du-Val-intersection}.

Let us compute the remaining three entries of the  matrix.
Using symmetry~$x\leftrightarrow y$, we get $L_{\{z\},\{x,t\}}\cdot L_{\{t\},\{y,z\}}=L_{\{z\},\{y,t\}}\cdot L_{\{t\},\{x,z\}}=0$.
Likewise, we have
$$
L_{\{z\},\{x,t\}}\cdot L_{\{t\},\{x,z\}}=L_{\{z\},\{y,t\}}\cdot L_{\{t\},\{y,z\}}=\frac{3}{4}.
$$
Finally, using symmetry $z\leftrightarrow t$, we get
$L_{\{t\},\{y,z\}}\cdot L_{\{t\},\{x,z\}}=L_{\{z\},\{y,t\}}\cdot L_{\{z\},\{x,t\}}=1$.
\end{proof}

\subsection{Family \textnumero $2.7$}
\label{section:r-2-n-7}

In this case, the threefold $X$ can be obtained by blowing up a smooth quadric threefold $\mathcal{Q}$ in $\mathbb{P}^4$
along a smooth curve of genus $5$. This implies that $h^{1,2}(X)=5$.
A~toric Landau--Ginzburg model of this family is given by Minkowski polynomial \textnumero $3238$.
It is
$$
x+{\frac {x}{y}}+z+{\frac {z}{y}}+{\frac {xy}{z}}+{\frac {2x}{z}}+{\frac {x}{yz}}+2y+\frac{2}{y}+{\frac {yz}{x}}+{\frac{2z}{x}}+{\frac {z}{xy}}+\frac{2y}{z}+\frac{2}{z}+{\frac {2y}{x}}+\frac{2}{x}+{\frac {y}{xz}}.
$$
The corresponding pencil of quartic surfaces $\mathcal{S}$ is given by
\begin{multline*}
x^2yz+x^2tz+z^2yx+z^2tx+y^2x^2+2x^2ty+x^2t^2+2y^2zx+2t^2zx+y^2z^2+\\
+2z^2ty+t^2z^2+2y^2tx+2t^2yx+2y^2tz+2t^2yz+y^2t^2=\lambda xyzt.
\end{multline*}
This equation is invariant with respect to the permutations $x\leftrightarrow z$ and $y\leftrightarrow t$.

Since the goal is to prove \eqref{equation:main-1} and \eqref{equation:main-2} in Main Theorem, we may assume that $\lambda\ne\infty$.
Let $\mathcal{C}_1$ be the conic $\{x=yz+ty+tz=0\}$,
let $\mathcal{C}_2$ be the conic $\{y=xz+tx+tz=0\}$,
let~$\mathcal{C}_3$ be the conic $\{z=xy+tx+ty=0\}$,
and let $\mathcal{C}_4$ be the conic $\{t=xy+xz+yz=0\}$.
Then
\begin{equation}
\label{equation:r2-n7-base-locus}
\begin{split}
H_{\{x\}}\cdot S_\lambda&=2\mathcal{C}_1,\\
H_{\{y\}}\cdot S_\lambda&=L_{\{y\},\{t\}}+L_{\{y\},\{x,z\}}+\mathcal{C}_2,\\
H_{\{z\}}\cdot S_\lambda&=2\mathcal{C}_3,\\
H_{\{t\}}\cdot S_\lambda&=L_{\{y\},\{t\}}+L_{\{t\},\{x,z\}}+\mathcal{C}_4,
\end{split}
\end{equation}
Thus, the base locus of the pencil $\mathcal{S}$ consists of $7$ smooth rational curves.
We let $C_1=\mathcal{C}_1$, $C_2=\mathcal{C}_2$, $C_3=\mathcal{C}_3$, $C_4=\mathcal{C}_4$, $C_5=L_{\{y\},\{t\}}$, $C_6=L_{\{y\},\{x,z\}}$, $C_7=L_{\{t\},\{x,z\}}$.

If $\lambda\ne -5$, then $S_\lambda$ has isolated singularities, so that it is irreducible.
On the other hand, one has $S_{-5}=\mathsf{Q}+\mathbf{Q}$,
where $\mathsf{Q}$ is a quadric surface given by $tx+ty+tz+xy+yz=0$,
and $\mathbf{Q}$ is a quadric surface given by $tx+ty+tz+xy+xz+yz=0$.
Both these quadric surfaces are irreducible.
The surface $\mathsf{Q}$ is singular at  $P_{\{y\},\{t\},\{x,z\}}$,
and the surface  $\mathbf{Q}$ is smooth.
One has $\mathsf{Q}\cap\mathbf{Q}=\mathcal{C}_1\cup \mathcal{C}_3$,
so that  $S_{-5}$ is singular along the conics $\mathcal{C}_1$ and $\mathcal{C}_3$.

If $\lambda\ne -5$, then the singular points of the surface $S_\lambda$ contained in the base locus of the pencil $\mathcal{S}$
are the points $P_{\{y\},\{z\},\{t\}}$,  $P_{\{x\},\{z\},\{t\}}$, $P_{\{x\},\{y\},\{t\}}$, $P_{\{x\},\{y\},\{z\}}$, and $P_{\{y\},\{t\},\{x,z\}}$.
They are all {fixed} singular points of the surfaces in $\mathcal{S}$.

\begin{lemma}
\label{lemma:r2-n7-singularities}
Suppose that $\lambda\ne-5$.
Then the singular points of the surface $S_\lambda$ contained in the base locus of the pencil $\mathcal{S}$ can be describes as follows:
\begin{itemize}\setlength{\itemindent}{3cm}
\item[$P_{\{y\},\{z\},\{t\}}$:] type $\mathbb{A}_3$ with quadratic term $(y+t)(y+z+t)$;
\item[$P_{\{x\},\{z\},\{t\}}$:] type $\mathbb{D}_4$ with quadratic term $(x+t+z)^2$;
\item[$P_{\{x\},\{y\},\{t\}}$:] type $\mathbb{A}_3$ with quadratic term $(y+t)(x+y+t)$;
\item[$P_{\{x\},\{y\},\{z\}}$:] type $\mathbb{D}_4$ with quadratic term $(x+y+z)^2$;
\item[$P_{\{y\},\{t\},\{x,z\}}$:] type $\mathbb{A}_1$ with quadratic term
$$
(x+z)(y+x)-(\lambda+4)ty
$$
for $\lambda\ne -4$, type $\mathbb{A}_3$ if $\lambda=-4$.
\end{itemize}
\end{lemma}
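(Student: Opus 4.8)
The plan is to reduce the five points to three symmetry representatives and then classify each by a local computation in a suitable affine chart. Recall that the defining equation of $S_\lambda$ is invariant under the involutions $x\leftrightarrow z$ and $y\leftrightarrow t$. The involution $x\leftrightarrow z$ interchanges $P_{\{y\},\{z\},\{t\}}$ with $P_{\{x\},\{y\},\{t\}}$ and fixes both $P_{\{x\},\{z\},\{t\}}$ and $P_{\{y\},\{t\},\{x,z\}}$, while $y\leftrightarrow t$ interchanges $P_{\{x\},\{z\},\{t\}}$ with $P_{\{x\},\{y\},\{z\}}$ and fixes $P_{\{y\},\{t\},\{x,z\}}$. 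Hence it suffices to determine the singularity type at one point of each orbit: one of the two $\mathbb{A}_3$-points, one of the two $\mathbb{D}_4$-points, and the symmetric point $P_{\{y\},\{t\},\{x,z\}}$.

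For each representative I would pass to the affine chart in which the point is the origin (e.g.\ $x=1$ for $P_{\{y\},\{z\},\{t\}}$, $y=1$ for $P_{\{x\},\{z\},\{t\}}$, and $x=1$ together with the shift $z\mapsto z-1$ for $P_{\{y\},\{t\},\{x,z\}}$) and read off the quadratic part of the Taylor expansion, classifying by its rank. At $P_{\{y\},\{t\},\{x,z\}}$ the quadratic term is $(x+z)(y+x)-(\lambda+4)ty$, which has rank $3$ precisely when $\lambda\neq -4$; thus for $\lambda\notin\{-4,-5\}$ this point is an ordinary double point, i.e.\ of type $\mathbb{A}_1$. At the $\mathbb{A}_3$-point the quadratic term is the product $(y+t)(y+z+t)$ of two distinct linear forms, hence has rank $2$; introducing coordinates $u=y+t$ and $v=y+z+t$ turns the equation into $uv+(\text{higher order})=0$, and I would check that the residual term has weighted order $4$ in the remaining variable, which identifies the singularity as $\mathbb{A}_3$ (equivalently, a single blow-up of the point produces exactly one ordinary double point on the exceptional line and the surface is smooth along it elsewhere).

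The $\mathbb{D}_4$-points are the most delicate. There the quadratic term is a perfect square, $(x+z+t)^2$, so the form has rank $1$ and the point is worse than a node. I would blow up the point once, exactly as in the proofs for families $2.4$ and $2.5$: passing to the two charts of the blow-up I would show that the strict transform of $S_\lambda$ acquires three ordinary double points lying on the exceptional $\mathbb{P}^1$ and is smooth along that line away from these three nodes. Since blowing up a $\mathbb{D}_4$ singularity yields precisely three disjoint nodes, this establishes that $P_{\{x\},\{z\},\{t\}}$, and by the $y\leftrightarrow t$ symmetry also $P_{\{x\},\{y\},\{z\}}$, is of type $\mathbb{D}_4$; the hypothesis $\lambda\neq -5$ is what guarantees that all three nodes are genuinely nondegenerate.

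Finally I would treat the exceptional value $\lambda=-4$ at $P_{\{y\},\{t\},\{x,z\}}$. Here the term $(\lambda+4)ty$ vanishes and the quadratic form drops to rank $2$, so the point ceases to be a node. Completing the square in the rank-$2$ form and computing one further order of the expansion, or equivalently blowing up once and locating the single resulting node, shows that the residual term has order $4$, so the singularity is of type $\mathbb{A}_3$, as claimed. The main obstacle throughout is bookkeeping: pinning down the exact $\mathbb{A}_n$ index (distinguishing $\mathbb{A}_2$ from $\mathbb{A}_3$) and, above all, carrying out the blow-up of the $\mathbb{D}_4$-points in both charts while verifying that the three exceptional nodes persist for every $\lambda\neq -5$; the rank computations for the nodal and $\mathbb{A}_3$-points are routine by comparison.
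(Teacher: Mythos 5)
Your proposal is correct and follows essentially the same route as the paper: the same symmetry reductions ($x\leftrightarrow z$ pairing the two $\mathbb{A}_3$-points, $y\leftrightarrow t$ pairing the two $\mathbb{D}_4$-points), the same affine-chart computation of the quadratic term at each representative, a single blow-up in two charts producing three nodes on the exceptional curve to certify the $\mathbb{D}_4$-points (with $\lambda\ne-5$ guaranteeing nondegeneracy), and the same weighted-order analysis giving $\mathbb{A}_3$ at $P_{\{y\},\{t\},\{x,z\}}$ when $\lambda=-4$. No gaps.
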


Let us prove this lemma and explicitly construct the birational morphism $\alpha$ in \eqref{equation:main-diagram}.
To start with, let us resolve the singularity of the surface $S_\lambda$ at the point $P_{\{y\},\{z\},\{t\}}$.
In~the~chart~$x=1$, the surface $S_\lambda$ is given by
\begin{multline*}
\hat{y}\hat{z}+\Big((\lambda+4)\hat{t}^2\hat{z}+(\lambda+6)\hat{t}\hat{y}^2-(\lambda+4)\hat{t}\hat{y}\hat{z}-(\lambda+6)\hat{t}^2\hat{y}-\hat{y}^3+\hat{y}\hat{z}^2\Big)+\\
+\Big(\hat{t}^4-2\hat{t}^3\hat{y}+3\hat{y}^2\hat{t}^2-2\hat{t}^2\hat{y}\hat{z}+\hat{y}^4+2\hat{t}\hat{y}^2\hat{z}-2\hat{t}\hat{y}^3-2\hat{y}^3\hat{z}+\hat{y}^2\hat{z}^2\Big)=0
\end{multline*}
where $\hat{y}=y+t$, $\hat{z}=t+z+y$, $\hat{t}=t$.
Let $\alpha_1\colon U_1\to\mathbb{P}^3$ be the blow up of the point~$P_{\{y\},\{z\},\{t\}}$.
A chart of the blow up $\alpha_1$ is given by the coordinate change $\hat{y}_1=\frac{\hat{y}}{\hat{t}}$, $\hat{z}_1=\frac{\hat{z}}{\hat{t}}$, and $\hat{t}_1=\hat{t}$.
In this chart, the surface $S^1_\lambda$ is given by the equation
\begin{multline*}
\Big(\hat{t}_1^2-(\lambda+6)\hat{t}_1\hat{y}_1+(\lambda+4)\hat{t}_1\hat{z}_1+\hat{y}_1\hat{z}_1\Big)-\Big(2\hat{t}_1^2\hat{y}_1-(\lambda+6)\hat{t}_1\hat{y}_1^2+(\lambda+4)\hat{t}_1\hat{y}_1\hat{z}_1\Big)+\\
+\Big(3\hat{y}_1^2\hat{t}_1^2-2\hat{t}_1^2\hat{y}_1\hat{z}_1-\hat{t}_1\hat{y}_1^3+\hat{t}_1\hat{y}_1\hat{z}_1^2\Big)+\Big(2\hat{t}_1^2\hat{y}_1^2\hat{z}_1-2\hat{t}_1^2\hat{y}_1^3\Big)+\Big(\hat{t}_1^2\hat{y}_1^4-2\hat{t}_1^2\hat{y}_1^3\hat{z}_1+\hat{t}_1^2\hat{y}_1^2\hat{z}_1^2\Big)=0.
\end{multline*}
where $\hat{t}_1=0$ defines the surface $\mathbf{E}_1$.
The only singular point of the surface $S^1_\lambda$ in $\mathbf{E}_1$ is the point $(\hat{y}_1,\hat{z}_1,\hat{t}_1)=(0,0,0)$.
If $\lambda\ne -5$, then this point is an ordinary double point of the surface $S_\lambda^1$,
so that $P_{\{x\},\{z\},\{t\}}$ is a singular point of the surface $S_\lambda$ of type $\mathbb{A}_3$.

The surface $\mathbf{E}_1$ contains two base curves of the pencil $\mathcal{S}^1$.
One of them is $\hat{t}_1=\hat{y}_1=0$, and another one is $\hat{t}_1=\hat{z}_1=0$.
Let us denote these curves by $C_{8}^1$ and $C_{9}^1$, respectively.
Then the proper transform of the line $L_{\{y\},\{t\}}$ on the threefold $U_1$ does not pass through the point $C_{8}^1\cap C_{9}^1$.

Let $\alpha_2\colon U_2\to U_1$ be the blow up of the point $(\hat{x}_1,\hat{z}_1,\hat{t}_1)=(0,0,0)$.
Then $D_\lambda^2\sim S^2_\lambda$ for every $\lambda\in\mathbb{C}$.
Moreover, the restriction $\mathcal{S}^2\vert_{\mathbf{E}_2}$ is a pencil of conics in $\mathbf{E}_2\cong\mathbb{P}^2$ that does not have base curves.
This shows that $\mathbf{E}_2$ contains no base curves of the pencil~$\mathcal{S}^2$.

Recall that the defining equation of the surface $S_\lambda$ is invariant with respect to the permutation $x\leftrightarrow z$.
Thus, if $\lambda\ne-5$, then $P_{\{x\},\{y\},\{t\}}$ is a du Val singular point of the surface $S_\lambda$ of type $\mathbb{A}_3$.
Moreover, the surface $S_{-5}$ has non-isolated ordinary double point at the point $P_{\{x\},\{y\},\{t\}}$,
so that $P_{\{x\},\{y\},\{t\}}$ is a {good} double point of the surface $S_{-5}$.

Let $\alpha_{3}\colon U_3\to U_2$ be the blow up of the preimage of the point $P_{\{x\},\{y\},\{t\}}$.
Then the pencil $\mathcal{S}^3$ has exactly two base curves contained in the surface $\mathbf{E}_3$.
Let us denote these curves by $C_{10}^3$ and $C_{11}^3$.
Let $\alpha_4\colon U_4\to U_3$ be the blow up of the point $C_{10}^3\cap C_{11}^3$.
Then~$\mathbf{E}_4$  does not contain base curves of the pencil $\mathcal{S}^4$,
and the proper transform of the line $L_{\{y\},\{t\}}$ on the threefold $U_3$ does not pass through the point $C_{10}^3\cap C_{11}^3$.

Now let us describe the singularity of the surface $S_\lambda$ at the point $P_{\{y\},\{t\},\{x,z\}}$.
In the chart $z=1$, the surface $S_\lambda$ is given by
$$
\bar{x}(\bar{t}+\bar{y})+(\lambda+4)\bar{t}\bar{y}-\Big(\bar{t}\bar{x}^2-(\lambda+4)\bar{t}\bar{x}\bar{y}+\bar{x}^2\bar{y}\Big)-\Big(\bar{x}^2\bar{t}^2+2\bar{t}^2\bar{x}\bar{y}+\bar{y}^2\bar{t}^2+2\bar{t}\bar{x}^2\bar{y}+2\bar{t}\bar{x}\bar{y}^2+\bar{x}^2\bar{y}^2\Big)=0,
$$
where $\bar{x}=x+1$, $\bar{y}=y$, and $\bar{t}=t$.
Thus, if $\lambda\ne -4$, then $P_{\{y\},\{t\},\{x,z\}}$ is an isolated ordinary double point of the surface $S_\lambda$.
If $\lambda=-4$, then the latter equation can be rewritten as
$$
\check{x}\check{t}-\check{y}^4+2\check{t}\check{y}^3-\check{y}^2\check{t}^2+2\check{t}\check{x}\check{y}^2-\check{t}\check{x}^2-2\check{t}^2\check{x}\check{y}v^7-\check{x}^2\check{t}^2=0,
$$
where $\check{x}=\bar{x}$, $\check{y}=\bar{y}$, and $\check{t}=\bar{t}+\bar{y}$.
Here, the term $\check{x}\check{t}-\check{y}^4$ has the smallest degree
with respect to the weights $\mathrm{wt}(\check{x})=2$, $\mathrm{wt}(\check{y})=1$, and $\mathrm{wt}(\check{t})=2$.
This shows that $P_{\{y\},\{t\},\{x,z\}}$ is a singular point of type $\mathbb{A}_3$ of the surface $S_{-4}$.

Let $\alpha_{5}\colon U_5\to U_4$ be the blow up of the preimage of the point $P_{\{y\},\{t\},\{x,z\}}$.
Then the restriction $\mathcal{S}^5\vert_{\mathbf{E}_5}$ is a pencil of conics that is given by
$$
\bar{x}(\bar{t}+\bar{y})+(\lambda+4)\bar{t}\bar{y}=0,
$$
where we consider $\bar{x}$, $\bar{y}$, and $\bar{t}$ as homogeneous coordinates on $\mathbf{E}_5\cong\mathbb{P}^2$.
This pencil does not have base curves, so that $\mathbf{E}_5$ does not contain base curves of the pencil $\mathcal{S}^5$ either.

Let us show that $P_{\{x\},\{z\},\{t\}}$ is a du Val singular point of the surface $S_\lambda$ of type $\mathbb{D}_4$.
In~the~ chart $y=1$, the surface $S_\lambda$ is given by
\begin{multline*}
\tilde{t}^2+\Big(2\tilde{t}^2\tilde{x}+2\tilde{t}^2\tilde{z}-2\tilde{t}\tilde{x}^2-(\lambda+8)\tilde{t}\tilde{x}\tilde{z}-2\tilde{t}\tilde{z}^2+(\lambda+5)\tilde{x}^2\tilde{z}+(\lambda+5)\tilde{x}\tilde{z}^2\Big)+\\
+\Big(\tilde{x}^2\tilde{t}^2+2\tilde{t}^2\tilde{x}\tilde{z}+\tilde{t}^2\tilde{z}^2-2\tilde{x}^3\tilde{t}-5\tilde{t}\tilde{x}^2\tilde{z}-5\tilde{t}\tilde{x}\tilde{z}^2-2\tilde{t}\tilde{z}^3+\tilde{x}^4+3\tilde{z}\tilde{x}^3+4\tilde{z}^2\tilde{x}^2+3\tilde{z}^3\tilde{x}+\tilde{z}^4\Big)=0,
\end{multline*}
where $\tilde{x}=x$, $\tilde{z}=z$, and $\tilde{t}=x+t+z$.
Let $\alpha_{6}\colon U_6\to U_5$ be the blow up of the preimage of the point $P_{\{x\},\{z\},\{t\}}$.
A chart of this blow up is given by the coordinate change $\tilde{x}_6=\frac{\tilde{x}}{\tilde{z}}$, $\tilde{z}_6=\tilde{z}$, and $\tilde{t}_6=\frac{\tilde{t}}{\tilde{z}}$.
Then $\tilde{z}_6=\tilde{t}_6=0$ define the exceptional curve of the induced birational morphism $S_{\lambda}^6\to S_{\lambda}^5$.
Moreover, if $\lambda\ne -5$, then the quadratic term of the surface $S_{\lambda}^6$ at the point $(\tilde{x}_6,\tilde{z}_6,\tilde{z})=(0,0,0)$ is
$$
\tilde{t}_6^2-2\tilde{t}_6\tilde{z}_6+(\lambda+5)\tilde{x}_6\tilde{z}_6+\tilde{z}_6^2.
$$
It is not degenerate. Thus, this point is an isolated ordinary double point of the surface~$S_{\lambda}^6$.
In this case, the chart of the surface $S_{\lambda}^6$ also has an isolated ordinary double singularity at the point $(\tilde{x}_6,\tilde{z}_6,\tilde{t}_6)=(-1,0,0)$,
and $S_{\lambda}^6$ is smooth along the curve $\tilde{z}_6=\tilde{t}_6=0$ away from these two points.

Now let us consider another chart of the blow up $\alpha_6$.
To do this, we introduce coordinates $\tilde{x}_6^\prime=\tilde{x}$, $\tilde{z}_6^\prime=\frac{\tilde{z}}{\tilde{x}}$, and $\tilde{t}_6^\prime=\frac{\tilde{t}}{\tilde{x}}$.
In this chart, the surface $S_{\lambda}^6$  is given by
$$
(\tilde{t}_6^\prime)^2-2\tilde{x}_6^\prime\tilde{t}_6^\prime+(\tilde{x}_6^\prime)^2+(\lambda+5)\tilde{x}_6^\prime\tilde{z}_6^\prime+\text{higher order terms}=0,
$$
so that $S_{\lambda}^6$ has an isolated ordinary double singularity at the point $(\tilde{x}_6^\prime,\tilde{z}_6^\prime,\tilde{t}_6^\prime)=(0,0,0)$
provided that $\lambda\ne -5$.
Therefore, we proved that if $\lambda\ne -5$, then $P_{\{x\},\{z\},\{t\}}$ is a singular point of the surface $S_\lambda$ of type $\mathbb{D}_4$.

The surface $\mathbf{E}_6$ contains one base curve of the pencil $\mathcal{S}^6$.
This is the curve $\{\tilde{z}_6=\tilde{t}_6=0\}$ in the first chart of our blow up.
Denote it by $C_{12}^6$.
Then $\mathbf{M}_{12}^{-5}=2$,
and $C_{12}^6$ contains three base points of the pencil $\mathcal{S}^6$,
which are {fixed} singular points of this pencil.
They are isolated ordinary double points of the surface $S_{\lambda}^6$ for $\lambda\ne -5$.

Recall that the defining equation of the surface $S_\lambda$ is invariant with respect to the permutation $y\leftrightarrow t$.
Thus, if $\lambda\ne -5$, then $P_{\{x\},\{y\},\{z\}}$ is a singular point of the surface $S_\lambda$ of type $\mathbb{D}_4$.
Using symmetry, we see that $(x+y+z)^2$ is the quadratic form of the Taylor expansion of the defining equation of the surface $S_\lambda$ at the point $P_{\{x\},\{y\},\{z\}}$.

Let $\alpha_{7}\colon U_7\to U_6$ be the blow up of the preimage of the point $P_{\{x\},\{y\},\{z\}}$.
Then $\mathbf{E}_7$ contains one base curve of the pencil $\mathcal{S}^7$.
Denote it by~$C_{13}^7$.
This curve is the exceptional curve of the induced birational morphism $S_{\lambda}^7\to S_{\lambda}^6$.
If $\lambda\ne-5$, then $S_{\lambda}^7$ has three isolated ordinary double points at $C_{13}^7$.
But  $S_{-5}^7$ is singular along the curve $C_{13}^7$.

For a general choice of $\lambda\in\mathbb{C}$, the surface $S_{\lambda}^7$ has six singular points.
All of them are {fixed} singular points of the pencil $\mathcal{S}^7$.
They are isolated ordinary double points on every surface $S_\lambda^7$ provided that $\lambda\ne -5$.
This proves the assertion of Lemma~\ref{lemma:r2-n7-singularities} and shows the existence
of the following commutative diagram:
$$
\xymatrix{
&&U_3\ar@{->}[dll]_{\alpha_3}&&U_4\ar@{->}[ll]_{\alpha_4}&&U_5\ar@{->}[ll]_{\alpha_5}&&U_6\ar@{->}[ll]_{\alpha_6}\\
U_2\ar@{->}[rrd]_{\alpha_2}&&&&&&&&&&U_7\ar@{->}[llu]_{\alpha_7}\\
&&U_1\ar@{->}[rr]_{\alpha_1}&&\mathbb{P}^3&&&&U\ar@{->}[llll]^\alpha\ar@{->}[urr]_{\gamma} }
$$
where $\gamma$ is the blow up of the six {fixed} singular points of surfaces in the pencil $\mathcal{S}^7$.

Using Lemma~\ref{lemma:r2-n7-singularities} and Corollary~\ref{corollary:irreducible-fibers},
we see that $[\mathsf{f}^{-1}(\lambda)]=1$ for every $\lambda\ne -5$.
To compute $[\mathsf{f}^{-1}(-5)]$, observe that $\widehat{D}_{-5}=\widehat{S}_{-5}$,
so that $[\widehat{D}_{-5}]=[\widehat{S}_{-5}]=2$.
Observe also that the base locus of the pencil $\widehat{\mathcal{S}}$ consists of the curves
$\widehat{C}_1$, $\widehat{C}_2$, $\widehat{C}_3$, $\widehat{C}_4$, $\widehat{C}_5$,
$\widehat{C}_6$, $\widehat{C}_7$, $\widehat{C}_8$, $\widehat{C}_9$, $\widehat{C}_{10}$, $\widehat{C}_{11}$, $\widehat{C}_{12}$, and
$\widehat{C}_{13}$.
Moreover, we have $\mathbf{M}_{1}^{-5}=\mathbf{M}_{2}^{-5}=\mathbf{M}_{12}^{-5}=\mathbf{M}_{13}^{-5}=2$ and
$$
\mathbf{M}_{3}^{-5}=\mathbf{M}_{4}^{-5}=\mathbf{M}_{5}^{-5}=\mathbf{M}_{6}^{-5}=\mathbf{M}_{7}^{-5}=\mathbf{M}_{8}^{-5}=\mathbf{M}_{9}^{-5}=\mathbf{M}_{10}^{-5}=\mathbf{M}_{11}^{-5}=1.
$$
Arguing as in the proof of Lemma~\ref{lemma:r2-n6-m13-m14-m15-m16-m17-m18-m19-m20},
we see that $\mathbf{m}_{8}=\mathbf{m}_{9}=\mathbf{m}_{10}=\mathbf{m}_{11}=1$ and $\mathbf{m}_{12}=\mathbf{m}_{13}=2$.
Now, using \eqref{equation:equation:number-of-irredubicle-components-refined-2} and Lemma~\ref{lemma:main-2}, we conclude that
$[\mathsf{f}^{-1}(-5)]=6$.
Thus, we see that \eqref{equation:main-1} in Main Theorem holds in this case, because $h^{1,2}(X)=5$.

To prove \eqref{equation:main-2} in Main Theorem, we have to check \eqref{equation:main-2-simple}.
To do this, recall that the base locus of the pencil $\mathcal{S}$ consists of the curves
$\mathcal{C}_1$, $\mathcal{C}_2$, $\mathcal{C}_3$, $\mathcal{C}_4$, $L_{\{y\},\{t\}}$, $L_{\{y\},\{x,z\}}$, and $L_{\{t\},\{x,z\}}$.
If $\lambda\ne -5$, then it follows from \eqref{equation:r2-n7-base-locus} that the intersection matrix of these curves on the surface $S_\lambda$ has the same rank as
the intersection matrix of the curves $L_{\{y\},\{t\}}$, $L_{\{y\},\{x,z\}}$, $L_{\{t\},\{x,z\}}$, and $H_\lambda$, which is given by
\begin{center}\renewcommand\arraystretch{1.42}
\begin{tabular}{|c||c|c|c|c|}
\hline
$\bullet$ & $L_{\{y\},\{t\}}$ &  $L_{\{y\},\{x,z\}}$ & $L_{\{t\},\{x,z\}}$ & $H_\lambda$\\
\hline\hline
$L_{\{y\},\{t\}}$& $0$ & $\frac{1}{2}$ & $\frac{1}{2}$ &   $1$\\
\hline
$L_{\{y\},\{x,z\}}$& $\frac{1}{2}$ & $-\frac{1}{2}$ & $\frac{1}{2}$ & $1$\\
\hline
$L_{\{t\},\{x,z\}}$& $\frac{1}{2}$ & $\frac{1}{2}$ & $-\frac{1}{2}$ & $1$\\
\hline
$H_\lambda$& $1$ & $1$ & $1$ & $4$\\
\hline
\end{tabular}
\end{center}
The rank of this intersection matrix is $3$.
Moreover, we have $\mathrm{rk}\,\mathrm{Pic}(\widetilde{S}_{\Bbbk})=\mathrm{rk}\,\mathrm{Pic}(S_{\Bbbk})+15$.
Thus, we see that \eqref{equation:main-2-simple} holds,
so that \eqref{equation:main-2} in Main Theorem holds in this case.

\subsection{Family \textnumero $2.8$}
\label{section:r-2-n-8}

One has $h^{1,2}(X)=9$.
In this case, the threefold $X$ is a double cover of the toric Fano threefold obtained by blowing up $\mathbb{P}^3$ at one point.
The ramification surface of this double cover is contained in the anticanonical linear system of this toric Fano threefold.
A~toric Landau--Ginzburg model of the threefold $X$ is given by Minkowski polynomial \textnumero $1968$.
It is
$$
\frac{xy}{z}+2x+\frac{xz}{y}+\frac{2x}{z}+\frac{2x}{y}+\frac{x}{yz}+2y+2z+\frac{2}{z}+\frac{2}{y}+\frac{yz}{x}+\frac{2}{yz}+\frac{2}{x}+\frac{1}{xyz}.
$$
The pencil of quartic surfaces $\mathcal{S}$ is given by
\begin{multline*}
x^2y^2+2x^2zy+x^2z^2+2x^2ty+2x^2tz+x^2t^2+2y^2zx+2z^2yx+\\
+2t^2yx+2t^2zx+y^2z^2+2t^3x+2t^2zy+t^4=\lambda xyzt.
\end{multline*}
This equation is invariant with respect to the permutation $y\leftrightarrow z$.

We may assume that $\lambda\ne\infty$.
Then
\begin{equation}
\label{equation:r2-n8-base-locus}
\begin{split}
H_{\{x\}}\cdot S_\lambda&=2\mathcal{C}_1,\\
H_{\{y\}}\cdot S_\lambda&=2\mathcal{C}_2,\\
H_{\{z\}}\cdot S_\lambda&=2\mathcal{C}_3,\\
H_{\{t\}}\cdot S_\lambda&=2\mathcal{C}_4,
\end{split}
\end{equation}
where $\mathcal{C}_1$ is a smooth conic given by $x=yz+t^2=0$,
the curve $\mathcal{C}_2$ is a smooth conic given by $y=xz+tx+t^2=0$,
the curve $\mathcal{C}_3$ is a smooth conic given by $z=xy+tx+t^2=0$,
and the curve $\mathcal{C}_4$ is a smooth conic given by $t=xy+xz+yz=0$.
Thus, we see that
$$
S_{\lambda}\cdot S_{\infty}=2\mathcal{C}_1+2\mathcal{C}_2+2\mathcal{C}_3+2\mathcal{C}_4,
$$
so that the base locus of the pencil $\mathcal{S}$ consists of $4$ smooth rational curves.
To match the notation introduced in Section~\ref{section:scheme}, we let
$C_1=\mathcal{C}_1$, $C_2=\mathcal{C}_2$, $C_3=\mathcal{C}_3$, $C_4=\mathcal{C}_4$.

If $\lambda\ne -2$, then $S_\lambda$ has isolated singularities, so that it is irreducible.
On the other hand, the surface $S_{-2}$ is not reduced. Indeed, one has
$S_{-2}=2\mathbf{Q}$, where $\mathbf{Q}$ is an irreducible quadric surface in $\mathbb{P}^3$ given by $t^2+tx+xy+xz+yz=0$.
One can check that $\mathbf{Q}$ is smooth.

If $\lambda\ne -2$, then the singular points of the surface $S_\lambda$ contained in the base locus of the pencil $\mathcal{S}$
are the points $P_{\{x\},\{y\},\{t\}}$,  $P_{\{x\},\{z\},\{t\}}$, $P_{\{y\},\{z\},\{t\}}$, and $P_{\{y\},\{z\},\{x,t\}}$.
In this case, the surface $S_\lambda$ has du Val singularities at these points.
In fact, we can say more.

\begin{lemma}
\label{lemma:r2-n8-singularities}
If $\lambda\ne-2$, then the singular points of the surface $S_\lambda$ contained in the base locus of the pencil $\mathcal{S}$ can be describes as follows:
\begin{itemize}\setlength{\itemindent}{3cm}
\item[$P_{\{x\},\{y\},\{t\}}$:] type $\mathbb{D}_6$ with quadratic term $(x+y)^2$;
\item[$P_{\{x\},\{z\},\{t\}}$:] type $\mathbb{D}_6$ with quadratic term $(x+z)^2$;
\item[$P_{\{y\},\{z\},\{t\}}$:] type $\mathbb{D}_4$ with quadratic term $(y+z+t)^2$;
\item[$P_{\{y\},\{z\},\{x,t\}}$:] type $\mathbb{A}_1$ with quadratic term $(x+t-y-z)^2+(\lambda+2)yz$.
\end{itemize}
\end{lemma}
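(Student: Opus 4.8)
The plan is to analyze each of the four fixed points in a suitable affine chart by reading off the quadratic term of the Taylor expansion of the defining equation of $S_\lambda$ and, when this term is degenerate, by resolving the singularity through explicit blow ups, exactly as in the proofs of Lemmas~\ref{lemma:r2-n4-singularities}, \ref{lemma:r2-n5-singularities}, and \ref{lemma:r2-n7-singularities}. First I would exploit the symmetry $y\leftrightarrow z$ of the pencil: it fixes $P_{\{y\},\{z\},\{t\}}$ and $P_{\{y\},\{z\},\{x,t\}}$ and interchanges $P_{\{x\},\{y\},\{t\}}$ with $P_{\{x\},\{z\},\{t\}}$, so that it suffices to treat the three points $P_{\{y\},\{z\},\{x,t\}}$, $P_{\{y\},\{z\},\{t\}}$, and $P_{\{x\},\{y\},\{t\}}$. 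As in the earlier families, the charts produced here will simultaneously describe the birational morphism $\alpha$ of \eqref{equation:main-diagram} near these points and hence the difference $\mathrm{rk}\,\mathrm{Pic}(\widetilde{S}_{\Bbbk})-\mathrm{rk}\,\mathrm{Pic}(S_{\Bbbk})$ needed later.

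The point $P_{\{y\},\{z\},\{x,t\}}$ is the easy case. Working in the chart $x=1$ with local coordinates $y$, $z$, and $w=x+t$ centered at the point, I would expand the equation and check that its quadratic term is $(x+t-y-z)^2+(\lambda+2)yz$. A direct computation shows that the associated symmetric matrix has determinant $-\tfrac14(\lambda+2)^2$, which is nonzero precisely when $\lambda\ne-2$. Hence the quadratic form has rank $3$ and $P_{\{y\},\{z\},\{x,t\}}$ is an ordinary double point, that is, a singular point of type $\mathbb{A}_1$.

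For $P_{\{y\},\{z\},\{t\}}$ I would pass to the chart $x=1$, where the quadratic term is the perfect square $(y+z+t)^2$. Introducing $\tilde t=y+z+t$ makes the tangent cone the double plane $\tilde t=0$, and one checks that the cubic part of the expansion restricts on this plane to a binary cubic with three distinct linear factors. Blowing up the point once and passing to the standard chart (dividing the equation by the square of the blow up parameter) then yields a surface whose exceptional curve carries exactly three ordinary double points, which is the signature of a $\mathbb{D}_4$ singularity. This is the same mechanism already used for the $\mathbb{D}_4$ points in Lemma~\ref{lemma:r2-n7-singularities}.

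The two $\mathbb{D}_6$ points are where the real work lies. For $P_{\{x\},\{y\},\{t\}}$ I would use the chart $z=1$, where the quadratic term is the perfect square $(x+y)^2$; writing $u=x+y$, a short computation gives that the cubic part of the expansion restricted to the tangent plane $u=0$ equals $(\lambda+2)\,x^2t$. Since $\lambda\ne-2$, this binary cubic has a double root, which already forces the singularity to be of type $\mathbb{D}_n$ with $n\geqslant5$, rather than $\mathbb{D}_4$, $\mathbb{A}_k$, or an exceptional type. The exact value $n=6$ then has to be extracted by iterating the blow up along the direction of the double factor: each blow up lengthens the chain of the emerging Dynkin diagram, and the process terminates after the appropriate number of steps, producing the fork characteristic of $\mathbb{D}_6$. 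The symmetry $y\leftrightarrow z$ then gives the same conclusion, with quadratic term $(x+z)^2$, at $P_{\{x\},\{z\},\{t\}}$. The main obstacle is precisely this last determination: one must carry the iterated blow ups far enough and verify that the relevant higher-order coefficient is nonzero, so as to be certain that the configuration is $\mathbb{D}_6$ and neither $\mathbb{D}_5$ nor $\mathbb{D}_7$; the remaining cases are routine chart computations.
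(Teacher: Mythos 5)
Your approach is essentially the paper's: read off the quadratic terms in affine charts, use the $y\leftrightarrow z$ symmetry to halve the work, and resolve the degenerate cases by explicit blow ups. Everything you actually verify checks out: the determinant $-\tfrac14(\lambda+2)^2$ at $P_{\{y\},\{z\},\{x,t\}}$ is correct, and at $P_{\{y\},\{z\},\{t\}}$ the cubic part restricted to the tangent plane is indeed $(\lambda+2)\hat y\hat z(\hat y+\hat z)$, with three distinct factors for $\lambda\ne-2$, which gives $\mathbb{D}_4$; likewise your computation of $(\lambda+2)x^2t$ at $P_{\{x\},\{y\},\{t\}}$ is right and correctly rules out $\mathbb{D}_4$ and the $\mathbb{A}$ and $\mathbb{E}$ types.

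The one thing you have not done is the step you yourself identify as the main obstacle: pinning down $n=6$ rather than $5$ or $7$ at the two $\mathbb{D}$-points with a repeated cubic factor. Your description of the iteration is correct in outline, but as written it proves only $\mathbb{D}_{\geqslant 5}$. The paper closes this by carrying out two explicit blow ups (its maps $\alpha_2$ and $\alpha_3$ at $P_{\{x\},\{z\},\{t\}}$ in the chart $y=1$): the first exceptional curve carries one ordinary double point plus one further singular point, and blowing up the latter produces an exceptional curve with exactly three ordinary double points, i.e.\ a $\mathbb{D}_4$. Since one blow up of a $\mathbb{D}_n$ point ($n\geqslant 6$) yields $\mathbb{A}_1+\mathbb{D}_{n-2}$ on the exceptional curve, the configuration $\mathbb{A}_1+\mathbb{D}_4$ forces $n=6$. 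To make your argument complete you must perform this second blow up and verify, for $\lambda\ne-2$, that the three resulting quadratic forms are nondegenerate; without that verification the value $n=6$ is asserted rather than proved.
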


\begin{proof}
We skip the computations of the quadratic terms, because they are straightforward.
If $\lambda\ne -2$, then $P_{\{y\},\{z\},\{x,t\}}$ is an isolated ordinary double point of the surface~$S_\lambda$.
Note that the expressions for quadratic terms are also valid for $\lambda=-2$.

Let us describe the singularity type of the point $P_{\{y\},\{z\},\{t\}}$.
In the chart $x=1$, the surface $S_\lambda$ is given by
\begin{multline*}
\hat{t}^2+2\hat{t}^3-4\hat{t}^2\hat{y}-4\hat{t}^2\hat{z}+2\hat{y}^2\hat{t}+(4-\lambda)\hat{t}\hat{y}\hat{z}+2\hat{z}^2\hat{t}+(2+\lambda)\hat{y}^2\hat{z}+\\
+(2+\lambda)\hat{z}^2\hat{y}+\hat{t}^4-4\hat{y}\hat{t}^3-4\hat{z}\hat{t}^3+6\hat{t}^2\hat{y}^2+14\hat{t}^2\hat{y}\hat{z}+6\hat{t}^2\hat{z}^2-4\hat{t}\hat{y}^3-\\
-16\hat{t}\hat{y}^2\hat{z}-16\hat{t}\hat{y}\hat{z}^2-4\hat{t}\hat{z}^3+\hat{y}^4+6\hat{y}^3\hat{z}+11\hat{y}^2\hat{z}^2+6\hat{y}\hat{z}^3+\hat{z}^4=0
\end{multline*}
for $\hat{y}=y$, $\hat{z}=z$, and $\hat{t}=y+z+t$.
Let $\alpha_1\colon U_1\to\mathbb{P}^3$ be the blow up of the point~$P_{\{y\},\{z\},\{t\}}$.
One chart of the blow up $\alpha_1$ is given by the coordinate change $\hat{y}_1=\hat{y}$, $\hat{z}_1=\frac{\hat{z}}{\hat{y}}$, and $\hat{t}_1=\frac{\hat{t}}{\hat{y}}$.
In this chart, the surface $S^1_\lambda$ is given by
\begin{multline*}
\hat{t}_1^2+2\hat{t}_1\hat{y}_1+\hat{y}_1^2+(2+\lambda)\hat{y}_1\hat{z}_1+\Big(6\hat{y}_1^2\hat{z}_1-4\hat{t}_1^2\hat{y}_1-4\hat{y}_1^2\hat{t}_1+(4-\lambda)\hat{t}_1\hat{y}_1\hat{z}_1+(2+\lambda)\hat{z}_1^2\hat{y}_1\Big)+\\
+\Big(2\hat{y}_1\hat{t}_1^3+6\hat{t}_1^2\hat{y}_1^2-4\hat{t}_1^2\hat{y}_1\hat{z}_1-16\hat{t}_1\hat{y}_1^2\hat{z}_1+2\hat{t}_1\hat{y}_1\hat{z}_1^2+11\hat{y}_1^2\hat{z}_1^2\Big)+\\
+(14\hat{t}_1^2\hat{y}_1^2\hat{z}_1-4\hat{t}_1^3\hat{y}_1^2-16\hat{t}_1\hat{y}_1^2\hat{z}_1^2+6\hat{y}_1^2\hat{z}_1^3\Big)+\Big(\hat{t}_1^4\hat{y}_1^2-4\hat{t}_1^3\hat{y}_1^2\hat{z}_1+6\hat{t}_1^2\hat{y}_1^2\hat{z}_1^2-4\hat{t}_1\hat{y}_1^2\hat{z}_1^3+\hat{y}_1^2\hat{z}_1^4\Big)=0,
\end{multline*}
and $\mathbf{E}_1$ is given by $\hat{y}_1=0$.
Let $C_5^1=S^1_{\lambda}\cap\mathbf{E}_1$.
Then $C_5^1$ is the line in $\mathbf{E}_1\cong\mathbb{P}^2$ that is given by $\hat{y}_1=\hat{t}_1=0$.
Note that $\mathbf{M}_5^{-2}=2$.
If $\lambda\ne -2$, then the only singular points of the surface $S^1_\lambda$ contained in $C_5^1$ are
the points $(\hat{y}_1,\hat{z}_1,\hat{t}_1)=(0,0,0)$ and $(\hat{y}_1,\hat{z}_1,\hat{t}_1)=(0,0,-1)$.
Both of them are isolated ordinary double points of the surface $S_{\lambda}$ in this case.

If $\lambda\ne -2$, then $S^1_\lambda$ has three isolated ordinary double points in $C_5^1$.
Two of them we have already described. The third one can be seen in another chart of the blow up~$\alpha_1$.
Thus, if $\lambda\ne -2$, then $P_{\{y\},\{z\},\{t\}}$ is a singular point of type $\mathbb{D}_4$ of the surface $S_\lambda$.

Now let us describe the singularity of the surface $S_\lambda$ at the point $P_{\{x\},\{z\},\{t\}}$.
In the chart $y=1$, the surface $S_\lambda$ is given by
\begin{multline*}
\bar{z}^2+\Big(2\bar{t}^2\bar{z}+(2+\lambda)\bar{x}^2\bar{t}-\lambda\bar{t}\bar{x}\bar{z}-2\bar{x}^2\bar{z}+2\bar{x}\bar{z}^2\Big)+\\
+\Big(\bar{t}^4+2\bar{t}^3\bar{x}-\bar{t}^2\bar{x}^2+2\bar{t}^2\bar{x}\bar{z}-2\bar{x}^3\bar{t}+2\bar{t}\bar{x}^2\bar{z}+\bar{x}^4-2\bar{x}^3\bar{z}+\bar{x}^2\bar{z}^2\Big)=0,
\end{multline*}
where $\bar{x}=x$, $\bar{z}=x+z$, and $\bar{t}=t$.
Let $\alpha_{2}\colon U_2\to U_1$ be the blow up of the preimage of the point $P_{\{x\},\{z\},\{t\}}$.
A chart of this blow up is given by the coordinate change $\bar{x}_2=\frac{\bar{x}}{\bar{t}}$, $\bar{z}_2=\frac{\bar{z}}{\bar{t}}$, and $\bar{t}_2=\bar{t}$.
In this chart, the surface $S_{\lambda}^2$ is given by
\begin{multline*}
\big(\bar{t}_2+\bar{z}_2\big)^2+\Big(2\bar{t}_2^2\bar{x}_2+(2+\lambda)\bar{x}_2^2\bar{t}_2-\lambda\bar{t}_2\bar{x}_2\bar{z}_2\Big)+(2\bar{t}_2^2\bar{x}_2\bar{z}_2-\bar{t}_2^2\bar{x}_2^2-2\bar{t}_2\bar{x}_2^2\bar{z}_2+2\bar{t}_2\bar{x}_2\bar{z}_2^2\Big)+\\
+\Big(2\bar{t}_2^2\bar{x}_2^2\bar{z}_2-2\bar{t}_2^2\bar{x}_2^3\Big)+\Big(\bar{t}_2^2\bar{x}_2^4-2\bar{t}_2^2\bar{x}_2^3\bar{z}_2+\bar{t}_2^2\bar{x}_2^2\bar{z}_2^2\Big)=0
\end{multline*}
and the surface $\mathbf{E}_2$ is given by $\bar{t}_2=0$.
Let $C_6^2=S^2_{\lambda}\cap\mathbf{E}_2$.
Then $C_6^2$ is the line in $\mathbf{E}_2\cong\mathbb{P}^2$ that is given by $\bar{t}_2=\bar{z}_2=0$.
Observe that $\mathbf{M}_6^{-2}=2$.
On the other hand, if $\lambda\ne -2$, then the point $(\bar{x}_2,\bar{z}_2,\bar{t}_2)=(0,0,0)$
is the only singular point of the surface $S^2_\lambda$ that is contained in the curve $C_6^2$ in this chart.
Note that~$C_6^2$ contains another singular point of  the surface $S^2_\lambda$
that can be seen in another chart of the blow up $\alpha_2$.
This point is an isolated ordinary double singularity of the  surface $S^2_\lambda$.

To determine the type of the singular point $(\bar{x}_2,\bar{z}_2,\bar{t}_2)=(0,0,0)$ on the surface $S^2_\lambda$ for every $\lambda\ne -2$,
we let  $\tilde{x}_{2}=\bar{x}_2$, $\tilde{z}_{2}=\bar{z}_2$, and $\tilde{t}_{2}=\bar{t}_2+\bar{z}_2$.
Then we can rewrite the defining equation of the surface $S_\lambda$ as
\begin{multline*}
\tilde{t}_2^2+\Big(2\tilde{t}_2^2\tilde{x}_2-(2+\lambda)\tilde{x}_2^2\tilde{z}_2+(2+\lambda)\tilde{x}_2^2\tilde{t}_2+(2+\lambda)\tilde{x}_2\tilde{z}_2^2-(\lambda+4)\tilde{t}_2\tilde{x}_2\tilde{z}_2\Big)+\\
+\Big(2\tilde{t}_2^2\tilde{x}_2\tilde{z}_2-\tilde{t}_2^2\tilde{x}_2^2-2\tilde{t}_2\tilde{x}_2\tilde{z}_2^2+\tilde{x}_2^2\tilde{z}_2^2\Big)+\Big(2\tilde{t}_2^2\tilde{x}_2^2\tilde{z}_2-2\tilde{t}_2^2\tilde{x}_2^3+4\tilde{t}_2\tilde{x}_2^3\tilde{z}_2-4\tilde{t}_2\tilde{x}_2^2\tilde{z}_2^2-2\tilde{x}_2^3\tilde{z}_2^2+2\tilde{x}_2^2\tilde{z}_2^3\Big)+\\
+\Big(\tilde{t}_2^2\tilde{x}_2^4-2\tilde{t}_2^2\tilde{x}_2^3\tilde{z}_2+\tilde{t}_2^2\tilde{x}_2^2\tilde{z}_2^2-2\tilde{t}_2\tilde{x}_2^4\tilde{z}_2+4\tilde{t}_2\tilde{x}_2^3\tilde{z}_2^2-2\tilde{t}_2\tilde{x}_2^2\tilde{z}_2^3+\tilde{x}_2^4\tilde{z}_2^2-2\tilde{x}_2^3\tilde{z}_2^3+\tilde{x}_2^2\tilde{z}_2^4\Big)=0.
\end{multline*}
Let $\alpha_{3}\colon U_3\to U_2$ be the blow up of the point $(\tilde{x}_{2},\tilde{z}_{2},\tilde{t}_{2})=(0,0,0)$.
A chart of this blow up is given by the coordinate change $\tilde{x}_3=\tilde{x}_2$, $\tilde{z}_3=\frac{\tilde{z}_2}{\tilde{t}_2}$, and $\tilde{t}_3=\frac{\tilde{t}_2}{\tilde{x}_2}$.
In this chart, the surface $S_{\lambda}^3$ is given by
\begin{multline*}
(2+\lambda)\tilde{z}_3\tilde{x}_3-(2+\lambda)\tilde{t}_3\tilde{x}_3-\tilde{t}_3^2=2\tilde{t}_3^2\tilde{x}_3+(2+\lambda)\tilde{x}_3\tilde{z}_3^2-(\lambda+4)\tilde{t}_3\tilde{x}_3\tilde{z}_3+\tilde{x}_3^2\tilde{z}_3^2-\tilde{t}_3^2\tilde{x}_3^2+\\
+2\tilde{t}_3^2\tilde{x}_3^2\tilde{z}_3-2\tilde{t}_3^2\tilde{x}_3^3+4\tilde{t}_3\tilde{x}_3^3\tilde{z}_3-2\tilde{t}_3\tilde{x}_3^2\tilde{z}_3^2-2\tilde{x}_3^3\tilde{z}_3^2+\tilde{t}_3^2\tilde{x}_3^4+2\tilde{t}_3^2\tilde{x}_3^3\tilde{z}_3-2\tilde{t}_3\tilde{x}_3^4\tilde{z}_3-\\
-4\tilde{t}_3\tilde{x}_3^3\tilde{z}_3^2+\tilde{x}_3^4\tilde{z}_3^2+2\tilde{x}_3^3\tilde{z}_3^3+4\tilde{t}_3\tilde{x}_3^4\tilde{z}_3^2-2\tilde{t}_3^2\tilde{x}_3^4\tilde{z}_3-2\tilde{x}_3^4\tilde{z}_3^3+\tilde{t}_3^2\tilde{x}_3^4\tilde{z}_3^2-2\tilde{t}_3\tilde{x}_3^4\tilde{z}_3^3+\tilde{x}_3^4\tilde{z}_3^4,
\end{multline*}
and the surface $\mathbf{E}_3$ is given by $\tilde{x}_3=0$.
Let $C_7^3=S^3_{\lambda}\cap\mathbf{E}_3$.
Then $C_7^3$ is the line in $\mathbf{E}_3\cong\mathbb{P}^2$ that is given by $\tilde{x}_3=\tilde{t}_3=0$ in our chart of the blow up $\alpha_3$.
Observe that $\mathbf{M}_{7}^{-2}=2$.

If $\lambda\ne -2$, then the point $(\tilde{x}_3,\tilde{z}_3,\tilde{t}_3)=(0,0,0)$ is an isolated ordinary double point of the surface $S^2_\lambda$.
This point is contained in the curve~$C_7^3$.
Moreover, this curve contains two more singular points of  the surface $S^2_\lambda$.
One of them is the point $(\tilde{x}_3,\tilde{z}_3,\tilde{t}_3)=(0,-1,0)$,
and the other one lies in another chart of the blow up~$\alpha_3$.
If~$\lambda\ne -2$, both these points are isolated ordinary double points of the surface~$S^3_\lambda$.
This means that the surface $S^2_\lambda$ has du Val singularity of type $\mathbb{D}_4$ at the point $(\bar{x}_2,\bar{z}_2,\bar{t}_2)=(0,0,0)$,
so that $S_\lambda$ has du Val singularity of type $\mathbb{D}_6$ at the point $P_{\{x\},\{z\},\{t\}}$ for every $\lambda\ne -2$.

Keeping in mind that the defining equation of the surface $S_\lambda$ is symmetric with respect to permutation $y\leftrightarrow z$,
we see that the surface $S_\lambda$ has du Val singularity of type $\mathbb{D}_6$ at the point $P_{\{x\},\{y\},\{t\}}$ for every $\lambda\ne -2$.
This complete the proof of the lemma.
\end{proof}

The proof of this lemma also gives
$\mathrm{rk}\,\mathrm{Pic}(\widetilde{S}_{\Bbbk})=\mathrm{rk}\,\mathrm{Pic}(S_{\Bbbk})+17$, which implies \eqref{equation:main-2-simple}.
Indeed, if $\lambda\ne -2$, then $2\mathcal{C}_1\sim 2\mathcal{C}_2\sim 2\mathcal{C}_3\sim 2\mathcal{C}_4\sim H_{\lambda}$ on the surface~$S_\lambda$ by \eqref{equation:r2-n8-base-locus},
so that the intersection matrix of the conics $\mathcal{C}_1$, $\mathcal{C}_2$, $\mathcal{C}_3$, and $\mathcal{C}_4$
on the surface $S_\lambda$ has rank $1$.
Thus, we see that \eqref{equation:main-2-simple} holds,
so that \eqref{equation:main-2} in Main Theorem holds in this case.

\begin{lemma}
\label{lemma:r2-n8-main-1}
If $\lambda\ne -2$, then $[\mathsf{f}^{-1}(\lambda)]=1$.
One also has $[\mathsf{f}^{-1}(-2)]=10$.
\end{lemma}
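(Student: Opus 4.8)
The plan is to handle the two claims separately. For $\lambda\ne -2$ the statement is immediate: Lemma~\ref{lemma:r2-n8-singularities} lists the singular points of $S_\lambda$ in the base locus of $\mathcal{S}$ (the two $\mathbb{D}_6$ points $P_{\{x\},\{y\},\{t\}}$, $P_{\{x\},\{z\},\{t\}}$, the $\mathbb{D}_4$ point $P_{\{y\},\{z\},\{t\}}$, and the $\mathbb{A}_1$ point $P_{\{y\},\{z\},\{x,t\}}$), and all of them are du Val. Hence Corollary~\ref{corollary:irreducible-fibers} applies verbatim and yields $[\mathsf{f}^{-1}(\lambda)]=1$.

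For $\lambda=-2$ I would first exploit that $S_{-2}=2\mathbf{Q}$ with $\mathbf{Q}$ a smooth irreducible quadric. The crucial observation is that $\mathrm{mult}_P(S_{-2})=2$ at every fixed singular point $P\in\Sigma$, since each such point lies on the smooth surface $\mathbf{Q}$. By Lemma~\ref{lemma:log-pull-back} (equivalently Corollary~\ref{corollary:log-pull-back}) this forces $\mathbf{a}_i^{-2}=0$ for every $\alpha$-exceptional divisor $\widehat{E}_i$, so that $\widehat{D}_{-2}=\widehat{S}_{-2}=2\widehat{\mathbf{Q}}$ and $\mathbf{A}_P^{-2}=0$ for all $P\in\Sigma$. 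Substituting into the refined formula \eqref{equation:equation:number-of-irredubicle-components-refined-2} collapses it to
$$
\big[\mathsf{f}^{-1}(-2)\big]=\big[\widehat{D}_{-2}\big]+\sum_{i=1}^{s}\mathbf{C}_i^{-2}=1+\sum_{i=1}^{s}\mathbf{C}_i^{-2},
$$
reducing everything to the evaluation of the numbers $\mathbf{C}_i^{-2}$ through Lemma~\ref{lemma:main-2} and \eqref{equation:D-A-B}.

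The next step is to split the base curves of $\widehat{\mathcal{S}}$ into the four original conics $C_1,\dots,C_4$ and the exceptional base curves created by $\alpha$. For each conic, $\mathbf{M}_i^{-2}=\mathrm{mult}_{\widehat{C}_i}(2\widehat{\mathbf{Q}})=2$ and $\mathbf{m}_i=2$ by \eqref{equation:r2-n8-base-locus}, so Lemma~\ref{lemma:main-2} gives $\mathbf{C}_i^{-2}=1$, a total of $4$. For the exceptional curves I would complete the blow-up sequence $\alpha$ begun in the proof of Lemma~\ref{lemma:r2-n8-singularities}: the $\mathbb{D}_4$ point produces a single exceptional base curve $\widehat{C}_5$ with $\mathbf{M}_5^{-2}=2$, whereas each of the two $\mathbb{D}_6$ points produces two exceptional base curves lying in consecutive exceptional divisors, all with $\mathbf{M}^{-2}=2$; the $\mathbb{A}_1$ point $P_{\{y\},\{z\},\{x,t\}}$ produces none, because only $C_2$ and $C_3$ pass through it and they meet transversally, so no defect arises there. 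For the $\mathbb{D}_4$ point the single-blow-up balance reads $6=2\cdot 2+\mathbf{m}_5$ (the conics $C_2,C_3,C_4$ all pass through $P_{\{y\},\{z\},\{t\}}$ with $\mathbf{m}=2$), giving $\mathbf{m}_5=2$ and hence $\mathbf{C}_5^{-2}=1$. Granting that the analogous computation at the two $\mathbb{D}_6$ points gives $\mathbf{m}=2$ for each of their four exceptional curves, each contributes $1$, for a further $5$; adding up yields $[\mathsf{f}^{-1}(-2)]=1+4+5=10$. Since $\lambda=-2$ is the only value with reducible fibre, this gives $\sum_{P}(\rho_P-1)=9=h^{1,2}(X)$, which is \eqref{equation:main-1}.

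The main obstacle is the last computation, namely showing that $\mathbf{m}=2$ (and not larger) for the exceptional base curves over the two $\mathbb{D}_6$ points. Since these curves sit in different exceptional divisors of the tower over each point, the clean one-step balance used for the $\mathbb{D}_4$ point is unavailable, and one must instead track the multiplicities level by level, restricting the pencil to each successive exceptional surface exactly as in the proof of Lemma~\ref{lemma:2-2-P-x-z-t}. This requires pushing the explicit resolution far enough to be certain which exceptional divisors actually carry base curves of $\widehat{\mathcal{S}}$; once that is pinned down, each $\mathbf{C}_i^{-2}$ follows mechanically from Lemma~\ref{lemma:main-2}, and the equality $[\mathsf{f}^{-1}(-2)]=10$ drops out.
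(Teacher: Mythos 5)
Your proposal is correct and follows essentially the same route as the paper: irreducibility for $\lambda\ne-2$ via Corollary~\ref{corollary:irreducible-fibers}, and for $\lambda=-2$ the identification of $\widehat{D}_{-2}=\widehat{S}_{-2}=2\widehat{\mathbf{Q}}$ together with the count of nine base curves of $\widehat{\mathcal{S}}$ (the four conics, one curve over the $\mathbb{D}_4$ point, and two over each $\mathbb{D}_6$ point), each contributing $\mathbf{C}_i^{-2}=\mathbf{m}_i-1=1$ via Lemma~\ref{lemma:main-2}. The level-by-level multiplicity check over the $\mathbb{D}_6$ points that you flag as the remaining obstacle is exactly the step the paper dispatches by simply asserting $\mathbf{m}_i=\mathbf{M}_i^{-2}=2$ for all nine curves, so your argument is, if anything, more explicit about what needs verifying.
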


\begin{proof}
Using Lemma~\ref{lemma:r2-n8-singularities} and Corollary~\ref{corollary:irreducible-fibers},
we see that $[\mathsf{f}^{-1}(\lambda)]=1$ for $\lambda\ne -2$.
To~show that $[\mathsf{f}^{-1}(-2)]=10$,
let us describe the birational morphism $\alpha$ in \eqref{equation:main-diagram}.
Implicitly, this was already done in the proof of Lemma~\ref{lemma:r2-n8-singularities}.
Because of this, we will use the notations introduced in that proof.

Let $\alpha_4\colon U_4\to U_3$ be the blow up of the preimage of the point $P_{\{x\},\{y\},\{t\}}$.
If~$\lambda\ne -2$, then the surface $S_{\lambda}^4$ has a unique singular point (of type $\mathbb{D}_4$) that is contained in~$\mathbf{E}_4$.
Let~$\alpha_5\colon U_5\to U_4$ be the blow up of this singular point.
Then $S_{\lambda}^5$ has $12$ singular points for general $\lambda\in\mathbb{C}$.
One of them is $P_{\{y\},\{z\},\{x,t\}}$,
another three are contained in the surface~$\mathbf{E}_5$,
another one is contained in the surface~$\mathbf{E}_4^5$,
and the remaining seven were explicitly described in the proof of Lemma~\ref{lemma:r2-n8-singularities}.
All these $12$ points are isolated ordinary double points of the surface $S_{\lambda}^5$ provided that $\lambda\ne -2$.
Thus, there exists a commutative diagram
$$
\xymatrix{
&&U_3\ar@{->}[dll]_{\alpha_3}&&U_4\ar@{->}[ll]_{\alpha_4}&&U_5\ar@{->}[ll]_{\alpha_5}\\
U_2\ar@{->}[rrd]_{\alpha_2}&&&&&&&&U\ar@{->}[dll]^\alpha\ar@{->}[ull]_{\gamma}\\
&&U_1\ar@{->}[rrrr]_{\alpha_1}&&&&\mathbb{P}^3&&}
$$
where $\gamma\colon U\to U_5$ is the blow up of these $12$ points.
This gives $\widehat{D}_{\lambda}=\widehat{S}_{\lambda}$ for every $\lambda\in\mathbb{C}$.

Let us describe the base curves of the pencil~$\widehat{\mathcal{S}}$.
Four of them are $\widehat{C}_1$, $\widehat{C}_2$, $\widehat{C}_3$, and~$\widehat{C}_4$.
The~next three are the curves $\widehat{C}_{5}$, $\widehat{C}_{6}$, $\widehat{C}_{7}$,
which are described in the proof of Lemma~\ref{lemma:r2-n8-singularities}.
The pencil $\widehat{\mathcal{S}}$ contains two more base curves, whose construction is similar to the construction of the curves $\widehat{C}_6$ and $\widehat{C}_{7}$.
One of them is contained in the surface $\widehat{E}_4$,
and another one is contained in the surface $\widehat{E}_5$.
Denote the former curve by $\widehat{C}_{8}$, and denote the latter curve by $\widehat{C}_{9}$.
Then $\widehat{C}_1$, $\widehat{C}_2$, $\widehat{C}_3$, $\widehat{C}_4$,
$\widehat{C}_5$, $\widehat{C}_6$, $\widehat{C}_7$, $\widehat{C}_8$, $\widehat{C}_{9}$
are all base curves of the pencil $\widehat{\mathcal{S}}$.

Note that $\mathbf{m}_1=\mathbf{m}_2=\mathbf{m}_3=\mathbf{m}_4=\mathbf{m}_5=\mathbf{m}_6=\mathbf{m}_7=\mathbf{m}_8=\mathbf{m}_9=2$ and
$$
\mathbf{M}^{-2}_1=\mathbf{M}^{-2}_2=\mathbf{M}^{-2}_3=\mathbf{M}^{-2}_4=\mathbf{M}^{-2}_5=\mathbf{M}^{-2}_6=\mathbf{M}^{-2}_7=\mathbf{M}^{-2}_8=\mathbf{M}^{-2}_9=2.
$$
Thus, using \eqref{equation:equation:number-of-irredubicle-components-refined-2} and Lemma~\ref{lemma:main-2}, we conclude that $[\mathsf{f}^{-1}(-2)]=10$.
\end{proof}

Using Lemma~\ref{lemma:r2-n8-main-1}, we see that \eqref{equation:main-1} in Main Theorem holds in this case.

\subsection{Family \textnumero $2.9$}
\label{section:r-2-n-9}

In this case, the threefold $X$ is a blow up of $\mathbb{P}^3$ at a smooth curve of degree $7$ and genus $5$.
Thus, we have $h^{1,2}(X)=5$.
A~toric Landau--Ginzburg model of the threefold $X$ is given by Minkowski polynomial \textnumero $3013$, which is
$$
x+y+z+\frac{x}{z}+\frac{y}{z}+\frac{x}{y}+\frac{y}{x}+2\frac{z}{y}+2\frac{z}{x}+\frac{z^2}{xy}+\frac{x}{yz}+\frac{2}{z}+\frac{y}{xz}+\frac{2}{y}+\frac{2}{x}+\frac{z}{xy}.
$$
The corresponding pencil $\mathcal{S}$ is given by
\begin{multline*}
t^2x^2+2t^2xy+2t^2xz+t^2y^2+2t^2yz+t^2z^2+tx^2y+tx^2z+txy^2+\\
+2txz^2+ty^2z+2tyz^2+tz^3+x^2yz+xy^2z+xyz^2=\lambda xyzt.
\end{multline*}
Observe that this equation is invariant with respect to the permutation $x\leftrightarrow y$.

We may assume that $\lambda\ne\infty$.
To describe the base curves of the pencil $\mathcal{S}$,
let $\mathcal{C}$ be a smooth conic that is given by $z=xy+tx+ty=0$.
Then
\begin{equation}
\label{equation:r2-n9-base-locus}
\begin{split}
H_{\{x\}}\cdot S_\lambda&=L_{\{x\},\{t\}}+2L_{\{x\},\{y,z\}}+L_{\{x\},\{z,t\}},\\
H_{\{y\}}\cdot S_\lambda&=L_{\{y\},\{t\}}+2L_{\{y\},\{x,z\}}+L_{\{y\},\{z,t\}},\\
H_{\{z\}}\cdot S_\lambda&=L_{\{z\},\{t\}}+L_{\{z\},\{x,y\}}+\mathcal{C},\\
H_{\{t\}}\cdot S_\lambda&=L_{\{x\},\{t\}}+L_{\{y\},\{t\}}+L_{\{z\},\{t\}}+L_{\{t\},\{x,y,z\}}.
\end{split}
\end{equation}
Thus, we let $C_1=L_{\{x\},\{t\}}$, \mbox{$C_2=L_{\{y\},\{t\}}$}, \mbox{$C_3=L_{\{z\},\{t\}}$},~\mbox{$C_4=L_{\{x\},\{y,z\}}$}, $C_5=L_{\{y\},\{x,z\}}$, \mbox{$C_6=L_{\{x\},\{z,t\}}$}, $C_7=L_{\{y\},\{z,t\}}$,
\mbox{$C_8=L_{\{z\},\{x,y\}}$}, \mbox{$C_9=L_{\{t\},\{x,y,z\}}$}, and $C_{10}=\mathcal{C}$.
These are all base curves of the pencil~$\mathcal{S}$.

If $\lambda\ne -3$, then $S_\lambda$ has isolated singularities, so that it is irreducible.
On the other hand, we have $S_{-3}=H_{\{z,t\}}+H_{\{x,y,z\}}+\mathbf{Q}$,
where $\mathbf{Q}$ is a smooth quadric surface that is given by $xy+t(x+y+z)=0$.
Note that $S_{-3}$ is singular along $L_{\{x\},\{y,z\}}$ and $L_{\{y\},\{x,z\}}$,
and it is smooth at general points of the remaining base curves of the pencil $\mathcal{S}$.

If $\lambda\ne -3$, then the singular points of the surface $S_\lambda$ contained in the base locus of the pencil $\mathcal{S}$
are $P_{\{x\},\{y\},\{z\}}$,  $P_{\{x\},\{z\},\{t\}}$, $P_{\{y\},\{z\},\{t\}}$, $P_{\{x\},\{t\},\{y,z\}}$, $P_{\{y\},\{t\},\{x,z\}}$, and $P_{\{z\},\{t\},\{x,y\}}$.
In this case, all of them are du Val singular points of the surface $S_\lambda$ by the following.

\begin{lemma}
\label{lemma:r2-n9-singularities}
If $\lambda\ne-3$, then the singular points of the surface $S_\lambda$ contained in the base locus of the pencil $\mathcal{S}$ can be describes as follows:
\begin{itemize}\setlength{\itemindent}{3cm}
\item[$P_{\{x\},\{y\},\{z\}}$:] type $\mathbb{D}_4$ with quadratic term $(x+y+z)^2$;
\item[$P_{\{x\},\{z\},\{t\}}$:] type $\mathbb{A}_2$ with quadratic term $(x+t)(z+t)$;
\item[$P_{\{y\},\{z\},\{t\}}$:] type $\mathbb{A}_2$ with quadratic term $(y+t)(z+t)$;
\item[$P_{\{x\},\{t\},\{y,z\}}$:] type $\mathbb{A}_2$ with quadratic term $x(x+y+z-(\lambda+3)t)$;
\item[$P_{\{y\},\{t\},\{x,z\}}$:] type $\mathbb{A}_2$ with quadratic term $y(x+y+z-(\lambda+3)t)$;
\item[$P_{\{z\},\{t\},\{x,y\}}$:] type $\mathbb{A}_1$ with quadratic term $(x+y)(t+z)+z^2+(\lambda+2)tz$.
\end{itemize}
\end{lemma}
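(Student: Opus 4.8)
The plan is to analyze each of the six fixed singular points locally, exactly in the spirit of the proofs of Lemmas~\ref{lemma:r2-n5-singularities}, \ref{lemma:r2-n4-singularities}, and \ref{lemma:r2-n8-singularities}. First I would exploit the symmetry $x\leftrightarrow y$ of the defining equation of the pencil $\mathcal{S}$: this involution swaps $P_{\{x\},\{z\},\{t\}}$ with $P_{\{y\},\{z\},\{t\}}$ and swaps $P_{\{x\},\{t\},\{y,z\}}$ with $P_{\{y\},\{t\},\{x,z\}}$, while fixing $P_{\{x\},\{y\},\{z\}}$ and $P_{\{z\},\{t\},\{x,y\}}$. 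Hence it suffices to treat the four representatives $P_{\{z\},\{t\},\{x,y\}}$, $P_{\{x\},\{z\},\{t\}}$, $P_{\{x\},\{t\},\{y,z\}}$, and $P_{\{x\},\{y\},\{z\}}$, and transport the remaining two types and quadratic terms by the involution.

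For each representative I would pass to the affine chart in which one coordinate is set to $1$, translate the point to the origin, and read off the quadratic part of the Taylor expansion; this immediately reproduces the quadratic terms listed in the statement. The point $P_{\{z\},\{t\},\{x,y\}}$ is handled first: its quadratic term $(x+y)(t+z)+z^2+(\lambda+2)tz$ is a nondegenerate ternary quadratic form for general $\lambda$, so the point is an ordinary double point, i.e.\ of type $\mathbb{A}_1$. The finitely many values of $\lambda$ at which the discriminant of this form vanishes must be checked separately by the quasi-homogeneous method below; since any such surface still has an isolated (hence du Val) singularity there, these special values do not affect the conclusion $[\mathsf{f}^{-1}(\lambda)]=1$ drawn via Corollary~\ref{corollary:irreducible-fibers}, exactly as the value $\lambda=-4$ is treated in the last bullet of Lemma~\ref{lemma:r2-n7-singularities}. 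For the four points of claimed type $\mathbb{A}_2$, the quadratic term is a product of two distinct linear forms, so after a linear change of coordinates it becomes $uv$; writing the local equation as $uv=(\text{terms of order}\geq 3)$ and assigning quasi-homogeneous weights $\mathrm{wt}(u)=\mathrm{wt}(v)=3$, $\mathrm{wt}(w)=2$ to the transverse coordinate $w$, I would verify that the lowest-weight part is $uv+c\,w^{3}$ with $c\neq 0$, which is the normal form of an $\mathbb{A}_2$ point. This is the same weighting device used for the $\mathbb{A}_3$ points in the proof of Lemma~\ref{lemma:r2-n5-singularities}.

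The main work is the point $P_{\{x\},\{y\},\{z\}}$, whose quadratic term $(x+y+z)^2$ has rank $1$, so a single Taylor expansion does not pin down the type. Following the treatment of the $\mathbb{D}_4$ points in the proofs of Lemmas~\ref{lemma:r2-n5-singularities} and \ref{lemma:r2-n4-singularities}, I would blow up $\mathbb{P}^3$ at $P_{\{x\},\{y\},\{z\}}$, compute the equation of the strict transform of $S_\lambda$ in the relevant charts, and show that along the exceptional $\mathbb{P}^1$ the blown-up surface acquires exactly three isolated ordinary double points and is smooth elsewhere on that curve. The $x\leftrightarrow y$ symmetry of the blown-up equation halves the chart computations: two of the three nodes are visible in one chart and are exchanged by the involution, and the third appears in the complementary chart. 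Since the minimal resolution of a point whose blow up carries three $\mathbb{A}_1$ singularities on a single central $(-2)$-curve has Dynkin diagram $\mathbb{D}_4$, this identifies the type and simultaneously records the first blow ups of the birational morphism $\alpha$ in \eqref{equation:main-diagram}, which will be needed later to compute $[\mathsf{f}^{-1}(-3)]$ and the difference $\mathrm{rk}\,\mathrm{Pic}(\widetilde{S}_{\Bbbk})-\mathrm{rk}\,\mathrm{Pic}(S_{\Bbbk})$.

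The hard part will be the bookkeeping in the $\mathbb{D}_4$ blow up, and more generally the verification that the exceptional values of $\lambda$ behave as expected: one must confirm that each of the three points on the exceptional curve is genuinely an ordinary double point for every $\lambda\neq -3$, i.e.\ that a $\lambda$-dependent rank-$3$ condition on the relevant quadratic forms never fails, and that the $\mathbb{A}_2$ determinations survive at all $\lambda\neq -3$. This is precisely where the restriction $\lambda\neq -3$ enters, since $S_{-3}$ is reducible and singular along $L_{\{x\},\{y,z\}}$ and $L_{\{y\},\{x,z\}}$, paralleling the exclusion of $\lambda=-5$ in family \textnumero$\,2.7$ and of $\lambda=-7$ in family \textnumero$\,2.5$.
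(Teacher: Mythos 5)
Your proposal is correct and follows essentially the same route as the paper: the paper's proof likewise only works out the $\mathbb{D}_4$ point $P_{\{x\},\{y\},\{z\}}$ in detail (chart $t=1$, coordinates $\bar z=x+y+z$, one blow up, three ordinary double points on the exceptional curve) and dismisses the remaining points as routine Taylor-expansion and quasi-homogeneous-weight checks in the style of Lemma~\ref{lemma:r2-n8-singularities}. Your additional use of the $x\leftrightarrow y$ symmetry and your explicit caution about isolated $\lambda$-values where the rank of a quadratic term could drop are harmless refinements, not a different method.
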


\begin{proof}
The proof is similar to the proof of Lemma~\ref{lemma:r2-n8-singularities}.
Because of this, we will only prove that $S_\lambda$ has du Val singularity of type $\mathbb{D}_4$
at the point $P_{\{x\},\{y\},\{z\}}$ for every $\lambda\ne -3$.
To do this, we rewrite the defining equation of the surface $S_\lambda$ in the chart $t=1$ as
$$
\bar{z}^2+(3+\lambda)\bar{x}^2\bar{y}+(3+\lambda)\bar{x}\bar{y}^2-(\lambda+2)\bar{x}\bar{y}\bar{z}-\bar{x}\bar{z}^2-\bar{y}\bar{z}^2+\bar{z}^3-\bar{y}\bar{x}^2\bar{z}-\bar{y}^2\bar{x}\bar{z}+\bar{y}\bar{x}\bar{z}^2=0,
$$
where $\bar{x}=x$, $\bar{y}=y$, and $\bar{z}=x+y+z$.

Let $\alpha_1\colon U_1\to\mathbb{P}^3$ be the blow up of the point $P_{\{x\},\{y\},\{z\}}$.
A chart of this is given by the coordinate change $\bar{x}_1=\bar{x}$, $\bar{y}_1=\frac{\bar{y}}{\bar{x}}$, $\bar{z}_1=\frac{\bar{z}}{\bar{z}}$.
In this chart, the surface $S^1_\lambda$ is given by
$$
(3+\lambda)\bar{x}_1\bar{y}_1+\bar{z}_1^2=\bar{x}_1\bar{z}_1^2+(3+\lambda)\bar{x}_1\bar{y}_1^2+(\lambda+2)\bar{x}_1\bar{y}_1\bar{z}_1-\bar{x}_1\bar{z}_1^3+\bar{y}_1\bar{x}_1^2\bar{z}_1+\bar{y}_1\bar{x}_1\bar{z}_1^2-\bar{y}_1\bar{x}_1^2\bar{z}_1^2+\bar{x}_1^2\bar{y}_1^2\bar{z}_1,
$$
and the surface $\mathbf{E}_1$ is given by $\bar{x}_1=0$.

Let $C_{11}^1$ be the line in $\mathbf{E}_1\cong\mathbb{P}^2$  given by $\bar{x}_1=\bar{z}_1=0$.
Then $S^1_\lambda\cdot\mathbf{E}_1=2C_{11}^1$ and $\mathbf{M}_{11}^{-3}=2$.
If $\lambda\ne -3$, then the curve $C_{11}^1$ contains three singular points of the surface $S^1_\lambda$.
One of them is the point $(\bar{x}_1,\bar{y}_1,\bar{z}_1)=(0,0,0)$.
Another one is the point $(\bar{x}_1,\bar{y}_1,\bar{z}_1)=(0,-1,0)$.
The third singular point can be described in another chart of the blow up $\alpha_1$.
All these points are isolated ordinary double points of the surface $S^1_\lambda$ in the case when $\lambda\ne -3$.
Thus, if $\lambda\ne -3$, then $P_{\{x\},\{y\},\{z\}}$ is a singular point of type $\mathbb{D}_4$ of the surface $S_\lambda$.
\end{proof}

The proof of Lemma~\ref{lemma:r2-n9-singularities} implies that $\mathrm{rk}\,\mathrm{Pic}(\widetilde{S}_{\Bbbk})=\mathrm{rk}\,\mathrm{Pic}(S_{\Bbbk})+13$.

If $\lambda\ne -3$, then the intersection form of the curves
$L_{\{x\},\{t\}}$, $L_{\{y\},\{t\}}$, $L_{\{z\},\{t\}}$, $L_{\{x\},\{z,t\}}$, and $L_{\{y\},\{z,t\}}$ on the surface $S_\lambda$ is given by
\begin{center}\renewcommand\arraystretch{1.42}
\begin{tabular}{|c||c|c|c|c|c|}
\hline
$\bullet$ & $L_{\{x\},\{t\}}$ & $L_{\{y\},\{t\}}$ & $L_{\{z\},\{t\}}$ & $L_{\{x\},\{z,t\}}$ & $L_{\{y\},\{z,t\}}$\\
\hline\hline
$L_{\{x\},\{t\}}$& $-\frac{2}{3}$ & $1$ & $\frac{1}{3}$ &   $\frac{1}{3}$ & $0$\\
\hline
$L_{\{y\},\{t\}}$& $1$ & $-\frac{2}{3}$ & $\frac{1}{3}$ & $0$ & $\frac{1}{3}$\\
\hline
$L_{\{z\},\{t\}}$ & $\frac{1}{3}$ & $\frac{1}{3}$ & $-\frac{1}{6}$ & $\frac{2}{3}$ & $\frac{2}{3}$\\
\hline
$L_{\{x\},\{z,t\}}$& $\frac{1}{3}$ & $0$ & $\frac{2}{3}$ & $-\frac{4}{3}$ & $1$ \\
\hline
$L_{\{y\},\{z,t\}}$& $0$ & $\frac{1}{3}$ & $\frac{2}{3}$ & $1$  & $-\frac{4}{3}$ \\
\hline
\end{tabular}
\end{center}
The determinant of this matrix is $\frac{34}{81}$.
This easily gives \eqref{equation:main-2-simple}.
Indeed, the base locus of the pencil $\mathcal{S}$ consists of the lines
$L_{\{x\},\{t\}}$, $L_{\{y\},\{t\}}$, $L_{\{z\},\{t\}}$,
$L_{\{x\},\{y,z\}}$, $L_{\{y\},\{x,z\}}$, $L_{\{x\},\{z,t\}}$, $L_{\{y\},\{z,t\}}$,
$L_{\{z\},\{x,y\}}$, $L_{\{t\},\{x,y,z\}}$, and the conic $\mathcal{C}$.
On the other hand, it follows from \eqref{equation:r2-n9-base-locus} that
\begin{multline*}
H_{\lambda}\sim L_{\{x\},\{t\}}+2L_{\{x\},\{y,z\}}+L_{\{x\},\{z,t\}}\sim L_{\{y\},\{t\}}+2L_{\{y\},\{x,z\}}+L_{\{y\},\{z,t\}}\sim\\
\sim L_{\{z\},\{t\}}+L_{\{z\},\{x,y\}}+\mathcal{C}\sim L_{\{x\},\{t\}}+L_{\{y\},\{t\}}+L_{\{z\},\{t\}}+L_{\{t\},\{x,y,z\}}
\end{multline*}
on the surface $S_\lambda$ provided that $\lambda\ne -3$.
In this case, we also have
$$
H_{\lambda}\sim L_{\{x\},\{y,z\}}+L_{\{y\},\{x,z\}}+L_{\{z\},\{x,y\}}+L_{\{y\},\{x,y,z\}},
$$
because $H_{\{x,y,z\}}\cdot S_\lambda=L_{\{x\},\{y,z\}}+L_{\{y\},\{x,z\}}+L_{\{z\},\{x,y\}}+L_{\{y\},\{x,y,z\}}$.
Likewise, we have
$$
H_{\lambda}\sim L_{\{x\},\{t,z\}}+L_{\{y\},\{t,z\}}+2L_{\{z\},\{t\}},
$$
because $H_{\{z,t\}}\cdot S_\lambda=L_{\{x\},\{t,z\}}+L_{\{y\},\{t,z\}}+2L_{\{z\},\{t\}}$.
Thus, one can express the classes of the curves
$L_{\{x\},\{z,t\}}$, $L_{\{y\},\{z,t\}}$, $L_{\{z\},\{x,y\}}$, $L_{\{t\},\{x,y,z\}}$, and $\mathcal{C}$
in $\mathrm{Pic}(S_\lambda)\otimes\mathbb{Q}$ as linear combinations of the classes of the lines
$L_{\{x\},\{t\}}$, $L_{\{y\},\{t\}}$, $L_{\{z\},\{t\}}$, $L_{\{x\},\{z,t\}}$, and $L_{\{y\},\{z,t\}}$.
For instance, we have
$$
L_{\{t\},\{x,y,z\}}\sim L_{\{x\},\{z,t\}}+L_{\{y\},\{z,t\}}+L_{\{z\},\{t\}}-L_{\{x\},\{t\}}-L_{\{y\},\{t\}}
$$
and $L_{\{z\},\{x,y\}}\sim L_{\{z\},\{t\}}+L_{\{x\},\{t\}}+L_{\{y\},\{t\}}-L_{\{x\},\{z,t\}}-L_{\{y\},\{z,t\}}$.
This shows that the intersection matrix $M$ in Lemma~\ref{lemma:cokernel} has rank $5$,
so that \eqref{equation:main-2-simple} holds in this case.
Thus, we see that \eqref{equation:main-2} in Main Theorem holds in this case.

Therefore, to complete the proof of Main Theorem in this case, we have to prove \eqref{equation:main-1}.
Since $h^{1,2}(X)=5$, the proof is given by the following.

\begin{lemma}
\label{lemma:r2-n9-main-1}
One has $[\mathsf{f}^{-1}(\lambda)]=1$ for every $\lambda\ne -3$.
One also has $[\mathsf{f}^{-1}(-3)]=6$.
\end{lemma}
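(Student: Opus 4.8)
The first assertion is immediate. By Lemma~\ref{lemma:r2-n9-singularities}, for every $\lambda\in\mathbb{C}$ with $\lambda\neq-3$ the surface $S_\lambda$ has at worst du Val singularities at every point of the base locus of $\mathcal{S}$, and it is irreducible since it has isolated singularities; hence $[\mathsf{f}^{-1}(\lambda)]=1$ by Corollary~\ref{corollary:irreducible-fibers}. The whole difficulty is concentrated in the reducible fiber over $\lambda=-3$, for which I would apply the refined formula \eqref{equation:equation:number-of-irredubicle-components-refined},
\[
\big[\mathsf{f}^{-1}(-3)\big]=\big[S_{-3}\big]+\sum_{i=1}^{10}\mathbf{C}_i^{-3}+\sum_{P\in\Sigma}\mathbf{D}_P^{-3}.
\]
Here $[S_{-3}]=3$ because $S_{-3}=H_{\{z,t\}}+H_{\{x,y,z\}}+\mathbf{Q}$, so it suffices to show that the two correction sums contribute $2$ and $1$ respectively.

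For the curve sum, recall that $S_{-3}$ is singular exactly along $C_4=L_{\{x\},\{y,z\}}$ and $C_5=L_{\{y\},\{x,z\}}$ and is smooth at general points of the remaining base curves; thus $\mathbf{M}_4^{-3}=\mathbf{M}_5^{-3}=2$ and $\mathbf{M}_i^{-3}=1$ for $i\notin\{4,5\}$. I would then compute the pencil multiplicities $\mathbf{m}_i$ in the decomposition $S_{\lambda_1}\cdot S_{\lambda_2}=\sum_i\mathbf{m}_iC_i$ by a routine local calculation along each base curve (equivalently, reading off $S_\lambda\cdot S_\infty$ with $S_\infty=\{xyzt=0\}$ and correcting along the edges of the coordinate tetrahedron), which gives in particular $\mathbf{m}_4=\mathbf{m}_5=2$. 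By Lemma~\ref{lemma:main} this yields $\mathbf{C}_4^{-3}=\mathbf{C}_5^{-3}=1$ and $\mathbf{C}_i^{-3}=0$ otherwise, so $\sum_i\mathbf{C}_i^{-3}=2$.

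For the defect sum, the set $\Sigma$ consists of the six fixed singular points listed in Lemma~\ref{lemma:r2-n9-singularities}. Substituting $\lambda=-3$ into the quadratic terms there, and checking directly, via $S_{-3}=H_{\{z,t\}}+H_{\{x,y,z\}}+\mathbf{Q}$, how the components meet at each point, shows that $P_{\{x\},\{z\},\{t\}}$, $P_{\{y\},\{z\},\{t\}}$, $P_{\{x\},\{t\},\{y,z\}}$, $P_{\{y\},\{t\},\{x,z\}}$ and $P_{\{z\},\{t\},\{x,y\}}$ are \emph{good} double points of $S_{-3}$ (their quadratic forms have rank at least $2$), so their defects vanish by Lemma~\ref{lemma:normal-crossing}. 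The one exception is $P_{\{x\},\{y\},\{z\}}$, where both $C_4$ and $C_5$ pass and where $H_{\{x,y,z\}}$ and $\mathbf{Q}$ are tangent, producing the quadratic term $(x+y+z)^2$ of rank $1$. To compute its defect I would reuse the blow-up $\alpha_1$ of $P_{\{x\},\{y\},\{z\}}$ already analysed in the proof of Lemma~\ref{lemma:r2-n9-singularities}. Since $\mathrm{mult}_{P_{\{x\},\{y\},\{z\}}}(S_{-3})=2$, Corollary~\ref{corollary:log-pull-back} gives $\mathbf{A}_{P_{\{x\},\{y\},\{z\}}}^{-3}=0$ and $\widehat{D}_{-3}=\widehat{S}_{-3}$ near this point, so by \eqref{equation:D-A-B} the defect equals the sum of the $\mathbf{C}_j^{-3}$ over the base curves of $\widehat{\mathcal{S}}$ lying above it. The only such curve is $\widehat{C}_{11}$, the proper transform of $C_{11}^1=\{\bar x_1=\bar z_1=0\}$ in $\mathbf{E}_1\cong\mathbb{P}^2$, and the defining equation of $S_\lambda^1$ in that proof gives $\mathbf{M}_{11}^{-3}=2$. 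For $\mathbf{m}_{11}$ I would use the local identity at $P_{\{x\},\{y\},\{z\}}$: only $C_4,C_5,C_8,C_{10}$ pass through this point, so $\mathrm{mult}_{P}(S_{\lambda_1}\cdot S_{\lambda_2})=\mathbf{m}_4+\mathbf{m}_5+\mathbf{m}_8+\mathbf{m}_{10}=6$, while the same multiplicity equals $\mathrm{mult}_P(S_{\lambda_1})\,\mathrm{mult}_P(S_{\lambda_2})+\mathbf{m}_{11}=4+\mathbf{m}_{11}$; hence $\mathbf{m}_{11}=2$ and $\mathbf{C}_{11}^{-3}=1$ by Lemma~\ref{lemma:main-2}. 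Combining everything, $[\mathsf{f}^{-1}(-3)]=3+2+1=6$.

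The main obstacle is this last defect computation. The delicate point is to verify that $\widehat{C}_{11}$ is the \emph{only} base curve of $\widehat{\mathcal{S}}$ above $P_{\{x\},\{y\},\{z\}}$ carrying $\mathbf{M}^{-3}\geq2$: the curve $C_{11}^1$ contains three ordinary double points of the general member (as noted in the proof of Lemma~\ref{lemma:r2-n9-singularities}), and I would have to confirm that resolving them introduces no further infinitely near base curve over $P_{\{x\},\{y\},\{z\}}$ that contributes to the defect, which is exactly what the computation $\mathbf{m}_{11}=2$ encodes. Once this is checked, \eqref{equation:main-1} in Main Theorem follows, since $\lambda=-3$ is the only value with reducible fiber and $h^{1,2}(X)=5$.
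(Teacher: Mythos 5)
Your proof is correct and follows essentially the same route as the paper: Corollary~\ref{corollary:irreducible-fibers} for $\lambda\ne-3$, then formula \eqref{equation:equation:number-of-irredubicle-components-refined} with $[S_{-3}]=3$, $\mathbf{C}_4^{-3}=\mathbf{C}_5^{-3}=1$ via Lemma~\ref{lemma:main}, vanishing defects at the five good double points via Lemma~\ref{lemma:normal-crossing}, and defect $1$ at $P_{\{x\},\{y\},\{z\}}$ computed from the single infinitely near base curve $\widehat{C}_{11}$ using Corollary~\ref{corollary:log-pull-back}, \eqref{equation:D-A-B} and Lemma~\ref{lemma:main-2}. Your explicit verification that $\mathbf{m}_{11}=2$ via the local intersection multiplicity at $P_{\{x\},\{y\},\{z\}}$ is a detail the paper leaves implicit, but it is exactly the computation needed to apply Lemma~\ref{lemma:main-2} there.
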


\begin{proof}
If $\lambda\ne-3$, then we have $[\mathsf{f}^{-1}(\lambda)]=1$ by Lemma~\ref{lemma:r2-n9-singularities} and Corollary~\ref{corollary:irreducible-fibers}.
To show that $[\mathsf{f}^{-1}(-3)]=6$, observe that $\mathbf{M}_4^{-3}=\mathbf{M}_5^{-3}=2$ and
$$
\mathbf{M}_1^{-3}=\mathbf{M}_2^{-3}=\mathbf{M}_3^{-3}=\mathbf{M}_6^{-3}=\mathbf{M}_7^{-3}=\mathbf{M}_8^{-3}=\mathbf{M}_9^{-3}=1.
$$
We also have
$\mathbf{m}_1=\mathbf{m}_2=\mathbf{m}_3=\mathbf{m}_4=\mathbf{m}_5=2$ and $\mathbf{m}_6=\mathbf{m}_7=\mathbf{m}_8=\mathbf{m}_9=1$.

Observe that $[S_{-3}]=3$, and the set $\Sigma$ consists of the points $P_{\{x\},\{y\},\{z\}}$,  $P_{\{x\},\{z\},\{t\}}$, $P_{\{y\},\{z\},\{t\}}$, $P_{\{x\},\{t\},\{y,z\}}$, $P_{\{y\},\{t\},\{x,z\}}$, and $P_{\{z\},\{t\},\{x,y\}}$.
Thus, it follows from \eqref{equation:equation:number-of-irredubicle-components-refined} and Lemma~\ref{lemma:main}
that
$$
\big[\mathsf{f}^{-1}(-3)\big]=5+\sum_{P\in\Sigma}\mathbf{D}_P^{-3}.
$$
Moreover, it follows from the proof of Lemma~\ref{lemma:r2-n9-singularities}
that $P_{\{x\},\{z\},\{t\}}$, $P_{\{y\},\{z\},\{t\}}$, $P_{\{x\},\{t\},\{y,z\}}$, $P_{\{y\},\{t\},\{x,z\}}$, and $P_{\{z\},\{t\},\{x,y\}}$
are {good} double points of the surface $S_{-3}$.
Thus, their defects vanish by Lemma~\ref{lemma:normal-crossing}.
Therefore, we conclude that
$$
\big[\mathsf{f}^{-1}(-3)\big]=5+\mathbf{D}_{P_{\{x\},\{y\},\{z\}}}^{-3}.
$$

Let us show that $\mathbf{D}_{P_{\{x\},\{y\},\{z\}}}^{-3}=1$.
To do this, we use the notation introduced in the proof of Lemma~\ref{lemma:r2-n9-singularities}.
Then there exists a commutative diagram
$$
\xymatrix{
&&&U\ar@{->}[drr]^\alpha\ar@{->}[dll]_{\gamma}\\
&U_1\ar@{->}[rrrr]_{\alpha_1}&&&&\mathbb{P}^3&&}
$$
for some birational morphism $\gamma$.
On the other hand, the curve $\widehat{C}_{11}$ is the only base of the pencil $\widehat{\mathcal{S}}$ that is mapped to $P_{\{x\},\{y\},\{z\}}$ by the birational morphism $\alpha$.
This follows from the proof of Lemma~\ref{lemma:r2-n9-singularities}.
Using Corollary~\ref{corollary:log-pull-back} and \eqref{equation:D-A-B}, we see that $\mathbf{D}_{P_{\{x\},\{y\},\{z\}}}^{-3}=\mathbf{C}_{11}^{-3}$.
By Lemma~\ref{lemma:main-2}, we have $\mathbf{C}_{11}^{-3}=1$, so that $\mathbf{D}_{P_{\{x\},\{y\},\{z\}}}^{-3}=1$ and $[\mathsf{f}^{-1}(-3)]=6$.
\end{proof}

\subsection{Family \textnumero $2.10$}
\label{section:r-2-n-10}

In this case, the threefold $X$ is a blow up of a complete intersection of two quadrics in $\mathbb{P}^3$
at a smooth elliptic curve of degree $4$. This implies that $h^{1,2}(X)=3$.
A~toric Landau--Ginzburg model of the threefold $X$ is given by Minkowski polynomial \textnumero $3018$, which is
$$
x+y+\frac{x}{z}+2z+\frac{yz}{x}+\frac{x}{y}+\frac{y}{x}+\frac{x}{yz}+\frac{2}{z}+\frac{{z}^{2}}{x}+\frac{z}{y}+\frac{3z}{x}+\frac{2}{y}+\frac{3}{x}+\frac{1}{yz}+\frac{1}{xz}.
$$
The quartic pencil $\mathcal{S}$ is given by
\begin{multline*}
x^2zy+y^2zx+x^2ty+2z^2xy+y^2z^2+x^2tz+y^2tz+x^2t^2+2t^2xy+\\
+z^3y+z^2tx+3z^2ty+2t^2zx+3t^2zy+t^3x+t^3y=\lambda xyzt.
\end{multline*}

We may assume that $\lambda\ne\infty$.
Let $\mathcal{C}_{1}$ be a smooth conic given by $x=yz+z^2+2zt+t^2=0$,
and let $\mathcal{C}_{2}$ be a smooth conic given by $z=xy+xt+yt=0$.
Then
\begin{equation}
\label{equation:r2-n10-base-locus}
\begin{split}
H_{\{x\}}\cdot S_\lambda&=L_{\{x\},\{y\}}+L_{\{x\},\{z,t\}}+\mathcal{C}_1,\\
H_{\{y\}}\cdot S_\lambda&=L_{\{x\},\{y\}}+L_{\{y\},\{t\}}+L_{\{y\},\{z,t\}}+L_{\{y\},\{x,z,t\}},\\
H_{\{z\}}\cdot S_\lambda&=L_{\{z\},\{t\}}+L_{\{z\},\{x,t\}}+\mathcal{C}_{2},\\
H_{\{t\}}\cdot S_\lambda&=L_{\{y\},\{t\}}+L_{\{z\},\{t\}}+L_{\{t\},\{x,z\}}+L_{\{t\},\{x,y,z\}}.
\end{split}
\end{equation}
We let $C_1=\mathcal{C}_1$, $C_2=\mathcal{C}_{2}$,
$C_3=L_{\{x\},\{y\}}$, $C_4=L_{\{y\},\{t\}}$, $C_5=L_{\{z\},\{t\}}$, $C_6=L_{\{x\},\{z,t\}}$, $C_7=L_{\{y\},\{z,t\}}$,
$C_8=L_{\{z\},\{x,t\}}$, $C_9=L_{\{t\},\{x,z\}}$, $C_{10}=L_{\{y\},\{x,z,t\}}$, and $C_{11}=L_{\{t\},\{x,y,z\}}$.
These are all base curves of the pencil $\mathcal{S}$.

If $\lambda\ne -4$ and $\lambda\ne -5$, then $S_\lambda$ has isolated singularities, so that it is irreducible.
On the other hand, both surfaces $S_{-4}$ and $S_{-5}$ are reducible. Indeed, one has
$S_{-4}=H_{\{x,z,t\}}+\mathsf{S}$,
where $\mathsf{S}$ is a cubic surface that is given by $t^2x+t^2y+txy+txz+2tyz+xyz+y^2z+yz^2=0$.
Likewise, we have
$S_{-5}=\mathsf{Q}+\mathbf{Q}$
where $\mathbf{Q}$ and $\mathsf{Q}$ are quadric surfaces that are given by the equations
$t^2+tx+2tz+xz+yz+z^2=0$ and $tx+ty+xy+yz=0$, respectively.

Both quadric surfaces $\mathbf{Q}$ and $\mathsf{Q}$ are smooth.
On the other hand, the surface $\mathsf{S}$ has two singular points: the points $P_{\{x\},\{y\},\{z,t\}}$ and $P_{\{y\},\{z\},\{t\}}$.
One can show that $\mathsf{S}$ has an ordinary double singularity at $P_{\{x\},\{y\},\{z,t\}}$,
and it has a singularity of type $\mathbb{A}_2$ at $P_{\{y\},\{z\},\{t\}}$.

If $\lambda\ne -4$ and $\lambda\ne -5$, then the singular points of the surface $S_\lambda$
contained in the base locus of the pencil $\mathcal{S}$ are the points
$P_{\{x\},\{z\},\{t\}}$, $P_{\{y\},\{z\},\{t\}}$, $P_{\{x\},\{y\},\{z,t\}}$
and $P_{\{y\},\{t\},\{x,z\}}$.
In~this case, all of them are du Val singular points of the surface $S_\lambda$ by

\begin{lemma}
\label{lemma:r2-n10-singularities}
If $\lambda\ne -4$ and $\lambda\ne -5$, then the singular points of the surface $S_\lambda$ contained in the base locus of the pencil $\mathcal{S}$ can be describes as follows:
\begin{itemize}\setlength{\itemindent}{3cm}
\item[$P_{\{x\},\{z\},\{t\}}$:] type $\mathbb{A}_4$ with quadratic term $z(x+z+t)$;
\item[$P_{\{y\},\{z\},\{t\}}$:] type $\mathbb{A}_2$ with quadratic term $(y+t)(z+t)$;
\item[$P_{\{x\},\{y\},\{z,t\}}$:] type $\mathbb{A}_4$ with quadratic term $(s+4)xy$;
\item[$P_{\{y\},\{t\},\{x,z\}}$:] type $\mathbb{A}_2$ with quadratic term $t(x+z+t-(\lambda+4)t)$.
\end{itemize}
\end{lemma}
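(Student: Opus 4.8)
The plan is to adapt verbatim the strategy of Lemmas~\ref{lemma:r2-n8-singularities} and~\ref{lemma:r2-n9-singularities}, handling the four fixed singular points separately. For each point $P$ I would pass to the affine chart in which the non-vanishing coordinate is set to $1$ --- namely $x=1$ for $P_{\{y\},\{z\},\{t\}}$ and $P_{\{y\},\{t\},\{x,z\}}$, and $y=1$ for $P_{\{x\},\{z\},\{t\}}$ and $P_{\{x\},\{y\},\{z,t\}}$ --- translate $P$ to the origin, and expand the defining equation of $S_\lambda$ in a Taylor series. The degree-two part of this expansion is a direct computation that reproduces the four quadratic terms in the statement. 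In each case this quadratic form factors as a product of two independent linear forms (for $P_{\{x\},\{y\},\{z,t\}}$ this uses $\lambda\neq -4$, so that $(\lambda+4)xy\neq 0$), hence has rank $2$. Since a general fibre of $\mathsf{g}$ is a smooth $K3$ surface, for general $\lambda$ the surface $S_\lambda$ has at worst du Val singularities, so each of the four points is of type $\mathbb{A}_n$ for some $n\geqslant 1$, and the only remaining work is to pin down $n$.

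To determine $n$ I would use the quasihomogeneous normal form already exploited in Lemma~\ref{lemma:r2-n5-singularities}. A linear change of coordinates turns the rank-two quadratic part into a single product $uv$; writing $w$ for the transverse coordinate, the equation of $S_\lambda$ then takes the shape $uv = c\,w^{n+1}+(\text{terms of higher quasihomogeneous weight})$ for the weights $\mathrm{wt}(u)=\mathrm{wt}(v)=n+1$ and $\mathrm{wt}(w)=2$, and the point is of type $\mathbb{A}_n$ exactly when the coefficient $c$ of the pure power $w^{n+1}$ is nonzero while no lower pure power of $w$ survives. For the two $\mathbb{A}_2$ points $P_{\{y\},\{z\},\{t\}}$ and $P_{\{y\},\{t\},\{x,z\}}$ this reduction is short: one diagonalizes the form, checks that the contributions of $w$ and $w^2$ are absorbed into $uv$, and verifies that the coefficient of $w^3$ is nonzero, the latter failing precisely at the excluded values $\lambda\in\{-4,-5\}$.

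The main obstacle is the identification of the two $\mathbb{A}_4$ points $P_{\{x\},\{z\},\{t\}}$ and $P_{\{x\},\{y\},\{z,t\}}$. Here one must follow the defining equation through the coordinate change up to total order five, confirm that the coefficients of $w$, $w^2$, $w^3$ and $w^4$ can all be removed, and show that the coefficient of $w^5$ does not vanish; it is at this last step that the hypothesis $\lambda\notin\{-4,-5\}$ is indispensable, matching the fact that $S_{-4}$ and $S_{-5}$ are reducible. In practice I would instead perform a blow-up of the origin, exactly as in the treatment of the $\mathbb{A}_4$ points in Subsection~\ref{section:r-2-n-4}: one checks that the induced surface is smooth along the exceptional line except at a single point where it acquires a singularity of type $\mathbb{A}_2$, which upgrades $P$ to type $\mathbb{A}_4$. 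This degree-five bookkeeping is the only genuinely laborious part of the argument, and once it is carried out the four entries of the table follow; the identification of the remaining (floating) singularities and the conclusion that $\mathrm{rk}\,\mathrm{Pic}(\widetilde{S}_{\Bbbk})=\mathrm{rk}\,\mathrm{Pic}(S_{\Bbbk})+\text{(sum of the }\mathbb{A}\text{-lengths)}$ then drop out as byproducts of these resolutions.
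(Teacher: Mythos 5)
Your proposal matches the paper's proof in essentials: the paper likewise reads off the quadratic terms by Taylor expansion in affine charts and pins down the $\mathbb{A}_4$ at $P_{\{x\},\{y\},\{z,t\}}$ by blowing up (it blows up twice and checks smoothness along the second exceptional curve, which is equivalent to your one blow-up followed by identifying an $\mathbb{A}_2$), leaving the other three points to analogous computations. The only inessential slip is your aside that the cubic coefficient at $P_{\{y\},\{z\},\{t\}}$ vanishes precisely at $\lambda\in\{-4,-5\}$ --- in fact that point remains an $\mathbb{A}_2$ on the cubic component of $S_{-4}$, since the plane $H_{\{x,z,t\}}$ misses it --- but this does not affect the argument for $\lambda\notin\{-4,-5\}$.
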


\begin{proof}
We will only describe the singularity of the surface $S_\lambda$ at the point $P_{\{x\},\{y\},\{z,t\}}$.
To do this, we rewrite the defining equation of the surface $S_\lambda$ in the chart $t=1$ as
$$
(\lambda+4)\bar{x}\bar{y}+\Big(\bar{x}^2\bar{z}-\bar{x}\bar{y}^2-(\lambda+4)\bar{x}\bar{y}\bar{z}+\bar{z}^2\bar{x}-\bar{y}^2\bar{z}\Big)+\Big(\bar{x}^2\bar{z}\bar{y}+\bar{y}^2\bar{z}\bar{x}+2\bar{z}^2\bar{x}\bar{y}+\bar{y}^2\bar{z}^2+\bar{z}^3\bar{y}\Big)=0,
$$
where $\bar{x}=x$, $\bar{y}=y$, and $\bar{z}=z+t$.

Let $\alpha_1\colon U_1\to\mathbb{P}^3$ be the blow up of the point $P_{\{x\},\{y\},\{z,t\}}$.
One chart of this blow up is given by the coordinate change
$\bar{x}_1=\frac{\bar{x}}{\bar{z}}$, $\bar{y}_1=\frac{\bar{y}}{\bar{z}}$, and $\bar{z}_1=\bar{z}$.
In this chart, the surface $\mathbf{E}_1$ is given by $\bar{z}_1=0$.
If $\lambda\ne -4$, then the surface $S^1_\lambda$ is given by
$$
\bar{x}_{1}\big(\bar{z}_{1}+(\lambda+4)\bar{y}_{1}\big)=(\lambda+4)\bar{x}_{1}\bar{y}_{1}\bar{z}_{1}-\bar{x}_{1}^2\bar{z}_{1}+\bar{y}_{1}^2\bar{z}_{1}-\bar{z}_{1}^2\bar{y}_{1}-2\bar{x}_{1}\bar{y}_{1}\bar{z}_{1}^2-\bar{y}_{1}^2\bar{z}_{1}^2+\bar{x}_{1}\bar{y}_{1}^2\bar{z}_{1}-\bar{x}_{1}^2\bar{y}_{1}\bar{z}_{1}^2-\bar{x}_{1}\bar{y}_{1}^2\bar{z}_{1}^2.
$$
If $\lambda=-4$, then this equation defines $D_{-4}^1=S^1_{-4}+\mathbf{E}_1$.

Let $C_{12}^1$ and $C_{13}^1$ be the lines in $\mathbf{E}_1\cong\mathbb{P}^2$
that are is given by $\bar{z}_1=\bar{x}_1=0$ and $\bar{z}_1=\bar{y}_1=0$, respectively.
Then $S^1_{-4}$ does not contain them.

Let $\alpha_2\colon U_2\to U_{1}$ be the blow up of the point $C_{12}^1\cap C_{13}^1$.
If $\lambda\ne -4$ and $\lambda\ne -5$, then the surface $S^2_\lambda$ is smooth along $\mathbf{E}_2$.
Hence, in this case, the surface $S_\lambda$ has a singular point  of type $\mathbb{A}_4$ at $P_{\{x\},\{y\},\{z,t\}}$.
\end{proof}

The base locus of the pencil $\mathcal{S}$ consists of the curves
$L_{\{x\},\{y\}}$, $L_{\{y\},\{t\}}$, $L_{\{z\},\{t\}}$, $L_{\{x\},\{z,t\}}$, $L_{\{y\},\{z,t\}}$,
$L_{\{z\},\{x,t\}}$, $L_{\{t\},\{x,z\}}$, $L_{\{y\},\{x,z,t\}}$, $L_{\{t\},\{x,y,z\}}$, $\mathcal{C}_1$, and $\mathcal{C}_{2}$.
If $\lambda\in\{-4,-5\}$, then
\begin{multline*}
H_{\lambda}\sim L_{\{x\},\{y\}}+L_{\{x\},\{z,t\}}+\mathcal{C}_1\sim L_{\{x\},\{y\}}+L_{\{y\},\{t\}}+L_{\{y\},\{z,t\}}+L_{\{y\},\{x,z,t\}}\sim \\
\sim L_{\{z\},\{t\}}+L_{\{z\},\{x,t\}}+\mathcal{C}_{2}\sim L_{\{y\},\{t\}}+L_{\{z\},\{t\}}+L_{\{t\},\{x,z\}}+L_{\{t\},\{x,y,z\}}
\end{multline*}
on the surface $S_\lambda$. This follows from \eqref{equation:r2-n10-base-locus}.
Moreover, in this case, we also have
$$
H_{\lambda}\sim L_{\{x\},\{z,t\}}+L_{\{z\},\{x,t\}}+L_{\{t\},\{x,z\}}+L_{y\},\{x,z,t\}},
$$
because $H_{\{x,z,t\}}\cdot S_\lambda=L_{\{x\},\{z,t\}}+L_{\{z\},\{x,t\}}+L_{\{t\},\{x,z\}}+L_{y\},\{x,z,t\}}$.
This shows that the intersection matrix $M$ in Lemma~\ref{lemma:cokernel} has the same rank
as the intersection matrix of the curves $L_{\{x\},\{y\}}$, $L_{\{y\},\{t\}}$, $L_{\{z\},\{t\}}$, $L_{\{x\},\{z,t\}}$, $L_{\{y\},\{z,t\}}$,
$L_{\{z\},\{x,t\}}$, and $L_{\{t\},\{x,z\}}$ on the surface $S_\lambda$.
If $\lambda\ne -4$ and $\lambda\ne -5$, then the latter matrix is given by
\begin{center}\renewcommand\arraystretch{1.42}
\begin{tabular}{|c||c|c|c|c|c|c|c|}
\hline
$\bullet$ & $L_{\{x\},\{y\}}$ & $L_{\{y\},\{t\}}$ & $L_{\{z\},\{t\}}$ & $L_{\{x\},\{z,t\}}$ & $L_{\{y\},\{z,t\}}$ & $L_{\{z\},\{x,t\}}$ & $L_{\{t\},\{x,z\}}$ \\
\hline\hline
$L_{\{x\},\{y\}}$& $-\frac{4}{5}$ & $1$ & $0$ & $\frac{3}{5}$ & $\frac{2}{5}$ & $0$ & $0$ \\
\hline
$L_{\{y\},\{t\}}$& $1$ & $-\frac{2}{3}$ & $\frac{1}{3}$ & $0$ & $\frac{1}{3}$ & $0$ & $\frac{2}{3}$\\
\hline
$L_{\{z\},\{t\}}$& $0$ & $\frac{1}{3}$ & $-\frac{8}{5}$ & $\frac{1}{5}$ & $\frac{2}{3}$ & $\frac{3}{5}$ & $\frac{1}{5}$\\
\hline
$L_{\{x\},\{z,t\}}$& $\frac{3}{5}$ & $0$ & $\frac{1}{5}$ & $-\frac{2}{5}$ & $\frac{1}{5}$ & $\frac{2}{5}$ & $\frac{4}{5}$\\
\hline
$L_{\{y\},\{z,t\}}$& $\frac{2}{5}$ & $\frac{1}{3}$ & $\frac{2}{3}$ & $\frac{1}{5}$ & $-\frac{8}{5}$ & $0$ & $0$ \\
\hline
$L_{\{z\},\{x,t\}}$ & $0$ & $0$ & $\frac{3}{5}$ & $\frac{2}{5}$ & $0$& $-\frac{4}{5}$ & $\frac{2}{5}$ \\
\hline
$L_{\{t\},\{x,z\}}$& $0$ & $\frac{2}{3}$ & $\frac{1}{5}$ & $\frac{4}{5}$ & $0$& $\frac{2}{5}$ & $-\frac{8}{5}$\\
\hline
\end{tabular}
\end{center}
The rank of this matrix is $6$.
We see that \eqref{equation:main-2-simple} holds, because
$\mathrm{rk}\,\mathrm{Pic}(\widetilde{S}_{\Bbbk})=\mathrm{rk}\,\mathrm{Pic}(S_{\Bbbk})+12$.
Thus, we conclude that \eqref{equation:main-2} in Main Theorem also holds in this case.

\begin{lemma}
\label{lemma:r2-n10-main-1}
One has $[\mathsf{f}^{-1}(\lambda)]=1$ for $\lambda\not\in\{-4,-5\}$,
$[\mathsf{f}^{-1}(-4)]=3$, and $[\mathsf{f}^{-1}(-5)]=2$.
\end{lemma}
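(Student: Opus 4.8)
The plan is to verify the three fiber-count claims of Lemma~\ref{lemma:r2-n10-main-1} by applying the machinery developed in Section~\ref{section:scheme}, treating the generic case and the two special values $\lambda\in\{-4,-5\}$ separately. For $\lambda\not\in\{-4,-5\}$ the surface $S_\lambda$ is irreducible with only du Val singularities in the base locus (Lemma~\ref{lemma:r2-n10-singularities}), so $[\mathsf{f}^{-1}(\lambda)]=1$ follows immediately from Corollary~\ref{corollary:irreducible-fibers}. This disposes of the generic value with no further work.

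First I would treat $\lambda=-5$, which should be the easier special case. Since $S_{-5}=\mathsf{Q}+\mathbf{Q}$ is a union of two smooth quadrics, we have $[S_{-5}]=2$. The intersection $\mathsf{Q}\cap\mathbf{Q}$ meets the base curves with multiplicity one along each $C_i$, so I expect $\mathbf{M}_i^{-5}=1$ for every base curve, whence all $\mathbf{C}_i^{-5}=0$ by Lemma~\ref{lemma:main}. It then remains to check that every fixed singular point in $\Sigma$ is a good double point of $S_{-5}$; the quadratic terms recorded in Lemma~\ref{lemma:r2-n10-singularities} remain nondegenerate of rank $\geqslant 2$ at $\lambda=-5$ (the exceptional cases being excluded precisely because they force $\lambda=-4$), so Lemma~\ref{lemma:normal-crossing} gives $\mathbf{D}_P^{-5}=0$ for all $P\in\Sigma$. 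Corollary~\ref{corollary:normal-crossing-simple} then yields $[\mathsf{f}^{-1}(-5)]=[S_{-5}]=2$.

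For $\lambda=-4$ the argument is more delicate. Here $S_{-4}=H_{\{x,z,t\}}+\mathsf{S}$ with $\mathsf{S}$ an irreducible cubic, so $[S_{-4}]=2$. I would first compute the multiplicities $\mathbf{M}_i^{-4}$ of the base curves in $S_{-4}$ and the intersection multiplicities $\mathbf{m}_i$ from \eqref{equation:r2-n10-base-locus}, then apply Lemma~\ref{lemma:main} to obtain the $\mathbf{C}_i^{-4}$. The decisive input is the set of defects $\mathbf{D}_P^{-4}$. For the three points where $S_{-4}$ has a good double point I would invoke Lemma~\ref{lemma:normal-crossing} to conclude their defects vanish. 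The remaining contribution must come from the points at which $S_{-4}$ degenerates—one reads off from the proof of Lemma~\ref{lemma:r2-n10-singularities} (where $\alpha_1$ blows up $P_{\{x\},\{y\},\{z,t\}}$ and produces $D_{-4}^1=S_{-4}^1+\mathbf{E}_1$) that $P_{\{x\},\{y\},\{z,t\}}$ carries a genuine defect, computed via \eqref{equation:D-A-B} by tracking $\mathbf{A}_P^{-4}$ and the curve contributions $\mathbf{C}_j^{-4}$ through the blow-ups $\alpha_1,\alpha_2$. Assembling these through \eqref{equation:equation:number-of-irredubicle-components-refined} should give $[\mathsf{f}^{-1}(-4)]=3$.

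\emph{The hard part} will be the local defect computation at $\lambda=-4$: one must follow the two-step blow-up of Lemma~\ref{lemma:r2-n10-singularities}, identify each base curve $\widehat{C}_j$ of the pencil $\widehat{\mathcal{S}}$ lying over $P_{\{x\},\{y\},\{z,t\}}$, and determine for each whether $\mathbf{M}_j^{-4}=1$ or $\mathbf{M}_j^{-4}\geqslant 2$ together with the ambient intersection multiplicity $\mathbf{m}_j$, exactly as in the proofs of Lemmas~\ref{lemma:2-2-P-x-z-t} and \ref{lemma:2-3-P-x-t-yz}. The bookkeeping of log-pullback coefficients $\mathbf{a}_i^{-4}$ in \eqref{equation:log-pull-back} and the application of Lemma~\ref{lemma:main-2} chart by chart is where the calculation could go wrong. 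Once the defect is pinned down, combining it with the easy counts $[S_{-4}]=2$, the vanishing of the good-double-point defects, and the $\mathbf{C}_i^{-4}$ yields the claimed value, and comparison with $h^{1,2}(X)=3$ confirms \eqref{equation:main-1} in this family.
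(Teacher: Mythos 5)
Your proposal follows the paper's argument essentially verbatim: Corollary~\ref{corollary:irreducible-fibers} for generic $\lambda$, good double points plus Corollary~\ref{corollary:normal-crossing-simple} for $\lambda=-5$, and the reduction $[\mathsf{f}^{-1}(-4)]=2+\mathbf{D}_{P_{\{x\},\{y\},\{z,t\}}}^{-4}$ with the defect computed through the two blow-ups of Lemma~\ref{lemma:r2-n10-singularities} (where $\mathbf{A}_{P_{\{x\},\{y\},\{z,t\}}}^{-4}=1$ and the base curves over the point contribute nothing since $D_{-4}^2$ is smooth at their general points). The plan is correct and matches the paper's proof.
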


\begin{proof}
If $\lambda\not\in-4$ and $\lambda\not\in-5$,
then $[\mathsf{f}^{-1}(\lambda)]=1$ by Lemma~\ref{lemma:r2-n10-singularities} and Corollary~\ref{corollary:irreducible-fibers}.
Moreover, it follows from Corollary~\ref{corollary:normal-crossing-simple} that
$[\mathsf{f}^{-1}(-5)]=2$,
because
$P_{\{x\},\{z\},\{t\}}$, $P_{\{y\},\{z\},\{t\}}$, $P_{\{x\},\{y\},\{z,t\}}$, and $P_{\{y\},\{t\},\{x,z\}}$
are {good} double points of the surface $S_{-5}$.

To complete the proof, we have to show that $[\mathsf{f}^{-1}(-4)]=3$.
Using \eqref{equation:equation:number-of-irredubicle-components-refined}, we see that
$$
\big[\mathsf{f}^{-1}(-4)\big]=2+\mathbf{D}_{P_{\{x\},\{y\},\{z,t\}}}^{-4}.
$$
Here, we also used Lemmas~\ref{lemma:main} and \ref{lemma:normal-crossing}.

To compute the {defect} $\mathbf{D}_{P_{\{x\},\{y\},\{z,t\}}}^{-4}$, let us use the proof of Lemma~\ref{lemma:r2-n10-singularities} and the notation used in this proof.
First, we have $D_{-4}^2=S^2_{-4}+\mathbf{E}_1^2$, so that $\mathbf{A}_{P_{\{x\},\{y\},\{z,t\}}}^{-4}=1$,
where $\mathbf{A}_{P_{\{x\},\{y\},\{z,t\}}}^{-4}$ is the number defined in \eqref{equation:A}.

The curves $C_{12}^2$ and $C_{13}^2$ are the base curves of the pencil $\mathcal{S}^2$.
Aside of these curves, this pencil has one more base curve contained in $\mathbf{E}_2\cup\mathbf{E}_1^2$.
However, the divisor $D_{-4}^2$ is smooth at general points of these three curves.
Now using Lemma~\ref{lemma:main-2} and \eqref{equation:D-A-B}, we conclude that
$\mathbf{D}_{P_{\{x\},\{y\},\{z,t\}}}^{-4}=\mathbf{A}_{P_{\{x\},\{y\},\{z,t\}}}^{-4}=1$,
so that $[\mathsf{f}^{-1}(-4)]=3$.
\end{proof}

Note that Lemma~\ref{lemma:r2-n10-main-1} implies \eqref{equation:main-1} in Main Theorem, because $h^{1,2}(X)=3$.

\subsection{Family \textnumero $2.11$}
\label{section:r-2-n-11}

In this case, the threefold $X$ is a blow up of a smooth cubic threefold at a line, so that $h^{1,2}(X)=5$.
A~toric Landau--Ginzburg model of the threefold $X$ is given by Minkowski polynomial \textnumero $1700$, which is
$$
y+\frac{x}{z}+\frac{y}{z}+z+\frac{yz}{x}+\frac{2y}{x}+\frac{2z^2}{x}+\frac{x}{yz}+\frac{2}{z}+\frac{y}{xz}+\frac{2z}{y}+\frac{2z}{x}+\frac{z^3}{xy}.
$$
The pencil of quartic surfaces $\mathcal{S}$ is given by the equation
$$
y^2zx+x^2ty+y^2tx+z^2xy+y^2z^2+2y^2tz+2z^3y+x^2t^2+2t^2xy+t^2y^2+2z^2tx+2z^2ty+z^4=\lambda xyzt.
$$
In the remaining part of this subsection, we will assume that $\lambda\ne\infty$.

Let $\mathcal{C}_{1}$ be the smooth conic given by $x=ty+yz+z^2=0$,
let $\mathcal{C}_{2}$ be the a smooth conic given by $y=tx+z^2=0$,
let $\mathcal{C}_{3}$ be the smooth conic given by $z=tx+ty+xy=0$,
and let $\mathcal{C}_{4}$ be the smooth conic given by $t=tx+ty+xy=0$.
Then
\begin{equation}
\label{equation:r2-n11-base-locus}
\begin{split}
H_{\{x\}}\cdot S_\lambda&=2\mathcal{C}_1,\\
H_{\{y\}}\cdot S_\lambda&=2\mathcal{C}_2,\\
H_{\{z\}}\cdot S_\lambda&=L_{\{z\},\{t\}}+L_{\{z\},\{x,y\}}+\mathcal{C}_{3},\\
H_{\{t\}}\cdot S_\lambda&=L_{\{z\},\{t\}}+L_{\{t\},\{y,z\}}+\mathcal{C}_{4},
\end{split}
\end{equation}
so that $L_{\{z\},\{t\}}$, $L_{\{z\},\{x,y\}}$, $L_{\{t\},\{y,z\}}$, $\mathcal{C}_1$, $\mathcal{C}_2$, $\mathcal{C}_{3}$, and $\mathcal{C}_{4}$
are all base curves of the pencil $\mathcal{S}$.
To~match the notation used in Subsection~\ref{subsection:scheme-step-6}, we let
$C_1=\mathcal{C}_1$, $C_2=\mathcal{C}_2$, $C_3=\mathcal{C}_3$, $C_4=\mathcal{C}_4$,
$C_5=L_{\{z\},\{t\}}$, $C_6=L_{\{z\},\{x,y\}}$, $C_7=L_{\{t\},\{y,z\}}$.

If $\lambda\ne -2$, then the surface $S_\lambda$ has isolated singularities, so that $S_\lambda$ is irreducible.
On the other hand, we have $S_{-2}=\mathbf{Q}+\mathsf{Q}$,
where $\mathbf{Q}$ and $\mathsf{Q}$ are irreducible quadric surfaces that are given by the equations
$xy+yz+z^2+xt+yt=0$ and $xt+ty+yz+z^2=0$, respectively. Both these quadric surfaces are smooth.
Note that $\mathbf{Q}\cap\mathsf{Q}=\mathcal{C}_1\cup\mathcal{C}_2$.

If $\lambda\ne -2$, then the singular points of the surface $S_\lambda$
contained in the base locus of the pencil $\mathcal{S}$ are the points
$P_{\{x\},\{z\},\{t\}}$, $P_{\{x\},\{y\},\{z\}}$, $P_{\{y\},\{z\},\{t\}}$, and $P_{\{x\},\{t\},\{y,z\}}$,
which are du Val singular points of the surface $S_\lambda$. In fact, we can say more:

\begin{lemma}
\label{lemma:r2-n11-singularities}
If $\lambda\ne -2$, then the singular points of the surface $S_\lambda$ contained in the base locus of the pencil $\mathcal{S}$ can be describes as follows:
\begin{itemize}\setlength{\itemindent}{3cm}
\item[$P_{\{x\},\{y\},\{z\}}$:] type $\mathbb{D}_6$ with quadratic term $(x+y)^2$;
\item[$P_{\{x\},\{z\},\{t\}}$:] type $\mathbb{A}_3$ with quadratic term $(z+t)(x+z+t)$;
\item[$P_{\{y\},\{z\},\{t\}}$:] type $\mathbb{A}_5$ with quadratic term $t(y+t)$;
\item[$P_{\{x\},\{t\},\{y,z\}}$:] type $\mathbb{A}_1$ with quadratic term $(\lambda+2)xt+(y+z-t)(y+z-x-t)$.
\end{itemize}
\end{lemma}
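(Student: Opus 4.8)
The plan is to produce an explicit local analytic model of the quartic $S_\lambda$ at each of the four fixed singular points $P_{\{x\},\{y\},\{z\}}$, $P_{\{x\},\{z\},\{t\}}$, $P_{\{y\},\{z\},\{t\}}$, $P_{\{x\},\{t\},\{y,z\}}$ and to read off its du Val type by a partial resolution, exactly in the spirit of the proof of Lemma~\ref{lemma:r2-n8-singularities}. For each point I would pass to the affine chart in which it becomes the origin, compute the Taylor expansion of the defining polynomial, and extract the quadratic part; this immediately yields the quadratic terms recorded in the statement. Throughout I would keep track of the factor $\lambda+2$: it is precisely the hypothesis $\lambda\neq-2$ that keeps the relevant leading coefficients nonzero, which is why the surface $S_{-2}$ (the reducible member $\mathbf{Q}+\mathsf{Q}$) must be excluded.

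I would first dispose of the ordinary double point. At $P_{\{x\},\{t\},\{y,z\}}$ the quadratic term is $(\lambda+2)xt+(y+z-t)(y+z-x-t)$, and a direct computation shows this quadratic form has rank $3$ whenever $\lambda\neq-2$; hence the point is of type $\mathbb{A}_1$. Next come the two $\mathbb{A}$-points whose quadratic terms have rank $2$. At $P_{\{x\},\{z\},\{t\}}$, after a linear change of coordinates turning $(z+t)(x+z+t)$ into a product of two coordinates, one blow up of the point replaces the singularity by a single node, which identifies it as type $\mathbb{A}_3$, since blowing up an $\mathbb{A}_n$ point lowers the index by $2$ as in Lemma~\ref{lemma:r2-n4-singularities}. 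At $P_{\{y\},\{z\},\{t\}}$ the quadratic term $t(y+t)$ is again of rank $2$ with transverse coordinate $z$; here the cleanest route is either to introduce weighted coordinates and read the $\mathbb{A}_5$ type off the first surviving power of the transverse variable, or to perform two successive blow ups reducing $\mathbb{A}_5\to\mathbb{A}_3\to\mathbb{A}_1$.

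The main work, and the step I expect to be the real obstacle, is the point $P_{\{x\},\{y\},\{z\}}$, whose quadratic term $(x+y)^2$ has rank $1$, so that no single blow up determines the type. Following the $\mathbb{D}_6$ computation in Lemma~\ref{lemma:r2-n8-singularities}, I would set $\bar z=x+y$, blow up the point, and analyze the strict transform along the exceptional $\mathbb{P}^1$: for $\lambda\neq-2$ it should acquire a single worst singular point, at which a second change of coordinates and a second blow up exhibit a $\mathbb{D}_4$ configuration, namely an exceptional curve meeting the strict transform in three ordinary double points, as in Lemma~\ref{lemma:r2-n5-singularities}. The delicate part is verifying that the nodes produced at each stage are genuinely isolated and nondegenerate for \emph{all} $\lambda\neq-2$, so that the configuration is that of $\mathbb{D}_6$ rather than an $\mathbb{A}$-chain; the nonvanishing of $\lambda+2$ governs exactly this dichotomy and must be rechecked after each blow up.

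As a byproduct these local resolutions describe the birational morphism $\alpha\colon U\to\mathbb{P}^3$ over the set $\Sigma$ and determine the minimal resolution $\nu\colon\widetilde{S}_{\Bbbk}\to S_{\Bbbk}$. I would finish by summing the du Val contributions $6+3+5+1$ of the four points, after checking that the generic surface $S_{\Bbbk}$ has no further (floating) singularities, to obtain the value of $\mathrm{rk}\,\mathrm{Pic}(\widetilde{S}_{\Bbbk})-\mathrm{rk}\,\mathrm{Pic}(S_{\Bbbk})$ needed for the later verification of \eqref{equation:main-2-simple}, and to confirm via Corollary~\ref{corollary:irreducible-fibers} that all fibers $\mathsf{f}^{-1}(\lambda)$ with $\lambda\neq-2$ are irreducible.
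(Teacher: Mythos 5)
Your proposal is correct and follows essentially the same route as the paper: read off the quadratic terms in affine charts, settle the rank-$3$ case ($\mathbb{A}_1$) and the rank-$2$ cases ($\mathbb{A}_3$, $\mathbb{A}_5$) by the standard weighted-coordinate/blow-up arguments, and resolve the rank-$1$ point $P_{\{x\},\{y\},\{z\}}$ by two successive blow ups, with $\lambda+2\neq 0$ guaranteeing nondegeneracy at each stage. The paper likewise only writes out the $\mathbb{D}_6$ computation in detail, and its count $1+1+4=6$ uses the extra node sitting on the \emph{first} exceptional curve alongside the residual $\mathbb{D}_4$ point, so make sure your "single worst singular point" bookkeeping records that second (nodal) singular point as well.
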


\begin{proof}
We will only describe the singularity of the surface $S_\lambda$ at the point $P_{\{x\},\{y\},\{z\}}$.
To do this, we rewrite the defining equation of the surface $S_\lambda$ in the chart $t=1$ as
$$
\bar{x}^2+\Big(\bar{x}^2\bar{y}-\lambda\bar{x}\bar{y}\bar{z}+(\lambda+2)\bar{y}^2\bar{z}-\bar{x}\bar{y}^2+2\bar{x}\bar{z}^2\Big)+\Big(\bar{y}^2\bar{z}\bar{x}+\bar{z}^2\bar{x}\bar{y}-\bar{y}^3\bar{z}+2\bar{z}^3\bar{y}+\bar{z}^4\Big)=0,
$$
where $\bar{x}=x+y$, $\bar{y}=y$, and $\bar{z}=z$.
Let $\alpha_1\colon U_1\to\mathbb{P}^3$ be the blow up of the point $P_{\{x\},\{y\},\{z,t\}}$.
One chart of this blow up is given by the coordinate change
$\bar{x}_1=\frac{\bar{x}}{\bar{z}}$, $\bar{y}_1=\frac{\bar{y}}{\bar{z}}$, and $\bar{z}_1=\bar{z}$.
In this chart, the surface $\mathbf{E}_1$ is given by $\bar{z}_1=0$.
Then $S^1_\lambda$ is given by
\begin{multline*}
\big(\bar{x}_1+\bar{z}_1\big)^2+\Big(2\bar{y}_1\bar{z}_1^2-\lambda\bar{x}_1\bar{y}_1\bar{z}_1+(\lambda+2)\bar{y}_1^2\bar{z}_1\Big)+\\
+\Big(\bar{x}_1^2\bar{y}_1\bar{z}_1-\bar{y}_1^2\bar{z}_1\bar{x}_1+\bar{z}_1^2\bar{x}_1\bar{y}_1\Big)+\Big(\bar{x}_1\bar{y}_1^2\bar{z}_1^2-\bar{y}_1^3\bar{z}_1^2\Big)=0.
\end{multline*}

Denote by $C_{8}^1$ the line in $\mathbf{E}_1\cong\mathbb{P}^2$ that is given by $\bar{z}_1=\bar{x}_1=0$.
Then $S^1_{-2}$ is singular along this line.
If $\lambda\ne -2$, then  $S^1_\lambda$ has two singular points in $\mathbf{E}_1$.
One of them is the point $(\bar{x}_1,\bar{y}_1,\bar{z}_1)=(0,0,0)$.
The second singular point lies in another chart of the blow up  $\alpha_1$.
If $\lambda\ne -2$, then this point is an isolated ordinary double point of the surface $S^1_\lambda$.

Let $\hat{x}_1=\bar{x}_1+\bar{z}_1$, $\hat{y}_1=\bar{y}_1$, and $\hat{z}_1=\bar{z}_{1}$.
Then we can rewrite the (local) defining equation of the surface $S^1_\lambda$ as
\begin{multline*}
\hat{x}_1^2+\Big((\lambda+2)\hat{y}_1\hat{z}_1^2-\lambda\hat{x}_1\hat{y}_1\hat{z}_1+(\lambda+2)\hat{y}_1^2\hat{z}_1\Big)+\\
+\Big(\hat{x}_1^2\hat{y}_1\hat{z}_1-\hat{y}_1^2\hat{z}_1\hat{x}_1-\hat{z}_1^2\hat{x}_1\hat{y}_1+\hat{y}_1^2\hat{z}_1^2\Big)+\Big(\hat{x}_1\hat{y}_1^2\hat{z}_1^2-\hat{y}_1^3\hat{z}_1^2-\hat{y}_1^2\hat{z}_1^3\Big)=0.
\end{multline*}

Let $\alpha_2\colon U_2\to U_{1}$ be the blow up of the point $(\hat{x}_1,\hat{y}_1,\hat{z}_1)=(0,0,0)$.
One chart of this blow up  is given by the coordinate change
$\hat{x}_2=\frac{\hat{x}_1}{\hat{z}_1}$, $\hat{y}_2=\frac{\hat{y}_1}{\hat{z}_1}$, and $\hat{z}_2=\hat{z}_1$.
In this chart, the surface $S^2_\lambda$ is given by
\begin{multline*}
\hat{x}_2^2+(\lambda+2)\hat{y}_2\hat{z}_2+\Big((\lambda+2)\hat{y}_2^2\hat{z}_2-\lambda\hat{x}_2\hat{y}_2\hat{z}_2\Big)+\Big(\hat{y}_2^2\hat{z}_2^2-\hat{z}_2^2\hat{x}_2\hat{y}_2\Big)+\\
+\Big(\hat{x}_2^2\hat{y}_2\hat{z}_2^2-\hat{x}_2\hat{y}_2^2\hat{z}_2^2-\hat{y}_2^2\hat{z}_2^3\Big)+\Big(\hat{x}_2\hat{y}_2^2\hat{z}_2^3-\hat{y}_2^3\hat{z}_2^3\Big)=0,
\end{multline*}
and the surface $\mathbf{E}_2$ is given by $\hat{z}_2=0$.
Thus, if $\lambda\ne -2$, then $S^2_\lambda$ has isolated ordinary double singularity at the point $(\hat{x}_2,\hat{y}_2,\hat{z}_2)=(0,0,0)$.

Denote by $C_{9}^2$ the line in $\mathbf{E}_2\cong\mathbb{P}^2$ that is given by $\hat{z}_2=\hat{x}_2=0$.
Then $S^2_{-2}$ is singular along this line.
If $\lambda\ne -2$, then $\mathbf{E}_2$ contains three singular points of the surface  $S^2_\lambda$.
One of them is the point $(\hat{x}_2,\hat{y}_2,\hat{z}_2)=(0,0,0)$.
The second one is $(\hat{x}_2,\hat{y}_2,\hat{z}_2)=(0,-1,0)$.
The  third point is contained in another chart of the blow up $\alpha_2$.
All of them are isolated ordinary double singularities of the surface $S^2_\lambda$.
Hence, if $\lambda\ne -2$, then $S_\lambda$ has a singular point of type $\mathbb{D}_6$ at the point $P_{\{x\},\{y\},\{z\}}$.
\end{proof}

The proof of Lemma~\ref{lemma:r2-n11-singularities} implies
\begin{equation}
\label{equation:r2-n11-singularities-Pic}
\mathrm{rk}\,\mathrm{Pic}(\widetilde{S}_{\Bbbk})=\mathrm{rk}\,\mathrm{Pic}(S_{\Bbbk})+15.
\end{equation}

By Lemma~\ref{lemma:cokernel}, to verify \eqref{equation:main-2} in Main Theorem,
we have to compute the rank of the intersection matrix of the curves
$\mathcal{C}_1$, $\mathcal{C}_2$, $\mathcal{C}_3$, $\mathcal{C}_4$,
$L_{\{z\},\{t\}}$, $L_{\{z\},\{t\}}$, and $L_{\{t\},\{y,z\}}$ on a general surface in the pencil $\mathcal{S}$.
On the other hand, if $\lambda\ne -2$, then it follows from \eqref{equation:r2-n11-base-locus} that
$$
H_{\lambda}\sim 2\mathcal{C}_1\sim 2\mathcal{C}_2\sim L_{\{z\},\{t\}}+L_{\{z\},\{x,y\}}+\mathcal{C}_{3}\sim L_{\{z\},\{t\}}+L_{\{t\},\{y,z\}}+\mathcal{C}_{4}.
$$
on the surface $S_\lambda$.
We have
$\mathcal{C}_1+\mathcal{C}_2+\mathcal{C}_3+\mathcal{C}_4\sim 2H_{\lambda}$,
because $\mathbf{Q}\cdot S_\lambda=\mathcal{C}_1+\mathcal{C}_2+\mathcal{C}_3+\mathcal{C}_4$.
Likewise, we also have $\mathsf{Q}\cdot S_\lambda=2L_{\{z\},\{t\}}+L_{\{z\},\{x,y\}}+L_{\{t\},\{y,z\}}+\mathcal{C}_{1}+\mathcal{C}_{2}$, so that
$$
2L_{\{z\},\{t\}}+L_{\{z\},\{x,y\}}+L_{\{t\},\{y,z\}}+\mathcal{C}_{1}+\mathcal{C}_{2}\sim 2H_{\lambda},
$$
which implies that $2L_{\{z\},\{t\}}+L_{\{z\},\{x,y\}}+L_{\{t\},\{y,z\}}\sim_{\mathbb{Q}} H_{\lambda}$.
Thus, if $\lambda\ne -2$,
then the rank of the intersection matrix of the curves
$\mathcal{C}_1$, $\mathcal{C}_2$, $\mathcal{C}_3$, $\mathcal{C}_4$,
$L_{\{z\},\{t\}}$, $L_{\{z\},\{t\}}$, and $L_{\{t\},\{y,z\}}$ on the surface $S_\lambda$
is the same as the rank of the intersection matrix of the curves $L_{\{z\},\{t\}}$, $L_{\{z\},\{x,y\}}$
and $H_{\lambda}$, which is very easy to compute.

\begin{lemma}
\label{lemma:r2-n11-intersection}
Suppose that $\lambda\ne -2$.
Then the intersection form of the curves
$L_{\{z\},\{t\}}$, $L_{\{z\},\{x,y\}}$
and $H_{\lambda}$ on the surface $S_\lambda$ is given by
\begin{center}\renewcommand\arraystretch{1.42}
\begin{tabular}{|c||c|c|c|}
\hline
$\bullet$ & $L_{\{z\},\{t\}}$ & $L_{\{z\},\{x,y\}}$ & $H_{\lambda}$  \\
\hline\hline
$L_{\{z\},\{t\}}$& $-\frac{5}{12}$ & $1$ & $1$  \\
\hline
$L_{\{z\},\{x,y\}}$& $1$ & $-1$ & $1$ \\
\hline
$H_{\lambda}$& $1$ & $1$ & $4$\\
\hline
\end{tabular}
\end{center}
\end{lemma}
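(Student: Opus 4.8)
The plan is to compute the three independent entries of the symmetric matrix, namely $L_{\{z\},\{t\}}^2$, $L_{\{z\},\{x,y\}}^2$, and $L_{\{z\},\{t\}}\cdot L_{\{z\},\{x,y\}}$, since the entries involving $H_\lambda$ are forced: $H_\lambda^2=4$ because $S_\lambda$ is a quartic, while $H_\lambda\cdot L_{\{z\},\{t\}}=H_\lambda\cdot L_{\{z\},\{x,y\}}=1$ because a general hyperplane meets each line in one point. So only the upper-left $2\times2$ block requires genuine work, and for that I would invoke the intersection theory for curves on du~Val surfaces collected in Appendix~\ref{section:intersection}, specifically Propositions~\ref{proposition:du-Val-self-intersection} and~\ref{proposition:du-Val-intersection}, together with the singularity data supplied by Lemma~\ref{lemma:r2-n11-singularities}.

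First I would determine which singular points of $S_\lambda$ lie on each of the two lines. The line $L_{\{z\},\{t\}}$ is cut out by $z=t=0$; among the four fixed singular points listed in Lemma~\ref{lemma:r2-n11-singularities}, I expect it to pass through $P_{\{y\},\{z\},\{t\}}$ (type $\mathbb{A}_5$) and possibly $P_{\{x\},\{z\},\{t\}}$ (type $\mathbb{A}_3$), so I would check incidences explicitly by substituting coordinates. Proposition~\ref{proposition:du-Val-self-intersection} then gives $L_{\{z\},\{t\}}^2$ as $-2$ plus local correction terms (fractions of the form $\tfrac{(n+1-k)k}{n+1}$ dictated by how the proper transform of the line meets the exceptional chain over each singular point), which I expect to sum to $-\tfrac{5}{12}$. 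Similarly, $L_{\{z\},\{x,y\}}$ is given by $z=x+y=0$ and passes through $P_{\{x\},\{y\},\{z\}}$, the $\mathbb{D}_6$ point whose resolution is described in detail in the proof of Lemma~\ref{lemma:r2-n11-singularities}; for a $\mathbb{D}_6$ point the self-intersection correction yields $L_{\{z\},\{x,y\}}^2=-1$.

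For the off-diagonal entry I would locate the intersection point $L_{\{z\},\{t\}}\cap L_{\{z\},\{x,y\}}$, which is $P_{\{x\},\{y\},\{z\}}=[0:0:0:1]$ — wait, rather $z=t=x+y=0$ gives a point with $z=t=0$, so it is a point on the plane $z=0$; I would verify it is $P_{\{z\},\{t\},\{x,y\}}$ and check whether $S_\lambda$ is smooth there. Since the two lines meet transversally at a single point, if that point is smooth on $S_\lambda$ then Proposition~\ref{proposition:du-Val-intersection} gives the local contribution $1$, matching the claimed value $L_{\{z\},\{t\}}\cdot L_{\{z\},\{x,y\}}=1$. The key bookkeeping is to use Remark~\ref{remark:transversal} to decide, at each shared du~Val point, which component of the exceptional fiber the proper transform of each line meets, since this controls whether the correction is $0$, $\tfrac12$, or similar.

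The main obstacle will be the self-intersection computation at the $\mathbb{D}_6$ point $P_{\{x\},\{y\},\{z\}}$ for $L_{\{z\},\{x,y\}}$: unlike the $\mathbb{A}_n$ case, for a $\mathbb{D}_n$ singularity the correction term depends delicately on which of the trivalent Dynkin-diagram branches the proper transform of the curve is attached to, so I would need to read off from the explicit blow-up computations in the proof of Lemma~\ref{lemma:r2-n11-singularities} exactly where the line $\bar{x}=\bar{z}=0$ meets the exceptional configuration. Once the self-intersections are correctly pinned down, the remaining entries are routine, and the resulting matrix will have the stated form; its rank is clearly $3$ (the bottom row is not a combination of the other two since $H_\lambda^2=4$ exceeds what the $2\times2$ block allows), which, combined with $\mathrm{rk}\,\mathrm{Pic}(\widetilde{S}_{\Bbbk})=\mathrm{rk}\,\mathrm{Pic}(S_{\Bbbk})+15$ from \eqref{equation:r2-n11-singularities-Pic} and $\mathrm{rk}\,\mathrm{Pic}(X)=2$, will yield \eqref{equation:main-2-simple}.
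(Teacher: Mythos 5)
Your proposal is correct and follows essentially the same route as the paper's proof: the $H_\lambda$ entries are immediate, $L_{\{z\},\{t\}}^2=-2+\tfrac{3}{4}+\tfrac{5}{6}=-\tfrac{5}{12}$ comes from Proposition~\ref{proposition:du-Val-self-intersection} applied at the $\mathbb{A}_3$ point $P_{\{x\},\{z\},\{t\}}$ and the $\mathbb{A}_5$ point $P_{\{y\},\{z\},\{t\}}$, the off-diagonal entry is $1$ because $L_{\{z\},\{t\}}\cap L_{\{z\},\{x,y\}}=P_{\{z\},\{t\},\{x,y\}}$ is a smooth point of $S_\lambda$, and $L_{\{z\},\{x,y\}}^2=-1$ follows from Lemma~\ref{lemma:Dn} once one reads off from the explicit resolution in the proof of Lemma~\ref{lemma:r2-n11-singularities} that the proper transform meets the tail curve $G_6$ of the $\mathbb{D}_6$ configuration. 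The only point needing care, which you correctly flagged, is exactly that branch-attachment check at the $\mathbb{D}_6$ point.
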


\begin{proof}
Since $L_{\{z\},\{t\}}\cap L_{\{z\},\{x,y\}}=P_{\{z\},\{t\},\{x,y\}}$ and $S_\lambda$ is smooth at this point, we conclude that $L_{\{z\},\{t\}}\cdot L_{\{z\},\{x,y\}}=1$.
So, to complete the proof, we have to find $L_{\{z\},\{t\}}^2$ and $L_{\{z\},\{x,y\}}^2$.

Observe that $P_{\{x\},\{z\},\{t\}}$ and $P_{\{y\},\{z\},\{t\}}$ are the only singular points
of the surface $S_\lambda$ that are contained in the line $L_{\{z\},\{t\}}$.
Applying Remark~\ref{remark:transversal} with $S=S_\lambda$, $O=P_{\{x\},\{z\},\{t\}}$, $n=3$, and $C=L_{\{z\},\{t\}}$,
we see that $\overline{C}$ does not contain the point $\overline{G}_1\cap\overline{G}_3$.
Similarly, applying Remark~\ref{remark:transversal} with $S=S_\lambda$, $O=P_{\{y\},\{z\},\{t\}}$, $n=5$, and $C=L_{\{z\},\{t\}}$,
we see that $\overline{C}$ does not contain the point $\overline{G}_1\cap\overline{G}_5$.
Thus, it follows from Proposition~\ref{proposition:du-Val-self-intersection} that
$$
L_{\{z\},\{t\}}^2=-2+\frac{3}{4}+\frac{5}{6}=-\frac{5}{12}.
$$

Note that $P_{\{x\},\{y\},\{z\}}$ is the only singular point of the surface $S_\lambda$ that is contained in the line $L_{\{z\},\{x,y\}}^2$.
To find $L_{\{z\},\{x,y\}}^2$, let us use the notations of Lemma~\ref{lemma:Dn}
with $S=S_\lambda$, $O=P_{\{x\},\{y\},\{z\}}$, $n=6$, and $C=L_{\{z\},\{x,y\}}$.
Let us also use the notation of the proof of Lemma~\ref{lemma:r2-n11-singularities}.
It follows from this proof that the proper transform of the line $L_{\{z\},\{x,y\}}$ on the surface $S_\lambda^1$ does not contain the point $(\bar{x}_1,\bar{y}_1,\bar{z}_1)=(0,0,0)$.
Thus, in the notation of Lemma~\ref{lemma:Dn}, we have $\widetilde{C}\cdot G_6=1$,
which implies that $L_{\{z\},\{x,y\}}^2=-1$ by Lemma~\ref{lemma:Dn}.
\end{proof}

The determinant of the intersection matrix in Lemma~\ref{lemma:r2-n11-intersection} is $\frac{13}{12}$.
Using \eqref{equation:r2-n11-singularities-Pic}, we get \eqref{equation:main-2-simple},
so that \eqref{equation:main-2} in Main Theorem holds in this case.
Moreover, since $h^{1,2}(X)=5$, the assertion \eqref{equation:main-1} in  Main Theorem is given by

\begin{lemma}
\label{lemma:r2-n11-main-1}
If $\lambda\ne -2$, then $[\mathsf{f}^{-1}(\lambda)]=1$.
One also has $[\mathsf{f}^{-1}(-2)]=6$.
\end{lemma}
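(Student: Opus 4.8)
The plan is to prove the two claims of Lemma~\ref{lemma:r2-n11-main-1} separately, following exactly the scheme laid out in Section~\ref{section:scheme} and already rehearsed in the earlier families of this section. For the generic statement, if $\lambda\ne -2$ then by Lemma~\ref{lemma:r2-n11-singularities} every singular point of $S_\lambda$ contained in the base locus of $\mathcal{S}$ is du Val (types $\mathbb{D}_6$, $\mathbb{A}_3$, $\mathbb{A}_5$, $\mathbb{A}_1$ at $P_{\{x\},\{y\},\{z\}}$, $P_{\{x\},\{z\},\{t\}}$, $P_{\{y\},\{z\},\{t\}}$, $P_{\{x\},\{t\},\{y,z\}}$ respectively). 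Since $S_\lambda$ is irreducible for $\lambda\ne -2$, Corollary~\ref{corollary:irreducible-fibers} immediately gives $[\mathsf{f}^{-1}(\lambda)]=1$.

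For the special fiber I would apply formula~\eqref{equation:equation:number-of-irredubicle-components-refined}. Recall $S_{-2}=\mathbf{Q}+\mathsf{Q}$, so $[S_{-2}]=2$. The term $\sum_{i=1}^{r}\mathbf{C}_i^{-2}$ is controlled by Lemma~\ref{lemma:main}: I would record the intersection multiplicities $\mathbf{m}_i$ from a product $S_{\lambda_1}\cdot S_{\lambda_2}$ (readable off \eqref{equation:r2-n11-base-locus}, e.g.\ $H_{\{x\}}\cdot S_\lambda=2\mathcal{C}_1$ forces $\mathbf{m}_1=2$, similarly $\mathbf{m}_2=2$) and compare with the multiplicities $\mathbf{M}_i^{-2}=\mathrm{mult}_{C_i}(S_{-2})$. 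Since $\mathbf{Q}\cap\mathsf{Q}=\mathcal{C}_1\cup\mathcal{C}_2$, the surface $S_{-2}$ is singular precisely along $\mathcal{C}_1$ and $\mathcal{C}_2$, giving $\mathbf{M}_1^{-2}=\mathbf{M}_2^{-2}=2$ and $\mathbf{M}_i^{-2}=1$ otherwise. By Lemma~\ref{lemma:main} this yields $\mathbf{C}_1^{-2}=\mathbf{C}_2^{-2}=1$ and all other curve-contributions zero, so the base curves add $2$.

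The remaining ingredient is the sum of defects over $\Sigma$. I expect the defects at the good double points to vanish by Lemma~\ref{lemma:normal-crossing}: the quadratic terms in Lemma~\ref{lemma:r2-n11-singularities} (which are stated to be valid at $\lambda=-2$ as well, or can be re-examined there) have rank at least $2$ at $P_{\{x\},\{z\},\{t\}}$, $P_{\{y\},\{z\},\{t\}}$ and $P_{\{x\},\{t\},\{y,z\}}$, so these are good double points of $S_{-2}$ and contribute nothing. Adding the contributions $2+2=4$, one arrives at $[\mathsf{f}^{-1}(-2)]=4+\sum_{P\in\Sigma}\mathbf{D}_P^{-2}$, and it remains to compute the single nontrivial defect $\mathbf{D}_{P_{\{x\},\{y\},\{z\}}}^{-2}$.

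The main obstacle is this last defect at the $\mathbb{D}_6$ point $P_{\{x\},\{y\},\{z\}}$, where $S_{-2}$ acquires a non-du Val (indeed non-isolated, since $S_{-2}^1$ is singular along $C_8^1$ and $S_{-2}^2$ along $C_9^2$) singularity. I would compute it via the local blow-up analysis of Subsection~\ref{subsection:scheme-step-8} and formula~\eqref{equation:D-A-B}, reusing the morphism $\alpha_1,\alpha_2,\ldots$ constructed in the proof of Lemma~\ref{lemma:r2-n11-singularities}. Concretely I would (i) identify the log pull-back $\widehat{D}_{-2}=\widehat{S}_{-2}+\sum\mathbf{a}_i^{-2}\widehat{E}_i$ to read off $\mathbf{A}_{P_{\{x\},\{y\},\{z\}}}^{-2}$, (ii) list the exceptional base curves $\widehat{C}_8,\widehat{C}_9,\dots$ of $\widehat{\mathcal{S}}$ lying over this point, computing their $\mathbf{m}$'s from local intersection multiplicities (as in Lemma~\ref{lemma:r2-n6-m13-m14-m15-m16-m17-m18-m19-m20}) and their $\mathbf{M}^{-2}$'s, and (iii) apply Lemma~\ref{lemma:main-2}. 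I anticipate the total to be $\mathbf{D}_{P_{\{x\},\{y\},\{z\}}}^{-2}=2$, giving $[\mathsf{f}^{-1}(-2)]=6$, which matches $h^{1,2}(X)=5$ and hence delivers \eqref{equation:main-1}; the bookkeeping of which exceptional divisors and curves genuinely contribute is where the care is required.
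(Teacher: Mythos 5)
Your proposal follows the paper's proof essentially verbatim: Corollary~\ref{corollary:irreducible-fibers} for $\lambda\ne-2$, then \eqref{equation:equation:number-of-irredubicle-components-refined} with $[S_{-2}]=2$, the contributions $\mathbf{C}_1^{-2}=\mathbf{C}_2^{-2}=1$ from the conics along which $S_{-2}=\mathbf{Q}+\mathsf{Q}$ is singular, vanishing defects at the good double points via Lemma~\ref{lemma:normal-crossing}, and finally $\mathbf{D}_{P_{\{x\},\{y\},\{z\}}}^{-2}=2$ extracted from the blow-ups already performed in the proof of Lemma~\ref{lemma:r2-n11-singularities} (where $\mathbf{M}_8^{-2}=\mathbf{M}_9^{-2}=2$ and $D_{-2}^2=S_{-2}^2$ give the two units via Lemma~\ref{lemma:main-2} and \eqref{equation:D-A-B}). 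The argument is correct and there is nothing substantive to add.
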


\begin{proof}
If $\lambda\not\in-2$,
then $[\mathsf{f}^{-1}(\lambda)]=1$ by Lemma~\ref{lemma:r2-n11-singularities} and Corollary~\ref{corollary:irreducible-fibers}.
Hence, to complete the proof, we must show that $[\mathsf{f}^{-1}(-2)]=6$.
Observe that
\mbox{$\mathbf{m}_{3}=\mathbf{m}_{4}=\mathbf{m}_{6}=\mathbf{m}_{7}=1$}
and $\mathbf{m}_{1}=\mathbf{m}_{2}=\mathbf{m}_{5}=2$.
Note that
$\mathbf{M}_{3}^{-2}=\mathbf{M}_{4}^{-2}=\mathbf{M}_{5}^{-2}=\mathbf{M}_{6}^{-2}=\mathbf{M}_{7}^{-2}=1$
and $\mathbf{M}_{1}^{-2}=\mathbf{M}_{2}^{-2}=2$.
Thus, applying Lemmas~\ref{lemma:main} and \ref{lemma:normal-crossing},
and using \eqref{equation:equation:number-of-irredubicle-components-refined},
we see that
$$
\big[\mathsf{f}^{-1}(-2)\big]=4+\mathbf{D}_{P_{\{x\},\{y\},\{z\}}}^{-2},
$$
where $\mathbf{D}_{P_{\{x\},\{y\},\{z\}}}^{-2}$ is the {defect} of the singular point $P_{\{x\},\{y\},\{z\}}$.

To compute $\mathbf{D}_{P_{\{x\},\{y\},\{z\}}}^{-2}$, we will use local computations done in the proof of Lemma~\ref{lemma:r2-n11-singularities}.
They give $\mathbf{M}_8^{-2}=\mathbf{M}_9^{-2}=2$ and $D_{-2}^2=S^2_{-2}$.
Now using Lemma~\ref{lemma:main-2} and \eqref{equation:D-A-B}, we conclude that
$\mathbf{D}_{P_{\{x\},\{y\},\{z\}}}^{-2}\geqslant 2$.
In fact, the proof of Lemma~\ref{lemma:r2-n11-singularities} implies that $\mathbf{D}_{P_{\{x\},\{y\},\{z\}}}^{-2}=2$, so that $[\mathsf{f}^{-1}(-2)]=6$.
\end{proof}

\subsection{Family \textnumero $2.12$}
\label{section:r-2-n-12}

In this case, the threefold $X$ is a blow up of $\mathbb{P}^3$ along a smooth curve of genus $3$ and degree $6$, so that $h^{1,2}(X)=3$.
Here, we chose its toric Landau--Ginzburg model to be given by Minkowski polynomial \textnumero $1193$, which is
$$
x+\frac{xy}{z}+z+y+\frac{2x}{z}+\frac{2y}{z}+\frac{x}{yz}+\frac{2}{y}+\frac{2}{z}+\frac{z}{xy}+\frac{2}{x}+\frac{y}{xz}.
$$
The quartic pencil $\mathcal{S}$ is given by
\begin{multline*}
x^2zy+x^2y^2+z^2xy+y^2zx+2x^2ty+2y^2tx+x^2t^2+\\
+2t^2zx+2t^2xy+t^2z^2+2t^2zy+t^2y^2=\lambda xyzt.
\end{multline*}
This equation is symmetric with respect to the swapping $x\leftrightarrow y$.

Let $\mathcal{C}$ be a  conic that is given by $z=xy+xt+yt=0$. If $\lambda\ne\infty$, then
\begin{equation}
\label{equation:r2-n12-base-locus}
\begin{split}
H_{\{x\}}\cdot S_\lambda&=2L_{\{x\},\{t\}}+2L_{\{x\},\{y,z\}},\\
H_{\{y\}}\cdot S_\lambda&=2L_{\{y\},\{t\}}+2L_{\{y\},\{x,z\}},\\
H_{\{z\}}\cdot S_\lambda&=2\mathcal{C},\\
H_{\{t\}}\cdot S_\lambda&=L_{\{x\},\{t\}}+L_{\{y\},\{t\}}+L_{\{t\},\{x,z\}}+L_{\{t\},\{y,z\}}.
\end{split}
\end{equation}
Thus, we may assume that
$C_1=L_{\{x\},\{t\}}$, $C_2=L_{\{y\},\{t\}}$, \mbox{$C_3=L_{\{x\},\{y,z\}}$}, $C_4=L_{\{y\},\{x,z\}}$, $C_5=L_{\{t\},\{x,z\}}$,
$C_6=L_{\{t\},\{y,z\}}$, and $C_7=\mathcal{C}$.
These are all base curves of the pencil $\mathcal{S}$.

If $\lambda\ne\infty$ and $\lambda\ne -2$, then the surface $S_\lambda$ has isolated singularities, so that $S_\lambda$ is irreducible.
On the other hand, the surface $S_{-2}$ is singular along  $L_{\{x\},\{y,z\}}$ and $L_{\{y\},\{x,z\}}$.

\begin{lemma}
\label{equation:r2-n12-S-2-irreducible}
The surface $S_{-2}$ is irreducible.
\end{lemma}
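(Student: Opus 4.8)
The plan is to reduce the irreducibility of the quartic surface $S_{-2}$ to the irreducibility of a single plane section, exploiting the fact that the irreducible components of a quartic surface in $\mathbb{P}^3$ are themselves hypersurfaces. Concretely, I would use the following principle: if there is a plane $H\subset\mathbb{P}^3$ such that the quartic curve $S_{-2}\cap H$ is reduced and irreducible, then $S_{-2}$ is reduced and irreducible. Indeed, every component of $S_{-2}$ is a hypersurface of positive degree, so it meets $H$ in a curve of positive degree; were $S_{-2}$ reducible or non-reduced, the section $S_{-2}\cap H$ would inherit the corresponding reducible or non-reduced structure for \emph{every} plane $H$.

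The first step is to choose a suitable plane $H$. The coordinate planes are useless here, because each of them contains base curves of the pencil $\mathcal{S}$ and therefore cuts out a reducible section: for instance $H_{\{t\}}$ meets $S_{-2}$ in $xy(x+z)(y+z)=0$, a union of four lines, and by \eqref{equation:r2-n12-base-locus} the plane $H_{\{z\}}$ is tangent to $S_{-2}$ along the conic $\mathcal{C}$, giving the non-reduced section $2\mathcal{C}$. Instead I would take a sufficiently general plane $H$, one containing none of the base curves $L_{\{x\},\{t\}},\dots,\mathcal{C}$ and meeting the two singular lines $L_{\{x\},\{y,z\}}$ and $L_{\{y\},\{x,z\}}$ transversally in two distinct smooth points of the curve $S_{-2}\cap H$.

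The second step is to compute the ternary quartic form defining $S_{-2}\cap H$ and to verify directly that it is reduced and irreducible. This is a finite check: one rules out a linear factor (which would force $S_{-2}$ to contain a plane, incompatible with the explicit equation once the coordinate planes have been excluded) and then rules out a splitting into two conics. I expect this last point to be the main obstacle, since the dangerous reducible type is precisely a pair of quadrics tangent along the reducible conic $L_{\{x\},\{y,z\}}+L_{\{y\},\{x,z\}}$: such a configuration produces exactly a one-dimensional singular locus of degree two, matching what we already know about $\mathrm{Sing}(S_{-2})$, so it cannot be excluded by a naive degree count on the singular locus and must be killed by the actual computation on $H$ (equivalently, by checking that no two quadrics through $L_{\{x\},\{y,z\}}\cup L_{\{y\},\{x,z\}}$ multiply to the defining polynomial of $S_{-2}$). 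Once the general plane section is seen to be an irreducible reduced plane quartic, the irreducibility of $S_{-2}$ follows at once.
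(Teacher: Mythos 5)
Your proposal is correct and follows essentially the same route as the paper: the paper also reduces to a single plane section, taking the specific plane $\Pi=\{z=t\}$ and observing that $S_{-2}\cap\Pi$ is a quartic curve whose only singularities are two ordinary double points (at $\Pi\cap L_{\{x\},\{y,z\}}$ and $\Pi\cap L_{\{y\},\{x,z\}}$), which forces it, and hence $S_{-2}$, to be irreducible. The only cosmetic difference is that the paper concludes irreducibility of the section from its singularity count rather than by the direct factorization check you describe; both verifications are routine, and you correctly identified the two-conic splitting as the case that a naive singular-locus degree count cannot exclude.
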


\begin{proof}
Let $\Pi$ be a plane in $\mathbb{P}^3$ that is given by $z=t$.
Then the intersection $S_{-2}\cap\Pi$ is a plane quartic curve that is singular
at the points $\Pi\cap L_{\{x\},\{y,z\}}$ and $\Pi\cap L_{\{y\},\{x,z\}}$.
Moreover, this curve is smooth away from these points.
Furthermore, both of these points are isolated ordinary double points of the curve $S_{-2}\cap\Pi$.
This implies that this curve is irreducible, so that
the surface $S_{-2}$ is also irreducible.
\end{proof}

The {fixed} singular points of the surfaces in the pencil $\mathcal{S}$
are $P_{\{x\},\{y\},\{z\}}$, $P_{\{x\},\{y\},\{t\}}$, $P_{\{x\},\{z\},\{t\}}$,
$P_{\{y\},\{z\},\{t\}}$, $P_{\{x\},\{t\},\{y,z\}}$, and $P_{\{y\},\{t\},\{x,z\}}$.
If $\lambda\ne\infty$ and $\lambda\ne -2$,
then the singularities of the surface $S_\lambda$
at these points can be describes as follows:
\begin{itemize}\setlength{\itemindent}{3cm}
\item[$P_{\{x\},\{y\},\{z\}}$:] type $\mathbb{D}_4$ with quadratic term $(x+y+z)^2$;
\item[$P_{\{x\},\{y\},\{t\}}$:] type $\mathbb{A}_1$ with quadratic term $xy+t^2$;
\item[$P_{\{x\},\{z\},\{t\}}$:] type $\mathbb{A}_1$ with quadratic term $x^2+xz+2xt+t^2$;
\item[$P_{\{y\},\{z\},\{t\}}$:] type $\mathbb{A}_1$ with quadratic term $y^2+yz+2yt+t^2$;
\item[$P_{\{x\},\{t\},\{y,z\}}$:] type $\mathbb{A}_3$ with quadratic term $x(y+z-(\lambda+2)t)$;
\item[$P_{\{y\},\{t\},\{x,z\}}$:] type $\mathbb{A}_3$ with quadratic term $y(x+z-(\lambda+2)t)$.
\end{itemize}

The surfaces in $\mathcal{S}$ also have {floating} singular points.
They are contained in the conic~$\mathcal{C}$.
To describe them nicely, we introduce a new parameter $\mu\in\mathbb{C}\cup\{\infty\}$ such that
$$
\lambda=\frac{2\mu^2-2\mu-1}{\mu(1-\mu)}.
$$
Then $S_\lambda$ is singular at the points $[1-\mu:\mu:0:\mu(\mu-1)]$ and $[\mu:1-\mu:0:\mu(\mu-1)]$.
Denote these two points by $P_\mu$ and $P_{1-\mu}$, respectively.
Then $P_\mu\ne P_{1-\mu}$ $\iff$ $\mu\not\in\{\infty,\frac{1}{2}\}$.
If $\mu=\frac{1}{2}$, then $P_\mu=P_{1-\mu}=[-2:-2:0:1]$.
If $\mu=\infty$, then $P_\mu=P_{1-\mu}=P_{\{x\},\{y\},\{z\}}$.

\begin{lemma}
\label{lemma:r2-n12-singularities-floating}
Suppose that $\lambda\ne\infty$ and $\lambda\ne -2$.
If $\mu\ne\frac{1}{2}$, then $S_\lambda$ has isolated ordinary double singularities at the points $P_\mu$ and $P_{1-\mu}$.
If $\mu=\frac{1}{2}$, then $\lambda=-6$, and $S_{-6}$ has a du Val singularity of type $\mathbb{A}_3$ at  the point $P_{\frac{1}{2}}$.
\end{lemma}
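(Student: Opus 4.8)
The plan is to exploit the fact that $\mathcal{C}$ occurs with multiplicity two in the plane section $H_{\{z\}}\cdot S_\lambda=2\mathcal{C}$, which lets me write the affine equation of $S_\lambda$ in the chart $t=1$ as
\[
F=(xy+x+y)^2+zG,\qquad G=xy(x+y+z-\lambda)+2x+2y+z.
\]
Indeed $F\vert_{z=0}=(xy+x+y)^2$, recovering the double conic. First I would use this presentation to locate all singular points of $S_\lambda$ on $\mathcal{C}$. Since $(xy+x+y)^2$ is a square, a point with $z=0$ and $xy+x+y=0$ is singular on $S_\lambda$ if and only if $G$ vanishes there; writing $s=x+y$ and using $xy=-(x+y)$ along $\mathcal{C}$ turns $G\vert_{\mathcal{C}}$ into $s(\lambda+2-s)$. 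Hence the singular points of $S_\lambda$ on $\mathcal{C}$ occur for $s=0$, which gives the fixed point $P_{\{x\},\{y\},\{z\}}$, and for $s=\lambda+2$, which gives the floating points, whose $x$- and $y$-coordinates are the two roots of $w^2-(\lambda+2)w-(\lambda+2)$. A short substitution checks that these two points are exactly $P_\mu$ and $P_{1-\mu}$, that their discriminant is $(\lambda+2)(\lambda+6)$, and that substituting $\mu=\tfrac12$ into the formula for $\lambda$ yields $\lambda=-6$; so the two points are distinct (and both $\neq P_{\{x\},\{y\},\{z\}}$) precisely when $\lambda\notin\{-2,-6\}$, i.e. when $\mu\neq\tfrac12$ (recall $\lambda\neq-2$ is assumed).

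Next I would compute the quadratic part of $F$ at a floating point $P=(a,b,0)$ with $a+b=\lambda+2$ and $ab=-(\lambda+2)$. In local coordinates $u,v,w$ the square $(xy+x+y)^2$ contributes the perfect square $\ell_1^2$ with $\ell_1=(b+1)u+(a+1)v$, while $zG$ contributes $w\ell_2+cw^2$, where $\ell_2$ is the $(u,v)$-part of the linear term of $G$ and $c=-\lambda-1$. As soon as $\ell_1$ and $\ell_2$ are linearly independent, $(\ell_1,\ell_2,w)$ is a coordinate system and the form $\ell_1^2+w\ell_2+cw^2$ is visibly nondegenerate (rank $3$). A one-line determinant computation shows the dependence of $\ell_1,\ell_2$ is governed by $(a-b)(a+b)$; since $\lambda\neq-2$ forces $a+b\neq0$, they are independent unless $a=b$, i.e. unless $\lambda=-6$. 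Thus for $\mu\neq\tfrac12$ the quadratic part has rank $3$ at both $P_\mu$ and $P_{1-\mu}$, so these are ordinary double points, and they are isolated because $S_\lambda$ has only isolated singularities for $\lambda\neq\infty,-2$, as already recorded in this subsection.

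The main work, and the only genuine obstacle, is the degenerate case $\mu=\tfrac12$, where $a=b=-2$, $\lambda=-6$, and the two nodes have collided at $P_{1/2}=[-2:-2:0:1]$; here the quadratic part drops to rank $2$, namely $[(u+v)+w]^2+4w^2$, whose radical is the line $u+v=w=0$. I would pin down the type by the quasihomogeneous leading-term method already used for the $\mathbb{D}$-type points in this section: set $\xi=(u+v)+(1+2i)w$, $\eta=(u+v)+(1-2i)w$, $\zeta=u-v$, so that $\xi\eta$ is the rank-$2$ quadratic part and $\zeta$ spans the radical, and assign weights $\mathrm{wt}(\xi)=\mathrm{wt}(\eta)=2$, $\mathrm{wt}(\zeta)=1$. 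Expanding $F$, the weight-$4$ part comes out as $\xi\eta+\tfrac12\zeta^2(S-w)+\tfrac1{16}\zeta^4$ with $S-w$ linear in $\xi,\eta$; absorbing the middle term by a shift of $\xi$ and $\eta$ by suitable multiples of $\zeta^2$ leaves $\xi'\eta'-\tfrac1{64}\zeta^4$, whose $\zeta^4$-coefficient is nonzero. Equivalently, restricting $F$ to the radical line gives $(uv)^2=\tfrac1{16}\zeta^4$, so the residual singularity has order $4$. Either computation shows the leading quasihomogeneous term is a nondegenerate $A_3$ normal form, identifying the singularity of $S_{-6}$ at $P_{1/2}$ as du Val of type $\mathbb{A}_3$. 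The delicate point is precisely that the $\zeta^4$-coefficient survives the coordinate shift (the explicit value $-\tfrac1{64}$ guarantees this), so that the type is exactly $\mathbb{A}_3$ and not $\mathbb{A}_2$, $\mathbb{A}_4$, or worse; conceptually this is the expected $\mathbb{A}_1+\mathbb{A}_1\rightsquigarrow\mathbb{A}_3$ collision of two nodes along the base curve $\mathcal{C}$.
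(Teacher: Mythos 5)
Your proof is correct, and it takes a genuinely different route from the paper's. For the generic case the paper parametrizes the floating points by $\mu$, passes to local coordinates at $P_\mu$, and writes out the full ternary quadratic form as an explicit polynomial in $\mu$, asserting its nondegeneracy for $\mu\ne\frac{1}{2}$. You instead exploit the structural identity $F=(xy+x+y)^2+zG$ (available because $H_{\{z\}}\cdot S_\lambda=2\mathcal{C}$), which both re-derives the location and the $\mu$-parametrization of the floating points (the paper only asserts these) and reduces nondegeneracy to the transparent fact that $\ell_1^2+w\ell_2+cw^2$ has rank $3$ whenever $\ell_1,\ell_2$ are independent, with the dependence locus $(a-b)(a+b)=0$ matching exactly the excluded values $\lambda\in\{-2,-6\}$. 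For $\mu=\frac{1}{2}$ the paper blows up once and checks that the strict transform acquires an ordinary double point (hence $\mathbb{A}_3$), whereas you use the quasihomogeneous leading-term method with weights $(2,2,1)$; both techniques appear elsewhere in the paper and are equally rigorous here, and your version keeps the entire lemma inside a single coordinate computation. I verified the decomposition $F=(xy+x+y)^2+zG$, the formula $G\vert_{\mathcal{C}}=s(\lambda+2-s)$, the linear forms $\ell_1=(b+1)u+(a+1)v$ and $\ell_2=(2b-\lambda)u+(2a-\lambda)v$ with $c=-\lambda-1$, and the weight-$4$ part $\xi\eta+\tfrac{1}{2}\zeta^2(U-w)+\tfrac{1}{16}\zeta^4$ with $U=u+v$; all are as you state.

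Two small corrections. First, with your normalization $\xi=(u+v)+(1+2i)w$, $\eta=(u+v)+(1-2i)w$, one has $U-w=\tfrac{1+i}{2}\xi+\tfrac{1-i}{2}\eta$, the completed product contributes $\tfrac{1}{8}\zeta^4$, and the residual quartic is $-\tfrac{1}{16}\zeta^4$, not $-\tfrac{1}{64}\zeta^4$; this changes nothing since only nonvanishing matters. Second, the sentence beginning ``Equivalently, restricting $F$ to the radical line'' overstates the case: the restriction computes the function $g(\zeta)$ of the splitting lemma only modulo the correction produced by shifting $\xi,\eta$ by $O(\zeta^2)$, and that correction is itself of order $4$ in $\zeta$ --- indeed here it turns $+\tfrac{1}{16}$ into $-\tfrac{1}{16}$, and in an unlucky example it could cancel the coefficient entirely, so restriction alone cannot distinguish $\mathbb{A}_3$ from $\mathbb{A}_{\geqslant 4}$. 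Your primary argument (checking that the $\zeta^4$-coefficient survives the shift) is the correct one; keep it and drop the word ``equivalently''.
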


\begin{proof}
Due to symmetry $x\leftrightarrow y$, it is enough to describe the singularity of the surface $S_\lambda$ at the point $P_\mu$.
Moreover, we may assume that $\mu\ne 0$ and $\mu\ne 1$, since $P_0=P_{\{y\},\{z\},\{t\}}$ and $P_{1}=P_{\{x\},\{z\},\{t\}}$.
Then $P_\mu=[\frac{1}{\mu}:\frac{1}{\mu-1}:0:1]$.
In the chart $t=1$, the surface $S_\lambda$ is given by
\begin{multline*}
(\mu-1)^4\bar{y}^2+\mu(\mu-1)(\mu^2-\mu-1)\bar{z}^2+\mu(\mu-1)(2\mu^2-1)\bar{z}\bar{x}+2\mu^2(\mu-1)^2\bar{x}\bar{y}+\\
+\mu(\mu-1)(2\mu^2-4\mu+1)\bar{y}\bar{z}+\mu^4\bar{x}^2+\text{higher order terms}=0.
\end{multline*}
where $\bar{x}=x-\frac{1}{\mu}$, $\bar{y}=y-\frac{1}{\mu-1}$, and $\bar{z}=z$.
If $\mu\ne\frac{1}{2}$, this quadratic form is non-degenerate, so that $S_\lambda$ has an isolated ordinary double singularity at $P_\mu$.

To complete the proof, we may assume that $\mu=\frac{1}{2}$.
Then $P_{\frac{1}{2}}=[-2:-2:0:1]$. Note that $\lambda=-6$ in this case.
In the chart $t=1$, the surface $S_{-6}$ is given by
\begin{multline*}
\hat{x}^2+2\hat{z}\hat{x}+5\hat{z}^2+\Big(2\hat{x}\hat{y}^2-2\hat{x}^2\hat{y}-2\hat{x}^2\hat{z}+2\hat{x}\hat{y}\hat{z}-2\hat{x}\hat{z}^2-2\hat{y}^2\hat{z}\Big)+\\
\Big(\hat{x}^2\hat{y}^2+\hat{z}\hat{y}\hat{x}^2-2\hat{y}^3\hat{x}-\hat{z}\hat{y}^2\hat{x}+\hat{z}^2\hat{y}\hat{x}+\hat{y}^4-\hat{y}^2\hat{z}^2\Big)=0,
\end{multline*}
where $\hat{x}=x+y+4$, $\hat{y}=y+2$, and $\hat{z}=z$.
Introducing new coordinates $\hat{x}_1=\frac{\hat{x}}{\hat{y}}$, $\hat{y}_1=\hat{y}$, and $\hat{z}_1=\frac{\hat{z}}{\hat{y}}$,
we can rewrite this equation (after dividing by $\hat{y}_1^2$) as
\begin{multline*}
\hat{x}_1^2+2\hat{x}_1\hat{y}_1+2\hat{z}_1\hat{x}_1+\hat{y}_1^2-2\hat{y}_1\hat{z}_1+5\hat{z}_1^2+\Big(2\hat{x}_1\hat{y}_1\hat{z}_1-2\hat{x}_1^2\hat{y}_1-2\hat{x}_1\hat{y}_1^2\Big)+\\
+\Big(\hat{x}_1^2\hat{y}_1^2-2\hat{z}_1\hat{y}_1\hat{x}_1^2-\hat{z}_1\hat{y}_1^2\hat{x}_1-2\hat{z}_1^2\hat{y}_1\hat{x}_1-\hat{y}_1^2\hat{z}_1^2\Big)+\hat{z}_1\hat{y}_1^2\hat{x}_1^2+\hat{x}_1\hat{y}_1^2\hat{z}_1^2.
\end{multline*}
This equation defines a chart of a blow up of the surface $S_{-6}$ at the point $P_{\frac{1}{2}}$.
Its quadratic part is not degenerate, which shows that $P_{\frac{1}{2}}$ is a du Val singular point of type $\mathbb{A}_3$ of the surface $S_{-6}$.
This completes the proof of the lemma.
\end{proof}

If $\lambda\ne\infty$ and $\lambda\ne-2$, then the singular points of the surface $S_\lambda$
contained in the base locus of the pencil $\mathcal{S}$ can be describes as follows:
\begin{itemize}
\item $P_{\{x\},\{y\},\{z\}}$, $P_{\{x\},\{y\},\{t\}}$, $P_{\{x\},\{z\},\{t\}}$, $P_{\{y\},\{z\},\{t\}}$, $P_{\{x\},\{t\},\{y,z\}}$, and $P_{\{y\},\{t\},\{x,z\}}$;

\item $P_\mu$ and $P_{1-\mu}$, where $\mu\in\mathbb{C}\cup\{\infty\}$ such that $\lambda=\frac{2\mu^2-2\mu-1}{\mu(1-\mu)}$;

\item $P_{\{t\},\{x,z\},\{y,z\}}$, which is an isolated ordinary double point of the surface $S_{-4}$.
\end{itemize}
If $\lambda\ne -4$, then $S_\lambda$ is smooth at the point $P_{\{t\},\{x,z\},\{y,z\}}$.

Note that {fixed} singular points of the quartic surfaces in the pencil $\mathcal{S}$ can be considered as singular points of the surface $S_{\Bbbk}$.
In our case, all exceptional curves of the minimal resolution of the surface $S_{\Bbbk}$ at these singular points are geometrically irreducible.
Likewise, we can consider the union $P_\mu\cup P_{1-\mu}$ as a (geometrically reducible) singular point of the surface $S_{\Bbbk}$.
This gives $\mathrm{rk}\,\mathrm{Pic}(\widetilde{S}_{\Bbbk})=\mathrm{rk}\,\mathrm{Pic}(S_{\Bbbk})+14$.
Thus, to prove \eqref{equation:main-2-simple}, we have to show that
the intersection matrix of the curves $L_{\{x\},\{t\}}$, $L_{\{y\},\{t\}}$, $L_{\{x\},\{y,z\}}$,
$L_{\{y\},\{x,z\}}$, $L_{\{t\},\{x,z\}}$, $L_{\{t\},\{y,z\}}$, and $\mathcal{C}$ on a general surface in $\mathcal{S}$ is of rank~$4$.
If $\lambda\ne\infty$ and $\lambda\ne -2$,
then it follows from \eqref{equation:r2-n12-base-locus} that
\begin{multline*}
H_{\lambda}\sim 2L_{\{x\},\{t\}}+2L_{\{x\},\{y,z\}}\sim 2L_{\{y\},\{t\}}+2L_{\{y\},\{x,z\}}\sim \\
\sim 2\mathcal{C}\sim L_{\{x\},\{t\}}+L_{\{y\},\{t\}}+L_{\{t\},\{x,z\}}+L_{\{t\},\{y,z\}}
\end{multline*}
on the surface $S_\lambda$.
Therefore, in this case,
the intersection matrix of the curves
$L_{\{x\},\{t\}}$, $L_{\{y\},\{t\}}$, $L_{\{x\},\{y,z\}}$,
$L_{\{y\},\{x,z\}}$, $L_{\{t\},\{x,z\}}$, $L_{\{t\},\{y,z\}}$, and $\mathcal{C}$ on the surface $S_\lambda$
has the same rank as the intersection matrix of the four lines
$L_{\{x\},\{t\}}$, $L_{\{y\},\{t\}}$, $L_{\{t\},\{x,z\}}$, and $L_{\{t\},\{y,z\}}$.
If~$\lambda\not\in\{\infty,-2,-4,-6\}$,
then the latter matrix is given by
\begin{center}\renewcommand\arraystretch{1.42}
\begin{tabular}{|c||c|c|c|c|}
\hline
$\bullet$ & $L_{\{x\},\{t\}}$ & $L_{\{y\},\{t\}}$ &  $L_{\{t\},\{x,z\}}$ & $L_{\{t\},\{y,z\}}$  \\
\hline\hline
$L_{\{x\},\{t\}}$& $-\frac{1}{4}$ & $\frac{1}{2}$ & $\frac{1}{2}$ & $\frac{1}{4}$ \\
\hline
$L_{\{y\},\{t\}}$& $\frac{1}{2}$ & $-\frac{1}{4}$ & $\frac{1}{4}$ & $\frac{1}{2}$\\
\hline
$L_{\{t\},\{x,z\}}$& $\frac{1}{2}$ & $\frac{1}{4}$ & $-\frac{3}{4}$ & $1$\\
\hline
$L_{\{t\},\{y,z\}}$& $\frac{1}{4}$ & $\frac{1}{2}$ & $1$ & $-\frac{3}{4}$\\
\hline
\end{tabular}
\end{center}
Its determinant is $-\frac{5}{8}$,
so that \eqref{equation:main-2} in Main Theorem  holds by Lemma~\ref{lemma:cokernel}.

\begin{lemma}
\label{lemma:r2-n12-main-1}
If $\lambda\ne -2$, then $[\mathsf{f}^{-1}(\lambda)]=1$. One also has $[\mathsf{f}^{-1}(-2)]=4$.
\end{lemma}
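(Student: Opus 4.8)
The plan is to split the proof into two parts according to the value of $\lambda$, following the template already established in Subsections~\ref{section:r-2-n-9}, \ref{section:r-2-n-10}, and~\ref{section:r-2-n-11}. First I would dispose of the generic fibers. For every $\lambda\notin\{-2,\infty\}$ the surface $S_\lambda$ has only du Val singularities in the base locus of $\mathcal{S}$; this follows from the explicit classification of fixed singular points given just before the lemma (types $\mathbb{D}_4$, $\mathbb{A}_1$, $\mathbb{A}_1$, $\mathbb{A}_1$, $\mathbb{A}_3$, $\mathbb{A}_3$ at the fixed points) together with Lemma~\ref{lemma:r2-n12-singularities-floating}, which shows the floating points $P_\mu$, $P_{1-\mu}$ are ordinary double points (or an $\mathbb{A}_3$ point when $\mu=\tfrac12$, i.e.\ $\lambda=-6$), and the point $P_{\{t\},\{x,z\},\{y,z\}}$ which is at worst an ordinary double point of $S_{-4}$. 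Since all of these are du Val, $S_\lambda$ is irreducible and Corollary~\ref{corollary:irreducible-fibers} gives $[\mathsf{f}^{-1}(\lambda)]=1$ immediately. I must take a little care to confirm that for the special values $\lambda\in\{-4,-6\}$ the relevant singularities are \emph{still} du Val (they are, by the two lemmas), so Corollary~\ref{corollary:irreducible-fibers} applies uniformly for all $\lambda\ne-2$.

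The substantive computation is $[\mathsf{f}^{-1}(-2)]=4$. Here $S_{-2}$ is irreducible by Lemma~\ref{equation:r2-n12-S-2-irreducible}, so $[S_{-2}]=1$, but it is singular along the two lines $L_{\{x\},\{y,z\}}=C_3$ and $L_{\{y\},\{x,z\}}=C_4$, so the correction terms $\mathbf{C}_i^{-2}$ and the defects $\mathbf{D}_P^{-2}$ no longer all vanish. The approach is to apply \eqref{equation:equation:number-of-irredubicle-components-refined}. From \eqref{equation:r2-n12-base-locus} one reads off the intersection $S_\lambda\cdot S_\infty$ and hence the multiplicities $\mathbf{m}_i$: I expect $\mathbf{m}_1=\mathbf{m}_2=\mathbf{m}_3=\mathbf{m}_4=2$ (the lines appearing doubled in $H_{\{x\}}\cdot S_\lambda$ and $H_{\{y\}}\cdot S_\lambda$) and $\mathbf{m}_5=\mathbf{m}_6=\mathbf{m}_7=1$, with $\mathcal{C}=C_7$ doubled inside $H_{\{z\}}\cdot S_\lambda$ giving $\mathbf{m}_7=2$ as well; I would verify these multiplicities precisely from the listed hyperplane sections rather than guess. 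Since $S_{-2}$ is singular exactly along $C_3$ and $C_4$, I expect $\mathbf{M}_3^{-2}=\mathbf{M}_4^{-2}=2$ while $\mathbf{M}_i^{-2}=1$ for the remaining base curves, and then Lemma~\ref{lemma:main} yields $\mathbf{C}_3^{-2}=\mathbf{C}_4^{-2}=\mathbf{m}_3-1=1$ and $\mathbf{C}_i^{-2}=0$ otherwise.

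It then remains to compute the defects $\mathbf{D}_P^{-2}$ of the fixed singular points. For the five fixed points at which $S_{-2}$ has a good double point — those of type $\mathbb{A}_1$ or $\mathbb{A}_3$, whose quadratic forms I listed above all have rank at least~$2$ — Lemma~\ref{lemma:normal-crossing} gives vanishing defect. The only fixed point requiring genuine work is $P_{\{x\},\{y\},\{z\}}$, of type $\mathbb{D}_4$, whose quadratic term $(x+y+z)^2$ has rank~$1$; for this point I would carry out the local blow-up analysis exactly as in the proof of Lemma~\ref{lemma:r2-n11-main-1}, producing the birational morphism $\alpha$ near $P_{\{x\},\{y\},\{z\}}$, identifying the base curves of $\widehat{\mathcal{S}}$ in the exceptional divisors, and applying \eqref{equation:D-A-B} with Lemma~\ref{lemma:main-2}. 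The main obstacle is precisely this $\mathbb{D}_4$ defect computation: I must determine $\mathbf{A}_{P_{\{x\},\{y\},\{z\}}}^{-2}$ from $\widehat{D}_{-2}$ and the multiplicities $\mathbf{M}_j^{-2}$ along the new exceptional base curves. Assembling everything via \eqref{equation:equation:number-of-irredubicle-components-refined}, I expect
$$
\big[\mathsf{f}^{-1}(-2)\big]=[S_{-2}]+\mathbf{C}_3^{-2}+\mathbf{C}_4^{-2}+\mathbf{D}_{P_{\{x\},\{y\},\{z\}}}^{-2}=1+1+1+1=4,
$$
which together with the generic count and $h^{1,2}(X)=3$ establishes \eqref{equation:main-1} in this case. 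The delicate point to double-check is the bookkeeping between the floating $\mathbb{A}_3$ degeneration at $\lambda=-6$ and the defect contributions, ensuring no fixed-point contribution is miscounted.
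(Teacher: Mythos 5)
Your overall strategy coincides with the paper's: Corollary~\ref{corollary:irreducible-fibers} for $\lambda\ne-2$ (the singularities at the base locus, including the floating points of Lemma~\ref{lemma:r2-n12-singularities-floating} and the extra nodes at $\lambda=-4,-6$, are all du Val), and for $\lambda=-2$ the refined formula \eqref{equation:equation:number-of-irredubicle-components-refined} with $[S_{-2}]=1$, $\mathbf{C}_3^{-2}=\mathbf{C}_4^{-2}=1$ from Lemma~\ref{lemma:main}, and vanishing defects at the five good double points by Lemma~\ref{lemma:normal-crossing}. Your multiplicity bookkeeping ($\mathbf{m}_1=\mathbf{m}_2=\mathbf{m}_3=\mathbf{m}_4=\mathbf{m}_7=2$, $\mathbf{m}_5=\mathbf{m}_6=1$, $\mathbf{M}_3^{-2}=\mathbf{M}_4^{-2}=2$ and the rest equal to $1$) agrees with the paper's, once the momentary slip about $\mathbf{m}_7$ is resolved in favour of $2$.

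The gap is that the one genuinely nontrivial step — showing $\mathbf{D}_{P_{\{x\},\{y\},\{z\}}}^{-2}=1$ at the $\mathbb{D}_4$ point — is only announced as something you "would carry out" and whose value you "expect"; as written, the identity $[\mathsf{f}^{-1}(-2)]=3+\mathbf{D}_{P_{\{x\},\{y\},\{z\}}}^{-2}=4$ rests on an uncomputed quantity. The paper closes this with less machinery than the $\mathbb{D}_6$ computation of Lemma~\ref{lemma:r2-n11-main-1} that you cite as your model: a single blow up $\alpha_1\colon U_1\to\mathbb{P}^3$ of $P_{\{x\},\{y\},\{z\}}$ already suffices, because $D_\lambda^1=S_\lambda^1$ for all $\lambda\ne\infty$ (so $\mathbf{A}_{P_{\{x\},\{y\},\{z\}}}^{-2}=0$ by \eqref{equation:A}), and $\mathbf{E}_1$ contains a \emph{unique} base curve $C_8^1$ of the pencil $\mathcal{S}^1$, with $\mathbf{m}_8=2$ and $\mathbf{M}_8^{-2}=2$; since $\widehat{C}_8$ is the only base curve of $\widehat{\mathcal{S}}$ lying over $P_{\{x\},\{y\},\{z\}}$, Lemma~\ref{lemma:main-2} and \eqref{equation:D-A-B} give $\mathbf{D}_{P_{\{x\},\{y\},\{z\}}}^{-2}=\mathbf{C}_8^{-2}=\mathbf{m}_8-1=1$. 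With that local computation supplied, your argument is complete and identical in substance to the paper's.
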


\begin{proof}
If $\lambda\ne -2$, then $S_\lambda$ has du Val singularities at the base locus of the pencil $\mathcal{S}$,
so that $[\mathsf{f}^{-1}(\lambda)]=1$ by Corollary~\ref{corollary:irreducible-fibers}.
Hence, to complete the proof, we have to show that $[\mathsf{f}^{-1}(-2)]=4$.
To do this, we observe that
$\mathbf{m}_{1}=\mathbf{m}_{2}=\mathbf{m}_{3}=\mathbf{m}_{4}=\mathbf{m}_{7}=2$ and $\mathbf{m}_{5}=\mathbf{m}_{6}=1$.
Similarly, we have
$\mathbf{M}_{1}^{-2}=\mathbf{M}_{2}^{-2}=\mathbf{M}_{5}^{-2}=\mathbf{M}_{6}^{-2}=\mathbf{M}_{7}^{-2}=1$
and $\mathbf{M}_{3}^{-2}=\mathbf{M}_{4}^{-2}=2$.
Thus, using \eqref{equation:equation:number-of-irredubicle-components-refined} and Lemma~\ref{lemma:main},
we see that
$$
\big[\mathsf{f}^{-1}(-2)\big]=3+\sum_{P\in\Sigma}\mathbf{D}_{P}^{-2},
$$
where $\Sigma$ is the set consisting of the  points
$P_{\{x\},\{y\},\{z\}}$, $P_{\{x\},\{y\},\{t\}}$, $P_{\{x\},\{z\},\{t\}}$, $P_{\{y\},\{z\},\{t\}}$, $P_{\{x\},\{t\},\{y,z\}}$, and $P_{\{y\},\{t\},\{x,z\}}$.
Using Lemma~\ref{lemma:normal-crossing}, we see that
$$
\mathbf{D}_{P_{\{x\},\{y\},\{t\}}}^{-2}=\mathbf{D}_{P_{\{x\},\{z\},\{t\}}}^{-2}=\mathbf{D}_{P_{\{y\},\{z\},\{t\}}}^{-2}=\mathbf{D}_{P_{\{x\},\{t\},\{y,z\}}}^{-2}=\mathbf{D}_{P_{\{y\},\{t\},\{x,z\}}}^{-2}=0.
$$
Thus, we conclude that $[\mathsf{f}^{-1}(-2)]=3+\mathbf{D}_{P_{\{x\},\{y\},\{z\}}}^{-2}$.
Let us show that $\mathbf{D}_{P_{\{x\},\{y\},\{z\}}}^{-2}=1$.

Let $\alpha_1\colon U_1\to\mathbb{P}^3$ be a blow up of the point $P_{\{x\},\{y\},\{z\}}$,
Then $D_{\lambda}^1=S_{\lambda}^1$ for every $\lambda\ne\infty$.
Moreover, the surface $\mathbf{E}_1$ contains a unique base curve of the pencil $\mathcal{S}^1$.
Denote it by $C_{8}^1$. Then $\mathbf{m}_{8}=2$ and $\mathbf{M}_8^{-2}=2$.
Thus, using \eqref{equation:D-A-B}, we see that  $\mathbf{D}_{P_{\{x\},\{y\},\{z\}}}^{-2}\geqslant 1$.

To show that $\mathbf{D}_{P_{\{x\},\{y\},\{z\}}}^{-2}=1$, observe that there exists a commutative diagram
$$
\xymatrix{
&&U\ar@{->}[drr]^\alpha\ar@{->}[dll]_{\gamma}\\
U_1\ar@{->}[rrrr]_{\alpha_1}&&&&\mathbb{P}^3}
$$
for some birational morphism $\gamma$.
Then $\widehat{C}_{8}$ is the unique base curve of the pencil $\widehat{\mathcal{S}}$ that is mapped to $P_{\{x\},\{y\},\{z\}}$ by the morphism $\alpha$.
Thus, using Lemma~\ref{lemma:main-2} and \eqref{equation:D-A-B}, we conclude that $\mathbf{D}_{P_{\{x\},\{y\},\{z\}}}^{-2}=1$.
\end{proof}

Recall that $h^{1,2}(X)=3$. Then \eqref{equation:main-1} in  Main Theorem follows from Lemma~\ref{lemma:r2-n12-main-1}.

\subsection{Family \textnumero $2.13$}
\label{section:r-2-n-13}

In this case, the threefold $X$ is a blow up of a smooth quadric threefold in $\mathbb{P}^4$
along a smooth curve of genus $2$ and degree $6$, which gives $h^{1,2}(X)=2$.
Its~toric Landau--Ginzburg model is given by Minkowski polynomial \textnumero $1392$.
Replacing $x$ by $\frac{x}{y}$, we rewrite it as
$$
x+y+\frac{xz}{y}+\frac{x}{yz}+\frac{z}{y}+\frac{yz}{x}+\frac{2}{z}+\frac{2}{y}+\frac{2y}{x}+\frac{1}{yz}+\frac{y}{xz}.
$$
The quartic pencil $\mathcal{S}$ is given by
\begin{multline*}
xt^{3}+x^{2}t^{2}+2xyt^{2}+2xzt^{2}+y^{2}t^{2}+x^{2}tz+xz^{2}t+2y^{2}tz+\\
+x^{2}yz+xy^{2}z+xy{z}^{2}+y^{2}z^{2}=\lambda xyzt.
\end{multline*}

To prove Main Theorem in this case, we may assume that $\lambda\ne\infty$.
Let $\mathcal{C}$ be a smooth conic that is given by $z=x^2+2xy+y^2+xt=0$.
Then
\begin{equation}
\label{equation:r2-n13-base-locus}
\begin{split}
H_{\{x\}}\cdot S_\lambda&=2L_{\{x\},\{y\}}+2L_{\{x\},\{z,t\}},\\
H_{\{y\}}\cdot S_\lambda&=L_{\{x\},\{y\}}+L_{\{y\},\{t\}}+L_{\{y\},\{z,t\}}+L_{\{y\},\{x,z,t\}},\\
H_{\{z\}}\cdot S_\lambda&=2L_{\{z\},\{t\}}+\mathcal{C},\\
H_{\{t\}}\cdot S_\lambda&=L_{\{y\},\{t\}}+L_{\{z\},\{t\}}+L_{\{t\},\{x,z\}}+L_{\{t\},\{x,y\}}.
\end{split}
\end{equation}
Thus, we may assume that
$C_1=L_{\{x\},\{y\}}$, $C_2=L_{\{y\},\{t\}}$, $C_3=L_{\{z\},\{t\}}$, $C_4=L_{\{x\},\{z,t\}}$, $C_5=L_{\{y\},\{z,t\}}$,
$C_6=L_{\{t\},\{x,y\}}$, $C_7=L_{\{t\},\{x,z\}}$, $C_8=L_{\{y\},\{x,z,t\}}$, and $C_9=\mathcal{C}$.
These are all base curves of the pencil $\mathcal{S}$.

For every $\lambda\in\mathbb{C}$, the surface $S_\lambda$ has isolated singularities, so that $S_\lambda$ is irreducible.

If $\lambda\ne-3$, then the singularities of the surface $S_\lambda$
that are contained in the base locus of the pencil $\mathcal{S}$ are all du Val and can be described as follows:
\begin{itemize}\setlength{\itemindent}{3cm}
\item[$P_{\{x\},\{y\},\{t\}}$:] type $\mathbb{A}_1$ with quadratic term $xy+y^2+xt$;
\item[$P_{\{x\},\{z\},\{t\}}$:] type $\mathbb{A}_1$ with quadratic term $xz+z^2+2tz+t^2$;
\item[$P_{\{y\},\{z\},\{t\}}$:] type $\mathbb{A}_1$ with quadratic term $yz+zt+t^2$;
\item[$P_{\{x\},\{y\},\{z,t\}}$:] type $\mathbb{A}_5$ with quadratic term $(\lambda+3)xy$;
\item[$P_{\{y\},\{t\},\{x,z\}}$:] type $\mathbb{A}_1$ with quadratic term
$$
(x+z+t)(y+t)-(\lambda+1)yt
$$
for $\lambda\ne-1$, type $\mathbb{A}_2$ for $\lambda=-1$;
\item[$P_{\{z\},\{t\},\{x,y\}}$:] type $\mathbb{A}_2$ with quadratic term $z(x+y+(\lambda+3)t)$;
\item[{$[0:\lambda+3:-1:1]$}:] type $\mathbb{A}_1$;
\item[$P_{\{t\},\{x,y\},\{x,z\}}$:] smooth for $\lambda\ne -2$, type $\mathbb{A}_2$ for $\lambda=-2$.
\end{itemize}
Therefore, the points $P_{\{x\},\{z\},\{t\}}$, $P_{\{x\},\{y\},\{t\}}$,
$P_{\{y\},\{z\},\{t\}}$, $P_{\{x\},\{y\},\{z,t\}}$, $P_{\{y\},\{t\},\{x,z\}}$, and $P_{\{z\},\{t\},\{x,y\}}$
are the {fixed} singular points of the surfaces in the pencil $\mathcal{S}$.

\begin{lemma}
\label{lemma:r2-n13-main-1}
If $\lambda\ne -3$, then $[\mathsf{f}^{-1}(\lambda)]=1$.
One also has $[\mathsf{f}^{-1}(-3)]=3$.
\end{lemma}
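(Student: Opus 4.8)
The plan is to verify Lemma~\ref{lemma:r2-n13-main-1} by the same strategy used throughout this section, exploiting the fact that all fixed singular points of surfaces in $\mathcal{S}$ are already classified. First I would dispose of the generic fibers: for $\lambda\ne -3$ the list above shows that every singular point of $S_\lambda$ lying in the base locus of $\mathcal{S}$ is du Val, so Corollary~\ref{corollary:irreducible-fibers} applies directly and gives $[\mathsf{f}^{-1}(\lambda)]=1$. (I should double-check that the floating singular points $[0:\lambda+3:-1:1]$ and $P_{\{t\},\{x,y\},\{x,z\}}$ are du Val as well, but these are ordinary double points or $\mathbb{A}_2$ points, hence du Val, so the hypothesis of Corollary~\ref{corollary:irreducible-fibers} is genuinely satisfied for all $\lambda\ne -3$.)

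The substantive part is the computation of $[\mathsf{f}^{-1}(-3)]$. Here I would start from the intersection formula \eqref{equation:equation:number-of-irredubicle-components-refined}. Since $S_{-3}$ is irreducible (the general statement above asserts $S_\lambda$ has isolated singularities, hence is irreducible, for \emph{every} $\lambda\in\mathbb{C}$), we have $[S_{-3}]=1$. Next I would record the multiplicities $\mathbf{m}_i$ of the base curves $C_1,\ldots,C_9$, read off from the curve $S_{\lambda_1}\cdot S_{\lambda_2}$ computed via \eqref{equation:r2-n13-base-locus}, and compare them with the multiplicities $\mathbf{M}_i^{-3}=\mathrm{mult}_{C_i}(S_{-3})$. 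For the curve $C_1=L_{\{x\},\{y\}}$, whose generic multiplicity is $\mathbf{m}_1=2$, I expect $\mathbf{M}_1^{-3}=2$ as well (the point $P_{\{x\},\{y\},\{z,t\}}$ degenerates to type $\mathbb{A}_5$ exactly at $\lambda=-3$, which signals that $S_{-3}$ acquires nonreduced behaviour along this line); for the remaining base curves $\mathbf{M}_i^{-3}=1$. Feeding these into Lemma~\ref{lemma:main} would then produce the contributions $\sum_{i}\mathbf{C}_i^{-3}$.

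The main obstacle will be the defect terms $\mathbf{D}_P^{-3}$ at the fixed singular points in $\Sigma$. For the points at which $S_{-3}$ has a good double point—which, by the quadratic-term data above, includes $P_{\{x\},\{y\},\{t\}}$, $P_{\{x\},\{z\},\{t\}}$, $P_{\{y\},\{z\},\{t\}}$, $P_{\{y\},\{t\},\{x,z\}}$, and $P_{\{z\},\{t\},\{x,y\}}$—Lemma~\ref{lemma:normal-crossing} gives $\mathbf{D}_P^{-3}=0$ immediately, and I would invoke it to clear these away. The remaining point $P_{\{x\},\{y\},\{z,t\}}$, where the singularity jumps to type $\mathbb{A}_5$ at $\lambda=-3$ with degenerate quadratic term $(\lambda+3)xy$, is the one that genuinely contributes, and this is where a local computation is unavoidable. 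I would resolve this point by a short chain of blow ups as in Subsection~\ref{subsection:scheme-step-8}, track which exceptional divisors enter $\widehat{D}_{-3}$ with positive coefficient (using Corollary~\ref{corollary:log-pull-back} to kill the purely double-point contributions) and which base curves of $\widehat{\mathcal{S}}$ carry multiplicity $\mathbf{M}\geqslant 2$, and apply \eqref{equation:D-A-B} together with Lemma~\ref{lemma:main-2}. Assembling these pieces via \eqref{equation:equation:number-of-irredubicle-components-refined} should yield $[\mathsf{f}^{-1}(-3)]=3$, which matches $h^{1,2}(X)=2$ once we add the contribution from the line $L_{\{x\},\{y\}}$ and the defect at $P_{\{x\},\{y\},\{z,t\}}$, thereby confirming \eqref{equation:main-1} in Main Theorem in this family.
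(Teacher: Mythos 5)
Your overall architecture matches the paper's: Corollary~\ref{corollary:irreducible-fibers} disposes of every $\lambda\ne-3$, and for $\lambda=-3$ one uses \eqref{equation:equation:number-of-irredubicle-components-refined} together with Lemma~\ref{lemma:normal-crossing} to kill the defects at the five good double points, leaving a local blow-up computation at $P_{\{x\},\{y\},\{z,t\}}$. The first half of your argument is correct as stated.

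The gap is in your bookkeeping for the special fiber. You assert $\mathbf{M}_1^{-3}=\mathrm{mult}_{L_{\{x\},\{y\}}}(S_{-3})=2$, reading the degeneration of the singularity at $P_{\{x\},\{y\},\{z,t\}}$ as "nonreduced behaviour along this line." That is false: $S_{-3}$ has only isolated singularities, and indeed the $x$-partial of the defining quartic restricted to $x=y=0$ equals $t(t+z)^2$, which vanishes on $L_{\{x\},\{y\}}$ only at the two points $P_{\{x\},\{y\},\{t\}}$ and $P_{\{x\},\{y\},\{z,t\}}$; so $S_{-3}$ is smooth at a general point of $L_{\{x\},\{y\}}$ and $\mathbf{M}_1^{-3}=1$. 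Hence \emph{every} $\mathbf{C}_i^{-3}$ vanishes by Lemma~\ref{lemma:main} and no base curve contributes. (Note also that $\mathbf{m}_1=3$, not $2$, since $L_{\{x\},\{y\}}$ occurs with multiplicity $2$ in $H_{\{x\}}\cdot S_\lambda$ and multiplicity $1$ in $H_{\{y\}}\cdot S_\lambda$; so if your claim $\mathbf{M}_1^{-3}=2$ were true, Lemma~\ref{lemma:main} would already contribute $\mathbf{m}_1-1=2$ from this line and, together with any positive defect, would overshoot the answer.) The correct decomposition is $[\mathsf{f}^{-1}(-3)]=[S_{-3}]+\mathbf{D}_{P_{\{x\},\{y\},\{z,t\}}}^{-3}=1+2$. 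In that defect computation the point is a \emph{triple} point of $S_{-3}$, so $D_{-3}^1=S_{-3}^1+\mathbf{E}_1$ and $\mathbf{A}_{P}^{-3}=1$ (Corollary~\ref{corollary:log-pull-back} does not apply here, contrary to what your plan suggests); one of the two base curves inside $\mathbf{E}_1$ has $\mathbf{M}=\mathbf{m}=2$ and contributes $1$ more via Lemma~\ref{lemma:main-2}, and a second blow-up shows nothing else contributes. Since your proposal pre-allocates part of the total to $L_{\{x\},\{y\}}$ and leaves the defect undetermined, the argument as written does not close.
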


\begin{proof}
If $\lambda\ne-3$, then $S_\lambda$ has du Val singularities at the base locus of the pencil $\mathcal{S}$,
so that $[\mathsf{f}^{-1}(\lambda)]=1$ by Corollary~\ref{corollary:irreducible-fibers}.
Hence, we must show that $[\mathsf{f}^{-1}(-3)]=3$.

Recall that $S_{-3}$ has isolated singularities.
Moreover, the points $P_{\{x\},\{z\},\{t\}}$, $P_{\{x\},\{y\},\{t\}}$,
$P_{\{y\},\{z\},\{t\}}$, $P_{\{y\},\{t\},\{x,z\}}$, and $P_{\{z\},\{t\},\{x,y\}}$
are {good} double points of this surface (see Section~\ref{subsection:scheme-step-9}).
Thus, it follows from \eqref{equation:equation:number-of-irredubicle-components-refined} and Lemmas~\ref{lemma:main} and \ref{lemma:normal-crossing} that
$$
[\mathsf{f}^{-1}(-3)]=1+\mathbf{D}_{P_{\{x\},\{y\},\{z,t\}}}^{-3},
$$
where $\mathbf{D}_{\mathbf{D}_{P_{\{x\},\{y\},\{z,t\}}}}^{-3}$ is the {defect} of the singular point $P_{\{x\},\{y\},\{z,t\}}$.

Let $\alpha_1\colon U_1\to\mathbb{P}^3$ be a blow up of the point $P_{\{x\},\{y\},\{z,t\}}$.
Then $D_{-3}^1=S_{-3}^1+\mathbf{E}_1$.
In the chart $t=1$, the surface $S_{\lambda}$ is given by
$$
(\lambda+3)\bar{x}\bar{y}+\Big(\bar{x}^2\bar{z}-\bar{x}^2\bar{y}-\bar{x}\bar{y}^2-(\lambda+2)\bar{x}\bar{y}\bar{z}+\bar{x}\bar{z}^2\big)+\Big(\bar{x}^2\bar{y}\bar{z}+\bar{x}\bar{y}^2\bar{z}+\bar{x}\bar{y}\bar{z}^2+\bar{y}^2\bar{z}^2\Big)=0,
$$
where $\bar{x}=x$, $\bar{y}=y$, and $\bar{z}=z+1$.
Then a chart of the blow up $\alpha_1$ is given by the coordinate change
$\bar{x}_1=\frac{\bar{x}}{\bar{z}}$, $\bar{y}_1=\frac{\bar{y}}{\bar{z}}$, and $\bar{z}_1=\bar{x}$.
Then $D_{\lambda}^1$ is given by
\begin{multline*}
\bar{x}_1\big(\bar{z}_1+(\lambda+3)\bar{y}_1\big)+\Big(\bar{x}_1^2\bar{z}_1-(\lambda+2)\bar{x}_1\bar{y}_1\bar{z}_1\Big)+\\
+\Big(\bar{x}_1\bar{y}_1\bar{z}_1^2-\bar{x}_1^2\bar{y}_1\bar{z}_1-\bar{x}_1\bar{y}_1^2\bar{z}_1+\bar{y}_1^2\bar{z}_1^2\Big)+\Big(\bar{x}_1^2\bar{y}_1\bar{z}_1^2+\bar{x}_1\bar{y}_1^2\bar{z}_1^2\Big)=0
\end{multline*}
and the surface $\mathbf{E}_1$ is given by $\bar{z}_1=0$.

The surface $\mathbf{E}_1$ contains two base curves of the pencil $\mathcal{S}^1$.
They are given by $\bar{z}_1=\bar{x}_1=0$ and $\bar{z}_1=\bar{y}_1=0$.
Denote them by $C_{9}^1$ and $C_{10}^1$, respectively.
Then $\mathbf{M}_{9}^{-3}=2$, $\mathbf{M}_{10}^{-3}=1$, and $\mathbf{m}_9=2$.
Thus, using \eqref{equation:D-A-B} and Lemma~\ref{lemma:main-2}, we see that  $\mathbf{D}_{P_{\{x\},\{y\},\{z,t\}}}\geqslant 2$.

To show that $\mathbf{D}_{P_{\{x\},\{y\},\{z,t\}}}=2$, we have to blow up $U_2$ at the point $(\bar{x}_1,\bar{x}_2,\bar{z}_1)=(0,0,0)$.
Namely, let $\alpha_2\colon U_2\to U_1$ be this blow up.
Then $D_{-3}^2=S_{-3}^2+\mathbf{E}_1^2$, and $\mathbf{E}_2$ contains a unique base curve of the pencil $\mathcal{S}^2$.
Denote it by $C_{11}^2$. Then $\mathbf{M}_{11}^{-3}=1$.
Now, using \eqref{equation:D-A-B} and Lemma~\ref{lemma:main-2} again, we obtain $\mathbf{D}_{P_{\{x\},\{y\},\{z,t\}}}=2$.
This gives $[\mathsf{f}^{-1}(-3)]=3$.
\end{proof}

Note that Lemma~\ref{lemma:r2-n13-main-1} implies \eqref{equation:main-1} in  Main Theorem, since $h^{1,2}(X)=2$.

To verify \eqref{equation:main-1} in  Main Theorem,
recall that the base curves of the pencil $\mathcal{S}$ are
$L_{\{x\},\{y\}}$, $L_{\{y\},\{t\}}$, $L_{\{z\},\{t\}}$, $L_{\{x\},\{z,t\}}$, $L_{\{y\},\{z,t\}}$,
$L_{\{t\},\{x,y\}}$, $L_{\{t\},\{x,z\}}$, $L_{\{y\},\{x,z,t\}}$, and $\mathcal{C}$.
On a general quartic surface in this pencil, the intersection matrix of these curves
has the same rank as the intersection matrix of the curves
$L_{\{x\},\{y\}}$, $L_{\{y\},\{t\}}$, $L_{\{z\},\{t\}}$, $L_{\{y\},\{z,t\}}$, $L_{\{t\},\{x,y\}}$, and $H_{\lambda}$.
This follows from \eqref{equation:r2-n13-base-locus}.
On the other hand, if $\lambda\not\in\{-1,-2,-3\}$, then the intersection form of the curves
$L_{\{x\},\{y\}}$, $L_{\{y\},\{t\}}$, $L_{\{z\},\{t\}}$, $L_{\{y\},\{z,t\}}$, $L_{\{t\},\{x,y\}}$, and $H_{\lambda}$
on the surface $S_\lambda$ is given by
\begin{center}\renewcommand\arraystretch{1.42}
\begin{tabular}{|c||c|c|c|c|c|c|}
\hline
$\bullet$ & $L_{\{x\},\{y\}}$ & $L_{\{y\},\{t\}}$ & $L_{\{z\},\{t\}}$ & $L_{\{y\},\{z,t\}}$ & $L_{\{t\},\{x,y\}}$ & $H_{\lambda}$\\
\hline\hline
 $L_{\{x\},\{y\}}$& $-\frac{1}{6}$ & $\frac{1}{2}$ & $0$ & $\frac{1}{3}$ & $\frac{1}{2}$ & $1$\\
\hline
 $L_{\{y\},\{t\}}$& $\frac{1}{2}$ & $-1$ & $\frac{1}{2}$ & $\frac{1}{2}$ & $\frac{1}{2}$ & $1$\\
\hline
$L_{\{z\},\{t\}}$& $0$ & $\frac{1}{2}$ & $-\frac{1}{3}$ & $\frac{1}{2}$ & $\frac{1}{3}$ & $1$\\
\hline
 $L_{\{y\},\{z,t\}}$& $\frac{1}{3}$ & $\frac{1}{2}$ & $\frac{1}{2}$ & $-\frac{2}{3}$ & $0$& $1$\\
\hline
 $L_{\{t\},\{x,y\}}$& $\frac{1}{2}$ & $\frac{1}{2}$ & $\frac{1}{3}$ & $0$ & $-\frac{5}{6}$& $1$\\
\hline
$H_{\lambda}$ & $1$  & $1$  & $1$  & $1$  & $1$  & $4$ \\
\hline
\end{tabular}
\end{center}
Since the determinant of this matrix is $-\frac{5}{12}$, we see that its rank is $6$.
On the other hand, we have
$\mathrm{rk}\,\mathrm{Pic}(\widetilde{S}_{\Bbbk})=\mathrm{rk}\,\mathrm{Pic}(S_{\Bbbk})+12$.
Hence, we see that \eqref{equation:main-2-simple} holds,
so that \eqref{equation:main-2} in Main Theorem also holds by Lemma~\ref{lemma:cokernel}.

\subsection{Family \textnumero $2.14$}
\label{section:r-2-n-14}

Let $V_5$ be a smooth threefold such that
$-K_{V_5}\sim 2H$ and $H^3=5$, where $H$ is an ample Cartier divisor.
Then $V_5$ is determined by these properties uniquely up to isomorphism.
A general surface in $|H|$ is a smooth del Pezzo surface of degree $5$.
This linear system is base point free and gives an embedding $V_5\hookrightarrow\mathbb{P}^6$.

In our case, the threefold $X$ is the blow up of the threefold $V_5$ along an elliptic curve
that is a complete intersection of two general surfaces in the linear system $|H|$.
Its toric Landau--Ginzburg model is given by Minkowski polynomial \textnumero $1658$, which is
$$
x+\frac{xy}{z}+z+\frac{2y}{z}+\frac{z^2}{xy}+\frac{z}{x}+\frac{2}{z}+\frac{3z}{xy}+\frac{3}{x}+\frac{y}{xz}+\frac{3}{xy}+\frac{2}{xz}+\frac{1}{xyz}.
$$
The quartic pencil $\mathcal{S}$ is given by
\begin{multline*}
x^2zy+y^2x^2+z^2yx+2y^2tx+z^3t+z^2ty+2t^2yx+3t^2z^2+\\
+3t^2zy+t^2y^2+3t^3z+2t^3y+t^4=\lambda xyzt.
\end{multline*}

Suppose that $\lambda\ne\infty$.
Let $\mathcal{C}_1$ be a conic that is given by $x=t^2+ty+2tz+z^2=0$,
let~$\mathcal{C}_2$ be a conic that is given by $z=xy+ty+t^2=0$,
and let $\mathcal{C}_3$ be a conic that is given by $t=xy+xz+z^2=0$.
Then
\begin{equation}
\label{equation:r2-n14-base-locus}
\begin{split}
H_{\{x\}}\cdot S_\lambda&=L_{\{x\},\{t\}}+L_{\{x\},\{y,z,t\}}+\mathcal{C}_1,\\
H_{\{y\}}\cdot S_\lambda&=L_{\{y\},\{t\}}+3L_{\{y\},\{z,t\}},\\
H_{\{z\}}\cdot S_\lambda&=2\mathcal{C}_2,\\
H_{\{t\}}\cdot S_\lambda&=L_{\{x\},\{t\}}+L_{\{y\},\{t\}}+\mathcal{C}_3.
\end{split}
\end{equation}
We let $C_1=\mathcal{C}_1$, $C_2=\mathcal{C}_2$, $C_3=\mathcal{C}_3$, $C_4=L_{\{x\},\{t\}}$,
$C_5=L_{\{y\},\{t\}}$, $C_6=L_{\{x\},\{y,z,t\}}$, and $C_7=L_{\{y\},\{z,t\}}$.
These are all base curves of the pencil~$\mathcal{S}$.

For every $\lambda\in\mathbb{C}$, the surface $S_\lambda$ has isolated singularities, so that $S_\lambda$ is irreducible.

If $\lambda\ne-4$, then the singularities of the surface $S_\lambda$
that are contained in the base locus of the pencil $\mathcal{S}$ are all du Val and can be described as follows:
\begin{itemize}\setlength{\itemindent}{5cm}
\item[$P_{\{y\},\{z\},\{t\}}$:] type $\mathbb{A}_3$ with quadratic term $y(z+y)$;
\item[$P_{\{x\},\{z\},\{t\}}$:] type $\mathbb{D}_5$ with quadratic term $(x+t)^2$;
\item[$P_{\{x\},\{z\},\{y,t\}}$:] type $\mathbb{A}_1$ with quadratic term
$$
(3+\lambda)xz+(x-y-2z)(x-y-z)
$$
for $\lambda\ne-3$, type $\mathbb{A}_3$ for $\lambda=-3$;
\item[$P_{\{x\},\{y\},\{z,t\}}$:] type $\mathbb{A}_3$ with quadratic term
$$
y((\lambda+3)x+y+z+t)
$$
for $\lambda\ne-3$, type $\mathbb{A}_5$ for $\lambda=-3$;
\item[{$[\lambda+3:0:-1:1]$}:] type $\mathbb{A}_2$ for $\lambda\ne -3$;

\item[{$[(\lambda+4)(\lambda+3):-1:0:\lambda+4]$}:] type $\mathbb{A}_1$ for $\lambda\ne -3$.
\end{itemize}
Therefore, if $\lambda\ne-4$, then $S_\lambda$ has du Val singularities at the base locus of the pencil~$\mathcal{S}$,
so that the fiber $\mathsf{f}^{-1}(\lambda)$ is irreducible by Corollary~\ref{corollary:irreducible-fibers}.
On the other hand, we have

\begin{lemma}
\label{lemma:r2-n14-main-1}
One has $[\mathsf{f}^{-1}(-4)]=2$.
\end{lemma}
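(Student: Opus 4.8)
The plan is to follow the scheme of Subsection~\ref{subsection:scheme-step-8}, computing $[\mathsf{f}^{-1}(-4)]$ from formula \eqref{equation:equation:number-of-irredubicle-components-refined}. First I would determine the structure of the surface $S_{-4}$ itself, in order to read off $[S_{-4}]$ and to locate where the surface fails to have du Val singularities. Since $S_{-4}$ is the only fiber that is not covered by Corollary~\ref{corollary:irreducible-fibers}, I expect it to be a reducible or otherwise degenerate quartic; the most likely outcome (given the pattern of all the preceding families) is that $S_{-4}$ splits off a plane, so that $[S_{-4}]=2$. I would verify this by factoring the defining equation at $\lambda=-4$, exhibiting $S_{-4}$ as $H_{I}+\mathbf{S}$ for an appropriate plane $H_I$ and a cubic (or quadric-plus-plane) residual surface $\mathbf{S}$.

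Next I would compute the correction terms. Using the multiplicities $\mathbf{m}_i$ obtained from $S_{\lambda_1}\cdot S_{\lambda_2}=\sum \mathbf{m}_i C_i$ and the values $\mathbf{M}_i^{-4}=\mathrm{mult}_{C_i}(S_{-4})$, Lemma~\ref{lemma:main} immediately gives each $\mathbf{C}_i^{-4}$: these vanish on the base curves along which $S_{-4}$ has multiplicity one and contribute $\mathbf{m}_i-1$ on the curves that lie in the newly-split-off plane. I would tabulate $\mathbf{m}_1,\dots,\mathbf{m}_7$ and $\mathbf{M}_1^{-4},\dots,\mathbf{M}_7^{-4}$ from \eqref{equation:r2-n14-base-locus}. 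The remaining work is to handle the fixed singular points in $\Sigma$, namely $P_{\{y\},\{z\},\{t\}}$, $P_{\{x\},\{z\},\{t\}}$, $P_{\{x\},\{z\},\{y,t\}}$, and $P_{\{x\},\{y\},\{z,t\}}$. For each of these I would check whether $S_{-4}$ has a good double point there in the sense of Subsection~\ref{subsection:scheme-step-9}: if the rank of the quadratic term of the local equation of $S_{-4}$ is at least $2$, then Lemma~\ref{lemma:normal-crossing} forces $\mathbf{D}_P^{-4}=0$, which disposes of that point with no blow-up computation.

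I anticipate that all or nearly all of the fixed points are good double points of $S_{-4}$, so that the defects contribute nothing and $[\mathsf{f}^{-1}(-4)]$ reduces to $[S_{-4}]+\sum_{i=1}^{7}\mathbf{C}_i^{-4}$. The main obstacle is the point $P_{\{x\},\{z\},\{t\}}$, which for generic $\lambda$ is a $\mathbb{D}_5$ point: a $\mathbb{D}$-type singularity has a quadratic term of rank $1$, so it is \emph{not} automatically a good double point and Lemma~\ref{lemma:normal-crossing} does not apply directly. At $\lambda=-4$ the local behavior may change, and I would need to examine the Taylor expansion of $S_{-4}$ at this point explicitly — either confirming that the special value $\lambda=-4$ degenerates the singularity into a good double point (rank-$2$ quadratic term), in which case $\mathbf{D}_{P_{\{x\},\{z\},\{t\}}}^{-4}=0$, or otherwise performing the local blow-up calculation of Subsection~\ref{subsection:scheme-step-8} to extract the genuine defect via \eqref{equation:D-A-B} and Lemma~\ref{lemma:main-2}. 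Once the contribution of this $\mathbb{D}_5$ point and of $P_{\{x\},\{y\},\{z,t\}}$ is pinned down, summing all terms should yield $[\mathsf{f}^{-1}(-4)]=2$, consistent with $h^{1,2}(X)=2$ and hence completing \eqref{equation:main-1} for this family.
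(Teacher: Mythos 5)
Your framework is the right one — formula \eqref{equation:equation:number-of-irredubicle-components-refined} combined with Lemmas~\ref{lemma:main} and \ref{lemma:normal-crossing}, with $P_{\{x\},\{z\},\{t\}}$ correctly singled out as the only fixed point where the good-double-point criterion can fail, since its quadratic term $(x+t)^2$ has rank $1$. But your expected bookkeeping is wrong in a way that matters. The quartic $S_{-4}$ does \emph{not} split off a plane: in this family $S_\lambda$ has isolated singularities for every $\lambda\in\mathbb{C}$, hence is irreducible, so $[S_{-4}]=1$, every $\mathbf{M}_i^{-4}$ equals $1$, and every term $\mathbf{C}_i^{-4}$ on the original base curves vanishes. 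The entire gap between $1$ and the target value $2$ must therefore be supplied by the defect, i.e.\ one has to prove $\mathbf{D}_{P_{\{x\},\{z\},\{t\}}}^{-4}=1$, not $0$. Your preferred branch of the dichotomy ($[S_{-4}]=2$ with all defects zero) is simply not available, and your fallback hope that $\lambda=-4$ might improve the singularity into a good double point also fails: at $\lambda=-4$ the point gets \emph{worse} than du Val — after one blow-up the proper transform of $S_{-4}$ is singular along an entire line in the exceptional plane, and that is precisely what creates the nonzero defect.

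What the paper actually does is the local blow-up computation you mention only as a last resort. It resolves $P_{\{x\},\{z\},\{t\}}$ by two point blow-ups $\alpha_1,\alpha_2$; one checks $D_\lambda^2=S_\lambda^2$ for all $\lambda$, so $\mathbf{A}_{P_{\{x\},\{z\},\{t\}}}^{-4}=0$, while the first exceptional divisor carries one base curve $\widehat{C}_8$ with $\mathbf{m}_8=2$ and $\mathbf{M}_8^{-4}=2$ (contributing $\mathbf{m}_8-1=1$ by Lemma~\ref{lemma:main-2}) and the second carries two base curves with $\mathbf{M}^{-4}=1$ (contributing nothing). Formula \eqref{equation:D-A-B} then gives $\mathbf{D}_{P_{\{x\},\{z\},\{t\}}}^{-4}=1$ and hence $[\mathsf{f}^{-1}(-4)]=1+1=2$. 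To turn your proposal into a proof you would need to (i) verify irreducibility of $S_{-4}$ instead of assuming a plane splits off, and (ii) actually carry out this resolution rather than leaving the defect as an unresolved alternative.
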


\begin{proof}
The points $P_{\{y\},\{z\},\{t\}}$, $P_{\{x\},\{z\},\{y,t\}}$, and $P_{\{x\},\{y\},\{z,t\}}$
are {good} double points of the surface $S_{-4}$.
Thus, it follows from \eqref{equation:equation:number-of-irredubicle-components-refined} and Lemmas~\ref{lemma:main} and \ref{lemma:normal-crossing} that
$$
\big[\mathsf{f}^{-1}(-4)\big]=1+\mathbf{D}_{P_{\{x\},\{z\},\{t\}}}^{-4}.
$$
Here, the number $\mathbf{D}_{P_{\{x\},\{z\},\{t\}}}^{-4}$ is the {defect} of the point $P_{\{x\},\{z\},\{t\}}$.
To compute it, we have to (partially) resolve
the singularity of the surface $S_{-4}$ at the point $P_{\{x\},\{z\},\{t\}}$.

Let $\alpha_1\colon U_1\to\mathbb{P}^3$ be a blow up of the point $P_{\{x\},\{z\},\{t\}}$.
In the chart $y=1$, the surface~$S_{\lambda}$ is given by
$$
\bar{x}^2+\Big(2\bar{t}^2\bar{x}+(\lambda+4)\bar{t}^2\bar{z}-(\lambda+2)\bar{t}\bar{x}\bar{z}+\bar{x}^2\bar{z}+\bar{x}\bar{z}^2\Big)+\Big(\bar{t}^4+3\bar{z}\bar{t}^3+3\bar{t}^2\bar{z}^2+\bar{t}\bar{z}^3\Big)=0,
$$
where $\bar{x}=x+t$, $\bar{z}=z$, and $\bar{t}=t$.
Then a chart of the blow up $\alpha_1$ is given by the coordinate change
$\bar{x}_1=\frac{\bar{x}}{\bar{z}}$, $\bar{z}_1=\bar{x}$, and $\bar{t}_1=\frac{\bar{t}}{\bar{z}}$.
Let $\hat{x}_1=\bar{x}_1$, $\hat{z}_1=\bar{x}_1+\bar{z}_1$ and $\hat{t}_1=\bar{t}_1$.
In these coordinates, the surface~$S_{\lambda}^1$ is given by
\begin{multline*}
\hat{x}_1\hat{z}_1+\Big(\hat{t}_1\hat{z}_1^2-\hat{x}_1^3+\hat{x}_1^2\hat{z}_1-(\lambda+4)\hat{t}_1^2\hat{x}_1+(\lambda+4)\hat{t}_1^2\hat{z}_1+(3+\lambda)\hat{x}_1^2\hat{t}_1-(\lambda+4)\hat{t}_1\hat{x}_1\hat{z}_1\Big)+\\
+\hat{t}_1^2\hat{x}_1^2-4\hat{t}_1^2\hat{x}_1\hat{z}_1+3\hat{t}_1^2\hat{z}_1^2+3\hat{x}_1^2\hat{t}_1^3-6\hat{x}_1\hat{z}_1\hat{t}_1^3+3\hat{t}_1^3\hat{z}_1^2+\hat{t}_1^4\hat{x}_1^2-2\hat{t}_1^4\hat{x}_1\hat{z}_1+\hat{t}_1^4\hat{z}_1^2=0.
\end{multline*}

If $\lambda\ne -4$, the surface $S_{\lambda}^1$ has isolated singularity at $(\hat{x}_1,\hat{z}_1,\hat{t}_1)=(0,0,0)$.
In this case, the surface $\mathbf{E}_1$ contains another singular point of the surface  $S_{\lambda}^1$,
which lies in another chart of the blow up $\alpha_1$.
If $\lambda\ne -4$, then this point is an isolated ordinary double point of the surface  $S_{\lambda}^1$.
On the other hand, the surface $S_{-4}^1$ is singular along the curve  $\bar{z}_1=\bar{x}_1=0$.
This explains why the singularity of the surface $S_{-4}$ at the point $P_{\{x\},\{z\},\{t\}}$ is not du Val.

Let $\alpha_2\colon U_2\to U_1$ be a blow up of the point $(\hat{x}_1,\hat{z}_1,\hat{t}_1)=(0,0,0)$.
A chart of this blow up is given by the coordinate change
$\hat{x}_2=\frac{\hat{x}_1}{\hat{t}_1}$, $\hat{y}_2=\frac{\hat{y}_1}{\hat{t}_1}$, and $\hat{t}_2=\hat{t}_1$.
In these coordinates, the surface $S_{\lambda}^2$ is given by
\begin{multline*}
\hat{x}_2\hat{z}_2-(\lambda+4)\hat{x}_2\hat{t}_2+(\lambda+4)\hat{z}_2\hat{t}_2+\Big(\hat{t}_2\hat{z}_2^2+(3+\lambda)\hat{x}_2^2\hat{t}_2-(\lambda+4)\hat{t}_2\hat{x}_2\hat{z}_2\Big)+\\
+\hat{t}_2^2\hat{x}_2^2-4\hat{t}_2^2\hat{x}_2\hat{z}_2+3\hat{t}_2^2\hat{z}_2^2-\hat{t}_2\hat{x}_2^3+\hat{t}_2\hat{x}_2^2\hat{z}_2+3\hat{x}_2^2\hat{t}_2^3-6\hat{x}_2\hat{z}_2\hat{t}_2^3+3\hat{t}_2^3\hat{z}_2^2+\hat{t}_2^4\hat{x}_2^2-2\hat{t}_2^4\hat{x}_2\hat{z}_2+\hat{t}_2^4\hat{z}_2^2=0,
\end{multline*}
and the surface $\mathbf{E}_2$ is given by $\hat{t}_2=0$.
Note that $D_{\lambda}^2=S_{\lambda}^2\sim -K_{U_2}$ for every $\lambda\in\mathbb{C}$.

If $\lambda\ne -4$, then the quadric form $\hat{x}_2\hat{z}_2-(\lambda+4)\hat{x}_2\hat{t}_2+(\lambda+4)\hat{z}_2\hat{t}_2$ is not degenerate,
so that $S_{\lambda}^2$ has an isolated ordinary double singularity at $(\hat{x}_2,\hat{z}_2,\hat{t}_2)=(0,0,0)$.
Thus, in this case, the surface $S_{\lambda}^1$ has a du Val singularity of type $\mathbb{A}_3$ at the point $(\hat{x}_1,\hat{z}_1,\hat{t}_1)=(0,0,0)$.
Therefore, if $\lambda\ne -4$, then $S_\lambda$ has a du Val singularity of type $\mathbb{D}_5$ at the point $P_{\{x\},\{z\},\{t\}}$.

Now we are ready to compute $\mathbf{D}_{P_{\{x\},\{z\},\{t\}}}$ using the algorithm described in Section~\ref{subsection:scheme-step-8}.
Observe that $\mathbf{E}_1$ contains one base curve of the pencil $\mathcal{S}^1$.
It is given by $\bar{z}_1=\bar{x}_1=0$. Denote this curve by $C_{8}^1$.
Then $\mathbf{m}_8=2$ and $\mathbf{M}_8^{-4}=2$.
Similarly, the surface $\mathbf{E}_2$ contains two  base curves of the pencil $\mathcal{S}^2$.
They are given by $\hat{t}_2=\hat{x}_2=0$ and $\hat{t}_2=\hat{z}_2=0$.
Denote them by $C_{9}^2$ and $C_{10}^2$, respectively.
Then $\mathbf{M}_9^{-4}=\mathbf{M}_{10}^{-4}=1$.
Now, using  \eqref{equation:D-A-B} and Lemma~\ref{lemma:main-2}, we deduce that $\mathbf{D}_{P_{\{x\},\{z\},\{t\}}}=1$,
so that $[\mathsf{f}^{-1}(-4)]=2$.
\end{proof}

Note that Lemma~\ref{lemma:r2-n14-main-1} implies \eqref{equation:main-1} in  Main Theorem, since $h^{1,2}(X)=2$.

\begin{lemma}
\label{lemma:r2-n14-intersection}
Suppose that $\lambda\ne -4$ and $\lambda\ne -3$.
Then the intersection form of the curves
$L_{\{x\},\{t\}}$, $L_{\{y\},\{t\}}$, $L_{\{x\},\{y,z,t\}}$, and $H_{\lambda}$
on the surface $S_\lambda$ is given by
\begin{center}\renewcommand\arraystretch{1.42}
\begin{tabular}{|c||c|c|c|c|}
\hline
$\bullet$ & $L_{\{x\},\{t\}}$ & $L_{\{y\},\{t\}}$ & $L_{\{x\},\{y,z,t\}}$ & $H_{\lambda}$\\
\hline\hline
 $L_{\{x\},\{t\}}$& $-\frac{3}{4}$ & $1$ & $1$ & $1$\\
\hline
 $L_{\{y\},\{t\}}$& $1$ & $\frac{5}{4}$ & $0$ & $1$\\
\hline
 $L_{\{x\},\{y,z,t\}}$& $1$ & $0$ & $-\frac{5}{6}$ & $1$\\
\hline
$H_{\lambda}$ & $1$  & $1$  & $1$ & $4$ \\
\hline
\end{tabular}
\end{center}
\end{lemma}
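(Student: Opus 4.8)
The plan is to follow the template already used for the earlier rank-$2$ families (compare Lemmas~\ref{lemma:r2-n4-intersection}, \ref{lemma:r2-n5-intersection}, and \ref{lemma:r2-n11-intersection}): first fill in the entries forced by degree reasons, then compute the three self-intersections by resolving the du Val points lying on each line, and finally pin down the remaining off-diagonal entries either by inspecting where the lines meet or by using the linear equivalences recorded in \eqref{equation:r2-n14-base-locus}. Throughout we work on a general surface $S_\lambda$ in the pencil, so we may assume $\lambda\notin\{-4,-3\}$ and freely use the singularity classification established just before this lemma.

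I would begin with the trivial entries. Since $H_\lambda$ is a general hyperplane section of the quartic $S_\lambda$, we have $H_\lambda^2=4$, and each of the three lines satisfies $H_\lambda\cdot L=1$. For the off-diagonal entries among the lines I would simply locate the intersection points: $L_{\{x\},\{t\}}$ and $L_{\{y\},\{t\}}$ meet only at the point $P_{\{x\},\{y\},\{t\}}$, and $L_{\{x\},\{t\}}$ and $L_{\{x\},\{y,z,t\}}$ meet only at the point $[0:1:-1:0]$, both of which are smooth points of $S_\lambda$, so each of these entries is $1$ by Proposition~\ref{proposition:du-Val-intersection}; whereas $L_{\{y\},\{t\}}\cap L_{\{x\},\{y,z,t\}}=\varnothing$, which forces that entry to be $0$.

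The substance of the proof is the diagonal. Each line carries exactly the du Val points identified earlier: the line $L_{\{x\},\{t\}}$ contains only the $\mathbb{D}_5$ point $P_{\{x\},\{z\},\{t\}}$; the line $L_{\{y\},\{t\}}$ contains only the $\mathbb{A}_3$ point $P_{\{y\},\{z\},\{t\}}$; and the line $L_{\{x\},\{y,z,t\}}$ contains the $\mathbb{A}_1$ point $P_{\{x\},\{z\},\{y,t\}}$ together with the $\mathbb{A}_3$ point $P_{\{x\},\{y\},\{z,t\}}$. I would compute each self-intersection with Proposition~\ref{proposition:du-Val-self-intersection}, feeding it the combinatorial data supplied by Remark~\ref{remark:transversal}, namely which exceptional curves $\overline{G}_i$ the proper transform of the line meets and whether that transform passes through a node $\overline{G}_i\cap\overline{G}_j$ of the chain. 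For the two $\mathbb{A}_3$ points and the $\mathbb{A}_1$ point this incidence data can be read off directly from the local charts already used to classify the singularities, while for the $\mathbb{D}_5$ point I would reuse the explicit two-step resolution $\alpha_1,\alpha_2$ of $P_{\{x\},\{z\},\{t\}}$ performed in the proof of Lemma~\ref{lemma:r2-n14-main-1}, combined with Lemma~\ref{lemma:Dn}.

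The main obstacle will be the $\mathbb{D}_5$ self-intersection $L_{\{x\},\{t\}}^2$: unlike the $\mathbb{A}_n$ case, the exceptional locus branches, so I must track carefully which prong of the $\mathbb{D}_5$ fork the proper transform of $L_{\{x\},\{t\}}$ hits, and this is precisely the information that Lemma~\ref{lemma:Dn} converts into the rational correction term. A secondary point of care is that the diagonal corrections here are comparatively large, so rather than trust a single direct computation I would cross-check each self-intersection against the relations $H_\lambda\sim L_{\{x\},\{t\}}+L_{\{x\},\{y,z,t\}}+\mathcal{C}_1$ and $H_\lambda\sim L_{\{x\},\{t\}}+L_{\{y\},\{t\}}+\mathcal{C}_3$ from \eqref{equation:r2-n14-base-locus}: these express the conics in terms of the chosen curves and impose independent linear constraints on the entries, and the requirement that the intersection of two distinct effective curves be nonnegative provides a useful sanity check on the signs. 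Once every entry is determined, the rank of the matrix follows by inspection, which is exactly the input needed to deduce \eqref{equation:main-2-simple} from Lemma~\ref{lemma:cokernel}.
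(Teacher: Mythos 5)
Your plan coincides with the paper's own proof: the off-diagonal entries are read off from the intersection points (all smooth points of $S_\lambda$ or empty), the self-intersections of $L_{\{y\},\{t\}}$ and $L_{\{x\},\{y,z,t\}}$ come from Proposition~\ref{proposition:du-Val-self-intersection} applied to the $\mathbb{A}_3$ point $P_{\{y\},\{z\},\{t\}}$ and to the pair $P_{\{x\},\{z\},\{y,t\}}$ ($\mathbb{A}_1$), $P_{\{x\},\{y\},\{z,t\}}$ ($\mathbb{A}_3$), and the $\mathbb{D}_5$ correction for $L_{\{x\},\{t\}}^2$ is extracted exactly as you propose, by feeding the two-step resolution from the proof of Lemma~\ref{lemma:r2-n14-main-1} into Lemma~\ref{lemma:Dn}. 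Note only that your computation will return $L_{\{y\},\{t\}}^2=-2+\tfrac{3}{4}=-\tfrac{5}{4}$, whereas the printed table shows $+\tfrac{5}{4}$; this is a sign typo in the statement (the paper's own proof also derives $-\tfrac{5}{4}$), not a defect of your argument.
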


\begin{proof}
To compute $L_{\{x\},\{t\}}^2$, let us use the notation of Lemma~\ref{lemma:Dn} with $S=S_{\lambda}$, $n=5$, $O=P_{\{x\},\{z\},\{t\}}$, and $C=L_{\{x\},\{t\}}$.
Then $\overline{C}$ contains the point $\alpha(G_1)=\alpha(G_2)=\alpha(G_3)$,
and either $\widetilde{C}\cdot G_2=1$ or $\widetilde{C}\cdot G_3=1$.
This follows from the proof of Lemma~\ref{lemma:r2-n14-main-1}.
Thus, we have $L_{\{x\},\{t\}}^2=-\frac{3}{4}$ by Lemma~\ref{lemma:Dn}.

To find $L_{\{y\},\{t\}}^2$, we observe that $P_{\{y\},\{z\},\{t\}}$ is the only singular point of the surface $S_\lambda$ that is contained in  $L_{\{y\},\{t\}}$.
Thus, it follows from Proposition~\ref{proposition:du-Val-intersection} that $L_{\{y\},\{t\}}^2=-\frac{5}{4}$.

To find $L_{\{x\},\{y,z,t\}}^2$, we observe that $P_{\{x\},\{z\},\{y,t\}}$ and $P_{\{x\},\{y\},\{z,t\}}$
are the only singular points of the surface $S_\lambda$ that are contained in the line $L_{\{x\},\{y,z,t\}}$.
Thus, it follows from Proposition~\ref{proposition:du-Val-intersection} that $L_{\{y\},\{t\}}^2=-\frac{5}{6}$.

Since $L_{\{x\},\{t\}}\cap L_{\{y\},\{t\}}=P_{\{x\},\{y\},\{t\}}$ and $L_{\{x\},\{t\}}\cap L_{\{x\},\{y,z,t\}}=P_{\{x\},\{t\},\{y,z\}}$, we obtain
$$
L_{\{x\},\{t\}}\cdot L_{\{y\},\{t\}}=L_{\{x\},\{t\}}\cdot L_{\{x\},\{y,z,t\}}=1,
$$
because $S_\lambda$ is smooth at the points $P_{\{x\},\{y\},\{t\}}$ and $P_{\{x\},\{t\},\{y,z\}}$.

Finally, we have $L_{\{y\},\{t\}}\cdot L_{\{x\},\{y,z,t\}}=0$, since
$L_{\{y\},\{t\}}\cap L_{\{x\},\{y,z,t\}}=\varnothing$.
\end{proof}

Recall that the base curves of the pencil $\mathcal{S}$ are
the curves $L_{\{x\},\{t\}}$,
$L_{\{y\},\{t\}}$, $L_{\{x\},\{y,z,t\}}$, $L_{\{y\},\{z,t\}}$, $\mathcal{C}_1$, $\mathcal{C}_2$, and $\mathcal{C}_3$.
If follows from \eqref{equation:r2-n14-base-locus} that
the intersection matrix of these curves on $S_\lambda$
has the same rank as the intersection matrix of the curves $L_{\{x\},\{t\}}$, $L_{\{y\},\{t\}}$, $L_{\{x\},\{y,z,t\}}$, and $H_{\lambda}$.
On the other hand, the determinant of the intersection matrix in Lemma~\ref{lemma:r2-n14-intersection} is $\frac{25}{16}$.
Thus, if $\lambda\ne -4$ and $\lambda\ne -3$, then the intersection matrix of the curves $L_{\{x\},\{t\}}$,
$L_{\{y\},\{t\}}$, $L_{\{x\},\{y,z,t\}}$, $L_{\{y\},\{z,t\}}$, $\mathcal{C}_1$, $\mathcal{C}_2$, and $\mathcal{C}_3$
on the surface $S_\lambda$ has rank $4$.
On the other hand, we have
$\mathrm{rk}\,\mathrm{Pic}(\widetilde{S}_{\Bbbk})=\mathrm{rk}\,\mathrm{Pic}(S_{\Bbbk})+14$.
Hence, we see that \eqref{equation:main-2-simple} holds,
so that \eqref{equation:main-2} in Main Theorem also holds by Lemma~\ref{lemma:cokernel}.

\subsection{Family \textnumero $2.15$}
\label{section:r-2-n-15}

In this case, the Fano threefold $X$ is a blow up of $\mathbb{P}^3$ at a smooth curve of degree $6$ and genus $4$.
Thus, we have $h^{1,2}(X)=4$. A~toric Landau--Ginzburg model of this family is given by Minkowski polynomial \textnumero $910$,
which is
$$
x+y+z+\frac{x}{z}+\frac{y}{z}+\frac{x}{yz}+\frac{2}{z}+\frac{y}{xz}+\frac{2}{y}+\frac{2}{x}+\frac{z}{xy}.
$$
The pencil $\mathcal{S}$ is given by
$$
x^2zy+y^2zx+z^2yx+x^2ty+y^2tx+x^2t^2+2t^2yx+t^2y^2+2t^2zx+2t^2zy+t^2z^2=\lambda xyzt.
$$
Observe that this equation is symmetric with respect to the permutation $x\leftrightarrow y$.

We may assume that $\lambda\ne\infty$. Let $\mathcal{C}$ be the conic $\{z=xy+xt+yt=0\}$. Then
\begin{equation}
\label{equation:r2-n15-base-locus}
\begin{split}
H_{\{x\}}\cdot S_\lambda&=2L_{\{x\},\{t\}}+2L_{\{x\},\{y,z\}},\\
H_{\{y\}}\cdot S_\lambda&=2L_{\{y\},\{t\}}+2L_{\{y\},\{x,z\}},\\
H_{\{z\}}\cdot S_\lambda&=L_{\{z\},\{t\}}+L_{\{z\},\{x,y\}}+\mathcal{C},\\
H_{\{t\}}\cdot S_\lambda&=L_{\{x\},\{t\}}+L_{\{y\},\{t\}}+L_{\{z\},\{t\}}+L_{\{t\},\{x,y,z\}}.
\end{split}
\end{equation}
Thus, we may assume that $C_1=L_{\{x\},\{t\}}$, $C_2=L_{\{y\},\{t\}}$, $C_3=L_{\{z\},\{t\}}$,
$C_4=L_{\{x\},\{y,z\}}$, $C_5=L_{\{y\},\{x,z\}}$, $C_6=L_{\{z\},\{x,y\}}$, $C_7=L_{\{t\},\{x,y,z\}}$, and $C_8=\mathcal{C}$.
These are all base curves of the pencil~$\mathcal{S}$.

If $\lambda\ne -1$, then the surface $S_\lambda$ has isolated singularities, so that $S_\lambda$ is irreducible.
One the other hand, we have $S_{-1}=H_{\{x,y,z\}}+\mathbf{S}$,
where $\mathbf{S}$ is an irreducible cubic surface that is given by $xyz+xyt+xt^2+yt^2+zt^2=0$.
The surface $\mathbf{S}$ is singular at $P_{\{y\},\{z\},\{t\}}$, $P_{\{x\},\{z\},\{t\}}$, and $P_{\{x\},\{y\},\{t\}}$.
These are isolated ordinary double points of this surface.
Note also that
$$
H_{\{x,y,z\}}\cap\mathbf{S}=L_{\{x\},\{y,z\}}+L_{\{y\},\{x,z\}}+\ell,
$$
where $\ell$ is the line $\{y+x-t=z+t=0\}$.

If $\lambda\ne-1$, then the singularities of the surface $S_\lambda$
that are contained in the base locus of the pencil $\mathcal{S}$ are all du Val and can be described as follows:
\begin{itemize}\setlength{\itemindent}{3cm}
\item[$P_{\{x\},\{y\},\{z\}}$:] type $\mathbb{D}_4$ with quadratic term $(x+y+z)^2$;
\item[$P_{\{x\},\{y\},\{t\}}$:] type $\mathbb{A}_1$ with quadratic term $xy+t^2$;
\item[$P_{\{x\},\{z\},\{t\}}$:] type $\mathbb{A}_1$ with quadratic term $xz+xt+t^2$;
\item[$P_{\{y\},\{z\},\{t\}}$:] type $\mathbb{A}_1$ with quadratic term $yz+yt+t^2$;
\item[$P_{\{x\},\{t\},\{y,z\}}$:] type $\mathbb{A}_3$ with quadratic term $x(x+y+z+(\lambda+1)t)$;
\item[$P_{\{y\},\{t\},\{x,z\}}$:] type $\mathbb{A}_3$ with quadratic term $y(x+y+z+(\lambda+1)t)$;
\item[$P_{\{z\},\{t\},\{x,y\}}$:] type $\mathbb{A}_1$ with quadratic term $(x+y)(z+t)+z^2-\lambda zt$.
\end{itemize}
If $\lambda\ne-1$, then  $[\mathsf{f}^{-1}(\lambda)]=1$ by Corollary~\ref{corollary:irreducible-fibers},
so that \eqref{equation:main-1} in  Main Theorem follows~from

\begin{lemma}
\label{lemma:r2-n15-main-1}
One has $[\mathsf{f}^{-1}(-1)]=5$.
\end{lemma}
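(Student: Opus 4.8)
The plan is to compute $[\mathsf{f}^{-1}(-1)]$ by applying the refined formula \eqref{equation:equation:number-of-irredubicle-components-refined} to the special value $\lambda=-1$, following exactly the pattern used in Lemmas~\ref{lemma:r2-n9-main-1}, \ref{lemma:r2-n11-main-1}, and \ref{lemma:r2-n12-main-1}. First I would record the numerical data already available. Since $S_{-1}=H_{\{x,y,z\}}+\mathbf{S}$ with $\mathbf{S}$ irreducible, we have $[S_{-1}]=2$. The multiplicities $\mathbf{m}_i$ of the base curves in $S_{\lambda_1}\cdot S_{\lambda_2}$ are read off from \eqref{equation:r2-n15-base-locus}: namely $\mathbf{m}_1=\mathbf{m}_2=\mathbf{m}_4=\mathbf{m}_5=2$ (coming from the planes $H_{\{x\}}$ and $H_{\{y\}}$), while $\mathbf{m}_3=\mathbf{m}_6=\mathbf{m}_7=\mathbf{m}_8=1$. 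Then I would compute the restricted multiplicities $\mathbf{M}_i^{-1}=\mathrm{mult}_{C_i}(S_{-1})$. Because $S_{-1}$ is singular precisely along $L_{\{x\},\{y,z\}}=C_4$ and $L_{\{y\},\{x,z\}}=C_5$ (these are the two lines in $H_{\{x,y,z\}}\cap\mathbf{S}$ together with $\ell$, and $S_{-1}$ contains $H_{\{x,y,z\}}$), we expect $\mathbf{M}_4^{-1}=\mathbf{M}_5^{-1}=2$ and $\mathbf{M}_i^{-1}=1$ for the remaining base curves. Lemma~\ref{lemma:main} then yields $\mathbf{C}_4^{-1}=\mathbf{C}_5^{-1}=1$ and $\mathbf{C}_i^{-1}=0$ otherwise, contributing $2$ to the count.

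Next I would handle the defects $\mathbf{D}_P^{-1}$ over the fixed singular points in $\Sigma$, which consists of $P_{\{x\},\{y\},\{z\}}$, $P_{\{x\},\{y\},\{t\}}$, $P_{\{x\},\{z\},\{t\}}$, $P_{\{y\},\{z\},\{t\}}$, $P_{\{x\},\{t\},\{y,z\}}$, $P_{\{y\},\{t\},\{x,z\}}$, and $P_{\{z\},\{t\},\{x,y\}}$. The strategy is to dispose of all of these except one by invoking Lemma~\ref{lemma:normal-crossing}: each point that is a \emph{good} double point of $S_{-1}$ has vanishing defect. The three points $P_{\{x\},\{y\},\{t\}}$, $P_{\{x\},\{z\},\{t\}}$, $P_{\{y\},\{z\},\{t\}}$ are isolated ordinary double points of $\mathbf{S}$ (hence of $S_{-1}$), and the three points $P_{\{x\},\{t\},\{y,z\}}$, $P_{\{y\},\{t\},\{x,z\}}$, $P_{\{z\},\{t\},\{x,y\}}$ should be good double points of $S_{-1}$ as well — I would verify that the quadratic terms listed above, specialized to $\lambda=-1$, have rank at least $2$. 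Granting this, \eqref{equation:equation:number-of-irredubicle-components-refined} reduces to
$$
\big[\mathsf{f}^{-1}(-1)\big]=2+\mathbf{C}_4^{-1}+\mathbf{C}_5^{-1}+\mathbf{D}_{P_{\{x\},\{y\},\{z\}}}^{-1}=4+\mathbf{D}_{P_{\{x\},\{y\},\{z\}}}^{-1}.
$$

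The remaining and principal task is to show $\mathbf{D}_{P_{\{x\},\{y\},\{z\}}}^{-1}=1$. This is the only point of $\Sigma$ where $S_\lambda$ has a $\mathbb{D}_4$ singularity (quadratic term a perfect square $(x+y+z)^2$), so Corollary~\ref{corollary:log-pull-back} does not apply and a genuine local blow-up computation is needed, exactly as in the proofs of Lemma~\ref{lemma:r2-n9-main-1} and Lemma~\ref{lemma:r2-n12-main-1}. I would blow up $P_{\{x\},\{y\},\{z\}}$ via $\alpha_1\colon U_1\to\mathbb{P}^3$, write $S_\lambda$ in the chart $t=1$ with the coordinate change $\bar z=x+y+z$, and identify the single base curve $\widehat{C}_9$ of $\widehat{\mathcal{S}}$ lying in the exceptional surface $\widehat{E}_1$. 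I expect $\mathbf{m}_9=2$ and $\mathbf{M}_9^{-1}=2$ (since $S_{-1}$ passes doubly through the exceptional locus because of the perfect-square quadratic term), while $\widehat{D}_{-1}=\widehat{S}_{-1}$ forces $\mathbf{A}_{P_{\{x\},\{y\},\{z\}}}^{-1}=0$. Then \eqref{equation:D-A-B} together with Lemma~\ref{lemma:main-2} gives $\mathbf{D}_{P_{\{x\},\{y\},\{z\}}}^{-1}=\mathbf{C}_9^{-1}=\mathbf{m}_9-1=1$. The main obstacle is precisely this local analysis at the $\mathbb{D}_4$ point: one must check that $\alpha_1$ alone suffices to expose all relevant base curves (i.e. that the further blow-ups resolving the $\mathbb{D}_4$ singularity introduce no additional base curves of $\widehat{\mathcal{S}}$ mapping to $P_{\{x\},\{y\},\{z\}}$ with defect, mirroring the situation in Lemma~\ref{lemma:r2-n9-main-1}), and that the unique exceptional base curve has exactly the multiplicities claimed. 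Combining everything yields $[\mathsf{f}^{-1}(-1)]=5$, which matches $h^{1,2}(X)=4$ once the irreducible fibers for $\lambda\neq-1$ are added, establishing \eqref{equation:main-1} in this case.
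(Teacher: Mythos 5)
Your proposal is correct and follows essentially the same route as the paper: reduce via \eqref{equation:equation:number-of-irredubicle-components-refined}, Lemma~\ref{lemma:main} and Lemma~\ref{lemma:normal-crossing} to $[\mathsf{f}^{-1}(-1)]=4+\mathbf{D}_{P_{\{x\},\{y\},\{z\}}}^{-1}$, then show the defect equals $1$ by noting that the perfect-square quadratic term (equivalently, the tangency of $H_{\{x,y,z\}}$ and $\mathbf{S}$ at that point) forces the unique base curve in the exceptional divisor to satisfy $\mathbf{M}_9^{-1}=\mathbf{m}_9=2$, exactly as the paper does by reference to Lemma~\ref{lemma:r2-n5-irreducible-special}. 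Your listed values $\mathbf{m}_1=\mathbf{m}_2=2$, $\mathbf{m}_3=1$ are off (they should be $3,3,2$ once the contribution of $H_{\{t\}}$ and $H_{\{z\}}$ is included), but this is harmless since $\mathbf{M}_i^{-1}=1$ for those curves and Lemma~\ref{lemma:main} gives $\mathbf{C}_i^{-1}=0$ regardless.
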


\begin{proof}
It follows from \eqref{equation:equation:number-of-irredubicle-components-refined} and Lemmas~\ref{lemma:main} and \ref{lemma:normal-crossing} that
$$
\big[\mathsf{f}^{-1}(-1)\big]=4+\mathbf{D}_{P_{\{x\},\{y\},\{z\}}}^{-1}.
$$
Observe that $\mathbf{S}$ is smooth at $P_{\{x\},\{y\},\{z\}}$,
and $H_{\{x,y,z\}}$ is tangent to $\mathbf{S}$ at this point.
Thus, the proper transforms of these surfaces on the blow up of $\mathbb{P}^3$ at the point $P_{\{x\},\{y\},\{z\}}$
both pass through the base curve of the proper transform of the pencil $\mathcal{S}$ that is contained in the exceptional divisor.
Using \eqref{equation:D-A-B}, we conclude that $\mathbf{D}_{P_{\{x\},\{y\},\{z\}}}^{-1}\geqslant 1$.
Arguing as in the proof of Lemma~\ref{lemma:r2-n5-irreducible-special}, we see that $\mathbf{D}_{P_{\{x\},\{y\},\{z\}}}^{-1}=1$, so that $[\mathsf{f}^{-1}(-1)]=5$.
\end{proof}

If $\lambda\ne -1$, then the intersection form of the curves
$L_{\{x\},\{t\}}$, $L_{\{y\},\{t\}}$, $L_{\{z\},\{t\}}$, $L_{\{z\},\{x,y\}}$, and $H_{\lambda}$
on the surface $S_\lambda$ is given by
\begin{center}\renewcommand\arraystretch{1.42}
\begin{tabular}{|c||c|c|c|c|c|}
\hline
$\bullet$ & $L_{\{x\},\{t\}}$ & $L_{\{y\},\{t\}}$ & $L_{\{z\},\{t\}}$ & $L_{\{z\},\{x,y\}}$ & $H_{\lambda}$\\
\hline\hline
$L_{\{x\},\{t\}}$  & $-\frac{1}{4}$  & $\frac{1}{2}$  & $\frac{1}{2}$ & $0$ & $1$ \\
\hline
$L_{\{y\},\{t\}}$  & $\frac{1}{2}$  & $-\frac{1}{4}$  & $\frac{1}{2}$ & $0$ & $1$ \\
\hline
$L_{\{z\},\{t\}}$  & $\frac{1}{2}$  & $\frac{1}{2}$  & $-\frac{1}{2}$ & $\frac{1}{2}$ & $1$ \\
\hline
 $L_{\{z\},\{x,y\}}$  & $0$  & $0$  & $\frac{1}{2}$ & $-\frac{1}{2}$ & $1$ \\
\hline
$H_{\lambda}$ & $1$  & $1$  & $1$ & $1$ & $4$ \\
\hline
\end{tabular}
\end{center}
The rank of this matrix is $4$.
Thus, if $\lambda\ne -1$, then it follows from \eqref{equation:r2-n15-base-locus} that
the intersection matrix of the base curves of the pencil $\mathcal{S}$ on the surface $S_\lambda$ also has rank $4$.
On the other hand, we have
$\mathrm{rk}\,\mathrm{Pic}(\widetilde{S}_{\Bbbk})=\mathrm{rk}\,\mathrm{Pic}(S_{\Bbbk})+14$.
Hence, we see that \eqref{equation:main-2-simple} holds, so that \eqref{equation:main-2} in Main Theorem also holds by Lemma~\ref{lemma:cokernel}.

\subsection{Family \textnumero $2.16$}
\label{section:r-2-n-16}

In this case, the Fano threefold $X$ is a blow up of a smooth complete intersection of two quadrics in a conic.
We have $h^{1,2}(X)=2$.
A~toric Landau--Ginzburg model of this family is given by Minkowski polynomial \textnumero $1939$, which is
$$
x+z+{\frac {y}{z}}+{\frac {y}{x}}+{\frac {x}{z}}+{\frac {z}{x}}+{
\frac {x}{y}}+\frac{1}{z}+{\frac {z}{y}}+\frac{1}{x}+{\frac {x}{yz}}+\frac{2}{y}+{\frac {z}{xy}}.
$$
The quartic pencil $\mathcal{S}$ is given by
$$
x^2yz+xy^2t+xyz^2+y^2zt+x^2yt+yz^2t+x^2zt+xyt^2+xz^2t+yzt^2+x^2t^2+2xzt^2+z^2t^2=\lambda xyzt.
$$

As usual, we suppose that  $\lambda\ne\infty$.
If $\lambda\ne-2$, then the surface $S_\lambda$ has isolated singularities, so that it is irreducible.
One also has $S_{-2}=H_{\{x,z\}}+H_{\{y,t\}}+\mathbf{Q}$,
where $\mathbf{Q}$ is an irreducible quadric surface given by $xz+xt+yt+zt=0$.

Let $\mathcal{C}$ be the conic in $\mathbb{P}^3$ that is given by $y=xz+xt+zt=0$.
Then
\begin{itemize}
\item $H_{\{x\}}\cdot S_\lambda=L_{\{x\},\{z\}}+L_{\{x\},\{t\}}+L_{\{x\},\{y,z\}}+L_{\{x\},\{y,t\}}$;
\item $H_{\{y\}}\cdot S_\lambda=L_{\{y\},\{t\}}+L_{\{y\},\{x,z\}}+\mathcal{C}$;
\item $H_{\{z\}}\cdot S_\lambda=L_{\{x\},\{z\}}+L_{\{z\},\{t\}}+L_{\{z\},\{x,y\}}+L_{\{z\},\{y,t\}}$;
\item $H_{\{t\}}\cdot S_\lambda=L_{\{x\},\{t\}}+L_{\{y\},\{t\}}+L_{\{z\},\{t\}}+L_{\{t\},\{x,z\}}$,
\end{itemize}
so that the base locus of the pencil~$\mathcal{S}$ consists of the curves
$L_{\{x\},\{z\}}$, $L_{\{x\},\{t\}}$, \mbox{$L_{\{y\},\{t\}}$},
$L_{\{z\},\{t\}}$, $L_{\{x\},\{y,z\}}$, $L_{\{x\},\{y,t\}}$, $L_{\{y\},\{x,z\}}$,
$L_{\{z\},\{x,y\}}$, \mbox{$L_{\{z\},\{y,t\}}$}, $L_{\{t\},\{x,z\}}$, and $\mathcal{C}$.

If $\lambda\ne-2$, then singular points of the surface $S_\lambda$ contained in the base locus of the pencil $\mathcal{S}$
are all du Val and can be described as follows:
\begin{itemize}\setlength{\itemindent}{3cm}
\item[$P_{\{x\},\{y\},\{z\}}$:] type $\mathbb{A}_3$ with quadratic term
$$
(x+z)(x+y+z)
$$
for $\lambda\neq -3$, type $\mathbb{A}_5$ for $\lambda=-3$;

\item[$P_{\{x\},\{y\},\{t\}}$:] type $\mathbb{A}_2$ with quadratic term $(x+t)(y+t)$;

\item[$P_{\{x\},\{z\},\{t\}}$:] type $\mathbb{A}_3$ with quadratic term $t(x+z)$;

\item[$P_{\{y\},\{z\},\{t\}}$:] type $\mathbb{A}_2$ with quadratic term $(y+t)(z+t)$;

\item[$P_{\{x\},\{z\},\{y,t\}}$:] type $\mathbb{A}_1$ with quadratic term $xy+yz+zt-(\lambda+2)xz$;

\item[$P_{\{y\},\{t\},\{x,z\}}$:] type $\mathbb{A}_1$ with quadratic term $xy+xt+yz-(\lambda+2)yt$.
\end{itemize}
Moreover, the singularities of the surface $S_{-2}$ at these points are non-isolated ordinary double points.
Thus, if $\lambda\ne-2$, then the fiber $\mathsf{f}^{-1}(\lambda)$ is irreducible by Corollary~\ref{corollary:irreducible-fibers}.
Similarly, it follows from \eqref{equation:equation:number-of-irredubicle-components-refined} and Lemma~\ref{lemma:normal-crossing}
that $[\mathsf{f}^{-1}(-2)]=3$.
This confirms \eqref{equation:main-1} in Main Theorem, since  $h^{1,2}(X)=2$.

If $\lambda\ne-2$ and $\lambda\ne-3$,
then the intersection matrix of the curves
$L_{\{y\},\{x,t\}}$, $L_{\{t\},\{x,z\}}$, $L_{\{x\},\{y,t\}}$, $L_{\{z\},\{y,t\}}$, $L_{\{x\},\{y,t\}}$, and $H_{\lambda}$
on the surface $S_\lambda$ is given by
\begin{center}\renewcommand\arraystretch{1.42}
\begin{tabular}{|c||c|c|c|c|c|c|}
\hline
 $\bullet$  & $L_{\{y\},\{x,t\}}$ & $L_{\{t\},\{x,z\}}$ & $L_{\{x\},\{y,t\}}$ & $L_{\{z\},\{y,t\}}$ & $L_{\{x\},\{y,t\}}$ & $H_{\lambda}$ \\
\hline\hline
$L_{\{y\},\{x,t\}}$ & $-\frac{4}{3}$ & $0$ & $\frac{1}{3}$ & $0$ & $0$ & $1$ \\
\hline
$L_{\{t\},\{x,z\}}$ & $0$ & $-\frac{1}{2}$ & $0$ &  $0$ & $0$ & $1$ \\
\hline
$L_{\{x\},\{y,t\}}$ & $\frac{1}{3}$ & $0$ & $-\frac{5}{6}$ &  $\frac{1}{2}$ & $1$ & $1$ \\
\hline
$L_{\{z\},\{y,t\}}$ & $0$ & $0$ & $\frac{1}{2}$ &  $-\frac{5}{6}$ & $0$ & $1$ \\
\hline
$L_{\{x\},\{y,t\}}$ & $0$ & $0$ & $1$ &  $0$ & $-\frac{5}{4}$ & $1$ \\
\hline
 $H_{\lambda}$   & $1$ & $1$ & $1$ & $1$ & $1$ & $4$ \\
\hline
\end{tabular}
\end{center}
This matrix has rank $6$.
One the other hand, if $\lambda\ne -2$, then
$$
H_\lambda\sim 2L_{\{x\},\{z\}}+L_{\{y\},\{x,z\}}+L_{\{t\},\{x,z\}}\sim L_{\{x\},\{y,t\}}+2L_{\{y\},\{t\}}+L_{\{z\},\{y,t\}}
$$
on the surface $S_{\lambda}$, because $H_{\{x,z\}}\cdot S_\lambda=2L_{\{x\},\{z\}}+L_{\{y\},\{x,z\}}+L_{\{t\},\{x,z\}}$ and
$$
H_{\{y,t\}}\cdot S_\lambda=L_{\{x\},\{y,t\}}+2L_{\{y\},\{t\}}+L_{\{z\},\{y,t\}}.
$$
Thus, if $\lambda\ne -2$ and $\lambda\ne -2$, then the intersection matrix of the curves
$L_{\{x\},\{z\}}$, $L_{\{x\},\{t\}}$, $L_{\{y\},\{t\}}$,  $L_{\{z\},\{t\}}$, $L_{\{x\},\{y,z\}}$, $L_{\{x\},\{y,t\}}$,
$L_{\{y\},\{x,z\}}$,  $L_{\{z\},\{x,y\}}$, $L_{\{z\},\{y,t\}}$,
$L_{\{t\},\{x,z\}}$, and $\mathcal{C}$ also has rank $6$.
Hence, we see that \eqref{equation:main-2-simple} holds,
because $\mathrm{rk}\,\mathrm{Pic}(\widetilde{S}_{\Bbbk})=\mathrm{rk}\,\mathrm{Pic}(S_{\Bbbk})+12$.
Then \eqref{equation:main-2} in Main Theorem holds by Lemma~\ref{lemma:cokernel}.

\subsection{Family \textnumero $2.17$}
\label{section:r-2-n-17}

The threefold $X$ is a blow up of a smooth quadric threefold along a smooth elliptic curve of degree $5$, so that $h^{1,2}(X)=1$.
A~toric Landau--Ginzburg model of this family is given by Minkowski polynomial \textnumero $1926$, which is
$$
x+y+z+\frac{x}{y}+\frac{y}{x}+\frac{z}{y}+\frac{z}{x}+\frac{1}{z}+\frac{2}{y}+\frac{1}{x}+\frac{z}{xy}+\frac{1}{xz}+\frac{1}{xy}.
$$
The quartic pencil $\mathcal{S}$ is given by
$$
x^2yz+xy^2z+xyz^2+x^2zt+y^2zt+xz^2t+yz^2t+xyt^2+2xzt^2+yzt^2+z^2t^2+yt^3+zt^3=\lambda xyzt.
$$

As usual, we suppose that  $\lambda\ne\infty$. Let $\mathcal{C}$ be the conic $\{x=yz+tz+t^2=0\}$.
Then
\begin{equation}
\label{equation:2-17}
\begin{split}
H_{\{x\}}\cdot S_\lambda&=L_{\{x\},\{t\}}+L_{\{x\},\{y,z\}}+\mathcal{C},\\
H_{\{y\}}\cdot S_\lambda&=L_{\{y\},\{z\}}+L_{\{y\},\{t\}}+L_{\{y\},\{x,t\}}+L_{\{y\},\{x,z,t\}},\\
H_{\{z\}}\cdot S_\lambda&=L_{\{y\},\{z\}}+2L_{\{z\},\{t\}}+L_{\{z\},\{x,t\}},\\
H_{\{t\}}\cdot S_\lambda&=L_{\{x\},\{t\}}+L_{\{y\},\{t\}}+L_{\{z\},\{t\}}+L_{\{t\},\{x,y,z\}}.\\
\end{split}
\end{equation}
Thus, we may assume that $C_1=L_{\{x\},\{t\}}$, \mbox{$C_2=L_{\{y\},\{z\}}$}, \mbox{$C_3=L_{\{y\},\{t\}}$}, $C_4=L_{\{z\},\{t\}}$,
\mbox{$C_5=L_{\{x\},\{y,z\}}$}, \mbox{$C_6=L_{\{y\},\{x,t\}}$}, $C_7=L_{\{z\},\{x,t\}}$, \mbox{$C_8=L_{\{y\},\{x,z,t\}}$}, \mbox{$C_9=L_{\{t\},\{x,y,z\}}$}, \mbox{$C_{10}=\mathcal{C}$}.
These are all base curves of the pencil~$\mathcal{S}$.
Note that $\textbf{m}_1=2$, $\textbf{m}_2=2$, $\textbf{m}_3=2$, $\textbf{m}_3=2$, $\textbf{m}_4=3$, $\textbf{m}_5=1$, $\textbf{m}_6=1$, $\textbf{m}_7=1$, $\textbf{m}_8=1$, $\textbf{m}_9=1$, and $\textbf{m}_{10}=1$.

If $\lambda\ne-2$, then the surface $S_\lambda\in\mathcal{S}$ has isolated singularities, so that it is irreducible.
On the other hand, one also has $S_{-2}=H_{\{x,t\}}+\mathbf{S}$,
where $\mathbf{S}$ is an irreducible cubic surface that is given by the equation
$xyz+xzt+y^2z+yz^2+yzt+yt^2+z^2t+zt^2=0$.

If $\lambda\ne-2$, then the singular points of the surface $S_\lambda$ contained in the base locus
of the pencil $\mathcal{S}$ can be described as follows:
\begin{itemize}\setlength{\itemindent}{3cm}
\item[$P_{\{x\},\{y\},\{t\}}$:] type $\mathbb{A}_2$ with quadratic term $(x+t)(y+t)$;

\item[$P_{\{x\},\{z\},\{t\}}$:] type $\mathbb{A}_3$ with quadratic term $z(x+t)$;

\item[$P_{\{y\},\{z\},\{t\}}$:] type $\mathbb{A}_2$ with quadratic term $z(y+t)$;

\item[$P_{\{x\},\{t\},\{y,z\}}$:] type $\mathbb{A}_1$ with quadratic term $(x+y+z)(x+t)-(\lambda+2)xt$;

\item[$P_{\{y\},\{t\},\{x,z\}}$:] type $\mathbb{A}_1$ with quadratic term
$$
(y+t)(x+y+z+t)-(\lambda+3)yt
$$
for $\lambda\neq -3$, type $\mathbb{A}_2$ for $\lambda=-3$;

\item[$P_{\{y\},\{z\},\{x,t\}}$:] type $\mathbb{A}_2$ with quadratic term $y(x+t+(\lambda+2)z)$;

\item[$P_{\{z\},\{t\},\{x,y\}}$:] type $\mathbb{A}_1$ with quadratic term $xz+yz-z^2-(\lambda+2)zt+t^2$;

\item[{$[0:-2,2:-1\pm\sqrt{5}]$}:] smooth point for $\lambda\ne\frac{-1\mp\sqrt{5}}{2}$, type $\mathbb{A}_1$ for $\lambda=\frac{-1\mp\sqrt{5}}{2}$.
\end{itemize}
Thus, if $\lambda\ne-2$, then $[\mathsf{f}^{-1}(\lambda)]=1$ by Corollary~\ref{corollary:irreducible-fibers}.
To find $[\mathsf{f}^{-1}(-2)]$, observe that the set $\Sigma$ consists of the points
$P_{\{y\},\{z\},\{t\}}$, $P_{\{x\},\{z\},\{t\}}$, $P_{\{x\},\{y\},\{t\}}$, $P_{\{x\},\{t\},\{y,z\}}$,
$P_{\{y\},\{z\},\{x,t\}}$, $P_{\{y\},\{t\},\{x,z\}}$, and $P_{\{z\},\{t\},\{x,y\}}$.
Thus, it follows from \eqref{equation:equation:number-of-irredubicle-components-refined} and Lemma~\ref{lemma:main} that
$$
\big[\mathsf{f}^{-1}(-2)\big]=2+\sum_{P\in\Sigma}\mathbf{D}^{-2}_{P}.
$$
Moreover, the quadratic terms of the surface $S_{\lambda}$ at the singular points
$P_{\{y\},\{z\},\{t\}}$, $P_{\{x\},\{z\},\{t\}}$, $P_{\{x\},\{y\},\{t\}}$, $P_{\{x\},\{t\},\{y,z\}}$, $P_{\{y\},\{z\},\{x,t\}}$,
$P_{\{y\},\{t\},\{x,z\}}$, and $P_{\{z\},\{t\},\{x,y\}}$ given above are also valid for $\lambda=-2$.
This shows that all these points are good double points of the surface $S_{-2}$, so that their defects vanish by Lemma~\ref{lemma:normal-crossing}.
Hence, we have $[\mathsf{f}^{-1}(-2)]=2$.
This confirms \eqref{equation:main-1} in Main Theorem, since  $h^{1,2}(X)=1$.

If $\lambda\ne -2$, then the intersection matrix of base curves of the pencil $\mathcal{S}$
on the surface $S_{\lambda}$ has the same rank as the intersection matrix of the curves
$L_{\{x\},\{y,z\}}$, $L_{\{y\},\{x,t\}}$, $L_{\{z\},\{x,t\}}$, $L_{\{y\},\{x,z,t\}}$, $L_{\{y\},\{z\}}$, and $H_{\lambda}$,
because
\begin{multline*}
L_{\{x\},\{t\}}+L_{\{x\},\{y,z\}}+\mathcal{C}\sim L_{\{y\},\{z\}}+L_{\{y\},\{t\}}+L_{\{y\},\{x,t\}}+L_{\{y\},\{x,z,t\}}\sim\\
\sim L_{\{y\},\{z\}}+2L_{\{z\},\{t\}}+L_{\{z\},\{x,t\}}\sim L_{\{x\},\{t\}}+L_{\{y\},\{t\}}+L_{\{z\},\{t\}}+L_{\{t\},\{x,y,z\}}\sim H_\lambda
\end{multline*}
on the surface $S_\lambda$ by \eqref{equation:2-17}, and
$$
2L_{\{x\},\{t\}}+L_{\{y\},\{x,t\}}+L_{\{z\},\{x,t\}}\sim H_\lambda,
$$
since $H_{\{x,t\}}\cdot S_\lambda=2L_{\{x\},\{t\}}+L_{\{y\},\{x,t\}}+L_{\{z\},\{x,t\}}$.
Moreover, if $\lambda\not\in\{-2,-3,\frac{-1\pm\sqrt{5}}{2}\}$,
then the intersection matrix of the curves $L_{\{x\},\{y,z\}}$, $L_{\{y\},\{x,t\}}$, $L_{\{z\},\{x,t\}}$, $L_{\{y\},\{x,z,t\}}$, $L_{\{y\},\{z\}}$, and $H_{\lambda}$
on the surface $S_\lambda$ is given by
\begin{center}\renewcommand\arraystretch{1.42}
\begin{tabular}{|c||c|c|c|c|c|c|}
\hline
 $\bullet$  & $L_{\{x\},\{y,z\}}$ & $L_{\{y\},\{x,t\}}$ & $L_{\{z\},\{x,t\}}$ & $L_{\{y\},\{x,z,t\}}$ & $L_{\{y\},\{z\}}$ & $H_{\lambda}$ \\
\hline\hline
$L_{\{x\},\{y,z\}}$ & $-\frac{4}{3}$ & $0$ & $0$ & $0$ & $1$ & $1$ \\
\hline
$L_{\{y\},\{x,t\}}$ & $0$ & $-\frac{2}{3}$ & $\frac{1}{3}$ &  $\frac{2}{3}$ & $\frac{2}{3}$ & $1$ \\
\hline
$L_{\{z\},\{x,t\}}$ & $0$ & $\frac{1}{3}$ & $-\frac{1}{3}$ &  $\frac{1}{3}$ & $\frac{1}{3}$ & $1$ \\
\hline
$L_{\{y\},\{x,z,t\}}$ & $0$ & $\frac{2}{3}$ & $\frac{1}{3}$ &  $-\frac{5}{6}$ & $\frac{2}{3}$ & $1$ \\
\hline
$L_{\{y\},\{z\}}$ & $1$ & $\frac{2}{3}$ & $\frac{1}{3}$ &  $\frac{2}{3}$ & $-\frac{2}{3}$ & $1$ \\
\hline
 $H_{\lambda}$   & $1$ & $1$ & $1$ & $1$ & $1$ & $4$ \\
\hline
\end{tabular}
\end{center}
The determinant of this matrix is $-\frac{7}{18}$.
But $\mathrm{rk}\,\mathrm{Pic}(\widetilde{S}_{\Bbbk})=\mathrm{rk}\,\mathrm{Pic}(S_{\Bbbk})+12$.
Thus, we see that \eqref{equation:main-2-simple} holds,
so that \eqref{equation:main-2} in Main Theorem also holds by Lemma~\ref{lemma:cokernel}.

\subsection{Family \textnumero $2.18$}
\label{section:r-2-n-18}
In this case, the threefold $X$ is  a double cover of $\mathbb{P}^1\times\mathbb{P}^2$ ramified in a divisor of bidegree $(2,2)$.
In this case, we have $h^{1,2}(X)=2$.
A~toric Landau--Ginzburg model of this family is given by Minkowski polynomial \textnumero $1922$, which is
$$
x+y+z+\frac{y}{x}+\frac{z}{x}+\frac{x}{yz}+\frac{1}{z}+\frac{1}{y}+\frac{1}{x}+\frac{2}{yz}+\frac{1}{xz}+\frac{1}{xy}+\frac{1}{xyz}.
$$
The quartic pencil $\mathcal{S}$ is given by
$$
x^2yz+xy^2z+xyz^2+y^2zt+yz^2t+x^2t^2+xyt^2+xzt^2+yzt^2+2xt^3+yt^3+zt^3+t^4=\lambda xyzt.
$$

Suppose that $\lambda\ne\infty$. Let $\mathcal{C}$ be the conic which is given by $x=yz+t^2=0$. Then
\begin{equation}
\label{equation:2-18}
\begin{split}
H_{\{x\}}\cdot S_\lambda&=L_{\{x\},\{t\}}+L_{\{x\},\{y,z,t\}}+\mathcal{C},\\
H_{\{y\}}\cdot S_\lambda&=2L_{\{y\},\{t\}}+L_{\{y\},\{x,t\}}+L_{\{y\},\{x,z,t\}},\\
H_{\{z\}}\cdot S_\lambda&=2L_{\{z\},\{t\}}+L_{\{z\},\{x,t\}}+L_{\{z\},\{x,y,t\}},\\
H_{\{t\}}\cdot S_\lambda&=L_{\{x\},\{t\}}+L_{\{y\},\{t\}}+L_{\{z\},\{t\}}+L_{\{t\},\{x,y,z\}}.
\end{split}
\end{equation}
Thus, we may assume that
$C_1=L_{\{x\},\{t\}}$, $C_2=L_{\{y\},\{t\}}$, \mbox{$C_3=L_{\{z\},\{t\}}$}, $C_4=L_{\{y\},\{x,t\}}$,
$C_5=L_{\{z\},\{x,t\}}$, $C_6=L_{\{x\},\{y,z,t\}}$, $C_7=L_{\{y\},\{x,z,t\}}$, \mbox{$C_8=L_{\{z\},\{x,y,t\}}$},
\mbox{$C_9=L_{\{t\},\{x,y,z\}}$}, and $C_{10}=\mathcal{C}$.
These are all base curves of the pencil~$\mathcal{S}$.

Note that $S_{-2}=H_{\{x,t\}}+H_{\{x,y,z,t\}}+\mathbf{Q}$,
where $\mathbf{Q}$ is an irreducible quadric cone in $\mathbb{P}^3$ that is given by $yz+t^2=0$.
On the other hand, if $\lambda\ne-2$, then $S_\lambda$ has isolated singularities.
Furthermore, if $\lambda\ne-2$, then the singular points of the surface $S_\lambda$ contained in the base locus
of the pencil $\mathcal{S}$ can be described as follows:
\begin{itemize}\setlength{\itemindent}{3cm}
\item[$P_{\{y\},\{z\},\{t\}}$:] type $\mathbb{A}_1$ with quadratic term $yz+t^2$;

\item[$P_{\{x\},\{z\},\{t\}}$:] type $\mathbb{A}_3$ with quadratic term $z(x+t)$;

\item[$P_{\{x\},\{y\},\{t\}}$:] type $\mathbb{A}_3$ with quadratic term $y(x+t)$;

\item[$P_{\{x\},\{t\},\{y,z\}}$:] type $\mathbb{A}_1$ with quadratic term $(x+t)(x+y+z+t)-(\lambda+2)xt$;

\item[$P_{\{y\},\{z\},\{x,t\}}$:] type $\mathbb{A}_1$ with quadratic term $x(x+y+z+t)-(\lambda+2)yz$;

\item[$P_{\{y\},\{t\},\{x,z\}}$:] type $\mathbb{A}_2$ with quadratic term $y(x+y+z-t-\lambda t)$;

\item[$P_{\{z\},\{t\},\{x,y\}}$:] type $\mathbb{A}_2$ with quadratic term $z(x+y+z-t-\lambda t)$.
\end{itemize}
Thus, the set $\Sigma$ consists of the points
$P_{\{y\},\{z\},\{t\}}$, $P_{\{x\},\{z\},\{t\}}$, $P_{\{x\},\{y\},\{t\}}$,
$P_{\{x\},\{t\},\{y,z\}}$, $P_{\{y\},\{z\},\{x,t\}}$,
$P_{\{y\},\{t\},\{x,z\}}$, and $P_{\{z\},\{t\},\{x,y\}}$.

If $\lambda\ne-2$, then the fiber $\mathsf{f}^{-1}(\lambda)$ is irreducible by Corollary~\ref{corollary:irreducible-fibers}.
Similarly, it follows from \eqref{equation:equation:number-of-irredubicle-components-refined},
Lemma~\ref{lemma:main} and Lemma~\ref{lemma:normal-crossing} that  $[\mathsf{f}^{-1}(-2)]=[S_{-2}]=3$,
because
$$
\mathbf{M}_{1}^{-2}=\mathbf{M}_{2}^{-2}=\mathbf{M}_{3}^{-2}=\mathbf{M}_{4}^{-2}=
\mathbf{M}_{5}^{-2}=\mathbf{M}_{6}^{-2}=\mathbf{M}_{7}^{-2}=\mathbf{M}_{8}^{-26}=\mathbf{M}_{9}^{-2}=\mathbf{M}_{10}^{-26}=1,
$$
and every point of the set $\Sigma$ is a good double point of the surface $S_{-2}$.
This confirms \eqref{equation:main-1} in Main Theorem, since $h^{1,2}(X)=2$.

To verify \eqref{equation:main-2} in Main Theorem, observe that
$$
H_{\{x,t\}}\cdot S_\lambda=2L_{\{x\},\{t\}}+L_{\{y\},\{x,t\}}+L_{\{z\},\{x,t\}}
$$
for $\lambda\ne -2$. Thus, if $\lambda\ne -2$, then $2L_{\{x\},\{t\}}+L_{\{y\},\{x,t\}}+L_{\{z\},\{x,t\}}\sim H_\lambda$ on the surface~$S_\lambda$.
Likewise, if $\lambda\ne -2$, then
$$
2L_{\{x\},\{y,z,t\}}+L_{\{y\},\{x,z,t\}}+L_{\{z\},\{x,y,t\}}+L_{\{t\},\{x,y,z\}}\sim H_\lambda,
$$
since $H_{\{x,y,z,t\}}\cdot S_\lambda=2L_{\{x\},\{y,z,t\}}+L_{\{y\},\{x,z,t\}}+L_{\{z\},\{x,y,t\}}+L_{\{t\},\{x,y,z\}}$.
If $\lambda\ne -2$, then
\begin{multline*}
H_\lambda\sim L_{\{x\},\{t\}}+L_{\{x\},\{y,z,t\}}+\mathcal{C}\sim 2L_{\{y\},\{t\}}+L_{\{y\},\{x,t\}}+L_{\{y\},\{x,z,t\}}\sim\\
\sim 2L_{\{z\},\{t\}}+L_{\{z\},\{x,t\}}+L_{\{z\},\{x,y,t\}}\sim L_{\{x\},\{t\}}+L_{\{y\},\{t\}}+L_{\{z\},\{t\}}+L_{\{t\},\{x,y,z\}}
\end{multline*}
on the surface $S_\lambda$. This follows from \eqref{equation:2-18}.
So, if $\lambda\ne -2$, then the rank of the intersection matrix of the curves
$L_{\{x\},\{t\}}$, $L_{\{y\},\{t\}}$, $L_{\{z\},\{t\}}$, $L_{\{y\},\{x,t\}}$,
$L_{\{z\},\{x,t\}}$, $L_{\{x\},\{y,z,t\}}$, $L_{\{y\},\{x,z,t\}}$, $L_{\{z\},\{x,y,t\}}$,
$L_{\{t\},\{x,y,z\}}$, and $\mathcal{C}$ on the surface $S_{\lambda}$ is $5$, since
the intersection matrix of the curves $L_{\{x\},\{t\}}$, $L_{\{x\},\{y,z,t\}}$, $L_{\{y\},\{x,z,t\}}$, $L_{\{y\},\{x,t\}}$, and $H_{\lambda}$
on the surface $S_\lambda$ is given~by
\begin{center}\renewcommand\arraystretch{1.42}
\begin{tabular}{|c||c|c|c|c|c|}
\hline
 $\bullet$  & $L_{\{x\},\{t\}}$ & $L_{\{x\},\{y,z,t\}}$ & $L_{\{y\},\{x,z,t\}}$ & $L_{\{y\},\{x,t\}}$ & $H_{\lambda}$ \\
\hline\hline
$L_{\{x\},\{t\}}$ & $0$ & $\frac{1}{2}$ & $0$ & $\frac{3}{4}$ & $1$ \\
\hline
$L_{\{x\},\{y,z,t\}}$ & $\frac{1}{2}$ & $-\frac{3}{2}$ & $1$ &  $0$ & $1$ \\
\hline
$L_{\{y\},\{x,z,t\}}$ & $0$ & $1$ & $-\frac{5}{6}$ &  $\frac{1}{2}$ & $1$ \\
\hline
$L_{\{y\},\{x,t\}}$ & $\frac{3}{4}$ & $0$ & $\frac{1}{2}$ &  $-\frac{1}{2}$ & $1$ \\
\hline
 $H_{\lambda}$   & $1$ & $1$ & $1$ & $1$ & $4$ \\
\hline
\end{tabular}
\end{center}
One the other hand, we have
$\mathrm{rk}\,\mathrm{Pic}(\widetilde{S}_{\Bbbk})=\mathrm{rk}\,\mathrm{Pic}(S_{\Bbbk})+13$,
so that \eqref{equation:main-2-simple} holds in this case.
Then \eqref{equation:main-2} in Main Theorem holds by Lemma~\ref{lemma:cokernel}.

\subsection{Family \textnumero $2.19$}
\label{section:r-2-n-19}

In this case, the threefold $X$ can be obtained by blowing up a smooth complete intersection of two quadrics in $\mathbb{P}^5$ along a line,
so that $h^{1,2}(X)=2$.
A~toric Landau--Ginzburg model of this family is given by
$$
x+y+z+\frac{x}{y}+\frac{z}{y}+\frac{yz}{x}+\frac{x}{z}+\frac{x}{yz}+\frac{y}{z}+\frac{1}{y}+\frac{y}{x},
$$
which is Minkowski polynomial \textnumero $1108$.
Then the pencil $\mathcal{S}$ is given by
$$
x^2yz+xyz^2+x^2zt+xy^2z+xz^2t+y^2z^2+x^2yt+x^2t^2+xy^2t+xzt^2+y^2zt=\lambda xyzt.
$$

For simplicity, we assume that $\lambda\ne\infty$.
If $\lambda\ne-1$, then the surface $S_\lambda$ has isolated singularities, so that it is irreducible.
On the other hand, we have $S_{-1}=H_{\{x,z\}}+H_{\{x,t\}}+\mathbf{Q}$,
where $\mathbf{Q}$ is a smooth quadric in $\mathbb{P}^3$ that is given by $xy+xt+y^2=0$.

Let $\mathcal{C}$ be the conic in $\mathbb{P}^3$ that is given by $z=xy+xt+y^2=0$.
Then
\begin{itemize}
\item $H_{\{x\}}\cdot S_\lambda=2L_{\{x\},\{y\}}+L_{\{x\},\{z\}}+L_{\{x\},\{z,t\}}$,
\item $H_{\{y\}}\cdot S_\lambda=L_{\{x\},\{y\}}+L_{\{y\},\{t\}}+L_{\{y\},\{x,z\}}+L_{\{y\},\{z,t\}}$,
\item $H_{\{z\}}\cdot S_\lambda=L_{\{x\},\{z\}}+L_{\{z\},\{t\}}+\mathcal{C}$,
\item $H_{\{t\}}\cdot S_\lambda=L_{\{y\},\{t\}}+L_{\{z\},\{t\}}+L_{\{t\},\{x,y\}}+L_{\{t\},\{x,z\}}$.
\end{itemize}
This shows that the base locus of the pencil $\mathcal{S}$ consists of the curves
$L_{\{x\},\{y\}}$, $L_{\{x\},\{z\}}$, $L_{\{y\},\{t\}}$, $L_{\{z\},\{t\}}$, $L_{\{x\},\{z,t\}}$,
 $L_{\{y\},\{x,z\}}$, $L_{\{y\},\{z,t\}}$, $L_{\{t\},\{x,y\}}$, $L_{\{t\},\{x,z\}}$, and $\mathcal{C}$.
This also gives
\begin{multline*}
2L_{\{x\},\{y\}}+L_{\{x\},\{z\}}+L_{\{x\},\{z,t\}}\sim L_{\{x\},\{y\}}+L_{\{y\},\{t\}}+L_{\{y\},\{x,z\}}+L_{\{y\},\{z,t\}}\sim\\
\sim L_{\{x\},\{z\}}+L_{\{z\},\{t\}}+\mathcal{C}\sim L_{\{y\},\{t\}}+L_{\{z\},\{t\}}+L_{\{t\},\{x,y\}}+L_{\{t\},\{x,z\}}\sim H_\lambda
\end{multline*}
on the surface $S_\lambda$ with $\lambda\ne -1$.

If $\lambda\ne -1$, then the singularities of the surface $S_\lambda$ contained in the base locus of the pencil $\mathcal{S}$
can be described as follows:
\begin{itemize}\setlength{\itemindent}{3cm}
\item[$P_{\{y\},\{z\},\{t\}}$:] type $\mathbb{A}_2$ with quadratic term $(y+t)(z+t)$;

\item[$P_{\{x\},\{z\},\{t\}}$:] type $\mathbb{A}_2$ with quadratic term $(x+z)(z+t)$;

\item[$P_{\{x\},\{y\},\{t\}}$:] type $\mathbb{A}_1$ with quadratic term $xy+xt+y^2$;

\item[$P_{\{x\},\{y\},\{z\}}$:] type $\mathbb{A}_4$ with quadratic term $x(x+z)$;

\item[$P_{\{x\},\{y\},\{z,t\}}$:] type $\mathbb{A}_2$ with quadratic term $x(y+\lambda y-z-t)$;

\item[$P_{\{y\},\{t\},\{x,z\}}$:] type $\mathbb{A}_1$ with quadratic term $(x+z)(y+t)-(\lambda+1)yt$;

\item[$P_{\{z\},\{t\},\{x,y\}}$:] type $\mathbb{A}_1$ with quadratic term $(z+t)(x+y-t)-(\lambda+1)zt$.
\end{itemize}
These quadratic terms remain valid also for $\lambda=-1$.
Thus, using \eqref{equation:equation:number-of-irredubicle-components-refined}
and applying Lemmas~\ref{lemma:main} and \ref{lemma:normal-crossing},
we see that $[\mathsf{f}^{-1}(-1)]=[S_{-1}]=3$, because $S_{-1}$ is smooth at general points of the curves
$L_{\{x\},\{y\}}$, $L_{\{x\},\{z\}}$, $L_{\{y\},\{t\}}$, $L_{\{z\},\{t\}}$, $L_{\{x\},\{z,t\}}$,
$L_{\{y\},\{x,z\}}$, $L_{\{y\},\{z,t\}}$, $L_{\{t\},\{x,y\}}$, $L_{\{t\},\{x,z\}}$, and $\mathcal{C}$.
This confirms \eqref{equation:main-1} in Main Theorem, since $h^{1,2}(X)=2$.

To verify \eqref{equation:main-2} in Main Theorem, observe that
$\mathrm{rk}\,\mathrm{Pic}(\widetilde{S}_{\Bbbk})=\mathrm{rk}\,\mathrm{Pic}(S_{\Bbbk})+13$.
Moreover, if $\lambda\ne -1$, then the rank of the intersection matrix of the curves
$L_{\{x\},\{y\}}$, $L_{\{x\},\{z\}}$, $L_{\{y\},\{t\}}$, $L_{\{z\},\{t\}}$, $L_{\{x\},\{z,t\}}$,
$L_{\{y\},\{x,z\}}$, $L_{\{y\},\{z,t\}}$, $L_{\{t\},\{x,y\}}$, $L_{\{t\},\{x,z\}}$, and $\mathcal{C}$
on the surface $S_{\lambda}$ is the same as  the rank of the intersection matrix of the curves
$L_{\{x\},\{z,t\}}$, $L_{\{y\},\{t\}}$, $L_{\{y\},\{x,z\}}$, $L_{\{y\},\{z,t\}}$, $L_{\{z\},\{t\}}$, $L_{\{t\},\{x,y\}}$, and $H_{\lambda}$.
Thus, using Lemma~\ref{lemma:cokernel}, we see that \eqref{equation:main-2} in Main Theorem holds in this case,
because the matrix in the following lemma has rank $5$.

\begin{lemma}
\label{lemma:r2-n19-intersection}
Suppose that $\lambda\ne-1$.
Then the intersection matrix of the curves $L_{\{x\},\{z,t\}}$, $L_{\{y\},\{t\}}$, $L_{\{y\},\{x,z\}}$, $L_{\{y\},\{z,t\}}$, $L_{\{z\},\{t\}}$, $L_{\{t\},\{x,y\}}$, and $H_{\lambda}$
on the surface $S_\lambda$ is given~by
\begin{center}\renewcommand\arraystretch{1.42}
\begin{tabular}{|c||c|c|c|c|c|c|c|}
\hline
 $\bullet$  & $L_{\{x\},\{z,t\}}$ & $L_{\{y\},\{t\}}$ & $L_{\{y\},\{x,z\}}$ & $L_{\{y\},\{z,t\}}$ & $L_{\{z\},\{t\}}$ & $L_{\{t\},\{x,y\}}$ & $H_{\lambda}$ \\
\hline\hline
$L_{\{x\},\{z,t\}}$ & $-\frac{2}{3}$ & $0$ & $0$ & $\frac{1}{3}$ & $\frac{2}{3}$ & $0$ & $1$ \\
\hline
$L_{\{y\},\{t\}}$ & $0$ & $-\frac{1}{3}$ & $\frac{1}{2}$ &  $\frac{1}{3}$ & $\frac{1}{3}$ & $\frac{1}{2}$ & $1$ \\
\hline
$L_{\{y\},\{x,z\}}$ & $0$ & $\frac{1}{2}$ & $-\frac{7}{10}$ &  $1$ & $0$ & $0$ & $1$ \\
\hline
$L_{\{y\},\{z,t\}}$ & $\frac{1}{3}$ & $\frac{1}{3}$ & $1$ &  $-\frac{2}{3}$ & $\frac{2}{3}$ & $0$ & $1$ \\
\hline
$L_{\{z\},\{t\}}$ & $\frac{2}{3}$ & $\frac{1}{3}$ & $0$ &  $\frac{2}{3}$ & $-\frac{1}{6}$ & $\frac{1}{2}$ & $1$ \\
\hline
$L_{\{t\},\{x,y\}}$ & $\frac{1}{2}$ & $0$ & $0$ &  $0$ & $0$ & $-\frac{1}{2}$ & $1$ \\
\hline
 $H_{\lambda}$  & $1$ & $1$ & $1$ & $1$ & $1$ & $1$ & $4$ \\
\hline
\end{tabular}
\end{center}
\end{lemma}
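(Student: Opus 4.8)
The plan is to compute the thirteen independent entries of this symmetric $7\times 7$ matrix one at a time, using only the du Val intersection calculus recorded in Appendix~\ref{section:intersection} together with the explicit list of singular points of $S_\lambda$ and their quadratic terms given above. Throughout I fix $\lambda\ne-1$, so that $S_\lambda$ is an irreducible quartic whose singularities in the base locus are the du Val points $P_{\{y\},\{z\},\{t\}}$, $P_{\{x\},\{z\},\{t\}}$, $P_{\{x\},\{y\},\{t\}}$, $P_{\{x\},\{y\},\{z\}}$, $P_{\{x\},\{y\},\{z,t\}}$, $P_{\{y\},\{t\},\{x,z\}}$, $P_{\{z\},\{t\},\{x,y\}}$ of the stated types. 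The last row and column require no work: $H_\lambda^2=4$ because $S_\lambda$ has degree $4$, and $H_\lambda\cdot L=1$ for each of the six lines, since every one of them is a line in $\mathbb{P}^3$. What remains is the diagonal (six self-intersections) and the off-diagonal pairs of lines.

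For the diagonal I would first read off, from the coordinates, exactly which of the seven singular points each line passes through. For instance $L_{\{z\},\{t\}}$ (given by $z=t=0$) contains $P_{\{y\},\{z\},\{t\}}$ and $P_{\{x\},\{z\},\{t\}}$, both of type $\mathbb{A}_2$, together with the $\mathbb{A}_1$ point $P_{\{z\},\{t\},\{x,y\}}$; similarly $L_{\{x\},\{z,t\}}$ and $L_{\{y\},\{z,t\}}$ each meet two $\mathbb{A}_2$ points, $L_{\{y\},\{t\}}$ meets one $\mathbb{A}_2$ and two $\mathbb{A}_1$ points, and $L_{\{y\},\{x,z\}}$ meets the $\mathbb{A}_4$ point $P_{\{x\},\{y\},\{z\}}$ and the $\mathbb{A}_1$ point $P_{\{y\},\{t\},\{x,z\}}$. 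Each self-intersection then follows from Proposition~\ref{proposition:du-Val-self-intersection}, whose correction term at an $\mathbb{A}_n$ point depends on which component of the exceptional chain the strict transform of the line hits; Remark~\ref{remark:transversal} is exactly the device for deciding this. The only case needing genuine local input is the $\mathbb{A}_4$ point $P_{\{x\},\{y\},\{z\}}$ on $L_{\{y\},\{x,z\}}$, where one must verify (via a blow-up chart, as in the classification of this singularity) that the strict transform meets an \emph{end} component of the chain.

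For the off-diagonal entries I would go pair by pair. Whenever two of the lines are disjoint in $\mathbb{P}^3$ their intersection number is $0$; whenever they meet at a point where $S_\lambda$ is smooth, Proposition~\ref{proposition:du-Val-intersection} gives $1$; and whenever they meet at one of the du Val points, the local contribution is the value $\tfrac{p(n+1-q)}{n+1}$ read off from the $\mathbb{A}_n$ chain, where $p$ and $q$ record which components the two strict transforms meet. Here again Remark~\ref{remark:transversal} supplies the transversality data, and Proposition~\ref{proposition:du-Val-intersection} converts it into the rational number. As a consistency check at the end I would use the rational equivalences recorded after \eqref{equation:2-18}, i.e.\ the decompositions of $H_\lambda$ as $2L_{\{x\},\{y\}}+L_{\{x\},\{z\}}+L_{\{x\},\{z,t\}}$ and its companions, to cross-verify the computed entries.

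The main obstacle is purely bookkeeping rather than conceptual: at each $\mathbb{A}_n$ point one must determine precisely which chain component each incident line meets, and whether two incident lines meet the same or distinct components, since these choices are what distinguish, say, a contribution of $\tfrac{4}{5}$ from one of $\tfrac{6}{5}$ at an $\mathbb{A}_4$ point. This requires the explicit local blow-up analysis already implicit in classifying the singularities, and it is the only step that is not mechanical. Once all the transversality data are fixed, every entry is a direct application of Propositions~\ref{proposition:du-Val-self-intersection} and~\ref{proposition:du-Val-intersection}, and the asserted matrix (of rank $5$) follows.
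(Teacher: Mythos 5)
Your proposal coincides with the paper's own proof: the last row and column are immediate, the diagonal entries follow from Proposition~\ref{proposition:du-Val-self-intersection} once one lists which du Val points each line passes through (the end-component check at the $\mathbb{A}_4$ point $P_{\{x\},\{y\},\{z\}}$ being the only nontrivial local input), and the off-diagonal entries split exactly as you describe into disjoint pairs, pairs meeting at a smooth point, and pairs meeting at a du Val point handled by Proposition~\ref{proposition:du-Val-intersection} and Remark~\ref{remark:transversal}. Note, however, that carrying out your own recipe for $L_{\{t\},\{x,y\}}$, which contains precisely the two $\mathbb{A}_1$ points $P_{\{x\},\{y\},\{t\}}$ and $P_{\{z\},\{t\},\{x,y\}}$, yields $L_{\{t\},\{x,y\}}^2=-2+\frac{1}{2}+\frac{1}{2}=-1$ rather than the $-\frac{1}{2}$ printed in the table (whose last row is also not the transpose of its last column), so the method establishes the corrected symmetric matrix rather than the one literally displayed; also your cross-check should cite the unlabelled hyperplane-section decompositions of Subsection~\ref{section:r-2-n-19}, not \eqref{equation:2-18}.
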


\begin{proof}
The last column and the last raw in this matrix are obvious.
To find its diagonal entries, we use  Proposition~\ref{proposition:du-Val-self-intersection}.
For instance, the line $L_{\{x\},\{z,t\}}$ contains two singular points of the surface $S_\lambda$.
These are the points $P_{\{x\},\{z\},\{t\}}$ and $P_{\{x\},\{y\},\{z,t\}}$.
Both of them are singular points of type $\mathbb{A}_2$.
Thus, by Proposition~\ref{proposition:du-Val-self-intersection},
we have
$$
L_{\{x\},\{z\}}^2=-2+\frac{2}{3}+\frac{2}{3}=-\frac{2}{3}.
$$
Likewise, we obtain the remaining diagonal entries.

To find the remaining entries of the intersection matrix, observe
that the line $L_{\{x\},\{z,t\}}$ does not intersect the lines $L_{\{y\},\{t\}}$, $L_{\{y\},\{x,z\}}$, and $L_{\{t\},\{x,y\}}$,
so that
$$
L_{\{x\},\{z,t\}}\cdot L_{\{y\},\{t\}}=L_{\{x\},\{z,t\}}\cdot L_{\{y\},\{x,z\}}=L_{\{x\},\{z,t\}}\cdot L_{\{t\},\{x,y\}}=0.
$$

Now observe that $L_{\{x\},\{z,t\}}\cap L_{\{y\},\{z,t\}}=P_{\{x\},\{y\},\{z,t\}}$,
which is a singular point of the surface $S_\lambda$ of type $\mathbb{A}_2$.
Moreover, the strict transforms of the lines $L_{\{x\},\{z,t\}}$ and $L_{\{y\},\{z,t\}}$
on the minimal resolution of singularities of the surface $S_\lambda$ at the point $P_{\{x\},\{y\},\{z,t\}}$
intersect different exceptional curves.
This implies that $L_{\{x\},\{z,t\}}\cdot L_{\{y\},\{z,t\}}=\frac{1}{3}$ by Proposition~\ref{proposition:du-Val-self-intersection}.
Similarly, we see that $L_{\{x\},\{z,t\}}\cdot L_{\{z\},\{t\}}=\frac{2}{3}$,
$L_{\{y\},\{t\}}\cdot L_{\{y\},\{x,z\}}=\frac{1}{2}$, $L_{\{y\},\{t\}}\cdot L_{\{y\},\{z,t\}}=\frac{1}{3}$,
$L_{\{y\},\{t\}}\cdot L_{\{z\},\{t\}}=\frac{1}{3}$, $L_{\{y\},\{t\}}\cdot L_{\{t\},\{x,y\}}=\frac{1}{2}$
and $L_{\{y\},\{z,t\}}\cdot L_{\{z\},\{t\}}=\frac{2}{3}$.

Observe that the line $L_{\{y\},\{x,z\}}$ does not intersect the lines $L_{\{z\},\{t\}}$ and $L_{\{t\},\{x,y\}}$,
and the line $L_{\{y\},\{z,t\}}$ does not intersect the line $L_{\{t\},\{x,y\}}$, so that
$$
L_{\{y\},\{x,z\}}\cdot L_{\{z\},\{t\}}=L_{\{y\},\{x,z\}}\cdot L_{\{t\},\{x,y\}}=L_{\{y\},\{z,t\}}\cdot L_{\{t\},\{x,y\}}=0.
$$
Moreover, the intersection $L_{\{y\},\{x,z\}}\cap L_{\{y\},\{z,t\}}$ consists of a smooth point of the surface~$S_\lambda$.
Thus, we have $L_{\{y\},\{x,z\}}\cdot L_{\{y\},\{z,t\}}=1$.

Finally, observe that $L_{\{z\},\{t\}}\cap L_{\{t\},\{x,y\}}=P_{\{z,t\},\{x,y\}}$,
which is a singular point of the surface $S_\lambda$ of type $\mathbb{A}_1$.
Thus, we have $L_{\{z\},\{t\}}\cdot L_{\{t\},\{x,y\}}=\frac{1}{2}$ by Proposition~\ref{proposition:du-Val-self-intersection}.
\end{proof}

\subsection{Family \textnumero $2.20$}
\label{section:r-2-n-20}
In this case, the threefold $X$ is  a blow up of the threefold $V_5$ along a twisted cubic (see Subsection~\ref{section:r-2-n-14}).
Thus, we have $h^{1,2}(X)=0$.
A~toric Landau--Ginzburg model of this family is given by Minkowski polynomial \textnumero $1109$, which is
$$
\frac{y}{z}+x+y+\frac{1}{z}+\frac{y}{xz}+\frac{y}{x}+\frac{1}{xz}+\frac{xz}{y}+z+\frac{1}{y}+\frac{1}{x}.
$$
The pencil $\mathcal{S}$ is given by the equation
$$
y^2tx+x^2zy+y^2zx+t^2xy+t^2y^2+y^2zt+t^3y+x^2z^2+z^2xy+t^2zx+t^2zy=\lambda xyzt.
$$
Suppose that $\lambda\ne\infty$. Then the surface $S_\lambda$ has isolated singularities.
In particular, we see that $S_\lambda$ is irreducible.

Let $\mathcal{C}$ be a conic in $\mathbb{P}^3$ that is given by $y=xz+t^2=0$. Then
\begin{equation}
\label{equation:2-20}
\begin{split}
H_{\{x\}}\cdot S_\lambda&=L_{\{x\},\{y\}}+L_{\{x\},\{t\}}+L_{\{x\},\{y,t\}}+L_{\{x\},\{z,t\}},\\
H_{\{y\}}\cdot S_\lambda&=L_{\{x\},\{y\}}+L_{\{y\},\{z\}}+\mathcal{C},\\
H_{\{z\}}\cdot S_\lambda&=L_{\{y\},\{z\}}+L_{\{z\},\{t\}}+L_{\{z\},\{x,t\}}+L_{\{z\},\{y,t\}},\\
H_{\{t\}}\cdot S_\lambda&=L_{\{x\},\{t\}}+L_{\{z\},\{t\}}+L_{\{t\},\{x,y\}}+L_{\{t\},\{y,z\}}.
\end{split}
\end{equation}
This shows that
$L_{\{x\},\{y\}}$, $L_{\{x\},\{t\}}$, $L_{\{y\},\{z\}}$, $L_{\{z\},\{t\}}$, $L_{\{x\},\{y,t\}}$, $L_{\{x\},\{z,t\}}$
$L_{\{z\},\{x,t\}}$, $L_{\{z\},\{y,t\}}$, $L_{\{t\},\{x,y\}}$, $L_{\{t\},\{y,z\}}$, and $\mathcal{C}$
are all base curves of the pencil $\mathcal{S}$.

If $\lambda\ne -2$ and $\lambda\ne -3$, then the singular points of the surface $S_\lambda$
contained in the base locus of the pencil $\mathcal{S}$ can be described as follows:
\begin{itemize}\setlength{\itemindent}{3cm}
\item[$P_{\{y\},\{z\},\{t\}}$:] type $\mathbb{A}_4$ with quadratic term $z(y+z)$;

\item[$P_{\{x\},\{z\},\{t\}}$:] type $\mathbb{A}_2$ with quadratic term $(x+t)(z+t)$;

\item[$P_{\{x\},\{y\},\{t\}}$:] type $\mathbb{A}_4$ with quadratic term $x(x+y)$.
\end{itemize}
The surface $S_{-3}$ has the same singularities at $P_{\{y\},\{z\},\{t\}}$, $P_{\{x\},\{z\},\{t\}}$, and $P_{\{x\},\{y\},\{t\}}$.
In~addition to them, it is also singular at the points $[0:1:1:-1]$ and $[1:1:0:-1]$, which are isolated ordinary double points of the surface $S_{-3}$.
Similarly, the singular points of the surface $S_{-2}$ are $P_{\{y\},\{z\},\{t\}}$, $P_{\{x\},\{z\},\{t\}}$, $P_{\{x\},\{y\},\{t\}}$, and $[1:-1:1:0]$.
They are singular points of the surface $S_{-2}$ of types $\mathbb{A}_6$, $\mathbb{A}_2$, $\mathbb{A}_6$, and $\mathbb{A}_1$, respectively.

We see that every surface $S_{\lambda}$ has du Val singularities in every base point of the pencil~$\mathcal{S}$.
Thus, by Lemma~\ref{corollary:irreducible-fibers}, every fiber $\mathsf{f}^{-1}(\lambda)$ is irreducible.
This confirms \eqref{equation:main-1} in Main Theorem, since $h^{1,2}(X)=0$.
To verify \eqref{equation:main-2} in Main Theorem, we need

\begin{lemma}
\label{lemma:r2-n20-intersection}
Suppose that $\lambda\ne-2$ and $\lambda\ne-3$.
Then the intersection matrix of the curves $L_{\{x\},\{y\}}$, $L_{\{x\},\{y,t\}}$, $L_{\{x\},\{z,t\}}$, $L_{\{y\},\{z\}}$, $L_{\{z\},\{x,t\}}$, $L_{\{z\},\{y,t\}}$, $L_{\{t\},\{y,z\}}$, and $H_{\lambda}$
on the surface $S_\lambda$ is given by
\begin{center}\renewcommand\arraystretch{1.42}
\begin{tabular}{|c||c|c|c|c|c|c|c|c|}
\hline
 $\bullet$  & $L_{\{x\},\{y\}}$ & $L_{\{x\},\{y,t\}}$ & $L_{\{x\},\{z,t\}}$ & $L_{\{y\},\{z\}}$ & $L_{\{z\},\{x,t\}}$ & $L_{\{z\},\{y,t\}}$ &
 $L_{\{t\},\{y,z\}}$ & $H_{\lambda}$ \\
\hline\hline
$L_{\{x\},\{y\}}$ & $-\frac{4}{5}$ & $\frac{2}{5}$ & $0$ & $1$ & $0$ & $0$ & $0$ & $1$ \\
\hline
$L_{\{x\},\{y,t\}}$ & $\frac{2}{5}$ & $-\frac{6}{5}$ & $1$ &  $0$ & $0$ & $1$ & $0$ & $1$ \\
\hline
$L_{\{x\},\{z,t\}}$ & $0$ & $1$ & $-\frac{4}{3}$ &  $0$ & $\frac{1}{3}$ & $0$ & $0$ & $1$ \\
\hline
$L_{\{y\},\{z\}}$ & $1$ & $0$ & $0$ &  $-\frac{4}{5}$ & $1$ & $\frac{2}{5}$ & $\frac{3}{5}$ & $1$ \\
\hline
$L_{\{z\},\{x,t\}}$ & $0$ & $0$ & $\frac{1}{3}$ &  $1$ & $-\frac{4}{3}$ & $1$ & $0$ & $1$ \\
\hline
$L_{\{z\},\{y,t\}}$ & $0$ & $1$ & $0$ & $\frac{2}{5}$ & $1$ & $-\frac{6}{5}$ & $\frac{1}{5}$ & $1$ \\
\hline
$L_{\{t\},\{y,z\}}$ & $0$ & $0$ & $0$ & $\frac{3}{5}$ &  $0$ & $\frac{1}{5}$ & $-\frac{6}{5}$ & $1$ \\
\hline
 $H_{\lambda}$  & $1$ & $1$ & $1$ & $1$ & $1$ & $1$ & $1$ & $4$ \\
\hline
\end{tabular}
\end{center}
\end{lemma}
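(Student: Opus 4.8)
The plan is to compute the matrix entry by entry, sorting the entries into three kinds and invoking the intersection theory of curves on surfaces with du Val singularities from Appendix~\ref{section:intersection}. The entries involving $H_\lambda$ are immediate: since $S_\lambda$ is a quartic surface one has $H_\lambda^2=4$, and each of the seven lines meets a general hyperplane section in one point, so $H_\lambda\cdot C=1$ for every line $C$ in the list. These fill in the last row and column.

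Next I would read off, from the itemized description of the singularities given just before the lemma, which du Val points lie on each line. For $\lambda\notin\{-2,-3\}$ the only fixed singular points in the base locus are the two $\mathbb{A}_4$ points $P_{\{y\},\{z\},\{t\}}$ and $P_{\{x\},\{y\},\{t\}}$ and the $\mathbb{A}_2$ point $P_{\{x\},\{z\},\{t\}}$. A direct inspection of coordinates shows that $L_{\{y\},\{z\}}$, $L_{\{z\},\{y,t\}}$ and $L_{\{t\},\{y,z\}}$ pass through $P_{\{y\},\{z\},\{t\}}$; that $L_{\{x\},\{y\}}$ and $L_{\{x\},\{y,t\}}$ pass through $P_{\{x\},\{y\},\{t\}}$; and that $L_{\{x\},\{z,t\}}$ and $L_{\{z\},\{x,t\}}$ pass through $P_{\{x\},\{z\},\{t\}}$. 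The diagonal entries then come from Proposition~\ref{proposition:du-Val-self-intersection}: the strict transform of each line is a smooth rational (hence $(-2)$) curve on the minimal resolution, so $C^2=-2+(C^{-1})_{ii}$, where $C^{-1}$ is the inverse of the $A_n$ Cartan form, with entries $(C^{-1})_{ij}=\min(i,j)\bigl(n+1-\max(i,j)\bigr)/(n+1)$, and $G_i$ is the exceptional component the strict transform meets. Thus meeting an end of an $\mathbb{A}_4$ chain gives $-2+\tfrac45=-\tfrac65$, a middle component gives $-2+\tfrac65=-\tfrac45$, and an end of the $\mathbb{A}_2$ chain gives $-2+\tfrac23=-\tfrac43$; using Remark~\ref{remark:transversal} to decide, line by line, which component is hit reproduces the diagonal.

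For the off-diagonal entries I would apply Proposition~\ref{proposition:du-Val-intersection}. Two of the lines that are disjoint give $0$; two that meet at a smooth point of $S_\lambda$ (necessarily transversally, being distinct lines) give $1$; and two that meet at a common du Val point give the cross term $(C^{-1})_{ij}$ determined by the two components their strict transforms meet. Concretely, only $L_{\{x\},\{y\}}$ and $L_{\{x\},\{y,t\}}$ meet at $P_{\{x\},\{y\},\{t\}}$; only $L_{\{x\},\{z,t\}}$ and $L_{\{z\},\{x,t\}}$ meet at $P_{\{x\},\{z\},\{t\}}$ (entry $\tfrac13$, opposite ends of the $\mathbb{A}_2$ chain); while $L_{\{y\},\{z\}}$, $L_{\{z\},\{y,t\}}$, $L_{\{t\},\{y,z\}}$ meet pairwise at $P_{\{y\},\{z\},\{t\}}$, producing the fractional entries $\tfrac15,\tfrac25,\tfrac35$ according to the pairing of the components $G_1,G_2,G_4$ of the $\mathbb{A}_4$ chain that they meet. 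All remaining pairs either are disjoint or meet only at a smooth point of $S_\lambda$.

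The main obstacle is the local resolution bookkeeping at the two $\mathbb{A}_4$ points: it is not enough to know that a line passes through such a point, I must resolve the singularity by the chain of blow-ups dictated by its local equation and determine exactly which of the four exceptional $(-2)$-curves the strict transform of each line meets, since the whole distinction between $-\tfrac65$ and $-\tfrac45$ on the diagonal, and between $\tfrac15$, $\tfrac25$ and $\tfrac35$ off it, hinges on this. As a safeguard against slips in this bookkeeping I would cross-check every entry against the linear equivalences recorded after \eqref{equation:2-20}, for instance $H_\lambda\sim L_{\{x\},\{y\}}+L_{\{x\},\{t\}}+L_{\{x\},\{y,t\}}+L_{\{x\},\{z,t\}}$ together with its three companions: intersecting each relation with the seven curves produces linear identities among the matrix entries that the table must satisfy, and these identities, combined with the local contributions already computed at the three singular points, are enough to pin down and confirm all the remaining values.
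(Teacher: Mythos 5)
Your route is the paper's route: Proposition~\ref{proposition:du-Val-self-intersection} for the diagonal, Proposition~\ref{proposition:du-Val-intersection} for the off-diagonal entries, Remark~\ref{remark:transversal} applied to the quadratic terms to decide which exceptional component each strict transform meets, and the relations \eqref{equation:2-20} to resolve the $\tfrac25$ versus $\tfrac35$ ambiguities. Your identification of which of the three du Val points lies on which line is correct, and the diagonal and fractional off-diagonal entries come out as you describe. The gap is your blanket claim that every pair not on your list is disjoint. The lines $L_{\{x\},\{y\}}$ and $L_{\{x\},\{z,t\}}$ both lie in the plane $H_{\{x\}}$ — they are two of the four components of $H_{\{x\}}\cdot S_\lambda$ in \eqref{equation:2-20} — so they necessarily meet, namely at $[0:0:1:-1]$. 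Differentiating the defining quartic with respect to $x$ and evaluating at this point gives $1\neq 0$, so it is a smooth point of $S_\lambda$, and hence $L_{\{x\},\{y\}}\cdot L_{\{x\},\{z,t\}}=1$, not $0$.

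Had you actually carried out the cross-check you propose, you would have caught this: intersecting $H_\lambda\sim L_{\{x\},\{y\}}+L_{\{x\},\{t\}}+L_{\{x\},\{y,t\}}+L_{\{x\},\{z,t\}}$ with $L_{\{x\},\{y\}}$ gives
$$
1=-\tfrac45+L_{\{x\},\{y\}}\cdot L_{\{x\},\{t\}}+L_{\{x\},\{y\}}\cdot L_{\{x\},\{y,t\}}+L_{\{x\},\{y\}}\cdot L_{\{x\},\{z,t\}},
$$
and since the two middle terms each lie in $\{\tfrac25,\tfrac35\}$ by Proposition~\ref{proposition:du-Val-intersection}, the last term cannot vanish; it must equal $1$, forcing the middle terms to both equal $\tfrac25$. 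This is exactly how the paper's own proof pins down the value $\tfrac25$, so the $0$ printed in the table at position $(L_{\{x\},\{y\}},L_{\{x\},\{z,t\}})$ is a typo for $1$ and your proposal is in effect defending a false entry with a false disjointness claim. Replace that claim by the computation above (and recheck that the corrected $8\times 8$ matrix still has rank $8$, which is what the lemma is used for); everything else in your plan is sound.
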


\begin{proof}
All diagonal entries here can be found using Proposition~\ref{proposition:du-Val-self-intersection}.
For instance, the only singular point of the surface $S_\lambda$ that is contained in $L_{\{x\},\{y\}}$
is the point $P_{\{x\},\{y\},\{t\}}$, which is a singular point of type $\mathbb{A}_4$ of the surface $S_\lambda$.
Applying Remark~\ref{remark:transversal} with $S=S_\lambda$, $n=4$, $O=P_{\{x\},\{y\},\{t\}}$, and $C=L_{\{x\},\{y\}}$,
we see that $\overline{C}$ contains the point $\overline{G}_1\cap\overline{G}_4$,
because the quadratic term of the surface $S_\lambda$ at the point $P_{\{x\},\{y\},\{t\}}$ is $x(x+y)$.
This shows that $\widetilde{C}$ intersects either $G_2$ or $G_3$.
Then  $L_{\{x\},\{y\}}^2=-\frac{4}{5}$ by Proposition~\ref{proposition:du-Val-self-intersection}.

Applying Proposition~\ref{proposition:du-Val-intersection}, we can find the remaining entries of the intersection matrix.
For instance, observe that
$$
L_{\{x\},\{y\}}\cap L_{\{x\},\{t\}}=L_{\{x\},\{y\}}\cap L_{\{x\},\{y,t\}}=P_{\{x\},\{y\},\{t\}}.
$$
Thus, it follows from Proposition~\ref{proposition:du-Val-intersection} that
$L_{\{x\},\{y\}}\cdot L_{\{x\},\{t\}}$ and $L_{\{x\},\{y\}}\cdot L_{\{x\},\{y,t\}}$
are among $\frac{2}{5}$ and $\frac{3}{5}$.
But $L_{\{x\},\{y\}}+L_{\{x\},\{t\}}+L_{\{x\},\{y,t\}}+L_{\{x\},\{z,t\}}\sim H_{\lambda}$ by \eqref{equation:2-20}, so that
\begin{multline*}
1=\big( L_{\{x\},\{y\}}+L_{\{x\},\{t\}}+L_{\{x\},\{y,t\}}+L_{\{x\},\{z,t\}}\big)\cdot L_{\{x\},\{y\}}=\\=L_{\{x\},\{y\}}\cdot L_{\{x\},\{t\}}+L_{\{x\},\{y\}}\cdot L_{\{x\},\{y,t\}}-\frac{1}{5}.
\end{multline*}
Hence, we deduce that $L_{\{x\},\{y\}}\cdot L_{\{x\},\{t\}}=\frac{2}{5}$ and $L_{\{x\},\{y\}}\cdot L_{\{x\},\{y,t\}}=\frac{2}{5}$.
Similarly, we can find all remaining entries of the intersection matrix.
\end{proof}

The matrix in Lemma~\ref{lemma:r2-n20-intersection} has rank~$8$.
But $\mathrm{rk}\,\mathrm{Pic}(\widetilde{S}_{\Bbbk})=\mathrm{rk}\,\mathrm{Pic}(S_{\Bbbk})+10$,
so that \eqref{equation:main-2-simple} holds.
By Lemma~\ref{lemma:cokernel}, this shows that \eqref{equation:main-2} in Main Theorem also holds.

\subsection{Family \textnumero $2.21$}
\label{section:r-2-n-21}

In this case, the threefold $X$ can be obtained from a smooth quadric threefold in $\mathbb{P}^4$ by blowing up a smooth rational curve of degree $4$.
Then $h^{1,2}(X)=0$.
A~toric Landau--Ginzburg model of this family is given by
$$
\frac{x}{z}+x+\frac{y}{z}+\frac{x}{y}+\frac{1}{z}+y+z+\frac{y}{x}+\frac{z}{y}+\frac{1}{x},
$$
which is Minkowski polynomial \textnumero $730$. Then the pencil $\mathcal{S}$ is given by
$$
x^2ty+x^2zy+y^2tx+x^2zt+t^2yx+y^2zx+z^2yx+y^2zt+z^2tx+t^2zy=\lambda xyzt.
$$

As usual, we assume that $\lambda\ne\infty$. Then
\begin{itemize}
\item $H_{\{x\}}\cdot S_\lambda=L_{\{x\},\{y\}}+L_{\{x\},\{z\}}+L_{\{x\},\{t\}}+L_{\{x\},\{y,t\}}$,
\item $H_{\{y\}}\cdot S_\lambda=L_{\{x\},\{y\}}+L_{\{y\},\{z\}}+L_{\{y\},\{t\}}+L_{\{y\},\{x,z\}}$,
\item $H_{\{z\}}\cdot S_\lambda=L_{\{x\},\{z\}}+L_{\{y\},\{z\}}+L_{\{z\},\{t\}}+L_{\{z\},\{x,y,t\}}$,
\item $H_{\{t\}}\cdot S_\lambda=L_{\{x\},\{t\}}+L_{\{y\},\{t\}}+L_{\{z\},\{t\}}+L_{\{t\},\{x,y,z\}}$.
\end{itemize}
This shows that $L_{\{x\},\{y\}}$, $L_{\{x\},\{z\}}$, $L_{\{x\},\{t\}}$, $L_{\{y\},\{z\}}$, $L_{\{y\},\{t\}}$,
$L_{\{z\},\{t\}}$, $L_{\{x\},\{y,t\}}$, $L_{\{y\},\{x,z\}}$, $L_{\{z\},\{x,y,t\}}$, and $L_{\{t\},\{x,y,z\}}$
are all base curves of the pencil $\mathcal{S}$.

For every $\lambda\in\mathbb{C}$, the surface $S_\lambda$ is irreducible, it has isolated singularities,
and its singular points contained in the base locus of the pencil~$\mathcal{S}$
can be described as follows:
\begin{itemize}\setlength{\itemindent}{2cm}
\item[$P_{\{x\},\{y\},\{z\}}$:] type $\mathbb{A}_3$ with quadratic term $y(x+z)$;

\item[$P_{\{x\},\{y\},\{t\}}$:] type $\mathbb{A}_3$ with quadratic term $x(y+t)$ for $\lambda\ne -1$, type $\mathbb{A}_5$ for $\lambda=-1$;

\item[$P_{\{x\},\{z\},\{t\}}$:] type $\mathbb{A}_1$;

\item[$P_{\{y\},\{z\},\{t\}}$:] type $\mathbb{A}_1$;

\item[$P_{\{x\},\{z\},\{y,t\}}$:] type $\mathbb{A}_1$ for $\lambda\neq -2$, type $\mathbb{A}_2$ for $\lambda=-2$;

\item[$P_{\{y\},\{t\},\{x,z\}}$:] type $\mathbb{A}_1$ for $\lambda\neq -2$, type $\mathbb{A}_2$ for $\lambda=-2$;

\item[$P_{\{z\},\{t\},\{x,y\}}$:] type $\mathbb{A}_1$ for $\lambda\neq -4$, type $\mathbb{A}_2$ for $\lambda=-4$.
\end{itemize}
Then $[\mathsf{f}^{-1}(\lambda)]=1$ for every $\lambda\in\mathbb{C}$ by Lemma~\ref{corollary:irreducible-fibers}.
This confirms \eqref{equation:main-1} in Main Theorem.

The rank of the intersection matrix of the curves
$L_{\{x\},\{y\}}$, $L_{\{x\},\{z\}}$, $L_{\{x\},\{t\}}$, $L_{\{y\},\{z\}}$, $L_{\{y\},\{t\}}$,
$L_{\{z\},\{t\}}$, $L_{\{x\},\{y,t\}}$, $L_{\{y\},\{x,z\}}$, $L_{\{z\},\{x,y,t\}}$, and $L_{\{t\},\{x,y,z\}}$
on the surface $S_{\lambda}$ is the same as  the rank of the intersection matrix of the curves
$L_{\{x\},\{y\}}$, $L_{\{x\},\{t\}}$, $L_{\{y\},\{z\}}$, $L_{\{z\},\{t\}}$, $L_{\{z\},\{x,y,t\}}$, $L_{\{t\},\{x,y,z\}}$, and $H_{\lambda}$.
If $\lambda\not\in\{-1,-2,-4\}$, then the latter matrix is given by
\begin{center}\renewcommand\arraystretch{1.42}
\begin{tabular}{|c||c|c|c|c|c|c|c|}
\hline
 $\bullet$  & $L_{\{x\},\{y\}}$ & $L_{\{x\},\{t\}}$ & $L_{\{y\},\{z\}}$ & $L_{\{z\},\{t\}}$ & $L_{\{z\},\{x,y,t\}}$ & $L_{\{t\},\{x,y,z\}}$ & $H_{\lambda}$ \\
\hline\hline
$L_{\{x\},\{y\}}$ & $-\frac{1}{2}$ & $\frac{3}{4}$ & $\frac{3}{4}$ & $0$ & $0$ & $0$ & $1$ \\
\hline
$L_{\{x\},\{t\}}$ & $\frac{3}{4}$ & $-\frac{3}{4}$ & $0$ &  $\frac{1}{2}$ & $0$ & $1$ & $1$ \\
\hline
$L_{\{y\},\{z\}}$ & $\frac{3}{4}$ & $0$ & $-\frac{3}{4}$ &  $\frac{1}{2}$ & $0$ & $1$ & $1$ \\
\hline
$L_{\{z\},\{t\}}$ & $0$ & $\frac{1}{2}$ & $\frac{1}{2}$ &  $-1$ & $1$ & $1$ & $1$ \\
\hline
$L_{\{z\},\{x,y,t\}}$ & $0$ & $0$ & $1$ &  $1$ & $-\frac{3}{2}$ & $1$ & $1$ \\
\hline
$L_{\{t\},\{x,y,z\}}$ & $0$ & $1$ & $0$ &  $1$ & $1$ & $-1$ & $1$ \\
\hline
 $H_{\lambda}$  & $1$ & $1$ & $1$ & $1$ & $1$ & $1$ & $4$ \\
\hline
\end{tabular}
\end{center}
Thus, its determinant is $-\frac{45}{16}\ne 0$.
Moreover, we have $\mathrm{rk}\,\mathrm{Pic}(\widetilde{S}_{\Bbbk})=\mathrm{rk}\,\mathrm{Pic}(S_{\Bbbk})+11$.
Hence, we see that \eqref{equation:main-2-simple} holds, so that \eqref{equation:main-2} in Main Theorem also holds by Lemma~\ref{lemma:cokernel}.

\subsection{Family \textnumero $2.22$}
\label{section:r-2-n-22}

The threefold $X$ is a blow up of the threefold $V_5$ along a conic (see Subsection~\ref{section:r-2-n-14}),
so that $h^{1,2}(X)=0$.
A~toric Landau--Ginzburg model of this family is given by Minkowski polynomial \textnumero $413$, which is
$$
\frac{y}{x}+\frac{1}{x}+y+z+\frac{1}{xz}+\frac{1}{z}+\frac{1}{y}+x+\frac{xz}{y}.
$$
The quartic pencil $\mathcal{S}$ is given by
$$
y^2zt+t^2zy+y^2zx+z^2yx+t^3y+t^2yx+t^2zx+x^2zy+x^2z^2=\lambda xyzt.
$$

Suppose that $\lambda\ne\infty$.
Let $\mathcal{C}_1$ be the conic in $\mathbb{P}^3$ that is given by $x=yz+zt+t^2=0$,
and let $\mathcal{C}_2$ be the conic in $\mathbb{P}^3$ that is given by $y=xz+t^2=0$.
Then
\begin{equation}
\label{equation:2-22}
\begin{split}
H_{\{x\}}\cdot S_\lambda&=L_{\{x\},\{y\}}+L_{\{x\},\{t\}}+\mathcal{C}_1,\\
H_{\{y\}}\cdot S_\lambda&=L_{\{x\},\{y\}}+L_{\{y\},\{z\}}+\mathcal{C}_2,\\
H_{\{z\}}\cdot S_\lambda&=L_{\{y\},\{z\}}+2L_{\{z\},\{t\}}+L_{\{z\},\{x,t\}},\\
H_{\{t\}}\cdot S_\lambda&=L_{\{x\},\{t\}}+L_{\{z\},\{t\}}+L_{\{t\},\{x,y\}}+L_{\{t\},\{y,z\}}.
\end{split}
\end{equation}
Thus, the base locus of the pencil $\mathcal{S}$ consists of the curves
$L_{\{x\},\{y\}}$, $L_{\{x\},\{t\}}$, $L_{\{y\},\{z\}}$, $L_{\{z\},\{t\}}$, $L_{\{z\},\{x,t\}}$, $L_{\{t\},\{x,y\}}$, $L_{\{t\},\{y,z\}}$,
$\mathcal{C}_1$, and $\mathcal{C}_2$.

For every $\lambda\in\mathbb{C}$, the surface $S_\lambda$ is irreducible and has isolated singularities.
Moreover, the singular points of the surface $S_\lambda$ contained in the base locus of the pencil~$\mathcal{S}$ can be described as follows:
\begin{itemize}\setlength{\itemindent}{1.5cm}
\item[$P_{\{x\},\{y\},\{t\}}$:] type $\mathbb{A}_4$ with quadratic term $x(x+y)$ for $\lambda\neq -2$, type $\mathbb{A}_5$ for $\lambda=-2$;

\item[$P_{\{x\},\{z\},\{t\}}$:] type $\mathbb{A}_3$ with quadratic term $z(x+t)$ for $\lambda\neq -2$, type $\mathbb{A}_5$ for $\lambda=-2$;

\item[$P_{\{y\},\{z\},\{t\}}$:] type $\mathbb{A}_4$ with quadratic term $z(y+z)$ for $\lambda\neq -1$, type $\mathbb{A}_5$ for $\lambda=-1$;

\item[$P_{\{z\},\{t\},\{x,y\}}$:] type $\mathbb{A}_1$;

\item[{$[1:-1:1:0]$}:] smooth for $\lambda\ne-1$, type $\mathbb{A}_1$ for $\lambda=-1$.
\end{itemize}
Then $[\mathsf{f}^{-1}(\lambda)]=1$ for every $\lambda\in\mathbb{C}$ by Lemma~\ref{corollary:irreducible-fibers}.
This confirms \eqref{equation:main-1} in Main Theorem.

If $\lambda\ne-1$ and $\lambda\ne-2$, then the intersection matrix of the curves $L_{\{x\},\{y\}}$, $L_{\{z\},\{t\}}$, $L_{\{z\},\{x,t\}}$, $L_{\{t\},\{x,y\}}$, $L_{\{t\},\{y,z\}}$, and $H_{\lambda}$
on the surface $S_\lambda$ is given by
\begin{center}\renewcommand\arraystretch{1.42}
\begin{tabular}{|c||c|c|c|c|c|c|}
\hline
 $\bullet$  & $L_{\{x\},\{y\}}$ & $L_{\{z\},\{t\}}$ & $L_{\{z\},\{x,t\}}$ & $L_{\{t\},\{x,y\}}$ & $L_{\{t\},\{y,z\}}$ &  $H_{\lambda}$ \\
\hline\hline
$L_{\{x\},\{y\}}$ & $-\frac{4}{5}$ & $0$ & $0$ & $\frac{1}{5}$ & $0$ & $1$ \\
\hline
$L_{\{z\},\{t\}}$ & $0$ &  $\frac{1}{20}$ & $\frac{1}{2}$ & $\frac{1}{2}$ & $\frac{1}{5}$ & $1$ \\
\hline
$L_{\{z\},\{x,t\}}$ & $0$ &  $\frac{1}{2}$ & $-1$ & $0$ & $0$ & $1$ \\
\hline
$L_{\{t\},\{x,y\}}$ & $\frac{1}{5}$ &  $\frac{1}{2}$ & $0$ & $-\frac{7}{10}$ & $1$ & $1$ \\
\hline
$L_{\{t\},\{y,z\}}$ & $0$ &  $\frac{1}{5}$ & $0$ & $1$ & $-\frac{6}{5}$ & $1$ \\
\hline
 $H_{\lambda}$  & $1$ & $1$ & $1$ & $1$ & $1$ & $4$ \\
\hline
\end{tabular}
\end{center}
This matrix has rank~$6$.
Hence, using \eqref{equation:2-22}, we see that the rank of the intersection
matrix of the curves $L_{\{x\},\{y\}}$, $L_{\{x\},\{t\}}$, $L_{\{y\},\{z\}}$, $L_{\{z\},\{t\}}$, $L_{\{z\},\{x,t\}}$,
$L_{\{t\},\{x,y\}}$, $L_{\{t\},\{y,z\}}$, $\mathcal{C}_1$, and $\mathcal{C}_2$ is also $5$.
But $\mathrm{rk}\,\mathrm{Pic}(\widetilde{S}_{\Bbbk})=\mathrm{rk}\,\mathrm{Pic}(S_{\Bbbk})+12$,
so that we conclude that \eqref{equation:main-2-simple} holds.
Then~\eqref{equation:main-2} in Main Theorem holds by Lemma~\ref{lemma:cokernel}.

\subsection{Family \textnumero $2.23$}
\label{section:r-2-n-23}

The threefold $X$ is a blow up of a smooth quadric threefold in~$\mathbb{P}^4$ along a smooth elliptic curve of degree $4$.
Then $h^{1,2}(X)=1$.
A~toric Landau--Ginzburg model of this family is given by Minkowski polynomial \textnumero $410$, which is
$$
x+y+z+\frac{z}{x}+\frac{z}{y}+\frac{x}{z}+\frac{y}{z}+\frac{1}{x}+\frac{1}{y}.
$$
In this case, the pencil $\mathcal{S}$ is given by the equation
$$
xyz^2+x^2yz+xy^2z+xz^2t+yz^2t+x^2yt+xy^2t+xzt^2+yzt^2=\lambda xyzt.
$$
As usual, we assume that $\lambda\ne\infty$. Then
\begin{equation}
\label{equation:2-23}
\begin{split}
H_{\{x\}}\cdot S_\lambda&=L_{\{x\},\{y\}}+L_{\{x\},\{z\}}+L_{\{x\},\{t\}}+L_{\{x\},\{z,t\}},\\
H_{\{y\}}\cdot S_\lambda&=L_{\{x\},\{y\}}+L_{\{y\},\{z\}}+L_{\{y\},\{t\}}+L_{\{y\},\{z,t\}},\\
H_{\{z\}}\cdot S_\lambda&=L_{\{x\},\{z\}}+L_{\{y\},\{z\}}+L_{\{z\},\{t\}}+L_{\{z\},\{x,y\}},\\
H_{\{t\}}\cdot S_\lambda&=L_{\{x\},\{t\}}+L_{\{y\},\{t\}}+L_{\{z\},\{t\}}+L_{\{t\},\{x,y,z\}}.
\end{split}
\end{equation}
This shows that the base locus of the pencil $\mathcal{S}$ is a union of the curves
$L_{\{x\},\{y\}}$, $L_{\{x\},\{z\}}$, $L_{\{x\},\{t\}}$, $L_{\{y\},\{z\}}$, $L_{\{y\},\{t\}}$, $L_{\{z\},\{t\}}$, $L_{\{x\},\{z,t\}}$,
$L_{\{y\},\{z,t\}}$, $L_{\{z\},\{x,y\}}$, and $L_{\{t\},\{x,y,z\}}$.

Observe that $S_{-1}=H_{\{z,t\}}+\mathbf{S}$,
where $\mathbf{S}$ is an irreducible cubic surface in $\mathbb{P}^3$ that is given by $xzt+yzt+x^2y+xy^2+xyz=0$.
On the other hand, if $\lambda\ne-1$, then $S_\lambda$ has isolated singularities, so that it is irreducible.
Moreover, in this case, the singular points of the surface $S_\lambda$ contained in the base locus of the pencil $\mathcal{S}$
can be described as follows:
\begin{itemize}\setlength{\itemindent}{2cm}
\item[$P_{\{y\},\{z\},\{t\}}$:] type $\mathbb{A}_3$ with quadratic term $y(z+t)$;

\item[$P_{\{x\},\{z\},\{t\}}$:] type $\mathbb{A}_3$ with quadratic term $x(z+t)$;

\item[$P_{\{x\},\{y\},\{t\}}$:] type $\mathbb{A}_1$ with quadratic term $xy+xt+yt$;

\item[$P_{\{x\},\{y\},\{z\}}$:] type $\mathbb{A}_3$ with quadratic term $z(x+y)$ for $\lambda\neq 0$, type $\mathbb{A}_5$ for $\lambda=0$;

\item[$P_{\{z\},\{t\},\{x,y\}}$:] type $\mathbb{A}_1$ with quadratic term $(x+y+z)(z+t)-(\lambda+1)zt$;

\item[$P_{\{x\},\{y\},\{z,t\}}$:] type $\mathbb{A}_1$ with quadratic term $(x+y)(z+t)-(\lambda+1)xy$.
\end{itemize}
Thus, it follows from Lemma~\ref{corollary:irreducible-fibers} that the fiber $\mathsf{f}^{-1}(\lambda)$ is irreducible for every $\lambda\ne -1$.
Moreover, the surface $S_{-1}$ has good double points at $P_{\{y\},\{z\},\{t\}}$, $P_{\{x\},\{z\},\{t\}}$,
$P_{\{x\},\{y\},\{t\}}$, $P_{\{x\},\{y\},\{z\}}$, $P_{\{z\},\{t\},\{x,y\}}$, and $P_{\{x\},\{y\},\{z,t\}}$.
Furthermore, it is smooth at general points of the curves
$L_{\{x\},\{y\}}$, $L_{\{x\},\{z\}}$, $L_{\{x\},\{t\}}$, $L_{\{y\},\{z\}}$, $L_{\{y\},\{t\}}$, $L_{\{z\},\{t\}}$, $L_{\{x\},\{z,t\}}$,
$L_{\{y\},\{z,t\}}$, $L_{\{z\},\{x,y\}}$, and $L_{\{t\},\{x,y,z\}}$.
This gives $[\mathsf{f}^{-1}(-1)]=[S_{-1}]=2$ by \eqref{equation:equation:number-of-irredubicle-components-refined} and
Lemmas~\ref{lemma:main} and \ref{lemma:normal-crossing} and confirms \eqref{equation:main-1} in Main Theorem, since $h^{1,2}(X)=1$.

To verify \eqref{equation:main-2} in Main Theorem, we may assume that $\lambda\ne 0$ and $\lambda\ne-1$.
Then the intersection matrix of the curves $L_{\{x\},\{z\}}$, $L_{\{x\},\{t\}}$, $L_{\{x\},\{z,t\}}$, $L_{\{y\},\{t\}}$, $L_{\{y\},\{z,t\}}$, $L_{\{z\},\{x,y\}}$, and $H_{\lambda}$
on the surface $S_\lambda$ is given by
\begin{center}\renewcommand\arraystretch{1.42}
\begin{tabular}{|c||c|c|c|c|c|c|c|}
\hline
 $\bullet$  & $L_{\{x\},\{z\}}$ & $L_{\{x\},\{t\}}$ & $L_{\{x\},\{z,t\}}$ & $L_{\{y\},\{t\}}$ & $L_{\{y\},\{z,t\}}$ & $L_{\{z\},\{x,y\}}$ & $H_{\lambda}$ \\
\hline\hline
$L_{\{x\},\{z\}}$ & $-\frac{1}{2}$ & $\frac{3}{4}$ & $\frac{1}{2}$ & $0$ & $0$ & $\frac{1}{2}$ & $1$ \\
\hline
$L_{\{x\},\{t\}}$ & $\frac{3}{4}$ & $-\frac{3}{4}$ & $\frac{1}{2}$ &  $\frac{1}{2}$ & $0$ & $0$ & $1$ \\
\hline
$L_{\{x\},\{z,t\}}$ & $\frac{1}{2}$ & $\frac{1}{2}$ & $-\frac{1}{2}$ &  $0$ & $\frac{1}{2}$ & $0$ & $1$ \\
\hline
$L_{\{y\},\{t\}}$ & $0$ & $\frac{1}{2}$ & $0$ &  $-\frac{3}{4}$ & $\frac{1}{2}$ & $0$ & $1$ \\
\hline
$L_{\{y\},\{z,t\}}$ & $0$ & $0$ & $\frac{1}{2}$ &  $\frac{1}{2}$ & $-\frac{1}{2}$ & $0$ & $1$ \\
\hline
$L_{\{z\},\{x,y\}}$ & $\frac{1}{2}$ & $0$ & $0$ &  $0$ & $0$ & $-\frac{1}{2}$ & $1$ \\
\hline
 $H_{\lambda}$  & $1$ & $1$ & $1$ & $1$ & $1$ & $1$ & $4$ \\
\hline
\end{tabular}
\end{center}
The rank of this matrix is $6$.
Thus, using \eqref{equation:2-23}, we see that the intersection matrix of the curves
$L_{\{x\},\{y\}}$, $L_{\{x\},\{z\}}$, $L_{\{x\},\{t\}}$, $L_{\{y\},\{z\}}$, $L_{\{y\},\{t\}}$, $L_{\{z\},\{t\}}$, $L_{\{x\},\{z,t\}}$,
$L_{\{y\},\{z,t\}}$, $L_{\{z\},\{x,y\}}$, and $L_{\{t\},\{x,y,z\}}$ is also $6$.
But $\mathrm{rk}\,\mathrm{Pic}(\widetilde{S}_{\Bbbk})=\mathrm{rk}\,\mathrm{Pic}(S_{\Bbbk})+12$.
This shows that \eqref{equation:main-2-simple} holds, so that \eqref{equation:main-2} in Main Theorem also holds by Lemma~\ref{lemma:cokernel}.

\subsection{Family \textnumero $2.24$}
\label{section:r-2-n-24}

The threefold $X$ is a smooth divisor in $\mathbb{P}^2\times\mathbb{P}^2$ of bidegree $(1,2)$,
which implies that $h^{1,2}(X)=0$.
A~toric Landau--Ginzburg model of this family is given by Minkowski polynomial \textnumero $411$, which is
$$
\frac{xy}{z}+x+y+z+\frac{x}{z}+\frac{y}{x}+\frac{z}{y}+\frac{1}{y}+\frac{1}{x}.
$$
Then the pencil $\mathcal{S}$ is given by the equation
$$
x^2y^2+x^2yz+y^2xz+z^2xy+x^2yt+y^2tz+z^2xt+t^2xz+t^2yz=\lambda xyzt.
$$
Moreover, the base locus of this pencil consists of the lines
$L_{\{x\},\{y\}}$, $L_{\{x\},\{z\}}$, $L_{\{x\},\{t\}}$, $L_{\{y\},\{z\}}$, $L_{\{y\},\{t\}}$, $L_{\{x\},\{y,t\}}$,
$L_{\{y\},\{z,t\}}$, $L_{\{z\},\{y,t\}}$, $L_{\{t\},\{y,z\}}$, and $L_{\{t\},\{x,z\}}$, because
\begin{equation}
\label{equation:2-24}
\begin{split}
H_{\{x\}}\cdot S_\lambda&=L_{\{x\},\{y\}}+L_{\{x\},\{z\}}+L_{\{x\},\{t\}}+L_{\{x\},\{y,t\}},\\
H_{\{y\}}\cdot S_\lambda&=L_{\{x\},\{y\}}+L_{\{y\},\{z\}}+L_{\{y\},\{t\}}+L_{\{y\},\{z,t\}},\\
H_{\{z\}}\cdot S_\lambda&=2L_{\{x\},\{z\}}+L_{\{y\},\{z\}}+L_{\{z\},\{y,t\}},\\
H_{\{t\}}\cdot S_\lambda&=L_{\{x\},\{t\}}+L_{\{y\},\{t\}}+L_{\{t\},\{y,z\}}+L_{\{t\},\{x,z\}}.
\end{split}
\end{equation}
Here, as usual, we assume that $\lambda\ne\infty$.

For every $\lambda\in\mathbb{C}$, the surface $S_\lambda$ has isolated singularities, so that $S_\lambda$ is irreducible.
Its~singular points contained in the base locus of the pencil $\mathcal{S}$ can be described as follows:
\begin{itemize}\setlength{\itemindent}{1.5cm}
\item[$P_{\{x\},\{z\},\{t\}}$:] type $\mathbb{A}_1$;

\item[$P_{\{x\},\{y\},\{t\}}$:] type $\mathbb{A}_3$ with quadratic term $x(y+t)$ for $\lambda\neq -2$, type $\mathbb{A}_4$ for $\lambda=-2$;

\item[$P_{\{x\},\{y\},\{z\}}$:] type $\mathbb{A}_3$ with quadratic term $z(x+y)$ for $\lambda\neq -\frac{3}{2}$, type $\mathbb{A}_4$ for $\lambda=-\frac{3}{2}$;

\item[$P_{\{y\},\{z\},\{t\}}$:] type $\mathbb{A}_3$ with quadratic term $y(y+z+t)$ for $\lambda\neq -2$, type $\mathbb{A}_4$ for $\lambda=-2$;

\item[$P_{\{x\},\{z\},\{y,t\}}$:] type $\mathbb{A}_1$ for $\lambda\neq -\frac{3}{2}$, type $\mathbb{A}_2$ for $\lambda=-\frac{3}{2}$;

\item[{$[1:1:-1:0]$}:] smooth for $\lambda\neq -1$, type $\mathbb{A}_1$ for $\lambda=-1$.
\end{itemize}
Then $[\mathsf{f}^{-1}(\lambda)]=1$ for every $\lambda\in\mathbb{C}$ by Corollary~\ref{corollary:irreducible-fibers}.
This confirms \eqref{equation:main-1} in Main Theorem.

To verify \eqref{equation:main-2} in Main Theorem,
we may assume that $\lambda\not\in\{-1,-\frac{3}{2},-2\}$.
Then the intersection matrix of the curves $L_{\{x\},\{z\}}$, $L_{\{x\},\{t\}}$, $L_{\{x\},\{y,t\}}$, $L_{\{y\},\{t\}}$, $L_{\{y\},\{z,t\}}$, $L_{\{t\},\{x,z\}}$, and $H_{\lambda}$
on the surface $S_\lambda$ is given by
\begin{center}\renewcommand\arraystretch{1.42}
\begin{tabular}{|c||c|c|c|c|c|c|c|}
\hline
 $\bullet$  & $L_{\{x\},\{z\}}$ & $L_{\{x\},\{t\}}$ & $L_{\{x\},\{y,t\}}$ & $L_{\{y\},\{t\}}$ & $L_{\{y\},\{z,t\}}$ & $L_{\{t\},\{x,z\}}$ & $H_{\lambda}$ \\
\hline\hline
$L_{\{x\},\{z\}}$ & $-\frac{1}{4}$ & $\frac{1}{2}$ & $\frac{1}{2}$ & $0$ & $0$ & $\frac{1}{2}$ & $1$ \\
\hline
$L_{\{x\},\{t\}}$ & $\frac{1}{2}$ & $-\frac{3}{4}$ & $\frac{1}{2}$ &  $\frac{1}{4}$ & $0$ & $\frac{1}{2}$ & $1$ \\
\hline
$L_{\{x\},\{y,t\}}$ & $\frac{1}{2}$ & $\frac{1}{2}$ & $-\frac{1}{2}$ &  $\frac{1}{2}$ & $0$ & $0$ & $1$ \\
\hline
$L_{\{y\},\{t\}}$ & $0$ & $\frac{1}{4}$ & $\frac{1}{2}$ &  $-\frac{1}{2}$ & $\frac{1}{2}$ & $\frac{1}{2}$ & $1$ \\
\hline
$L_{\{y\},\{z,t\}}$ & $0$ & $0$ & $0$ &  $\frac{1}{2}$ & $-1$ & $0$ & $1$ \\
\hline
$L_{\{t\},\{x,z\}}$ & $\frac{1}{2}$ & $\frac{1}{2}$ & $0$ &  $\frac{1}{2}$ & $0$ & $-\frac{3}{2}$ & $1$ \\
\hline
 $H_{\lambda}$  & $1$ & $1$ & $1$ & $1$ & $1$ & $1$ & $4$ \\
\hline
\end{tabular}
\end{center}
The rank of this intersection matrix is $7$.
Thus, using \eqref{equation:2-24}, we see that
the rank of the intersection
matrix of the curves $L_{\{x\},\{y\}}$, $L_{\{x\},\{z\}}$, $L_{\{x\},\{t\}}$, $L_{\{y\},\{z\}}$, $L_{\{y\},\{t\}}$, $L_{\{x\},\{y,t\}}$,
$L_{\{y\},\{z,t\}}$, $L_{\{z\},\{y,t\}}$, $L_{\{t\},\{y,z\}}$, and $L_{\{t\},\{x,z\}}$ on the surface $S_{\lambda}$ is also $7$.
On the other hand, we have
$\mathrm{rk}\,\mathrm{Pic}(\widetilde{S}_{\Bbbk})=\mathrm{rk}\,\mathrm{Pic}(S_{\Bbbk})+11$.
Hence, we see that \eqref{equation:main-2-simple} holds.
By Lemma~\ref{lemma:cokernel}, we see that \eqref{equation:main-2} in Main Theorem also holds.

\subsection{Family \textnumero $2.25$}
\label{section:r-2-n-25}

In this case, the threefold $X$ is  a blow up of $\mathbb{P}^3$ along a smooth elliptic curve, which is an intersection of two quadrics.
This shows that $h^{1,2}(X)=1$.
A~toric Landau--Ginzburg model of this family is given by Minkowski polynomial \textnumero $198$, which is
$$
x+y+z+\frac{yz}{x} + \frac{x}{z} + \frac{1}{y} + \frac{1}{x} + \frac{1}{yz}.
$$
Thus, the pencil of quartic surfaces $\mathcal{S}$ is given by the equation
$$
x^2yz+y^2xz+xyz^2+y^2z^2+x^2yt+zxt^2+yzt^2+xt^3=\lambda xyzt.
$$
As usual, we assume that $\lambda\ne\infty$.

Let $\mathcal{C}_1$ be the conic in $\mathbb{P}^3$ that is given by $x=yz+t^2=0$,
and let $\mathcal{C}_2$ be the conic in $\mathbb{P}^3$ that is given by $z=xy+t^2=0$.
Then
\begin{equation}
\label{equation:2-25}
\begin{split}
H_{\{x\}}\cdot S_\lambda&=L_{\{x\},\{y\}}+L_{\{x\},\{z\}}+\mathcal{C}_1,\\
H_{\{y\}}\cdot S_\lambda&=L_{\{x\},\{y\}}+2L_{\{y\},\{t\}}+L_{\{y\},\{z,t\}},\\
H_{\{z\}}\cdot S_\lambda&=L_{\{x\},\{z\}}+L_{\{z\},\{t\}}+\mathcal{C}_2,\\
H_{\{t\}}\cdot S_\lambda&=L_{\{y\},\{t\}}+L_{\{z\},\{t\}}+L_{\{t\},\{x,y\}}+L_{\{t\},\{x,z\}}.
\end{split}
\end{equation}
This shows that the base locus of the pencil $\mathcal{S}$ consists of the curves
$L_{\{x\},\{y\}}$, $L_{\{x\},\{z\}}$, $L_{\{y\},\{t\}}$, $L_{\{z\},\{t\}}$,
$L_{\{y\},\{z,t\}}$, $L_{\{t\},\{x,y\}}$, $L_{\{t\},\{x,z\}}$, $\mathcal{C}_1$, and $\mathcal{C}_2$.

To describe the singularities of the surfaces in the pencil $\mathcal{S}$, observe that
$$
S_{-1}=\mathsf{Q}+\mathbf{Q}
$$
where $\mathsf{Q}$ is an irreducible quadric surface that is given by $yz+xt+xz=0$,
and $\mathbf{Q}$ is an irreducible quadric surface given by $yz+xy+t^2=0$.
Thus, the singularities of the surface $S_{-1}$ are not isolated.
On the other hand, if $\lambda\ne-1$, then the surface $S_\lambda$ has isolated singularities, so that it is irreducible.
Moreover, in this case, the singular points of the surface $S_\lambda$ contained in the base locus of the pencil $\mathcal{S}$ can be described as follows:
\begin{itemize}\setlength{\itemindent}{3cm}
\item[$P_{\{y\},\{z\},\{t\}}$:] type $\mathbb{A}_3$ with quadratic term $y(z+t)$;

\item[$P_{\{x\},\{z\},\{t\}}$:] type $\mathbb{A}_5$ with quadratic term $z(x+z)$;

\item[$P_{\{x\},\{y\},\{t\}}$:] type $\mathbb{A}_4$ with quadratic term $y(x+y)$;

\item[$P_{\{y\},\{t\},\{x,z\}}$:] type $\mathbb{A}_1$ with quadratic term $xy+yz-(\lambda+1)yt+t^2$.
\end{itemize}

By Lemma~\ref{corollary:irreducible-fibers}, we have $[\mathsf{f}^{-1}(\lambda)]=1$ for every $\lambda\ne -1$.
Moreover, the points $P_{\{y\},\{z\},\{t\}}$, $P_{\{x\},\{z\},\{t\}}$,
$P_{\{x\},\{y\},\{t\}}$, and $P_{\{y\},\{t\},\{x,z\}}$ are good double points of the surface~$S_{-1}$.
Furthermore, the surface $S_{-1}$ is smooth at general points of the curves $L_{\{x\},\{y\}}$, $L_{\{x\},\{z\}}$, $L_{\{y\},\{t\}}$, $L_{\{z\},\{t\}}$,
$L_{\{y\},\{z,t\}}$, $L_{\{t\},\{x,y\}}$, $L_{\{t\},\{x,z\}}$, $\mathcal{C}_1$, and $\mathcal{C}_2$.
Thus, it follows from
\eqref{equation:equation:number-of-irredubicle-components-refined},
Lemma~\ref{lemma:main} and Lemma~\ref{lemma:normal-crossing}
that $[\mathsf{f}^{-1}(-1)]=[S_{-1}]=2$.
This confirms \eqref{equation:main-1} in Main Theorem, since $h^{1,2}(X)=1$.

To verify \eqref{equation:main-2} in Main Theorem, we may assume that $\lambda\ne -1$.
Then, using \eqref{equation:2-25}, we see that the intersection
matrix of the curves $L_{\{x\},\{y\}}$, $L_{\{x\},\{z\}}$, $L_{\{y\},\{t\}}$, $L_{\{z\},\{t\}}$,
$L_{\{y\},\{z,t\}}$, $L_{\{t\},\{x,y\}}$, $L_{\{t\},\{x,z\}}$, $\mathcal{C}_1$, $\mathcal{C}_2$ on the surface $S_{\lambda}$
has the same rank as the intersection matrix of the curves
$L_{\{x\},\{y\}}$, $L_{\{x\},\{z\}}$, $L_{\{y\},\{z,t\}}$, $L_{\{t\},\{x,y\}}$, $L_{\{t\},\{x,z\}}$,~$H_{\lambda}$, which is given~by
\begin{center}\renewcommand\arraystretch{1.42}
\begin{tabular}{|c||c|c|c|c|c|c|}
\hline
 $\bullet$  & $L_{\{x\},\{y\}}$ & $L_{\{x\},\{z\}}$ & $L_{\{y\},\{z,t\}}$ & $L_{\{t\},\{x,y\}}$ & $L_{\{t\},\{x,z\}}$ &  $H_{\lambda}$ \\
\hline\hline
$L_{\{x\},\{y\}}$ & $-\frac{4}{5}$ & $1$ & $1$ & $\frac{3}{5}$ & $0$ & $1$ \\
\hline
$L_{\{x\},\{z\}}$ & $1$ &  $-\frac{2}{3}$ & $0$ & $0$ & $\frac{1}{3}$ & $1$ \\
\hline
$L_{\{y\},\{z,t\}}$ & $1$ &  $0$ & $-1$ & $0$ & $0$ & $1$ \\
\hline
$L_{\{t\},\{x,y\}}$ & $\frac{3}{5}$ &  $0$ & $0$ & $-\frac{6}{5}$ & $1$ & $1$ \\
\hline
$L_{\{t\},\{x,z\}}$ & $0$ &  $\frac{1}{3}$ & $0$ & $1$ & $-\frac{2}{3}$ & $1$ \\
\hline
 $H_{\lambda}$  & $1$ & $1$ & $1$ & $1$ & $1$ & $4$ \\
\hline
\end{tabular}
\end{center}
The rank of this matrix is $5$.
On the other hand, using the description of the singular points of the surface $S_\lambda$, we conclude that
$\mathrm{rk}\,\mathrm{Pic}(\widetilde{S}_{\Bbbk})=\mathrm{rk}\,\mathrm{Pic}(S_{\Bbbk})+13$.
Hence, we see that \eqref{equation:main-2-simple} holds.
By Lemma~\ref{lemma:cokernel}, we see that \eqref{equation:main-2} in Main Theorem also holds.

\subsection{Family \textnumero $2.26$}
\label{section:r-2-n-26}

In this case, the threefold $X$ is a blow up of the threefold $V_5$ along a line (see Subsection~\ref{section:r-2-n-14}).
Then $h^{1,2}(X)=0$.
A~toric Landau--Ginzburg model of this family is given by
$$
\frac{y}{x}+\frac{1}{x}+y+z+\frac{1}{z}+\frac{1}{y}+x+\frac{x}{yz},
$$
which is Minkowski polynomial \textnumero $201$.
The quartic pencil $\mathcal{S}$ is given by
$$
y^2zt+t^2yz+y^2xz+z^2xy+t^2xy+t^2xz+x^2yz+x^2t^2=\lambda xyzt.
$$

Suppose that $\lambda\ne\infty$. Then
\begin{equation}
\label{equation:2-26}
\begin{split}
H_{\{x\}}\cdot S_\lambda&=L_{\{x\},\{y\}}+L_{\{x\},\{z\}}+L_{\{x\},\{t\}}+L_{\{x\},\{y,t\}},\\
H_{\{y\}}\cdot S_\lambda&=L_{\{x\},\{y\}}+2L_{\{y\},\{t\}}+L_{\{y\},\{x,z\}},\\
H_{\{z\}}\cdot S_\lambda&=L_{\{x\},\{z\}}+2L_{\{z\},\{t\}}+L_{\{z\},\{x,y\}},\\
H_{\{t\}}\cdot S_\lambda&=L_{\{x\},\{t\}}+L_{\{y\},\{t\}}+L_{\{z\},\{t\}}+L_{\{t\},\{x,y,z\}}.
\end{split}
\end{equation}
Thus, the base locus of the pencil $\mathcal{S}$ consists of the lines
$L_{\{x\},\{y\}}$, $L_{\{x\},\{z\}}$, $L_{\{x\},\{t\}}$, $L_{\{x\},\{y,t\}}$,
$L_{\{y\},\{t\}}$, $L_{\{y\},\{x,z\}}$, $L_{\{z\},\{t\}}$, $L_{\{z\},\{x,y\}}$, and $L_{\{t\},\{x,y,z\}}$.

For every $\lambda\in\mathbb{C}$, the surface $S_\lambda$ is irreducible, it has isolated singularities,
and its singular points contained in the base locus of the pencil~$\mathcal{S}$ can be described as follows:
\begin{itemize}\setlength{\itemindent}{1.3cm}
\item[$P_{\{x\},\{y\},\{z\}}$:] type $\mathbb{A}_2$ with quadratic term $(x+y)(x+z)$ for $\lambda\neq -1$, type $\mathbb{A}_3$ for $\lambda=-1$;

\item[$P_{\{x\},\{y\},\{t\}}$:] type $\mathbb{A}_3$ with quadratic term $xy$;

\item[$P_{\{x\},\{z\},\{t\}}$:] type $\mathbb{A}_2$ with quadratic term $z(x+t)$;

\item[$P_{\{y\},\{z\},\{t\}}$:] type $\mathbb{A}_1$;

\item[$P_{\{y\},\{t\},\{x,z\}}$:] type $\mathbb{A}_2$ with quadratic term $y(x+y+z-\lambda t)$ for $\lambda\neq 0$, type $\mathbb{A}_3$ for $\lambda=0$;

\item[$P_{\{z\},\{t\},\{x,y\}}$:] type $\mathbb{A}_2$ with quadratic term $z(x+y+z-t-\lambda t)$.
\end{itemize}
Then $[\mathsf{f}^{-1}(\lambda)]=1$ for every $\lambda\in\mathbb{C}$ by Lemma~\ref{corollary:irreducible-fibers}.
This confirms \eqref{equation:main-1} in Main Theorem.

By Lemma~\ref{lemma:cokernel},
to verify  \eqref{equation:main-2} in Main Theorem,
we have to prove \eqref{equation:main-2-simple}.
Observe that
the intersection matrix of the lines $L_{\{x\},\{y\}}$, $L_{\{x\},\{z\}}$, $L_{\{x\},\{t\}}$, $L_{\{x\},\{y,t\}}$,
$L_{\{y\},\{t\}}$, $L_{\{y\},\{x,z\}}$, $L_{\{z\},\{t\}}$, $L_{\{z\},\{x,y\}}$, $L_{\{t\},\{x,y,z\}}$
on the surface $S_\lambda$ has the same rank as the intersection matrix of the curves $L_{\{x\},\{t\}}$, $L_{\{z\},\{t\}}$, $L_{\{x\},\{y,t\}}$, $L_{\{y\},\{x,z\}}$, $L_{\{z\},\{x,y\}}$, $L_{\{t\},\{x,y,z\}}$, $H_{\lambda}$,
since
\begin{multline*}
L_{\{x\},\{y\}}+L_{\{x\},\{z\}}+L_{\{x\},\{t\}}+L_{\{x\},\{y,t\}}\sim L_{\{x\},\{y\}}+2L_{\{y\},\{t\}}+L_{\{y\},\{x,z\}}\sim\\
\sim L_{\{x\},\{z\}}+2L_{\{z\},\{t\}}+L_{\{z\},\{x,y\}}\sim L_{\{x\},\{t\}}+L_{\{y\},\{t\}}+L_{\{z\},\{t\}}+L_{\{t\},\{x,y,z\}}\sim H_\lambda,
\end{multline*}
which follow from \eqref{equation:2-26}.
On the other hand, if $\lambda\ne 0$ and $\lambda\ne-1$,
then the intersection matrix of the curves $L_{\{x\},\{t\}}$, $L_{\{z\},\{t\}}$, $L_{\{x\},\{y,t\}}$, $L_{\{y\},\{x,z\}}$, $L_{\{z\},\{x,y\}}$, $L_{\{t\},\{x,y,z\}}$, and $H_{\lambda}$
on the surface $S_\lambda$ is given by the following table:
\begin{center}\renewcommand\arraystretch{1.42}
\begin{tabular}{|c||c|c|c|c|c|c|c|}
\hline
 $\bullet$  & $L_{\{x\},\{t\}}$ & $L_{\{z\},\{t\}}$ & $L_{\{x\},\{y,t\}}$ & $L_{\{y\},\{x,z\}}$ & $L_{\{z\},\{x,y\}}$ & $L_{\{t\},\{x,y,z\}}$ & $H_{\lambda}$ \\
\hline\hline
$L_{\{x\},\{t\}}$ & $-\frac{7}{12}$ & $\frac{1}{3}$ & $\frac{3}{4}$ & $0$ & $0$ & $1$ & $1$ \\
\hline
$L_{\{z\},\{t\}}$ & $\frac{1}{3}$ & $-\frac{1}{6}$ & $0$ &  $0$ & $\frac{2}{3}$ & $\frac{1}{3}$ & $1$ \\
\hline
$L_{\{x\},\{y,t\}}$ & $\frac{3}{4}$ & $0$ & $-\frac{5}{4}$ &  $0$ & $0$ & $0$ & $1$ \\
\hline
$L_{\{y\},\{x,z\}}$ & $0$ & $0$ & $0$ &  $-\frac{2}{3}$ & $\frac{1}{3}$ & $\frac{1}{3}$ & $1$ \\
\hline
$L_{\{z\},\{x,y\}}$ & $0$ & $\frac{2}{3}$ & $0$ &  $\frac{1}{3}$ & $-\frac{2}{3}$ & $\frac{1}{3}$ & $1$ \\
\hline
$L_{\{t\},\{x,y,z\}}$ & $1$ & $\frac{1}{3}$ & $0$ &  $\frac{1}{3}$ & $\frac{1}{3}$ & $-\frac{2}{3}$ & $1$ \\
\hline
 $H_{\lambda}$  & $1$ & $1$ & $1$ & $1$ & $1$ & $1$ & $4$ \\
\hline
\end{tabular}
\end{center}
The rank of this matrix is $6$.
But $\mathrm{rk}\,\mathrm{Pic}(\widetilde{S}_{\Bbbk})=\mathrm{rk}\,\mathrm{Pic}(S_{\Bbbk})+12$.
We conclude that \eqref{equation:main-2-simple} holds, so that \eqref{equation:main-2} in Main Theorem holds by Lemma~\ref{lemma:cokernel}.

\subsection{Family \textnumero $2.27$}
\label{section:r-2-n-27}

In this case, the threefold $X$ is  a blow up of $\mathbb{P}^3$ in a twisted cubic, so that $h^{1,2}(X)=0$.
A~toric Landau--Ginzburg model of this family is given by Minkowski polynomial \textnumero $70$, which is
$$
x+y+z+{\frac {x}{z}}+\frac{1}{x}+{\frac {1}{yz}}+{\frac {1}{xy}}.
$$
The quartic pencil $\mathcal{S}$ is given by the equation:
$$
x^2zy+y^2zx+z^2xy+x^2ty+t^2zy+t^3x+t^3z=\lambda xyzt.
$$

Suppose that $\lambda\ne\infty$. Let $\mathcal{C}$ be the conic in $\mathbb{P}^3$ that is given by $z=xy+t^2=0$. Then
\begin{equation}
\label{equation:2-27}
\begin{split}
H_{\{x\}}\cdot S_\lambda&=L_{\{x\},\{z\}}+2L_{\{x\},\{t\}}+L_{\{x\},\{y,t\}},\\
H_{\{y\}}\cdot S_\lambda&=3L_{\{y\},\{t\}}+L_{\{y\},\{x,z\}},\\
H_{\{z\}}\cdot S_\lambda&=L_{\{x\},\{z\}}+L_{\{z\},\{t\}}+\mathcal{C},\\
H_{\{t\}}\cdot S_\lambda&=L_{\{x\},\{t\}}+L_{\{y\},\{t\}}+L_{\{z\},\{t\}}+L_{\{t\},\{x,y,z\}}.
\end{split}
\end{equation}
Thus, the base locus of the pencil $\mathcal{S}$ consists of the curves
$L_{\{x\},\{z\}}$, $L_{\{x\},\{t\}}$, $L_{\{y\},\{t\}}$, $L_{\{z\},\{t\}}$,
$L_{\{x\},\{y,t\}}$, $L_{\{y\},\{x,z\}}$, $L_{\{t\},\{x,y,z\}}$, and $\mathcal{C}$.

For every $\lambda\in\mathbb{C}$, the surface $S_\lambda$ is irreducible, it has isolated singularities,
and its singular points contained in the base locus of the pencil $\mathcal{S}$ can be described as follows:
\begin{itemize}\setlength{\itemindent}{3cm}
\item[$P_{\{x\},\{y\},\{t\}}$:] type $\mathbb{A}_2$ with quadratic term $xy$;

\item[$P_{\{x\},\{z\},\{t\}}$:] type $\mathbb{A}_5$ with quadratic term $xz$ for $\lambda\neq -1$, type $\mathbb{A}_6$ for $\lambda=-1$;

\item[$P_{\{y\},\{z\},\{t\}}$:] type $\mathbb{A}_2$ with quadratic term $y(z+t)$;

\item[$P_{\{x\},\{z\},\{y,t\}}$:] type $\mathbb{A}_1$;

\item[$P_{\{y\},\{t\},\{x,z\}}$:] type $\mathbb{A}_3$ with quadratic term
$$
y(x+y+z-t-\lambda t)
$$
for $\lambda\neq -1$, type $\mathbb{A}_4$ for $\lambda=-1$.
\end{itemize}
By Lemma~\ref{corollary:irreducible-fibers}, each fiber $\mathsf{f}^{-1}(\lambda)$ is irreducible.
This confirms \eqref{equation:main-1} in Main Theorem.

\begin{lemma}
\label{lemma:r2-n27-intersection}
Suppose that $\lambda\ne-1$.
Then the intersection matrix of the curves $L_{\{x\},\{t\}}$, $L_{\{x\},\{y,t\}}$, $L_{\{y\},\{x,z\}}$, $L_{\{z\},\{t\}}$, and $H_{\lambda}$
on the surface $S_\lambda$ is given by
\begin{center}\renewcommand\arraystretch{1.42}
\begin{tabular}{|c||c|c|c|c|c|}
\hline
 $\bullet$  & $L_{\{x\},\{t\}}$ & $L_{\{x\},\{y,t\}}$ & $L_{\{y\},\{x,z\}}$ & $L_{\{z\},\{t\}}$ &  $H_{\lambda}$ \\
\hline\hline
$L_{\{x\},\{t\}}$ & $-\frac{1}{2}$ & $\frac{2}{3}$ & $0$ & $\frac{1}{6}$ & $1$ \\
\hline
$L_{\{x\},\{y,t\}}$ & $\frac{2}{3}$ &  $-\frac{3}{2}$ & $0$ & $0$ & $1$ \\
\hline
$L_{\{y\},\{x,z\}}$ & $0$ &  $0$ & $-\frac{5}{4}$ & $0$ & $1$ \\
\hline
$L_{\{z\},\{t\}}$ & $\frac{1}{6}$ &  $0$ & $0$ & $-\frac{1}{2}$ & $1$ \\
\hline
 $H_{\lambda}$  & $1$ & $1$ & $1$ & $1$ & $4$ \\
\hline
\end{tabular}
\end{center}
\end{lemma}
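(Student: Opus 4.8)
The plan is to compute the five nontrivial entries of the matrix directly, using the intersection theory for curves on surfaces with du Val singularities recorded in Appendix~\ref{section:intersection}, exactly as in the preceding lemmas such as Lemma~\ref{lemma:r2-n20-intersection}. The last row and column are immediate: $H_\lambda^2=4$ because $S_\lambda$ is a quartic, and $H_\lambda\cdot L=1$ for each of the four lines $L$ since a general hyperplane section meets a line in one point. So the real content is the $4\times4$ block formed by $L_{\{x\},\{t\}}$, $L_{\{x\},\{y,t\}}$, $L_{\{y\},\{x,z\}}$, and $L_{\{z\},\{t\}}$ on a general member $S_\lambda$ with $\lambda\ne-1$, which has only du Val singularities.

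First I would record, for each of these four lines, the fixed singular points of $S_\lambda$ that it passes through, reading them off from the base-locus description in \eqref{equation:2-27} together with the singularity list above. Working inside the planes cut out by the equations, one finds that $L_{\{x\},\{t\}}$ contains the $\mathbb{A}_2$ point $P_{\{x\},\{y\},\{t\}}$ and the $\mathbb{A}_5$ point $P_{\{x\},\{z\},\{t\}}$; that $L_{\{z\},\{t\}}$ contains $P_{\{x\},\{z\},\{t\}}$ and the $\mathbb{A}_2$ point $P_{\{y\},\{z\},\{t\}}$; that $L_{\{y\},\{x,z\}}$ contains only the $\mathbb{A}_3$ point $P_{\{y\},\{t\},\{x,z\}}$; and that $L_{\{x\},\{y,t\}}$ contains $P_{\{x\},\{y\},\{t\}}$ and the $\mathbb{A}_1$ point $P_{\{x\},\{z\},\{y,t\}}$. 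The self-intersections then follow from Proposition~\ref{proposition:du-Val-self-intersection}, where for each $\mathbb{A}_n$ point I would invoke Remark~\ref{remark:transversal} to decide whether the strict transform of the line meets an end or an interior component of the exceptional chain. That dichotomy is controlled by the tangent direction of the line relative to the tangent cone of $S_\lambda$, which is precisely the quadratic term recorded above; for instance the tangent cones $xz$ at $P_{\{x\},\{z\},\{t\}}$ and $y(z+t)$ at the $\mathbb{A}_2$ points factor into the coordinate branches containing our lines.

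For the off-diagonal entries I would first check, straight from the defining equations $x=t=0$, $x=y+t=0$, $y=x+z=0$, $z=t=0$, which pairs of lines are disjoint. Of the six pairs, four meet nowhere in $\mathbb{P}^3$ and hence contribute zeros, while the two surviving incidences are $L_{\{x\},\{t\}}\cap L_{\{x\},\{y,t\}}=P_{\{x\},\{y\},\{t\}}$ (an $\mathbb{A}_2$ point) and $L_{\{x\},\{t\}}\cap L_{\{z\},\{t\}}=P_{\{x\},\{z\},\{t\}}$ (the $\mathbb{A}_5$ point). These two numbers I would compute with Proposition~\ref{proposition:du-Val-intersection}, again using Remark~\ref{remark:transversal} to pin down which chain components the two strict transforms hit relative to one another. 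As a global cross-check I would use the rational equivalences in \eqref{equation:2-27}, which express $H_\lambda$ both as $L_{\{x\},\{z\}}+2L_{\{x\},\{t\}}+L_{\{x\},\{y,t\}}$ and as $L_{\{x\},\{t\}}+L_{\{y\},\{t\}}+L_{\{z\},\{t\}}+L_{\{t\},\{x,y,z\}}$; these relations are exactly what let one replace the full base-locus intersection matrix by the $5\times5$ matrix of the lemma, and they give linear constraints that the computed entries must satisfy.

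I expect the main obstacle to be the local analysis at the $\mathbb{A}_5$ point $P_{\{x\},\{z\},\{t\}}$ and the $\mathbb{A}_3$ point $P_{\{y\},\{t\},\{x,z\}}$, and the careful bookkeeping at the $\mathbb{A}_2$ point $P_{\{x\},\{y\},\{t\}}$ where two of our lines ($L_{\{x\},\{t\}}$ and $L_{\{x\},\{y,t\}}$) both lie in the branch $\{x=0\}$ of the tangent cone. At such points the exceptional configuration has several components, so determining the exact correction to each self- and mutual intersection is not settled by transversality at a single end; it requires following the strict transforms of the lines through the successive blow-ups that resolve the singularity, precisely the computations already performed when identifying the singularity types above. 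In practice I would extract the required incidence data from those blow-up charts and feed it into Propositions~\ref{proposition:du-Val-self-intersection} and~\ref{proposition:du-Val-intersection}, then verify the resulting matrix against the rational equivalences as a final consistency test.
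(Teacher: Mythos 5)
Your overall strategy is exactly the paper's: the last row and column are trivial, the diagonal entries come from Proposition~\ref{proposition:du-Val-self-intersection} applied to the list of singular points lying on each line, the off-diagonal entries come from disjointness checks plus Proposition~\ref{proposition:du-Val-intersection} at the two surviving incidences, and the relations in \eqref{equation:2-27} serve as a cross-check. Your zeros are right, the two incidences $L_{\{x\},\{t\}}\cap L_{\{x\},\{y,t\}}=P_{\{x\},\{y\},\{t\}}$ and $L_{\{x\},\{t\}}\cap L_{\{z\},\{t\}}=P_{\{x\},\{z\},\{t\}}$ are right (and give $\tfrac{2}{3}$ and $\tfrac{1}{6}$ as claimed), and your data for $L_{\{y\},\{x,z\}}$ and $L_{\{z\},\{t\}}$ reproduces $-\tfrac{5}{4}$ and $-\tfrac{1}{2}$.

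The gap is in your incidence data for the other two lines, and as written the plan cannot terminate at the stated matrix. You put an $\mathbb{A}_2$ and an $\mathbb{A}_1$ point on $L_{\{x\},\{y,t\}}$, which by Proposition~\ref{proposition:du-Val-self-intersection} forces $L_{\{x\},\{y,t\}}^2=-2+\tfrac{2}{3}+\tfrac{1}{2}=-\tfrac{5}{6}$, not $-\tfrac{3}{2}$. The culprit is the point $P_{\{x\},\{z\},\{y,t\}}=[0:1:0:-1]$: since $\partial f/\partial x=2xyz+y^2z+yz^2+2xyt+t^3-\lambda yzt$ equals $-1$ there, that point is \emph{smooth} and contributes nothing. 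The gradient restricted to $L_{\{x\},\{t\}}=\{x=t=0\}$ reduces to $yz(y+z)$, so the genuine extra $\mathbb{A}_1$ point is $P_{\{x\},\{t\},\{y,z\}}=[0:1:-1:0]$, and it sits on $L_{\{x\},\{t\}}$, not on $L_{\{x\},\{y,t\}}$ (the subscripts in the displayed singularity list are transposed). Your own proposed consistency test catches this at once: dotting $H_\lambda\sim L_{\{x\},\{z\}}+2L_{\{x\},\{t\}}+L_{\{x\},\{y,t\}}$ with $L_{\{x\},\{y,t\}}$, and using that $L_{\{x\},\{z\}}$ meets $L_{\{x\},\{y,t\}}$ transversally at the smooth point $[0:1:0:-1]$, gives $1=1+2\cdot\tfrac{2}{3}+L_{\{x\},\{y,t\}}^2$, i.e. $L_{\{x\},\{y,t\}}^2=-\tfrac{4}{3}$, consistent only with the line passing through the single $\mathbb{A}_2$ point; dotting with $L_{\{x\},\{t\}}$ likewise balances only once $L_{\{x\},\{t\}}$ carries the additional $\mathbb{A}_1$ contribution. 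So you must re-derive which points are actually singular directly from the gradient of the quartic rather than from the displayed list, and then actually run your cross-check before accepting the entries $L_{\{x\},\{t\}}^2$ and $L_{\{x\},\{y,t\}}^2$; the rank-$5$ conclusion that the lemma feeds into is unaffected by these corrections.
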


\begin{proof}
The entries of the last raw and the last column in the intersection matrix are obvious.
To find its diagonal entries, we use Proposition~\ref{proposition:du-Val-self-intersection}.
For instance, to compute $L_{\{x\},\{t\}}^2$, observe that the only singular points of the surface $S_\lambda$ contained
in the line $L_{\{x\},\{t\}}$ are the points $P_{\{x\},\{z\},\{t\}}$ and $P_{\{x\},\{y\},\{t\}}$.
Using Remark~\ref{remark:transversal} with $S=S_\lambda$, $n=5$, $O=P_{\{x\},\{z\},\{t\}}$, and $C=L_{\{x\},\{t\}}$,
we see that $\overline{C}$ does not contain the point $\overline{G}_1\cap\overline{G}_5$.
Thus, it follows  from Proposition~\ref{proposition:du-Val-self-intersection} that $L_{\{x\},\{y\}}^2=-\frac{1}{2}$.
Similarly, we see that $L_{\{x\},\{y,t\}}^2=-\frac{3}{2}$, $L_{\{y\},\{x,z\}}^2=-\frac{5}{4}$, and $L_{\{z\},\{t\}}^2=-\frac{1}{2}$.

Note that $L_{\{x\},\{y,t\}}\cap L_{\{y\},\{x,z\}}=L_{\{x\},\{y,t\}}\cap L_{\{z\},\{t\}}=L_{\{y\},\{x,z\}}\cap L_{\{z\},\{t\}}=\varnothing$,
so that
$$
L_{\{x\},\{y,t\}}\cdot L_{\{y\},\{x,z\}}=L_{\{x\},\{y,t\}}\cdot L_{\{z\},\{t\}}=L_{\{y\},\{x,z\}}\cdot L_{\{z\},\{t\}}=0.
$$
Similarly, we see that $L_{\{x\},\{t\}}\cdot L_{\{y\},\{x,z\}}=0$.

To find the remaining entries of the intersection matrix, we use Proposition~\ref{proposition:du-Val-intersection}.
To start with, let us compute $L_{\{x\},\{t\}}\cdot L_{\{z\},\{t\}}$.
Observe that $L_{\{x\},\{t\}}\cap L_{\{z\},\{t\}}=P_{\{x\},\{z\},\{t\}}$.
Using Remark~\ref{remark:transversal} with $S=S_\lambda$, $n=5$, $O=P_{\{x\},\{z\},\{t\}}$, $C=L_{\{x\},\{t\}}$, and $Z=L_{\{z\},\{t\}}$,
we see that both curves $\overline{C}$ and $\overline{Z}$ do not contain the point $\overline{G}_1\cap\overline{G}_5$.
Moreover, since the quadratic term of the surface $S_\lambda$ at the singular point $P_{\{x\},\{z\},\{t\}}$ is $xz$,
we see that either $\overline{C}\cdot\overline{G}_1=\overline{Z}\cdot\overline{G}_5=1$ or
$\overline{C}\cdot\overline{G}_5=\overline{Z}\cdot\overline{G}_1=1$.
Thus, using Proposition~\ref{proposition:du-Val-intersection}, we conclude that $L_{\{x\},\{t\}}\cdot L_{\{z\},\{t\}}=\frac{1}{6}$.

Finally, let us compute $L_{\{x\},\{t\}}\cdot L_{\{x\},\{y,t\}}$.
Observe that $L_{\{x\},\{t\}}\cap L_{\{x\},\{y,t\}}=P_{\{x\},\{y,\},\{t\}}$,
and $P_{\{x\},\{y,\},\{t\}}$ is a singular point of the surface $S_\lambda$ of type $\mathbb{A}_2$.
Let us use the notation of Appendix~\ref{subsection:A} with $S=S_\lambda$, $n=2$, $O=P_{\{x\},\{y\},\{t\}}$, $C=L_{\{x\},\{t\}}$, and $Z=L_{\{x\},\{y,t\}}$.
Then~$\pi$ is the blow up of the point $O$,
and either both curves $\widetilde{C}$ and $\widetilde{Z}$ intersect $G_1$,
or they intersect the curve $G_2$.
Thus, we have $L_{\{x\},\{t\}}\cdot L_{\{x\},\{y,t\}}=\frac{2}{3}$ Proposition~\ref{proposition:du-Val-intersection}.
\end{proof}

Using \eqref{equation:2-27}, we see that the
intersection matrix of the curves $L_{\{x\},\{z\}}$, $L_{\{x\},\{t\}}$, $L_{\{y\},\{t\}}$, $L_{\{z\},\{t\}}$,
$L_{\{x\},\{y,t\}}$, $L_{\{y\},\{x,z\}}$, $L_{\{t\},\{x,y,z\}}$, and $\mathcal{C}$ on the surface
$S_\lambda$ has the same rank as the intersection matrix of the
curves $L_{\{x\},\{t\}}$, $L_{\{x\},\{y,t\}}$, $L_{\{y\},\{x,z\}}$, $L_{\{z\},\{t\}}$, and $H_{\lambda}$.
But the matrix in Lemma~\ref{lemma:r2-n27-intersection} has rank~$5$.
Thus, since
$\mathrm{rk}\,\mathrm{Pic}(\widetilde{S}_{\Bbbk})=\mathrm{rk}\,\mathrm{Pic}(S_{\Bbbk})+12$,
we conclude that \eqref{equation:main-2-simple} holds.
Hence, it follows from Lemma~\ref{lemma:cokernel} that \eqref{equation:main-2} in Main Theorem holds.

\subsection{Family \textnumero $2.28$}
\label{section:r-2-n-28}
In this case, the threefold $X$ is  a blow up of $\mathbb{P}^3$ in a smooth plane cubic curve,
which implies that $h^{1,2}(X)=1$.
A~toric Landau--Ginzburg model of this family is given by Minkowski polynomial \textnumero $68$, which is
$$
x+\frac{x}{z}+\frac{x}{yz}+\frac{y}{z}+z+\frac{1}{y}+\frac{y}{x}.
$$
The quartic pencil $\mathcal{S}$ is given by
$$
x^2yz+x^2yt+x^2t^2+xy^2t+xyz^2+xzt^2+y^2zt=\lambda xyzt.
$$

Suppose that $\lambda\ne\infty$.
Let $\mathcal{C}$ be the conic that is given by $z=xy+xt+y^2=0$.
Then
\begin{equation}
\label{equation:2-28}
\begin{split}
H_{\{x\}}\cdot S_\lambda&=2L_{\{x\},\{y\}}+L_{\{x\},\{z\}}+L_{\{x\},\{t\}},\\
H_{\{y\}}\cdot S_\lambda&=L_{\{x\},\{y\}}+2L_{\{y\},\{t\}}+L_{\{y\},\{x,z\}},\\
H_{\{z\}}\cdot S_\lambda&=L_{\{x\},\{z\}}+L_{\{z\},\{t\}}+\mathcal{C},\\
H_{\{t\}}\cdot S_\lambda&=L_{\{x\},\{t\}}+L_{\{y\},\{t\}}+L_{\{z\},\{t\}}+L_{\{t\},\{x,z\}}.
\end{split}
\end{equation}
Thus, the base locus of the pencil $\mathcal{S}$ consists of the curves
$L_{\{x\},\{y\}}$, $L_{\{x\},\{z\}}$, $L_{\{x\},\{t\}}$, $L_{\{y\},\{t\}}$,
$L_{\{z\},\{t\}}$, $L_{\{y\},\{x,z\}}$, $L_{\{t\},\{x,z\}}$, and $\mathcal{C}$.

If $\lambda\ne-1$, then $S_\lambda$ is irreducible, it has isolated singularities,
and its singular points contained in the base locus of the pencil $\mathcal{S}$ can be described as follows:
\begin{itemize}\setlength{\itemindent}{3cm}
\item[$P_{\{x\},\{y\},\{z\}}$:] type $\mathbb{A}_4$ with quadratic term $x(x+z)$;

\item[$P_{\{x\},\{y\},\{t\}}$:] type $\mathbb{A}_4$ with quadratic term $xy$;

\item[$P_{\{x\},\{z\},\{t\}}$:] type $\mathbb{A}_3$ with quadratic term $t(x+z)$;

\item[$P_{\{y\},\{z\},\{t\}}$:] type $\mathbb{A}_1$ with quadratic term $yz+yt+t^2$;

\item[$P_{\{y\},\{t\},\{x,z\}}$:] type $\mathbb{A}_2$ with quadratic term $y(x+z-t-\lambda t)$.
\end{itemize}
Thus, if $\lambda\ne -1$, then the fiber $\mathsf{f}^{-1}(\lambda)$ is irreducible by Lemma~\ref{corollary:irreducible-fibers}.
On the other hand, we have $S_{-1}=H_{\{x,z\}}+\mathbf{S}$,
where $\mathbf{S}$ is an irreducible cubic surface in $\mathbb{P}^3$ that is given by $xyz+xyt+xt^2+y^2t=0$.
Nevertheless, the points
$P_{\{y\},\{z\},\{t\}}$, $P_{\{x\},\{z\},\{t\}}$, $P_{\{x\},\{y\},\{t\}}$, $P_{\{x\},\{y\},\{z\}}$, and $P_{\{y\},\{t\},\{x,z\}}$
are good double points of the surface $S_{-1}$.
Moreover, the surface $S_{-1}$ is smooth
at general points of the curves $L_{\{x\},\{y\}}$, $L_{\{x\},\{z\}}$, $L_{\{x\},\{t\}}$, $L_{\{y\},\{t\}}$,
$L_{\{z\},\{t\}}$, $L_{\{y\},\{x,z\}}$, $L_{\{t\},\{x,z\}}$, and $\mathcal{C}$.
Thus, using \eqref{equation:equation:number-of-irredubicle-components-refined} and
Lemmas~\ref{lemma:main} and \ref{lemma:normal-crossing}, we conclude that $[\mathsf{f}^{-1}(-1)]=[S_{-1}]=2$.
This confirms \eqref{equation:main-1} in Main Theorem.

Now let us verify \eqref{equation:main-2} in Main Theorem.
By Lemma~\ref{lemma:cokernel}, it is enough to show that the equality \eqref{equation:main-2-simple} holds.
If $\lambda\ne -1$, then it follows from \eqref{equation:2-28} that
the intersection matrix of the curves
$L_{\{x\},\{y\}}$, $L_{\{x\},\{z\}}$, $L_{\{x\},\{t\}}$, $L_{\{y\},\{t\}}$,
$L_{\{z\},\{t\}}$, $L_{\{y\},\{x,z\}}$, $L_{\{t\},\{x,z\}}$, and $\mathcal{C}$ on the surface $S_\lambda$
has the same rank as the intersection matrix of the curves $L_{\{x\},\{y\}}$, $L_{\{x\},\{t\}}$, $L_{\{y\},\{t\}}$, $L_{\{z\},\{t\}}$, and $H_{\lambda}$.
If $\lambda\ne-1$, the latter matrix is given by
\begin{center}\renewcommand\arraystretch{1.42}
\begin{tabular}{|c||c|c|c|c|c|}
\hline
 $\bullet$  & $L_{\{x\},\{y\}}$ & $L_{\{x\},\{t\}}$ & $L_{\{y\},\{t\}}$ & $L_{\{z\},\{t\}}$ &  $H_{\lambda}$ \\
\hline\hline
$L_{\{x\},\{y\}}$ & $0$ & $\frac{3}{5}$ & $\frac{2}{5}$ & $0$ & $1$ \\
\hline
$L_{\{x\},\{t\}}$ & $\frac{3}{5}$ &  $-\frac{9}{20}$ & $\frac{1}{5}$ & $\frac{3}{4}$ & $1$ \\
\hline
$L_{\{y\},\{t\}}$ & $\frac{2}{5}$ &  $\frac{1}{5}$ & $-\frac{1}{30}$ & $\frac{1}{2}$ & $1$ \\
\hline
$L_{\{z\},\{t\}}$ & $0$ &  $\frac{3}{4}$ & $\frac{1}{2}$ & $-\frac{3}{4}$ & $1$ \\
\hline
 $H_{\lambda}$  & $1$ & $1$ & $1$ & $1$ & $4$ \\
\hline
\end{tabular}
\end{center}
Its rank is $4$.
On the other hand, it follows from the description of the singular points of the surface $S_\lambda$ that
$\mathrm{rk}\,\mathrm{Pic}(\widetilde{S}_{\Bbbk})=\mathrm{rk}\,\mathrm{Pic}(S_{\Bbbk})+14$.
Thus, we can conclude that \eqref{equation:main-2-simple} holds,
so that \eqref{equation:main-2} in Main Theorem also holds.

\subsection{Family \textnumero $2.29$}
\label{section:r-2-n-29}
In this case, the threefold $X$ is  a blow up of a smooth quadric threefold in $\mathbb{P}^4$ along a conic.
This implies that $h^{1,2}(X)=0$.
A~toric Landau--Ginzburg model of this family is given by Minkowski polynomial \textnumero $71$, which is
$$
x+y+z+\frac{x}{z}+\frac{y}{z}+\frac{1}{y}+\frac{1}{x}.
$$
The quartic pencil $\mathcal{S}$ is given by
$$
x^2zy+y^2zx+z^2yx+x^2ty+y^2tx+t^2zx+t^2zy=\lambda xyzt.
$$
Its base locus consists of the lines
$L_{\{x\},\{y\}}$, $L_{\{x\},\{z\}}$, $L_{\{x\},\{t\}}$,
$L_{\{y\},\{z\}}$, $L_{\{y\},\{t\}}$, $L_{\{z\},\{t\}}$, $L_{\{z\},\{x,y\}}$, and $L_{\{t\},\{x,y,z\}}$, because
\begin{itemize}
\item $H_{\{x\}}\cdot S_\lambda=L_{\{x\},\{y\}}+L_{\{x\},\{z\}}+2L_{\{x\},\{t\}}$,
\item $H_{\{y\}}\cdot S_\lambda=L_{\{x\},\{y\}}+L_{\{y\},\{z\}}+2L_{\{y\},\{t\}}$,
\item $H_{\{z\}}\cdot S_\lambda=L_{\{x\},\{z\}}+L_{\{y\},\{z\}}+L_{\{z\},\{t\}}+L_{\{z\},\{x,y\}}$,
\item $H_{\{t\}}\cdot S_\lambda=L_{\{x\},\{t\}}+L_{\{y\},\{t\}}+L_{\{z\},\{t\}}+L_{\{t\},\{x,y,z\}}$.
\end{itemize}
Here, as usual, we assume that $\lambda\ne\infty$.

For every $\lambda\in\mathbb{C}$, the surface $S_\lambda$ has isolated singularities, so that it is irreducible.
Its singular points contained in the base locus of the pencil $\mathcal{S}$ can be described as follows:
\begin{itemize}\setlength{\itemindent}{2cm}
\item[$P_{\{x\},\{y\},\{z\}}$:] type $\mathbb{A}_3$ with quadratic term $z(x+y)$ for $\lambda\neq 0$, type $\mathbb{A}_5$ for $\lambda=0$;

\item[$P_{\{x\},\{y\},\{t\}}$:] type $\mathbb{A}_3$ with quadratic term $xy$;

\item[$P_{\{x\},\{z\},\{t\}}$:] type $\mathbb{A}_2$ with quadratic term $x(z+t)$;

\item[$P_{\{y\},\{z\},\{t\}}$:] type $\mathbb{A}_2$ with quadratic term $y(z+t)$;

\item[$P_{\{x\},\{t\},\{y,z\}}$:] type $\mathbb{A}_1$;

\item[$P_{\{y\},\{t\},\{x,z\}}$:] type $\mathbb{A}_1$;

\item[$P_{\{z\},\{t\},\{x,y\}}$:] type $\mathbb{A}_1$ for $\lambda\neq -1$, type $\mathbb{A}_3$ for $\lambda=-1$.
\end{itemize}
By Lemma~\ref{corollary:irreducible-fibers}, every fiber $\mathsf{f}^{-1}(\lambda)$ is irreducible.
This confirms \eqref{equation:main-1} in Main Theorem.

If $\lambda\ne 0$ and $\lambda\ne -1$, then the intersection matrix of the lines
$L_{\{x\},\{y\}}$, $L_{\{x\},\{z\}}$, $L_{\{x\},\{t\}}$,
$L_{\{y\},\{z\}}$, $L_{\{y\},\{t\}}$, $L_{\{z\},\{t\}}$, $L_{\{z\},\{x,y\}}$, and $L_{\{t\},\{x,y,z\}}$
on the surface $S_\lambda$ has the same rank as the following intersection matrix:
\begin{center}\renewcommand\arraystretch{1.42}
\begin{tabular}{|c||c|c|c|c|c|}
\hline
 $\bullet$  & $L_{\{x\},\{y\}}$ & $L_{\{x\},\{z\}}$ & $L_{\{y\},\{z\}}$ & $L_{\{z\},\{t\}}$ &  $H_{\lambda}$ \\
\hline\hline
$L_{\{x\},\{y\}}$ & $-\frac{1}{4}$ & $\frac{1}{4}$ & $\frac{1}{4}$ & $0$ & $1$ \\
\hline
$L_{\{x\},\{z\}}$ & $\frac{1}{4}$ &  $-\frac{7}{12}$ & $\frac{3}{4}$ & $\frac{1}{3}$ & $1$ \\
\hline
$L_{\{y\},\{z\}}$ & $\frac{1}{4}$ &  $\frac{3}{4}$ & $-\frac{7}{12}$ & $\frac{1}{3}$ & $1$ \\
\hline
$L_{\{z\},\{t\}}$ & $0$ &  $\frac{1}{3}$ & $\frac{1}{3}$ & $-\frac{1}{6}$ & $1$ \\
\hline
 $H_{\lambda}$  & $1$ & $1$ & $1$ & $1$ & $4$ \\
\hline
\end{tabular}
\end{center}
This matrix has rank $5$.
On the other hand, we have $\mathrm{rk}\,\mathrm{Pic}(\widetilde{S}_{\Bbbk})=\mathrm{rk}\,\mathrm{Pic}(S_{\Bbbk})+13$.
Thus, we conclude that \eqref{equation:main-2-simple} holds, so that \eqref{equation:main-2} in Main Theorem also holds.

\subsection{Family \textnumero $2.30$}
\label{section:r-2-n-30}

In this case, the threefold $X$ is  a blow up of $\mathbb{P}^3$ along a conic, so that $h^{1,2}(X)=0$.
A~toric Landau--Ginzburg model of this family is given by Minkowski polynomial \textnumero $22$, which is
$$
\frac{x}{yz}+x+\frac{1}{y}+\frac{1}{z}+\frac{z}{x}+\frac{y}{x}.
$$
The pencil $\mathcal{S}$ is given by the equation
$$
x^2t^2+x^2yz+t^2zx+t^2yx+z^2yt+y^2zt=\lambda xyzt.
$$

Suppose, for simplicity, that $\lambda\ne\infty$. Then
\begin{equation}
\label{equation:2-30}
\begin{split}
H_{\{x\}}\cdot S_\lambda&=L_{\{x\},\{y\}}+L_{\{x\},\{z\}}+L_{\{x\},\{t\}}+L_{\{x\},\{y,z\}},\\
H_{\{y\}}\cdot S_\lambda&=L_{\{x\},\{y\}}+2L_{\{y\},\{t\}}+L_{\{y\},\{x,z\}},\\
H_{\{z\}}\cdot S_\lambda&=L_{\{x\},\{z\}}+2L_{\{z\},\{t\}}+L_{\{z\},\{x,y\}},\\
H_{\{t\}}\cdot S_\lambda&=2L_{\{x\},\{t\}}+L_{\{y\},\{t\}}+L_{\{z\},\{t\}}.
\end{split}
\end{equation}
Thus, the base locus of the pencil $\mathcal{S}$ is a union of the lines
$L_{\{x\},\{y\}}$, $L_{\{x\},\{z\}}$, $L_{\{x\},\{t\}}$, $L_{\{y\},\{t\}}$, $L_{\{z\},\{t\}}$, $L_{\{x\},\{y,z\}}$,
$L_{\{y\},\{x,z\}}$, and $L_{\{z\},\{x,y\}}$,

For every $\lambda\in\mathbb{C}$, the surface $S_\lambda$ has isolated singularities, so that it is irreducible.
Its singular points contained in the base locus of the pencil~$\mathcal{S}$ can be described as follows:
\begin{itemize}\setlength{\itemindent}{1.5cm}
\item[$P_{\{y\},\{z\},\{t\}}$:] type $\mathbb{A}_1$;

\item[$P_{\{x\},\{z\},\{t\}}$:] type $\mathbb{A}_4$ with quadratic term $zt$;

\item[$P_{\{x\},\{y\},\{t\}}$:] type $\mathbb{A}_4$ with quadratic term $yt$;

\item[$P_{\{x\},\{y\},\{z\}}$:] type $\mathbb{A}_3$ with quadratic term $x(x+y+z)$ for $\lambda\neq -1$, type $\mathbb{A}_5$ for $\lambda=-1$;

\item[$P_{\{x\},\{t\},\{y,z\}}$:] type $\mathbb{A}_1$.
\end{itemize}
Then $[\mathsf{f}^{-1}(\lambda)]=1$ for every $\lambda\in\mathbb{C}$ by Lemma~\ref{corollary:irreducible-fibers}.
This confirms \eqref{equation:main-1} in Main Theorem.

Let us verify \eqref{equation:main-2} in Main Theorem.
If $\lambda\ne-1$, then the intersection matrix of the curves $L_{\{x\},\{y\}}$, $L_{\{x\},\{z\}}$, $L_{\{y\},\{x,z\}}$, $L_{\{z\},\{x,y\}}$, and $H_{\lambda}$
on the surface $S_\lambda$ is given by
\begin{center}\renewcommand\arraystretch{1.42}
\begin{tabular}{|c||c|c|c|c|c|}
\hline
 $\bullet$  & $L_{\{x\},\{y\}}$ & $L_{\{x\},\{z\}}$ & $L_{\{y\},\{x,z\}}$ & $L_{\{z\},\{x,y\}}$ &  $H_{\lambda}$ \\
\hline\hline
$L_{\{x\},\{y\}}$ & $-\frac{9}{20}$ & $\frac{3}{4}$ & $\frac{1}{4}$ & $\frac{1}{4}$ & $1$ \\
\hline
$L_{\{x\},\{z\}}$ & $\frac{3}{4}$ &  $-\frac{9}{20}$ & $\frac{1}{4}$ & $\frac{1}{4}$ & $1$ \\
\hline
$L_{\{y\},\{x,z\}}$ & $\frac{1}{4}$ &  $\frac{1}{4}$ & $-2$ & $\frac{1}{4}$ & $1$ \\
\hline
$L_{\{z\},\{x,y\}}$ & $\frac{1}{4}$ &  $\frac{1}{4}$ & $\frac{1}{4}$ & $-2$ & $1$ \\
\hline
 $H_{\lambda}$  & $1$ & $1$ & $1$ & $1$ & $4$ \\
\hline
\end{tabular}
\end{center}
 Observe that this matrix has rank~$5$.
Thus, if $\lambda\ne 1$, then the intersection matrix of the lines
$L_{\{x\},\{y\}}$, $L_{\{x\},\{z\}}$, $L_{\{x\},\{t\}}$, $L_{\{y\},\{t\}}$, $L_{\{z\},\{t\}}$, $L_{\{x\},\{y,z\}}$,
$L_{\{y\},\{x,z\}}$, and $L_{\{z\},\{x,y\}}$ on the surface $S_\lambda$ also has rank $5$, because
\begin{multline*}
L_{\{x\},\{y\}}+L_{\{x\},\{z\}}+L_{\{x\},\{t\}}+L_{\{x\},\{y,z\}}\sim L_{\{x\},\{y\}}+2L_{\{y\},\{t\}}+L_{\{y\},\{x,z\}}\sim \\
\sim L_{\{x\},\{z\}}+2L_{\{z\},\{t\}}+L_{\{z\},\{x,y\}}\sim 2L_{\{x\},\{t\}}+L_{\{y\},\{t\}}+L_{\{z\},\{t\}}\sim H_\lambda
\end{multline*}
on the surface $S_\lambda$ by \eqref{equation:2-30}.
On the other hand, we have
$\mathrm{rk}\,\mathrm{Pic}(\widetilde{S}_{\Bbbk})=\mathrm{rk}\,\mathrm{Pic}(S_{\Bbbk})+13$.
Hence, we see that \eqref{equation:main-2-simple} holds, so that \eqref{equation:main-2} in Main Theorem also holds by Lemma~\ref{lemma:cokernel}.

\subsection{Family \textnumero $2.31$}
\label{section:r-2-n-31}
In this case, the threefold $X$ is  a blow up of the smooth quadric threefold in $\mathbb{P}^4$ along a line.
This shows that $h^{1,2}(X)=0$.
A~toric Landau--Ginzburg model of this family is given by Minkowski polynomial \textnumero $20$, which is
$$
x+y+z+\frac {x}{y}+\frac{1}{x}+\frac {1}{yz}.
$$
The pencil $\mathcal{S}$ is given by the equation
$$
x^2yz+y^2xz+z^2yx+x^2tz+t^2yz+t^3x=\lambda xyzt.
$$

We suppose that $\lambda\ne\infty$.
Let $\mathcal{C}$ be the conic  $\{y=xz+t^2=0\}$.
Then
\begin{equation}
\label{equation:2-31}
\begin{split}
H_{\{x\}}\cdot S_\lambda&=L_{\{x\},\{y\}}+L_{\{x\},\{z\}}+2L_{\{x\},\{t\}},\\
H_{\{y\}}\cdot S_\lambda&=L_{\{x\},\{y\}}+L_{\{y\},\{t\}}+\mathcal{C}, \\
H_{\{z\}}\cdot S_\lambda&=L_{\{x\},\{z\}}+3L_{\{z\},\{t\}},\\
H_{\{t\}}\cdot S_\lambda&=L_{\{x\},\{t\}}+L_{\{y\},\{t\}}+L_{\{z\},\{t\}}+L_{\{t\},\{x,y,z\}}.
\end{split}
\end{equation}
Therefore, the base locus of the pencil $\mathcal{S}$ consists of the lines
$L_{\{x\},\{y\}}$, $L_{\{x\},\{z\}}$, $L_{\{x\},\{t\}}$,
$L_{\{y\},\{t\}}$ $L_{\{z\},\{t\}}$, $L_{\{t\},\{x,y,z\}}$, and the conic $\mathcal{C}$.

For every $\lambda\in\mathbb{C}$, the surface $S_\lambda$ has isolated singularities, so that it is irreducible.
Moreover, the singular points of the surface $S_\lambda$ contained in the base locus of the pencil~$\mathcal{S}$ can be described as follows:
\begin{itemize}\setlength{\itemindent}{2cm}
\item[$P_{\{x\},\{y\},\{t\}}$:] type $\mathbb{A}_5$ with quadratic term $xy$ for $\lambda\neq 0$, type $\mathbb{A}_6$ for $\lambda=0$;

\item[$P_{\{x\},\{z\},\{t\}}$:] type $\mathbb{A}_4$ with quadratic term $xz$;

\item[$P_{\{y\},\{z\},\{t\}}$:] type $\mathbb{A}_2$ with quadratic term $z(y+t)$;

\item[$P_{\{z\},\{t\},\{x,y\}}$:] type $\mathbb{A}_2$ with quadratic term $z(x+y+z-t-\lambda t)$;

\item[$P_{\{x\},\{t\},\{y,z\}}$:] type $\mathbb{A}_1$.
\end{itemize}
In particular, every fiber
$\mathsf{f}^{-1}(\lambda)$ is irreducible by Lemma~\ref{corollary:irreducible-fibers}.
This confirms \eqref{equation:main-1} in Main Theorem, since $h^{1,2}(X)=0$.

Now let us verify \eqref{equation:main-2} in Main Theorem.
If $\lambda\ne 0$, then the intersection matrix of the curves $L_{\{x\},\{y\}}$, $L_{\{y\},\{t\}}$, $L_{\{z\},\{t\}}$, and $H_{\lambda}$
on the surface $S_\lambda$ is given by
\begin{center}\renewcommand\arraystretch{1.42}
\begin{tabular}{|c||c|c|c|c|}
\hline
 $\bullet$  & $L_{\{x\},\{y\}}$ & $L_{\{y\},\{t\}}$ & $L_{\{z\},\{t\}}$ &  $H_{\lambda}$ \\
\hline\hline
$L_{\{x\},\{y\}}$ &  $-\frac{2}{3}$ & $\frac{3}{5}$ & 0 & $1$ \\
\hline
$L_{\{y\},\{t\}}$ &  $\frac{3}{5}$ & $-\frac{4}{3}$ & $\frac{1}{3}$ & $1$ \\
\hline
$L_{\{z\},\{t\}}$ &  $0$ & $\frac{1}{3}$ & $-\frac{8}{15}$ & $1$ \\
\hline
 $H_{\lambda}$  & $1$ & $1$ & $1$ & $4$ \\
\hline
\end{tabular}
\end{center}
This matrix has rank~$4$.
On the other hand, if $\lambda\ne 0$, then the intersection matrix of the curves
$L_{\{x\},\{y\}}$, $L_{\{x\},\{z\}}$, $L_{\{x\},\{t\}}$,
$L_{\{y\},\{t\}}$ $L_{\{z\},\{t\}}$, $L_{\{t\},\{x,y,z\}}$, and $\mathcal{C}$
on the surface $S_\lambda$ has the same rank as the intersection matrix
of the curves $L_{\{x\},\{y\}}$, $L_{\{y\},\{t\}}$, $L_{\{z\},\{t\}}$, and $H_{\lambda}$,
because
\begin{multline*}
L_{\{x\},\{y\}}+L_{\{x\},\{z\}}+2L_{\{x\},\{t\}}\sim L_{\{x\},\{y\}}+L_{\{y\},\{t\}}+\mathcal{C}\sim\\
\sim L_{\{x\},\{z\}}+3L_{\{z\},\{t\}}\sim L_{\{x\},\{t\}}+L_{\{y\},\{t\}}+L_{\{z\},\{t\}}+L_{\{t\},\{x,y,z\}}\sim H_\lambda
\end{multline*}
on the surface $S_\lambda$ by \eqref{equation:2-31}.
Moreover, it follows from the description of singularities of the surface $S_\lambda$ that
$\mathrm{rk}\,\mathrm{Pic}(\widetilde{S}_{\Bbbk})=\mathrm{rk}\,\mathrm{Pic}(S_{\Bbbk})+14$.
Therefore, we conclude that \eqref{equation:main-2-simple} holds, so that \eqref{equation:main-2} in Main Theorem also holds by Lemma~\ref{lemma:cokernel}.

\subsection{Family \textnumero $2.32$}
\label{section:r-2-n-32}

In this case, the threefold $X$ is a divisor of bidegree $(1,1)$ on $\mathbb{P}^2\times\mathbb{P}^2$, so that $h^{1,2}(X)=0$.
A~toric Landau--Ginzburg model of this family is given by Minkowski polynomial \textnumero $21$, which is
$$
x+y+z+\frac{1}{y}+\frac{1}{x}+\frac{1}{xyz}.
$$
The quartic pencil $\mathcal{S}$ is given by the equation
$$
x^2yz+y^2xz+z^2yx+t^2xz+t^2yz+t^4=\lambda xyzt.
$$
As usual, we suppose that $\lambda\ne\infty$.

Let $\mathcal{C}_1$ be the conic in $\mathbb{P}^3$ that is given by $x=yz+t^2=0$,
and let $\mathcal{C}_2$ be the conic in $\mathbb{P}^3$ that is given by $y=xz+t^2=0$.
Then
\begin{equation}
\label{equation:2-32}
\begin{split}
H_{\{x\}}\cdot S_\lambda&=2L_{\{x\},\{t\}}+\mathcal{C}_1,\\
H_{\{y\}}\cdot S_\lambda&=2L_{\{y\},\{t\}}+\mathcal{C}_2,\\
H_{\{z\}}\cdot S_\lambda&=4L_{\{z\},\{t\}},\\
H_{\{t\}}\cdot S_\lambda&=L_{\{x\},\{t\}}+L_{\{y\},\{t\}}+L_{\{z\},\{t\}}+L_{\{t\},\{x,y,z\}}.
\end{split}
\end{equation}
This shows that the base locus of the pencil $\mathcal{S}$ consists of the curves
$L_{\{x\},\{t\}}$, $L_{\{y\},\{t\}}$, $L_{\{z\},\{t\}}$, $L_{\{t\},\{x,y,z\}}$, $\mathcal{C}_1$, and $\mathcal{C}_2$.

For every $\lambda\in\mathbb{C}$, the surface $S_\lambda$ has isolated singularities, so that it is irreducible.
Its~singularities contained in the base locus of the pencil $\mathcal{S}$ can be described as follows:
\begin{itemize}\setlength{\itemindent}{2cm}
\item[$P_{\{x\},\{y\},\{t\}}$:] type $\mathbb{A}_4$ with quadratic term $xy$, for $\lambda\neq 0$, type $\mathbb{A}_5$ for $\lambda=0$;

\item[$P_{\{x\},\{z\},\{t\}}$:] type $\mathbb{A}_3$ with quadratic term $xz$;

\item[$P_{\{y\},\{z\},\{t\}}$:] type $\mathbb{A}_3$ with quadratic term $yz$;

\item[$P_{\{x\},\{t\},\{y,z\}}$:] type $\mathbb{A}_1$;

\item[$P_{\{y\},\{t\},\{x,z\}}$:] type $\mathbb{A}_1$;

\item[$P_{\{z\},\{t\},\{x,y\}}$:] type $\mathbb{A}_3$ with quadratic term $z(x+y+z-\lambda t)$.
\end{itemize}
In particular, every fiber
$\mathsf{f}^{-1}(\lambda)$ is irreducible by Lemma~\ref{corollary:irreducible-fibers}.
This confirms \eqref{equation:main-1} in Main Theorem, since $h^{1,2}(X)=0$.
To verify \eqref{equation:main-2} in Main Theorem, we need the following result:

\begin{lemma}
\label{lemma:r2-n32-intersection}
Suppose that $\lambda\ne 0$.
Then the intersection matrix of the curves $L_{\{x\},\{t\}}$, $L_{\{y\},\{t\}}$, and $H_{\lambda}$
on the surface $S_\lambda$ is given by
\begin{center}\renewcommand\arraystretch{1.42}
\begin{tabular}{|c||c|c|c|}
\hline
 $\bullet$  & $L_{\{x\},\{t\}}$ & $L_{\{y\},\{t\}}$ & $H_{\lambda}$ \\
\hline\hline
$L_{\{x\},\{t\}}$ &  $\frac{1}{20}$ & $\frac{1}{5}$ & $1$ \\
\hline
$L_{\{y\},\{t\}}$ &  $\frac{1}{5}$ & $\frac{1}{20}$ & $1$ \\
\hline
\hline
 $H_{\lambda}$  & $1$ & $1$ & $4$ \\
\hline
\end{tabular}
\end{center}
\end{lemma}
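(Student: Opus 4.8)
The plan is to read off the two trivial rows and columns directly and then reduce the three remaining entries to the intersection theory of curves on du Val surfaces recorded in Appendix~\ref{section:intersection}. First I would note that $H_\lambda^2=4$, since $S_\lambda$ is a quartic and $H_\lambda$ is its general hyperplane section, while $H_\lambda\cdot L_{\{x\},\{t\}}=H_\lambda\cdot L_{\{y\},\{t\}}=1$ because each of these is a line. Thus everything comes down to computing $L_{\{x\},\{t\}}^2$, $L_{\{y\},\{t\}}^2$ and $L_{\{x\},\{t\}}\cdot L_{\{y\},\{t\}}$ on $S_\lambda$ for general $\lambda$, where $S_\lambda$ carries only the du Val singularities listed before the statement.

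For the self-intersection $L_{\{x\},\{t\}}^2$ I would first locate the singular points of $S_\lambda$ on the line $L_{\{x\},\{t\}}=\{x=t=0\}$: these are exactly $P_{\{x\},\{y\},\{t\}}$ (type $\mathbb{A}_4$, quadratic term $xy$), $P_{\{x\},\{z\},\{t\}}$ (type $\mathbb{A}_3$, quadratic term $xz$) and $P_{\{x\},\{t\},\{y,z\}}$ (type $\mathbb{A}_1$). By Proposition~\ref{proposition:du-Val-self-intersection} each contributes a correction to $-2$, and the size of the correction is governed by which component of the resolution chain the strict transform of the line meets. Using Remark~\ref{remark:transversal}, I would determine this from the tangent cone at each point: at $P_{\{x\},\{y\},\{t\}}$ the direction of $L_{\{x\},\{t\}}$ lies in the single branch $\{x=0\}$ of the quadratic form $xy$ and not on its singular line $\{x=y=0\}$, so $\overline{C}$ avoids $\overline{G}_1\cap\overline{G}_4$ and the strict transform meets an end curve, giving the contribution $\tfrac{4}{5}$; the same analysis at the $\mathbb{A}_3$ point $P_{\{x\},\{z\},\{t\}}$ (branch $\{x=0\}$ of $xz$) gives $\tfrac{3}{4}$, and the $\mathbb{A}_1$ point gives $\tfrac{1}{2}$. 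Summing, $L_{\{x\},\{t\}}^2=-2+\tfrac{4}{5}+\tfrac{3}{4}+\tfrac{1}{2}=\tfrac{1}{20}$. The equality $L_{\{y\},\{t\}}^2=\tfrac{1}{20}$ then follows for free from the symmetry $x\leftrightarrow y$ of the defining equation of $\mathcal{S}$.

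For the off-diagonal entry I would observe that $L_{\{x\},\{t\}}\cap L_{\{y\},\{t\}}$ consists of the single point $P_{\{x\},\{y\},\{t\}}=[0:0:1:0]$, which is the $\mathbb{A}_4$ point with quadratic term $xy$. Here the key local fact is that $L_{\{x\},\{t\}}$ is tangent to the branch $\{x=0\}$ while $L_{\{y\},\{t\}}$ is tangent to the branch $\{y=0\}$, so on the minimal resolution their strict transforms meet the two opposite end curves of the $\mathbb{A}_4$ chain. Proposition~\ref{proposition:du-Val-intersection} then yields $L_{\{x\},\{t\}}\cdot L_{\{y\},\{t\}}=\tfrac{1}{5}$, which completes the table.

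The main obstacle is not any single computation but the bookkeeping of \emph{which} exceptional component the strict transform of a line meets at each du Val point, since an end component and a middle component produce different correction terms (compare the $\tfrac{4}{5}$ versus $\tfrac{6}{5}$ dichotomy in Lemma~\ref{lemma:r2-n4-intersection}). The delicate step is therefore the local tangent-cone analysis via Remark~\ref{remark:transversal}: one must verify that at each $\mathbb{A}_n$ point the line lies in a single branch of the quadratic form rather than along its singular locus, so that every contribution is the end-correction $\tfrac{n}{n+1}$ rather than a larger middle-correction, and that the two lines in the cross term select opposite ends of the common $\mathbb{A}_4$ chain.
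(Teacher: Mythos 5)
Your proposal is correct and follows essentially the same route as the paper's proof: locate the three du Val points of types $\mathbb{A}_4$, $\mathbb{A}_3$, $\mathbb{A}_1$ on $L_{\{x\},\{t\}}$, apply Proposition~\ref{proposition:du-Val-self-intersection} with the end-curve corrections $\tfrac{4}{5}+\tfrac{3}{4}+\tfrac{1}{2}$ to get $\tfrac{1}{20}$, and compute the cross term at the common $\mathbb{A}_4$ point via Remark~\ref{remark:transversal} and Proposition~\ref{proposition:du-Val-intersection}, the quadratic term $xy$ forcing the two strict transforms onto opposite ends of the chain. The only cosmetic difference is that you invoke the $x\leftrightarrow y$ symmetry for $L_{\{y\},\{t\}}^2$ where the paper simply says "similarly."
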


\begin{proof}
To find $L_{\{x\},\{t\}}^2$, observe that the singular points of the surface $S_\lambda$ contained in the line $L_{\{x\},\{t\}}$ are
the points $P_{\{x\},\{z\},\{t\}}$, $P_{\{x\},\{y\},\{t\}}$, and $P_{\{x\},\{t\},\{y,z\}}$.
These points are singular points of the surface $S_\lambda$ of types $\mathbb{A}_3$, $\mathbb{A}_4$, and $\mathbb{A}_1$, respectively.
Applying Proposition~\ref{proposition:du-Val-self-intersection},
we see that
$$
L_{\{x\},\{t\}}^2=-2+\frac{3}{4}+\frac{4}{5}+\frac{1}{2}=\frac{1}{20}.
$$
Similarly, we find $L_{\{y\},\{t\}}^2=\frac{1}{20}$.
Finally, observe that
$$
L_{\{x\},\{t\}}\cap L_{\{y\},\{t\}}=P_{\{x\},\{y\},\{t\}}.
$$
Using Remark~\ref{remark:transversal} with $S=S_\lambda$, $n=4$, $O=P_{\{x\},\{y\},\{t\}}$, $C=L_{\{x\},\{t\}}$, and $Z=L_{\{y\},\{t\}}$,
we see that both curves $\overline{C}$ and $\overline{Z}$ do not contain the point $\overline{G}_1\cap\overline{G}_4$.
Moreover, since the quadratic term of the surface $S_\lambda$ at the singular point $P_{\{x\},\{y\},\{t\}}$ is $xy$,
we see that either $\overline{C}\cdot\overline{G}_1=\overline{Z}\cdot\overline{G}_4=1$, or
$\overline{C}\cdot\overline{G}_4=\overline{Z}\cdot\overline{G}_1=1$.
Thus, using Proposition~\ref{proposition:du-Val-intersection}, we conclude that $L_{\{x\},\{t\}}\cdot L_{\{y\},\{t\}}=\frac{1}{5}$.
\end{proof}

If $\lambda\ne 0$, then the intersection matrix of the curves
$L_{\{x\},\{t\}}$, $L_{\{y\},\{t\}}$, $L_{\{z\},\{t\}}$, $L_{\{t\},\{x,y,z\}}$, $\mathcal{C}_1$, and $\mathcal{C}_2$
on the surface $S_\lambda$ has the same rank as the intersection matrix of the curves $L_{\{x\},\{t\}}$, $L_{\{y\},\{t\}}$, and $H_{\lambda}$,
because
$$
2L_{\{x\},\{t\}}+\mathcal{C}_1\sim 2L_{\{y\},\{t\}}+\mathcal{C}_2\sim 4L_{\{z\},\{t\}}\sim L_{\{x\},\{t\}}+L_{\{y\},\{t\}}+L_{\{z\},\{t\}}+L_{\{t\},\{x,y,z\}}\sim H_\lambda
$$
on the surface $S_\lambda$ by \eqref{equation:2-32}.
On the other hand, the matrix in Lemma~\ref{lemma:r2-n32-intersection} has rank~$3$.
Moreover, we have
$\mathrm{rk}\,\mathrm{Pic}(\widetilde{S}_{\Bbbk})=\mathrm{rk}\,\mathrm{Pic}(S_{\Bbbk})+15$.
Hence, we see that \eqref{equation:main-2-simple} holds, so that \eqref{equation:main-2} in Main Theorem also holds by Lemma~\ref{lemma:cokernel}.

\subsection{Family \textnumero $2.33$}
\label{section:r-2-n-33}

The threefold $X$ is a blow up of $\mathbb{P}^3$ in a line, so that $h^{1,2}(X)=0$.
A~toric Landau--Ginzburg model of this family is given by Minkowski polynomial \textnumero $6$, which is
$$
x+y+z+\frac{x}{z}+\frac{1}{xy}.
$$
The quartic pencil $\mathcal{S}$ is given by the equation
$$
x^2yz+y^2xz+z^2yx+x^2ty+t^3z=\lambda xyzt.
$$

Suppose that $\lambda\ne\infty$. Then
\begin{itemize}
\item $H_{\{x\}}\cdot S_\lambda=L_{\{x\},\{z\}}+3L_{\{x\},\{t\}}$,
\item $H_{\{y\}}\cdot S_\lambda=L_{\{y\},\{z\}}+3L_{\{y\},\{t\}}$,
\item $H_{\{z\}}\cdot S_\lambda=2L_{\{x\},\{z\}}+L_{\{y\},\{z\}}+L_{\{z\},\{t\}}$,
\item $H_{\{t\}}\cdot S_\lambda=L_{\{x\},\{t\}}+L_{\{y\},\{t\}}+L_{\{z\},\{t\}}+L_{\{t\},\{x,y,z\}}$.
\end{itemize}
Thus, the base locus of the pencil $\mathcal{S}$ consists of the lines
$L_{\{x\},\{z\}}$, $L_{\{x\},\{t\}}$, $L_{\{y\},\{z\}}$, $L_{\{y\},\{t\}}$, $L_{\{z\},\{t\}}$, and $L_{\{t\},\{x,y,z\}}$.

Observe that the surface $S_\lambda$ has isolated singularities for every $\lambda\in\mathbb{C}$, so that it is irreducible.
Moreover, the singular points of the surface $S_\lambda$ contained in the base locus of the pencil $\mathcal{S}$ can be described as follows:
\begin{itemize}\setlength{\itemindent}{3cm}
\item[$P_{\{x\},\{y\},\{t\}}$:] type $\mathbb{A}_2$ with quadratic term $xy$;

\item[$P_{\{x\},\{z\},\{t\}}$:] type $\mathbb{A}_6$ with quadratic term $xz$;

\item[$P_{\{y\},\{z\},\{t\}}$:] type $\mathbb{A}_3$ with quadratic term $y(z+t)$;

\item[$P_{\{x\},\{t\},\{y,z\}}$:] type $\mathbb{A}_2$ with quadratic term $x(x+y+z+\lambda t)$;

\item[$P_{\{y\},\{t\},\{x,z\}}$:] type $\mathbb{A}_2$ with quadratic term $y(x+y+z-t-\lambda t)$.
\end{itemize}
Then $[\mathsf{f}^{-1}(\lambda)]=1$ for every $\lambda\in\mathbb{C}$ by Lemma~\ref{corollary:irreducible-fibers}.
This confirms \eqref{equation:main-1} in Main Theorem.

\begin{lemma}
\label{lemma:r2-n33-intersection}
The intersection matrix of the curves $L_{\{x\},\{z\}}$, $L_{\{y\},\{z\}}$, and $H_{\lambda}$
on the surface $S_\lambda$ is given by
\begin{center}\renewcommand\arraystretch{1.42}
\begin{tabular}{|c||c|c|c|}
\hline
 $\bullet$  & $L_{\{x\},\{z\}}$ & $L_{\{y\},\{z\}}$ & $H_{\lambda}$ \\
\hline\hline
$L_{\{x\},\{z\}}$ &  $-\frac{2}{7}$ & $1$ & $1$ \\
\hline
$L_{\{y\},\{z\}}$ &  $1$ & $-\frac{5}{4}$ & $1$ \\
\hline
 $H_{\lambda}$  & $1$ & $1$ & $4$ \\
\hline
\end{tabular}
\end{center}
\end{lemma}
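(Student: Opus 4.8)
The plan is to read off every entry of this $3\times 3$ matrix from the intersection theory of curves on the du Val surface $S_\lambda$ developed in Appendix~\ref{section:intersection}, exactly as in the analogous Lemmas~\ref{lemma:r2-n27-intersection} and~\ref{lemma:r2-n32-intersection}. The entries in the last row and last column are immediate: $H_\lambda^2=4$ since $S_\lambda$ is a quartic, and $H_\lambda\cdot L_{\{x\},\{z\}}=H_\lambda\cdot L_{\{y\},\{z\}}=1$ since a general hyperplane meets a line transversally at one smooth point of $S_\lambda$. The off-diagonal entry $L_{\{x\},\{z\}}\cdot L_{\{y\},\{z\}}$ is just as easy: the two lines meet only at $P_{\{x\},\{y\},\{z\}}=[0:0:0:1]$, and in the chart $t=1$ the defining equation has nonzero linear part (the monomial $z$), so $S_\lambda$ is smooth there; the two lines lie in the tangent plane with distinct tangent directions, whence $L_{\{x\},\{z\}}\cdot L_{\{y\},\{z\}}=1$.

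It remains to compute the two self-intersections, and for these I would use Proposition~\ref{proposition:du-Val-self-intersection}. First I would note that only $P_{\{x\},\{z\},\{t\}}$ (type $\mathbb{A}_6$) lies on $L_{\{x\},\{z\}}$, and only $P_{\{y\},\{z\},\{t\}}$ (type $\mathbb{A}_3$) lies on $L_{\{y\},\{z\}}$, the other base points of the pencil being disjoint from these lines. Since a line on the $K3$ resolution $\widetilde{S}_{\Bbbk}$ has self-intersection $-2$, each self-intersection equals $-2$ plus a correction determined by which exceptional curve of the minimal resolution the strict transform of the line meets.

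For $L_{\{y\},\{z\}}$ the correction is governed by a single application of Remark~\ref{remark:transversal}. In the chart $x=1$ the tangent cone at $P_{\{y\},\{z\},\{t\}}$ is $y(z+t)$, and the line $y=z=0$ lies along the branch $y=0$ while being transverse to the branch $z+t=0$; hence its strict transform meets an end of the $\mathbb{A}_3$ chain, the correction is $\frac{1\cdot 3}{4}=\frac{3}{4}$, and $L_{\{y\},\{z\}}^2=-2+\frac{3}{4}=-\frac{5}{4}$. The harder entry is $L_{\{x\},\{z\}}^2$. Here the tangent cone at $P_{\{x\},\{z\},\{t\}}$ is $xz$, and the line $x=z=0$ is exactly the singular edge of the tangent cone, sitting equidistantly on both branches. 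The main obstacle is to show that its strict transform meets one of the two innermost curves $G_3,G_4$ of the $\mathbb{A}_6$ chain rather than $G_2$ or $G_5$: a single use of Remark~\ref{remark:transversal} shows only that it meets an interior curve and does not by itself separate $\{G_2,G_5\}$ from $\{G_3,G_4\}$, which give different corrections $\frac{10}{7}$ and $\frac{12}{7}$.

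The way I would settle this is by an explicit iterated blow up. Blowing up $P_{\{x\},\{z\},\{t\}}$ replaces the $\mathbb{A}_6$ point by two exceptional curves meeting at a residual $\mathbb{A}_4$ point, and because the line sits on the singular edge $x=z=0$ its strict transform passes straight through this $\mathbb{A}_4$ point; after a second blow up it passes through the residual $\mathbb{A}_2$ point, and only after the third blow up does it separate and meet one of the two curves created there, which are precisely $G_3,G_4$. Hence the correction is $\frac{3\cdot 4}{7}=\frac{12}{7}$ and $L_{\{x\},\{z\}}^2=-2+\frac{12}{7}=-\frac{2}{7}$, completing the matrix. The only genuinely laborious step is carrying the local equation through these three blow ups to confirm that the strict transform of the line stays on the singular locus until the final stage; everything else follows directly from the appendix.
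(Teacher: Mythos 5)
Your proposal is correct and follows essentially the same route as the paper's proof: the entries involving $H_\lambda$ and the transversal intersection at the smooth point $P_{\{x\},\{y\},\{z\}}$ are read off directly, $L_{\{y\},\{z\}}^2=-\tfrac{5}{4}$ comes from Remark~\ref{remark:transversal} and Proposition~\ref{proposition:du-Val-self-intersection} at the $\mathbb{A}_3$ point, and $L_{\{x\},\{z\}}^2=-\tfrac{2}{7}$ from showing that $\widetilde{C}$ meets $G_3$ or $G_4$ in the $\mathbb{A}_6$ chain. The only difference is that where the paper simply asserts ``it follows from explicit computations'' for the last point, you actually describe those computations (tracking the strict transform of $L_{\{x\},\{z\}}$ through the residual $\mathbb{A}_4$ and $\mathbb{A}_2$ points of the iterated blow up), and that tracking is exactly what is needed to rule out the competing correction $\tfrac{10}{7}$.
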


\begin{proof}
The only singular point of the surface $S_\lambda$ contained in $L_{\{x\},\{z\}}$ is the point $P_{\{x\},\{z\},\{t\}}$.
Let us use the notation of Appendix~\ref{subsection:A} with $S=S_\lambda$, $n=6$, $O=P_{\{x\},\{z\},\{t\}}$, and $C=L_{\{x\},\{z\}}$.
Then it follows from explicit computations that $\widetilde{C}$ intersects one of the curves $G_3$ or $G_4$.
Then $L_{\{x\},\{z\}}^2=-\frac{2}{7}$ by Proposition~\ref{proposition:du-Val-self-intersection}.

The only singular point of the surface $S_\lambda$ contained in $L_{\{y\},\{z\}}$ is the point $P_{\{y\},\{z\},\{t\}}$.
Using Remark~\ref{remark:transversal} with $S=S_\lambda$, $n=3$, $O=P_{\{y\},\{z\},\{t\}}$, and $C=L_{\{y\},\{z\}}$,
we see that the curve $\overline{C}$ does not contain the point $\overline{G}_1\cap\overline{G}_3$.
Then $L_{\{y\},\{z\}}^2=-\frac{5}{4}$ by Proposition~\ref{proposition:du-Val-self-intersection}.

Finally, observe that $L_{\{x\},\{z\}}\cap L_{\{y\},\{z\}}=P_{\{x\},\{y\},\{z\}}$
and $S_\lambda$ is smooth at $P_{\{x\},\{y\},\{z\}}$. Thus, we conclude that $L_{\{x\},\{z\}}\cdot L_{\{y\},\{z\}}=1$.
\end{proof}

The intersection matrix of the lines
$L_{\{x\},\{z\}}$, $L_{\{x\},\{t\}}$, $L_{\{y\},\{z\}}$, $L_{\{y\},\{t\}}$, $L_{\{z\},\{t\}}$, and $L_{\{t\},\{x,y,z\}}$
on the surface $S_\lambda$ has the same rank as the intersection matrix in Lemma~\ref{lemma:r2-n32-intersection},
because
\begin{multline*}
H_\lambda \sim L_{\{x\},\{z\}}+3L_{\{x\},\{t\}}\sim L_{\{y\},\{z\}}+3L_{\{y\},\{t\}}\sim\\
\sim 2L_{\{x\},\{z\}}+L_{\{y\},\{z\}}+L_{\{z\},\{t\}}\sim L_{\{x\},\{t\}}+L_{\{y\},\{t\}}+L_{\{z\},\{t\}}+L_{\{t\},\{x,y,z\}}.
\end{multline*}
On the other hand, the matrix in Lemma~\ref{lemma:r2-n33-intersection} has rank~$3$.
Moreover, it follows from the description of singularities of the surface $S_\lambda$ that
$\mathrm{rk}\,\mathrm{Pic}(\widetilde{S}_{\Bbbk})=\mathrm{rk}\,\mathrm{Pic}(S_{\Bbbk})+15$.
Hence, we see that \eqref{equation:main-2-simple} holds, so that \eqref{equation:main-2} in Main Theorem also holds by Lemma~\ref{lemma:cokernel}.

\subsection{Family \textnumero $2.34$}
\label{section:r-2-n-34}

One has $X\cong\mathbb{P}^1\times\mathbb{P}^2$.
We discussed this case in Example~\ref{example:r-2-n-34}, where we described the pencil $\mathcal{S}$ and its base locus.
In this example, we also verified \eqref{equation:main-2} in Main Theorem,
so that now we will only check \eqref{equation:main-1} in Main Theorem.

If $\lambda\ne\infty$, then $S_\lambda$ is irreducible, it has isolated singularities,
and its singular points contained in the base locus of the pencil $\mathcal{S}$ can be described as follows:
\begin{itemize}\setlength{\itemindent}{3cm}

\item[$P_{\{x\},\{y\},\{t\}}$:] type $\mathbb{A}_4$ with quadratic term $xy$;

\item[$P_{\{x\},\{z\},\{t\}}$:] type $\mathbb{A}_4$ with quadratic term $xz$;

\item[$P_{\{y\},\{z\},\{t\}}$:] type $\mathbb{A}_2$ with quadratic term $yz$;

\item[$P_{\{y\},\{t\},\{x+z\}}$:] type $\mathbb{A}_2$ with quadratic term $y(x+y+z-\lambda t)$;

\item[$P_{\{z\},\{t\},\{x+y\}}$:] type $\mathbb{A}_2$ with quadratic term $z(x+y+z-\lambda t)$;

\item[$P_{\{x\},\{t\},\{y+z\}}$:] type $\mathbb{A}_1$.
\end{itemize}
Then $[\mathsf{f}^{-1}(\lambda)]=1$ for every $\lambda\in\mathbb{C}$ by Lemma~\ref{corollary:irreducible-fibers}.
This confirms \eqref{equation:main-1} in Main Theorem.

\subsection{Family \textnumero $2.35$}
\label{section:r-2-n-35}

We have $X\cong\mathbb{P}(\mathcal O_{\mathbb{P}^2}\oplus \mathcal O_{\mathbb{P}^2}(1))$.
A~toric Landau--Ginzburg model of this family is given by Minkowski polynomial \textnumero $5$, which is
$$
x+y+z+\frac {x}{yz}+\frac{1}{x}.
$$
The quartic pencil $\mathcal{S}$ is given by
$$
x^2yz+y^2zx+z^2yx+x^2t^2+t^2yz=\lambda xyzt.
$$

Suppose that $\lambda\ne\infty$. Then
\begin{itemize}
\item $H_{\{x\}}\cdot S_\lambda=L_{\{x\},\{y\}}+L_{\{x\},\{z\}}+2L_{\{x\},\{t\}}$,
\item $H_{\{y\}}\cdot S_\lambda=2L_{\{x\},\{y\}}+2L_{\{y\},\{t\}}$,
\item $H_{\{z\}}\cdot S_\lambda=2L_{\{x\},\{z\}}+2L_{\{z\},\{t\}}$,
\item $H_{\{t\}}\cdot S_\lambda=L_{\{x\},\{t\}}+L_{\{y\},\{t\}}+L_{\{z\},\{t\}}+L_{\{t\},\{x,y,z\}}$.
\end{itemize}
Thus, the base locus of the pencil $\mathcal{S}$ consists of the lines
$L_{\{x\},\{y\}}$, $L_{\{x\},\{z\}}$, $L_{\{x\},\{t\}}$, $L_{\{y\},\{t\}}$, $L_{\{z\},\{t\}}$, and $L_{\{t\},\{x,y,z\}}$.

For every $\lambda\in\mathbb{C}$, the surface $S_\lambda\in\mathcal{S}$ has isolated singularities.
In particular, it is irreducible.
Moreover, its singular points contained in the base locus of the pencil $\mathcal{S}$ can be described as follows:
\begin{itemize}\setlength{\itemindent}{3cm}
\item[$P_{\{x\},\{y\},\{z\}}$:] type $\mathbb{A}_1$;

\item[$P_{\{x\},\{y\},\{t\}}$:] type $\mathbb{A}_5$ with quadratic term $xy$;

\item[$P_{\{x\},\{z\},\{t\}}$:] type $\mathbb{A}_5$ with quadratic term $xz$;

\item[$P_{\{y\},\{z\},\{t\}}$:] type $\mathbb{A}_1$;

\item[$P_{\{x\},\{t\},\{y,z\}}$:] type $\mathbb{A}_1$;

\item[$P_{\{y\},\{t\},\{x,z\}}$:] type $\mathbb{A}_1$;

\item[$P_{\{z\},\{t\},\{x,y\}}$:] type $\mathbb{A}_1$.
\end{itemize}
In particular, every fiber $\mathsf{f}^{-1}(\lambda)$ is irreducible by Lemma~\ref{corollary:irreducible-fibers}.
This confirms \eqref{equation:main-1} in Main Theorem, since $h^{1,2}(X)=0$.

To verify \eqref{equation:main-2} in Main Theorem, observe that
the intersection matrix of the curves $L_{\{x\},\{y\}}$, $L_{\{x\},\{z\}}$, $L_{\{x\},\{t\}}$, $L_{\{y\},\{t\}}$, $L_{\{z\},\{t\}}$, and $H_{\lambda}$
on the surface $S_\lambda$ is given by
\begin{center}\renewcommand\arraystretch{1.42}
\begin{tabular}{|c||c|c|c|c|c|c|}
\hline
 $\bullet$  & $L_{\{x\},\{y\}}$ & $L_{\{x\},\{z\}}$ & $L_{\{x\},\{t\}}$ & $L_{\{y\},\{t\}}$ & $L_{\{z\},\{t\}}$ & $H_{\lambda}$ \\
\hline\hline
 $L_{\{x\},\{y\}}$ &  $-\frac{1}{6}$ & $\frac{1}{2}$ & $\frac{1}{3}$ & $\frac{2}{3}$ & $0$ & $1$ \\
\hline
 $L_{\{x\},\{z\}}$ &  $\frac{1}{2}$ & $-\frac{1}{6}$ & $\frac{1}{3}$ & $0$ & $\frac{2}{3}$ & $1$ \\
\hline
 $L_{\{x\},\{t\}}$ &  $\frac{1}{3}$ & $\frac{1}{3}$ & $\frac{1}{6}$ & $\frac{1}{6}$ & $\frac{1}{6}$ & $1$ \\
\hline
 $L_{\{y\},\{t\}}$ &  $\frac{2}{3}$ & $0$ & $\frac{1}{6}$ & $-\frac{1}{6}$ & $\frac{1}{2}$ & $1$ \\
\hline
 $L_{\{z\},\{t\}}$ &  $0$ & $\frac{2}{3}$ & $\frac{1}{6}$ & $\frac{1}{2}$ & $-\frac{1}{6}$ & $1$ \\
\hline
 $H_{\lambda}$  & $1$ & $1$ & $1$ & $1$ & $1$ & $4$ \\
\hline
\end{tabular}
\end{center}
This matrix has rank~$3$.
On the other hand, the intersection matrix of the lines
$L_{\{x\},\{y\}}$, $L_{\{x\},\{z\}}$, $L_{\{x\},\{t\}}$, $L_{\{y\},\{t\}}$, $L_{\{z\},\{t\}}$, and $L_{\{t\},\{x,y,z\}}$
on the surface $S_\lambda$ has the same rank as the intersection matrix
of the curves $L_{\{x\},\{y\}}$, $L_{\{x\},\{z\}}$, $L_{\{x\},\{t\}}$, $L_{\{y\},\{t\}}$, $L_{\{z\},\{t\}}$, and $H_{\lambda}$,
because
\begin{multline*}
H_\lambda\sim L_{\{x\},\{y\}}+L_{\{x\},\{z\}}+2L_{\{x\},\{t\}}\sim 2L_{\{x\},\{y\}}+2L_{\{y\},\{t\}}\sim\\
\sim 2L_{\{x\},\{z\}}+2L_{\{z\},\{t\}}\sim L_{\{x\},\{t\}}+L_{\{y\},\{t\}}+L_{\{z\},\{t\}}+L_{\{t\},\{x,y,z\}}.
\end{multline*}
Moreover, we have $\mathrm{rk}\,\mathrm{Pic}(\widetilde{S}_{\Bbbk})=\mathrm{rk}\,\mathrm{Pic}(S_{\Bbbk})+15$.
Therefore, we conclude that \eqref{equation:main-2-simple} holds, so that \eqref{equation:main-2} in Main Theorem also holds by Lemma~\ref{lemma:cokernel}.

\subsection{Family \textnumero $2.36$}
\label{section:r-2-n-36}

In this case, we have $X\cong\mathbb{P}(\mathcal O_{\mathbb{P}^2}\oplus \mathcal O_{\mathbb{P}^2}(2))$, so that $h^{1,2}(X)=0$.
A~toric Landau--Ginzburg model of this family is given by Minkowski polynomial \textnumero $7$, which is
$$
x+y+z+\frac{{x}^2}{yz}+\frac{1}{x}.
$$
The pencil $\mathcal{S}$ is given by the equation
$$
x^2yz+y^2zx+z^2yx+x^3t+t^2yz=\lambda xyzt.
$$

Suppose that $\lambda\ne\infty$. Then
\begin{equation}
\label{equation:2-36}
\begin{split}
H_{\{x\}}\cdot S_\lambda&=L_{\{x\},\{y\}}+L_{\{x\},\{z\}}+2L_{\{x\},\{t\}},\\
H_{\{y\}}\cdot S_\lambda&=3L_{\{x\},\{y\}}+L_{\{y\},\{t\}},\\
H_{\{z\}}\cdot S_\lambda&=3L_{\{x\},\{z\}}+L_{\{z\},\{t\}},\\
H_{\{t\}}\cdot S_\lambda&=L_{\{x\},\{t\}}+L_{\{y\},\{t\}}+L_{\{z\},\{t\}}+L_{\{t\},\{x,y,z\}}.
\end{split}
\end{equation}
Thus, the base locus of the pencil $\mathcal{S}$ consists of the lines
$L_{\{x\},\{y\}}$, $L_{\{x\},\{z\}}$, $L_{\{x\},\{t\}}$,
$L_{\{y\},\{t\}}$, $L_{\{z\},\{t\}}$, and $L_{\{t\},\{x,y,z\}}$.

For every $\lambda\in\mathbb{C}$, the surface $S_\lambda$ has isolated singularities, so that it is irreducible.
Moreover, its singular points contained in the base locus of the pencil $\mathcal{S}$ can be described as follows:
\begin{itemize}\setlength{\itemindent}{3cm}
\item[$P_{\{x\},\{y\},\{t\}}$:] type $\mathbb{A}_6$ with quadratic term $xy$;

\item[$P_{\{x\},\{z\},\{t\}}$:] type $\mathbb{A}_6$ with quadratic term $xz$;

\item[$P_{\{x\},\{t\},\{y,z\}}$:] type $\mathbb{A}_1$;

\item[$P_{\{x\},\{y\},\{z\}}$:] type $\mathbb{A}_2$ with quadratic term $yz$.
\end{itemize}
In particular, every fiber $\mathsf{f}^{-1}(\lambda)$ is irreducible by Lemma~\ref{corollary:irreducible-fibers}.
This confirms \eqref{equation:main-1} in Main Theorem, since $h^{1,2}(X)=0$.

On the surface $S_\lambda$, the intersection matrix of the lines
$L_{\{x\},\{y\}}$, $L_{\{x\},\{z\}}$, $L_{\{x\},\{t\}}$, $L_{\{y\},\{t\}}$, $L_{\{z\},\{t\}}$, and $L_{\{t\},\{x,y,z\}}$
has the same rank as the intersection matrix
of the curves $L_{\{x\},\{y\}}$, $L_{\{x\},\{z\}}$, and $H_{\lambda}$,
because
\begin{multline*}
H_\lambda\sim L_{\{x\},\{y\}}+L_{\{x\},\{z\}}+2L_{\{x\},\{t\}}\sim 3L_{\{x\},\{y\}}+L_{\{y\},\{t\}}\sim\\
\sim 3L_{\{x\},\{z\}}+L_{\{z\},\{t\}}\sim L_{\{x\},\{t\}}+L_{\{y\},\{t\}}+L_{\{z\},\{t\}}+L_{\{t\},\{x,y,z\}}.
\end{multline*}
These rational equivalences follows from \eqref{equation:2-36}.

\begin{lemma}
\label{lemma:r2-n36-intersection}
The intersection matrix of the curves $L_{\{x\},\{y\}}$, $L_{\{x\},\{z\}}$, and $H_{\lambda}$ on the surface $S_\lambda$ is given by
\begin{center}\renewcommand\arraystretch{1.42}
\begin{tabular}{|c||c|c|c|}
\hline
 $\bullet$  & $L_{\{x\},\{y\}}$ & $L_{\{x\},\{z\}}$ & $H_{\lambda}$ \\
\hline\hline
$L_{\{x\},\{y\}}$ &  $\frac{2}{21}$ & $\frac{1}{3}$ & $1$ \\
\hline
$L_{\{x\},\{z\}}$ &  $\frac{1}{3}$ & $\frac{2}{21}$ & $1$ \\
\hline
 $H_{\lambda}$  & $1$ & $1$ & $4$ \\
\hline
\end{tabular}
\end{center}
\end{lemma}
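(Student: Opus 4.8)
The plan is to fill in the $3\times 3$ table entry by entry, using the intersection theory for curves on surfaces with du Val singularities recorded in Appendix~\ref{section:intersection}. First I would dispose of the trivial entries: since $H_\lambda$ is a general hyperplane section of the quartic $S_\lambda$, one has $H_\lambda^2=4$, while $H_\lambda\cdot L_{\{x\},\{y\}}=H_\lambda\cdot L_{\{x\},\{z\}}=1$ because a general plane meets each line in a single smooth point of $S_\lambda$. I would then record which singular points of $S_\lambda$ lie on the two lines. The line $L_{\{x\},\{y\}}=\{x=y=0\}$ passes through exactly two singular points, namely $P_{\{x\},\{y\},\{t\}}$ (type $\mathbb{A}_6$, quadratic term $xy$) and $P_{\{x\},\{y\},\{z\}}$ (type $\mathbb{A}_2$, quadratic term $yz$). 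By the symmetry $y\leftrightarrow z$ of the defining equation of $S_\lambda$, which interchanges $L_{\{x\},\{y\}}\leftrightarrow L_{\{x\},\{z\}}$ and $P_{\{x\},\{y\},\{t\}}\leftrightarrow P_{\{x\},\{z\},\{t\}}$, it suffices to compute $L_{\{x\},\{y\}}^2$ and transport the answer to $L_{\{x\},\{z\}}^2$.

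For the self-intersection I would apply Proposition~\ref{proposition:du-Val-self-intersection}, writing $L_{\{x\},\{y\}}^2=-2+c_6+c_2$, where $-2$ is the self-intersection of the strict transform of $L_{\{x\},\{y\}}$ on the minimal resolution (a smooth rational curve on a $K3$ surface) and $c_6,c_2$ are the excess contributions from the $\mathbb{A}_6$ and $\mathbb{A}_2$ points. The $\mathbb{A}_2$ contribution is automatic: any curve through an $\mathbb{A}_2$ point meets one of the two end curves of its chain, so $c_2=\frac{1\cdot 2}{3}=\frac{2}{3}$. The real work is the $\mathbb{A}_6$ contribution. Since the tangent direction of $L_{\{x\},\{y\}}$ at $P_{\{x\},\{y\},\{t\}}$ is the line $\{x=y=0\}$, that is, the singular locus of the tangent cone $xy=0$ rather than one of its two planes, the strict transform does not meet an end of the $\mathbb{A}_6$ chain. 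Arguing as in Subsection~\ref{section:r-2-n-33} (via Lemma~\ref{lemma:Dn} and the explicit local resolution of Appendix~\ref{subsection:A} with $S=S_\lambda$, $n=6$, $O=P_{\{x\},\{y\},\{t\}}$, and $C=L_{\{x\},\{y\}}$), I expect to show that $\widetilde{C}$ meets $G_2$ or $G_5$, which gives $c_6=\frac{2\cdot 5}{7}=\frac{10}{7}$ and hence $L_{\{x\},\{y\}}^2=-2+\frac{10}{7}+\frac{2}{3}=\frac{2}{21}$. The symmetry then yields $L_{\{x\},\{z\}}^2=\frac{2}{21}$.

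The remaining entry is $L_{\{x\},\{y\}}\cdot L_{\{x\},\{z\}}$, which I would compute with Proposition~\ref{proposition:du-Val-intersection}. The two lines meet only at $L_{\{x\},\{y\}}\cap L_{\{x\},\{z\}}=P_{\{x\},\{y\},\{z\}}$, the $\mathbb{A}_2$ point. Using Remark~\ref{remark:transversal} with $O=P_{\{x\},\{y\},\{z\}}$ and $n=2$: since the quadratic term there is $yz$ and $L_{\{x\},\{y\}}$ lies in the branch $\{y=0\}$ while $L_{\{x\},\{z\}}$ lies in the branch $\{z=0\}$, the strict transforms meet the two different end curves $G_1$ and $G_2$ of the $\mathbb{A}_2$ chain. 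Proposition~\ref{proposition:du-Val-intersection} then gives $L_{\{x\},\{y\}}\cdot L_{\{x\},\{z\}}=\frac{1\cdot 1}{3}=\frac{1}{3}$, completing the table.

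The main obstacle is pinning down which exceptional curve the strict transform of $L_{\{x\},\{y\}}$ meets at the $\mathbb{A}_6$ point. Unlike the short chains handled by a single blow-up and the transversality Remark~\ref{remark:transversal}, here the line is tangent to the central direction of the tangent cone, so I must follow it through the full chain of blow-ups resolving $P_{\{x\},\{y\},\{t\}}$; the delicate point is distinguishing $G_2/G_5$ (giving $\frac{10}{7}$) from the neighbouring options $G_1/G_6$ or $G_3/G_4$, which would change the final self-intersection. I note that the apparent $x\leftrightarrow y$ symmetry of the quadratic term $xy$ is misleading: the full equation of $S_\lambda$ is \emph{not} symmetric in $x$ and $y$ (the terms $x^3t$ and $t^2yz$ break it), so the two branches play asymmetric roles and the contact orders are unequal, consistent with landing on $G_2$ rather than a central curve. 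As a safeguard I would cross-check the outcome against the rational equivalence $H_\lambda\sim L_{\{x\},\{y\}}+L_{\{x\},\{z\}}+2L_{\{x\},\{t\}}$ coming from \eqref{equation:2-36}, intersecting it with $L_{\{x\},\{y\}}$ to confirm that $L_{\{x\},\{y\}}^2$ and $L_{\{x\},\{y\}}\cdot L_{\{x\},\{z\}}$ are consistent with the independently computed value of $L_{\{x\},\{y\}}\cdot L_{\{x\},\{t\}}$.
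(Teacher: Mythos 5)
Your table is correct and most of your entries match the paper's, but for the one genuinely delicate entry, $L_{\{x\},\{y\}}^2$, you take a different route. You attack it head-on: decompose $L_{\{x\},\{y\}}^2=-2+c_6+c_2$ over the two singular points on the line, get $c_2=\tfrac{2}{3}$ from the $\mathbb{A}_2$ point, and then try to determine which exceptional curve of the $\mathbb{A}_6$ chain at $P_{\{x\},\{y\},\{t\}}$ the strict transform meets. The paper sidesteps that question entirely: it first computes $L_{\{y\},\{t\}}^2=-\tfrac{8}{7}$, which is easy because $L_{\{y\},\{t\}}$ sits in one branch of the tangent cone $xy$ and therefore meets an end curve of the chain, and then exploits the relation $3L_{\{x\},\{y\}}+L_{\{y\},\{t\}}\sim H_\lambda$ (from $H_{\{y\}}\cdot S_\lambda$) twice — once to get $L_{\{x\},\{y\}}\cdot L_{\{y\},\{t\}}=\tfrac{5}{7}$ and once to solve for $L_{\{x\},\{y\}}^2=\tfrac{2}{21}$. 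Your approach buys a self-contained local computation; the paper's buys freedom from ever resolving the $G_2$-versus-$G_3$ ambiguity you correctly identify as the main obstacle. Note that your guess is right — the paper's value $L_{\{x\},\{y\}}\cdot L_{\{y\},\{t\}}=\tfrac{5}{7}$ together with Proposition~\ref{proposition:du-Val-intersection} forces $\widetilde{L}_{\{x\},\{y\}}$ to meet $G_2$ (with $L_{\{y\},\{t\}}$ on $G_1$), so $c_6=\tfrac{10}{7}$ — but as written you only assert this as an expectation, and the ``explicit computations'' needed to confirm it (cf.\ the analogous $\mathbb{A}_6$ analysis in Subsection~\ref{section:r-2-n-33}, where the answer turns out to be $G_3$ or $G_4$ instead) are exactly what your plan leaves unexecuted. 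Your proposed cross-check via $H_\lambda\sim L_{\{x\},\{y\}}+L_{\{x\},\{z\}}+2L_{\{x\},\{t\}}$ would close this, but it drags in $L_{\{x\},\{t\}}$; the relation $3L_{\{x\},\{y\}}+L_{\{y\},\{t\}}\sim H_\lambda$ is the cleaner choice and turns the cross-check into the proof. Two small corrections: the reference to Lemma~\ref{lemma:Dn} is misplaced (that lemma concerns $\mathbb{D}_n$ points; you want Lemma~\ref{lemma:intersection} and Corollary~\ref{corollary:self-intersection} from Appendix~\ref{subsection:A}), and the remaining entries ($H_\lambda^2=4$, $H_\lambda\cdot L=1$, and $L_{\{x\},\{y\}}\cdot L_{\{x\},\{z\}}=\tfrac{1}{3}$ at the $\mathbb{A}_2$ point) agree with the paper.
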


\begin{proof}
By definition, we have $H_\lambda^2=4$ and $H_\lambda\cdot L_{\{x\},\{y\}}=H_\lambda\cdot L_{\{x\},\{z\}}=1$.
Note that
$$
L_{\{x\},\{y\}}\cap L_{\{x\},\{z\}}=P_{\{x\},\{y\},\{z\}}.
$$
Recall that $P_{\{x\},\{y\},\{z\}}$ is a singular point of the surface $S_\lambda$ of type $\mathbb{A}_2$.
Then one gets $L_{\{x\},\{y\}}\cdot L_{\{x\},\{z\}}=\frac{1}{3}$ by Proposition~\ref{proposition:du-Val-intersection}.

We claim that $L_{\{y\},\{t\}}^2=-\frac{8}{7}$.
Indeed, the point $P_{\{x\},\{y\},\{t\}}$ is the only singular point of the surface $S_\lambda$ that is contained in $L_{\{y\},\{t\}}$.
Using Remark~\ref{remark:transversal} with $S=S_\lambda$, $n=6$, $O=P_{\{x\},\{y\},\{t\}}$, and $C=L_{\{y\},\{t\}}$,
we see that $\overline{C}$ does not contain the point $\overline{G}_1\cap\overline{G}_6$, because
the quadratic term of the surface $S_\lambda$ at the point $P_{\{x\},\{y\},\{t\}}$ is $xy$.
Thus, we have $L_{\{y\},\{t\}}^2=-\frac{8}{7}$ by Proposition~\ref{proposition:du-Val-self-intersection}.

Since $L_{\{y\},\{t\}}^2=-\frac{8}{7}$, we get $L_{\{x\},\{y\}}\cdot L_{\{y\},\{t\}}=\frac{5}{7}$, because
$$
1=H_\lambda\cdot L_{\{y\},\{t\}}=\big(3L_{\{x\},\{y\}}+L_{\{y\},\{t\}}\big)\cdot L_{\{y\},\{t\}}=3 L_{\{x\},\{y\}}\cdot L_{\{y\},\{t\}}-\frac{8}{7}.
$$
Since $L_{\{x\},\{y\}}\cdot L_{\{y\},\{t\}}=\frac{5}{7}$, we get $L_{\{x\},\{y\}}^2=\frac{2}{21}$, because
$$
1=H_\lambda\cdot L_{\{y\},\{t\}}=\big(3L_{\{x\},\{y\}}+L_{\{y\},\{t\}}\big)\cdot L_{\{x\},\{y\}}=3 L_{\{x\},\{y\}}^2+\frac{5}{7}.
$$
Similarly, we see that $L_{\{z\},\{t\}}^2=\frac{2}{21}$.
\end{proof}

The matrix in Lemma~\ref{lemma:r2-n36-intersection} has rank~$3$.
Moreover, we have
$\mathrm{rk}\,\mathrm{Pic}(\widetilde{S}_{\Bbbk})=\mathrm{rk}\,\mathrm{Pic}(S_{\Bbbk})+15$.
Hence, we see that \eqref{equation:main-2-simple} holds, so that \eqref{equation:main-2} in Main Theorem also holds by Lemma~\ref{lemma:cokernel}.

\section{Fano threefolds of Picard rank $3$}
\label{section:rank-3}

\subsection{Family \textnumero $3.1$}
\label{section:r-3-n-1}

In this case, the threefold $X$ is a double cover of $\mathbb{P}^1\times\mathbb{P}^1\times\mathbb{P}^1$ branched over a
smooth divisor of tridegree $(2,2,2)$, which implies that $h^{1,2}(X)=8$.
The toric Landau--Ginzburg model is given by Minkowski polynomial \textnumero $3873.4$,
which is the Laurent polynomial
$$
x+y+\frac{x}{z}+\frac{y}{z}+\frac{xz}{y}+3z+\frac{yz}{x}+\frac{2x}{y}+\frac{2y}{x}+\frac{x}{yz}+\frac{3}{z}+\frac{y}{xz}+\frac{z^2}{y}+\frac{z^2}{x}+\frac{3z}{y}+\frac{3z}{x}+\frac{3}{y}+\frac{3}{x}+\frac{1}{yz}+\frac{1}{xz}.
$$
The quartic pencil $\mathcal{S}$ is given by
\begin{multline*}
x^2yz+y^2zx+x^2ty+y^2tx+x^2z^2+3z^2yx+y^2z^2+2x^2tz+2y^2tz+x^2t^2+3t^2yx+\\
+t^2y^2+z^3x+z^3y+3z^2tx+3z^2ty+3t^2zx+3t^2yz+t^3x+t^3y=\lambda xyzt.
\end{multline*}
This equation is symmetric with respect to permutations of variables $x\leftrightarrow y$ and $z\leftrightarrow t$.

To prove Main Theorem in this case, we may assume that $\lambda\ne\infty$. Then
\begin{equation}
\label{equation:r3-n1-base-locus}
\begin{split}
H_{\{x\}}\cdot S_\lambda&=L_{\{x\},\{y\}}+2L_{\{x\},\{z,t\}}+L_{\{x\},\{y,z,t\}},\\
H_{\{y\}}\cdot S_\lambda&=L_{\{x\},\{y\}}+2L_{\{y\},\{z,t\}}+L_{\{y\},\{x,z,t\}},\\
H_{\{z\}}\cdot S_\lambda&=L_{\{z\},\{t\}}+L_{\{z\},\{x,y,t\}}+\mathcal{C}_1,\\
H_{\{t\}}\cdot S_\lambda&=L_{\{z\},\{t\}}+L_{\{t\},\{x,y,z\}}+\mathcal{C}_2,
\end{split}
\end{equation}
where $\mathcal{C}_1$ is a smooth conic that is given by $z=xy+xt+yt=0$,
and $\mathcal{C}_2$ is a smooth conic that is given by $t=xy+xz+yz=0$.
Hence, since $\lambda\ne\infty$, we have
\begin{multline*}
S_{\lambda}\cdot S_{\infty}=2L_{\{x\},\{y\}}+2L_{\{z\},\{t\}}+2L_{\{x\},\{z,t\}}+2L_{\{y\},\{z,t\}}+\\
+L_{\{x\},\{y,z,t\}}+L_{\{y\},\{x,z,t\}}+L_{\{z\},\{x,y,t\}}+L_{\{t\},\{x,y,z\}}+\mathcal{C}_1+\mathcal{C}_2.
\end{multline*}
We let $C_1=\mathcal{C}_1$, $C_2=\mathcal{C}_2$, $C_3=L_{\{x\},\{y\}}$, $C_4=L_{\{z\},\{t\}}$, $C_5=L_{\{x\},\{z,t\}}$,
$C_6=L_{\{y\},\{z,t\}}$, $C_7=L_{\{x\},\{y,z,t\}}$, $C_8=L_{\{y\},\{x,z,t\}}$, $C_9=L_{\{z\},\{x,y,t\}}$, and $C_{10}=L_{\{t\},\{x,y,z\}}$.
These are all base curves of the pencil $\mathcal{S}$.

For every $\lambda\ne -6$, the surface $S_\lambda$ has isolated singularities, so that $S_\lambda$ is irreducible.
On the other hand, we have $S_{-6}=H_{\{z,t\}}+H_{\{x,y,z,t\}}+\mathbf{Q}$,
where $\mathbf{Q}$ is an irreducible quadric given by $xy+xz+yz+xt+yt=0$.
This quadric is singular at the point $P_{\{y\},\{z,t\}}$,
which is also contained in the planes $H_{\{z,t\}}$ and $H_{\{x,y,z,t\}}$.

If $\lambda\ne-6$, then the singularities of the surface $S_\lambda$
that are contained in the base locus of the pencil $\mathcal{S}$ are all du Val and can be described as follows:
\begin{itemize}\setlength{\itemindent}{3cm}
\item[$P_{\{y\},\{z\},\{t\}}$:] type $\mathbb{A}_3$ with quadratic term $(z+t)(y+z+t)$;
\item[$P_{\{x\},\{z\},\{t\}}$:] type $\mathbb{A}_3$ with quadratic term $(z+t)(x+z+t)$;
\item[$P_{\{x\},\{y\},\{z,t\}}$:] type $\mathbb{A}_5$ with quadratic term $(\lambda+6)xy$;
\item[$P_{\{z\},\{t\},\{x,y\}}$:] type $\mathbb{A}_1$ with quadratic term $(\lambda+6)zt-(z+t)(x+y+z+t)$.
\end{itemize}
In the notation of Subsection~\ref{subsection:scheme-step-6}, the points $P_{\{y\},\{z\},\{t\}}$,
$P_{\{x\},\{z\},\{t\}}$, $P_{\{x\},\{y\},\{z,t\}}$, and $P_{\{z\},\{t\},\{x,y\}}$
are the {fixed} singular points of the quartic surfaces in the pencil $\mathcal{S}$.

By Corollary~\ref{corollary:irreducible-fibers}, the fiber $\mathsf{f}^{-1}(\lambda)$ is irreducible for every $\lambda\ne -6$.
Therefore, the assertion \eqref{equation:main-1} in  Main Theorem follows from

\begin{lemma}
\label{lemma:r3-n1-main-1}
One has $[\mathsf{f}^{-1}(-6)]=9$.
\end{lemma}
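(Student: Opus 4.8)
The plan is to feed $\lambda=-6$ into the refined counting formula \eqref{equation:equation:number-of-irredubicle-components-refined}, namely
$$
\big[\mathsf{f}^{-1}(-6)\big]=\big[S_{-6}\big]+\sum_{i=1}^{10}\mathbf{C}_i^{-6}+\sum_{P\in\Sigma}\mathbf{D}_P^{-6},
$$
and evaluate the three terms separately. First, since $S_{-6}=H_{\{z,t\}}+H_{\{x,y,z,t\}}+\mathbf{Q}$ with $\mathbf{Q}$ irreducible, we have $\big[S_{-6}\big]=3$. Next I would compute each multiplicity $\mathbf{M}_i^{-6}=\mathrm{mult}_{C_i}(S_{-6})$ by recording, for each of the ten base curves $C_1,\dots,C_{10}$, which of the three components of $S_{-6}$ contain it; this is a routine substitution of the defining equations of the $C_i$ into the linear and quadratic forms cutting out $H_{\{z,t\}}$, $H_{\{x,y,z,t\}}$ and $\mathbf{Q}$. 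The expected outcome is that only $C_5=L_{\{x\},\{z,t\}}$ and $C_6=L_{\{y\},\{z,t\}}$ lie in two components simultaneously (in $H_{\{z,t\}}\cap\mathbf{Q}$), giving $\mathbf{M}_5^{-6}=\mathbf{M}_6^{-6}=2$, while every other base curve lies in a single component and hence has $\mathbf{M}_i^{-6}=1$. Since $\mathbf{m}_5=\mathbf{m}_6=2$, Lemma~\ref{lemma:main} then yields $\mathbf{C}_5^{-6}=\mathbf{C}_6^{-6}=1$ and $\mathbf{C}_i^{-6}=0$ otherwise, so $\sum_{i=1}^{10}\mathbf{C}_i^{-6}=2$.

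For the defect sum I would dispose of three of the four fixed points cheaply. Inspecting the quadratic terms of the surfaces $S_\lambda$ at the fixed singular points, the terms $(z+t)(y+z+t)$, $(z+t)(x+z+t)$ and (at $\lambda=-6$) $-(z+t)(x+y+z+t)$ at $P_{\{y\},\{z\},\{t\}}$, $P_{\{x\},\{z\},\{t\}}$ and $P_{\{z\},\{t\},\{x,y\}}$ respectively are products of two distinct linear forms, so these are good double points of $S_{-6}$. By Lemma~\ref{lemma:normal-crossing} their defects vanish. The only remaining point is $P_{\{x\},\{y\},\{z,t\}}$, whose quadratic term $(\lambda+6)xy$ degenerates precisely at $\lambda=-6$; this is exactly the bad point that forces $S_{-6}$ to be non-du Val there, and it is where the real work lies.

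Thus everything reduces to showing $\mathbf{D}_{P_{\{x\},\{y\},\{z,t\}}}^{-6}=4$, after which the three terms sum to $3+2+4=9$ as required. To compute this defect I would follow the algorithm of Subsection~\ref{subsection:scheme-step-8}: pass to the affine chart $t=1$, where $P_{\{x\},\{y\},\{z,t\}}$ becomes the origin and $S_\lambda$ has the $\mathbb{A}_5$ singularity visible for general $\lambda$; then blow up the point and its infinitely near points, tracking at each stage the log pull-back divisor $\widehat{D}_{-6}=\widehat{S}_{-6}+\sum\mathbf{a}_i^{-6}\widehat{E}_i$ from \eqref{equation:log-pull-back} to read off $\mathbf{A}_{P_{\{x\},\{y\},\{z,t\}}}^{-6}$ via \eqref{equation:A}, and identifying the base curves of $\widehat{\mathcal{S}}$ created over $P$ together with their multiplicities $\mathbf{M}_j^{-6}$ and $\mathbf{m}_j$, so that Lemma~\ref{lemma:main-2} and \eqref{equation:D-A-B} give their contributions. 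The main obstacle is precisely this local bookkeeping: because the quadratic form vanishes at $\lambda=-6$, the first exceptional surface already enters $\widehat{D}_{-6}$ with positive multiplicity, and one must carefully carry out the chain of blow-ups resolving the $\mathbb{A}_5$ point while keeping the exceptional surfaces and the newly appearing base curves separate, exactly as in the analogous computations for families \textnumero\,$2.2$ and \textnumero\,$2.6$. Once the circled contributions are tallied to $\mathbf{D}_{P_{\{x\},\{y\},\{z,t\}}}^{-6}=4$, the lemma follows; this matches $h^{1,2}(X)=8$ through \eqref{equation:main-1}, since $\mathsf{f}^{-1}(-6)$ is the only reducible fiber.
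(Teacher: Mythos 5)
Your reduction is exactly the paper's: you correctly get $[S_{-6}]=3$, correctly identify $C_5=L_{\{x\},\{z,t\}}$ and $C_6=L_{\{y\},\{z,t\}}$ as the only base curves with $\mathbf{M}_i^{-6}=2$ (they lie in $H_{\{z,t\}}\cap\mathbf{Q}$), so that Lemma~\ref{lemma:main} gives $\sum_i\mathbf{C}_i^{-6}=2$, and you correctly kill the defects at $P_{\{y\},\{z\},\{t\}}$, $P_{\{x\},\{z\},\{t\}}$, $P_{\{z\},\{t\},\{x,y\}}$ via Lemma~\ref{lemma:normal-crossing}, arriving at $[\mathsf{f}^{-1}(-6)]=5+\mathbf{D}_{P_{\{x\},\{y\},\{z,t\}}}^{-6}$.

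The gap is that you never actually compute $\mathbf{D}_{P_{\{x\},\{y\},\{z,t\}}}^{-6}=4$; you describe the algorithm of Subsection~\ref{subsection:scheme-step-8} and then write "once the circled contributions are tallied to $4$, the lemma follows." That tally is the entire content of the lemma at this point, and your only stated evidence for the value $4$ is that it "matches $h^{1,2}(X)=8$ through \eqref{equation:main-1}" --- which is circular, since \eqref{equation:main-1} for this family is precisely what the lemma is meant to establish. The paper does the work: blowing up $P_{\{x\},\{y\},\{z,t\}}$ gives $D_{-6}^1=S_{-6}^1+2\mathbf{E}_1$ with two base curves $C_{11}^1,C_{12}^1$ in $\mathbf{E}_1$ satisfying $\mathbf{m}_{11}=\mathbf{m}_{12}=\mathbf{M}_{11}^{-6}=\mathbf{M}_{12}^{-6}=2$; blowing up $C_{11}^1\cap C_{12}^1$ gives $D_{-6}^2=S_{-6}^2+2\mathbf{E}_1^2+\mathbf{E}_2$ with two further base curves $C_{13}^2,C_{14}^2$; these four curves are all the base curves of $\widehat{\mathcal{S}}$ over the point, $\mathbf{A}_{P_{\{x\},\{y\},\{z,t\}}}^{-6}=2$ by \eqref{equation:A}, and \eqref{equation:D-A-B} together with Lemma~\ref{lemma:main-2} then yields the total $4$. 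Until you exhibit the coefficients of $\widehat{E}_1,\widehat{E}_2$ in $\widehat{D}_{-6}$ and the multiplicities $\mathbf{m}_j,\mathbf{M}_j^{-6}$ of the exceptional base curves, the value of the defect --- and hence the lemma --- is unproved.
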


\begin{proof}
Recall that $[S_{-6}]=3$.
Moreover, we have $\mathbf{M}_{5}^{-6}=\mathbf{M}_{6}^{-6}=2$ and
$$
\mathbf{M}_{1}^{-6}=\mathbf{M}_{2}^{-6}=\mathbf{M}_{3}^{-6}=\mathbf{M}_{4}^{-6}=\mathbf{M}_{7}^{-6}=\mathbf{M}_{8}^{-6}=\mathbf{M}_{9}^{-6}=\mathbf{M}_{10}^{-6}=1.
$$
But
$\mathbf{m}_{1}=\mathbf{m}_{2}=\mathbf{m}_{7}=\mathbf{m}_{8}=\mathbf{m}_{9}=\mathbf{m}_{10}=1$,
$\mathbf{m}_{3}=\mathbf{m}_{4}=\mathbf{m}_{5}=\mathbf{m}_{4}=6$,
and the points $P_{\{y\},\{z\},\{t\}}$, $P_{\{x\},\{z\},\{t\}}$, and $P_{\{z\},\{t\},\{x,y\}}$
are non-isolated ordinary double points of the surface $S_{-6}$.
Thus, it follows from \eqref{equation:equation:number-of-irredubicle-components-refined} and Lemmas~\ref{lemma:main} and \ref{lemma:normal-crossing} that
$$
\big[\mathsf{f}^{-1}(-6)\big]=5+\mathbf{D}_{P_{\{x\},\{y\},\{z,t\}}}^{-6}.
$$

Let $\alpha_1\colon U_1\to\mathbb{P}^3$ be a blow up of the point $P_{\{x\},\{y\},\{z,t\}}$.
Then $D_{-6}^1=S_{-6}^1+2\mathbf{E}_1$.
The surface $\mathbf{E}_1$ contains two base curves of the pencil $\mathcal{S}^1$.
Denote them by $C_{11}^1$ and $C_{12}^1$, respectively.
Then $\mathbf{m}_{11}=\mathbf{m}_{12}=2$ and $\mathbf{M}_{11}^{-6}=\mathbf{M}_{12}^{-6}=2$.

Let $\alpha_2\colon U_2\to U_1$ be the blow up of the point $C_{11}^1\cap C_{12}^1$. Then
$D_{-6}^2=S_{-6}^2+2\mathbf{E}_1^2+\mathbf{E}_2$.
The surface $\mathbf{E}_2$ contains two base curves of the pencil $\mathcal{S}^2$.
Denote them by $C_{13}^2$ and $C_{14}^2$, respectively. Then $\mathbf{M}_{13}^{-6}=\mathbf{M}_{14}^{-6}=2$.

Note that there exists a commutative diagram
$$
\xymatrix{
&&U_2\ar@{->}[dll]_{\alpha_2}&&\\
U_1\ar@{->}[rrd]_{\alpha_1}&&&&U\ar@{->}[lld]^{\alpha}\ar@{->}[llu]_{\gamma}\\
&&\mathbb{P}^3&&}
$$
for some birational morphism $\gamma$.
Moreover, the only base curves of the pencil $\widehat{\mathcal{S}}$ that
are mapped to the singular point $P_{\{x\},\{y\},\{z,t\}}$ are the curves $\widehat{C}_{11}$, $\widehat{C}_{12}$, $\widehat{C}_{13}$, and $\widehat{C}_{14}$.
Furthermore, our computations also give $\mathbf{A}_{P_{\{x\},\{y\},\{z,t\}}}=2$.
Thus, it follows from \eqref{equation:D-A-B} and Lemma~\ref{lemma:main-2} that
$\mathbf{D}_{P_{\{x\},\{y\},\{z,t\}}}=4$, so that $[\mathsf{f}^{-1}(-6)]=5$.
\end{proof}

If $\lambda\ne -6$, then the intersection matrix of the curves
$L_{\{x\},\{y\}}$, $L_{\{z\},\{t\}}$, $L_{\{x\},\{z,t\}}$,
$L_{\{y\},\{z,t\}}$, $L_{\{x\},\{y,z,t\}}$, $L_{\{y\},\{x,z,t\}}$, $L_{\{z\},\{x,y,t\}}$, $L_{\{t\},\{x,y,z\}}$, $\mathcal{C}_1$, and $\mathcal{C}_2$ on the surface $S_\lambda$
has the same  rank as the intersection matrix of the curves
$L_{\{x\},\{y\}}$, $L_{\{z\},\{t\}}$, $L_{\{x\},\{z,t\}}$, $L_{\{y\},\{z,t\}}$,
$L_{\{z\},\{x,y,t\}}$, $L_{\{t\},\{x,y,z\}}$, and $H_{\lambda}$.
This follows from \eqref{equation:r3-n1-base-locus}.
On the other hand, if $\lambda\ne -6$, then
$$
L_{\{x\},\{z,t\}}+L_{\{y\},\{z,t\}}+2L_{\{z\},\{t\}}\sim H_{\lambda}
$$
on the surface $S_\lambda$, because $H_{\{z,t\}}\cdot S_\lambda=L_{\{x\},\{z,t\}}+L_{\{y\},\{z,t\}}+2L_{\{z\},\{t\}}$. Similarly, if $\lambda\ne -6$, then
$$
L_{\{x\},\{y,z,t\}}+L_{\{y\},\{x,z,t\}}+L_{\{z\},\{x,y,t\}}+L_{\{t\},\{x,y,z\}}\sim H_{\lambda}.
$$
Using this, we can easily compute the intersection form of the curves
$L_{\{x\},\{y\}}$, $L_{\{z\},\{t\}}$, $L_{\{x\},\{z,t\}}$, $L_{\{y\},\{z,t\}}$,
$L_{\{z\},\{x,y,t\}}$, $L_{\{t\},\{x,y,z\}}$, and $H_{\lambda}$ on the surface $S_\lambda$.
If $\lambda\ne -6$, it is given by the following matrix:
\begin{center}\renewcommand\arraystretch{1.42}
\begin{tabular}{|c||c|c|c|c|c|c|c|}
\hline
$\bullet$ & $L_{\{x\},\{y\}}$ & $L_{\{z\},\{t\}}$ & $L_{\{x\},\{z,t\}}$ & $L_{\{y\},\{z,t\}}$ & $L_{\{z\},\{x,y,t\}}$ & $L_{\{t\},\{x,y,z\}}$ & $H_{\lambda}$\\
\hline\hline
$L_{\{x\},\{y\}}$ & $-\frac{1}{2}$ & $0$ & $\frac{1}{2}$ & $\frac{1}{2}$ & $0$& $0$ & $1$\\
\hline
$L_{\{z\},\{t\}}$ & $0$ & $0$ & $\frac{1}{2}$ & $\frac{1}{2}$ & $\frac{1}{2}$& $\frac{1}{2}$ & $1$\\
\hline
$L_{\{x\},\{z,t\}}$ & $\frac{1}{2}$ & $\frac{1}{2}$ & $-\frac{1}{6}$ & $\frac{1}{6}$ & $0$& $0$ & $1$\\
\hline
$L_{\{y\},\{z,t\}}$ & $\frac{1}{2}$ & $\frac{1}{2}$ & $\frac{1}{6}$ & $-\frac{1}{6}$ & $0$& $0$ & $1$\\
\hline
$L_{\{z\},\{x,y,t\}}$ & $0$ & $\frac{1}{2}$ & $0$ & $0$ & $-\frac{3}{2}$& $\frac{1}{2}$ & $1$\\
\hline
$L_{\{t\},\{x,y,z\}}$ & $0$ & $\frac{1}{2}$ & $0$ & $0$ & $\frac{1}{2}$& $-\frac{3}{2}$ & $1$\\
\hline
$H_{\lambda}$ & $1$  & $1$  & $1$  & $1$  & $1$ & $1$ & $4$ \\
\hline
\end{tabular}
\end{center}
The rank of this intersection matrix is $5$.
Moreover, we have
$\mathrm{rk}\,\mathrm{Pic}(\widetilde{S}_{\Bbbk})=\mathrm{rk}\,\mathrm{Pic}(S_{\Bbbk})+12$.
Hence, we see that \eqref{equation:main-2-simple} holds,
so that \eqref{equation:main-2} in Main Theorem also holds by Lemma~\ref{lemma:cokernel}.

\subsection{Family \textnumero $3.2$}
\label{section:r-3-n-2}

We already discussed this case in Example~\ref{example:r-3-n-2}.
Because of this, let us use the notation of this example.
Note that $h^{1,2}(X)=3$, and the defining equation of the surface $S_\lambda$
is symmetric with respect to the swaps $x\leftrightarrow y$ and $z\leftrightarrow t$.

To prove Main Theorem in this case, we may assume that $\lambda\ne\infty$. Then
\begin{equation}
\label{equation:r3-n2-base-locus}
\begin{split}
H_{\{x\}}\cdot S_\lambda&=L_{\{x\},\{t\}}+L_{\{x\},\{y,z\}}+\mathcal{C}_1,\\
H_{\{y\}}\cdot S_\lambda&=L_{\{y\},\{t\}}+L_{\{y\},\{x,z\}}+\mathcal{C}_2,\\
H_{\{z\}}\cdot S_\lambda&=L_{\{z\},\{t\}}+3L_{\{z\},\{x,y\}},\\
H_{\{t\}}\cdot S_\lambda&=L_{\{x\},\{t\}}+L_{\{y\},\{t\}}+L_{\{z\},\{t\}}+L_{\{t\},\{x,y,z\}},
\end{split}
\end{equation}

For every $\lambda\ne -6$, the surface $S_\lambda$ is irreducible, it has isolated singularities,
and its singularities contained in the base locus of the pencil $\mathcal{S}$ can be described as follows:
\begin{itemize}\setlength{\itemindent}{3cm}
\item[$P_{\{x\},\{y\},\{z\}}$:] type $\mathbb{A}_5$ with quadratic term $z(x+y+z)$;
\item[$P_{\{x\},\{t\},\{y,z\}}$:] type $\mathbb{A}_2$ with quadratic term $x(x+y+z+(\lambda+6)t)$;
\item[$P_{\{y\},\{t\},\{x,z\}}$:] type $\mathbb{A}_2$ with quadratic term $y(x+y+z+(\lambda+6)t)$;
\item[$P_{\{z\},\{t\},\{x,y\}}$:] type $\mathbb{A}_3$ with quadratic term $z(x+y+z+(\lambda+6)t)$.
\end{itemize}
By Corollary~\ref{corollary:irreducible-fibers}, one has $[\mathsf{f}^{-1}(\lambda)]=1$ for every $\lambda\ne -6$.

Recall that $S_{-6}=H_{\{x,y,z\}}+\mathbf{S}$,
where $\mathbf{S}$ is a cubic surface whose singular locus consists of the points $P_{\{z\},\{x,y\},\{x,t\}}$ and $P_{\{z\},\{x,y\},\{y,t\}}$.
Observe also that $H_{\{x,y,z\}}\cap\mathbf{S}$ consists of the line $L_{\{z\},\{x,y\}}$
and an irreducible conic $x+y+z=xy+t^2$.
Then $S_{-6}$ has good double points at $P_{\{x\},\{y\},\{z\}}$, $P_{\{x\},\{t\},\{y,z\}}$, $P_{\{y\},\{t\},\{x,z\}}$, and $P_{\{z\},\{t\},\{x,y\}}$.
Hence, using \eqref{equation:equation:number-of-irredubicle-components-refined} and Lemmas~\ref{lemma:main} and \ref{lemma:normal-crossing},
we get $[\mathsf{f}^{-1}(-6)]=4$.
This confirms \eqref{equation:main-1} in  Main Theorem.

Let us verify \eqref{equation:main-2} in Main Theorem.
We may assume that $\lambda\ne -6$. Then
$$
L_{\{x\},\{y,z\}}+L_{\{y\},\{x,z\}}+L_{\{z\},\{x,y\}}\sim L_{\{x\},\{t\}}+L_{\{y\},\{t\}}+L_{\{z\},\{t\}}
$$
on the surface $S_\lambda$. This follows from \eqref{equation:r3-n2-base-locus} and the fact that
$$
H_{\{x,y,z\}}\cdot S_\lambda=L_{\{x\},\{y,z\}}+L_{\{y\},\{x,z\}}+L_{\{z\},\{x,y\}}+L_{\{t\},\{x,y,z\}}.
$$
Using this, we can compute the intersection form of the curves
$L_{\{x\},\{t\}}$, $L_{\{x\},\{y,z\}}$, $L_{\{y\},\{t\}}$, $L_{\{y\},\{x,z\}}$,
$L_{\{z\},\{t\}}$, and $H_{\lambda}$ on the surface $S_\lambda$.
Namely, it is given by the following matrix:
\begin{center}\renewcommand\arraystretch{1.42}
\begin{tabular}{|c||c|c|c|c|c|c|}
\hline
$\bullet$ & $L_{\{x\},\{t\}}$ & $L_{\{x\},\{y,z\}}$ & $L_{\{y\},\{t\}}$ & $L_{\{y\},\{x,z\}}$ & $L_{\{z\},\{t\}}$ & $H_{\lambda}$\\
\hline
\hline
$L_{\{x\},\{t\}}$ &$-\frac{4}{3}$ & $\frac{2}{3}$ & $1$ & $0$ & $1$ & $1$\\
\hline
$L_{\{x\},\{y,z\}}$&$\frac{2}{3}$ & $-\frac{1}{2}$ & $0$ & $\frac{5}{6}$ & $0$ & $1$\\
\hline
$L_{\{y\},\{t\}}$&$1$ & $0$ & $-\frac{4}{3}$ & $\frac{2}{3}$ & $1$ & $1$\\
\hline
$L_{\{y\},\{x,z\}}$&$0$ & $\frac{5}{6}$ & $\frac{2}{3}$ & $-\frac{1}{2}$ & $0$ & $1$\\
\hline
$L_{\{z\},\{t\}}$&$1$ & $0$ & $1$ & $0$ & $-\frac{5}{4}$ & $1$\\
\hline
$H_{\lambda}$ & $1$  & $1$  & $1$  & $1$ & $1$ & $4$ \\
\hline
\end{tabular}
\end{center}
The rank of this matrix is $5$.
Thus, if $\lambda\ne -6$, then it follows from \eqref{equation:r3-n2-base-locus} that
the intersection matrix of the curves $L_{\{x\},\{t\}}$, $L_{\{y\},\{t\}}$, $L_{\{z\},\{t\}}$,
$L_{\{x\},\{y,z\}}$, $L_{\{y\},\{x,z\}}$, $L_{\{z\},\{x,y\}}$, $L_{\{t\},\{x,y,z\}}$, $\mathcal{C}_1$, and $\mathcal{C}_2$
on the surface $S_\lambda$ also has rank $5$.
But
$\mathrm{rk}\,\mathrm{Pic}(\widetilde{S}_{\Bbbk})=\mathrm{rk}\,\mathrm{Pic}(S_{\Bbbk})+12$.
Hence, we see that \eqref{equation:main-2-simple} holds,
so that \eqref{equation:main-2} in Main Theorem also holds by Lemma~\ref{lemma:cokernel}.

\subsection{Family \textnumero $3.3$}
\label{section:r-3-n-3}

The threefold $X$ is a divisor of tridegree $(1,1,2)$ on $\mathbb{P}^1\times\mathbb{P}^1\times\mathbb{P}^1$, which implies that $h^{1,2}(X)=3$.
Its toric Landau--Ginzburg model is given by Minkowski polynomial $1804$.
Using the coordinate change $x\mapsto\frac{y}{x}$ and  $z\mapsto\frac{z}{x}$, we can rewrite it as
$$
\frac{yz}{x}+\frac{y}{x}+y+\frac{z}{x}+z+\frac{2}{x}+2x+\frac{1}{z}+\frac{2x}{z}+\frac{x^2}{z}+\frac{1}{xy}+\frac{2}{y}+\frac{x}{y}.
$$
The quartic pencil $\mathcal{S}$ is given by the equation
\begin{multline*}
t^3z+t^2xy+2t^2xz+2t^2yz+2tx^2y+tx^2z+ty^2z+tyz^2+\\
+x^3y+2x^2yz+xy^2z+xyz^2+y^2z^2=\lambda xyzt.
\end{multline*}
This equation is symmetric with respect to the involution $[x:y:z:t]\leftrightarrow[t:z:y:x]$.

Suppose that $\lambda\ne\infty$.
Let $\mathcal{C}_1$ be a smooth conic that is given by $x=yz+yt+t^2=0$,
and let $\mathcal{C}_2$ is a smooth conic that is given by $t=x^2+xz+yz=0$. Then
\begin{equation}
\label{equation:r3-n3-base-locus}
\begin{split}
H_{\{x\}}\cdot S_\lambda&=L_{\{x\},\{z\}}+L_{\{x\},\{y,t\}}+\mathcal{C}_1,\\
H_{\{y\}}\cdot S_\lambda&=L_{\{y\},\{t\}}+L_{\{y\},\{z\}}+2L_{\{y\},\{x,t\}},\\
H_{\{z\}}\cdot S_\lambda&=L_{\{x\},\{z\}}+L_{\{y\},\{z\}}+2L_{\{z\},\{x,t\}},\\
H_{\{t\}}\cdot S_\lambda&=L_{\{y\},\{t\}}+L_{\{t\},\{x,z\}}+\mathcal{C}_2.
\end{split}
\end{equation}
Hence, we conclude that $L_{\{x\},\{z\}}$, $L_{\{y\},\{z\}}$, $L_{\{y\},\{t\}}$, $L_{\{x\},\{y,t\}}$, $L_{\{y\},\{x,t\}}$,
$L_{\{z\},\{x,t\}}$, $L_{\{t\},\{x,z\}}$, $\mathcal{C}_1$, and $\mathcal{C}_2$ are all base curves of the pencil $\mathcal{S}$.

The surface $S_{-4}$ is irreducible. However, its singularities are not isolated:
it is singular along the lines $L_{\{z\},\{x,t\}}$ and $L_{\{t\},\{x,z\}}$, and smooth away from them.

If $\lambda\ne -4$, then $S_\lambda$ has isolated singularities, so that $S_\lambda$ is irreducible.
In this case, the singularities of the surface $S_\lambda$
that are contained in the base locus of the pencil $\mathcal{S}$ are all du Val and can be described as follows:
\begin{itemize}\setlength{\itemindent}{3cm}
\item[$P_{\{x\},\{y\},\{t\}}$:] type $\mathbb{A}_4$ with quadratic term $y(y+z+t)$;
\item[$P_{\{x\},\{z\},\{t\}}$:] type $\mathbb{A}_4$ with quadratic term $z(x+z+t)$;
\item[$P_{\{y\},\{z\},\{x,t\}}$:] type $\mathbb{A}_3$ with quadratic term $(\lambda+4)yz$.
\end{itemize}
Thus, it follows from Corollary~\ref{corollary:irreducible-fibers} that $[\mathsf{f}^{-1}(\lambda)]=1$ for every $\lambda\ne -4$.

\begin{lemma}
\label{lemma:r3-n3-main-1}
One has $[\mathsf{f}^{-1}(-4)]=4$.
\end{lemma}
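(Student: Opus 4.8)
The plan is to read off $[\mathsf{f}^{-1}(-4)]$ from the refined count \eqref{equation:equation:number-of-irredubicle-components-refined} specialized to $\lambda=-4$. Since $S_{-4}$ is irreducible, the leading term is $[S_{-4}]=1$, so everything reduces to the curve contributions $\mathbf{C}_i^{-4}$ and the defects $\mathbf{D}_P^{-4}$. I would first pin down the base-locus multiplicities $\mathbf{m}_i$ by intersecting a general member with $S_\infty=H_{\{x\}}+H_{\{y\}}+H_{\{z\}}+H_{\{t\}}$; adding the four plane sections in \eqref{equation:r3-n3-base-locus} gives
$$
S_\lambda\cdot S_\infty=2L_{\{x\},\{z\}}+2L_{\{y\},\{z\}}+2L_{\{y\},\{t\}}+2L_{\{y\},\{x,t\}}+2L_{\{z\},\{x,t\}}+L_{\{x\},\{y,t\}}+L_{\{t\},\{x,z\}}+\mathcal{C}_1+\mathcal{C}_2,
$$
whose total degree $16$ checks the bookkeeping. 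Hence $\mathbf{m}_i=2$ precisely for the five lines $L_{\{x\},\{z\}}$, $L_{\{y\},\{z\}}$, $L_{\{y\},\{t\}}$, $L_{\{y\},\{x,t\}}$, $L_{\{z\},\{x,t\}}$, and $\mathbf{m}_i=1$ for the remaining base curves; this list is symmetric under the involution $x\leftrightarrow t$, $y\leftrightarrow z$, as it must be.

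Next I would determine along which base curves $S_{-4}$ actually has multiplicity two. Because a surface that is singular along a curve $C$ satisfies $\mathbf{M}_C^{-4}=2\le\mathbf{m}_C$, the only candidates are the five $\mathbf{m}=2$ lines, and since the involution fixes $S_{-4}$ and pairs $L_{\{z\},\{x,t\}}\leftrightarrow L_{\{y\},\{x,t\}}$, the non-isolated singular locus is a symmetric union of these. Computing the gradient of the defining equation along each candidate line isolates the lines where $\mathbf{M}^{-4}=2$; for each such line Lemma~\ref{lemma:main} gives $\mathbf{C}^{-4}=\mathbf{m}-1=1$, while $\mathbf{C}_i^{-4}=0$ for every base curve with $\mathbf{M}_i^{-4}=1$.

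Finally I would treat the defects at the fixed singular points $\Sigma=\{P_{\{x\},\{y\},\{t\}},\,P_{\{x\},\{z\},\{t\}},\,P_{\{y\},\{z\},\{x,t\}}\}$. The two $\mathbb{A}_4$ points $P_{\{x\},\{y\},\{t\}}$ and $P_{\{x\},\{z\},\{t\}}$ retain their nondegenerate (rank-$2$) quadratic terms $y(y+z+t)$ and $z(x+z+t)$ at $\lambda=-4$, so they are good double points of $S_{-4}$ and Lemma~\ref{lemma:normal-crossing} forces their defects to vanish. The delicate case is $P_{\{y\},\{z\},\{x,t\}}$, whose quadratic term $(\lambda+4)yz$ degenerates to $0$ at $\lambda=-4$: this point is exactly where the singular lines meet, so it is not a good double point and Lemma~\ref{lemma:normal-crossing} does not apply. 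Here I would run the local algorithm of Subsection~\ref{subsection:scheme-step-8}, blowing up $P_{\{y\},\{z\},\{x,t\}}$ and the infinitely near points that appear, tracking the log-pullback $\widehat{D}_{-4}$ together with the base curves arising in the exceptional divisors, recording their $\mathbf{m}_j$ and $\mathbf{M}_j^{-4}$, and assembling $\mathbf{D}_{P_{\{y\},\{z\},\{x,t\}}}^{-4}$ from \eqref{equation:D-A-B} and Lemma~\ref{lemma:main-2}. This local resolution is the main obstacle, since the vanishing quadratic term means the singularity is worse than a node and the base locus of the transformed pencil must be followed through several blow-ups before it is resolved near this point. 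Combining the irreducible term, the curve contributions, and this single defect should give $[\mathsf{f}^{-1}(-4)]=4$; together with $[\mathsf{f}^{-1}(\lambda)]=1$ for $\lambda\ne-4$ (Corollary~\ref{corollary:irreducible-fibers}), this matches $h^{1,2}(X)=3$ and yields \eqref{equation:main-1} in Main Theorem for this family.
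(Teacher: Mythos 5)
Your proposal is correct and follows the same route as the paper: the same multiplicities $\mathbf{m}_i$ read off from $S_\lambda\cdot S_\infty$, the identification of the two lines $L_{\{y\},\{x,t\}}$ and $L_{\{z\},\{x,t\}}$ as the non-isolated singular locus of $S_{-4}$ (each contributing $\mathbf{C}^{-4}=\mathbf{m}-1=1$ by Lemma~\ref{lemma:main}), vanishing defects at the two good double points $P_{\{x\},\{y\},\{t\}}$ and $P_{\{x\},\{z\},\{t\}}$ via Lemma~\ref{lemma:normal-crossing}, and the local algorithm of Subsection~\ref{subsection:scheme-step-8} at $P_{\{y\},\{z\},\{x,t\}}$. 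The only remark worth making is that the defect computation there is easier than you anticipate: a single blow-up already resolves the situation, giving $D_{-4}^1=S_{-4}^1+\mathbf{E}_1$ (so $\mathbf{A}_{P_{\{y\},\{z\},\{x,t\}}}^{-4}=1$) and two base curves in $\mathbf{E}_1$ each with $\mathbf{M}^{-4}=1$, whence $\mathbf{D}_{P_{\{y\},\{z\},\{x,t\}}}^{-4}=1$ and the total is $1+2+1=4$.
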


\begin{proof}
Let $C_1=\mathcal{C}_1$, $C_2=\mathcal{C}_2$,
$C_3=L_{\{x\},\{z\}}$, $C_4=L_{\{y\},\{z\}}$, $C_5=L_{\{y\},\{t\}}$, $C_6=L_{\{x\},\{y,t\}}$, $C_7=L_{\{y\},\{x,t\}}$,
$C_8=L_{\{z\},\{x,t\}}$, and $C_9=L_{\{t\},\{x,z\}}$.
Then
$\mathbf{m}_1=\mathbf{m}_2=\mathbf{m}_6=\mathbf{m}_9=1$
and $\mathbf{m}_3=\mathbf{m}_4=\mathbf{m}_5=\mathbf{m}_7=\mathbf{m}_8=2$.
Likewise, we have $\mathbf{M}^{-4}_7=\mathbf{M}^{-4}_8=2$ and
$$
\mathbf{M}^{-4}_1=\mathbf{M}^{-4}_2=\mathbf{M}^{-4}_3=\mathbf{M}^{-4}_4=\mathbf{M}^{-4}_5=\mathbf{M}^{-4}_4=\mathbf{M}^{-4}_9=1
$$
Thus, it follows from \eqref{equation:equation:number-of-irredubicle-components-refined} and Lemma~\ref{lemma:main} that
$$
\big[\mathsf{f}^{-1}(-4)\big]=3+\mathbf{D}_{P_{\{x\},\{z\},\{t\}}}^{-4}+\mathbf{D}_{P_{\{x\},\{y\},\{t\}}}^{-4}+\mathbf{D}_{P_{\{y\},\{z\},\{x,t\}}}^{-4}.
$$
The surface $S_{-4}$ has (non-isolated) ordinary double singularities at the points $P_{\{x\},\{z\},\{t\}}$ and $P_{\{x\},\{y\},\{t\}}$.
Thus, it follows from Lemma~\ref{lemma:normal-crossing} that $\mathbf{D}_{P_{\{x\},\{z\},\{t\}}}^{-4}=\mathbf{D}_{P_{\{x\},\{y\},\{t\}}}^{-4}=0$.

Let $\alpha_1\colon U_1\to\mathbb{P}^3$ be a blow up of the point $P_{\{y\},\{z\},\{x,t\}}$.
Then $D_{-4}^1=S_{-4}^1+\mathbf{E}_1$.
The~surface $\mathbf{E}_1$ contains two base curves of the pencil $\mathcal{S}^1$.
Denote them by $C_{10}^1$ and $C_{11}^1$.
Then $\mathbf{M}_{10}^{-4}=\mathbf{M}_{11}^{-4}=1$, $\mathbf{A}_{P_{\{y\},\{z\},\{x,t\}}}=1$,
and the only base curves of the pencil $\widehat{\mathcal{S}}$ that
are mapped to the singular point $P_{\{y\},\{z\},\{x,t\}}$ are the curves $\widehat{C}_{10}$ and $\widehat{C}_{11}$.
Then
$$
\mathbf{D}_{P_{\{y\},\{z\},\{x,t\}}}=1
$$
by \eqref{equation:D-A-B} and Lemma~\ref{lemma:main-2}. We conclude that $[\mathsf{f}^{-1}(-4)]=4$.
\end{proof}

Recall that $h^{1,2}(X)=3$. Since the fiber $\mathsf{f}^{-1}(\lambda)$ is irreducible for every $\lambda\ne -4$,
we see that \eqref{equation:main-1} in Main Theorem follows from Lemma~\ref{lemma:r3-n3-main-1}.

Now let us prove \eqref{equation:main-2} in Main Theorem.
We may assume that $\lambda\ne -4$. Then the intersection form of the curves
$L_{\{x\},\{z\}}$, $L_{\{y\},\{z\}}$, $L_{\{y\},\{t\}}$, $L_{\{x\},\{t,z\}}$,
$L_{\{t\},\{x,z\}}$, and $H_{\lambda}$ on the surface $S_\lambda$ is given by
\begin{center}\renewcommand\arraystretch{1.42}
\begin{tabular}{|c||c|c|c|c|c|c|}
\hline
$\bullet$ & $L_{\{x\},\{z\}}$ & $L_{\{y\},\{z\}}$ & $L_{\{y\},\{t\}}$ & $L_{\{x\},\{t,z\}}$ & $L_{\{t\},\{x,z\}}$ & $H_{\lambda}$\\
\hline\hline
$L_{\{x\},\{z\}}$ &$-\frac{6}{5}$ & $1$ & $0$ & $1$ & $\frac{1}{5}$ & $1$\\
\hline
$L_{\{y\},\{z\}}$& $1$ & $-1$ & $1$ & $0$ & $0$ & $1$\\
\hline
$L_{\{y\},\{t\}}$& $0$ & $1$ & $-\frac{6}{5}$ & $\frac{1}{5}$ & $1$ & $1$\\
\hline
$L_{\{x\},\{t,z\}}$& $1$ & $0$ & $\frac{1}{5}$ & $-\frac{6}{5}$ & $0$ & $1$\\
\hline
$L_{\{t\},\{x,z\}}$& $\frac{1}{5}$ & $0$ & $1$ & $0$ & $-\frac{6}{5}$ & $1$\\
\hline
$H_{\lambda}$ & $1$  & $1$  & $1$  & $1$ & $1$ & $4$ \\
\hline
\end{tabular}
\end{center}
The determinant of this matrix is $-\frac{112}{25}$.
Thus, it follows from \eqref{equation:r3-n3-base-locus}
that the intersection matrix of the curves $L_{\{x\},\{z\}}$, $L_{\{y\},\{z\}}$, $L_{\{y\},\{t\}}$, $L_{\{x\},\{y,t\}}$, $L_{\{y\},\{x,t\}}$,
$L_{\{z\},\{x,t\}}$, $L_{\{t\},\{x,z\}}$, $\mathcal{C}_1$, and $\mathcal{C}_2$ on the surface $S_\lambda$ also has rank $6$.
But $\mathrm{rk}\,\mathrm{Pic}(\widetilde{S}_{\Bbbk})=\mathrm{rk}\,\mathrm{Pic}(S_{\Bbbk})+11$.
Summarizing, we see that \eqref{equation:main-2-simple} holds,
so that \eqref{equation:main-2} in Main Theorem also holds by Lemma~\ref{lemma:cokernel}.

\subsection{Family \textnumero $3.4$}
\label{section:r-3-n-4}

The threefold $X$ is a blow up of a double cover of $\mathbb{P}^1\times\mathbb{P}^2$
in a divisor of bidegree $(2,2)$ along a smooth fiber of the projection to $\mathbb{P}^2$.
One has $h^{1,2}(X)=2$.
The toric Landau--Ginzburg model is given by the Minkowski polynomial \textnumero $1724$, which is
$$
y+\frac{yz}{x}+\frac{2y}{x}+x+\frac{y}{xz}+2z+\frac{z}{x}+\frac{2}{z}+\frac{xz}{y}+\frac{2}{x}+\frac{2x}{y}+\frac{1}{xz}+\frac{x}{yz}.
$$
The pencil $\mathcal{S}$ is given by
\begin{multline*}
y^2xz+y^2z^2+2y^2tz+x^2yz+t^2y^2+2xyz^2+z^2yt+2t^2xy+\\
+x^2z^2+2t^2yz+2x^2tz+t^3y+x^2t^2=\lambda xyzt.
\end{multline*}
As usual, we will assume that $\lambda\ne\infty$.

Let $\mathcal{C}_1$ be a conic that is given by $z=x^2+2xy+y^2+yt=0$,
and let $\mathcal{C}_2$ be a conic that is given by $t=xy+xz+yz=0$.
Then
\begin{equation}
\label{equation:r3-n4-base-locus}
\begin{split}
H_{\{x\}}\cdot S_\lambda&=L_{\{x\},\{y\}}+2L_{\{x\},\{z,t\}}+L_{\{x\},\{y,t\}},\\
H_{\{y\}}\cdot S_\lambda&=2L_{\{x\},\{y\}}+2L_{\{y\},\{z,t\}},\\
H_{\{z\}}\cdot S_\lambda&=2L_{\{z\},\{t\}}+\mathcal{C}_1,\\
H_{\{t\}}\cdot S_\lambda&=L_{\{z\},\{t\}}+L_{\{t\},\{x,y\}}+\mathcal{C}_2.
\end{split}
\end{equation}
This shows that $L_{\{x\},\{y\}}$, $L_{\{z\},\{t\}}$, $L_{\{x\},\{y,t\}}$, $L_{\{x\},\{z,t\}}$, $L_{\{y\},\{z,t\}}$, $L_{\{t\},\{x,y\}}$, $\mathcal{C}_1$, and $\mathcal{C}_2$
are all base curves of the pencil $\mathcal{S}$.

For every $\lambda\in\mathbb{C}$, the surface $S_\lambda$ has isolated singularities, so that $S_\lambda$ is irreducible.
Moreover, if $\lambda\ne-4$, then the singularities of the surface $S_\lambda$
that are contained in the base locus of the pencil $\mathcal{S}$ are all du Val and can be described as follows:
\begin{itemize}\setlength{\itemindent}{3cm}
\item[$P_{\{x\},\{y\},\{t\}}$:] type $\mathbb{A}_1$ with quadratic term $x^2+2xy+y^2+yt$;

\item[$P_{\{x\},\{z\},\{t\}}$:] type $\mathbb{A}_1$ with quadratic term $xz+z^2+2zt+t^2$;

\item[$P_{\{y\},\{z\},\{t\}}$:] type $\mathbb{A}_1$ with quadratic term $z^2+yz+2zt+t^2$;

\item[$P_{\{x\},\{y\},\{z,t\}}$:] type $\mathbb{A}_5$ with quadratic term $(\lambda+4)xy$;

\item[$P_{\{z\},\{t\},\{x,y\}}$:] type $\mathbb{A}_2$ with quadratic term $z(x+y-(\lambda+4)t)$;

\item[{$[\lambda+4:0:-1:1]$}:] type $\mathbb{A}_1$;

\item[{$[0:\lambda+4:-1:1]$}:] type $\mathbb{A}_1$ for $\lambda\ne -5$, type $\mathbb{A}_3$ for $\lambda=-5$.
\end{itemize}
Therefore, it follows from Corollary~\ref{corollary:irreducible-fibers} that $[\mathsf{f}^{-1}(\lambda)]=1$ for every $\lambda\ne -4$.
Thus, the assertion \eqref{equation:main-1} in Main Theorem follows from

\begin{lemma}
\label{lemma:r3-n4-main-1}
One has $[\mathsf{f}^{-1}(-4)]=3$.
\end{lemma}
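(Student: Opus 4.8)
The plan is to compute $[\mathsf{f}^{-1}(-4)]$ via the refined formula \eqref{equation:equation:number-of-irredubicle-components-refined}. Since $S_{-4}$ is irreducible, the term $[S_{-4}]$ equals $1$. First I would record the multiplicities $\mathbf{m}_i$ of the base curves by decomposing $S_{\lambda_1}\cdot S_{\lambda_2}$ using the four plane sections in \eqref{equation:r3-n4-base-locus}: this gives $\mathbf{m}=2$ for the lines $L_{\{x\},\{y\}}$, $L_{\{z\},\{t\}}$, $L_{\{x\},\{z,t\}}$, $L_{\{y\},\{z,t\}}$ and $\mathbf{m}=1$ for $L_{\{x\},\{y,t\}}$, $L_{\{t\},\{x,y\}}$, $\mathcal{C}_1$, $\mathcal{C}_2$. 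Then I would determine the multiplicities $\mathbf{M}_i^{-4}=\mathrm{mult}_{C_i}(S_{-4})$ along each base curve; for the curves where $S_{-4}$ is smooth at a general point one has $\mathbf{M}_i^{-4}=1$ and Lemma~\ref{lemma:main} forces $\mathbf{C}_i^{-4}=0$, while the curves (if any) along which $S_{-4}$ has higher multiplicity contribute $\mathbf{m}_i-1$. This isolates the line-contribution term $\sum_i\mathbf{C}_i^{-4}$.

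Next I would treat the defect sum $\sum_{P\in\Sigma}\mathbf{D}_P^{-4}$. The set $\Sigma$ consists of the fixed singular points listed just before the lemma, namely $P_{\{x\},\{y\},\{t\}}$, $P_{\{x\},\{z\},\{t\}}$, $P_{\{y\},\{z\},\{t\}}$, $P_{\{x\},\{y\},\{z,t\}}$, and $P_{\{z\},\{t\},\{x,y\}}$. The crucial observation is that all the listed quadratic terms remain valid at $\lambda=-4$, so I would check that each of these points is a \emph{good} double point of $S_{-4}$ in the sense of Subsection~\ref{subsection:scheme-step-9}: the quadratic forms at $P_{\{x\},\{y\},\{t\}}$, $P_{\{x\},\{z\},\{t\}}$, $P_{\{y\},\{z\},\{t\}}$, and $P_{\{z\},\{t\},\{x,y\}}$ have rank at least $2$ (they are ordinary double points or $\mathbb{A}$-type), so by Lemma~\ref{lemma:normal-crossing} their defects vanish. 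The only genuinely problematic point is $P_{\{x\},\{y\},\{z,t\}}$, whose quadratic term $(\lambda+4)xy$ degenerates exactly at $\lambda=-4$; here $\mathrm{mult}_P(S_{-4})$ may jump, Lemma~\ref{lemma:log-pull-back} does not apply, and the defect need not be zero.

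The main obstacle is therefore computing $\mathbf{D}_{P_{\{x\},\{y\},\{z,t\}}}^{-4}$ explicitly. I would follow the algorithm of Subsection~\ref{subsection:scheme-step-8} exactly as in Example~\ref{example:r-8-n1} and the proofs of Lemmas~\ref{lemma:2-3-P-x-y-z} and \ref{lemma:2-3-P-x-t-yz}: pass to an affine chart centered at $P_{\{x\},\{y\},\{z,t\}}$, blow up the point to obtain $\alpha_1\colon U_1\to\mathbb{P}^3$, and examine the proper transform $S_{-4}^1$ together with the exceptional divisor $\mathbf{E}_1$. I expect $\widehat{D}_{-4}=\widehat{S}_{-4}+\widehat{E}_1$ (so that $\mathbf{A}_P^{-4}=1$), and then I would identify the base curves of $\mathcal{S}^1$ lying in $\mathbf{E}_1$, compute their multiplicities $\mathbf{M}_j^{-4}$ and $\mathbf{m}_j$, and apply Lemma~\ref{lemma:main-2} and formula \eqref{equation:D-A-B}. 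Possibly one further blow-up is needed to resolve the remaining fixed singularity in $\mathbf{E}_1$, as the $\mathbb{A}_5$ type at this point (for $\lambda\ne-4$) suggests a longer chain. Summing the contributions should give $\mathbf{D}_{P_{\{x\},\{y\},\{z,t\}}}^{-4}=2$, whence
$$
\big[\mathsf{f}^{-1}(-4)\big]=1+\sum_i\mathbf{C}_i^{-4}+\mathbf{D}_{P_{\{x\},\{y\},\{z,t\}}}^{-4}=3,
$$
matching $h^{1,2}(X)=2$ together with $[\mathsf{f}^{-1}(\lambda)]=1$ for $\lambda\neq-4$, and confirming \eqref{equation:main-1} in Main Theorem.
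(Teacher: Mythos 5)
Your plan follows the paper's proof essentially verbatim: the paper likewise reduces to $[\mathsf{f}^{-1}(-4)]=1+\mathbf{D}_{P_{\{x\},\{y\},\{z,t\}}}^{-4}$ via Lemmas~\ref{lemma:main} and \ref{lemma:normal-crossing} (all other fixed singular points being good double points, and $S_{-4}$ being smooth at general points of the base curves since its singularities are isolated), and then computes the defect by two successive point blow-ups, finding $\mathbf{A}_{P}^{-4}=1$ plus a single base curve in $\mathbf{E}_1$ with $\mathbf{M}=2$ and $\mathbf{m}=2$ contributing $1$, hence $\mathbf{D}_{P}^{-4}=2$. The only part you leave as an expectation rather than carrying out is that explicit local computation, which is precisely where the paper does the remaining work; your incidental misstatement of some $\mathbf{m}_i$ for the original base curves is harmless because the corresponding $\mathbf{M}_i^{-4}$ all equal $1$.
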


\begin{proof}
The surface $S_{-4}$ has isolated ordinary double singularities at the points $P_{\{x\},\{y\},\{t\}}$, $P_{\{x\},\{z\},\{t\}}$, and $P_{\{y\},\{z\},\{t\}}$,
and it has du Val singularity of type $\mathbb{A}_2$ at the point $P_{\{z\},\{t\},\{x,y\}}$.
Thus, using \eqref{equation:equation:number-of-irredubicle-components-refined},
we see that
$$
\big[\mathsf{f}^{-1}(-4)\big]=1+\mathbf{D}_{P_{\{x\},\{y\},\{z,t\}}}^{-4}
$$
by Lemmas~\ref{lemma:main} and \ref{lemma:normal-crossing}.
To compute $\mathbf{D}_{P_{\{x\},\{y\},\{z,t\}}}^{-4}$, we have to (partially) describe the birational morphism
$\alpha$ in \eqref{equation:main-diagram}.

In the chart $t=1$, the surface $S_{-4}$ is given by
$$
\bar{y}\bar{z}^2-\bar{x}^2\bar{y}-\bar{x}\bar{y}^2+\Big(\bar{x}^2\bar{y}\bar{z}+\bar{x}^2\bar{z}^2+\bar{x}\bar{y}^2\bar{z}+2\bar{x}\bar{y}\bar{z}^2+\bar{y}^2\bar{z}^2\Big)=0,
$$
where $\bar{x}=x$, $\bar{y}=y$, and $\bar{z}=z+1$.
In particular, the singularity of the surface $S_{-4}$ at the point $P_{\{x\},\{y\},\{z,t\}}$ is not du Val.
Since $P_{\{x\},\{y\},\{z,t\}}$ is a singular point of the surface $S_{-4}$ of multiplicity $3$,
we can use \eqref{equation:equation:number-of-irredubicle-components-refined} to conclude that $\mathbf{D}_{P_{\{x\},\{y\},\{z,t\}}}^{-4}>0$.

Let $\alpha_1\colon U_1\to\mathbb{P}^3$ be the blow up of the point $P_{\{x\},\{y\},\{z,t\}}$.
A local chart of this blow up is given by the coordinate change $\bar{x}_1=\frac{\bar{x}}{\bar{z}}$, $\bar{y}_1=\frac{\bar{y}}{\bar{z}}$, and $\bar{z}_1=\bar{z}$.
In this chart, the surface~$\mathbf{E}_1$ is given by $\bar{z}_1=0$,
and $D_{\lambda}^1$ is given by
\begin{multline*}
(\lambda+4)\bar{x}_1\bar{y}_1+\bar{y}_1\bar{z}_1-(\lambda+4)\bar{x}_1\bar{y}_1\bar{z}_1v+\\
+\Big(-\bar{x}_1^2\bar{y}_1\bar{z}_1+\bar{x}_1^2\bar{z}_1^2-\bar{x}_1\bar{y}_1^2\bar{z}_1+2\bar{x}_1\bar{y}_1\bar{z}_1^2+\bar{y}_1^2\bar{z}_1^2\Big)+\Big(\bar{x}_1^2\bar{y}_1\bar{z}_1^2+\bar{y}_1^2\bar{x}_1\bar{z}_1^2\Big)=0.
\end{multline*}
Thus, we see that $D_{-4}^1=S_{-4}^1+\mathbf{E}_1$.

The surface $\mathbf{E}_1$ contains two base curves of the pencil $\mathcal{S}^1$.
One of them is given by $\bar{z}_1=\bar{x}_1=0$, and another one is given by $\bar{z}_1=\bar{y}_1=0$.
Denote the former curve by $C_{9}^1$, and denote the latter curve by $C_{10}^1$.
Then $\mathbf{M}_{9}^{-4}=1$ and $\mathbf{M}_{10}^{-4}=2$.
Hence, using \eqref{equation:D-A-B} and Lemma~\ref{lemma:main-2},
we conclude that $\mathbf{D}_{P_{\{x\},\{y\},\{z,t\}}}^{-4}\geqslant 2$.

Let $\alpha_2\colon U_2\to U_1$ be the blow up of the point $C_{9}^1\cap C_{10}^1$.
Then $D_{-4}^2=S_{-4}^2+\mathbf{E}_1^2$.
Moreover, the surface $\mathbf{E}_2$ contains two base curves of the pencil $\mathcal{S}^2$.
Denote them by $C_{11}^2$ and $C_{12}^2$, respectively.
Then $\mathbf{M}_{11}^{-4}=\mathbf{M}_{12}^{-4}=1$.
Moreover, one can show that the only base curves of the pencil $\widehat{\mathcal{S}}$ that
are mapped to $P_{\{x\},\{y\},\{z,t\}}$ are the curves $\widehat{C}_{9}$, $\widehat{C}_{10}$, $\widehat{C}_{11}$, and $\widehat{C}_{12}$.
Finally, local computations imply that $\mathbf{A}_{P_{\{x\},\{y\},\{z,t\}}}^{-4}=1$.
Thus, using \eqref{equation:D-A-B} and Lemma~\ref{lemma:main-2} we get $\mathbf{D}_{P_{\{x\},\{y\},\{z,t\}}}^{-4}=2$,
so that $[\mathsf{f}^{-1}(-4)]=3$.
\end{proof}

To prove \eqref{equation:main-2} in Main Theorem, we need the following result.

\begin{lemma}
\label{lemma:r3-n4-intersection}
Suppose that $\lambda\ne -4$ and $\lambda\ne -5$.
Then the intersection form of the curves
$L_{\{x\},\{y\}}$, $L_{\{x\},\{y,t\}}$, $L_{\{z\},\{t\}}$, $L_{\{t\},\{x,y\}}$, and $H_{\lambda}$
on the surface $S_\lambda$ is given by
\begin{center}\renewcommand\arraystretch{1.42}
\begin{tabular}{|c||c|c|c|c|c|}
\hline
$\bullet$ & $L_{\{x\},\{y\}}$ & $L_{\{x\},\{y,t\}}$ & $L_{\{z\},\{t\}}$ & $L_{\{t\},\{x,y\}}$ & $H_{\lambda}$\\
\hline\hline
$L_{\{x\},\{y\}}$ &$-\frac{1}{6}$ & $\frac{1}{2}$ & $0$ & $\frac{1}{2}$ & $1$\\
\hline
$L_{\{x\},\{y,t\}}$& $\frac{1}{2}$ & $-\frac{3}{2}$ & $0$ & $\frac{1}{2}$ & $1$\\
\hline
$L_{\{z\},\{t\}}$& $0$ & $0$ & $-\frac{5}{6}$ & $\frac{1}{3}$ & $1$\\
\hline
$L_{\{t\},\{x,y\}}$& $\frac{1}{2}$ & $\frac{1}{2}$ & $\frac{1}{3}$ & $-\frac{5}{6}$ & $1$\\
\hline
$H_{\lambda}$ & $1$  & $1$  & $1$ & $1$ & $4$ \\
\hline
\end{tabular}
\end{center}
\end{lemma}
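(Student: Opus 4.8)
The plan is to follow the same pattern used throughout this section (cf.\ Lemma~\ref{lemma:r2-n6-intersection} and Example~\ref{example:r-2-n-34}) and reduce everything to the du Val intersection theory of Appendix~\ref{section:intersection}. The last row and column of the matrix are immediate: $H_\lambda^2=4$ because $S_\lambda$ is a quartic, and $H_\lambda\cdot C=1$ for each of the four lines $C$ since they have degree $1$ in $\mathbb P^3$. Everything else is a local computation at the du Val points of $S_\lambda$, whose types and quadratic terms were already recorded above for $\lambda\neq -4$. So the first concrete step is to list, for each of the four lines, exactly which singular points of $S_\lambda$ it contains. Here $L_{\{x\},\{y,t\}}$ meets only the node $P_{\{x\},\{y\},\{t\}}$; the line $L_{\{x\},\{y\}}$ passes through that node and through the $\mathbb A_5$ point $P_{\{x\},\{y\},\{z,t\}}$; and $L_{\{z\},\{t\}}$, $L_{\{t\},\{x,y\}}$ pass through the $\mathbb A_2$ point $P_{\{z\},\{t\},\{x,y\}}$ together with the nodes lying on them.

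For the diagonal entries I would apply Proposition~\ref{proposition:du-Val-self-intersection}: the strict transform of each line on the minimal resolution $\widetilde S_\lambda$ is a $(-2)$-curve, and each du Val point through which the line passes contributes a positive rational correction. At the nodes the correction is forced, so $L_{\{x\},\{y,t\}}^2$ comes out immediately. At the $\mathbb A_5$ point contributing to $L_{\{x\},\{y\}}^2$ and at the $\mathbb A_2$ point contributing to $L_{\{z\},\{t\}}^2$ and $L_{\{t\},\{x,y\}}^2$, however, the value depends on which exceptional component of the resolution chain the proper transform of the line meets. This is precisely what Remark~\ref{remark:transversal} is designed to settle, by recording whether the proper transform of the line passes through the node $\overline G_1\cap\overline G_n$ of the exceptional configuration on the first blow-up; I would apply it with the quadratic terms already computed for these points.

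For the off-diagonal entries I would use Proposition~\ref{proposition:du-Val-intersection}. Pairs of the four lines that are disjoint in $\mathbb P^3$ contribute $0$; pairs meeting at a smooth point of $S_\lambda$ contribute $1$; and pairs meeting at a du Val point contribute a fractional number determined, again via Remark~\ref{remark:transversal}, by the pair of exceptional components their strict transforms meet. The genuinely delicate value is $L_{\{z\},\{t\}}\cdot L_{\{t\},\{x,y\}}$, where both lines pass through the $\mathbb A_2$ point $P_{\{z\},\{t\},\{x,y\}}$ and one has to decide whether their transforms hit the same end or opposite ends of the chain $G_1-G_2$.

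The main obstacle is thus the local book-keeping at the $\mathbb A_5$ point $P_{\{x\},\{y\},\{z,t\}}$ and the $\mathbb A_2$ point $P_{\{z\},\{t\},\{x,y\}}$: one must follow the successive blow-ups resolving these points and track the tangent directions of the relevant lines (which here lie in the respective tangent cones) carefully enough to pin down the exact exceptional components met. Rather than grind through every local resolution by hand, I would cross-check the outcome against the linear equivalences coming from \eqref{equation:r3-n4-base-locus}: for instance $2L_{\{z\},\{t\}}+\mathcal C_1\sim H_\lambda$ and $2L_{\{x\},\{y\}}+2L_{\{y\},\{z,t\}}\sim H_\lambda$ express some of these curves rationally in terms of $H_\lambda$ and the conics, yielding independent relations that the entries of the matrix must satisfy and thereby fixing the few ambiguous fractional contributions.
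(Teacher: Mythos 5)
Your plan is essentially the paper's proof. The paper obtains the off-diagonal entries from Proposition~\ref{proposition:du-Val-intersection} exactly as you describe (disjoint pairs give $0$; the three pairs meeting at the node $P_{\{x\},\{y\},\{t\}}$ give $\tfrac12$; and $L_{\{z\},\{t\}}\cdot L_{\{t\},\{x,y\}}=\tfrac13$ because the quadratic term $z(x+y-(\lambda+4)t)$ puts the two lines in different branches of the tangent cone at the $\mathbb{A}_2$ point), and it settles the one genuinely ambiguous diagonal entry, $L_{\{x\},\{y\}}^2$, precisely by your ``main obstacle'' route: tracking the proper transform of $L_{\{x\},\{y\}}$ through the two blow-ups already carried out in the proof of Lemma~\ref{lemma:r3-n4-main-1}, concluding that it meets $G_2$ or $G_4$ of the $\mathbb{A}_5$ chain and hence contributes $\tfrac{2\cdot 4}{6}=\tfrac43$, whence $L_{\{x\},\{y\}}^2=-2+\tfrac12+\tfrac43=-\tfrac16$. (A minor point: at an $\mathbb{A}_2$ point the self-intersection correction is $\tfrac23$ for either exceptional component, so the only ambiguity there is the off-diagonal one you correctly single out.)

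One thing you should be aware of: your enumeration of the singular points on each line is more careful than the paper's, and executed faithfully it will not reproduce the table as printed. The line $L_{\{z\},\{t\}}$ contains the two $\mathbb{A}_1$ points $P_{\{x\},\{z\},\{t\}}$ and $P_{\{y\},\{z\},\{t\}}$ in addition to the $\mathbb{A}_2$ point $P_{\{z\},\{t\},\{x,y\}}$, so Proposition~\ref{proposition:du-Val-self-intersection} gives
$$
L_{\{z\},\{t\}}^2=-2+\tfrac12+\tfrac12+\tfrac23=-\tfrac13,
$$
not the $-\tfrac56$ in the table; the paper's argument for that entry only lists the singular points lying on $L_{\{t\},\{x,y\}}$ and then transfers the value ``likewise''. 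Your proposed linear-equivalence cross-check detects exactly this: from \eqref{equation:r3-n4-base-locus} one has $H_\lambda\sim L_{\{z\},\{t\}}+L_{\{t\},\{x,y\}}+\mathcal{C}_2$, and $\mathcal{C}_2$ meets $L_{\{z\},\{t\}}$ transversally at the two nodes, so $1=L_{\{z\},\{t\}}^2+\tfrac13+1$, again forcing $-\tfrac13$. The corrected matrix still has nonzero determinant, so the rank-$5$ conclusion that the lemma feeds into is unaffected, but you should record the corrected value rather than the printed one.
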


\begin{proof}
First, let us compute non-diagonal entries.
Since $L_{\{x\},\{y\}}\cap L_{\{x\},\{y,t\}}=P_{\{x\},\{y\},\{t\}}$ and $P_{\{x\},\{y\},\{t\}}$
is an ordinary double point of the surface $S_\lambda$, we get
$L_{\{x\},\{y\}}\cdot L_{\{x\},\{y,t\}}=\frac{1}{2}$ by Proposition~\ref{proposition:du-Val-intersection}. Likewise, we have $L_{\{x\},\{y\}}\cdot L_{\{t\},\{x,y\}}=L_{\{x\},\{y,t\}}\cdot L_{\{t\},\{x,y\}}=\frac{1}{2}$.

Since $L_{\{x\},\{y\}}\cap L_{\{z\},\{t\}}=L_{\{x\},\{y,t\}}\cap L_{\{z\},\{t\}}=\varnothing$, we have
$$
L_{\{x\},\{y\}}\cdot L_{\{z\},\{t\}}=L_{\{x\},\{y,t\}}\cdot L_{\{z\},\{t\}}=0.
$$

To compute $L_{\{z\},\{t\}}\cdot L_{\{t\},\{x,y\}}$, observe that $L_{\{z\},\{t\}}\cap L_{\{t\},\{x,y\}}=P_{\{z\},\{t\},\{x,y\}}$.
Moreover, the surface $S_\lambda$ has du Val singularity of type $\mathbb{A}_2$ at the point $P_{\{z\},\{t\},\{x,y\}}$.
Furthermore, the quadratic term of its defining equation at this point is $z(x+y-(\lambda+4)t)$.
Thus, using Remark~\ref{remark:transversal} with $S=S_\lambda$, $O=P_{\{z\},\{t\},\{x,y\}}$, $n=3$, $C=L_{\{z\},\{t\}}$, and $Z=L_{\{t\},\{x,y\}}$,
we see that $\widetilde{C}$ and $\widetilde{Z}$ intersect different curves among $G_1$ and $G_2$.
Then $L_{\{z\},\{t\}}\cdot L_{\{t\},\{x,y\}}=\frac{1}{3}$ by Proposition~\ref{proposition:du-Val-intersection}.

Now let us compute the diagonal entries.
Since $P_{\{x\},\{y\},\{t\}}$ and $P_{\{z\},\{t\},\{x,y\}}$
are the only singular points of the surface $S_\lambda$ that are contained in the line $L_{\{t\},\{x,y\}}$,
we see that
$$
L_{\{z\},\{t\}}^2=-2+\frac{1}{2}+\frac{2}{3}=-\frac{5}{6}
$$
by Proposition~\ref{proposition:du-Val-self-intersection}. Likewise, we have $L_{\{t\},\{x,y\}}^2=-\frac{5}{6}$.
We also have \mbox{$L_{\{x\},\{y,t\}}^{2}=-\frac{3}{2}$}, because
$P_{\{x\},\{y\},\{t\}}$ is the only singular point of the surface $S_\lambda$ that is contained in $L_{\{x\},\{y,t\}}$.

To compute  $L_{\{x\},\{y\}}^{2}$, let us use the notation of the proof of Lemma~\ref{lemma:r3-n4-main-1}.
Note that the proper transform of the line $L_{\{x\},\{y\}}$ on the surface  $S_{\lambda}^1$ passes through the point $C_{9}^1\cap C_{10}^1$.
On the other hand, its proper transform on the surface  $S_{\lambda}^2$ does not pass through
the intersection $C_{11}^2$ and $C_{12}^2$.
Applying Remark~\ref{remark:transversal} with $S=S_\lambda$, $O=P_{\{x\},\{y\},\{z,t\}}$, $n=5$, and $C=L_{\{x\},\{y\}}$,
we see that $\widetilde{C}$ intersects either the curve $G_2$ or the curve $G_4$.
Thus, it follows from Proposition~\ref{proposition:du-Val-self-intersection} that
$L_{\{x\},\{y\}}^2=-\frac{1}{6}$,
because $P_{\{x\},\{y\},\{z,t\}}$ and $P_{\{x\},\{y\},\{t\}}$ are the only singular points of the surface $S_\lambda$ contained in the line $L_{\{x\},\{y\}}$.
\end{proof}

The determinant of the matrix in Lemma~\ref{lemma:r3-n4-intersection} is $-\frac{16}{9}$.
Thus, if $\lambda\ne -4$ and $\lambda\ne -5$, then it follows from \eqref{equation:r3-n4-base-locus} that the intersection matrix of the curves
$L_{\{x\},\{y\}}$, $L_{\{z\},\{t\}}$, $L_{\{x\},\{y,t\}}$, $L_{\{x\},\{z,t\}}$, $L_{\{y\},\{z,t\}}$, $L_{\{t\},\{x,y\}}$, $\mathcal{C}_1$, and $\mathcal{C}_2$
also has rank $5$.
On the other hand, one can easily see that $\mathrm{rk}\,\mathrm{Pic}(\widetilde{S}_{\Bbbk})=\mathrm{rk}\,\mathrm{Pic}(S_{\Bbbk})+12$.
Hence, we conclude that \eqref{equation:main-2-simple} holds,
so that \eqref{equation:main-2} in Main Theorem also holds by Lemma~\ref{lemma:cokernel}.

\subsection{Family \textnumero $3.5$}
\label{section:r-3-n-5}

The threefold $X$ can be obtained by blowing up $\mathbb{P}^1\times\mathbb{P}^2$  along a smooth rational curve of bidegree $(5,2)$.
Then $h^{1,2}(X)=0$.
A~toric Landau--Ginzburg model of this family is given by the Minkowski polynomial \textnumero $1819$, which is
$$
\frac{1}{x}+\frac{1}{y}+\frac{1}{z}+\frac{2y}{x}+\frac{2x}{y}+\frac{y}{z}+\frac{x}{z}+\frac{yz}{x}+z+\frac{y^2}{x}+3y+3x+\frac{x^2}{y}.
$$
The quartic pencil $\mathcal{S}$ is given by
\begin{multline*}
t^2xy+t^2xz+t^2yz+tx^2y+2tx^2z+txy^2+2ty^2z+x^3z+\\
+3x^2yz+3xy^2z+xyz^2+y^3z+y^2z^2=\lambda xyzt.
\end{multline*}

Suppose that $\lambda\ne\infty$. Then $S_\lambda$ has isolated singularities, so that it is irreducible.

Let $\mathcal{C}_1$ be the conic in $\mathbb{P}^3$ that is given by $x=(y+t)^2+yz=0$,
and let $\mathcal{C}_2$ be the conic that is given by $t=(x+y)^2+yz=0$.
Then
\begin{itemize}
\item $H_{\{x\}}\cdot S_\lambda=L_{\{x\},\{y\}}+L_{\{x\},\{z\}}+\mathcal{C}_1$;
\item $H_{\{y\}}\cdot S_\lambda=L_{\{x\},\{y\}}+L_{\{y\},\{z\}}+2L_{\{y\},\{x,t\}}$;
\item $H_{\{z\}}\cdot S_\lambda=L_{\{x\},\{z\}}+L_{\{y\},\{z\}}+L_{\{z\},\{t\}}+L_{\{z\},\{x,y,t\}}$;
\item $H_{\{t\}}\cdot S_\lambda=L_{\{z\},\{t\}}+L_{\{t\},\{x,y\}}+\mathcal{C}_2$.
\end{itemize}
Therefore, the base locus of the pencil $\mathcal{S}$ consists of the curves
$L_{\{x\},\{y\}}$, $L_{\{x\},\{z\}}$, $L_{\{y\},\{z\}}$, $L_{\{z\},\{t\}}$, $L_{\{y\},\{x,t\}}$,
$L_{\{t\},\{x,y\}}$, $L_{\{z\},\{x,y,t\}}$, $\mathcal{C}_1$, and $\mathcal{C}_2$.

For every $\lambda\in\mathbb{C}$, the singular points of the surface $S_\lambda$ contained in the base locus of the pencil $\mathcal{S}$ can be described as follows:
\begin{itemize}\setlength{\itemindent}{3cm}
\item[$P_{\{x\},\{y\},\{z\}}$:] type $\mathbb{A}_1$;

\item[$P_{\{x\},\{y\},\{t\}}$:] type $\mathbb{A}_4$ for $\lambda\ne -4$, $\mathbb{A}_5$ for $\lambda=-4$;

\item[{$P_{\{x\},\{z\},\{y,t\}}$}:] type $\mathbb{A}_1$;

\item[{$P_{\{y\},\{z\},\{x,t\}}$}:] type $\mathbb{A}_2$ for $\lambda\ne -4$, $\mathbb{A}_4$ for $\lambda=-4$;

\item[{$P_{\{z\},\{t\},\{x,y\}}$}:] type $\mathbb{A}_2$ for $\lambda\ne -4$, $\mathbb{A}_3$ for $\lambda=-4$;

\item[{$[1:0:\lambda+4:-1]$}:] type $\mathbb{A}_1$ for $\lambda\ne -4$.
\end{itemize}
In particular, it follows from Corollary~\ref{corollary:irreducible-fibers} that $\mathsf{f}^{-1}(\lambda)$ is irreducible for every $\lambda\ne\infty$.
Thus, since $h^{1,2}(X)=0$, we see that \eqref{equation:main-1} in Main Theorem holds in this case.

\begin{lemma}
\label{lemma:r3-n5-intersection}
Suppose that $\lambda\ne -4$.
Then the intersection matrix of the curves $L_{\{x\},\{y\}}$, $L_{\{x\},\{z\}}$, $L_{\{y\},\{z\}}$, $L_{\{z\},\{t\}}$, $L_{\{y\},\{x,t\}}$, $L_{\{t\},\{x,y\}}$, and $H_{\lambda}$
on the surface $S_\lambda$ is given by
\begin{center}\renewcommand\arraystretch{1.42}
\begin{tabular}{|c||c|c|c|c|c|c|c|}
\hline
 $\bullet$  & $L_{\{x\},\{y\}}$ & $L_{\{x\},\{z\}}$ & $L_{\{y\},\{z\}}$ & $L_{\{z\},\{t\}}$ & $L_{\{y\},\{x,t\}}$ & $L_{\{t\},\{x,y\}}$ & $H_{\lambda}$ \\
\hline\hline
 $L_{\{x\},\{y\}}$ &  $-\frac{3}{10}$ & $\frac{1}{2}$ & $\frac{1}{2}$ & $0$ & $\frac{2}{5}$ & $\frac{3}{5}$ & $1$ \\
\hline
 $L_{\{x\},\{z\}}$  & $\frac{1}{2}$& $-1$& $\frac{1}{2}$ & $1$ & $0$ & $0$ & $1$ \\
\hline
 $L_{\{y\},\{z\}}$  & $\frac{1}{2}$& $\frac{1}{2}$& $-\frac{5}{6}$& $1$& $\frac{2}{3}$& $0$ & $1$ \\
\hline
 $L_{\{z\},\{t\}}$  & $0$& $1$& $1$& $-\frac{4}{3}$ & $0$& $\frac{2}{3}$ & $1$ \\
\hline
 $L_{\{y\},\{x,t\}}$  & $\frac{2}{5}$ & $0$ & $\frac{2}{3}$ & $0$& $-\frac{1}{30}$ & $\frac{1}{5}$ & $1$ \\
\hline
 $L_{\{t\},\{x,y\}}$  & $\frac{3}{5}$ & $0$ & $0$ & $\frac{2}{3}$& $\frac{1}{5}$ & $-\frac{8}{15}$ & $1$ \\
\hline
 $H_{\lambda}$  & $1$ & $1$ & $1$ & $1$ & $1$ & $1$ & $4$ \\
\hline
\end{tabular}
\end{center}
\end{lemma}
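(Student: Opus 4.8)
The plan is to compute the matrix entry by entry, exactly as in the previous subsections, using the singularity data listed above together with Propositions~\ref{proposition:du-Val-self-intersection} and~\ref{proposition:du-Val-intersection} and Remark~\ref{remark:transversal}. The last row and last column are immediate: since $H_\lambda$ is a general hyperplane section one has $H_\lambda^2=4$, and each of the six curves $L_{\{x\},\{y\}}$, $L_{\{x\},\{z\}}$, $L_{\{y\},\{z\}}$, $L_{\{z\},\{t\}}$, $L_{\{y\},\{x,t\}}$, $L_{\{t\},\{x,y\}}$ is a line, so its intersection with $H_\lambda$ equals $1$. The remaining entries fall into the two standard types, and I would organize the work accordingly, recording first for each line which singular points of $S_\lambda$ it passes through (read off from the description of the base locus and the list of singularities, valid for $\lambda\ne -4$).

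For the diagonal I would apply Proposition~\ref{proposition:du-Val-self-intersection} to each line. For example, $L_{\{x\},\{y\}}$ meets only $P_{\{x\},\{y\},\{z\}}$ (type $\mathbb{A}_1$) and $P_{\{x\},\{y\},\{t\}}$ (type $\mathbb{A}_4$), so $L_{\{x\},\{y\}}^2=-2+\tfrac12+c$, where the contribution $c$ of the $\mathbb{A}_4$ point is fixed, via Remark~\ref{remark:transversal}, by which exceptional curve of the minimal resolution the strict transform meets. Since the quadratic term at $P_{\{x\},\{y\},\{t\}}$ is $x(x+y)$, the curve $\overline{C}$ passes through $\overline{G}_1\cap\overline{G}_4$, forcing $\widetilde{C}$ to meet $G_2$ or $G_3$, whence $c=\tfrac65$ and $L_{\{x\},\{y\}}^2=-\tfrac3{10}$. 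The self-intersections of $L_{\{x\},\{z\}}$, $L_{\{y\},\{z\}}$, $L_{\{z\},\{t\}}$, $L_{\{y\},\{x,t\}}$, $L_{\{t\},\{x,y\}}$ are obtained the same way, collecting one local term $\tfrac{n}{n+1}$ or $\tfrac{n-1}{n+1}$ from each du Val point on the line.

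For the off-diagonal entries I would use Proposition~\ref{proposition:du-Val-intersection}: for each pair of lines meeting at a singular point, Remark~\ref{remark:transversal} decides whether the two strict transforms hit the same or different exceptional curves, which determines the local intersection number; pairs that are disjoint contribute $0$. The main obstacle is the point $P_{\{x\},\{y\},\{t\}}$ of type $\mathbb{A}_4$, through which three of our lines, $L_{\{x\},\{y\}}$, $L_{\{y\},\{x,t\}}$, and $L_{\{t\},\{x,y\}}$, all pass; distinguishing the three intersection numbers $\tfrac25$, $\tfrac35$, $\tfrac15$ among them requires a careful local analysis of which of $\overline{G}_1,\dots,\overline{G}_4$ each strict transform meets, rather than Remark~\ref{remark:transversal} alone. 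When this leaves an ambiguity I would resolve it by pairing one of the base-locus relations, such as $L_{\{x\},\{y\}}+L_{\{y\},\{z\}}+2L_{\{y\},\{x,t\}}\sim H_\lambda$ or $L_{\{z\},\{t\}}+L_{\{t\},\{x,y\}}+\mathcal{C}_2\sim H_\lambda$, with the relevant line and comparing with the already-known diagonal terms, which pins down the remaining values and simultaneously serves as a consistency check. Once assembled, the matrix has rank $7$, which together with $\mathrm{rk}\,\mathrm{Pic}(\widetilde{S}_{\Bbbk})=\mathrm{rk}\,\mathrm{Pic}(S_{\Bbbk})+\text{(the relevant constant)}$ yields \eqref{equation:main-2-simple} by Lemma~\ref{lemma:cokernel}.
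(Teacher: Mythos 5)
Your plan follows the paper's proof of this lemma essentially verbatim: entry-by-entry computation via Propositions~\ref{proposition:du-Val-self-intersection} and~\ref{proposition:du-Val-intersection}, with Remark~\ref{remark:transversal} deciding which exceptional curve each strict transform meets, and the relation $L_{\{x\},\{y\}}+L_{\{y\},\{z\}}+2L_{\{y\},\{x,t\}}\sim H_\lambda$ used to break the tie between $\frac{2}{5}$ and $\frac{3}{5}$ at the $\mathbb{A}_4$ point $P_{\{x\},\{y\},\{t\}}$. Two slips, however. First, the quadratic term of $S_\lambda$ at $P_{\{x\},\{y\},\{t\}}$ is $y(x+y)$, not $x(x+y)$ (in the chart $z=1$ the only degree-two monomials come from $y^2z^2$ and $xyz^2$); your conclusion for $L_{\{x\},\{y\}}$ survives because that line lies in both planes of either cone, but with your tangent cone the line $L_{\{y\},\{x,t\}}$ would not lie in the cone at all, so the correct form is needed to sort out the other lines through this point. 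Second, the assembled $7\times 7$ matrix has rank $6$, not $7$: the very relation $H_\lambda\sim L_{\{x\},\{y\}}+L_{\{y\},\{z\}}+2L_{\{y\},\{x,t\}}$ that you invoke forces the last row to equal the first row plus the third row plus twice the fifth, and \eqref{equation:main-2-simple} in fact requires rank $6$ here, since $\mathrm{rk}\,\mathrm{Pic}(X)=3$ and $\mathrm{rk}\,\mathrm{Pic}(\widetilde{S}_{\Bbbk})-\mathrm{rk}\,\mathrm{Pic}(S_{\Bbbk})=11$; a rank of $7$ would make the verification of Main Theorem fail rather than succeed.
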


\begin{proof}
Observe that $L_{\{x\},\{y\}}\cap L_{\{z\},\{t\}}=\varnothing$, so that $L_{\{x\},\{y\}}\cdot L_{\{z\},\{t\}}=0$.
Similarly, we see that $L_{\{x\},\{z\}}\cdot L_{\{z\},\{x,y,t\}}=0$, $L_{\{x\},\{z\}}\cdot L_{\{t\},\{x,y\}}=0$, and $L_{\{y\},\{z\}}\cdot L_{\{t\},\{x,y\}}=0$.
Since $L_{\{x\},\{z\}}\cap L_{\{z\},\{t\}}=P_{\{x\},\{z\},\{t\}}$ and $S_\lambda$ is smooth at $P_{\{x\},\{z\},\{t\}}$, we have $L_{\{x\},\{z\}}\cdot L_{\{z\},\{t\}}=1$.
Likewise, we have $L_{\{y\},\{z\}}\cdot L_{\{z\},\{t\}}=1$.

The points $P_{\{x\},\{y\},\{z\}}$ and $P_{\{x\},\{z\},\{y,t\}}$ are the only singular points of the surface $S_\lambda$ that are contained in $L_{\{x\},\{z\}}$.
Thus, we have $L_{\{x\},\{z\}}^2=-1$ by Proposition~\ref{proposition:du-Val-self-intersection}.
Similarly, we see that $L_{\{y\},\{z\}}^2=-\frac{5}{6}$,
because $P_{\{x\},\{y\},\{z\}}$ and $P_{\{y\},\{z\},\{x,t\}}$ are the only singular points of the surface $S_\lambda$ that are contained in $L_{\{y\},\{z\}}$.
Likewise, we have $L_{\{z\},\{t\}}^2=-\frac{4}{3}$,
because $P_{\{z\},\{t\},\{x,y\}}$ is the only singular point of the surface $S_\lambda$ contained in $L_{\{z\},\{t\}}$.

Since $L_{\{x\},\{y\}}\cap L_{\{x\},\{z\}}=P_{\{x\},\{y\},\{z\}}$, we have  $L_{\{x\},\{y\}}\cdot L_{\{x\},\{z\}}=\frac{1}{2}$ by Proposition~\ref{proposition:du-Val-intersection}.
Similarly, we have $L_{\{x\},\{y\}}\cdot L_{\{y\},\{z\}}=\frac{1}{2}$.

Let us show that $L_{\{y\},\{z\}}\cdot L_{\{y\},\{x,t\}}=\frac{2}{3}$.
To do this, let us use the notation of Appendix~\ref{section:intersection} with $S=S_\lambda$, $O=P_{\{y\},\{z\},\{x,t\}}$, $n=2$, $C=L_{\{y\},\{x,t\}}$, and $Z=L_{\{y\},\{z\}}$.
We may assume that $\widetilde{C}\cap E_1\ne\varnothing$.
If $\widetilde{Z}\cap E_1\ne\varnothing$, then $L_{\{y\},\{z\}}\cdot L_{\{y\},\{x,t\}}=\frac{2}{3}$ by Proposition~\ref{proposition:du-Val-intersection}.
Otherwise, we have $L_{\{y\},\{z\}}\cdot L_{\{y\},\{x,t\}}=\frac{1}{3}$.
In the  chart $t=1$, the surface $S_\lambda$ is given by
$$
\bar{y}\big(\bar{x}+\bar{y}-(\lambda+4)\bar{z}\big)+\text{higher order terms}=0,
$$
where $\bar{x}=x+1$, $\bar{y}=y$, and $\bar{z}=z$. Here $O=(0,0,0)$.
In these coordinates, the line $L_{\{y\},\{x,t\}}$ is given by $\bar{y}=\bar{x}=0$,
and the line $L_{\{y\},\{z\}}$ is given by $\bar{y}=\bar{z}=0$.
This shows that $\widetilde{Z}\cap E_1\ne\varnothing$, so that $L_{\{y\},\{z\}}\cdot L_{\{y\},\{x,t\}}=\frac{2}{3}$.

Let us compute $L_{\{y\},\{x,t\}}^2$, $L_{\{x\},\{y\}}^2$, and $L_{\{x\},\{y\}}\cdot L_{\{y\},\{x,t\}}$.
Using Remark~\ref{remark:transversal} with $S=S_\lambda$, $O=P_{\{x\},\{y\},\{t\}}$, $n=4$, $C=L_{\{y\},\{x,t\}}$, and $Z=L_{\{x\},\{y\}}$,
we see that $\overline{C}$ does not pass through the point $\overline{G}_1\cap\overline{G}_4$,
and $\overline{Z}$ passes through the point $\overline{G}_1\cap\overline{G}_4$.
Now, using Proposition~\ref{proposition:du-Val-self-intersection}, we obtain
$$
L_{\{y\},\{x,t\}}^2=-2+\frac{1}{2}+\frac{2}{3}+\frac{4}{5}=-\frac{1}{30}
$$
because $P_{\{x\},\{y\},\{t\}}$, $P_{\{y\},\{z\},\{x,t\}}$, and $[1:0:\lambda+4:-1]$
are the only singular points of the surface $S_\lambda$ that are contained in the line $L_{\{y\},\{x,t\}}$.
Similarly, we get
$$
L_{\{x\},\{y\}}^2=-2+\frac{1}{2}+\frac{6}{5}=-\frac{3}{10}.
$$
because $P_{\{x\},\{y\},\{z\}}$ and $P_{\{x\},\{y\},\{t\}}$
are the only singular points of the surface $S_\lambda$ that are contained in the line $L_{\{x\},\{y\}}$.
Moreover, using Proposition~\ref{proposition:du-Val-intersection},
we see that either $L_{\{x\},\{y\}}\cdot L_{\{y\},\{x,t\}}=\frac{2}{5}$ or $L_{\{x\},\{y\}}\cdot L_{\{y\},\{x,t\}}=\frac{3}{5}$.
In fact, we have $L_{\{x\},\{y\}}\cdot L_{\{y\},\{x,t\}}=\frac{2}{5}$, because
\begin{multline*}
1=H_{\lambda}\cdot L_{\{y\},\{x,t\}}=\Big(L_{\{x\},\{y\}}+L_{\{y\},\{z\}}+2L_{\{y\},\{x,t\}}\Big)\cdot L_{\{y\},\{x,t\}}=\\
=L_{\{x\},\{y\}}\cdot L_{\{y\},\{x,t\}}+L_{\{y\},\{z\}}\cdot L_{\{y\},\{x,t\}}+2L_{\{y\},\{x,t\}}^2=L_{\{x\},\{y\}}\cdot L_{\{y\},\{x,t\}}+\frac{3}{5},
\end{multline*}
since $H_\lambda\sim L_{\{x\},\{y\}}+L_{\{y\},\{z\}}+2L_{\{y\},\{x,t\}}$ on the surface $S_\lambda$.

To complete the proof of the lemma, we must find $L_{\{t\},\{x,y\}}\cdot L_{\{x\},\{y\}}$, $L_{\{t\},\{x,y\}}\cdot L_{\{z\},\{t\}}$, $L_{\{t\},\{x,y\}}\cdot L_{\{y\},\{x,t\}}$, and $L_{\{t\},\{x,y\}}^2$.
Observe that $P_{\{x\},\{y\},\{t\}}$ and $P_{\{z\},\{t\},\{x,y\}}$
are the only singular points of the surface $S_\lambda$ that are contained in the line $L_{\{t\},\{x,y\}}$.
Thus, since $L_{\{t\},\{x,y\}}\cap L_{\{z\},\{t\}}=P_{\{z\},\{t\},\{x,y\}}$,
we get $L_{\{t\},\{x,y\}}\cdot L_{\{z\},\{t\}}=\frac{2}{3}$ by Proposition~\ref{proposition:du-Val-intersection}.

To find the remaining entries of the intersection matrix, let us use Remark~\ref{remark:transversal} with $S=S_\lambda$, $O=P_{\{x\},\{y\},\{t\}}$, $n=4$, $C=L_{\{y\},\{x,t\}}$, and $Z=L_{\{t\},\{x,y\}}$.
As we already checked above, the curve $\overline{C}$ does not pass through the point $\overline{G}_1\cap\overline{G}_4$.
Likewise, the curve $\overline{Z}$ does not pass through this point,
so that we may assume that $\widetilde{C}\cap G_1\ne\varnothing$ and $\widetilde{Z}\cap G_4\ne\varnothing$.
Hence, we have $L_{\{t\},\{x,y\}}\cdot L_{\{y\},\{x,t\}}=\frac{1}{5}$ by Proposition~\ref{proposition:du-Val-intersection}.
Likewise, it follows from Proposition~\ref{proposition:du-Val-self-intersection} that $L_{\{t\},\{x,y\}}^2=-\frac{8}{15}$.
This gives $L_{\{x\},\{y\}}\cdot L_{\{t\},\{x,y\}}=\frac{3}{5}$, because
\begin{multline*}
1=H_\lambda\cdot L_{\{t\},\{x,y\}}=\Big(L_{\{x\},\{y\}}+L_{\{y\},\{z\}}+2L_{\{y\},\{x,t\}}\Big)\cdot L_{\{t\},\{x,y\}}=\\
=L_{\{x\},\{y\}}\cdot L_{\{t\},\{x,y\}}+L_{\{y\},\{z\}}\cdot L_{\{t\},\{x,y\}}+2L_{\{t\},\{x,y\}}^2=L_{\{x\},\{y\}}\cdot L_{\{t\},\{x,y\}}+\frac{2}{5},
\end{multline*}
since $L_{\{x\},\{y\}}+L_{\{y\},\{z\}}+2L_{\{y\},\{x,t\}}\sim H_\lambda$.
\end{proof}

The matrix in Lemma~\ref{lemma:r3-n5-intersection} has rank~$6$.
Moreover, we have
$\mathrm{rk}\,\mathrm{Pic}(\widetilde{S}_{\Bbbk})=\mathrm{rk}\,\mathrm{Pic}(S_{\Bbbk})+11$.
Hence, we see that \eqref{equation:main-2-simple} holds,
so that \eqref{equation:main-2} in Main Theorem also holds by Lemma~\ref{lemma:cokernel}.

\subsection{Family \textnumero $3.6$}
\label{section:r-3-n-6}

In this case, Main Theorem is proved in Example~\ref{example:r-3-n-6}.

\subsection{Family \textnumero $3.7$}
\label{section:r-3-n-7}

In this case, the threefold $X$ can be obtained by blowing up a hypersurface of bidegree $(1,1)$ in $\mathbb{P}^2\times\mathbb{P}^2$ along a smooth elliptic curve,
so that $h^{1,2}(X)=1$.
The toric Landau--Ginzburg model of the threefold $X$ is given by
\begin{multline*}
x+y+z+\frac{y}{z}+\frac{y}{x}+\frac{z}{y}+\frac{z}{x}+\frac{1}{z}+\frac{y}{xz}+\frac{1}{y}+\frac{2}{x}+\frac{z}{xy}+\frac{1}{xz}+\frac{1}{xy},
\end{multline*}
which is the Minkowski polynomial \textnumero $2354.2$.
The pencil $\mathcal{S}$ is given by
\begin{multline*}
x^2yz+xy^2z+xyz^2+xy^2t+y^2zt+xz^2t+yz^2t+xyt^2+y^2t^2+\\+xt^2z+2yzt^2+z^2t^2+yt^3+zt^3=\lambda xyzt.
\end{multline*}
As usual, we suppose that $\lambda\ne\infty$.

For every $\lambda\ne-3$, the surface $S_\lambda$ has isolated singularities, so that $S_\lambda$ is irreducible.
On the other hand, one has $S_{-3}=H_{\{x,t\}}+\mathsf{S}$,
where $\mathsf{S}$ is an irreducible cubic surface that is given by $xyz+yt^2+zt^2+y^2t+z^2t+2yzt+y^2z+yz^2=0$.

To describe the base locus of the pencil $\mathcal{S}$, we observe that
\begin{equation}
\label{equation:r3-n7-base-locus}
\begin{split}
H_{\{x\}}\cdot S_\lambda&=L_{\{x\},\{t\}}+L_{\{x\},\{y,z\}}+L_{\{x\},\{y,t\}}+L_{\{x\},\{z,t\}},\\
H_{\{y\}}\cdot S_\lambda&=L_{\{y\},\{z\}}+L_{\{y\},\{t\}}+L_{\{y\},\{x,t\}}+L_{\{y\},\{z,t\}},\\
H_{\{z\}}\cdot S_\lambda&=L_{\{y\},\{z\}}+L_{\{z\},\{t\}}+L_{\{z\},\{x,t\}}+L_{\{z\},\{y,t\}},\\
H_{\{t\}}\cdot S_\lambda&=L_{\{x\},\{t\}}+L_{\{y\},\{t\}}+L_{\{z\},\{t\}}+L_{\{t\},\{x,y,z\}}.
\end{split}
\end{equation}
Thus, the lines
$L_{\{x\},\{t\}}$, $L_{\{y\},\{z\}}$, $L_{\{y\},\{t\}}$, $L_{\{z\},\{t\}}$,
$L_{\{x\},\{y,z\}}$, $L_{\{x\},\{y,t\}}$, $L_{\{x\},\{z,t\}}$,
$L_{\{y\},\{x,t\}}$, $L_{\{y\},\{z,t\}}$,
$L_{\{z\},\{x,t\}}$, $L_{\{z\},\{y,t\}}$, and $L_{\{t\},\{x,y,z\}}$
are all base curves of the pencil $\mathcal{S}$.

If $\lambda\ne-2$ and $\lambda\ne-3$, then the singular points of the surface $S_\lambda$ contained in the base locus
of the pencil $\mathcal{S}$ are all du Val and can be described as follows:
\begin{itemize}\setlength{\itemindent}{3cm}
\item[$P_{\{y\},\{z\},\{t\}}$:] type $\mathbb{A}_3$ with quadratic term $yz$;
\item[$P_{\{x\},\{z\},\{t\}}$:] type $\mathbb{A}_2$ with quadratic term $(x+t)(z+t)$;
\item[$P_{\{x\},\{y\},\{t\}}$:] type $\mathbb{A}_2$ with quadratic term $(x+t)(y+t)$;
\item[$P_{\{x\},\{t\},\{y,z\}}$:] type $\mathbb{A}_1$ with quadratic term $(x+y+z)(x+t)-(\lambda+3)xt$;
\item[$P_{\{y\},\{z\},\{x,t\}}$:] type $\mathbb{A}_1$ with quadratic term $(x+t)(y+z)+(\lambda+3)yz$.
\end{itemize}
Thus, it follows from Corollary~\ref{corollary:irreducible-fibers} that $[\mathsf{f}^{-1}(\lambda)]=1$ for every $\lambda\ne -3$ and $\lambda\ne -2$.

The surface $S_{-2}$ has the same singularities at the points $P_{\{y\},\{z\},\{t\}}$, $P_{\{x\},\{z\},\{t\}}$, $P_{\{x\},\{y\},\{t\}}$, $P_{\{x\},\{t\},\{y,z\}}$, and $P_{\{y\},\{z\},\{x,t\}}$.
In addition to them, it also has isolated ordinary double singularities at the points $[0:-1:1:1]$, $[0:1:-1:1]$, and $[0:1:1:-1]$.
Thus, using Corollary~\ref{corollary:irreducible-fibers}, we conclude that $[\mathsf{f}^{-1}(-2)]=1$.

The surface $S_{-3}$ has {good} double points at $P_{\{y\},\{z\},\{t\}}$, $P_{\{x\},\{z\},\{t\}}$, $P_{\{x\},\{y\},\{t\}}$, $P_{\{x\},\{t\},\{y,z\}}$, $P_{\{y\},\{z\},\{x,t\}}$,
and it is smooth at general points of the lines $L_{\{x\},\{t\}}$, $L_{\{y\},\{z\}}$, $L_{\{y\},\{t\}}$, $L_{\{z\},\{t\}}$,
$L_{\{x\},\{y,z\}}$, $L_{\{x\},\{y,t\}}$, $L_{\{x\},\{z,t\}}$,
$L_{\{y\},\{x,t\}}$, $L_{\{y\},\{z,t\}}$,
$L_{\{z\},\{x,t\}}$, $L_{\{z\},\{y,t\}}$, $L_{\{t\},\{x,y,z\}}$.
Thus, it follows from \eqref{equation:equation:number-of-irredubicle-components-refined} and Lemmas~\ref{lemma:main} and \ref{lemma:normal-crossing} that
$[\mathsf{f}^{-1}(-3)]=[S_{-3}]=2$.
Hence, we see that \eqref{equation:main-1} in Main Theorem holds in this case, because $h^{1,2}(X)=1$.

To prove \eqref{equation:main-2} in Main Theorem, we may assume that $\lambda\ne-2$ and $\lambda\ne-3$.
Then
$$
H_{\{x,t\}}\cdot S_\lambda=2L_{\{x\},\{t\}}+L_{\{y\},\{x,t\}}+L_{\{z\},\{x,t\}},
$$
so that $2L_{\{x\},\{t\}}+L_{\{y\},\{x,t\}}+L_{\{z\},\{x,t\}}\sim H_\lambda$ on the surface $S_\lambda$.
It follows from \eqref{equation:r3-n7-base-locus} that
the intersection matrix of the lines $L_{\{x\},\{t\}}$, $L_{\{y\},\{z\}}$, $L_{\{y\},\{t\}}$, $L_{\{z\},\{t\}}$,
$L_{\{x\},\{y,z\}}$, $L_{\{x\},\{y,t\}}$, $L_{\{x\},\{z,t\}}$,
$L_{\{y\},\{x,t\}}$, $L_{\{y\},\{z,t\}}$,
$L_{\{z\},\{x,t\}}$, $L_{\{z\},\{y,t\}}$, and $L_{\{t\},\{x,y,z\}}$
on the surface $S_\lambda$ has the same rank as the intersection matrix of the curves
$L_{\{x\},\{y,z\}}$, $L_{\{x\},\{y,t\}}$, $L_{\{x\},\{z,t\}}$, $L_{\{y\},\{x,t\}}$, $L_{\{y\},\{z,t\}}$, $L_{\{z\},\{y,t\}}$, $L_{\{t\},\{x,y,z\}}$, and $H_{\lambda}$.
The latter matrix is given by
\begin{center}\renewcommand\arraystretch{1.42}
\begin{tabular}{|c||c|c|c|c|c|c|c|c|}
\hline
 $\bullet$  & $L_{\{x\},\{y,z\}}$ & $L_{\{x\},\{y,t\}}$ & $L_{\{x\},\{z,t\}}$ & $L_{\{y\},\{x,t\}}$ & $L_{\{y\},\{z,t\}}$ & $L_{\{z\},\{y,t\}}$ & $L_{\{t\},\{x,y,z\}}$ & $H_{\lambda}$ \\
\hline\hline
$L_{\{x\},\{y,z\}}$ & $-\frac{3}{2}$ & $1$ & $1$ & $0$ & $0$ & $0$ & $\frac{1}{2}$ & $1$ \\
\hline
$L_{\{x\},\{y,t\}}$ & $1$ & $-\frac{4}{3}$ & $1$ &  $\frac{1}{3}$ & $10$ & $1$ & $0$ & $1$ \\
\hline
$L_{\{x\},\{z,t\}}$ & $1$ & $1$ & $-\frac{4}{3}$ &  $0$ & $1$ & $0$ & $0$ & $1$ \\
\hline
$L_{\{y\},\{x,t\}}$ & $0$ & $\frac{1}{3}$ & $0$ &  $-\frac{5}{6}$ & $1$ & $0$ & $0$ & $1$ \\
\hline
$L_{\{y\},\{z,t\}}$ & $0$ & $0$ & $1$ &  $1$ & $-\frac{5}{4}$ & $\frac{1}{4}$ & $0$ & $1$ \\
\hline
$L_{\{z\},\{y,t\}}$ & $0$ & $1$ & $0$ &  $0$ & $\frac{1}{4}$ & $-\frac{5}{4}$ & $0$ & $1$ \\
\hline
$L_{\{t\},\{x,y,z\}}$ & $\frac{1}{2}$ & $0$ & $0$ &  $0$ & $0$ & $0$ & $-\frac{3}{2}$ & $1$ \\
\hline
 $H_{\lambda}$  & $1$ & $1$ & $1$ & $1$ & $1$ & $1$ & $1$ & $4$ \\
\hline
\end{tabular}
\end{center}
This matrix has rank~$8$, and $\mathrm{rk}\,\mathrm{Pic}(\widetilde{S}_{\Bbbk})=\mathrm{rk}\,\mathrm{Pic}(S_{\Bbbk})+9$.
Hence, we see that \eqref{equation:main-2-simple} holds, so that \eqref{equation:main-2} in Main Theorem also holds by Lemma~\ref{lemma:cokernel}.

\subsection{Family \textnumero $3.8$}
\label{section:r-3-n-8}

In this case, we have $h^{1,2}(X)=0$,
and a toric Landau--Ginzburg model of the threefold $X$ is given by
$$
x+y+z+\frac{xz}{y}+\frac{x}{y}+\frac{y}{x}+\frac{z}{y}+\frac{1}{z}+\frac{2}{y}+\frac{2}{x}+\frac{1}{xz}+\frac{1}{xt},
$$
which is the Minkowski polynomials \textnumero $1504$.
The pencil $\mathcal{S}$ is given by
$$
x^2yz+xy^2z+x^2z^2+xyz^2+x^2zt+y^2zt+xz^2t+xyt^2+2xzt^2+2yzt^2+yt^3+zt^3=\lambda xyzt.
$$

Suppose that $\lambda\ne\infty$. Then $S_\lambda$ has isolated singularities, so that it is irreducible.

Let $\mathcal{C}_1$ be a plane cubic curve that is given by $x=y^2z+2yzt+yt^2+zt^2=0$.
Then $\mathcal{C}_1$ is singular at $P_{\{x\},\{y\},\{t\}}$.
Let $\mathcal{C}_2$ be a  conic that is given by $y=xz+xt+t^2=0$.
Then
\begin{equation}
\label{equation:r3-n8-base-locus}
\begin{split}
H_{\{x\}}\cdot S_\lambda&=L_{\{x\},\{t\}}+\mathcal{C}_1,\\
H_{\{y\}}\cdot S_\lambda&=L_{\{y\},\{z\}}+L_{\{y\},\{x,t\}}+\mathcal{C}_2,\\
H_{\{z\}}\cdot S_\lambda&=L_{\{y\},\{z\}}+2L_{\{z\},\{t\}}+L_{\{z\},\{x,t\}},\\
H_{\{t\}}\cdot S_\lambda&=L_{\{x\},\{t\}}+L_{\{z\},\{t\}}+L_{\{t\},\{x,y\}}+L_{\{t\},\{y,z\}}.
\end{split}
\end{equation}
Thus, the base locus of the pencil $\mathcal{S}$ consists of the curves
$L_{\{x\},\{t\}}$, $L_{\{y\},\{z\}}$,
$L_{\{z\},\{t\}}$, $L_{\{y\},\{x,t\}}$, $L_{\{z\},\{x,t\}}$,
$L_{\{t\},\{x,y\}}$, $L_{\{t\},\{y,z\}}$,  $\mathcal{C}_1$, and $\mathcal{C}_2$.

For every $\lambda\in\mathbb{C}$, the singular points of the surface $S_\lambda$ contained in the base locus
of the pencil $\mathcal{S}$ are du Val and can be described as follows:
\begin{itemize}\setlength{\itemindent}{2cm}
\item[$P_{\{x\},\{y\},\{t\}}$:] type $\mathbb{A}_3$ with quadratic term $x(x+y+t)$;
\item[$P_{\{x\},\{z\},\{t\}}$:] type $\mathbb{A}_3$ with quadratic term $z(x+t)$, for $\lambda\neq -3$, type $\mathbb{A}_4$ for $\lambda=-3$;
\item[$P_{\{y\},\{z\},\{t\}}$:] type $\mathbb{A}_2$ with quadratic term $z(y+z+t)$;
\item[$P_{\{y\},\{z\},\{x,t\}}$:] type $\mathbb{A}_2$ with quadratic term
$$
y(x+3z+\lambda z+t)
$$
for $\lambda\neq -3$ and $\lambda\ne-4$, type $\mathbb{A}_3$ for $\lambda=-3$ or $\lambda=-4$;
\item[$P_{\{z\},\{t\},\{x,y\}}$:] type $\mathbb{A}_1$;
\item[$P_{\{t\},\{x,y\},\{y,z\}}$:] smooth if $\lambda\ne -3$, type $\mathbb{A}_1$ if $\lambda=3$.
\end{itemize}
Thus, it follows from Corollary~\ref{corollary:irreducible-fibers} that the fiber $\mathsf{f}^{-1}(\lambda)$ is irreducible for every $\lambda\ne\infty$.
Hence, we see that \eqref{equation:main-1} in Main Theorem holds in this case.

To prove \eqref{equation:main-2} in Main Theorem, we may assume that $\lambda\ne-3$ and $\lambda\ne-4$.
Then it follows from \eqref{equation:r3-n8-base-locus} that
the intersection matrix of the curves
$L_{\{x\},\{t\}}$,
$L_{\{y\},\{z\}}$, $L_{\{y\},\{x,t\}}$,
$L_{\{y\},\{z\}}$, $L_{\{z\},\{t\}}$, $L_{\{z\},\{x,t\}}$,
$L_{\{x\},\{t\}}$ $L_{\{z\},\{t\}}$, $L_{\{t\},\{x,y\}}$, $L_{\{t\},\{y,z\}}$
$\mathcal{C}_1$, and $\mathcal{C}_2$ on the surface~$S_\lambda$ has the same rank as the intersection matrix of the curves
$L_{\{x\},\{t\}}$, $L_{\{y\},\{z\}}$, $L_{\{y\},\{x,t\}}$, $L_{\{t\},\{x,y\}}$, $L_{\{t\},\{y,z\}}$, and $H_{\lambda}$.
The latter matrix is given by
\begin{center}\renewcommand\arraystretch{1.42}
\begin{tabular}{|c||c|c|c|c|c|c|}
\hline
 $\bullet$  & $L_{\{x\},\{t\}}$ & $L_{\{y\},\{z\}}$ & $L_{\{y\},\{x,t\}}$ & $L_{\{t\},\{x,y\}}$ & $L_{\{t\},\{y,z\}}$ &  $H_{\lambda}$ \\
\hline\hline
$L_{\{x\},\{t\}}$ & $-\frac{1}{2}$ & $0$ & $\frac{1}{4}$ & $\frac{1}{4}$ & $0$ & $1$ \\
\hline
$L_{\{y\},\{z\}}$ & $0$ & $-\frac{2}{3}$ & $\frac{2}{3}$ & $0$ & $\frac{1}{3}$ & $1$ \\
\hline
$L_{\{y\},\{x,t\}}$ & $\frac{1}{4}$ & $\frac{2}{3}$ & $-\frac{7}{12}$ & $\frac{3}{4}$ & $0$ & $1$ \\
\hline
$L_{\{t\},\{x,y\}}$ & $\frac{1}{4}$ & $0$ & $\frac{3}{4}$ & $-\frac{3}{4}$ & $1$ & $1$ \\
\hline
$L_{\{t\},\{y,z\}}$ & $0$ & $\frac{1}{3}$ & $0$ & $1$ & $-\frac{4}{3}$ & $1$ \\
\hline
 $H_{\lambda}$  & $1$ & $1$ & $1$ & $1$ & $1$ & $4$ \\
\hline
\end{tabular}
\end{center}
This matrix has rank~$6$, and $\mathrm{rk}\,\mathrm{Pic}(\widetilde{S}_{\Bbbk})=\mathrm{rk}\,\mathrm{Pic}(S_{\Bbbk})+11$.
Hence, we see that \eqref{equation:main-2-simple} holds,
so that \eqref{equation:main-2} in Main Theorem also holds by Lemma~\ref{lemma:cokernel}.

\subsection{Family \textnumero $3.9$}
\label{section:r-3-n-9}

In this case, the threefold $X$ is a blow up of a cone over a Veronese surface in $\mathbb{P}^5$
in a disjoint union of the vertex and a smooth curve of genus $3$.
Thus, we have $h^{1,2}(X)=3$.
A~toric Landau--Ginzburg model of this family is given by
$$
x+y+z+\frac{x^2}{yz}+\frac{y}{x}+\frac{z}{x}\frac{2x}{yz}+\frac{1}{x}+\frac{1}{yz},
$$
which is the polynomial \textnumero $373$.
The pencil $\mathcal{S}$ is given by
$$
x^2yz+xy^2z+xyz^2+x^3t+y^2zt+yz^2t+2x^2t^2+yzt^2+xt^3=\lambda xyzt.
$$
As usual, we assume that $\lambda\ne\infty$.

If $\lambda\ne-2$, then the surface $S_\lambda$ has isolated singularities, so that it is irreducible.
But
$$
S_{-2}=H_{\{x,t\}}+\mathbf{S},
$$
where $\mathbf{S}$ is an irreducible cubic surface that is given by $xt^2+x^2t+yzt+y^2z+yz^2+xyz=0$.
The surface has $\mathbf{S}$ isolated singularities, and $H_{\{x,t\}}\cdot\mathbf{S}=L_{\{y\},\{x,t\}}+L_{\{z\},\{x,t\}}+L_{\{x,t\},\{y,z\}}$.

To describe the base locus of the pencil $\mathcal{S}$, we observe that
\begin{itemize}
\item $H_{\{x\}}\cdot S_\lambda=L_{\{x\},\{y\}}+L_{\{x\},\{z\}}+L_{\{x\},\{t\}}+L_{\{x\},\{y,z,t\}}$,
\item $H_{\{y\}}\cdot S_\lambda=L_{\{x\},\{y\}}+L_{\{y\},\{t\}}+2L_{\{y\},\{x,t\}}$,
\item $H_{\{z\}}\cdot S_\lambda=L_{\{x\},\{z\}}+L_{\{z\},\{t\}}+2L_{\{z\},\{x,t\}}$,
\item $H_{\{t\}}\cdot S_\lambda=L_{\{x\},\{t\}}+L_{\{y\},\{t\}}+L_{\{z\},\{t\}}+L_{\{t\},\{x,y,z\}}$.
\end{itemize}
Therefore, the lines $L_{\{x\},\{z\}}$,
$L_{\{x\},\{y\}}$, $L_{\{y\},\{t\}}$, $L_{\{z\},\{t\}}$, $L_{\{z\},\{x,t\}}$, $L_{\{y\},\{x,t\}}$,,
$L_{\{x\},\{t\}}$, $L_{\{y\},\{t\}}$, $L_{\{t\},\{x,y,z\}}$ are all base curves of the pencil $\mathcal{S}$.

If $\lambda\ne -2$, then the singular points of the surface $S_\lambda$ contained in the base locus of the pencil $\mathcal{S}$
can be described as follows:
\begin{itemize}\setlength{\itemindent}{3cm}
\item[$P_{\{x\},\{z\},\{t\}}$:] type $\mathbb{A}_5$ with quadratic term $z(x+t)$;
\item[$P_{\{x\},\{y\},\{t\}}$:] type $\mathbb{A}_5$ with quadratic term $z(y+t)$;
\item[$P_{\{x\},\{t\},\{y,z\}}$:] type $\mathbb{A}_1$ with quadratic term $x^2+xy+xz+yt+zt+t^2+\lambda xt$;
\item[$P_{\{y\},\{z\},\{x,t\}}$:] type $\mathbb{A}_1$ with quadratic term $(\lambda+2)yz-(x+t)^2$.
\end{itemize}
Thus, it follows from Corollary~\ref{corollary:irreducible-fibers} that the fiber $\mathsf{f}^{-1}(\lambda)$ is irreducible for every $\lambda\ne -2$.

Note that the surface $S_{-2}$ consists of two irreducible components,
and it is singular along the lines $L_{\{y\},\{x,t\}}$ and $L_{\{z\},\{x,t\}}$.
Thus, it follows from \eqref{equation:equation:number-of-irredubicle-components-refined} and Lemma~\ref{lemma:main} that
$$
\big[\mathsf{f}^{-1}(-2)\big]=4+\mathbf{D}_{P_{\{x\},\{z\},\{t\}}}^{-2}+\mathbf{D}_{P_{\{x\},\{y\},\{t\}}}^{-2}+\mathbf{D}_{P_{\{x\},\{t\},\{y,z\}}}^{-2}+\mathbf{D}_{P_{\{y\},\{z\},\{x,t\}}}^{-2}.
$$
Moreover, the surface $S_{-2}$ has {good} double points at $P_{\{x\},\{z\},\{t\}}$, $P_{\{x\},\{y\},\{t\}}$, $P_{\{x\},\{t\},\{y,z\}}$, and $P_{\{y\},\{z\},\{x,t\}}$.
By Lemma~\ref{lemma:normal-crossing}, this implies
$$
\mathbf{D}_{P_{\{x\},\{z\},\{t\}}}^{-2}=\mathbf{D}_{P_{\{x\},\{y\},\{t\}}}^{-2}=\mathbf{D}_{P_{\{x\},\{t\},\{y,z\}}}^{-2}=\mathbf{D}_{P_{\{y\},\{z\},\{x,t\}}}^{-2}=0,
$$
so that $[\mathsf{f}^{-1}(-2)]=4$.
Hence, we see that \eqref{equation:main-1} in Main Theorem holds in this case.

If $\lambda\ne -2$, then the intersection matrix of the lines
$L_{\{x\},\{z\}}$, $L_{\{x\},\{y\}}$, $L_{\{y\},\{t\}}$, $L_{\{z\},\{t\}}$, $L_{\{z\},\{x,t\}}$, $L_{\{y\},\{x,t\}}$,
$L_{\{x\},\{t\}}$, $L_{\{y\},\{t\}}$, and $L_{\{t\},\{x,y,z\}}$
on the surface $S_\lambda$ has the same rank as the intersection matrix of the curves
$L_{\{x\},\{z\}}$, $L_{\{x\},\{y,z,t\}}$, $L_{\{y\},\{x,t\}}$, $L_{\{z\},\{x,t\}}$, $L_{\{t\},\{x,y,z\}}$, and $H_{\lambda}$.
The latter matrix is given by
\begin{center}\renewcommand\arraystretch{1.42}
\begin{tabular}{|c||c|c|c|c|c|c|}
\hline
 $\bullet$  & $L_{\{x\},\{z\}}$ & $L_{\{x\},\{y,z,t\}}$ & $L_{\{y\},\{x,t\}}$ & $L_{\{z\},\{x,t\}}$ & $L_{\{t\},\{x,y,z\}}$ &  $H_{\lambda}$ \\
\hline\hline
$L_{\{x\},\{z\}}$ & $-\frac{7}{6}$ & $1$ & $0$ & $\frac{2}{3}$ & $0$ & $1$ \\
\hline
$L_{\{x\},\{y,z,t\}}$ & $1$ & $-\frac{3}{2}$ & $0$ & $0$ & $\frac{1}{2}$ & $1$ \\
\hline
$L_{\{y\},\{x,t\}}$ & $0$ & $0$ & $-\frac{1}{6}$ & $\frac{1}{2}$ & $0$ & $1$ \\
\hline
$L_{\{z\},\{x,t\}}$ & $\frac{2}{3}$ & $0$ & $\frac{1}{2}$ & $-\frac{1}{6}$ & $0$ & $1$ \\
\hline
$L_{\{t\},\{x,y,z\}}$ & $0$ & $\frac{1}{2}$ & $0$ & $0$ & $-\frac{3}{2}$ & $1$ \\
\hline
 $H_{\lambda}$  & $1$ & $1$ & $1$ & $1$ & $1$ & $4$ \\
\hline
\end{tabular}
\end{center}
Its determinant vanishes.
The geometric reason for this is the following: if $\lambda\ne -2$, then
$$
H_{\{x,t\}}\cdot S_\lambda=2L_{\{x\},\{t\}}+L_{\{y\},\{x,t\}}+L_{\{y\},\{x,t\}}.
$$
which implies that $2L_{\{x\},\{t\}}+L_{\{y\},\{x,t\}}+L_{\{y\},\{x,t\}}\sim H_{\lambda}$ on the surface $S_\lambda$.
In fact, one can check that the rank of this matrix is $5$.
Moreover, we have $\mathrm{rk}\,\mathrm{Pic}(\widetilde{S}_{\Bbbk})=\mathrm{rk}\,\mathrm{Pic}(S_{\Bbbk})+12$.
Hence, we see that \eqref{equation:main-2-simple} holds, so that \eqref{equation:main-2} in Main Theorem also holds by Lemma~\ref{lemma:cokernel}.

\subsection{Family \textnumero $3.10$}
\label{section:r-3-n-10}

In this case, the threefold $X$ is a blow up of a smooth quadric hypersurface in $\mathbb{P}^4$ along a disjoint union of two irreducible conics.
Thus, we have $h^{1,2}(X)=0$.
A~toric Landau--Ginzburg model i given by the Laurent polynomial
$$
\frac{z}{y}+x+\frac{1}{y}+z+\frac{z}{xy}+\frac{x}{z}+\frac{z}{x}+\frac{xy}{z}+\frac{1}{z}+y+\frac{1}{x}.
$$
which is the Minkowski polynomial \textnumero $1112$.
The pencil $\mathcal{S}$ is given by
$$
z^2tx+x^2yz+t^2zx+z^2yx+t^2z^2+x^2yt+z^2yt+x^2y^2+t^2yx+y^2zx+t^2yz=\lambda xyzt.
$$

If $\lambda\ne\infty$, then $S_\lambda$ has isolated singularities, so that, in particular, it is irreducible.

To describe the base locus of the pencil $\mathcal{S}$, we observe that
\begin{equation}
\label{equation:r3-n10-base-locus}
\begin{split}
H_{\{x\}}\cdot S_\lambda&=L_{\{x\},\{z\}}+L_{\{x\},\{t\}}+\mathcal{C}_1,\\
H_{\{y\}}\cdot S_\lambda&=L_{\{y\},\{z\}}+L_{\{y\},\{t\}}+\mathcal{C}_2,\\
H_{\{z\}}\cdot S_\lambda&=L_{\{x\},\{z\}}+L_{\{y\},\{z\}}+\mathcal{C}_3,\\
H_{\{t\}}\cdot S_\lambda&=L_{\{x\},\{t\}}+L_{\{y\},\{t\}}+L_{\{t\},\{x,z\}}+L_{\{t\},\{y,z\}},
\end{split}
\end{equation}
where $\mathcal{C}_1$ is the conic $\{x=ty+tz+yz=0\}$,
the curve $\mathcal{C}_2$ is the conic $\{y=tx+tz+xz=0\}$, and $\mathcal{C}_3$ is the conic $\{z=t^2+tx+xy=0\}$.
Thus, the curves $L_{\{x\},\{z\}}$, $L_{\{x\},\{t\}}$, $L_{\{y\},\{z\}}$, $L_{\{y\},\{t\}}$,
$L_{\{t\},\{x,z\}}$, $L_{\{t\},\{y,z\}}$, $\mathcal{C}_1$, $\mathcal{C}_2$, and $\mathcal{C}_3$
are all base curves of the pencil $\mathcal{S}$.

\begin{lemma}
\label{lemma:r3-n10-singular-points}
Suppose that $\lambda\ne\infty$.
Then the singular points of the surface $S_\lambda$ contained in the base locus can be described as follows:
\begin{itemize}\setlength{\itemindent}{3cm}
\item[$P_{\{y\},\{z\},\{t\}}$:] type $\mathbb{A}_3$ for $\lambda\ne -4$, type $\mathbb{A}_4$ for $\lambda=-4$;

\item[$P_{\{x\},\{z\},\{t\}}$:] type $\mathbb{A}_4$ for $\lambda\ne -2$, type $\mathbb{A}_6$ for $\lambda=-2$;

\item[$P_{\{x\},\{y\},\{t\}}$:] type $\mathbb{A}_2$ for $\lambda\ne -4$, type $\mathbb{A}_3$ for $\lambda=-4$;

\item[$P_{\{x\},\{y\},\{z\}}$:] type $\mathbb{A}_2$ for $\lambda\ne -3$, type $\mathbb{A}_4$ for $\lambda=-3$;

\item[$P_{\{t\},\{x,z\},\{y,z\}}$:] smooth for $\lambda\ne -3$, type $\mathbb{A}_2$ for $\lambda=-3$.
\end{itemize}
\end{lemma}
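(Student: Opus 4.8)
The plan is to follow the same local, chart-by-chart strategy already used in the proofs of Lemmas~\ref{lemma:r2-n4-singularities}, \ref{lemma:r2-n5-singularities}, \ref{lemma:r2-n7-singularities}, \ref{lemma:r2-n8-singularities} and \ref{lemma:r2-n9-singularities}. First I would list the candidate singular points, which are exactly the pairwise intersection points of the base curves $L_{\{x\},\{z\}}$, $L_{\{x\},\{t\}}$, $L_{\{y\},\{z\}}$, $L_{\{y\},\{t\}}$, $L_{\{t\},\{x,z\}}$, $L_{\{t\},\{y,z\}}$, $\mathcal{C}_1$, $\mathcal{C}_2$, $\mathcal{C}_3$ read off from \eqref{equation:r3-n10-base-locus}; one checks that, apart from points at which $S_\lambda$ is smooth, these are the four coordinate points $P_{\{y\},\{z\},\{t\}}$, $P_{\{x\},\{z\},\{t\}}$, $P_{\{x\},\{y\},\{t\}}$, $P_{\{x\},\{y\},\{z\}}$ together with $P_{\{t\},\{x,z\},\{y,z\}}$, and that $S_\lambda$ is smooth at every other point of the base locus. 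For each such point I would pass to the affine chart in which the relevant coordinate equals $1$, translate the point to the origin, and expand the defining polynomial of $S_\lambda$ to read off its quadratic term.

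The classification then proceeds by the rank of the quadratic part together with the vanishing of the gradient. A point with nonzero gradient is smooth, which is the generic behaviour of $P_{\{t\},\{x,z\},\{y,z\}}$; a rank-$3$ quadratic term gives an ordinary double point of type $\mathbb{A}_1$; and a rank-$2$ quadratic term, factoring as a product of two distinct linear forms, places the point in the $\mathbb{A}_n$ series with $n\geq 2$. In the present family every singular point occurring in the base locus falls into this last case, so for each of them I would determine the precise $n$ by the standard resolution loop: blow up the point, restrict to the chart containing the unique node of the strict transform, and iterate until the surface becomes smooth, the number of blow ups giving the type; equivalently, the adjunction and intersection bookkeeping of Appendix~\ref{section:intersection} can be applied. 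Throughout, $\lambda$ is kept as a symbolic parameter, so that the coefficient coincidences producing the jumps at $\lambda\in\{-2,-3,-4\}$ appear as the vanishing of explicit polynomials in $\lambda$.

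The main obstacle is the point $P_{\{x\},\{z\},\{t\}}$, whose resolution chain is the longest: generically $\mathbb{A}_4$, but $\mathbb{A}_6$ when $\lambda=-2$. Here several blow ups must be performed in succession, and at each stage the $\lambda$-dependence of the leading coefficient of the strict transform must be tracked, since it is precisely the vanishing of one of these coefficients at $\lambda=-2$ that lengthens the chain. The analogous but shorter computations at $P_{\{y\},\{z\},\{t\}}$ (jump $\mathbb{A}_3\to\mathbb{A}_4$ at $\lambda=-4$), $P_{\{x\},\{y\},\{t\}}$ (jump $\mathbb{A}_2\to\mathbb{A}_3$ at $\lambda=-4$), $P_{\{x\},\{y\},\{z\}}$ (jump $\mathbb{A}_2\to\mathbb{A}_4$ at $\lambda=-3$), and $P_{\{t\},\{x,z\},\{y,z\}}$ (smooth $\to\mathbb{A}_2$ at $\lambda=-3$) are carried out in the same way. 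Once all five points are classified, the lemma follows, and the resolution data obtained along the way feeds directly into the subsequent computation of $\mathrm{rk}\,\mathrm{Pic}(\widetilde{S}_{\Bbbk})-\mathrm{rk}\,\mathrm{Pic}(S_{\Bbbk})$ and into the application of Corollary~\ref{corollary:irreducible-fibers} giving \eqref{equation:main-1} in Main Theorem, since $h^{1,2}(X)=0$.
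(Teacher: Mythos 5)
Your plan coincides with the paper's own proof: the paper likewise finds the singular points by taking partial derivatives, passes to an affine chart at each one, reads off the quadratic term, and performs the iterated blow-ups with $\lambda$ kept symbolic so that the jumps at $\lambda\in\{-2,-3,-4\}$ appear as vanishing coefficients (it writes this out in full only for $P_{\{y\},\{z\},\{t\}}$ and leaves the remaining points to the reader). One small caution: the number of blow-ups alone does not distinguish $\mathbb{A}_{2k-1}$ from $\mathbb{A}_{2k}$, so, as in the paper's treatment of the $\lambda=-4$ case via the weighted leading term $\tilde{y}\tilde{z}+\tilde{t}^{3}$, you must also record whether the last residual singularity is an ordinary node or an $\mathbb{A}_2$ point — which your tracking of the leading coefficients of the strict transforms indeed accomplishes.
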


\begin{proof}
Taking partial derivatives, we see that $P_{\{y\},\{z\},\{t\}}$,
$P_{\{x\},\{z\},\{t\}}$, $P_{\{x\},\{y\},\{t\}}$, and $P_{\{x\},\{y\},\{z\}}$ are the singular points of the surface $S_\lambda$.
Moreover, if $\lambda\ne -3$, then these points are the only singular points of the surface $S_\lambda$
that are contained in the base locus of the pencil $\mathcal{S}$.
If $\lambda=-3$, then $P_{\{t\},\{x,z\},\{y,z\}}$ is also a singular point of the surface $S_\lambda$.
In this case, the surface $S_\lambda$ does not have other singular points which are contained in the base locus of the pencil $\mathcal{S}$.

In the chart $x=1$, the surface $S_\lambda$ is given by the equation
$$
y(y+z+t)+y^2z+yz^2-\lambda tyz+t^2y+t^2z+tz^2+t^2yz+t^2z^2+tyz^2=0.
$$
Introducing coordinates $\bar{y}=y$, $\bar{z}=z$, and $\bar{t}=t+y+z$, we can rewrite this equation as
\begin{multline*}
\bar{t}\bar{y}+\bar{t}^2\bar{y}+\bar{t}^2\bar{z}-2\bar{t}\bar{y}^2-(\lambda+4)\bar{z}\bar{t}\bar{y}-\bar{z}^2\bar{t}+\bar{y}^3+(\lambda+4)\bar{z}\bar{y}^2+(\lambda+3)\bar{z}^2\bar{y}+\\
+\bar{t}^2\bar{y}\bar{z}+\bar{t}^2\bar{z}^2-2\bar{t}\bar{y}^2\bar{z}-3\bar{t}\bar{y}\bar{z}^2-2\bar{t}\bar{z}^3+\bar{y}^3\bar{z}+2\bar{y}^2\bar{z}^2+2\bar{y}\bar{z}^3+\bar{z}^4=0.
\end{multline*}
Here, we have $P_{\{y\},\{z\},\{t\}}=(0,0,0)$.
Let us blow up this point.

Let $\hat{z}=z$, $\hat{y}=\frac{y}{z}$, $\hat{t}=\frac{t}{z}$.
We can rewrite the latter equation (after dividing by $\hat{z}^2$) as
\begin{multline*}
\hat{t}\hat{y}-\hat{t}\hat{z}+(\lambda+3)\hat{y}\hat{z}+\hat{z}^2+\Big(\hat{t}^2\hat{z}-(\lambda+4)\hat{z}\hat{t}\hat{y}-2\hat{z}^2\hat{t}+(\lambda+4)\hat{z}\hat{y}^2+2\hat{z}^2\hat{y}\Big)+\\
+\Big(\hat{t}^2\hat{y}\hat{z}+\hat{t}^2\hat{z}^2-2\hat{t}\hat{y}^2\hat{z}-3\hat{t}\hat{y}\hat{z}^2+\hat{y}^3\hat{z}+2\hat{y}^2\hat{z}^2\Big)+\Big(\hat{t}^2\hat{y}\hat{z}^2-2\hat{t}\hat{y}^2\hat{z}^2+\hat{y}^3\hat{z}^2\Big)=0.
\end{multline*}
This equation defines (a chart of) the blow up of the surface $S_\lambda$ at $P_{\{y\},\{z\},\{t\}}$.
The two exceptional curves of the blow up are given by the equations $\hat{z}=\hat{t}=0$ and $\hat{z}=\hat{y}=0$.
They intersect at the point $(0,0,0)$, which is singular point of the obtained surface.

If $\lambda\ne 4$, then $\hat{t}\hat{y}-\hat{t}\hat{z}+(\lambda+3)\hat{y}\hat{z}+\hat{z}^2$ is non-degenerate,
so that $P_{\{y\},\{z\},\{t\}}$ is a singular point of the surface $S_\lambda$ of type $\mathbb{A}_3$.
If $\lambda=4$, then this form splits as $(\hat{y}-\hat{z})(\hat{t}-\hat{z})$.
In this case, introducing new coordinates $\tilde{y}=\hat{t}-\hat{z}$,
$\tilde{z}=\hat{y}-\hat{z}$, and $\tilde{t}=\hat{t}$, we rewrite the latter equation (with $\lambda=-4)$ as
$$
\tilde{y}\tilde{z}+\tilde{t}^3+\text{higher order terms}=0,
$$
where we order monomials with respect to weights $\mathrm{wt}(\tilde{y})=3$, $\mathrm{wt}(\tilde{z})=3$, and $\mathrm{wt}(\tilde{t})=2$.
We see that this point is a singular point of type $\mathbb{A}_2$.
Therefore, if $\lambda=-4$, then $P_{\{y\},\{z\},\{t\}}$ is a singular point of the surface $S_\lambda$ of type $\mathbb{A}_4$.

We leave the proofs of the remaining assertions of the lemma to the reader.
\end{proof}

Thus, it follows from Corollary~\ref{corollary:irreducible-fibers} that the fiber $\mathsf{f}^{-1}(\lambda)$ is irreducible for every $\lambda\ne\infty$.
This implies \eqref{equation:main-1} in Main Theorem.
To prove \eqref{equation:main-2} in Main Theorem, we need the following.

\begin{lemma}
\label{lemma:r3-n10-intersection}
Suppose that $\lambda\not\in\{-2,-3,-4,\infty\}$.
Then the intersection matrix of the curves  $L_{\{x\},\{z\}}$, $L_{\{x\},\{t\}}$, $L_{\{y\},\{z\}}$, $L_{\{y\},\{t\}}$, $L_{\{t\},\{x,z\}}$, and $H_{\lambda}$ on the surface $S_\lambda$ is given by
\begin{center}\renewcommand\arraystretch{1.42}
\begin{tabular}{|c||c|c|c|c|c|c|}
\hline
 $\bullet$  & $L_{\{x\},\{z\}}$  & $L_{\{x\},\{t\}}$  & $L_{\{y\},\{z\}}$  & $L_{\{y\},\{t\}}$  & $L_{\{t\},\{x,z\}}$  & $H_{\lambda}$\\
\hline\hline
$L_{\{x\},\{z\}}$ &  $-\frac{2}{15}$ & $\frac{2}{5}$ & $\frac{1}{3}$ & $0$ & $\frac{3}{5}$ & $1$ \\
\hline
$L_{\{x\},\{t\}}$ &  $\frac{2}{5}$ & $-\frac{8}{5}$ & $0$ & $\frac{1}{3}$ & $\frac{1}{5}$ & $1$ \\
\hline
$L_{\{y\},\{z\}}$ &  $\frac{1}{3}$ & $0$ & $-\frac{7}{2}$ & $\frac{3}{4}$ & $0$ & $1$ \\
\hline
$L_{\{y\},\{t\}}$ &  $0$ & $\frac{1}{3}$ & $\frac{3}{4}$ & $-\frac{7}{12}$ & $1$ & $1$ \\
\hline
$L_{\{t\},\{x,z\}}$ &  $\frac{3}{5}$ & $\frac{1}{5}$ & $0$ & $1$ & $-\frac{6}{5}$ & $1$ \\
\hline
 $H_{\lambda}$  & $1$ & $1$ & $1$ & $1$ & $1$ & $4$ \\
\hline
\end{tabular}
\end{center}
\end{lemma}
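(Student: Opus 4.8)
The plan is to argue exactly as in the proofs of Lemmas~\ref{lemma:r2-n20-intersection} and~\ref{lemma:r3-n5-intersection}, feeding the singularity data of Lemma~\ref{lemma:r3-n10-singular-points} into the intersection theory developed in Appendix~\ref{section:intersection}. Throughout I assume $\lambda\notin\{-2,-3,-4,\infty\}$, so that the base points $P_{\{y\},\{z\},\{t\}}$, $P_{\{x\},\{z\},\{t\}}$, $P_{\{x\},\{y\},\{t\}}$, $P_{\{x\},\{y\},\{z\}}$ are du Val singular points of $S_\lambda$ of types $\mathbb{A}_3$, $\mathbb{A}_4$, $\mathbb{A}_2$, $\mathbb{A}_2$ respectively, while $P_{\{t\},\{x,z\},\{y,z\}}$ is a smooth point of $S_\lambda$. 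The last row and column are immediate: $H_\lambda^2=4$, and $H_\lambda\cdot C=1$ for each of the five lines $C$, since each is a line in $\mathbb{P}^3$ and $H_\lambda$ is a general hyperplane section.

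First I would record the incidences between the lines and the singular points: $L_{\{x\},\{z\}}$ passes through $P_{\{x\},\{z\},\{t\}}$ and $P_{\{x\},\{y\},\{z\}}$; $L_{\{x\},\{t\}}$ through $P_{\{x\},\{z\},\{t\}}$ and $P_{\{x\},\{y\},\{t\}}$; $L_{\{y\},\{z\}}$ through $P_{\{x\},\{y\},\{z\}}$ and $P_{\{y\},\{z\},\{t\}}$; $L_{\{y\},\{t\}}$ through $P_{\{x\},\{y\},\{t\}}$ and $P_{\{y\},\{z\},\{t\}}$; and $L_{\{t\},\{x,z\}}$ through $P_{\{x\},\{z\},\{t\}}$ only (the point $P_{\{t\},\{x,z\},\{y,z\}}$ on it being smooth). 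With this table the diagonal entries follow from Proposition~\ref{proposition:du-Val-self-intersection}, after Remark~\ref{remark:transversal} is applied at each relevant point to decide which exceptional components of the minimal resolution the strict transform of the line meets. For the off-diagonal entries there are two cases: when two of the lines are disjoint in $\mathbb{P}^3$ (for instance $L_{\{x\},\{z\}}\cap L_{\{y\},\{t\}}=\varnothing$ and $L_{\{x\},\{t\}}\cap L_{\{y\},\{z\}}=\varnothing$) the corresponding entry is $0$; otherwise the two lines meet at one of the singular points, and the intersection number is given by Proposition~\ref{proposition:du-Val-intersection}, once Remark~\ref{remark:transversal} has identified the exceptional curves met by each strict transform.

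The step requiring genuine care is the $\mathbb{A}_4$ point $P_{\{x\},\{z\},\{t\}}$, which lies on three of the five lines, namely $L_{\{x\},\{z\}}$, $L_{\{x\},\{t\}}$ and $L_{\{t\},\{x,z\}}$. At such a point Remark~\ref{remark:transversal} typically fixes the intersection data only up to the symmetry of the $\mathbb{A}_4$ chain, so I expect to resolve the residual ambiguity either by a short local blow-up computation at this point (of the kind already carried out in the proof of Lemma~\ref{lemma:r3-n10-singular-points}) or, more economically, by intersecting the rational equivalences coming from \eqref{equation:r3-n10-base-locus} with the curves in question. Concretely, the relations $L_{\{x\},\{z\}}+L_{\{x\},\{t\}}+\mathcal{C}_1\sim H_\lambda$ and $L_{\{x\},\{t\}}+L_{\{y\},\{t\}}+L_{\{t\},\{x,z\}}+L_{\{t\},\{y,z\}}\sim H_\lambda$ impose linear constraints that tie the ambiguous entries to already-computed ones; this is precisely the bookkeeping device used to pin down the fractional entries in Lemma~\ref{lemma:r3-n5-intersection}. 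Once every entry has been fixed in this way, the stated matrix is assembled, and it only remains (a routine determinant computation that I would not write out) to read off its rank for use in verifying \eqref{equation:main-2-simple}.
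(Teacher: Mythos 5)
Your plan is correct and follows the paper's own argument essentially verbatim: incidences of the five lines with the du Val points of Lemma~\ref{lemma:r3-n10-singular-points}, Propositions~\ref{proposition:du-Val-intersection} and~\ref{proposition:du-Val-self-intersection} for the entries, zeroes for disjoint lines, and resolution of the ambiguity at the $\mathbb{A}_4$ point $P_{\{x\},\{z\},\{t\}}$ (where $L_{\{x\},\{z\}}$, $L_{\{x\},\{t\}}$ and $L_{\{t\},\{x,z\}}$ all meet) by combining a local check of the quadratic term with the linear equivalences from \eqref{equation:r3-n10-base-locus} — exactly the device the paper uses. The only case your dichotomy omits is two lines meeting at a \emph{smooth} point of $S_\lambda$ (e.g.\ $L_{\{y\},\{t\}}\cap L_{\{t\},\{x,z\}}=[1:0:-1:0]$, giving the entry $1$), which is trivial and does not affect the argument.
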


\begin{proof}
Lets us show how to compute the diagonal entries of the intersection table.
To start with, let us compute $L_{\{x\},\{z\}}^2$.
Observe that $P_{\{x\},\{z\},\{t\}}$ and $P_{\{x\},\{y\},\{z\}}$ are the only singular points of the surface $S_\lambda$ that are contained in $L_{\{x\},\{z\}}$.
Thus, by Proposition~\ref{proposition:du-Val-self-intersection}, one has
$L_{\{x\},\{z\}}^2=-2+\frac{2}{3}+\frac{k}{5}$,
where either $k=4$ or $k=6$. In fact, we have $k=6$ here.
Indeed, let us use the notation of Remark~\ref{remark:transversal} with $S=S_\lambda$, $O=P_{\{x\},\{z\},\{t\}}$, $n=4$, $C=L_{\{x\},\{z\}}$.
In the chart $y=1$, the surface~$S_\lambda$ is given by
$$
x(x+z)+\text{higher order terms}=0,
$$
and $L_{\{x\},\{z\}}$ is given by $x=z=0$.
This shows that $\overline{C}$ contains the point $\overline{G}_1\cap\overline{G}_4$.
Thus, either $\widetilde{C}\cap G_2\ne\varnothing$ or $\widetilde{C}\cap G_3\ne\varnothing$.
In both cases, we have $k=6$ by Proposition~\ref{proposition:du-Val-self-intersection}.
Thus, we have $L_{\{x\},\{z\}}^2=-\frac{2}{15}$.

Similarly, it follows from Proposition~\ref{proposition:du-Val-self-intersection} that $L_{\{x\},\{t\}}^2=-\frac{8}{15}$,
because $P_{\{x\},\{z\},\{t\}}$ and $P_{\{x\},\{y\},\{t\}}$ are the only singular points of the surface $S_\lambda$ contained in $L_{\{x\},\{t\}}$.
Likewise, we see that $L_{\{t\},\{x,z\}}^2=-\frac{6}{5}$,
because $P_{\{x\},\{z\},\{t\}}$ is the only singular point of the surface $S_\lambda$ that is contained in $L_{\{t\},\{x,z\}}$.
Using Proposition~\ref{proposition:du-Val-self-intersection} again, we get
$L_{\{y\},\{t\}}^2=L_{\{y\},\{z\}}^2=-\frac{7}{12}$.

Now let us compute the remaining entries of the first raw in the intersection table.
Since $L_{\{x\},\{z\}}\cap L_{\{y\},\{t\}}=\varnothing$, we have $L_{\{x\},\{z\}}\cdot L_{\{y\},\{t\}}=0$.
To compute $L_{\{x\},\{z\}}\cdot L_{\{y\},\{z\}}$, observe that $L_{\{x\},\{z\}}\cap L_{\{y\},\{z\}}=P_{\{x\},\{y\},\{t\}}$.
In the chart $t=1$, the surface $S_\lambda$ is given by
$$
(x+z)(z+y)+\text{higher order terms}=0.
$$
Thus, using Proposition~\ref{proposition:du-Val-intersection} and Remark~\ref{remark:transversal} with $S=S_\lambda$, $O=P_{\{y\},\{z\},\{t\}}$, $n=2$,
$C=L_{\{x\},\{z\}}$, and $Z=L_{\{y\},\{z\}}$, we see that $L_{\{x\},\{z\}}\cdot L_{\{y\},\{z\}}=\frac{1}{3}$.

To find $L_{\{x\},\{z\}}\cdot L_{\{x\},\{t\}}$ and $L_{\{x\},\{z\}}\cdot L_{\{t\},\{y,z\}}$,
we notice that
$$
L_{\{x\},\{z\}}\cap L_{\{x\},\{t\}}=L_{\{x\},\{z\}}\cap L_{\{t\},\{y,z\}}=P_{\{x\},\{z\},\{t\}}.
$$
Let us use the notation of Remark~\ref{remark:transversal} with $O=P_{\{x\},\{z\},\{t\}}$, $n=4$, $C=L_{\{x\},\{t\}}$, and $Z=L_{\{t\},\{y,z\}}$.
Keeping in mind the equation of the surface $S_\lambda$ in the chart $y=1$,
we see that neither $\overline{C}$ nor $\overline{Z}$ contains the point $\overline{G}_1\cap\overline{G}_4$.
By Proposition~\ref{proposition:du-Val-intersection}, this implies, in particular, that $L_{\{x\},\{t\}}\cdot L_{\{t\},\{y,z\}}=\frac{1}{5}$.
On the other hand, we already checked above that the proper transform of the line $L_{\{x\},\{z\}}$
on the surface $\overline{S}$ does contain the point $\overline{G}_1\cap\overline{G}_4$.
This implies that $L_{\{x\},\{z\}}\cdot L_{\{x\},\{t\}}$ and $L_{\{x\},\{z\}}\cdot L_{\{t\},\{y,z\}}$ are among $\frac{2}{5}$ and $\frac{3}{5}$.
Moreover, one has
\begin{multline*}
1=H_{\{t\}}\cdot L_{\{x\},\{z\}}=\Big(L_{\{x\},\{t\}}+L_{\{y\},\{t\}}+L_{\{t\},\{x,z\}}+L_{\{t\},\{y,z\}}\Big)\cdot L_{\{x\},\{z\}}=\\
=L_{\{x\},\{t\}}\cdot L_{\{x\},\{z\}}+L_{\{y\},\{t\}}\cdot L_{\{x\},\{z\}}+L_{\{t\},\{x,z\}}\cdot L_{\{x\},\{z\}}+L_{\{t\},\{y,z\}}\cdot L_{\{x\},\{z\}}=\\
=L_{\{x\},\{t\}}\cdot L_{\{x\},\{z\}}+L_{\{t\},\{y,z\}}\cdot L_{\{x\},\{z\}},
\end{multline*}
because $L_{\{y\},\{t\}}\cdot L_{\{x\},\{z\}}=0$ and $L_{\{t\},\{x,z\}}\cdot L_{\{x\},\{z\}}=0$.
Similarly, we have
\begin{multline*}
H_{\{x\}}\cdot L_{\{x\},\{t\}}=\Big(L_{\{x\},\{z\}}+L_{\{x\},\{t\}}+C_1\Big)\cdot L_{\{x\},\{t\}}=\\
=L_{\{x\},\{z\}}\cdot L_{\{x\},\{t\}}+L_{\{x\},\{t\}}^2+C_1\cdot L_{\{x\},\{t\}}=L_{\{x\},\{z\}}\cdot L_{\{x\},\{t\}}-\frac{8}{5}+C_1\cdot L_{\{x\},\{t\}}.
\end{multline*}
Moreover, we have $C_1\cap L_{\{x\},\{t\}}=P_{\{x\},\{z\},\{t\}}\cup P_{\{x\},\{y\},\{t\}}$.
Thus, applying Proposition~\ref{proposition:du-Val-intersection} and Remark~\ref{remark:transversal}, we see that
$C_1\cdot L_{\{x\},\{t\}}=\frac{1}{3}+\frac{4}{5}=\frac{17}{15}$,
so  that $L_{\{x\},\{z\}}\cdot L_{\{x\},\{t\}}=\frac{2}{5}$.
Thus, we have $L_{\{x\},\{z\}}\cdot L_{\{t\},\{y,z\}}=\frac{3}{5}$.

To compute the remaining entries of the second raw in the intersection table,
we have to find $L_{\{x\},\{t\}}\cdot L_{\{y\},\{z\}}$ and $L_{\{x\},\{t\}}\cdot L_{\{y\},\{t\}}$.
But $L_{\{x\},\{t\}}\cap L_{\{y\},\{z\}}=\varnothing$, so that $L_{\{x\},\{t\}}\cdot L_{\{y\},\{z\}}=0$.
Moreover, we have $L_{\{x\},\{t\}}\cap L_{\{y\},\{t\}}=P_{\{x\},\{y\},\{t\}}$,
so that $L_{\{x\},\{t\}}\cdot L_{\{y\},\{t\}}=\frac{1}{3}$ by Proposition~\ref{proposition:du-Val-intersection}.

To complete the proof of the lemma, we have to find $L_{\{y\},\{z\}}\cdot L_{\{y\},\{t\}}$,
$L_{\{y\},\{z\}}\cdot L_{\{t\},\{x,z\}}$, and $L_{\{y\},\{t\}}\cdot L_{\{t\},\{x,z\}}$.
Since $L_{\{y\},\{z\}}\cap L_{\{t\},\{x,z\}}=\varnothing$, we have $L_{\{y\},\{z\}}\cdot L_{\{t\},\{x,z\}}=0$.
Similarly, we have  $L_{\{y\},\{t\}}\cdot L_{\{t\},\{x,z\}}=1$, since
$L_{\{y\},\{t\}}\cap L_{\{t\},\{x,z\}}=P_{\{y\},\{z\},\{x,t\}}$ and the surface~$S_\lambda$ is smooth at the point $[1:0:-1:0]$.
Finally, observe that $L_{\{y\},\{z\}}\cdot L_{\{y\},\{t\}}=\frac{3}{4}$ by Proposition~\ref{proposition:du-Val-intersection},
since $L_{\{y\},\{z\}}\cap L_{\{y\},\{t\}}=P_{\{y\},\{z\},\{t\}}$.
\end{proof}

If $\lambda\not\in\{-2,-3,-4,\infty\}$, then it follows from \eqref{equation:r3-n10-base-locus} that
the intersection matrix of the curves $L_{\{x\},\{z\}}$, $L_{\{x\},\{t\}}$, $L_{\{y\},\{z\}}$, $L_{\{y\},\{t\}}$,
$L_{\{t\},\{x,z\}}$, $L_{\{t\},\{y,z\}}$, $\mathcal{C}_1$, $\mathcal{C}_2$, and $\mathcal{C}_3$
on the surface $S_\lambda$ has the same rank as the intersection matrix of the curves
$L_{\{x\},\{z\}}$, $L_{\{x\},\{t\}}$, $L_{\{y\},\{z\}}$, $L_{\{y\},\{t\}}$, $L_{\{t\},\{x,z\}}$, and $H_{\lambda}$.
On the other hand, the determinant of the matrix in Lemma~\ref{lemma:r3-n10-intersection} is $-\frac{2}{9}$,
and $\mathrm{rk}\,\mathrm{Pic}(\widetilde{S}_{\Bbbk})=\mathrm{rk}\,\mathrm{Pic}(S_{\Bbbk})+11$.
Hence, we see that \eqref{equation:main-2-simple} holds,
so that \eqref{equation:main-2} in Main Theorem also holds by Lemma~\ref{lemma:cokernel}.

\subsection{Family \textnumero $3.11$}
\label{section:r-3-n-11}

The threefold $X$ can be obtained from $\mathbb{P}^3$ by blowing up a disjoint union of a point and a smooth elliptic curve.
We discussed this case in Example~\ref{example:r-3-n-11},
where we described the pencil $\mathcal{S}$ and its base locus.
Let us use the notation introduced in this example.
As usual, we assume that $\lambda\ne\infty$. Observe that
\begin{equation}
\label{equation:r-3-n-11}
\begin{split}
H_{\{x\}}\cdot S_\lambda&=L_{\{x\},\{t\}}+L_{\{x\},\{z,t\}}+\mathcal{C},\\
H_{\{y\}}\cdot S_\lambda&=L_{\{y\},\{z\}}+L_{\{y\},\{t\}}+L_{\{y\},\{x,t\}}+L_{\{y\},\{z,t\}},\\
H_{\{z\}}\cdot S_\lambda&=2L_{\{y\},\{z\}}+L_{\{z\},\{t\}}+L_{\{z\},\{x,t\}},\\
H_{\{t\}}\cdot S_\lambda&=L_{\{x\},\{t\}}+L_{\{y\},\{t\}}+L_{\{z\},\{t\}}+L_{\{t\},\{x,y,z\}}.
\end{split}
\end{equation}

If $\lambda\ne -2$, then $S_\lambda$ is irreducible, it has isolated singularities,
and its singular points contained in the base locus of the pencil $\mathcal{S}$ can be described as follows:
\begin{itemize}\setlength{\itemindent}{3.5cm}
\item[$P_{\{y\},\{z\},\{t\}}$:] type $\mathbb{A}_4$ with quadratic term $yz$;
\item[$P_{\{x\},\{z\},\{t\}}$:] type $\mathbb{A}_2$ with quadratic term $(x+t)(z+t)$;
\item[$P_{\{x\},\{y\},\{t\}}$:] type $\mathbb{A}_2$ with quadratic term $(x+t)(y+t)$;
\item[$P_{\{x\},\{t\},\{y,z\}}$:] type $\mathbb{A}_1$ with quadratic term $(x+t)(x+y+z-t)-(\lambda+2)xt$;
\item[$P_{\{y\},\{z\},\{x,t\}}$:] type $\mathbb{A}_2$ with quadratic term $z(x+t+(\lambda+2)y)$;
\item[{$[0:1\mp\sqrt{5}:-2:+2]$}:] smooth if $\lambda\ne\frac{-1\pm\sqrt{5}}{2}$, type $\mathbb{A}_1$ if $\lambda=\frac{-1\pm\sqrt{5}}{2}$.
\end{itemize}
Then $[\mathsf{f}^{-1}(\lambda)]=1$ for every $\lambda\ne -2$ by Corollary~\ref{corollary:irreducible-fibers}.

Recall that $S_{-2}=H_{\{x,t\}}+\mathbf{S}$,
where $\mathbf{S}$ is an irreducible cubic surface that has {good} double points at
$P_{\{y\},\{z\},\{t\}}$, $P_{\{x\},\{z\},\{t\}}$, $P_{\{x\},\{y\},\{t\}}$, $P_{\{x\},\{t\},\{y,z\}}$, and $P_{\{y\},\{z\},\{x,t\}}$.
Moreover, the surface $S_{-2}$ is smooth at general points of the base curves
$L_{\{x\},\{t\}}$, $L_{\{y\},\{z\}}$, $L_{\{y\},\{t\}}$, $L_{\{z\},\{t\}}$, $L_{\{x\},\{z,t\}}$,
$L_{\{y\},\{x,t\}}$, $L_{\{y\},\{z,t\}}$,  $L_{\{z\},\{x,t\}}$, $L_{\{t\},\{x,y,z\}}$, and $\mathcal{C}$.
Thus, it follows from \eqref{equation:equation:number-of-irredubicle-components-refined} and Lemmas~\ref{lemma:main} and \ref{lemma:normal-crossing} that
$[\mathsf{f}^{-1}(-2)]=[S_{-3}]=2$.
Therefore, we conclude that \eqref{equation:main-1} in Main Theorem holds in this case.

To verify \eqref{equation:main-2} in Main Theorem, we may assume that $\lambda\ne-2$ and $\lambda\ne\frac{-1\pm\sqrt{5}}{2}$.
Then, using \eqref{equation:r-3-n-11}, we see that the intersection matrix of the curves
$L_{\{x\},\{t\}}$, $L_{\{y\},\{z\}}$, $L_{\{y\},\{t\}}$, $L_{\{z\},\{t\}}$, $L_{\{x\},\{z,t\}}$,
$L_{\{y\},\{x,t\}}$, $L_{\{y\},\{z,t\}}$,  $L_{\{z\},\{x,t\}}$, $L_{\{t\},\{x,y,z\}}$, and $\mathcal{C}$
on the surface $S_\lambda$ has the same rank as the intersection matrix of the curves
$L_{\{x\},\{t\}}$, $L_{\{x\},\{z,t\}}$, $L_{\{y\},\{x,t\}}$, $L_{\{y\},\{z,t\}}$, $L_{\{z\},\{x,t\}}$, $L_{\{t\},\{x,y,z\}}$, and $H_{\lambda}$.
On the other hand, the latter matrix is given by
\begin{center}\renewcommand\arraystretch{1.42}
\begin{tabular}{|c||c|c|c|c|c|c|c|}
\hline
 $\bullet$  & $L_{\{x\},\{t\}}$ & $L_{\{x\},\{z,t\}}$ & $L_{\{y\},\{x,t\}}$ & $L_{\{y\},\{z,t\}}$ & $L_{\{z\},\{x,t\}}$ & $L_{\{t\},\{x,y,z\}}$ &  $H_{\lambda}$ \\
\hline\hline
$L_{\{x\},\{t\}}$ & $-\frac{1}{6}$ & $\frac{1}{3}$ & $\frac{2}{3}$ & $0$ & $\frac{2}{3}$ & $\frac{1}{2}$ & $1$ \\
\hline
$L_{\{x\},\{z,t\}}$ & $\frac{1}{3}$ & $-\frac{4}{3}$ & $0$ & $1$ & $\frac{1}{3}$ & $0$ & $1$ \\
\hline
$L_{\{y\},\{x,t\}}$ & $\frac{2}{3}$ & $0$ & $-\frac{2}{3}$ & $1$ & $\frac{1}{3}$ & $0$ & $1$ \\
\hline
$L_{\{y\},\{z,t\}}$ & $0$ & $1$ & $1$ & $-\frac{6}{5}$ & $0$ & $0$ & $1$ \\
\hline
$L_{\{z\},\{x,t\}}$ & $\frac{2}{3}$ & $\frac{1}{3}$ & $\frac{1}{3}$ & $0$ & $-\frac{2}{3}$ & $0$ & $1$ \\
\hline
$L_{\{t\},\{x,y,z\}}$ & $\frac{1}{2}$ & $0$ & $0$ & $0$ & $0$ & $-\frac{3}{2}$ & $1$ \\
\hline
 $H_{\lambda}$  & $1$ & $1$ & $1$ & $1$ & $1$ & $1$ & $4$ \\
\hline
\end{tabular}
\end{center}
Its rank is $6$. Note also that $\mathrm{rk}\,\mathrm{Pic}(\widetilde{S}_{\Bbbk})=\mathrm{rk}\,\mathrm{Pic}(S_{\Bbbk})+11$.
Hence, we see that \eqref{equation:main-2-simple} holds,
so that \eqref{equation:main-2} in Main Theorem also holds by Lemma~\ref{lemma:cokernel}.

\subsection{Family \textnumero $3.12$}
\label{section:r-3-n-12}

In this case, the threefold $X$ can be obtained from $\mathbb{P}^3$ by blowing up a disjoint union of a line and a twisted cubic curve.
Its toric Landau--Ginzburg model is given by
$$
\frac {z}{x}+\frac{1}{x}+y+z+\frac{y}{z}+\frac{z}{y}+\frac{1}{z}+\frac{xy}{z}+\frac{1}{y}+x,
$$
which is the Minkowski polynomials \textnumero $737$.
The pencil $\mathcal{S}$ is given by
$$
z^2yt+t^2yz+y^2zx+z^2yx+y^2tx+z^2tx+t^2yx+x^2y^2+t^2zx+x^2yz=\lambda xyzt.
$$
As usual, we suppose that $\lambda\ne\infty$.

Let $\mathcal{C}$ be the conic $z=xy+yt+t^2=0$. Then
\begin{equation}
\label{equation:3-12}
\begin{split}
H_{\{x\}}\cdot S_\lambda&=L_{\{x\},\{y\}}+L_{\{x\},\{z\}}+L_{\{x\},\{t\}}+L_{\{x\},\{z,t\}},\\
H_{\{y\}}\cdot S_\lambda&=L_{\{x\},\{y\}}+L_{\{y\},\{z\}}+L_{\{y\},\{t\}}+L_{\{y\},\{z,t\}},\\
H_{\{z\}}\cdot S_\lambda&=L_{\{x\},\{z\}}+L_{\{y\},\{z\}}+\mathcal{C},\\
H_{\{t\}}\cdot S_\lambda&=L_{\{x\},\{t\}}+L_{\{y\},\{t\}}+L_{\{t\},\{x,z\}}+L_{\{t\},\{y,z\}}.
\end{split}
\end{equation}
Thus, the base locus of the pencil $\mathcal{S}$ consists of the curves
$L_{\{x\},\{y\}}$, $L_{\{x\},\{z\}}$, $L_{\{x\},\{t\}}$,
$L_{\{y\},\{z\}}$, $L_{\{y\},\{t\}}$, $L_{\{x\},\{z,t\}}$, $L_{\{y\},\{z,t\}}$,
$L_{\{t\},\{x,z\}}$, $L_{\{t\},\{y,z\}}$, and $\mathcal{C}$.

For every $\lambda\in\mathbb{C}$, the surface $S_\lambda$ has isolated singularities, so that it is irreducible.
Moreover, the singular points of the surface $S_\lambda$ contained in the base locus of the pencil~$\mathcal{S}$
are du Val and can be described as follows:
\begin{itemize}\setlength{\itemindent}{1.3cm}
\item[$P_{\{x\},\{y\},\{z\}}$:] type $\mathbb{A}_1$;
\item[$P_{\{x\},\{y\},\{t\}}$:] type $\mathbb{A}_1$;
\item[$P_{\{x\},\{z\},\{t\}}$:] type $\mathbb{A}_3$ with quadratic term $x(x+z+t)$ for $\lambda\neq -3$, type $\mathbb{A}_4$ for $\lambda=-3$;
\item[$P_{\{y\},\{z\},\{t\}}$:] type $\mathbb{A}_4$ with quadratic term $y(y+z)$ for $\lambda\neq -2$, type $\mathbb{A}_5$ for $\lambda=-2$;
\item[$P_{\{x\},\{y\},\{z,t\}}$:] type $\mathbb{A}_1$ for $\lambda\neq -2$, type $\mathbb{A}_2$ for $\lambda=-2$;
\item[{$P_{\{t\},\{x,z\},\{y,z\}}$}:] smooth if $\lambda\ne -3$, type $\mathbb{A}_1$ if $\lambda=-3$.
\end{itemize}
Thus, it follows from Corollary~\ref{corollary:irreducible-fibers} that the fiber $\mathsf{f}^{-1}(\lambda)$ is irreducible for every $\lambda\in\mathbb{C}$.
Since $h^{1,2}(X)=0$, we see that \eqref{equation:main-1} in Main Theorem holds in this case.

Now let us verify \eqref{equation:main-2} in Main Theorem. We may assume that  $\lambda\ne-2$ and $\lambda\ne-3$.
Then the intersection matrix of the curves $L_{\{x\},\{y\}}$, $L_{\{x\},\{z\}}$, $L_{\{x\},\{t\}}$, $L_{\{y\},\{t\}}$, $L_{\{y\},\{z,t\}}$, $L_{\{t\},\{x,z\}}$, and~$H_{\lambda}$
on the surface $S_\lambda$ is given by
\begin{center}\renewcommand\arraystretch{1.42}
\begin{tabular}{|c||c|c|c|c|c|c|c|}
\hline
 $\bullet$  & $L_{\{x\},\{y\}}$ & $L_{\{x\},\{z\}}$ & $L_{\{x\},\{t\}}$ & $L_{\{y\},\{t\}}$ & $L_{\{y\},\{z,t\}}$ & $L_{\{t\},\{x,z\}}$ &  $H_{\lambda}$ \\
\hline\hline
$L_{\{x\},\{y\}}$ & $-\frac{1}{2}$ & $\frac{1}{2}$ & $\frac{1}{2}$ & $\frac{1}{2}$ & $\frac{1}{2}$ &$0$ & $1$ \\
\hline
$L_{\{x\},\{z\}}$ & $\frac{1}{2}$ & $-\frac{3}{4}$ & $\frac{3}{4}$ & $0$ & $0$ & $\frac{1}{4}$ & $1$ \\
\hline
$L_{\{x\},\{t\}}$ & $\frac{1}{2}$ & $\frac{3}{4}$ & $-\frac{3}{4}$ & $\frac{1}{2}$ & $0$ & $\frac{1}{4}$ & $1$ \\
\hline
$L_{\{y\},\{t\}}$ & $\frac{1}{2}$ & $0$ & $\frac{1}{2}$ & $-\frac{7}{10}$ & $\frac{3}{5}$ & $1$ & $1$ \\
\hline
$L_{\{y\},\{z,t\}}$ & $\frac{1}{2}$ & $0$ & $0$ & $\frac{3}{5}$ & $-\frac{7}{10}$ & $0$ & $1$ \\
\hline
$L_{\{t\},\{x,z\}}$ & $0$ & $\frac{1}{4}$ & $\frac{1}{4}$ & $1$ & $0$ & $-\frac{5}{4}$ & $1$ \\
\hline
 $H_{\lambda}$  & $1$ & $1$ & $1$ & $1$ & $1$ & $1$ & $4$ \\
\hline
\end{tabular}
\end{center}
The rank of this matrix is $7$. On the other hand, it follows from \eqref{equation:3-12} that
\begin{multline*}
L_{\{x\},\{y\}}+L_{\{x\},\{z\}}+L_{\{x\},\{t\}}+L_{\{x\},\{z,t\}}\sim L_{\{x\},\{y\}}+L_{\{y\},\{z\}}+L_{\{y\},\{t\}}+L_{\{y\},\{z,t\}}\sim\\
\sim L_{\{x\},\{z\}}+L_{\{y\},\{z\}}+\mathcal{C}\sim L_{\{x\},\{t\}}+L_{\{y\},\{t\}}+L_{\{t\},\{x,z\}}+L_{\{t\},\{y,z\}}\sim H_\lambda.
\end{multline*}
This implies that the rank of the intersection matrix of the curves
$L_{\{x\},\{y\}}$, $L_{\{x\},\{z\}}$, $L_{\{x\},\{t\}}$,
$L_{\{y\},\{z\}}$, $L_{\{y\},\{t\}}$, $L_{\{x\},\{z,t\}}$, $L_{\{y\},\{z,t\}}$,
$L_{\{t\},\{x,z\}}$, $L_{\{t\},\{y,z\}}$, and $\mathcal{C}$ is also $7$.
As we have seen above, $\mathrm{rk}\,\mathrm{Pic}(\widetilde{S}_{\Bbbk})=\mathrm{rk}\,\mathrm{Pic}(S_{\Bbbk})+10$.
Hence, we see that \eqref{equation:main-2-simple} holds, so that \eqref{equation:main-2} in Main Theorem holds by~Lemma~\ref{lemma:cokernel}.

\subsection{Family \textnumero $3.13$}
\label{section:r-3-n-13}

The threefold $X$ is a blow up of a smooth hypersurface in $\mathbb{P}^2\times\mathbb{P}^2$ of bidegree $(1,1)$
in  a smooth rational curve of bidegree~$(2,2)$.
Thus, we have $h^{1,2}(X)=0$.
A~toric Landau--Ginzburg model of this family is given by Minkowski polynomial \textnumero $420$, which is
$$
\frac{x}{y}+x+\frac{1}{y}+z+\frac{z}{x}+\frac{1}{z}+y+\frac{1}{x}+\frac{y}{z}.
$$
The quartic pencil $\mathcal{S}$ is given by
$$
x^2zt+x^2yz+t^2zx+z^2yx+z^2yt+t^2yx+y^2zx+t^2yz+y^2tx=\lambda xyzt.
$$

As usual, we assume that $\lambda\ne\infty$.
Then
\begin{itemize}
\item $H_{\{x\}}\cdot S_\lambda=L_{\{x\},\{y\}}+L_{\{x\},\{z\}}+L_{\{x\},\{t\}}+L_{\{x\},\{z,t\}}$,
\item $H_{\{y\}}\cdot S_\lambda=L_{\{x\},\{y\}}+L_{\{y\},\{z\}}+L_{\{y\},\{t\}}+L_{\{y\},\{x,t\}}$,
\item $H_{\{z\}}\cdot S_\lambda=L_{\{x\},\{z\}}+L_{\{y\},\{z\}}+L_{\{z\},\{t\}}+L_{\{z\},\{y,t\}}$,
\item $H_{\{t\}}\cdot S_\lambda=L_{\{x\},\{t\}}+L_{\{y\},\{t\}}+L_{\{z\},\{t\}}+L_{\{t\},\{x,y,z\}}$.
\end{itemize}
Thus, the lines $L_{\{x\},\{y\}}$, $L_{\{x\},\{z\}}$, $L_{\{x\},\{t\}}$,
$L_{\{y\},\{z\}}$, $L_{\{y\},\{t\}}$, $L_{\{z\},\{t\}}$, $L_{\{x\},\{z,t\}}$, $L_{\{y\},\{x,t\}}$,
$L_{\{z\},\{y,t\}}$, and $L_{\{t\},\{x,y,z\}}$ are all base curves of the pencil $\mathcal{S}$.

Each surface $S_\lambda$  is irreducible, it has isolated singularities, and its singular points contained in the base locus of the pencil $\mathcal{S}$
can be described as follows:
\begin{itemize}\setlength{\itemindent}{1.5cm}
\item[$P_{\{x\},\{y\},\{z\}}$:] type $\mathbb{A}_1$;

\item[$P_{\{x\},\{y\},\{t\}}$:] type $\mathbb{A}_3$ with quadratic term $y(x+t)$ for $\lambda\neq -2$, type $\mathbb{A}_4$ for $\lambda=-2$;

\item[$P_{\{x\},\{z\},\{t\}}$:] type $\mathbb{A}_3$ with quadratic term $x(z+t)$ for $\lambda\neq -2$, type $\mathbb{A}_4$ for $\lambda=-2$;

\item[$P_{\{y\},\{z\},\{t\}}$:] type $\mathbb{A}_3$ with quadratic term $z(y+t)$ for $\lambda\neq -2$, type $\mathbb{A}_4$ for $\lambda=-2$.
\end{itemize}
Then $[\mathsf{f}^{-1}(\lambda)]=1$ for every $\lambda\in\mathbb{C}$ by Lemma~\ref{corollary:irreducible-fibers}.
This confirms \eqref{equation:main-1} in Main Theorem.

Now we suppose that $\lambda\ne-2$.
Then the rank of the intersection matrix of the lines $L_{\{x\},\{y\}}$, $L_{\{x\},\{z\}}$, $L_{\{x\},\{t\}}$,
$L_{\{y\},\{z\}}$, $L_{\{y\},\{t\}}$, $L_{\{z\},\{t\}}$, $L_{\{x\},\{z,t\}}$, $L_{\{y\},\{x,t\}}$,
$L_{\{z\},\{y,t\}}$, and $L_{\{t\},\{x,y,z\}}$ on the surface $S_{\lambda}$ has the same
as the rank of the following matrix:
\begin{center}\renewcommand\arraystretch{1.42}
\begin{tabular}{|c||c|c|c|c|c|c|c|}
\hline
 $\bullet$  & $L_{\{x\},\{z\}}$ & $L_{\{x\},\{t\}}$ & $L_{\{x\},\{z,t\}}$ & $L_{\{y\},\{x,t\}}$ & $L_{\{z\},\{y,t\}}$ & $L_{\{t\},\{x,y,z\}}$ &  $H_{\lambda}$ \\
\hline\hline
$L_{\{x\},\{z\}}$ & $-\frac{3}{4}$ & $\frac{3}{4}$ & $1$ & $0$ & $1$ &$0$ & $1$ \\
\hline
$L_{\{x\},\{t\}}$ & $\frac{3}{4}$ & $-\frac{1}{2}$ & $1$ & $1$ & $0$ & $1$ & $1$ \\
\hline
$L_{\{x\},\{z,t\}}$ & $1$ & $1$ & $-1$ & $0$ & $0$ & $0$ & $1$ \\
\hline
$L_{\{y\},\{x,t\}}$ & $0$ & $1$ & $0$ & $-1$ & $0$ & $0$ & $1$ \\
\hline
$L_{\{z\},\{y,t\}}$ & $1$ & $0$ & $0$ & $0$ & $-1$ & $0$ & $1$ \\
\hline
$L_{\{t\},\{x,y,z\}}$ & $0$ & $1$ & $0$ & $0$ & $0$ & $-2$ & $1$ \\
\hline
 $H_{\lambda}$  & $1$ & $1$ & $1$ & $1$ & $1$ & $1$ & $4$ \\
\hline
\end{tabular}
\end{center}
Its rank is $7$.
On the other hand, we have $\mathrm{rk}\,\mathrm{Pic}(\widetilde{S}_{\Bbbk})=\mathrm{rk}\,\mathrm{Pic}(S_{\Bbbk})+10$.
Hence, we see that \eqref{equation:main-2-simple} holds.
By Lemma~\ref{lemma:cokernel}, we see that \eqref{equation:main-2} in Main Theorem also holds.

\subsection{Family \textnumero $3.14$}
\label{section:r-3-n-14}

The threefold $X$ is $\mathbb{P}^3$ blown up in a union of a smooth plane cubic and a point that does not lie on the plane containing the cubic,
so that $h^{1,2}(X)=1$.
A~toric Landau--Ginzburg model of this family is given by
$$
x+y+z+\frac{x^2}{yz}+\frac{y}{x}+\frac{z}{x}+\frac{x}{yz}+\frac{1}{x},
$$
which is Minkowski polynomial \textnumero $202$. The quartic pencil $\mathcal{S}$ is given by
$$
x^2yz+xy^2z+xyz^2+x^3t+y^2zt+yz^2t+x^2t^2+yzt^2=\lambda xyzt.
$$

Suppose that $\lambda\ne\infty$. Then
\begin{equation}
\label{equation:3-14}
\begin{split}
H_{\{x\}}\cdot S_\lambda&=L_{\{x\},\{y\}}+L_{\{x\},\{z\}}+L_{\{x\},\{t\}}+L_{\{x\},\{y,z,t\}},\\
H_{\{y\}}\cdot S_\lambda&=2L_{\{x\},\{y\}}+L_{\{y\},\{t\}}+L_{\{y\},\{x,t\}},\\
H_{\{z\}}\cdot S_\lambda&=2L_{\{x\},\{z\}}+L_{\{z\},\{t\}}+L_{\{z\},\{x,t\}},\\
H_{\{t\}}\cdot S_\lambda&=L_{\{x\},\{t\}}+L_{\{y\},\{t\}}+L_{\{z\},\{t\}}+L_{\{t\},\{x,y,z\}}.
\end{split}
\end{equation}
Thus, the base locus of the pencil $\mathcal{S}$ consists of the lines
$L_{\{x\},\{y\}}$, $L_{\{x\},\{z\}}$, $L_{\{x\},\{t\}}$,
$L_{\{y\},\{t\}}$, $L_{\{z\},\{t\}}$, $L_{\{x\},\{y,z,t\}}$,
$L_{\{y\},\{x,t\}}$,  $L_{\{z\},\{x,t\}}$, $L_{\{t\},\{x,y,z\}}$.

Let $\mathbf{S}$ be the cubic surface in $\mathbb{P}^3$ that is given by
$$
xyz+x^2t+y^2z+yz^2+yzt=0.
$$
Then $\mathbf{S}$ is irreducible and $S_{-2}=H_{\{x,t\}}+\mathbf{S}$.
On the other hand, if $\lambda\ne-2$, then the surface $S_\lambda$ has isolated singularities, so that it is irreducible.
In this case, its singular points contained in the base locus of the pencil $\mathcal{S}$ can be described as follows:
\begin{itemize}\setlength{\itemindent}{3cm}
\item[$P_{\{x\},\{y\},\{z\}}$:] type $\mathbb{A}_1$ with quadratic term $x^2+yz$;

\item[$P_{\{x\},\{y\},\{t\}}$:] type $\mathbb{A}_4$ with quadratic term $y(x+t)$;

\item[$P_{\{x\},\{z\},\{t\}}$:] type $\mathbb{A}_4$ with quadratic term $z(x+t)$;

\item[$P_{\{x\},\{y\},\{z,t\}}$:] type $\mathbb{A}_1$ with quadratic term $x^2-y^2-yz-yt+(\lambda+1)xy$;

\item[$P_{\{x\},\{z\},\{y,t\}}$:] type $\mathbb{A}_1$ with quadratic term $x^2-z^2-yz-zt+(\lambda+1)xz$;

\item[$P_{\{x\},\{t\},\{y,z\}}$:] type $\mathbb{A}_1$ with quadratic term $(x+t)(x+y+z+t)-(\lambda+2)xt$.
\end{itemize}

By Lemma~\ref{corollary:irreducible-fibers}, we have $[\mathsf{f}^{-1}(\lambda)]=1$ for every $\lambda\ne -2$.
Moreover, the points
$P_{\{x\},\{z\},\{t\}}$, $P_{\{x\},\{y\},\{t\}}$,
$P_{\{x\},\{y\},\{z\}}$, $P_{\{x\},\{y\},\{z,t\}}$,
$P_{\{x\},\{z\},\{y,t\}}$, and $P_{\{x\},\{t\},\{y,z\}}$
are good double points of the surface $S_{-2}$.
Furthermore, one can check that the surface $S_{-2}$ is smooth
at general points of the lines
$L_{\{x\},\{y\}}$, $L_{\{x\},\{z\}}$, $L_{\{x\},\{t\}}$,
$L_{\{y\},\{t\}}$, $L_{\{z\},\{t\}}$, $L_{\{x\},\{y,z,t\}}$,
$L_{\{y\},\{x,t\}}$,  $L_{\{z\},\{x,t\}}$, and~$L_{\{t\},\{x,y,z\}}$.
Therefore, using \eqref{equation:equation:number-of-irredubicle-components-refined} and
applying Lemmas~\ref{lemma:main} and \ref{lemma:normal-crossing}, we conclude that
$[\mathsf{f}^{-1}(-2)]=[S_{-2}]=2$.
This confirms \eqref{equation:main-1} in Main Theorem.

To verify \eqref{equation:main-2} in Main Theorem, we suppose that $\lambda\ne-2$.
Then \eqref{equation:3-14} gives
\begin{multline*}
H_{\lambda}\sim L_{\{x\},\{y\}}+L_{\{x\},\{z\}}+L_{\{x\},\{t\}}+L_{\{x\},\{y,z,t\}}\sim 2L_{\{x\},\{y\}}+L_{\{y\},\{t\}}+L_{\{y\},\{x,t\}}\sim\\
\sim 2L_{\{x\},\{z\}}+L_{\{z\},\{t\}}+L_{\{z\},\{x,t\}}\sim L_{\{x\},\{t\}}+L_{\{y\},\{t\}}+L_{\{z\},\{t\}}+L_{\{t\},\{x,y,z\}}.
\end{multline*}
Therefore, the intersection matrix of the lines
$L_{\{x\},\{y\}}$, $L_{\{x\},\{z\}}$, $L_{\{x\},\{t\}}$,
$L_{\{y\},\{t\}}$, $L_{\{z\},\{t\}}$, $L_{\{x\},\{y,z,t\}}$,
$L_{\{y\},\{x,t\}}$,  $L_{\{z\},\{x,t\}}$, $L_{\{t\},\{x,y,z\}}$ on the surface $S_{\lambda}$
has the same rank as the intersection
matrix of the curves $L_{\{y\},\{x,t\}}$, $L_{\{x\},\{y,z,t\}}$, $L_{\{z\},\{x,t\}}$, $L_{\{t\},\{x,y,z\}}$, $L_{\{x\},\{y\}}$, and~$H_{\lambda}$.
The latter matrix is given by
\begin{center}\renewcommand\arraystretch{1.42}
\begin{tabular}{|c||c|c|c|c|c|c|}
\hline
 $\bullet$  & $L_{\{y\},\{x,t\}}$ & $L_{\{x\},\{y,z,t\}}$ & $L_{\{z\},\{x,t\}}$ & $L_{\{t\},\{x,y,z\}}$ & $L_{\{x\},\{y\}}$ &  $H_{\lambda}$ \\
\hline\hline
$L_{\{y\},\{x,t\}}$ & $-\frac{4}{5}$ & $0$ & $1$ & $0$ & $\frac{3}{5}$ & $1$ \\
\hline
$L_{\{x\},\{y,z,t\}}$ & $0$ & $-\frac{1}{2}$ & $0$ & $\frac{1}{2}$ & $\frac{1}{2}$ & $1$ \\
\hline
$L_{\{z\},\{x,t\}}$ & $1$ & $0$ & $-\frac{4}{5}$ & $0$ & $0$ & $1$ \\
\hline
$L_{\{t\},\{x,y,z\}}$ & $0$ & $\frac{1}{2}$ & $0$ & $-\frac{3}{2}$ & $0$ & $1$ \\
\hline
$L_{\{x\},\{y\}}$ & $\frac{3}{5}$ & $\frac{1}{2}$ & $0$ & $0$ & $-\frac{1}{5}$ & $1$ \\
\hline
 $H_{\lambda}$  & $1$ & $1$ & $1$ & $1$ & $1$ & $4$ \\
\hline
\end{tabular}
\end{center}
The rank of this matrix is $5$.
We can see that the determinant of this matrix is~$0$ without computing it.
Indeed, we have $H_\lambda\sim 2L_{\{x\},\{t\}}+L_{\{y\},\{x,t\}}+L_{\{z\},\{x,t\}}$ on the surface~$S_\lambda$,
because $H_{\{x,t\}}\cdot S_\lambda=2L_{\{x\},\{t\}}+L_{\{y\},\{x,t\}}+L_{\{z\},\{x,t\}}$.

Observe that $\mathrm{rk}\,\mathrm{Pic}(\widetilde{S}_{\Bbbk})=\mathrm{rk}\,\mathrm{Pic}(S_{\Bbbk})+12$.
Therefore, we conclude that \eqref{equation:main-2-simple} holds.
Using Lemma~\ref{lemma:cokernel}, we see that \eqref{equation:main-2} in Main Theorem also holds.

\subsection{Family \textnumero $3.15$}
\label{section:r-3-n-15}

In this case, the threefold $X$ is a blow up of a quadric in a disjoint union of a line and a conic, so that $h^{1,2}(X)=0$.
A~toric Landau--Ginzburg model of this family is given by Minkowski polynomial \textnumero $419$ is
$$
x+y+z+\frac{x}{z}+\frac{z}{y}+\frac{1}{z}+\frac{1}{y}+\frac{1}{x}+\frac{z}{xy}.
$$
The quartic pencil $\mathcal{S}$ is given by
$$
x^2zy+y^2zx+z^2yx+x^2ty+z^2tx+t^2yx+t^2zx+t^2zy+t^2z^2=\lambda xyzt.
$$

Suppose that $\lambda\ne\infty$.
Let $\mathcal{C}$ be a conic that is given by $y=xz+xt+zt=0$.
Then
\begin{equation}
\label{equation:3-15}
\begin{split}
H_{\{x\}}\cdot S_\lambda&=L_{\{x\},\{z\}}+2L_{\{x\},\{t\}}+L_{\{x\},\{y,z\}},\\
H_{\{y\}}\cdot S_\lambda&=L_{\{y\},\{z\}}+L_{\{y\},\{t\}}+\mathcal{C},\\
H_{\{z\}}\cdot S_\lambda&=L_{\{x\},\{z\}}+L_{\{y\},\{z\}}+L_{\{z\},\{t\}}+L_{\{z\},\{x,t\}},\\
H_{\{t\}}\cdot S_\lambda&=L_{\{x\},\{t\}}+L_{\{y\},\{t\}}+L_{\{z\},\{t\}}+L_{\{t\},\{x,y,z\}}.
\end{split}
\end{equation}
Thus, the base locus of the pencil $\mathcal{S}$ consists of
the curves $L_{\{x\},\{z\}}$, $L_{\{x\},\{t\}}$,  $L_{\{y\},\{z\}}$, $L_{\{y\},\{t\}}$,
$L_{\{x\},\{y,z\}}$, $L_{\{z\},\{t\}}$, $L_{\{z\},\{x,t\}}$, $L_{\{t\},\{x,y,z\}}$, and $\mathcal{C}$.

For every $\lambda\in\mathbb{C}$, the surface $S_\lambda$ is irreducible, it has isolated singularities,
and its singular points  contained in the base locus of the pencil~$\mathcal{S}$ can be described as follows:
\begin{itemize}\setlength{\itemindent}{1.5cm}
\item[$P_{\{x\},\{y\},\{z\}}$:] type $\mathbb{A}_2$ with quadratic term $(x+z)(y+z)$ for $\lambda\neq -2$, type $\mathbb{A}_3$ for $\lambda=-2$;

\item[$P_{\{x\},\{y\},\{t\}}$:] type $\mathbb{A}_1$;

\item[$P_{\{x\},\{z\},\{t\}}$:] type $\mathbb{A}_3$ with quadratic term $xz$;

\item[$P_{\{y\},\{z\},\{t\}}$:] type $\mathbb{A}_3$ with quadratic term $y(z+t)$ for $\lambda\neq -3$, type $\mathbb{A}_4$ for $\lambda=-3$;

\item[$P_{\{x\},\{t\},\{y,z\}}$:] type $\mathbb{A}_2$ with quadratic term $x(x+y+z-t-\lambda t)$.
\end{itemize}
So, by Lemma~\ref{corollary:irreducible-fibers}, each fiber $\mathsf{f}^{-1}(\lambda)$ is irreducible.
This confirms \eqref{equation:main-1} in Main Theorem.

If $\lambda\ne-2$ and $\lambda\ne-3$,
then the intersection matrix of the curves $L_{\{x\},\{z\}}$, $L_{\{x\},\{y,z\}}$, $L_{\{y\},\{z\}}$, $L_{\{z\},\{x,t\}}$, $L_{\{t\},\{x,y,z\}}$, and $H_{\lambda}$
on the surface $S_\lambda$ is given by the following table:
\begin{center}\renewcommand\arraystretch{1.42}
\begin{tabular}{|c||c|c|c|c|c|c|}
\hline
 $\bullet$  & $L_{\{x\},\{z\}}$ & $L_{\{x\},\{y,z\}}$ & $L_{\{y\},\{z\}}$ & $L_{\{z\},\{x,t\}}$ & $L_{\{t\},\{x,y,z\}}$ &  $H_{\lambda}$ \\
\hline\hline
$L_{\{x\},\{z\}}$ & $-\frac{1}{3}$ & $\frac{1}{3}$ & $\frac{1}{3}$ & $\frac{1}{2}$ & $0$ & $1$ \\
\hline
$L_{\{x\},\{y,z\}}$ & $\frac{1}{3}$ & $-\frac{2}{3}$ & $\frac{2}{3}$ & $0$ & $\frac{1}{3}$ & $1$ \\
\hline
$L_{\{y\},\{z\}}$ & $\frac{1}{3}$ & $\frac{2}{3}$ & $-\frac{7}{12}$ & $0$ & $0$ & $1$ \\
\hline
$L_{\{z\},\{x,t\}}$ & $\frac{1}{2}$ & $0$ & $0$ & $-\frac{5}{4}$ & $0$ & $1$ \\
\hline
$L_{\{t\},\{x,y,z\}}$ & $0$ & $\frac{1}{3}$ & $0$ & $0$ & $-\frac{4}{3}$ & $1$ \\
\hline
 $H_{\lambda}$  & $1$ & $1$ & $1$ & $1$ & $1$ & $4$ \\
\hline
\end{tabular}
\end{center}
This matrix has rank $6$.
On the other hand, using \eqref{equation:3-15}, we see that
\begin{multline*}
H_\lambda\sim L_{\{x\},\{z\}}+2L_{\{x\},\{t\}}+L_{\{x\},\{y,z\}}\sim L_{\{y\},\{z\}}+L_{\{y\},\{t\}}+\mathcal{C}\sim\\
\sim L_{\{x\},\{z\}}+L_{\{y\},\{z\}}+L_{\{z\},\{t\}}+L_{\{z\},\{x,t\}}\sim L_{\{x\},\{t\}}+L_{\{y\},\{t\}}+L_{\{z\},\{t\}}+L_{\{t\},\{x,y,z\}}
\end{multline*}
on the surface $S_\lambda$.
Thus, the rank of the intersection matrix of the curves $L_{\{x\},\{z\}}$, $L_{\{x\},\{t\}}$,  $L_{\{y\},\{z\}}$, $L_{\{y\},\{t\}}$,
$L_{\{x\},\{y,z\}}$, $L_{\{z\},\{t\}}$, $L_{\{z\},\{x,t\}}$, $L_{\{t\},\{x,y,z\}}$, and $\mathcal{C}$ is also $6$.
One the other hand, we have $\mathrm{rk}\,\mathrm{Pic}(\widetilde{S}_{\Bbbk})=\mathrm{rk}\,\mathrm{Pic}(S_{\Bbbk})+11$.
Thus, we see that \eqref{equation:main-2-simple} holds,
so that \eqref{equation:main-2} in Main Theorem holds by Lemma~\ref{lemma:cokernel}.

\subsection{Family \textnumero $3.16$}
\label{section:r-3-n-16}

In this case, the threefold $X$ is can be obtained from $\mathbb{P}^3$ blown up in a point by blowing up a proper transform of a twisted cubic curve passing through the point.
Thus, we see that $h^{1,2}(X)=0$.
A~toric Landau--Ginzburg model of this family is given by Minkowski polynomial \textnumero $212$, which is
$$
x+y+z+\frac{y}{z}+\frac{x}{y}+\frac{y}{xz}+\frac{1}{y}+\frac{1}{x}.
$$
The pencil $\mathcal{S}$ is given by the equation
$$
x^2zy+y^2zx+z^2yx+y^2tx+x^2tz+t^2y^2+t^2zx+t^2zy=\lambda xyzt.
$$

Suppose that $\lambda\ne\infty$. Then
\begin{equation}
\label{equation:3-16}
\begin{split}
H_{\{x\}}\cdot S_\lambda&=L_{\{x\},\{y\}}+2L_{\{x\},\{t\}}+L_{\{x\},\{y,z\}},\\
H_{\{y\}}\cdot S_\lambda&=L_{\{x\},\{y\}}+L_{\{y\},\{z\}}+L_{\{y\},\{t\}}+L_{\{y\},\{x,t\}},\\
H_{\{z\}}\cdot S_\lambda&=2L_{\{y\},\{z\}}+L_{\{z\},\{t\}}+L_{\{z\},\{x,t\}},\\
H_{\{t\}}\cdot S_\lambda&=L_{\{x\},\{t\}}+L_{\{y\},\{t\}}+L_{\{z\},\{t\}}+L_{\{t\},\{x,y,z\}}.
\end{split}
\end{equation}
Thus, the base locus of the pencil $\mathcal{S}$ consists of
the lines $L_{\{x\},\{y\}}$, $L_{\{x\},\{t\}}$, $L_{\{y\},\{z\}}$, $L_{\{y\},\{t\}}$, $L_{\{z\},\{t\}}$,
$L_{\{x\},\{y,z\}}$, $L_{\{y\},\{x,t\}}$, $L_{\{z\},\{x,t\}}$, and $L_{\{t\},\{x,y,z\}}$.

For every $\lambda\in\mathbb{C}$, the surface $S_\lambda\in\mathcal{S}$ has isolated singularities, so that it is irreducible.

The singular points of the surface $S_\lambda$ contained in the base locus of the pencil $\mathcal{S}$ can be described as follows:
\begin{itemize}\setlength{\itemindent}{3cm}
\item[$P_{\{x\},\{y\},\{z\}}$:] type $\mathbb{A}_1$;

\item[$P_{\{x\},\{z\},\{t\}}$:] type $\mathbb{A}_1$;

\item[$P_{\{x\},\{y\},\{t\}}$:] type $\mathbb{A}_3$ with quadratic term $xy$;

\item[$P_{\{x\},\{t\},\{y,z\}}$:] type $\mathbb{A}_2$ with quadratic term
$$
x(x+y+z-t-\lambda t)
$$
for $\lambda\neq -1$, type $\mathbb{A}_3$ for $\lambda=-1$;

\item[$P_{\{y\},\{z\},\{t\}}$:] type $\mathbb{A}_2$ with quadratic term $z(y+t)$;

\item[$P_{\{y\},\{z\},\{x,t\}}$:] type $\mathbb{A}_2$ with quadratic term
$$
z(x+t-2y-\lambda y)
$$
for $\lambda\neq -2$, type $\mathbb{A}_3$ for $\lambda=-2$.
\end{itemize}
Therefore,  every fiber $\mathsf{f}^{-1}(\lambda)$ is irreducible by Lemma~\ref{corollary:irreducible-fibers}.
This confirms \eqref{equation:main-1} in Main Theorem, because $h^{1,2}(X)=0$.

Now let us verify \eqref{equation:main-2} in Main Theorem. We may assume that $\lambda\ne-1$ and $\lambda\ne-2$.
Using~\eqref{equation:3-16}, we see that the intersection matrix of
the lines $L_{\{x\},\{y\}}$, $L_{\{x\},\{t\}}$, $L_{\{y\},\{z\}}$, $L_{\{y\},\{t\}}$, $L_{\{z\},\{t\}}$,
$L_{\{x\},\{y,z\}}$, $L_{\{y\},\{x,t\}}$, $L_{\{z\},\{x,t\}}$, and $L_{\{t\},\{x,y,z\}}$
on the surface $S_\lambda$ has the same rank
as the intersection matrix of the curves $L_{\{x\},\{t\}}$, $L_{\{x\},\{y,z\}}$, $L_{\{y\},\{z\}}$, $L_{\{z\},\{t\}}$, $L_{\{t\},\{x,y,z\}}$, and $H_{\lambda}$.
But the latter matrix is given by
\begin{center}\renewcommand\arraystretch{1.42}
\begin{tabular}{|c||c|c|c|c|c|c|}
\hline
 $\bullet$  & $L_{\{x\},\{t\}}$ & $L_{\{x\},\{y,z\}}$ & $L_{\{y\},\{z\}}$ & $L_{\{z\},\{t\}}$ & $L_{\{t\},\{x,y,z\}}$ &  $H_{\lambda}$ \\
\hline\hline
$L_{\{x\},\{t\}}$ & $-\frac{1}{12}$ & $\frac{2}{3}$ & $0$ & $\frac{1}{2}$ & $\frac{1}{3}$ & $1$ \\
\hline
$L_{\{x\},\{y,z\}}$ & $\frac{2}{3}$ & $-\frac{5}{6}$ & $\frac{1}{2}$ & $0$ & $\frac{1}{3}$ & $1$ \\
\hline
$L_{\{y\},\{z\}}$ & $0$ & $\frac{1}{2}$ & $-\frac{1}{6}$ & $\frac{2}{3}$ & $0$ & $1$ \\
\hline
$L_{\{z\},\{t\}}$ & $\frac{1}{2}$ & $0$ & $\frac{2}{3}$ & $-\frac{5}{6}$ & $1$ & $1$ \\
\hline
$L_{\{t\},\{x,y,z\}}$ & $\frac{1}{3}$ & $\frac{1}{3}$ & $0$ & $1$ & $-\frac{4}{3}$ & $1$ \\
\hline
 $H_{\lambda}$  & $1$ & $1$ & $1$ & $1$ & $1$ & $4$ \\
\hline
\end{tabular}
\end{center}
Its rank is $6$.
On the other hand, the description of the singular points of the surface $S_\lambda$ easily gives
$\mathrm{rk}\,\mathrm{Pic}(\widetilde{S}_{\Bbbk})=\mathrm{rk}\,\mathrm{Pic}(S_{\Bbbk})+11$.
Thus, we can conclude that \eqref{equation:main-2-simple} holds in this case, so that \eqref{equation:main-2} in Main Theorem also
holds by Lemma~\ref{lemma:cokernel}.

\subsection{Family \textnumero $3.17$}
\label{section:r-3-n-17}

The threefold $X$ is a divisor of tridegree $(1,1,1)$ in $\mathbb{P}^1\times\mathbb{P}^1\times\mathbb{P}^2$, so that $h^{1,2}(X)=0$.
A~toric Landau--Ginzburg model of this family is given by Minkowski polynomial \textnumero $208$, which is
$$
\frac{z}{y}+x+\frac{1}{y}+z+y+\frac{1}{x}+\frac{1}{xz}+\frac{y}{xz}.
$$
The pencil of quartic surfaces $\mathcal{S}$ is given by the equation
$$
z^2tx+x^2zy+t^2zx+z^2yx+y^2zx+t^2zy+t^3y+t^2y^2=\lambda xyzt.
$$

To describe the base locus of the pencil $\mathcal{S}$, we observe that
\begin{itemize}
\item $H_{\{x\}}\cdot S_\lambda=L_{\{x\},\{y\}}+2L_{\{x\},\{t\}}+L_{\{x\},\{y,z,t\}}$,
\item $H_{\{y\}}\cdot S_\lambda=L_{\{x\},\{y\}}+L_{\{y\},\{z\}}+L_{\{y\},\{t\}}+L_{\{y\},\{z,t\}}$,
\item $H_{\{z\}}\cdot S_\lambda=L_{\{y\},\{z\}}+2L_{\{z\},\{t\}}+L_{\{z\},\{y,t\}}$,
\item $H_{\{t\}}\cdot S_\lambda=L_{\{x\},\{t\}}+L_{\{y\},\{t\}}+L_{\{z\},\{t\}}+L_{\{t\},\{x,y,z\}}$.
\end{itemize}
Thus, the base locus of the pencil $\mathcal{S}$ consists of the lines
$L_{\{x\},\{y\}}$, $L_{\{x\},\{t\}}$, $L_{\{y\},\{z\}}$, $L_{\{y\},\{t\}}$, $L_{\{z\},\{t\}}$,
$L_{\{x\},\{y,z,t\}}$, $L_{\{y\},\{z,t\}}$, $L_{\{z\},\{y,t\}}$, and $L_{\{t\},\{x,y,z\}}$.

If $\lambda\ne\infty$, then $S_\lambda$ has isolated singularities, so that it is irreducible.
In this case, the singular points of the surface $S_\lambda$ contained in the base locus of the pencil $\mathcal{S}$ can be described as follows:
\begin{itemize}\setlength{\itemindent}{3cm}
\item[$P_{\{x\},\{y\},\{t\}}$:] type $\mathbb{A}_2$ with quadratic term $x(y+t)$;

\item[$P_{\{x\},\{z\},\{t\}}$:] type $\mathbb{A}_1$;

\item[$P_{\{y\},\{z\},\{t\}}$:] type $\mathbb{A}_4$ with quadratic term $yz$;

\item[$P_{\{x\},\{y\},\{z,t\}}$:] type $\mathbb{A}_1$ for $\lambda\neq -2$, type $\mathbb{A}_2$ for $\lambda=-2$;

\item[$P_{\{x\},\{t\},\{y,z\}}$:] type $\mathbb{A}_2$ with quadratic term $x(x+y+z-t-\lambda t)$;

\item[$P_{\{z\},\{t\},\{x,y\}}$:] type $\mathbb{A}_1$.
\end{itemize}
Thus, by Lemma~\ref{corollary:irreducible-fibers}, the fiber $\mathsf{f}^{-1}(\lambda)$ is irreducible for every $\lambda\ne\infty$.
This confirms \eqref{equation:main-1} in Main Theorem, since $h^{1,2}(X)=0$.

Let us check \eqref{equation:main-2} in Main Theorem.
To do this, we may assume that $\lambda\ne\infty$ and~$\lambda\ne-2$.
Then
the intersection matrix of the curves $L_{\{x\},\{y\}}$, $L_{\{x\},\{t\}}$, $L_{\{y\},\{t\}}$, $L_{\{y\},\{z,t\}}$, $L_{\{t\},\{x,y,z\}}$, and $H_{\lambda}$
on the surface $S_\lambda$ is given by the following table:
\begin{center}\renewcommand\arraystretch{1.42}
\begin{tabular}{|c||c|c|c|c|c|c|}
\hline
 $\bullet$  & $L_{\{x\},\{y\}}$ & $L_{\{x\},\{t\}}$ & $L_{\{y\},\{t\}}$ & $L_{\{y\},\{z,t\}}$ & $L_{\{t\},\{x,y,z\}}$ &  $H_{\lambda}$ \\
\hline\hline
$L_{\{x\},\{y\}}$ & $-\frac{5}{6}$ & $\frac{2}{3}$ & $\frac{1}{3}$ & $\frac{1}{2}$ & $0$ & $1$ \\
\hline
$L_{\{x\},\{t\}}$ & $\frac{2}{3}$ & $-\frac{1}{6}$ & $\frac{1}{3}$ & $0$ & $\frac{1}{3}$ & $1$ \\
\hline
$L_{\{y\},\{t\}}$ & $\frac{1}{3}$ & $\frac{1}{3}$ & $-\frac{8}{15}$ & $\frac{4}{5}$ & $1$ & $1$ \\
\hline
$L_{\{y\},\{z,t\}}$ & $\frac{1}{2}$ & $0$ & $\frac{4}{5}$ & $-\frac{7}{10}$ & $0$ & $1$ \\
\hline
$L_{\{t\},\{x,y,z\}}$ & $0$ & $\frac{1}{3}$ & $1$ & $0$ & $-\frac{5}{6}$ & $1$ \\
\hline
 $H_{\lambda}$  & $1$ & $1$ & $1$ & $1$ & $1$ & $4$ \\
\hline
\end{tabular}
\end{center}
This matrix has rank $6$.
Thus, the intersection matrix of the lines
$L_{\{x\},\{y\}}$, $L_{\{x\},\{t\}}$, $L_{\{y\},\{z\}}$, $L_{\{y\},\{t\}}$, $L_{\{z\},\{t\}}$,
$L_{\{x\},\{y,z,t\}}$, $L_{\{y\},\{z,t\}}$, $L_{\{z\},\{y,t\}}$, and $L_{\{t\},\{x,y,z\}}$ on the surface $S_\lambda$ also has rank $6$,
because
\begin{multline*}
H_{\lambda}\sim L_{\{x\},\{y\}}+2L_{\{x\},\{t\}}+L_{\{x\},\{y,z,t\}}\sim L_{\{x\},\{y\}}+L_{\{y\},\{z\}}+L_{\{y\},\{t\}}+L_{\{y\},\{z,t\}}\sim\\
\sim L_{\{y\},\{z\}}+2L_{\{z\},\{t\}}+L_{\{z\},\{y,t\}}\sim L_{\{x\},\{t\}}+L_{\{y\},\{t\}}+L_{\{z\},\{t\}}+L_{\{t\},\{x,y,z\}}.
\end{multline*}
On the other hand, one has
$\mathrm{rk}\,\mathrm{Pic}(\widetilde{S}_{\Bbbk})=\mathrm{rk}\,\mathrm{Pic}(S_{\Bbbk})+11$.
We conclude that \eqref{equation:main-2-simple} holds.
By Lemma~\ref{lemma:cokernel}, this implies that \eqref{equation:main-2} in Main Theorem also holds.

\subsection{Family \textnumero $3.18$}
\label{section:r-3-n-18}

The threefold $X$ can be obtained by blowing up $\mathbb{P}^3$ in disjoint union of a line and a conic.
This shows that $h^{1,2}(X)=0$.
A~toric Landau--Ginzburg model of this family is given by Minkowski polynomial \textnumero $211$, which is
$$
\frac{x}{y}+x+\frac{1}{y}+z+\frac{x}{z}+y+\frac{1}{x}+\frac{y}{z}.
$$
Thus, the pencil $\mathcal{S}$ is given by the equation
$$
x^2tz+x^2zy+t^2zx+z^2yx+x^2ty+y^2zx+t^2zy+y^2tx=\lambda xyzt.
$$

As usual, we suppose that $\lambda\ne\infty$. Then
\begin{equation}
\label{equation:3-18}
\begin{split}
H_{\{x\}}\cdot S_\lambda&=L_{\{x\},\{y\}}+L_{\{x\},\{z\}}+2L_{\{x\},\{t\}},\\
H_{\{y\}}\cdot S_\lambda&=L_{\{x\},\{y\}}+L_{\{y\},\{z\}}+L_{\{y\},\{t\}}+L_{\{y\},\{x,t\}},\\
H_{\{z\}}\cdot S_\lambda&=L_{\{x\},\{z\}}+L_{\{y\},\{z\}}+L_{\{z\},\{t\}}+L_{\{z\},\{x,y\}},\\
H_{\{t\}}\cdot S_\lambda&=L_{\{x\},\{t\}}+L_{\{y\},\{t\}}+L_{\{z\},\{t\}}+L_{\{t\},\{x,y,z\}}.
\end{split}
\end{equation}
Thus, the base locus of the pencil $\mathcal{S}$ consists of
the lines $L_{\{x\},\{y\}}$, $L_{\{x\},\{z\}}$, $L_{\{x\},\{t\}}$, $L_{\{y\},\{z\}}$,
$L_{\{y\},\{t\}}$, $L_{\{z\},\{t\}}$,  $L_{\{y\},\{x,t\}}$, $L_{\{z\},\{x,y\}}$, and $L_{\{t\},\{x,y,z\}}$.

For every $\lambda\in\mathbb{C}$, the surface $S_\lambda$ is irreducible, it has isolated singularities,
and its singular points contained in the base locus of the pencil~$\mathcal{S}$ can be described as follows:
\begin{itemize}\setlength{\itemindent}{2cm}
\item[$P_{\{y\},\{z\},\{t\}}$:] type $\mathbb{A}_1$;

\item[$P_{\{x\},\{z\},\{t\}}$:] type $\mathbb{A}_2$ with quadratic term $x(z+t)$;

\item[$P_{\{x\},\{y\},\{t\}}$:] type $\mathbb{A}_3$ with quadratic term $xy$;

\item[$P_{\{x\},\{y\},\{z\}}$:] type $\mathbb{A}_3$ with quadratic term $x(x+y)$ for $\lambda\neq -1$, type $\mathbb{A}_5$ for $\lambda=-1$;

\item[$P_{\{x\},\{t\},\{y,z\}}$:] type $\mathbb{A}_1$;

\item[$P_{\{z\},\{t\},\{x,y\}}$:] type $\mathbb{A}_1$ for $\lambda\neq -2$, type $\mathbb{A}_2$ for $\lambda=-2$.
\end{itemize}
Then $[\mathsf{f}^{-1}(\lambda)]=1$ for every $\lambda\in\mathbb{C}$ by Lemma~\ref{corollary:irreducible-fibers}.
This confirms \eqref{equation:main-1} in Main Theorem.

To verify \eqref{equation:main-2} in Main Theorem,
we may assume that $\lambda\ne-1$ and $\lambda\ne-2$.
In this case, the intersection matrix of the curves $L_{\{y\},\{t\}}$, $L_{\{y\},\{x,t\}}$, $L_{\{z\},\{t\}}$, $L_{\{z\},\{x,y\}}$, $L_{\{t\},\{x,y,z\}}$, and~$H_{\lambda}$
on the surface $S_\lambda$ is given by the following table:
\begin{center}\renewcommand\arraystretch{1.42}
\begin{tabular}{|c||c|c|c|c|c|c|}
\hline
 $\bullet$  & $L_{\{y\},\{t\}}$ & $L_{\{y\},\{x,t\}}$ & $L_{\{z\},\{t\}}$ & $L_{\{z\},\{x,y\}}$ & $L_{\{t\},\{x,y,z\}}$ &  $H_{\lambda}$ \\
\hline\hline
$L_{\{y\},\{t\}}$ & $-\frac{3}{4}$ & $\frac{3}{4}$ & $\frac{1}{2}$ & $0$ & $1$ & $1$ \\
\hline
$L_{\{y\},\{x,t\}}$ & $\frac{3}{4}$ & $-\frac{5}{4}$ & $0$ & $0$ & $0$ & $1$ \\
\hline
$L_{\{z\},\{t\}}$ & $\frac{1}{2}$ & $0$ & $-\frac{1}{3}$ & $\frac{1}{2}$ & $\frac{1}{2}$ & $1$ \\
\hline
$L_{\{z\},\{x,y\}}$ & $0$ & $0$ & $\frac{1}{2}$ & $-\frac{3}{4}$ & $\frac{1}{2}$ & $1$ \\
\hline
$L_{\{t\},\{x,y,z\}}$ & $1$ & $0$ & $\frac{1}{2}$ & $\frac{1}{2}$ & $-1$ & $1$ \\
\hline
 $H_{\lambda}$  & $1$ & $1$ & $1$ & $1$ & $1$ & $4$ \\
\hline
\end{tabular}
\end{center}
This matrix has rank $6$. On the other hand, it follows from \eqref{equation:3-18} that
\begin{multline*}
H_\lambda\sim L_{\{x\},\{y\}}+L_{\{x\},\{z\}}+2L_{\{x\},\{t\}}\sim L_{\{x\},\{y\}}+L_{\{y\},\{z\}}+L_{\{y\},\{t\}}+L_{\{y\},\{x,t\}}\sim \\
\sim L_{\{x\},\{z\}}+L_{\{y\},\{z\}}+L_{\{z\},\{t\}}+L_{\{z\},\{x,y\}}\sim L_{\{x\},\{t\}}+L_{\{y\},\{t\}}+L_{\{z\},\{t\}}+L_{\{t\},\{x,y,z\}}.
\end{multline*}
Thus, the intersection matrix of the lines
$L_{\{x\},\{y\}}$, $L_{\{x\},\{z\}}$, $L_{\{x\},\{t\}}$, $L_{\{y\},\{z\}}$,
$L_{\{y\},\{t\}}$, $L_{\{z\},\{t\}}$,  $L_{\{y\},\{x,t\}}$, $L_{\{z\},\{x,y\}}$, and $L_{\{t\},\{x,y,z\}}$
on the surface $S_\lambda$ has the same rank
as the intersection matrix of the curves $L_{\{y\},\{t\}}$, $L_{\{y\},\{x,t\}}$, $L_{\{z\},\{t\}}$, $L_{\{z\},\{x,y\}}$, $L_{\{t\},\{x,y,z\}}$, and~$H_{\lambda}$.
Moreover, we have  $\mathrm{rk}\,\mathrm{Pic}(\widetilde{S}_{\Bbbk})=\mathrm{rk}\,\mathrm{Pic}(S_{\Bbbk})+11$.
Thus, we see that \eqref{equation:main-2-simple} holds, so that \eqref{equation:main-2} in Main Theorem holds by Lemma~\ref{lemma:cokernel}.

\subsection{Family \textnumero $3.19$}
\label{section:r-3-n-19}

The threefold $X$ can be obtained by blowing up a smooth quadric hypersurface in $\mathbb{P}^3$ in two points, so that $h^{1,2}(X)=0$.
A~toric Landau--Ginzburg model of this family is given by Minkowski polynomial \textnumero $74$, which is
$$
\frac{z}{x}+\frac{1}{x}+y+z+x+\frac{1}{yz}+\frac{x}{yz}.
$$
The quartic pencil $\mathcal{S}$ is given by the following equations:
$$
z^2ty+t^2yz+y^2xz+z^2xy+x^2yz+t^3x+x^2t^2=\lambda xyzt.
$$

Suppose that $\lambda\ne\infty$. Then
\begin{equation}
\label{equation:3-19}
\begin{split}
H_{\{x\}}\cdot S_\lambda&=L_{\{x\},\{y\}}+L_{\{x\},\{z\}}+L_{\{x\},\{t\}}+L_{\{x\},\{z,t\}},\\
H_{\{y\}}\cdot S_\lambda&=L_{\{x\},\{y\}}+2L_{\{y\},\{t\}}+L_{\{y\},\{x,t\}},\\
H_{\{z\}}\cdot S_\lambda&=L_{\{x\},\{z\}}+2L_{\{z\},\{t\}}+L_{\{z\},\{x,t\}},\\
H_{\{t\}}\cdot S_\lambda&=L_{\{x\},\{t\}}+L_{\{y\},\{t\}}+L_{\{z\},\{t\}}+L_{\{t\},\{x,y,z\}}.
\end{split}
\end{equation}
Thus, the base locus of the pencil $\mathcal{S}$ consists of
the lines $L_{\{x\},\{y\}}$, $L_{\{x\},\{z\}}$, $L_{\{x\},\{t\}}$,
$L_{\{y\},\{t\}}$, $L_{\{z\},\{t\}}$, $L_{\{x\},\{z,t\}}$, $L_{\{y\},\{x,t\}}$,
$L_{\{z\},\{x,t\}}$, $L_{\{t\},\{x,y,z\}}$.

For every $\lambda\in\mathbb{C}$, the quartic surface $S_\lambda$ has isolated singularities, so that it is irreducible.
Moreover, the singular points of the surface $S_\lambda$ contained in the base locus of the pencil~$\mathcal{S}$ can be described as follows:
\begin{itemize}\setlength{\itemindent}{3cm}
\item[$P_{\{x\},\{y\},\{t\}}$:] type $\mathbb{A}_4$ with quadratic term $y(x+t)$;

\item[$P_{\{x\},\{z\},\{t\}}$:] type $\mathbb{A}_4$ with quadratic term $xz$;

\item[$P_{\{y\},\{z\},\{t\}}$:] type $\mathbb{A}_1$;

\item[$P_{\{y\},\{t\},\{x,z\}}$:] type $\mathbb{A}_1$;

\item[$P_{\{z\},\{t\},\{x,y\}}$:] type $\mathbb{A}_1$.
\end{itemize}
Then each fiber $\mathsf{f}^{-1}(\lambda)$ is irreducible by Lemma~\ref{corollary:irreducible-fibers}.
This confirms \eqref{equation:main-1} in Main Theorem.

Let us verify \eqref{equation:main-2} in Main Theorem.
It follows from \eqref{equation:3-19}  that
the intersection matrix of the lines
 $L_{\{x\},\{y\}}$, $L_{\{x\},\{z\}}$, $L_{\{x\},\{t\}}$,
$L_{\{y\},\{t\}}$, $L_{\{z\},\{t\}}$, $L_{\{x\},\{z,t\}}$, $L_{\{y\},\{x,t\}}$,
$L_{\{z\},\{x,t\}}$, $L_{\{t\},\{x,y,z\}}$ on the surface $S_\lambda$ has the same rank
as the intersection matrix of the curves $L_{\{x\},\{y\}}$, $L_{\{x\},\{z,t\}}$, $L_{\{y\},\{t\}}$, $L_{\{z\},\{t\}}$, $L_{\{t\},\{x,y,z\}}$, and $H_{\lambda}$.
The latter matrix is given by
\begin{center}\renewcommand\arraystretch{1.42}
\begin{tabular}{|c||c|c|c|c|c|c|}
\hline
 $\bullet$  & $L_{\{x\},\{y\}}$ & $L_{\{x\},\{z,t\}}$ & $L_{\{y\},\{t\}}$ & $L_{\{z\},\{t\}}$ & $L_{\{t\},\{x,y,z\}}$ &  $H_{\lambda}$ \\
\hline\hline
$L_{\{x\},\{y\}}$ & $-\frac{6}{5}$ & $1$ & $\frac{4}{5}$ & $0$ & $0$ & $1$ \\
\hline
$L_{\{x\},\{z,t\}}$ & $1$ & $-\frac{6}{5}$ & $0$ & $\frac{1}{5}$ & $0$ & $1$ \\
\hline
$L_{\{y\},\{t\}}$ & $\frac{4}{5}$ & $0$ & $-\frac{1}{5}$ & $\frac{1}{2}$ & $\frac{1}{2}$ & $1$ \\
\hline
$L_{\{z\},\{t\}}$ & $0$ & $\frac{1}{5}$ & $\frac{1}{2}$ & $-\frac{1}{5}$ & $\frac{1}{2}$ & $1$ \\
\hline
$L_{\{t\},\{x,y,z\}}$ & $0$ & $1$ & $\frac{1}{2}$ & $\frac{1}{2}$ & $-1$ & $1$ \\
\hline
 $H_{\lambda}$  & $1$ & $1$ & $1$ & $1$ & $1$ & $4$ \\
\hline
\end{tabular}
\end{center}
The rank of this matrix is $6$.
Moreover, we have $\mathrm{rk}\,\mathrm{Pic}(\widetilde{S}_{\Bbbk})=\mathrm{rk}\,\mathrm{Pic}(S_{\Bbbk})+11$.
Thus, we conclude that \eqref{equation:main-2-simple} holds.
Then \eqref{equation:main-2} in Main Theorem holds by Lemma~\ref{lemma:cokernel}.

\subsection{Family \textnumero $3.20$}
\label{section:r-3-n-20}

In this case, the threefold $X$ is a blow up of the smooth quadric threefold along a disjoint union of two lines,
so that $h^{1,2}(X)=0$.
A~toric Landau--Ginzburg model of this family is given by Minkowski polynomial \textnumero $79$, which is
$$
\frac{y}{x}+\frac{1}{x}+y+z+\frac{1}{y}+x+\frac{x}{yz}.
$$
The quartic pencil $\mathcal{S}$ is given by the following equation:
$$
y^2tz+t^2yz+y^2xz+z^2xy+t^2xz+x^2yz+x^2t^2=\lambda xyzt.
$$

As usual, we suppose that $\lambda\ne\infty$. Then
\begin{equation}
\label{equation:3-20}
\begin{split}
H_{\{x\}}\cdot S_\lambda&=L_{\{x\},\{y\}}+L_{\{x\},\{z\}}+L_{\{x\},\{t\}}+L_{\{x\},\{y,t\}},\\
H_{\{y\}}\cdot S_\lambda&=L_{\{x\},\{y\}}+2L_{\{y\},\{t\}}+L_{\{y\},\{x,z\}},\\
H_{\{z\}}\cdot S_\lambda&=2L_{\{x\},\{z\}}+2L_{\{z\},\{t\}},\\
H_{\{t\}}\cdot S_\lambda&=L_{\{x\},\{t\}}+L_{\{y\},\{t\}}+L_{\{z\},\{t\}}+L_{\{t\},\{x,y,z\}}.
\end{split}
\end{equation}
Thus, the base locus of the pencil $\mathcal{S}$ consists of
the lines $L_{\{x\},\{y\}}$, $L_{\{x\},\{z\}}$, $L_{\{x\},\{t\}}$, $L_{\{y\},\{t\}}$,
$L_{\{z\},\{t\}}$, $L_{\{x\},\{y,t\}}$, $L_{\{y\},\{x,z\}}$, and $L_{\{t\},\{x,y,z\}}$.

For every $\lambda\in\mathbb{C}$, the surface $S_\lambda$ has isolated singularities, so that it is irreducible.
The singular points of the surface~$S_\lambda$ contained in the base locus of the pencil~$\mathcal{S}$ can be described as follows:
\begin{itemize}\setlength{\itemindent}{3cm}
\item[$P_{\{x\},\{y\},\{z\}}$:] type $\mathbb{A}_1$;

\item[$P_{\{x\},\{y\},\{t\}}$:] type $\mathbb{A}_3$ with quadratic term $xy$;

\item[$P_{\{x\},\{z\},\{t\}}$:] type $\mathbb{A}_3$ with quadratic term $z(x+t)$;

\item[$P_{\{y\},\{z\},\{t\}}$:] type $\mathbb{A}_1$;

\item[$P_{\{x\},\{y\},\{z,t\}}$:] type $\mathbb{A}_1$ for $\lambda\neq-1$, type $\mathbb{A}_2$ for $\lambda=-1$;

\item[$P_{\{y\},\{t\},\{x,z\}}$:] type $\mathbb{A}_2$ with quadratic term
$$
y(x+y+z-\lambda t)
$$
for $\lambda\neq 0$, type $\mathbb{A}_3$ for $\lambda=0$;

\item[$P_{\{z\},\{t\},\{x,y\}}$:] type $\mathbb{A}_1$.
\end{itemize}
By Lemma~\ref{corollary:irreducible-fibers}, each fiber $\mathsf{f}^{-1}(\lambda)$ is irreducible.
This confirms \eqref{equation:main-1} in Main Theorem.

If $\lambda\ne 0$ and $\lambda\ne -1$, then the intersection matrix of the curves $L_{\{x\},\{y\}}$, $L_{\{x\},\{y,t\}}$, $L_{\{y\},\{x,z\}}$, $L_{\{t\},\{x,y,z\}}$, and $H_{\lambda}$
on the surface $S_\lambda$ is given by the following table:
\begin{center}\renewcommand\arraystretch{1.42}
\begin{tabular}{|c||c|c|c|c|c|}
\hline
 $\bullet$  & $L_{\{x\},\{y\}}$ & $L_{\{x\},\{y,t\}}$ & $L_{\{y\},\{x,z\}}$ & $L_{\{t\},\{x,y,z\}}$ &  $H_{\lambda}$ \\
\hline\hline
$L_{\{x\},\{y\}}$ & $0$ & $\frac{1}{2}$ & $\frac{1}{2}$ & $0$ & $1$ \\
\hline
$L_{\{x\},\{y,t\}}$ & $\frac{1}{2}$ & $-\frac{5}{4}$ & $0$ & $0$ & $1$ \\
\hline
$L_{\{y\},\{x,z\}}$ & $\frac{1}{2}$ & $0$ & $-\frac{5}{6}$ & $\frac{1}{3}$ & $1$ \\
\hline
$L_{\{t\},\{x,y,z\}}$ & $0$ & $0$ & $\frac{1}{3}$ & $-\frac{5}{6}$ & $1$ \\
\hline
 $H_{\lambda}$  & $1$ & $1$ & $1$ & $1$ & $4$ \\
\hline
\end{tabular}
\end{center}
The rank of this matrix is $5$.
On the other hand, it follows from \eqref{equation:3-20} that
\begin{multline*}
L_{\{x\},\{y\}}+L_{\{x\},\{z\}}+L_{\{x\},\{t\}}+L_{\{x\},\{y,t\}}\sim L_{\{x\},\{y\}}+2L_{\{y\},\{t\}}+L_{\{y\},\{x,z\}}\sim \\
\sim 2L_{\{x\},\{z\}}+2L_{\{z\},\{t\}}\sim L_{\{x\},\{t\}}+L_{\{y\},\{t\}}+L_{\{z\},\{t\}}+L_{\{t\},\{x,y,z\}}\sim H_\lambda
\end{multline*}
on the surface $S_\lambda$.
Thus, if $\lambda\ne 0$ and $\lambda\ne -1$, then the rank of the intersection matrix of the lines
$L_{\{x\},\{y\}}$, $L_{\{x\},\{z\}}$, $L_{\{x\},\{t\}}$, $L_{\{y\},\{t\}}$,
$L_{\{z\},\{t\}}$, $L_{\{x\},\{y,t\}}$, $L_{\{y\},\{x,z\}}$, and $L_{\{t\},\{x,y,z\}}$
on the surface $S_\lambda$ is also $5$.
Moreover, we have $\mathrm{rk}\,\mathrm{Pic}(\widetilde{S}_{\Bbbk})=\mathrm{rk}\,\mathrm{Pic}(S_{\Bbbk})+12$.
Thus, we see that \eqref{equation:main-2-simple} holds, so that \eqref{equation:main-2} in Main Theorem holds by Lemma~\ref{lemma:cokernel}.

\subsection{Family \textnumero $3.21$}
\label{section:r-3-n-21}

In this case, the threefold $X$ is  a blow up of $\mathbb{P}^1\times\mathbb{P}^2$ in a curve of bidegree $(2,1)$, so that $h^{1,2}(X)=0$.
A~toric Landau--Ginzburg model of this family is given by Minkowski polynomial \textnumero $213$, which is
$$
\frac{z}{y}+x+\frac{1}{y}+z+\frac{z}{xy}+\frac{1}{z}+y+\frac{1}{x}.
$$
The quartic pencil $\mathcal{S}$ is given by the equation
$$
z^2xt+x^2yz+t^2xz+z^2xy+t^2z^2+t^2xy+y^2xz+t^2yz=\lambda xyzt.
$$

Suppose that $\lambda\ne\infty$.
Let $\mathcal{C}$ be the conic in $\mathbb{P}^3$ that is given by $y=xz+xt+zt=0$.
Then
\begin{equation}
\label{equation:3-21}
\begin{split}
H_{\{x\}}\cdot S_\lambda&=L_{\{x\},\{z\}}+2L_{\{x\},\{t\}}+L_{\{x\},\{y,z\}},\\
H_{\{y\}}\cdot S_\lambda&=L_{\{y\},\{z\}}+L_{\{y\},\{t\}}+\mathcal{C},\\
H_{\{z\}}\cdot S_\lambda&=L_{\{x\},\{z\}}+L_{\{y\},\{z\}}+2L_{\{z\},\{t\}},\\
H_{\{t\}}\cdot S_\lambda&=L_{\{x\},\{t\}}+L_{\{y\},\{t\}}+L_{\{z\},\{t\}}+L_{\{t\},\{x,y,z\}}.
\end{split}
\end{equation}
Thus, the base locus of the pencil $\mathcal{S}$ consists of the curves $L_{\{x\},\{z\}}$ $L_{\{x\},\{t\}}$,
$L_{\{y\},\{z\}}$, $L_{\{y\},\{t\}}$, $L_{\{z\},\{t\}}$, $L_{\{x\},\{y,z\}}$, $L_{\{t\},\{x,y,z\}}$, and $\mathcal{C}$.

For every $\lambda\in\mathbb{C}$, the surface $S_\lambda$ has isolated singularities, so that it is irreducible.
Moreover, the singular points of the surface $S_\lambda$ contained in the base locus of the pencil~$\mathcal{S}$ can be described as follows:
\begin{itemize}\setlength{\itemindent}{1.5cm}

\item[$P_{\{x\},\{y\},\{t\}}$:] type $\mathbb{A}_1$;

\item[$P_{\{x\},\{y\},\{z\}}$:] type $\mathbb{A}_2$ with quadratic term $(x+z)(y+z)$ for $\lambda\neq-1$, type $\mathbb{A}_3$ for $\lambda=-1$;

\item[$P_{\{x\},\{z\},\{t\}}$:] type $\mathbb{A}_3$ with quadratic term $xz$;

\item[$P_{\{x\},\{t\},\{y,z\}}$:] type $\mathbb{A}_2$ with quadratic term
$$
x(x+y+z-t-\lambda t)
$$
for $\lambda\neq-1$, type $\mathbb{A}_3$ for $\lambda=-1$;

\item[$P_{\{y\},\{z\},\{t\}}$:] type $\mathbb{A}_3$ with quadratic term $yz$;

\item[$P_{\{z\},\{t\},\{x,y\}}$:] type $\mathbb{A}_1$.
\end{itemize}
By Lemma~\ref{corollary:irreducible-fibers}, each fiber $\mathsf{f}^{-1}(\lambda)$ is irreducible.
This confirms \eqref{equation:main-1} in Main Theorem.

Now let us show that \eqref{equation:main-2} in Main Theorem also holds in this case.
To do this, we may assume that $\lambda\ne-1$.
Then the intersection matrix of the curves $L_{\{x\},\{z\}}$, $L_{\{x\},\{y,z\}}$, $L_{\{y\},\{t\}}$, $L_{\{t\},\{x,y,z\}}$, and $H_{\lambda}$
on the surface $S_\lambda$ is given by
\begin{center}\renewcommand\arraystretch{1.42}
\begin{tabular}{|c||c|c|c|c|c|}
\hline
 $\bullet$  & $L_{\{x\},\{z\}}$ & $L_{\{x\},\{y,z\}}$ & $L_{\{y\},\{t\}}$ & $L_{\{t\},\{x,y,z\}}$ &  $H_{\lambda}$ \\
\hline\hline
$L_{\{x\},\{z\}}$ & $-\frac{1}{3}$ & $\frac{1}{3}$ & $0$ & $0$ & $1$ \\
\hline
$L_{\{x\},\{y,z\}}$ & $\frac{1}{3}$ & $-\frac{2}{3}$ & $0$ & $\frac{1}{3}$ & $1$ \\
\hline
$L_{\{y\},\{t\}}$ & $0$ & $0$ & $-\frac{3}{4}$ & $1$ & $1$ \\
\hline
$L_{\{t\},\{x,y,z\}}$ & $0$ & $\frac{1}{3}$ & $1$ & $-\frac{5}{6}$ & $1$ \\
\hline
 $H_{\lambda}$  & $1$ & $1$ & $1$ & $1$ & $4$ \\
\hline
\end{tabular}
\end{center}
The rank of this intersection matrix is $5$.
On the other hand, it follows from \eqref{equation:3-21} that
\begin{multline*}
H_{\lambda}\sim L_{\{x\},\{z\}}+2L_{\{x\},\{t\}}+L_{\{x\},\{y,z\}}\sim L_{\{y\},\{z\}}+L_{\{y\},\{t\}}+\mathcal{C}\sim\\
\sim L_{\{x\},\{z\}}+L_{\{y\},\{z\}}+2L_{\{z\},\{t\}}\sim L_{\{x\},\{t\}}+L_{\{y\},\{t\}}+L_{\{z\},\{t\}}+L_{\{t\},\{x,y,z\}}
\end{multline*}
on the surface $S_\lambda$.
Thus, the rank of the intersection matrix of the curves
$L_{\{x\},\{z\}}$ $L_{\{x\},\{t\}}$,
$L_{\{y\},\{z\}}$, $L_{\{y\},\{t\}}$, $L_{\{z\},\{t\}}$, $L_{\{x\},\{y,z\}}$, $L_{\{t\},\{x,y,z\}}$, and $\mathcal{C}$
on the surface $S_\lambda$ is also $5$.
Moreover, we have $\mathrm{rk}\,\mathrm{Pic}(\widetilde{S}_{\Bbbk})=\mathrm{rk}\,\mathrm{Pic}(S_{\Bbbk})+12$.
Thus, we see that \eqref{equation:main-2-simple} holds, so that \eqref{equation:main-2} in Main Theorem holds by Lemma~\ref{lemma:cokernel}.

\subsection{Family \textnumero $3.22$}
\label{section:r-3-n-22}

In this case, the threefold $X$ is  a blow up of $\mathbb{P}^1\times\mathbb{P}^2$ in a conic contained
in a fiber of the projection $\mathbb{P}^1\times\mathbb{P}^2\to\mathbb{P}^1$.
Thus, we have $h^{1,2}(X)=0$.
A~toric Landau--Ginzburg model of this family is given by Minkowski polynomial \textnumero $75$, which is
$$
\frac{z}{x}+\frac{1}{x}+y+z+\frac{1}{xyz}+x+\frac{1}{yz}.
$$
The quartic pencil $\mathcal{S}$ is given by
$$
z^2ty+t^2yz+y^2xz+z^2xy+t^4+x^2yz+t^3x=\lambda xyzt.
$$

Let $\mathcal{C}$ be a cubic curve in $\mathbb{P}^3$ that is given by $x=yz^2+yzt+t^3=0$.
Then $\mathcal{C}$ is singular at the point $P_{\{x\},\{z\},\{t\}}$.
Moreover, if $\lambda\ne\infty$, then
\begin{equation}
\label{equation:3-22}
\begin{split}
H_{\{x\}}\cdot S_\lambda&=L_{\{x\},\{t\}}+\mathcal{C},\\
H_{\{y\}}\cdot S_\lambda&=3L_{\{y\},\{t\}}+L_{\{y\},\{x,t\}},\\
H_{\{z\}}\cdot S_\lambda&=3L_{\{z\},\{t\}}+L_{\{z\},\{x,t\}},\\
H_{\{t\}}\cdot S_\lambda&=L_{\{x\},\{t\}}+L_{\{y\},\{t\}}+L_{\{z\},\{t\}}+L_{\{t\},\{x,y,z\}}.
\end{split}
\end{equation}
Therefore, the base locus of the pencil $\mathcal{S}$ consists of
the curves $L_{\{x\},\{t\}}$, $L_{\{y\},\{t\}}$, $L_{\{z\},\{t\}}$, $L_{\{y\},\{x,t\}}$, $L_{\{z\},\{x,t\}}$,
$L_{\{t\},\{x,y,z\}}$, and $\mathcal{C}$.

For every $\lambda\ne\infty$, the surface $S_\lambda$ has isolated singularities, which implies that $S_\lambda$ is irreducible.
In~this case, the singular points of the surface $S_\lambda$ contained in the base locus of the pencil~$\mathcal{S}$ can be described as follows:
\begin{itemize}\setlength{\itemindent}{2cm}
\item[$P_{\{x\},\{y\},\{t\}}$:] type $\mathbb{A}_4$ with quadratic term $y(x+t)$ for $\lambda\neq-2$, type $\mathbb{A}_6$ for $\lambda=-2$;

\item[$P_{\{x\},\{z\},\{t\}}$:] type $\mathbb{A}_3$ with quadratic term $xz$;

\item[$P_{\{y\},\{z\},\{t\}}$:] type $\mathbb{A}_2$ with quadratic term $yz$;

\item[$P_{\{y\},\{t\},\{x,z\}}$:] type $\mathbb{A}_2$ with quadratic term $y(x+y+z-t-\lambda t)$;

\item[$P_{\{z\},\{t\},\{x,y\}}$:] type $\mathbb{A}_2$ with quadratic term $z(x+y+z-\lambda t)$.
\end{itemize}
Thus, it follows from Lemma~\ref{corollary:irreducible-fibers} that $[\mathsf{f}^{-1}(\lambda)]=1$ for every $\lambda\in\mathbb{C}$.
This confirms \eqref{equation:main-1} in Main Theorem, since $h^{1,2}(X)=0$.

Let us verify \eqref{equation:main-2} in Main Theorem. If $\lambda\ne\infty$, then
\begin{multline*}
H_{\lambda}\sim L_{\{x\},\{t\}}+\mathcal{C}\sim 3L_{\{y\},\{t\}}+L_{\{y\},\{x,t\}}\sim \\
\sim 3L_{\{z\},\{t\}}+L_{\{z\},\{x,t\}}\sim L_{\{x\},\{t\}}+L_{\{y\},\{t\}}+L_{\{z\},\{t\}}+L_{\{t\},\{x,y,z\}}
\end{multline*}
on the surface $S_\lambda$. This follows from \eqref{equation:3-22}.
Thus, if $\lambda\ne\infty$, then the intersection matrix of
$L_{\{x\},\{t\}}$, $L_{\{y\},\{t\}}$, $L_{\{z\},\{t\}}$, $L_{\{y\},\{x,t\}}$, $L_{\{z\},\{x,t\}}$,
$L_{\{t\},\{x,y,z\}}$, and $\mathcal{C}$ on the surface $S_\lambda$
has the same rank as the intersection matrix of the curves $L_{\{y\},\{x,t\}}$, $L_{\{z\},\{x,t\}}$, $L_{\{t\},\{x,y,z\}}$, and $H_{\lambda}$.
If $\lambda\ne\infty$ and $\lambda\ne-2$, then the latter matrix is given by the following table:
\begin{center}\renewcommand\arraystretch{1.42}
\begin{tabular}{|c||c|c|c|c|}
\hline
 $\bullet$  & $L_{\{y\},\{x,t\}}$ & $L_{\{z\},\{x,t\}}$ & $L_{\{t\},\{x,y,z\}}$ &  $H_{\lambda}$ \\
\hline\hline
$L_{\{y\},\{x,t\}}$ & $-\frac{4}{5}$ & $1$ & $0$ & $1$ \\
\hline
$L_{\{z\},\{x,t\}}$ & $1$ & $-\frac{5}{4}$ & $0$ & $1$ \\
\hline
$L_{\{t\},\{x,y,z\}}$ & $0$ & $0$ & $-\frac{2}{3}$ & $1$ \\
\hline
 $H_{\lambda}$  & $1$ & $1$ & $1$ & $4$ \\
\hline
\end{tabular}
\end{center}
The rank of this matrix is $4$.
On the other hand, we have $\mathrm{rk}\,\mathrm{Pic}(\widetilde{S}_{\Bbbk})=\mathrm{rk}\,\mathrm{Pic}(S_{\Bbbk})+13$.
Thus, we see that \eqref{equation:main-2-simple} holds, so that \eqref{equation:main-2} in Main Theorem holds by Lemma~\ref{lemma:cokernel}.

\subsection{Family \textnumero $3.23$}
\label{section:r-3-n-23}

In this case, the threefold $X$ is  a blow up of $\mathbb{P}^3$ blown up at a point at the proper transform of a conic passing through this point.
A~toric Landau--Ginzburg model of this family is given by Minkowski polynomial \textnumero $76$, which is
$$
\frac{z}{x}+\frac{1}{x}+y+z+\frac{1}{xy}+x+\frac {1}{yz}.
$$
The pencil $\mathcal{S}$ is given by the following equation:
$$
z^2ty+t^2yz+y^2xz+z^2xy+t^3z+x^2yz+t^3x=\lambda xyzt.
$$

Suppose that $\lambda\ne\infty$.
Let $\mathcal{C}$ be the conic in $\mathbb{P}^3$ that is given by $x=yz+yt+t^2=0$.
Then
\begin{equation}
\label{equation:3-23}
\begin{split}
H_{\{x\}}\cdot S_\lambda&=L_{\{x\},\{z\}}+L_{\{x\},\{t\}}+\mathcal{C},\\
H_{\{y\}}\cdot S_\lambda&=3L_{\{y\},\{t\}}+L_{\{y\},\{x,z\}},\\
H_{\{z\}}\cdot S_\lambda&=L_{\{x\},\{z\}}+3L_{\{z\},\{t\}},\\
H_{\{t\}}\cdot S_\lambda&=L_{\{x\},\{t\}}+L_{\{y\},\{t\}}+L_{\{z\},\{t\}}+L_{\{t\},\{x,y,z\}}.
\end{split}
\end{equation}
Thus, the base locus of the pencil $\mathcal{S}$ consists of the curves $L_{\{x\},\{z\}}$, $L_{\{x\},\{t\}}$,
$L_{\{y\},\{t\}}$,  $L_{\{z\},\{t\}}$, $L_{\{y\},\{x,z\}}$, $L_{\{t\},\{x,y,z\}}$, and $\mathcal{C}$.

Observe that $S_\lambda$ has isolated singularities.
In particular, it is irreducible.
Moreover, its singular points contained in the base locus of the pencil $\mathcal{S}$ can be described as follows:
\begin{itemize}\setlength{\itemindent}{3cm}
\item[$P_{\{x\},\{y\},\{t\}}$:] type $\mathbb{A}_2$ with quadratic term $y(x+t)$;

\item[$P_{\{x\},\{z\},\{t\}}$:] type $\mathbb{A}_4$ with quadratic term $xz$;

\item[$P_{\{y\},\{z\},\{t\}}$:] type $\mathbb{A}_2$ with quadratic term $yz$;

\item[$P_{\{y\},\{t\},\{x,z\}}$:] type $\mathbb{A}_3$ with quadratic term
$$
y(x+y+z-t-\lambda t)
$$
for $\lambda\neq-1$, type $\mathbb{A}_4$ for $\lambda=-1$;

\item[$P_{\{z\},\{t\},\{x,y\}}$:] type $\mathbb{A}_2$ with quadratic term $z(x+y+z-\lambda t)$.
\end{itemize}
Thus, by Lemma~\ref{corollary:irreducible-fibers}, every fiber $\mathsf{f}^{-1}(\lambda)$ is irreducible.
This confirms \eqref{equation:main-1} in Main Theorem, since $h^{1,2}(X)=0$.

To check \eqref{equation:main-2} in Main Theorem, we may assume that $\lambda\ne-1$.
Then the intersection matrix of the curves $L_{\{x\},\{t\}}$, $L_{\{y\},\{x,z\}}$, $L_{\{t\},\{x,y,z\}}$, and $H_{\lambda}$
on the surface $S_\lambda$ is given by
\begin{center}\renewcommand\arraystretch{1.42}
\begin{tabular}{|c||c|c|c|c|}
\hline
 $\bullet$  & $L_{\{x\},\{t\}}$ & $L_{\{y\},\{x,z\}}$ & $L_{\{t\},\{x,y,z\}}$ &  $H_{\lambda}$ \\
\hline\hline
$L_{\{x\},\{t\}}$ & $-\frac{8}{15}$ & $0$ & $1$ & $1$ \\
\hline
$L_{\{y\},\{x,z\}}$ & $0$ & $-\frac{5}{4}$ & $\frac{1}{4}$ & $1$ \\
\hline
$L_{\{t\},\{x,y,z\}}$ & $1$ & $\frac{1}{4}$ & $-\frac{1}{2}$ & $1$ \\
\hline
 $H_{\lambda}$  & $1$ & $1$ & $1$ & $4$ \\
\hline
\end{tabular}
\end{center}
This matrix has rank $4$.
On the other hand, it follows from \eqref{equation:3-23} that
\begin{multline*}
H_\lambda\sim L_{\{x\},\{z\}}+L_{\{x\},\{t\}}+\mathcal{C}\sim 3L_{\{y\},\{t\}}+L_{\{y\},\{x,z\}}\sim \\
\sim L_{\{x\},\{z\}}+3L_{\{z\},\{t\}}\sim L_{\{x\},\{t\}}+L_{\{y\},\{t\}}+L_{\{z\},\{t\}}+L_{\{t\},\{x,y,z\}}
\end{multline*}
on the surface $S_\lambda$.
Hence, the rank of the intersection matrix of the curves
$L_{\{x\},\{z\}}$, $L_{\{x\},\{t\}}$,
$L_{\{y\},\{t\}}$,  $L_{\{z\},\{t\}}$, $L_{\{y\},\{x,z\}}$, $L_{\{t\},\{x,y,z\}}$, and $\mathcal{C}$
on the surface $S_\lambda$ is also $4$.
Using the description of the singular points of the surface $S_\lambda$, we see that
$\mathrm{rk}\,\mathrm{Pic}(\widetilde{S}_{\Bbbk})=\mathrm{rk}\,\mathrm{Pic}(S_{\Bbbk})+13$.
Thus, we see that \eqref{equation:main-2-simple} holds, so that \eqref{equation:main-2} in Main Theorem holds by Lemma~\ref{lemma:cokernel}.

\subsection{Family \textnumero $3.24$}
\label{section:r-3-n-24}

In this case, a toric Landau--Ginzburg model is given by Minkowski polynomial \textnumero $77$, which is
$$
x+y+z+\frac{y}{x}+\frac{1}{y}+\frac{1}{x}+\frac{1}{xyz}.
$$
Thus, the quartic pencil $\mathcal{S}$ is given by
$$
x^2yz+y^2xz+z^2xy+y^2tz+t^2xz+t^2yz+t^4=\lambda xyzt.
$$

Let $\mathcal{C}_1$ be the cubic curve in $\mathbb{P}^3$ that is given by $x=y^2z+yzt+t^3=0$.
Then $\mathcal{C}_1$ is singular at the point $P_{\{x\},\{y\},\{t\}}$, but its proper transform on $U$ is a smooth rational curve.
Let $\mathcal{C}_2$ be the conic in $\mathbb{P}^3$ that is given by $y=xz+t^2=0$.
If $\lambda\ne\infty$, then
\begin{equation}
\label{equation:3-24}
\begin{split}
H_{\{x\}}\cdot S_\lambda&=L_{\{x\},\{t\}}+\mathcal{C}_1, \\
H_{\{y\}}\cdot S_\lambda&=2L_{\{y\},\{t\}}+\mathcal{C}_2,\\
H_{\{z\}}\cdot S_\lambda&=4L_{\{z\},\{t\}},\\
H_{\{t\}}\cdot S_\lambda&=L_{\{x\},\{t\}}+L_{\{y\},\{t\}}+L_{\{z\},\{t\}}+L_{\{t\},\{x,y,z\}}.
\end{split}
\end{equation}
Thus, the base locus of the pencil $\mathcal{S}$ is a union of the curves
$L_{\{x\},\{t\}}$, $L_{\{y\},\{t\}}$, $L_{\{z\},\{t\}}$, $L_{\{t\},\{x,y,z\}}$, $\mathcal{C}_1$, and $\mathcal{C}_2$,

If $\lambda\ne\infty$, then the quartic surface $S_\lambda$ has isolated singularities, so that it is irreducible.
In~this case, its singular points contained in the base locus of the pencil $\mathcal{S}$ can be described as follows:
\begin{itemize}\setlength{\itemindent}{3cm}
\item[$P_{\{x\},\{y\},\{t\}}$:] type $\mathbb{A}_3$ with quadratic term $xy$;

\item[$P_{\{x\},\{z\},\{t\}}$:] type $\mathbb{A}_3$ with quadratic term $z(x+t)$;

\item[$P_{\{y\},\{z\},\{t\}}$:] type $\mathbb{A}_3$ with quadratic term $yz$;

\item[$P_{\{x\},\{t\},\{y,z\}}$:] type $\mathbb{A}_1$ for $\lambda\neq-\frac{5}{2}$, type $\mathbb{A}_2$ for $\lambda=-\frac{5}{2}$;

\item[$P_{\{y\},\{t\},\{x,z\}}$:] type $\mathbb{A}_1$;

\item[$P_{\{z\},\{t\},\{x,y\}}$:] type $\mathbb{A}_3$ with quadratic term $z(x+y+z-t-\lambda t)$.
\end{itemize}
Thus, by Lemma~\ref{corollary:irreducible-fibers}, every fiber $\mathsf{f}^{-1}(\lambda)$ is irreducible.
This confirms \eqref{equation:main-1} in Main Theorem, since $h^{1,2}(X)=0$ in this case.

If $\lambda\ne\infty$, then it follows from \eqref{equation:3-24} that
\begin{multline*}
L_{\{x\},\{t\}}+\mathcal{C}_1\sim 2L_{\{y\},\{t\}}+\mathcal{C}_2\sim 4L_{\{z\},\{t\}}\sim L_{\{x\},\{t\}}+L_{\{y\},\{t\}}+L_{\{z\},\{t\}}+L_{\{t\},\{x,y,z\}}\sim H_\lambda.
\end{multline*}
In this case, the intersection matrix of the curves
$L_{\{x\},\{t\}}$, $L_{\{y\},\{t\}}$, $L_{\{z\},\{t\}}$, $L_{\{t\},\{x,y,z\}}$, $\mathcal{C}_1$, and $\mathcal{C}_2$
on the surface $S_\lambda$ has the same rank as the intersection matrix of the curves $L_{\{x\},\{t\}}$, $L_{\{t\},\{x,y,z\}}$, and $H_{\lambda}$.
On the other hand, if $\lambda\ne\infty$ and $\lambda\ne-\frac{5}{2}$,
then the latter matrix is given by
\begin{center}\renewcommand\arraystretch{1.42}
\begin{tabular}{|c||c|c|c|}
\hline
 $\bullet$  & $L_{\{x\},\{t\}}$ & $L_{\{t\},\{x,y,z\}}$ &  $H_{\lambda}$ \\
\hline\hline
$L_{\{x\},\{t\}}$ & $0$ & $\frac{1}{2}$ & $1$ \\
\hline
$L_{\{t\},\{x,y,z\}}$ & $\frac{1}{2}$ & $-\frac{1}{4}$ & $1$ \\
\hline
 $H_{\lambda}$  & $1$ & $1$ & $4$ \\
\hline
\end{tabular}
\end{center}
The rank of this matrix is $3$.
Thus, if $\lambda\ne\infty$ and $\lambda\ne-\frac{5}{2}$,
then the rank of the intersection matrix of the curves
$L_{\{x\},\{t\}}$, $L_{\{y\},\{t\}}$, $L_{\{z\},\{t\}}$, $L_{\{t\},\{x,y,z\}}$, $\mathcal{C}_1$, and $\mathcal{C}_2$
on the surface $S_\lambda$ is also~$3$.
This implies \eqref{equation:main-2-simple},
because $\mathrm{rk}\,\mathrm{Pic}(\widetilde{S}_{\Bbbk})=\mathrm{rk}\,\mathrm{Pic}(S_{\Bbbk})+14$.
Then \eqref{equation:main-2} in Main Theorem holds by Lemma~\ref{lemma:cokernel}.

\subsection{Family \textnumero $3.25$}
\label{section:r-3-n-25}

In this case, the threefold $X$ is  a blow up of $\mathbb{P}^3$ in a disjoint union of two lines.
Thus, we have $h^{1,2}(X)=0$.
A~toric Landau--Ginzburg model of this family is given by Minkowski polynomial \textnumero $24$, which is
$$
x+y+z+\frac{x}{z}+\frac{1}{x}+\frac{1}{xy}.
$$
Hence, the quartic pencil $\mathcal{S}$ is given by the following equation:
$$
x^2yz+y^2xz+z^2xy+x^2ty+t^2yz+t^3z=\lambda xyzt.
$$

Suppose that $\lambda\ne\infty$. Then
\begin{equation}
\label{equation:3-25}
\begin{split}
H_{\{x\}}\cdot S_\lambda&=L_{\{x\},\{z\}}+2L_{\{x\},\{t\}}+L_{\{x\},\{y,t\}},\\
H_{\{y\}}\cdot S_\lambda&=L_{\{y\},\{z\}}+3L_{\{y\},\{t\}},\\
H_{\{z\}}\cdot S_\lambda&=2L_{\{x\},\{z\}}+L_{\{y\},\{z\}}+L_{\{z\},\{t\}},\\
H_{\{t\}}\cdot S_\lambda&=L_{\{x\},\{t\}}+L_{\{y\},\{t\}}+L_{\{z\},\{t\}}+L_{\{t\},\{x,y,z\}}.
\end{split}
\end{equation}
Thus, the base locus of the pencil $\mathcal{S}$ consists of
the lines $L_{\{x\},\{z\}}$, $L_{\{x\},\{t\}}$, $L_{\{y\},\{z\}}$, $L_{\{y\},\{t\}}$,
$L_{\{z\},\{t\}}$, $L_{\{x\},\{y,t\}}$, $L_{\{t\},\{x,y,z\}}$.

Observe that $S_\lambda$ has isolated singularities, so that it is irreducible.
Moreover, its singular points contained in the base locus of the pencil $\mathcal{S}$ can be described as follows:
\begin{itemize}\setlength{\itemindent}{3cm}
\item[$P_{\{x\},\{y\},\{t\}}$:] type $\mathbb{A}_2$ with quadratic term $xy$;

\item[$P_{\{x\},\{z\},\{t\}}$:] type $\mathbb{A}_4$ with quadratic term $xz$;

\item[$P_{\{y\},\{z\},\{t\}}$:] type $\mathbb{A}_3$ with quadratic term $y(z+t)$;

\item[$P_{\{x\},\{z\},\{y,t\}}$:] type $\mathbb{A}_1$;

\item[$P_{\{x\},\{t\},\{y,z\}}$:] type $\mathbb{A}_1$;

\item[$P_{\{y\},\{t\},\{x,z\}}$:] type $\mathbb{A}_2$  with quadratic term $y(x+y+z+(\lambda+1)t)$.
\end{itemize}
Therefore, by Lemma~\ref{corollary:irreducible-fibers}, every fiber $\mathsf{f}^{-1}(\lambda)$ is irreducible.
This confirms \eqref{equation:main-1} in Main Theorem, since $h^{1,2}(X)=0$.

Let us verify \eqref{equation:main-2} in Main Theorem. It follows from \eqref{equation:3-25} that
\begin{multline*}
H_\lambda\sim L_{\{x\},\{z\}}+2L_{\{x\},\{t\}}+L_{\{x\},\{y,t\}}\sim L_{\{y\},\{z\}}+3L_{\{y\},\{t\}}\sim \\
\sim 2L_{\{x\},\{z\}}+L_{\{y\},\{z\}}+L_{\{z\},\{t\}}\sim L_{\{x\},\{t\}}+L_{\{y\},\{t\}}+L_{\{z\},\{t\}}+L_{\{t\},\{x,y,z\}}
\end{multline*}
on the surface $S_\lambda$.
Thus, the  intersection matrix of the lines $L_{\{x\},\{z\}}$, $L_{\{x\},\{t\}}$, $L_{\{y\},\{z\}}$, $L_{\{y\},\{t\}}$,
$L_{\{z\},\{t\}}$, $L_{\{x\},\{y,t\}}$, $L_{\{t\},\{x,y,z\}}$ has the same rank as
the intersection matrix of the curves $L_{\{x\},\{y,t\}}$, $L_{\{y\},\{z\}}$, $L_{\{t\},\{x,y,z\}}$, and $H_{\lambda}$.
The latter matrix is given by
\begin{center}\renewcommand\arraystretch{1.42}
\begin{tabular}{|c||c|c|c|c|}
\hline
 $\bullet$  & $L_{\{x\},\{y,t\}}$ & $L_{\{z\},\{t\}}$ & $L_{\{t\},\{x,y,z\}}$ &  $H_{\lambda}$ \\
\hline\hline
$L_{\{x\},\{y,t\}}$ & $-\frac{5}{6}$ & $0$ & $0$ & $1$ \\
\hline
$L_{\{y\},\{t\}}$ & $0$ &  $-\frac{4}{3}$ & $\frac{1}{2}$ & $1$ \\
\hline
$L_{\{t\},\{x,y,z\}}$ & $0$ &  $\frac{1}{2}$ & $-\frac{1}{2}$ & $1$ \\
\hline
 $H_{\lambda}$  & $1$ & $1$ & $1$ & $4$ \\
\hline
\end{tabular}
\end{center}
This matrix has rank $4$. This gives \eqref{equation:main-2-simple}, since
$\mathrm{rk}\,\mathrm{Pic}(\widetilde{S}_{\Bbbk})=\mathrm{rk}\,\mathrm{Pic}(S_{\Bbbk})+13$.
Thus, we see that  \eqref{equation:main-2} in Main Theorem holds by Lemma~\ref{lemma:cokernel}.

\subsection{Family \textnumero $3.26$}
\label{section:r-3-n-26}

The threefold can be obtained from $\mathbb{P}^3$ by blowing up disjoint union of a point and a line, so that $h^{1,2}(X)=0$.
A~toric Landau--Ginzburg model of this family is given by Minkowski polynomial \textnumero $25$, which is
$$
\frac{y}{x}+\frac{1}{x}+y+z+x+\frac{1}{yz}.
$$
Then the pencil $\mathcal{S}$ is given by the following equation:
$$
y^2tz+t^2yz+y^2xz+z^2xy+x^2yz+t^3x=\lambda xyzt.
$$

Suppose that $\lambda\ne\infty$. Then
\begin{equation}
\label{equation:3-26}
\begin{split}
H_{\{x\}}\cdot S_\lambda&=L_{\{x\},\{y\}}+L_{\{x\},\{z\}}+L_{\{x\},\{t\}}+L_{\{x\},\{y,t\}},\\
H_{\{y\}}\cdot S_\lambda&=L_{\{x\},\{y\}}+3L_{\{y\},\{t\}},\\
H_{\{z\}}\cdot S_\lambda&=L_{\{x\},\{z\}}+3L_{\{z\},\{t\}},\\
H_{\{t\}}\cdot S_\lambda&=L_{\{x\},\{t\}}+L_{\{y\},\{t\}}+L_{\{z\},\{t\}}+L_{\{t\},\{x,y,z\}}.
\end{split}
\end{equation}
Thus, the base locus of the pencil $\mathcal{S}$ consists of
the lines $L_{\{x\},\{y\}}$, $L_{\{x\},\{z\}}$, $L_{\{x\},\{t\}}$,
$L_{\{y\},\{t\}}$, $L_{\{z\},\{t\}}$, $L_{\{x\},\{y,t\}}$, and $L_{\{t\},\{x,y,z\}}$.

The surface $S_\lambda$ has isolated singularities, so that it is irreducible.
Its singular points contained in the base locus of the pencil $\mathcal{S}$ can be described as follows:
\begin{itemize}\setlength{\itemindent}{3cm}
\item[$P_{\{x\},\{y\},\{t\}}$:] type $\mathbb{A}_4$ with quadratic term $xy$;

\item[$P_{\{x\},\{z\},\{t\}}$:] type $\mathbb{A}_3$ with quadratic term $z(x+t)$;

\item[$P_{\{y\},\{z\},\{t\}}$:] type $\mathbb{A}_2$ with quadratic term $yz$;

\item[$P_{\{y\},\{t\},\{x,z\}}$:] type $\mathbb{A}_2$ with quadratic term $  y(x+y-\lambda t)$;

\item[$P_{\{z\},\{t\},\{x,y\}}$:] type $\mathbb{A}_2$ with quadratic term $z(x+y+z-t-\lambda t)$.
\end{itemize}
By Lemma~\ref{corollary:irreducible-fibers}, every fiber $\mathsf{f}^{-1}(\lambda)$ is irreducible.
This confirms \eqref{equation:main-1} in Main Theorem.

To verify \eqref{equation:main-2} in Main Theorem, observe that
the intersection matrix of the curves $L_{\{x\},\{y,t\}}$, $L_{\{z\},\{t\}}$, $L_{\{t\},\{x,y,z\}}$, and $H_{\lambda}$ on the surface $S_\lambda$ is given by
the following matrix:
\begin{center}\renewcommand\arraystretch{1.42}
\begin{tabular}{|c||c|c|c|c|}
\hline
 $\bullet$  & $L_{\{x\},\{y,t\}}$ & $L_{\{z\},\{t\}}$ & $L_{\{t\},\{x,y,z\}}$ &  $H_{\lambda}$ \\
\hline\hline
$L_{\{x\},\{y,t\}}$ & $-\frac{5}{4}$ & $0$ & $0$ & $1$ \\
\hline
$L_{\{z\},\{t\}}$ & $0$ &  $\frac{1}{12}$ & $\frac{1}{3}$ & $1$ \\
\hline
$L_{\{t\},\{x,y,z\}}$ & $0$ &  $\frac{1}{3}$ & $-\frac{2}{3}$ & $1$ \\
\hline
 $H_{\lambda}$  & $1$ & $1$ & $1$ & $4$ \\
\hline
\end{tabular}
\end{center}
The rank of this matrix is $4$.
On the other hand, it follows from \eqref{equation:3-26} that
\begin{multline*}
L_{\{x\},\{y\}}+L_{\{x\},\{z\}}+L_{\{x\},\{t\}}+L_{\{x\},\{y,t\}}\sim L_{\{x\},\{y\}}+3L_{\{y\},\{t\}}\sim \\
\sim L_{\{x\},\{z\}}+3L_{\{z\},\{t\}}\sim L_{\{x\},\{t\}}+L_{\{y\},\{t\}}+L_{\{z\},\{t\}}+L_{\{t\},\{x,y,z\}}\sim H_\lambda.
\end{multline*}
Thus, the rank of the intersection matrix of the lines $L_{\{x\},\{y\}}$, $L_{\{x\},\{z\}}$, $L_{\{x\},\{t\}}$,
$L_{\{y\},\{t\}}$, $L_{\{z\},\{t\}}$, $L_{\{x\},\{y,t\}}$, and $L_{\{t\},\{x,y,z\}}$ is also $4$.
Moreover, the description of the singular points of the surface $S_\lambda$ easily gives
$\mathrm{rk}\,\mathrm{Pic}(\widetilde{S}_{\Bbbk})=\mathrm{rk}\,\mathrm{Pic}(S_{\Bbbk})+13$.
Thus, we see that \eqref{equation:main-2-simple} holds, so that \eqref{equation:main-2} in Main Theorem holds by Lemma~\ref{lemma:cokernel}.

\subsection{Family \textnumero $3.27$}
\label{section:r-3-n-27}

We already discussed this case in Example~\ref{example:r-3-n-27}, where we also described the pencil $\mathcal{S}$.
Suppose that $\lambda\ne\infty$. Then
\begin{equation}
\label{equation:3-27}
\begin{split}
H_{\{x\}}\cdot S_\lambda&=L_{\{x\},\{y\}}+L_{\{x\},\{z\}}+2L_{\{x\},\{t\}},\\
H_{\{y\}}\cdot S_\lambda&=L_{\{x\},\{y\}}+L_{\{y\},\{z\}}+2L_{\{y\},\{t\}},\\
H_{\{z\}}\cdot S_\lambda&=L_{\{x\},\{z\}}+L_{\{y\},\{z\}}+2L_{\{z\},\{t\}},\\
H_{\{t\}}\cdot S_\lambda&=L_{\{x\},\{t\}}+L_{\{y\},\{t\}}+L_{\{z\},\{t\}}+L_{\{t\},\{x,y,z\}}.
\end{split}
\end{equation}
Thus, the base locus of the pencil $\mathcal{S}$ consists of
the lines $L_{\{x\},\{y\}}$, $L_{\{x\},\{z\}}$, $L_{\{x\},\{t\}}$,
$L_{\{y\},\{z\}}$, $L_{\{y\},\{t\}}$,
$L_{\{z\},\{t\}}$, and $L_{\{t\},\{x,y,z\}}$.

The surface $S_\lambda$ has isolated singularities, so that it is irreducible.
Moreover, its singular points contained in the base locus of the pencil $\mathcal{S}$ can be described as follows:
\begin{itemize}\setlength{\itemindent}{3cm}
\item[$P_{\{x\},\{y\},\{z\}}$:] type $\mathbb{A}_1$;

\item[$P_{\{x\},\{y\},\{t\}}$:] type $\mathbb{A}_3$ with quadratic term $xy$;

\item[$P_{\{x\},\{z\},\{t\}}$:] type $\mathbb{A}_3$ with quadratic term $xz$;

\item[$P_{\{y\},\{z\},\{t\}}$:] type $\mathbb{A}_3$ with quadratic term $yz$;

\item[$P_{\{x\},\{t\},\{y,z\}}$:] type $\mathbb{A}_1$;

\item[$P_{\{y\},\{t\},\{x,z\}}$:] type $\mathbb{A}_1$;

\item[$P_{\{z\},\{t\},\{x,y\}}$:] type $\mathbb{A}_1$.
\end{itemize}
By Lemma~\ref{corollary:irreducible-fibers}, we have $[\mathsf{f}^{-1}(\lambda)]=1$.
This confirms \eqref{equation:main-1} in Main Theorem.

To prove \eqref{equation:main-2} in Main Theorem, observe that the intersection matrix of the curves $L_{\{x\},\{y\}}$, $L_{\{x\},\{z\}}$, $L_{\{y\},\{z\}}$, and $H_{\lambda}$
on the surface $S_\lambda$ is given by the following table:
\begin{center}\renewcommand\arraystretch{1.42}
\begin{tabular}{|c||c|c|c|c|}
\hline
 $\bullet$  & $L_{\{x\},\{y\}}$ & $L_{\{x\},\{z\}}$ & $L_{\{y\},\{z\}}$ & $H_\lambda$ \\
\hline\hline
 $L_{\{x\},\{y\}}$ &  $1$ & $\frac{1}{2}$ & $\frac{1}{2}$ & $1$ \\
\hline
 $L_{\{x\},\{z\}}$ &  $\frac{1}{2}$ & $1$ & $\frac{1}{2}$ & $1$ \\
\hline
 $L_{\{y\},\{z\}}$ &  $\frac{1}{2}$ & $\frac{1}{2}$ & $1$ & $1$ \\
\hline
 $H_\lambda$  & $1$ & $1$ & $1$ & $4$ \\
\hline
\end{tabular}
\end{center}
The determinant of this matrix is $\frac{5}{4}$.
On the other hand, it follows from \eqref{equation:3-27} that
\begin{multline*}
H_\lambda\sim L_{\{x\},\{y\}}+L_{\{x\},\{z\}}+2L_{\{x\},\{t\}}\sim L_{\{x\},\{y\}}+L_{\{y\},\{z\}}+2L_{\{y\},\{t\}}\sim\\
\sim L_{\{x\},\{z\}}+L_{\{y\},\{z\}}+2L_{\{z\},\{t\}}\sim L_{\{x\},\{t\}}+L_{\{y\},\{t\}}+L_{\{z\},\{t\}}+L_{\{t\},\{x,y,z\}}.
\end{multline*}
Thus, the rank of the intersection matrix of the lines the lines $L_{\{x\},\{y\}}$, $L_{\{x\},\{z\}}$, $L_{\{x\},\{t\}}$,
$L_{\{y\},\{z\}}$, $L_{\{y\},\{t\}}$, $L_{\{z\},\{t\}}$, and $L_{\{t\},\{x,y,z\}}$ is $4$.
As we have seen above, the description of the singular points of the surface $S_\lambda$ gives
$\mathrm{rk}\,\mathrm{Pic}(\widetilde{S}_{\Bbbk})=\mathrm{rk}\,\mathrm{Pic}(S_{\Bbbk})+13$,
so that \eqref{equation:main-2-simple} holds.
This gives \eqref{equation:main-2} in Main Theorem by Lemma~\ref{lemma:cokernel}.

\subsection{Family \textnumero $3.28$}
\label{section:r-3-n-28}

The threefold $X$ is $\mathbb{P}^1\times\mathbb{F}_1$, where $\mathbb{F}_1$ is a blow up of $\mathbb{P}^2$ in a point.
Thus, we have $h^{1,2}(X)=0$.
A~toric Landau--Ginzburg model of this family is given by Minkowski polynomial \textnumero $29$, which is
$$
x+y+z+\frac{x}{z}+\frac{1}{x}+\frac{1}{y}.
$$
Then the pencil $\mathcal{S}$ is given by
$$
x^2yz+y^2xz+z^2xy+x^2ty+t^2xz+t^2yz=\lambda xyzt.
$$

As usual, we suppose that $\lambda\ne\infty$. Then
\begin{equation}
\label{equation:3-28}
\begin{split}
H_{\{x\}}\cdot S_\lambda&=L_{\{x\},\{y\}}+L_{\{x\},\{z\}}+2L_{\{x\},\{t\}},\\
H_{\{y\}}\cdot S_\lambda&=L_{\{x\},\{y\}}+L_{\{y\},\{z\}}+2L_{\{y\},\{t\}},\\
H_{\{z\}}\cdot S_\lambda&=2L_{\{x\},\{z\}}+L_{\{y\},\{z\}}+L_{\{z\},\{t\}},\\
H_{\{t\}}\cdot S_\lambda&=L_{\{x\},\{t\}}+L_{\{y\},\{t\}}+L_{\{z\},\{t\}}+L_{\{t\},\{x,y,z\}}.
\end{split}
\end{equation}
Thus, the base locus of the pencil $\mathcal{S}$ consists of the lines
$L_{\{x\},\{y\}}$, $L_{\{x\},\{z\}}$, $L_{\{x\},\{t\}}$, $L_{\{y\},\{z\}}$, $L_{\{y\},\{t\}}$, $L_{\{z\},\{t\}}$, and $L_{\{t\},\{x,y,z\}}$.

Each surface $S_\lambda$ has isolated singularities.
In particular, it is irreducible.
Its singular points contained in the base locus of the pencil $\mathcal{S}$ can be described as follows:
\begin{itemize}\setlength{\itemindent}{3cm}
\item[$P_{\{y\},\{z\},\{t\}}$:] type $\mathbb{A}_2$ with quadratic term $y(z+t)$;

\item[$P_{\{x\},\{z\},\{t\}}$:] type $\mathbb{A}_4$ with quadratic term $xz$;

\item[$P_{\{x\},\{y\},\{t\}}$:] type $\mathbb{A}_3$ with quadratic term $xy$;

\item[$P_{\{x\},\{y\},\{z\}}$:] type $\mathbb{A}_2$ with quadratic term $x(x+y)$;

\item[$P_{\{x\},\{t\},\{y,z\}}$:] type $\mathbb{A}_1$;

\item[$P_{\{y\},\{t\},\{x,z\}}$:] type $\mathbb{A}_1$.
\end{itemize}
Thus, each fiber $\mathsf{f}^{-1}(\lambda)$ is irreducible by Lemma~\ref{corollary:irreducible-fibers}.
This confirms \eqref{equation:main-1} in Main Theorem.

To verify \eqref{equation:main-2} in Main Theorem,
observe that the intersection matrix of the curves $L_{\{y\},\{z\}}$, $L_{\{y\},\{t\}}$, $L_{\{t\},\{x,y,z\}}$, and $H_{\lambda}$
on the surface $S_\lambda$ is given by the following table:
\begin{center}\renewcommand\arraystretch{1.42}
\begin{tabular}{|c||c|c|c|c|}
\hline
 $\bullet$  & $L_{\{y\},\{z\}}$ & $L_{\{y\},\{t\}}$ & $L_{\{t\},\{x,y,z\}}$ &  $H_{\lambda}$ \\
\hline\hline
$L_{\{y\},\{z\}}$ & $-\frac{2}{3}$ & $\frac{2}{3}$ & $0$ & $1$ \\
\hline
$L_{\{y\},\{t\}}$ & $\frac{2}{3}$ &  $-\frac{1}{12}$ & $\frac{1}{2}$ & $1$ \\
\hline
$L_{\{t\},\{x,y,z\}}$ & $0$ &  $\frac{1}{2}$ & $-1$ & $1$ \\
\hline
 $H_{\lambda}$  & $1$ & $1$ & $1$ & $4$ \\
\hline
\end{tabular}
\end{center}
This matrix has rank $4$. Using \eqref{equation:3-28}, we see that
\begin{multline*}
H_\lambda\sim L_{\{x\},\{y\}}+L_{\{x\},\{z\}}+2L_{\{x\},\{t\}}\sim L_{\{x\},\{y\}}+L_{\{y\},\{z\}}+2L_{\{y\},\{t\}}\sim\\
2L_{\{x\},\{z\}}+L_{\{y\},\{z\}}+L_{\{z\},\{t\}}\sim L_{\{x\},\{t\}}+L_{\{y\},\{t\}}+L_{\{z\},\{t\}}+L_{\{t\},\{x,y,z\}}
\end{multline*}
on the surface $S_\lambda$.
Thus, the rank of the intersection matrix of the lines
$L_{\{x\},\{y\}}$, $L_{\{x\},\{z\}}$, $L_{\{x\},\{t\}}$, $L_{\{y\},\{z\}}$, $L_{\{y\},\{t\}}$, $L_{\{z\},\{t\}}$, and $L_{\{t\},\{x,y,z\}}$
on the surface $S_\lambda$ is also $4$.
On the other hand, we have $\mathrm{rk}\,\mathrm{Pic}(\widetilde{S}_{\Bbbk})=\mathrm{rk}\,\mathrm{Pic}(S_{\Bbbk})+13$.
Thus, we see that \eqref{equation:main-2-simple} holds, so that \eqref{equation:main-2} in Main Theorem holds by Lemma~\ref{lemma:cokernel}.

\subsection{Family \textnumero $3.29$}
\label{section:r-3-n-29}

In this case, we have $h^{1,2}(X)=0$.
A~toric Landau--Ginzburg model of this family is given by Minkowski polynomial \textnumero $26$, which is
$$
\frac{y}{x}+\frac{1}{x}+y+z+\frac{1}{xyz}+x.
$$
Hence, the pencil $\mathcal{S}$ is given by the equation
$$
y^2tz+t^2yz+y^2xz+z^2xy+t^4+x^2yz=\lambda xyzt.
$$

Let $\mathcal{C}$ be the cubic curve in $\mathbb{P}^3$ that is given by $x=y^2z+yzt+t^3=0$.
Then $\mathcal{C}$ is singular at the point $P_{\{x\},\{y\},\{t\}}$.
If $\lambda\ne\infty$, then
\begin{equation}
\label{equation:3-29}
\begin{split}
H_{\{x\}}\cdot S_\lambda&=L_{\{x\},\{t\}}+\mathcal{C},\\
H_{\{y\}}\cdot S_\lambda&=4L_{\{y\},\{t\}},\\
H_{\{z\}}\cdot S_\lambda&=4L_{\{z\},\{t\}},\\
H_{\{t\}}\cdot S_\lambda&=L_{\{x\},\{t\}}+L_{\{y\},\{t\}}+L_{\{z\},\{t\}}+L_{\{t\},\{x,y,z\}}.
\end{split}
\end{equation}
Thus, the base locus of the pencil $\mathcal{S}$ consists of the curves
$L_{\{x\},\{t\}}$, $L_{\{y\},\{t\}}$, $L_{\{z\},\{t\}}$, $L_{\{t\},\{x,y,z\}}$, and $\mathcal{C}$.

If $\lambda\ne\infty$, then $S_\lambda$ has isolated singularities, so that it is irreducible.
In this case, the singular points of the surface $S_\lambda$ contained in the base locus of the pencil $\mathcal{S}$ can be described as follows:
\begin{itemize}\setlength{\itemindent}{3cm}
\item[$P_{\{x\},\{y\},\{t\}}$:] type $\mathbb{A}_3$ with quadratic term $xy$,

\item[$P_{\{x\},\{z\},\{t\}}$:] type $\mathbb{A}_3$ with quadratic term $z(x+t)$,

\item[$P_{\{y\},\{z\},\{t\}}$:] type $\mathbb{A}_3$ with quadratic term $yz$,

\item[$P_{\{y\},\{t\},\{x,z\}}$:] type $\mathbb{A}_3$ with quadratic term $y(x+y+z-\lambda t)$,

\item[$P_{\{z\},\{t\},\{x,y\}}$:] type $\mathbb{A}_3$ with quadratic term $z(x+y+z-t-\lambda t)$.
\end{itemize}
By Lemma~\ref{corollary:irreducible-fibers}, we have $[\mathsf{f}^{-1}(\lambda)]$ for every $\lambda\in\mathbb{C}$.
This confirms \eqref{equation:main-1} in Main Theorem.

To verify \eqref{equation:main-2} in Main Theorem, observe that
$$
L_{\{x\},\{t\}}+\mathcal{C}\sim 4L_{\{y\},\{t\}}\sim 4L_{\{z\},\{t\}}\sim L_{\{x\},\{t\}}+L_{\{y\},\{t\}}+L_{\{z\},\{t\}}+L_{\{t\},\{x,y,z\}}.
$$
on the surface $S_\lambda$ with $\lambda\ne\infty$. This follows from \eqref{equation:3-29}.
Thus, the intersection matrix of the curves
$L_{\{x\},\{t\}}$, $L_{\{y\},\{t\}}$, $L_{\{z\},\{t\}}$, $L_{\{t\},\{x,y,z\}}$, and $\mathcal{C}$
on the surface $S_\lambda$ has the same rank as the intersection matrix of the lines $L_{\{x\},\{t\}}$ and $L_{\{y\},\{t\}}$.
On the other hand, the rank of the latter matrix is $2$, because have $L_{\{x\},\{t\}}^2=L_{\{x\},\{t\}}\cdot L_{\{y\},\{t\}}=\frac{1}{4}$
and $L_{\{x\},\{t\}}^2=\frac{1}{2}$.
Moreover, we have
$\mathrm{rk}\,\mathrm{Pic}(\widetilde{S}_{\Bbbk})=\mathrm{rk}\,\mathrm{Pic}(S_{\Bbbk})+15$.
This shows that \eqref{equation:main-2-simple} holds.
Then \eqref{equation:main-2} in Main Theorem also holds by Lemma~\ref{lemma:cokernel}.

\subsection{Family \textnumero $3.30$}
\label{section:r-3-n-30}

The threefold $X$ can be obtained from $\mathbb{P}^3$ blown up at a point by blowing up the proper transform of a line passing through this point.
This  shows that $h^{1,2}(X)=0$.
A~toric Landau--Ginzburg model of this family is given by Minkowski polynomial \textnumero $28$, which is
$$
x+y+z+\frac{y}{z}+\frac{x}{y}+\frac{1}{x}.
$$
In this case, the quartic pencil $\mathcal{S}$ is given by the equation
$$
x^2yz+y^2xz+z^2xy+y^2xt+x^2tz+t^2yz=\lambda xyzt.
$$

Suppose that $\lambda\ne\infty$. Then
\begin{equation}
\label{equation:3-30}
\begin{split}
H_{\{x\}}\cdot S_\lambda&=L_{\{x\},\{y\}}+L_{\{x\},\{z\}}+2L_{\{x\},\{t\}},\\
H_{\{y\}}\cdot S_\lambda&=2L_{\{x\},\{y\}}+L_{\{y\},\{z\}}+L_{\{y\},\{t\}},\\
H_{\{z\}}\cdot S_\lambda&=L_{\{x\},\{z\}}+2L_{\{y\},\{z\}}+L_{\{z\},\{t\}},\\
H_{\{t\}}\cdot S_\lambda&=L_{\{x\},\{t\}}+L_{\{y\},\{t\}}+L_{\{z\},\{t\}}+L_{\{t\},\{x,y,z\}}.
\end{split}
\end{equation}
Thus, the base locus of the pencil $\mathcal{S}$ consists of the lines
$L_{\{x\},\{y\}}$, $L_{\{x\},\{z\}}$, $L_{\{x\},\{t\}}$, $L_{\{y\},\{z\}}$, $L_{\{y\},\{t\}}$, $L_{\{z\},\{t\}}$, and $L_{\{t\},\{x,y,z\}}$.

Each surface $S_\lambda$ is irreducible and has isolated singularities.
Moreover, its singular points contained in the base locus of the pencil $\mathcal{S}$ can be described as follows:
\begin{itemize}\setlength{\itemindent}{3cm}
\item[$P_{\{x\},\{y\},\{z\}}$:] type $\mathbb{A}_4$ with quadratic term $yz$;

\item[$P_{\{x\},\{y\},\{t\}}$:] type $\mathbb{A}_4$ with quadratic term $xy$;

\item[$P_{\{x\},\{z\},\{t\}}$:] type $\mathbb{A}_2$ with quadratic term $x(z+t)$;

\item[$P_{\{y\},\{z\},\{t\}}$:] type $\mathbb{A}_2$ with quadratic term $z(y+t)$;

\item[$P_{\{x\},\{t\},\{y,z\}}$:] type $\mathbb{A}_1$.
\end{itemize}
By Lemma~\ref{corollary:irreducible-fibers}, each fiber $\mathsf{f}^{-1}(\lambda)$ is irreducible.
This confirms \eqref{equation:main-1} in Main Theorem.

To verify \eqref{equation:main-2} in Main Theorem, observe that
$\mathrm{rk}\,\mathrm{Pic}(\widetilde{S}_{\Bbbk})=\mathrm{rk}\,\mathrm{Pic}(S_{\Bbbk})+13$.
On the other hand, it follows from \eqref{equation:3-30} that
\begin{multline*}
H_\lambda\sim L_{\{x\},\{y\}}+L_{\{x\},\{z\}}+2L_{\{x\},\{t\}}\sim 2L_{\{x\},\{y\}}+L_{\{y\},\{z\}}+L_{\{y\},\{t\}}\sim \\
\sim L_{\{x\},\{z\}}+2L_{\{y\},\{z\}}+L_{\{z\},\{t\}}\sim L_{\{x\},\{t\}}+L_{\{y\},\{t\}}+L_{\{z\},\{t\}}+L_{\{t\},\{x,y,z\}}
\end{multline*}
on the surface $S_\lambda$.
Thus, the intersection matrix of the lines
$L_{\{x\},\{y\}}$, $L_{\{x\},\{z\}}$, $L_{\{x\},\{t\}}$, $L_{\{y\},\{z\}}$, $L_{\{y\},\{t\}}$, $L_{\{z\},\{t\}}$, and $L_{\{t\},\{x,y,z\}}$
has the same rank as the intersection matrix of the curves $L_{\{x\},\{y\}}$, $L_{\{x\},\{z\}}$, $L_{\{t\},\{x,y,z\}}$, and $H_{\lambda}$.
The latter matrix is given by
\begin{center}\renewcommand\arraystretch{1.42}
\begin{tabular}{|c||c|c|c|c|}
\hline
 $\bullet$  & $L_{\{x\},\{y\}}$ & $L_{\{x\},\{z\}}$ & $L_{\{t\},\{x,y,z\}}$ &  $H_{\lambda}$ \\
\hline\hline
$L_{\{x\},\{y\}}$ & $0$ & $\frac{1}{4}$ & $0$ & $1$ \\
\hline
$L_{\{x\},\{z\}}$ & $\frac{1}{4}$ &  $-\frac{7}{12}$ & $0$ & $1$ \\
\hline
$L_{\{t\},\{x,y,z\}}$ & $0$ &  $0$ & $-\frac{3}{2}$ & $1$ \\
\hline
 $H_{\lambda}$  & $1$ & $1$ & $1$ & $4$ \\
\hline
\end{tabular}
\end{center}
Its rank is $4$, so that \eqref{equation:main-2-simple} holds.
Then \eqref{equation:main-2} in Main Theorem holds by Lemma~\ref{lemma:cokernel}.

\subsection{Family \textnumero $3.31$}
\label{section:r-3-n-31}

The threefold $X$ can be obtained by blowing up irreducible quadric cone in $\mathbb{P}^4$ in its vertex.
This implies that $h^{1,2}(X)=0$.
A~toric Landau--Ginzburg model of this family is given by Minkowski polynomial \textnumero $27$, which is
$$
x+y+z+\frac{x}{z}+\frac{x}{y}+\frac{1}{x}.
$$
Then the pencil $\mathcal{S}$ is given by the following equation:
$$
t^2yz+tx^2y+tx^2z+x^2yz+xy^2z+xyz^2=\lambda xyzt.
$$

Suppose that $\lambda\ne\infty$. Then
\begin{equation}
\label{equation:3-31}
\begin{split}
H_{\{x\}}\cdot S_\lambda&=L_{\{x\},\{y\}}+L_{\{x\},\{z\}}+2L_{\{x\},\{t\}},\\
H_{\{y\}}\cdot S_\lambda&=2L_{\{x\},\{y\}}+L_{\{y\},\{z\}}+L_{\{y\},\{t\}},\\
H_{\{z\}}\cdot S_\lambda&=2L_{\{x\},\{z\}}+L_{\{y\},\{z\}}+L_{\{z\},\{t\}},\\
H_{\{t\}}\cdot S_\lambda&=L_{\{x\},\{t\}}+L_{\{y\},\{t\}}+L_{\{z\},\{t\}}+L_{\{t\},\{x,y,z\}}.
\end{split}
\end{equation}
Thus, the base locus of the pencil $\mathcal{S}$ consists of the lines
$L_{\{x\},\{y\}}$, $L_{\{x\},\{z\}}$, $L_{\{x\},\{t\}}$, $L_{\{y\},\{z\}}$, $L_{\{y\},\{t\}}$, $L_{\{z\},\{t\}}$, and $L_{\{t\},\{x,y,z\}}$.

Each surface $S_\lambda$ is irreducible, it has isolated singularities,
and its singular points contained in the base locus of the pencil $\mathcal{S}$ can be described as follows:
\begin{itemize}\setlength{\itemindent}{3cm}
\item[$P_{\{x\},\{y\},\{z\}}$:] type $\mathbb{A}_3$ with quadratic term $yz$;

\item[$P_{\{x\},\{y\},\{t\}}$:] type $\mathbb{A}_4$ with quadratic term $xy$;

\item[$P_{\{x\},\{z\},\{t\}}$:] type $\mathbb{A}_4$ with quadratic term $xz$;

\item[$P_{\{y\},\{z\},\{t\}}$:] type $\mathbb{A}_1$;

\item[{$P_{\{x\},\{t\},\{y,z\}}$}:] type $\mathbb{A}_1$.
\end{itemize}
Thus, by Lemma~\ref{corollary:irreducible-fibers}, every fiber $\mathsf{f}^{-1}(\lambda)$ is irreducible.
This confirms \eqref{equation:main-1} in Main Theorem, since $h^{1,2}(X)=0$.

To verify \eqref{equation:main-2} in Main Theorem, observe that $\mathrm{rk}\,\mathrm{Pic}(\widetilde{S}_{\Bbbk})=\mathrm{rk}\,\mathrm{Pic}(S_{\Bbbk})+13$.
Moreover, it follows from \eqref{equation:3-31} that
\begin{multline*}
H_\lambda\sim L_{\{x\},\{y\}}+L_{\{x\},\{z\}}+2L_{\{x\},\{t\}}\sim 2L_{\{x\},\{y\}}+L_{\{y\},\{z\}}+L_{\{y\},\{t\}}\sim \\
\sim 2L_{\{x\},\{z\}}+L_{\{y\},\{z\}}+L_{\{z\},\{t\}}\sim L_{\{x\},\{t\}}+L_{\{y\},\{t\}}+L_{\{z\},\{t\}}+L_{\{t\},\{x,y,z\}}
\end{multline*}
on the surface $S_\lambda$.
Thus, the intersection matrix of the lines
$L_{\{x\},\{y\}}$, $L_{\{x\},\{z\}}$, $L_{\{x\},\{t\}}$, $L_{\{y\},\{z\}}$, $L_{\{y\},\{t\}}$, $L_{\{z\},\{t\}}$, and $L_{\{t\},\{x,y,z\}}$
has the same rank as the intersection matrix of the curves $L_{\{x\},\{y\}}$, $L_{\{x\},\{z\}}$, $L_{\{y\},\{z\}}$, and $H_{\lambda}$.
The latter matrix can be computed as
\begin{center}\renewcommand\arraystretch{1.42}
\begin{tabular}{|c||c|c|c|c|}
\hline
 $\bullet$  & $L_{\{x\},\{y\}}$ & $L_{\{x\},\{z\}}$ & $L_{\{y\},\{z\}}$ & $H_{\lambda}$ \\
\hline\hline
 $L_{\{x\},\{y\}}$ &  $-\frac{1}{20}$ & $\frac{1}{4}$ & $\frac{1}{2}$ & $1$ \\
\hline
 $L_{\{x\},\{z\}}$  & $\frac{1}{4}$& $-\frac{1}{20}$& $\frac{1}{2}$ & $1$ \\
\hline
 $L_{\{y\},\{z\}}$  & $\frac{1}{2}$& $\frac{1}{2}$& $-\frac{1}{2}$& $1$ \\
\hline
 $H_{\lambda}$  & $1$ & $1$ & $1$ & $4$ \\
\hline
\end{tabular}
\end{center}
The determinant of this matrix is $-\frac{3}{25}$.
Thus, we see that \eqref{equation:main-2-simple} holds.
Then \eqref{equation:main-2} in Main Theorem holds by Lemma~\ref{lemma:cokernel}.

\section{Fano threefolds of Picard rank $4$}
\label{section:rank-4}

\subsection{Family \textnumero $4.1$}
\label{section:r-4-n-1}

The threefold $X$ is  a divisor of degree $(1,1,1,1)$ on $\mathbb{P}^1\times\mathbb{P}^1\times\mathbb{P}^1\times\mathbb{P}^1$.
In this case,  we have $h^{1,2}(X)=1$.
A~toric Landau--Ginzburg model of this family is given by Minkowski polynomial \textnumero $2354.1$, which is
$$
x+y+z+\frac{y}{z}+\frac{y}{x}+\frac{z}{y}+\frac{z}{x}+\frac{1}{z}+\frac{y}{xz}+\frac{1}{y}+\frac{3}{x}+\frac{z}{xy}+\frac{1}{xz}+\frac{1}{xy}.
$$
The quartic pencil $\mathcal{S}$ is given by the following equation:
\begin{multline*}
x^2yz+xy^2z+xyz^2+xy^2t+y^2zt+xz^2t+yz^2t+xyt^2+y^2t^2+xt^2z+\\+3yzt^2+z^2t^2+yt^3+zt^3=\lambda xyzt.
\end{multline*}
As usual, we assume that $\lambda\ne\infty$ (just for simplicity).

Let $\mathcal{C}$ be the conic in $\mathbb{P}^3$ given by $x=yz+yt+zt=0$.
Then
\begin{equation}
\label{equation:4-1}
\begin{split}
H_{\{x\}}\cdot S_\lambda&=L_{\{x\},\{t\}}+L_{\{x\},\{y,z,t\}}+\mathcal{C},\\
H_{\{y\}}\cdot S_\lambda&=L_{\{y\},\{z\}}+L_{\{y\},\{t\}}+L_{\{y\},\{x,t\}}+L_{\{y\},\{z,t\}},\\
H_{\{z\}}\cdot S_\lambda&=L_{\{y\},\{z\}}+L_{\{z\},\{t\}}+L_{\{z\},\{x,t\}}+L_{\{z\},\{y,t\}},\\
H_{\{t\}}\cdot S_\lambda&=L_{\{x\},\{t\}}+L_{\{y\},\{t\}}+L_{\{z\},\{t\}}+L_{\{t\},\{x,y,z\}}.
\end{split}
\end{equation}
Thus, the base locus of the pencil $\mathcal{S}$ consists of the curves
$L_{\{x\},\{t\}}$, $L_{\{y\},\{z\}}$, $L_{\{y\},\{t\}}$, $L_{\{z\},\{t\}}$,
$L_{\{y\},\{x,t\}}$, $L_{\{y\},\{z,t\}}$, $L_{\{z\},\{x,t\}}$, $L_{\{z\},\{y,t\}}$,
$L_{\{x\},\{y,z,t\}}$, $L_{\{t\},\{x,y,z\}}$, and $\mathcal{C}$.

Observe that $S_{-4}=H_{\{x,t\}}+\mathbf{S}$, where $\mathbf{S}$ is a cubic surface in $\mathbb{P}^3$ that is given by
$$
yt^2+zt^2+z^2t+y^2t+3yzt+y^2z+yz^2+xyz=0.
$$
On the other hand, if $\lambda\ne-4$, then $S_\lambda$ is irreducible and has isolated singularities.
Moreover, if $\lambda\ne-3$ and $\lambda\ne-4$, then singular points of the surface~$S_\lambda$ contained in the base locus of the pencil $\mathcal{S}$ can be described as follows:
\begin{itemize}\setlength{\itemindent}{3cm}
\item[$P_{\{x\},\{y\},\{t\}}$:] type $\mathbb{A}_2$ with quadratic term $(x+t)(y+t)$;

\item[$P_{\{x\},\{z\},\{t\}}$:] type $\mathbb{A}_2$ with quadratic term $(x+t)(z+t)$;

\item[$P_{\{y\},\{z\},\{t\}}$:] type $\mathbb{A}_3$ with quadratic term $yz$;

\item[$P_{\{x\},\{t\},\{y,z\}}$:] type $\mathbb{A}_1$ with quadratic term $(x+t)(x+y+z+t)-(\lambda+4)xt$;

\item[$P_{\{y\},\{z\},\{x,t\}}$:] type $\mathbb{A}_1$ with quadratic term $(x+t)(y+z)+(\lambda+4)yz$.
\end{itemize}
Furthermore, the surface $S_{-3}$ has the same type singularities at the points
$P_{\{x\},\{z\},\{t\}}$, $P_{\{x\},\{y\},\{t\}}$, $P_{\{y\},\{z\},\{t\}}$, $P_{\{x\},\{t\},\{y,z\}}$, and $P_{\{y\},\{z\},\{x,t\}}$.
In addition to this, the surface $S_{-3}$ is also singular at the points $[0:\xi_3:1:\xi_3^2]$ and $[0:\xi_3^2:1:\xi_3]$,
where $\xi_3$ is a primitive cube root of unity.
Both these points are singular points of the surface $S_{-3}$ of type $\mathbb A_1$.

For $\lambda\ne -4$, the surface $S_{\lambda}$ has du Val singularities at the base points of the pencil~$\mathcal{S}$.
Therefore, by Lemma~\ref{corollary:irreducible-fibers}, the fiber $\mathsf{f}^{-1}(\lambda)$ is irreducible for every $\lambda\ne -4$.
Moreover, the points $P_{\{x\},\{z\},\{t\}}$, $P_{\{x\},\{y\},\{t\}}$, $P_{\{y\},\{z\},\{t\}}$, $P_{\{x\},\{t\},\{y,z\}}$, and $P_{\{y\},\{z\},\{x,t\}}$
are good double points of the surface $S_{-4}$.
Furthermore, the surface $S_{-4}$ is smooth
at general points of the curves $L_{\{x\},\{t\}}$, $L_{\{y\},\{z\}}$, $L_{\{y\},\{t\}}$, $L_{\{z\},\{t\}}$,
$L_{\{y\},\{x,t\}}$, $L_{\{y\},\{z,t\}}$, $L_{\{z\},\{x,t\}}$, $L_{\{z\},\{y,t\}}$,
$L_{\{x\},\{y,z,t\}}$, $L_{\{t\},\{x,y,z\}}$, and $\mathcal{C}$.
Thus, we see that
$$
\big[\mathsf{f}^{-1}(-4)\big]=\big[S_{-4}\big]=2
$$
by \eqref{equation:equation:number-of-irredubicle-components-refined} and Lemmas~\ref{lemma:main} and \ref{lemma:normal-crossing}.
This confirms \eqref{equation:main-1} in Main Theorem.

To verify \eqref{equation:main-2} in Main Theorem, we may assume that $\lambda\ne -4$.
Then the intersection matrix of the curves $L_{\{x\},\{t\}}$, $L_{\{x\},\{y,z,t\}}$, $L_{\{y\},\{x,t\}}$, $L_{\{y\},\{z,t\}}$, $L_{\{z\},\{y,t\}}$, $L_{\{t\},\{x,y,z\}}$, and $H_{\lambda}$
on the surface $S_\lambda$ is given by
\begin{center}\renewcommand\arraystretch{1.42}
\begin{tabular}{|c||c|c|c|c|c|c|c|}
\hline
 $\bullet$  & $L_{\{x\},\{t\}}$ & $L_{\{x\},\{y,z,t\}}$ & $L_{\{y\},\{x,t\}}$ & $L_{\{y\},\{z,t\}}$ & $L_{\{z\},\{y,t\}}$ & $L_{\{t\},\{x,y,z\}}$ & $H_{\lambda}$ \\
\hline\hline
$L_{\{x\},\{t\}}$ & $-\frac{1}{6}$ & $\frac{1}{2}$ & $\frac{2}{3}$ & $0$ & $0$ & $\frac{1}{2}$ & $1$ \\
\hline
$L_{\{x\},\{y,z,t\}}$ & $\frac{1}{2}$ & $-\frac{3}{2}$ & $0$ &  $1$ & $1$ & $\frac{1}{2}$ & $1$ \\
\hline
$L_{\{y\},\{x,t\}}$ & $\frac{2}{3}$ & $0$ & $-\frac{5}{6}$ &  $1$ & $0$ & $0$ & $1$ \\
\hline
$L_{\{y\},\{z,t\}}$ & $0$ & $1$ & $1$ &  $-\frac{5}{4}$ & $\frac{1}{4}$ & $0$ & $1$ \\
\hline
$L_{\{z\},\{y,t\}}$ & $1$ & $1$ & $0$ &  $\frac{1}{4}$ & $-\frac{5}{4}$ & $0$ & $1$ \\
\hline
$L_{\{t\},\{x,y,z\}}$ & $\frac{1}{2}$ & $\frac{1}{2}$ & $0$ &  $0$ & $0$ & $-\frac{3}{2}$ & $1$ \\
\hline
 $H_{\lambda}$  & $1$ & $1$ & $1$ & $1$ & $1$ & $1$ & $4$ \\
\hline
\end{tabular}
\end{center}
This matrix has rank~$7$.
On the other hand, it follows from \eqref{equation:4-1} that
\begin{multline*}
H_\lambda\sim L_{\{x\},\{t\}}+L_{\{x\},\{y,z,t\}}+\mathcal{C}\sim L_{\{y\},\{z\}}+L_{\{y\},\{t\}}+L_{\{y\},\{x,t\}}+L_{\{y\},\{z,t\}}\sim\\
\sim L_{\{y\},\{z\}}+L_{\{z\},\{t\}}+L_{\{z\},\{x,t\}}+L_{\{z\},\{y,t\}}\sim L_{\{x\},\{t\}}+L_{\{y\},\{t\}}+L_{\{z\},\{t\}}+L_{\{t\},\{x,y,z\}}.
\end{multline*}
Moreover, we also have $2L_{\{x\},\{t\}}+L_{\{y\},\{x,t\}}+L_{\{z\},\{x,t\}}\sim H_\lambda$, because
$$
H_{\{x,t\}}\cdot S_\infty=2L_{\{x\},\{t\}}+L_{\{y\},\{x,t\}}+L_{\{z\},\{x,t\}}.
$$
Therefore, the intersection matrix of the curves
$L_{\{x\},\{t\}}$, $L_{\{y\},\{z\}}$, $L_{\{y\},\{t\}}$, $L_{\{z\},\{t\}}$,
$L_{\{y\},\{x,t\}}$, $L_{\{y\},\{z,t\}}$, $L_{\{z\},\{x,t\}}$, $L_{\{z\},\{y,t\}}$,
$L_{\{x\},\{y,z,t\}}$, $L_{\{t\},\{x,y,z\}}$, and $\mathcal{C}$ on the surface~$S_\lambda$
has the same rank as the intersection matrix of the curves $L_{\{x\},\{t\}}$, $L_{\{x\},\{y,z,t\}}$, $L_{\{y\},\{x,t\}}$, $L_{\{y\},\{z,t\}}$, $L_{\{z\},\{y,t\}}$, $L_{\{t\},\{x,y,z\}}$, and $H_{\lambda}$.
But $\mathrm{rk}\,\mathrm{Pic}(\widetilde{S}_{\Bbbk})=\mathrm{rk}\,\mathrm{Pic}(S_{\Bbbk})+9$.
Therefore, we conclude that \eqref{equation:main-2-simple} holds.
Then \eqref{equation:main-2} in Main Theorem holds by Lemma~\ref{lemma:cokernel}.

\subsection{Family \textnumero $4.2$}
\label{section:r-4-n-2}

In this case, the threefold $X$ is a blow up of the irreducible quadric cone in $\mathbb{P}^4$
in its vertex and a smooth elliptic curve that does not pass through the vertex.
This shows that $h^{1,2}(X)=1$.
A~toric Landau--Ginzburg model of this family is given by Minkowski polynomial \textnumero $663$, which is
$$
x+y+\frac{z}{y}+\frac{z}{x}+\frac{x}{y}+\frac{y}{x}+\frac{2}{x}+\frac{2}{y}+\frac{1}{yz}+\frac{1}{xz}.
$$
The quartic pencil $\mathcal{S}$ is given by the equation
$$
x^2yz+xy^2z+xz^2t+yz^2t+x^2zt+y^2zt+2xzt^2+2yzt^2+xt^3+yt^3=\lambda xyzt.
$$
For simplicity, we assume that $\lambda\ne\infty$.

If $\lambda\ne-2$, then $S_\lambda$ is irreducible and has isolated singularities.
On the other hand, we have $S_{-2}=H_{\{x,y\}}+\mathbf{S}$,
where $\mathbf{S}$ is an irreducible cubic surface that is given by the equation $xyz+xzt+t^3+z^2t+2zt^2+yzt=0$.

Let $\mathcal{C}_1$ be the conic in $\mathbb{P}^3$ that is given by $x=yz+(z+t)^2=0$,
and let $\mathcal{C}_2$ be the conic in $\mathbb{P}^3$ that is given by $y=xz+(z+t)^2=0$.
Then
\begin{equation}
\label{equation:4-2}
\begin{split}
H_{\{x\}}\cdot S_\lambda&=L_{\{x\},\{y\}}+L_{\{x\},\{t\}}+\mathcal{C}_1,\\
H_{\{y\}}\cdot S_\lambda&=L_{\{x\},\{y\}}+L_{\{y\},\{t\}}+\mathcal{C}_2,\\
H_{\{z\}}\cdot S_\lambda&=3L_{\{z\},\{t\}}+L_{\{z\},\{x,y\}},\\
H_{\{t\}}\cdot S_\lambda&=L_{\{x\},\{t\}}+L_{\{y\},\{t\}}+L_{\{z\},\{t\}}+L_{\{t\},\{x,y\}}.
\end{split}
\end{equation}
Thus, the base locus of the pencil $\mathcal{S}$ consists of the curves
$L_{\{x\},\{y\}}$, $L_{\{x\},\{t\}}$,  $L_{\{y\},\{t\}}$, $L_{\{z\},\{t\}}$, $L_{\{z\},\{x,y\}}$,
$L_{\{t\},\{x,y\}}$, $\mathcal{C}_1$, and $\mathcal{C}_2$.

If $\lambda\ne-2$, then singular points of the surface $S_\lambda$ contained in the base locus of the pencil $\mathcal{S}$ can be described as follows:
\begin{itemize}\setlength{\itemindent}{3cm}
\item[$P_{\{x\},\{y\},\{t\}}$:] type $\mathbb{A}_3$ with quadratic term $t(x+y)$;

\item[$P_{\{x\},\{z\},\{t\}}$:] type $\mathbb{A}_2$ with quadratic term $z(x+t)$;

\item[$P_{\{y\},\{z\},\{t\}}$:] type $\mathbb{A}_2$ with quadratic term $z(y+t)$;

\item[$P_{\{x\},\{y\},\{z,t\}}$:] type $\mathbb{A}_3$ with quadratic term $x^2+y^2+\lambda xy$;

\item[$P_{\{z\},\{t\},\{x,y\}}$:] type $\mathbb{A}_3$ with quadratic term $z(x+y-2t-\lambda t)$.
\end{itemize}
Therefore, by Lemma~\ref{corollary:irreducible-fibers}, the fiber $\mathsf{f}^{-1}(\lambda)$ is irreducible for every $\lambda\ne -2$.
Moreover, the points $P_{\{y\},\{z\},\{t\}}$, $P_{\{x\},\{z\},\{t\}}$, $P_{\{x\},\{y\},\{t\}}$,
$P_{\{x\},\{y\},\{z,t\}}$, and $P_{\{z\},\{t\},\{x,y\}}$ are good double points of the surface $S_{-2}$.
Furthermore, the surface $S_{-2}$ is smooth
at general points of the curves $L_{\{x\},\{y\}}$, $L_{\{x\},\{t\}}$,  $L_{\{y\},\{t\}}$, $L_{\{z\},\{t\}}$,
$L_{\{z\},\{x,y\}}$, $L_{\{t\},\{x,y\}}$, $\mathcal{C}_1$, and $\mathcal{C}_2$.
Thus, we see that $[\mathsf{f}^{-1}(-2)]=[S_{-2}]=2$
by \eqref{equation:equation:number-of-irredubicle-components-refined} and Lemmas~\ref{lemma:main} and \ref{lemma:normal-crossing}.
This confirms \eqref{equation:main-1} in Main Theorem.

To verify \eqref{equation:main-2} in Main Theorem, we may assyme that $\lambda\ne -2$.
By \eqref{equation:4-2}, we have
\begin{multline*}
H_\lambda\sim L_{\{x\},\{y\}}+L_{\{x\},\{t\}}+\mathcal{C}_1\sim L_{\{x\},\{y\}}+L_{\{y\},\{t\}}+\mathcal{C}_2\sim\\
\sim 3L_{\{z\},\{t\}}+L_{\{z\},\{x,y\}}\sim L_{\{x\},\{t\}}+L_{\{y\},\{t\}}+L_{\{z\},\{t\}}+L_{\{t\},\{x,y\}}
\end{multline*}
on the surface $S_\lambda$. Since $H_{\{x,y\}}\cdot S_\lambda=2L_{\{x\},\{y\}}+L_{\{z\},\{x,y\}}+L_{\{t\},\{x,y\}}$, we also have
$$
2L_{\{x\},\{y\}}+L_{\{z\},\{x,y\}}+L_{\{t\},\{x,y\}}\sim H_\lambda.
$$
Thus, the intersection matrix of the curves
$L_{\{x\},\{y\}}$, $L_{\{x\},\{t\}}$,  $L_{\{y\},\{t\}}$, $L_{\{z\},\{t\}}$, $L_{\{z\},\{x,y\}}$,
$L_{\{t\},\{x,y\}}$, $\mathcal{C}_1$, and $\mathcal{C}_2$  on the surface~$S_\lambda$
has the same rank as the intersection matrix
\begin{center}\renewcommand\arraystretch{1.42}
\begin{tabular}{|c||c|c|c|c|}
\hline
 $\bullet$  & $L_{\{z\},\{x,y\}}$ & $L_{\{z\},\{x,t\}}$ & $L_{\{t\},\{x,y\}}$ & $H_{\lambda}$ \\
\hline\hline
 $L_{\{z\},\{x,y\}}$ &  $-\frac{5}{4}$ & $1$ & $\frac{1}{4}$ & $1$ \\
\hline
 $L_{\{z\},\{x,t\}}$ &  $1$ & $-\frac{7}{12}$ & $\frac{1}{2}$ & $1$ \\
\hline
 $L_{\{t\},\{x,y\}}$ &  $\frac{1}{4}$ & $\frac{1}{2}$ & $-1$ & $1$ \\
\hline
 $H_{\lambda}$  & $1$ & $1$ & $1$ & $4$ \\
\hline
\end{tabular}
\end{center}
Its rank is $4$.
On the other hand, we have $\mathrm{rk}\,\mathrm{Pic}(\widetilde{S}_{\Bbbk})=\mathrm{rk}\,\mathrm{Pic}(S_{\Bbbk})+12$,
because the quadratic term of the defining equation of the surface $S_\lambda$
at $P_{\{x\},\{y\},\{z,t\}}$ is $x^2+y^2+\lambda xy$, which is irreducible over $\Bbbk$.
Then \eqref{equation:main-2} in Main Theorem holds by Lemma~\ref{lemma:cokernel}.

\subsection{Family \textnumero $4.3$}
\label{section:r-4-n-3}

In this case, the threefold $X$ is  a blow up of $\mathbb{P}^1\times\mathbb{P}^1\times\mathbb{P}^1$ at a smooth rational curve of tridegree $(1,1,2)$.
Thus, we have $h^{1,2}(X)=0$.
A~toric Landau--Ginzburg model of this family is given by Minkowski polynomial \textnumero $740$, which is
$$
x+y+z+\frac{y}{z}+\frac{y}{x}+\frac{z}{y}+\frac{z}{x}+\frac{1}{z}+\frac{1}{y}+\frac{1}{x}.
$$
The quartic pencil $\mathcal{S}$ is given by the following equation:
$$
x^2yz+y^2zx+z^2yx+y^2tx+y^2tz+z^2tx+z^2ty+t^2yx+t^2zx+t^2yz=\lambda xyzt.
$$
As usual, we suppose that $\lambda\ne\infty$.

The base locus of the pencil $\mathcal{S}$ consists of the lines
$L_{\{x\},\{y\}}$, $L_{\{x\},\{z\}}$, $L_{\{x\},\{t\}}$, $L_{\{y\},\{z\}}$, $L_{\{y\},\{t\}}$, $L_{\{z\},\{t\}}$,
$L_{\{x\},\{y,z,t\}}$, $L_{\{y\},\{z,t\}}$, $L_{\{z\},\{y,t\}}$, and $L_{\{t\},\{x,y,z\}}$, because
\begin{equation}
\label{equation:4-3}
\begin{split}
H_{\{x\}}\cdot S_\lambda&=L_{\{x\},\{y\}}+L_{\{x\},\{z\}}+L_{\{x\},\{t\}}+L_{\{x\},\{y,z,t\}},\\
H_{\{y\}}\cdot S_\lambda&=L_{\{x\},\{y\}}+L_{\{y\},\{z\}}+L_{\{y\},\{t\}}+L_{\{y\},\{z,t\}},\\
H_{\{z\}}\cdot S_\lambda&=L_{\{x\},\{z\}}+L_{\{y\},\{z\}}+L_{\{z\},\{t\}}+L_{\{z\},\{y,t\}},\\
H_{\{t\}}\cdot S_\lambda&=L_{\{x\},\{t\}}+L_{\{y\},\{t\}}+L_{\{z\},\{t\}}+L_{\{t\},\{x,y,z\}}.
\end{split}
\end{equation}

For every $\lambda\in\mathbb{C}$, the surface $S_\lambda$ is irreducible, it has isolated singularities,
and its singular points  contained in the base locus of the pencil $\mathcal{S}$ can be described as follows:
\begin{itemize}\setlength{\itemindent}{3cm}
\item[$P_{\{x\},\{y\},\{z\}}$:] type $\mathbb{A}_1$;

\item[$P_{\{x\},\{y\},\{t\}}$:] type $\mathbb{A}_1$;

\item[$P_{\{x\},\{z\},\{t\}}$:] type $\mathbb{A}_1$;

\item[$P_{\{y\},\{z\},\{t\}}$:] type $\mathbb{A}_3$ with quadratic term $yz$;

\item[$P_{\{x\},\{y\},\{z,t\}}$:] type $\mathbb{A}_1$ for $\lambda\neq -3$, type $\mathbb{A}_2$ for $\lambda=-3$;

\item[$P_{\{x\},\{z\},\{y,t\}}$:] type $\mathbb{A}_1$ for $\lambda\neq -3$, type $\mathbb{A}_2$ for $\lambda=-3$;

\item[$P_{\{x\},\{t\},\{y,z\}}$:] type $\mathbb{A}_1$ for $\lambda\neq -3$, type $\mathbb{A}_2$ for $\lambda=-3$.
\end{itemize}
Then $[\mathsf{f}^{-1}(\lambda)]=1$ for every $\lambda\in\mathbb{C}$ by Lemma~\ref{corollary:irreducible-fibers}.
This confirms \eqref{equation:main-1} in Main Theorem.

To verify \eqref{equation:main-2} in Main Theorem, observe that
$\mathrm{rk}\,\mathrm{Pic}(\widetilde{S}_{\Bbbk})=\mathrm{rk}\,\mathrm{Pic}(S_{\Bbbk})+9$.
On the other hand, it follows from \eqref{equation:4-3} that
the intersection matrix of the lines
$L_{\{x\},\{y\}}$, $L_{\{x\},\{z\}}$, $L_{\{x\},\{t\}}$, $L_{\{y\},\{z\}}$, $L_{\{y\},\{t\}}$, $L_{\{z\},\{t\}}$,
$L_{\{x\},\{y,z,t\}}$, $L_{\{y\},\{z,t\}}$, $L_{\{z\},\{y,t\}}$, and $L_{\{t\},\{x,y,z\}}$ on the surface $S_\lambda$
has the same rank as the intersection matrix of the curves $L_{\{x\},\{y\}}$, $L_{\{x\},\{z\}}$, $L_{\{x\},\{t\}}$, $L_{\{y\},\{z\}}$, $L_{\{y\},\{t\}}$, $L_{\{t\},\{x,y,z\}}$, and $H_{\lambda}$.
If $\lambda\ne -3$, the latter matrix is given by
\begin{center}\renewcommand\arraystretch{1.42}
\begin{tabular}{|c||c|c|c|c|c|c|c|}
\hline
 $\bullet$  & $L_{\{x\},\{y\}}$ & $L_{\{x\},\{z\}}$ & $L_{\{x\},\{t\}}$ & $L_{\{y\},\{z\}}$ & $L_{\{y\},\{t\}}$ & $L_{\{t\},\{x,y,z\}}$ & $H_{\lambda}$ \\
\hline\hline
$L_{\{x\},\{y\}}$ & $-\frac{1}{2}$ & $\frac{1}{2}$ & $\frac{1}{2}$ & $\frac{1}{2}$ & $\frac{1}{2}$ & $0$ & $1$ \\
\hline
$L_{\{x\},\{z\}}$ & $\frac{1}{2}$ & $-\frac{1}{2}$ & $\frac{1}{2}$ &  $\frac{1}{2}$ & $0$ & $0$ & $1$ \\
\hline
$L_{\{x\},\{t\}}$ & $\frac{1}{2}$ & $\frac{1}{2}$ & $-\frac{1}{2}$ &  $0$ & $\frac{1}{2}$ & $\frac{1}{2}$ & $1$ \\
\hline
$L_{\{y\},\{z\}}$ & $\frac{1}{2}$ & $\frac{1}{2}$ & $0$ &  $-1$ & $\frac{1}{2}$ & $0$ & $1$ \\
\hline
$L_{\{y\},\{t\}}$ & $\frac{1}{2}$ & $0$ & $\frac{1}{2}$ &  $\frac{1}{2}$ & $-\frac{1}{4}$ & $1$ & $1$ \\
\hline
$L_{\{t\},\{x,y,z\}}$ & $0$ & $0$ & $\frac{1}{2}$ &  $0$ & $1$ & $-\frac{3}{2}$ & $1$ \\
\hline
 $H_{\lambda}$  & $1$ & $1$ & $1$ & $1$ & $1$ & $1$ & $4$ \\
\hline
\end{tabular}
\end{center}
Its rank is $7$, so that \eqref{equation:main-2-simple} holds.
Then \eqref{equation:main-2} in Main Theorem holds by Lemma~\ref{lemma:cokernel}.

\subsection{Family \textnumero $4.4$}
\label{section:r-4-n-4}

In this case, we have $h^{1,2}(X)=0$.
A~toric Landau--Ginzburg model of this family is given by Minkowski polynomial \textnumero $426$, which is
$$
x+y+z+\frac{x}{z}+\frac{y}{z}+\frac{x}{y}+\frac{y}{x}+\frac{1}{y}+\frac{1}{x}.
$$
The quartic pencil $\mathcal{S}$ is given by
$$
x^2yz+y^2zx+z^2yx+x^2ty+y^2tx+x^2tz+y^2tz+t^2zx+t^2yz=\lambda xyzt.
$$

Suppose that $\lambda\ne\infty$.
Then
\begin{equation}
\label{equation:4-4}
\begin{split}
H_{\{x\}}\cdot S_\lambda&=L_{\{x\},\{y\}}+L_{\{x\},\{z\}}+L_{\{x\},\{t\}}+L_{\{x\},\{y,t\}},\\
H_{\{y\}}\cdot S_\lambda&=L_{\{x\},\{y\}}+L_{\{y\},\{z\}}+L_{\{y\},\{t\}}+L_{\{y\},\{x,t\}},\\
H_{\{z\}}\cdot S_\lambda&=L_{\{x\},\{z\}}+L_{\{y\},\{z\}}+L_{\{z\},\{t\}}+L_{\{z\},\{x,y\}},\\
H_{\{t\}}\cdot S_\lambda&=L_{\{x\},\{t\}}+L_{\{y\},\{t\}}+L_{\{z\},\{t\}}+L_{\{t\},\{x,y,z\}}.
\end{split}
\end{equation}

Each surface $S_\lambda$ is irreducible, it has isolated singularities,
and its singular points contained in the base locus of the pencil $\mathcal{S}$ can be described as follows:
\begin{itemize}\setlength{\itemindent}{3cm}
\item[$P_{\{x\},\{y\},\{t\}}$:] type $\mathbb{A}_3$ with quadratic term $xy$;

\item[$P_{\{x\},\{z\},\{t\}}$:] type $\mathbb{A}_1$;

\item[$P_{\{y\},\{z\},\{t\}}$:] type $\mathbb{A}_1$;

\item[$P_{\{x\},\{y\},\{z\}}$:] type $\mathbb{A}_3$ with quadratic term $yz$ for $\lambda\neq -2$, type $\mathbb{A}_5$ for $\lambda=-2$;

\item[$P_{\{z\},\{t\},\{x,y\}}$:] type $\mathbb{A}_1$ for $\lambda\neq -3$, type $\mathbb{A}_3$ for $\lambda=-3$.
\end{itemize}
Then each fiber $\mathsf{f}^{-1}(\lambda)$ is irreducible by Lemma~\ref{corollary:irreducible-fibers}.
This confirms \eqref{equation:main-1} in Main Theorem.

Let us prove \eqref{equation:main-2} in Main Theorem. We may assume that $\lambda\ne-2$ and $\lambda=-3$.
Then the intersection matrix of the curves $L_{\{x\},\{y,t\}}$, $L_{\{y\},\{x,t\}}$, $L_{\{y\},\{z\}}$, $L_{\{z\},\{t\}}$, $L_{\{z\},\{x,y\}}$, $L_{\{t\},\{x,y,z\}}$, and $H_{\lambda}$
on the surface $S_\lambda$ is given by
\begin{center}\renewcommand\arraystretch{1.42}
\begin{tabular}{|c||c|c|c|c|c|c|c|}
\hline
 $\bullet$  & $L_{\{x\},\{y,t\}}$ & $L_{\{y\},\{x,t\}}$ & $L_{\{y\},\{z\}}$ & $L_{\{z\},\{t\}}$ & $L_{\{z\},\{x,y\}}$ & $L_{\{t\},\{x,y,z\}}$ & $H_{\lambda}$ \\
\hline\hline
$L_{\{x\},\{y,t\}}$ & $-\frac{5}{4}$ & $\frac{1}{4}$ & $0$ & $0$ & $0$ & $0$ & $1$ \\
\hline
$L_{\{y\},\{x,t\}}$ & $\frac{1}{4}$ & $-\frac{5}{4}$ & $1$ &  $0$ & $0$ & $0$ & $1$ \\
\hline
$L_{\{y\},\{z\}}$ & $0$ & $1$ & $-\frac{1}{2}$ &  $\frac{1}{2}$ & $\frac{1}{2}$ & $0$ & $1$ \\
\hline
$L_{\{z\},\{t\}}$ & $0$ & $0$ & $\frac{1}{2}$ &  $-\frac{1}{2}$ & $\frac{1}{2}$ & $\frac{1}{2}$ & $1$ \\
\hline
$L_{\{z\},\{x,y\}}$ & $0$ & $0$ & $\frac{1}{2}$ &  $\frac{1}{2}$ & $-\frac{3}{4}$ & $\frac{1}{2}$ & $1$ \\
\hline
$L_{\{t\},\{x,y,z\}}$ & $0$ & $0$ & $0$ &  $\frac{1}{2}$ & $\frac{1}{2}$ & $-\frac{3}{2}$ & $1$ \\
\hline
 $H_{\lambda}$  & $1$ & $1$ & $1$ & $1$ & $1$ & $1$ & $4$ \\
\hline
\end{tabular}
\end{center}
This matrix has rank~$7$.
Thus, it follows from \eqref{equation:4-4} that
the rank of the intersection matrix of the lines
$L_{\{x\},\{y\}}$, $L_{\{x\},\{z\}}$, $L_{\{x\},\{t\}}$,
$L_{\{y\},\{z\}}$, $L_{\{z\},\{t\}}$, $L_{\{y\},\{t\}}$, $L_{\{x\},\{y,t\}}$, $L_{\{y\},\{x,t\}}$,
$L_{\{z\},\{x,y\}}$, and $L_{\{t\},\{x,y,z\}}$ is also $7$.
But  $\mathrm{rk}\,\mathrm{Pic}(\widetilde{S}_{\Bbbk})=\mathrm{rk}\,\mathrm{Pic}(S_{\Bbbk})+9$,
so that \eqref{equation:main-2-simple} holds.
Then \eqref{equation:main-2} in Main Theorem holds by Lemma~\ref{lemma:cokernel}.

\subsection{Family \textnumero $4.5$}
\label{section:r-4-n-5}

In this case, we have $h^{1,2}(X)=0$.
A~toric Landau--Ginzburg model of this family is given by Minkowski polynomial \textnumero $425$, is
$$
x+y+z+\frac{y}{z}+\frac{y}{x}+\frac{z}{y}+\frac{1}{z}+\frac{1}{y}+\frac{1}{x}.
$$
Then the pencil $\mathcal{S}$ is given by the equation
$$
x^2yz+y^2zx+z^2yx+y^2tx+y^2tz+z^2tx+t^2yx+t^2zx+t^2yz=\lambda xyzt.
$$

Suppose that $\lambda\ne\infty$. Then
\begin{equation}
\label{equation:4-5}
\begin{split}
H_{\{x\}}\cdot S_\lambda&=L_{\{x\},\{y\}}+L_{\{x\},\{z\}}+L_{\{x\},\{t\}}+L_{\{x\},\{y,t\}},\\
H_{\{y\}}\cdot S_\lambda&=L_{\{x\},\{y\}}+L_{\{y\},\{z\}}+L_{\{y\},\{t\}}+L_{\{y\},\{z,t\}},\\
H_{\{z\}}\cdot S_\lambda&=L_{\{x\},\{z\}}+L_{\{y\},\{z\}}+L_{\{z\},\{t\}}+L_{\{z\},\{y,t\}},\\
H_{\{t\}}\cdot S_\lambda&=L_{\{x\},\{t\}}+L_{\{y\},\{t\}}+L_{\{z\},\{t\}}+L_{\{t\},\{x,y,z\}}.
\end{split}
\end{equation}

Observe that $S_\lambda$ is irreducible,
it has isolated singularities, and its singular points contained in the base locus of the pencil $\mathcal{S}$ can be described as follows:
\begin{itemize}\setlength{\itemindent}{2cm}
\item[$P_{\{x\},\{y\},\{z\}}$:] type $\mathbb{A}_1$;

\item[$P_{\{x\},\{y\},\{t\}}$:] type $\mathbb{A}_3$ with quadratic term $x(y+t)$ for $\lambda\neq -2$, type $\mathbb{A}_4$ for $\lambda=-2$;

\item[$P_{\{x\},\{z\},\{t\}}$:] type $\mathbb{A}_1$;

\item[$P_{\{y\},\{z\},\{t\}}$:] type $\mathbb{A}_3$ with quadratic term $yz$;

\item[$P_{\{x\},\{z\},\{y,t\}}$:] type $\mathbb{A}_1$ for $\lambda\neq -2$, type $\mathbb{A}_2$ for $\lambda=-2$.
\end{itemize}
Then each fiber $\mathsf{f}^{-1}(\lambda)$ is irreducible by Lemma~\ref{corollary:irreducible-fibers}.
This confirms \eqref{equation:main-1} in Main Theorem.

It follows from \eqref{equation:4-5} that
the intersection matrix of the base curves of the pencil $\mathcal{S}$ on the surface $S_\lambda$
has the same rank as the intersection matrix of the curves $L_{\{x\},\{y\}}$, $L_{\{x\},\{z\}}$, $L_{\{x\},\{t\}}$, $L_{\{y\},\{z\}}$, $L_{\{y\},\{z,t\}}$, $L_{\{t\},\{x,y,z\}}$, and $H_{\lambda}$.
If~$\lambda\ne-2$, the latter matrix is given~by
\begin{center}\renewcommand\arraystretch{1.42}
\begin{tabular}{|c||c|c|c|c|c|c|c|}
\hline
 $\bullet$  & $L_{\{x\},\{y\}}$ & $L_{\{x\},\{z\}}$ & $L_{\{x\},\{t\}}$ & $L_{\{y\},\{z\}}$ & $L_{\{y\},\{z,t\}}$ & $L_{\{t\},\{x,y,z\}}$ & $H_{\lambda}$ \\
\hline\hline
$L_{\{x\},\{y\}}$ & $-\frac{3}{4}$ & $\frac{1}{2}$ & $\frac{3}{4}$ & $\frac{1}{2}$ & $1$ & $0$ & $1$ \\
\hline
$L_{\{x\},\{z\}}$ & $\frac{1}{2}$ & $-\frac{1}{2}$ & $\frac{1}{2}$ &  $\frac{1}{2}$ & $0$ & $0$ & $1$ \\
\hline
$L_{\{x\},\{t\}}$ & $\frac{3}{4}$ & $\frac{1}{2}$ & $-\frac{3}{4}$ &  $0$ & $0$ & $1$ & $1$ \\
\hline
$L_{\{y\},\{z\}}$ & $\frac{1}{2}$ & $\frac{1}{2}$ & $0$ &  $-\frac{1}{2}$ & $\frac{1}{2}$ & $0$ & $1$ \\
\hline
$L_{\{y\},\{z,t\}}$ & $1$ & $0$ & $0$ &  $\frac{1}{2}$ & $-\frac{5}{4}$ & $0$ & $1$ \\
\hline
$L_{\{t\},\{x,y,z\}}$ & $0$ & $0$ & $1$ &  $0$ & $0$ & $-2$ & $1$ \\
\hline
 $H_{\lambda}$  & $1$ & $1$ & $1$ & $1$ & $1$ & $1$ & $4$ \\
\hline
\end{tabular}
\end{center}
The determinant of this matrix is $\frac{39}{128}$.
However, we also have $\mathrm{rk}\,\mathrm{Pic}(\widetilde{S}_{\Bbbk})=\mathrm{rk}\,\mathrm{Pic}(S_{\Bbbk})+9$.
Therefore, we see that \eqref{equation:main-2} in Main Theorem holds by Lemma~\ref{lemma:cokernel}.

\subsection{Family \textnumero $4.6$}
\label{section:r-4-n-6}

In this case, the threefold $X$ is  a blow up of $\mathbb{P}^3$ in a disjoint union of three lines.
Thus, we have $h^{1,2}(X)=0$.
A~toric Landau--Ginzburg model of this family is given by Minkowski polynomial \textnumero $423$, which is
$$
x+y+z+\frac{z}{y}+\frac{1}{z}+\frac{1}{y}+\frac{1}{x}+\frac{1}{xz}+\frac{1}{xy}.
$$
The quartic pencil $\mathcal{S}$ is given by the following equation:
$$
x^2yz+y^2zx+z^2yx+z^2tx+t^2yx+t^2zx+t^2yz+t^3y+t^3z=\lambda xyzt.
$$
As usual, we assume that $\lambda\ne\infty$.

Let $\mathcal{C}_1$ be the conic in $\mathbb{P}^3$ that is given by $x=yz+yt+zt=0$,
and let $\mathcal{C}_2$ be the conic in $\mathbb{P}^3$ that is given by $y=xz+xt+t^2=0$.
Then
\begin{equation}
\label{equation:4-6}
\begin{split}
H_{\{x\}}\cdot S_\lambda&=2L_{\{x\},\{t\}}+\mathcal{C}_1,\\
H_{\{y\}}\cdot S_\lambda&=L_{\{y\},\{z\}}+L_{\{y\},\{t\}}+\mathcal{C}_2,\\
H_{\{z\}}\cdot S_\lambda&=L_{\{y\},\{z\}}+2L_{\{z\},\{t\}}+L_{\{z\},\{x,t\}},\\
H_{\{t\}}\cdot S_\lambda&=L_{\{x\},\{t\}}+L_{\{y\},\{t\}}+L_{\{z\},\{t\}}+L_{\{t\},\{x,y,z\}}.
\end{split}
\end{equation}

For every $\lambda$, the surface $S_\lambda$ is irreducible, it has isolated singularities,
and its singular points contained in the base locus of the pencil $\mathcal{S}$ can be described as follows:
\begin{itemize}\setlength{\itemindent}{2cm}
\item[$P_{\{x\},\{y\},\{t\}}$:] type $\mathbb{A}_3$ with quadratic term $x(y+t)$ for $\lambda\neq -3$, type $\mathbb A_5$ for $\lambda=-3$;

\item[$P_{\{x\},\{z\},\{t\}}$:] type $\mathbb{A}_2$ with quadratic term $xz$;

\item[$P_{\{y\},\{z\},\{t\}}$:] type $\mathbb{A}_3$ with quadratic term $yz$;

\item[$P_{\{x\},\{t\},\{y,z\}}$:] type $\mathbb{A}_1$;

\item[$P_{\{y\},\{z\},\{x,t\}}$:] type $\mathbb{A}_1$ for $\lambda\neq -3$, type $\mathbb A_3$ for $\lambda=-3$;

\item[$P_{\{z\},\{t\},\{x,y\}}$:] type $\mathbb{A}_1$.
\end{itemize}
By Lemma~\ref{corollary:irreducible-fibers}, each fiber $\mathsf{f}^{-1}(\lambda)$ is irreducible.
This confirms \eqref{equation:main-1} in Main Theorem.

To verify \eqref{equation:main-2} in Main Theorem, observe that
$\mathrm{rk}\,\mathrm{Pic}(\widetilde{S}_{\Bbbk})=\mathrm{rk}\,\mathrm{Pic}(S_{\Bbbk})+11$.
On the other hand, it follows from \eqref{equation:4-6} that the intersection matrix of the curves
$2L_{\{x\},\{t\}}$, $L_{\{y\},\{z\}}$, $L_{\{y\},\{t\}}$, $L_{\{z\},\{t\}}$, $L_{\{z\},\{x,t\}}$, $L_{\{t\},\{x,y,z\}}$, $\mathcal{C}_1$, and $\mathcal{C}_2$
has the same rank as the intersection matrix of the curves $L_{\{x\},\{t\}}$, $L_{\{y\},\{z\}}$, $L_{\{y\},\{t\}}$, $L_{\{z\},\{t\}}$, and $H_{\lambda}$.
If $\lambda\ne-3$, then the latter matrix is given~by
\begin{center}\renewcommand\arraystretch{1.42}
\begin{tabular}{|c||c|c|c|c|c|}
\hline
 $\bullet$  & $L_{\{x\},\{t\}}$ & $L_{\{y\},\{z\}}$ & $L_{\{y\},\{t\}}$ & $L_{\{z\},\{t\}}$ &  $H_{\lambda}$ \\
\hline\hline
$L_{\{x\},\{t\}}$ & $-\frac{1}{12}$ & $0$ & $\frac{1}{4}$ & $\frac{1}{3}$ & $1$ \\
\hline
$L_{\{y\},\{z\}}$ & $0$ &  $-\frac{1}{2}$ & $\frac{1}{2}$ & $\frac{1}{2}$ & $1$ \\
\hline
$L_{\{y\},\{t\}}$ & $\frac{1}{4}$ &  $\frac{1}{2}$ & $-\frac{1}{2}$ & $\frac{1}{4}$ & $1$ \\
\hline
$L_{\{z\},\{t\}}$ & $\frac{1}{3}$ &  $\frac{1}{2}$ & $\frac{1}{4}$ & $-\frac{1}{12}$ & $1$ \\
\hline
 $H_{\lambda}$  & $1$ & $1$ & $1$ & $1$ & $4$ \\
\hline
\end{tabular}
\end{center}
Its rank is $5$, so that \eqref{equation:main-2-simple} holds.
Then \eqref{equation:main-2} in Main Theorem holds by Lemma~\ref{lemma:cokernel}.

\subsection{Family \textnumero $4.7$}
\label{section:r-4-n-7}

In this case, the threefold $X$ can be obtained by blowing up a smooth hypersurface in $\mathbb{P}^2\times\mathbb{P}^2$ of bidegree $(1,1)$
in a disjoint union of two smooth rational curves.
This shows that $h^{1,2}(X)=0$.
A~toric Landau--Ginzburg model of this family is given by Minkowski polynomial \textnumero $215$, which is
$$
x+y+z+\frac{z}{y}+\frac{1}{z}+\frac{1}{y}+\frac{1}{x}+\frac{1}{xz}.
$$
The quartic pencil $\mathcal{S}$ is given by
$$
x^2yz+y^2zx+z^2yx+z^2tx+t^2yx+t^2zx+t^2yz+t^3y=\lambda xyzt.
$$

Suppose that $\lambda\ne\infty$. Then
\begin{equation}
\label{equation:4-7}
\begin{split}
H_{\{x\}}\cdot S_\lambda&=L_{\{x\},\{y\}}+2L_{\{x\},\{t\}}+L_{\{x\},\{z,t\}},\\
H_{\{y\}}\cdot S_\lambda&=L_{\{x\},\{y\}}+L_{\{y\},\{z\}}+L_{\{y\},\{t\}}+L_{\{y\},\{z,t\}},\\
H_{\{z\}}\cdot S_\lambda&=L_{\{y\},\{z\}}+2L_{\{z\},\{t\}}+L_{\{z\},\{x,t\}},\\
H_{\{t\}}\cdot S_\lambda&=L_{\{x\},\{t\}}+L_{\{y\},\{t\}}+L_{\{z\},\{t\}}+L_{\{t\},\{x,y,z\}}.
\end{split}
\end{equation}
Thus, the base locus of the pencil $\mathcal{S}$ consists of the lines
$L_{\{x\},\{y\}}$, $L_{\{x\},\{t\}}$, $L_{\{y\},\{z\}}$, $L_{\{y\},\{t\}}$, $L_{\{z\},\{t\}}$,
$L_{\{x\},\{z,t\}}$, $L_{\{y\},\{z,t\}}$, $L_{\{z\},\{x,t\}}$, and $L_{\{t\},\{x,y,z\}}$.

For every $\lambda\in\mathbb{C}$, the surface $S_\lambda$ has isolated singularities.
Thus, we conclude that every surface $S_\lambda$ is irreducible.
Moreover, the singular points of the surface~$S_\lambda$ contained in the base locus of the pencil $\mathcal{S}$ can be described as follows:
\begin{itemize}\setlength{\itemindent}{3cm}
\item[$P_{\{x\},\{y\},\{t\}}$:] type $\mathbb{A}_2$ with quadratic term $x(y+t)$;

\item[$P_{\{x\},\{z\},\{t\}}$:] type $\mathbb{A}_2$ with quadratic term $xz$;

\item[$P_{\{y\},\{z\},\{t\}}$:] type $\mathbb{A}_3$ with quadratic term $yz$;

\item[$P_{\{x\},\{y\},\{z,t\}}$:] type $\mathbb{A}_1$ for $\lambda\neq -2$, type $\mathbb A_2$ for $\lambda=-2$;

\item[$P_{\{x\},\{t\},\{y,z\}}$:] type $\mathbb{A}_1$;

\item[$P_{\{z\},\{t\},\{x,y\}}$:] type $\mathbb{A}_1$.
\end{itemize}
By Lemma~\ref{corollary:irreducible-fibers}, each fiber $\mathsf{f}^{-1}(\lambda)$ is irreducible.
This confirms \eqref{equation:main-1} in Main Theorem.

To verify \eqref{equation:main-2} in Main Theorem, observe first that
$\mathrm{rk}\,\mathrm{Pic}(\widetilde{S}_{\Bbbk})=\mathrm{rk}\,\mathrm{Pic}(S_{\Bbbk})+10$.
On the other hand, it follows from \eqref{equation:4-7} that the intersection matrix of the lines
$L_{\{x\},\{y\}}$, $L_{\{x\},\{t\}}$, $L_{\{y\},\{z\}}$, $L_{\{y\},\{t\}}$, $L_{\{z\},\{t\}}$,
$L_{\{x\},\{z,t\}}$, $L_{\{y\},\{z,t\}}$, $L_{\{z\},\{x,t\}}$, and $L_{\{t\},\{x,y,z\}}$
on the surface $S_\lambda$
has the same rank as the intersection matrix of the curves $L_{\{x\},\{t\}}$, $L_{\{x\},\{z,t\}}$, $L_{\{y\},\{z,t\}}$, $L_{\{z\},\{x,t\}}$, $L_{\{t\},\{x,y,z\}}$, and $H_{\lambda}$.
If $\lambda\ne-2$, then the latter matrix is given by
\begin{center}\renewcommand\arraystretch{1.42}
\begin{tabular}{|c||c|c|c|c|c|c|}
\hline
 $\bullet$  & $L_{\{x\},\{t\}}$ & $L_{\{x\},\{z,t\}}$ & $L_{\{y\},\{z,t\}}$ & $L_{\{z\},\{x,t\}}$ & $L_{\{t\},\{x,y,z\}}$ &  $H_{\lambda}$ \\
\hline\hline
$L_{\{x\},\{t\}}$ & $-\frac{1}{6}$ & $\frac{2}{3}$ & $0$ & $\frac{1}{3}$ & $\frac{1}{2}$ & $1$ \\
\hline
$L_{\{x\},\{z,t\}}$ & $\frac{2}{3}$ &  $-\frac{5}{6}$ & $\frac{1}{2}$ & $\frac{1}{3}$ & $\frac{1}{2}$ & $1$ \\
\hline
$L_{\{y\},\{z,t\}}$ & $0$ &  $\frac{1}{2}$ & $-\frac{5}{4}$ & $0$ & $0$ & $1$ \\
\hline
$L_{\{z\},\{x,t\}}$ & $\frac{1}{3}$ &  $\frac{1}{3}$ & $0$ & $-\frac{4}{3}$ & $0$ & $1$ \\
\hline
$L_{\{t\},\{x,y,z\}}$ & $\frac{1}{2}$ &  $\frac{1}{2}$ & $0$ & $0$ & $-1$ & $1$ \\
\hline
 $H_{\lambda}$  & $1$ & $1$ & $1$ & $1$ & $1$ & $4$ \\
\hline
\end{tabular}
\end{center}
Its rank is $6$, so that \eqref{equation:main-2-simple} holds.
Then \eqref{equation:main-2} in Main Theorem holds by Lemma~\ref{lemma:cokernel}.

\subsection{Family \textnumero $4.8$}
\label{section:r-4-n-8}

The threefold $X$ can be obtained by blowing up $\mathbb{P}^1\times\mathbb{P}^1\times\mathbb{P}^1$
along a smooth rational curve of tridegree $(1,1,0)$.
Then $h^{1,2}(X)=0$.
A~toric Landau--Ginzburg model of this family is given by Minkowski polynomial \textnumero $216$, which is
$$
x+y+z+\frac{z}{y}+\frac{z}{x}+\frac{1}{z}+\frac{1}{y}+\frac{1}{x}.
$$
The quartic pencil $\mathcal{S}$ is given by
$$
x^2yz+y^2zx+z^2yx+z^2tx+z^2ty+t^2yx+t^2zx+t^2yz=\lambda xyzt.
$$

Suppose that $\lambda\ne\infty$. Then
\begin{equation}
\label{equation:4-8}
\begin{split}
H_{\{x\}}\cdot S_\lambda&=L_{\{x\},\{y\}}+L_{\{x\},\{z\}}+L_{\{x\},\{t\}}+L_{\{x\},\{z,t\}},\\
H_{\{y\}}\cdot S_\lambda&=L_{\{x\},\{y\}}+L_{\{y\},\{z\}}+L_{\{y\},\{t\}}+L_{\{y\},\{z,t\}},\\
H_{\{z\}}\cdot S_\lambda&=L_{\{x\},\{z\}}+L_{\{y\},\{z\}}+2L_{\{z\},\{t\}},\\
H_{\{t\}}\cdot S_\lambda&=L_{\{x\},\{t\}}+L_{\{y\},\{t\}}+L_{\{z\},\{t\}}+L_{\{t\},\{x,y,z\}}.
\end{split}
\end{equation}
Thus, the base locus of the pencil $\mathcal{S}$ consists of the lines
$L_{\{x\},\{y\}}$, $L_{\{x\},\{z\}}$, $L_{\{x\},\{t\}}$, $L_{\{y\},\{z\}}$, $L_{\{y\},\{t\}}$, $L_{\{z\},\{t\}}$,
$L_{\{x\},\{z,t\}}$, $L_{\{y\},\{z,t\}}$, and $L_{\{t\},\{x,y,z\}}$.

For every $\lambda\in\mathbb{C}$, the surface $S_\lambda$ is irreducible, it has isolated singularities
and its singular points contained in the base locus of the pencil $\mathcal{S}$ can be described as follows:
\begin{itemize}\setlength{\itemindent}{3cm}
\item[$P_{\{x\},\{y\},\{z\}}$:] type $\mathbb{A}_1$;

\item[$P_{\{x\},\{y\},\{t\}}$:] type $\mathbb{A}_1$;

\item[$P_{\{x\},\{z\},\{t\}}$:] type $\mathbb{A}_3$ with quadratic term $xz$;

\item[$P_{\{y\},\{z\},\{t\}}$:] type $\mathbb{A}_3$ with quadratic term $yz$;

\item[$P_{\{x\},\{y\},\{z,t\}}$:] type $\mathbb{A}_1$ for $\lambda\neq -2$, type $\mathbb A_5$ for $\lambda=-2$;

\item[$P_{\{z\},\{t\},\{x,y\}}$:] type $\mathbb{A}_1$.
\end{itemize}
By Lemma~\ref{corollary:irreducible-fibers}, each fiber $\mathsf{f}^{-1}(\lambda)$ is irreducible.
This confirms \eqref{equation:main-1} in Main Theorem.

To verify \eqref{equation:main-2} in Main Theorem, observe that $\mathrm{rk}\,\mathrm{Pic}(\widetilde{S}_{\Bbbk})=\mathrm{rk}\,\mathrm{Pic}(S_{\Bbbk})+10$.
On the other hand, it follows from \eqref{equation:4-8} that the intersection matrix of the lines
$L_{\{x\},\{y\}}$, $L_{\{x\},\{z\}}$, $L_{\{x\},\{t\}}$, $L_{\{y\},\{z\}}$, $L_{\{y\},\{t\}}$, $L_{\{z\},\{t\}}$,
$L_{\{x\},\{z,t\}}$, $L_{\{y\},\{z,t\}}$, and $L_{\{t\},\{x,y,z\}}$ on the surface $S_\lambda$
has the same rank as the intersection matrix of the curves $L_{\{x\},\{y\}}$, $L_{\{x\},\{z\}}$, $L_{\{x\},\{z,t\}}$, $L_{\{y\},\{z,t\}}$, $L_{\{y\},\{t\}}$, and $H_{\lambda}$.
If~$\lambda\ne-2$, then the latter matrix is given by
\begin{center}\renewcommand\arraystretch{1.42}
\begin{tabular}{|c||c|c|c|c|c|c|}
\hline
 $\bullet$  & $L_{\{x\},\{y\}}$ & $L_{\{x\},\{z\}}$ & $L_{\{x\},\{z,t\}}$ & $L_{\{y\},\{z,t\}}$ & $L_{\{y\},\{t\}}$ &  $H_{\lambda}$ \\
\hline\hline
$L_{\{x\},\{y\}}$ & $-\frac{1}{2}$ & $\frac{1}{2}$ & $\frac{1}{2}$ & $\frac{1}{2}$ & $\frac{1}{2}$ & $1$ \\
\hline
$L_{\{x\},\{z\}}$ & $\frac{1}{2}$ &  $-\frac{1}{2}$ & $\frac{1}{2}$ & $0$ & $0$ & $1$ \\
\hline
$L_{\{x\},\{z,t\}}$ & $\frac{1}{2}$ &  $\frac{1}{2}$ & $-\frac{3}{4}$ & $\frac{1}{2}$ & $0$ & $1$ \\
\hline
$L_{\{y\},\{z,t\}}$ & $\frac{1}{2}$ &  $0$ & $\frac{1}{2}$ & $-\frac{3}{4}$ & $\frac{3}{4}$ & $1$ \\
\hline
$L_{\{y\},\{t\}}$ & $\frac{1}{2}$ &  $0$ & $0$ & $\frac{3}{4}$ & $-\frac{3}{4}$ & $1$ \\
\hline
 $H_{\lambda}$  & $1$ & $1$ & $1$ & $1$ & $1$ & $4$ \\
\hline
\end{tabular}
\end{center}
Its rank is $6$, so that \eqref{equation:main-2-simple} holds.
Then \eqref{equation:main-2} in Main Theorem holds by Lemma~\ref{lemma:cokernel}.

\subsection{Family \textnumero $4.9$}
\label{section:r-4-n-9}

In this case, we have $h^{1,2}(X)=0$.
A~toric Landau--Ginzburg model of this family is given by Minkowski polynomial \textnumero $81$, which is
$$
\frac{y}{x}+\frac{1}{x}+y+z+\frac{1}{y}+x+\frac{1}{yz}.
$$
The quartic pencil $\mathcal{S}$ is given by
$$
y^2tz+t^2yz+y^2zx+z^2yx+t^2zx+x^2yz+t^3x=\lambda xyzt.
$$

As usual, we suppose that $\lambda\ne\infty$. Then
\begin{equation}
\label{equation:4-9}
\begin{split}
H_{\{x\}}\cdot S_\lambda&=L_{\{x\},\{y\}}+L_{\{x\},\{z\}}+L_{\{x\},\{t\}}+L_{\{x\},\{y,t\}},\\
H_{\{y\}}\cdot S_\lambda&=L_{\{x\},\{y\}}+2L_{\{y\},\{t\}}+2L_{\{y\},\{z,t\}},\\
H_{\{z\}}\cdot S_\lambda&=L_{\{x\},\{z\}}+3L_{\{z\},\{t\}},\\
H_{\{t\}}\cdot S_\lambda&=L_{\{x\},\{t\}}+L_{\{y\},\{t\}}+L_{\{z\},\{t\}}+L_{\{t\},\{x,y,z\}}.
\end{split}
\end{equation}
Thus, we see that the base locus of the pencil $\mathcal{S}$ consists of the eight lines
$L_{\{x\},\{y\}}$, $L_{\{x\},\{z\}}$, $L_{\{x\},\{t\}}$, $L_{\{y\},\{t\}}$, $L_{\{z\},\{t\}}$,
$L_{\{x\},\{y,t\}}$, $L_{\{y\},\{z,t\}}$, and $L_{\{t\},\{x,y,z\}}$.

For every $\lambda\in\mathbb{C}$, the surface $S_\lambda$ is irreducible and has isolated singularities.
Moreover, the singular points of the surface $S_\lambda$ contained in the base locus of the pencil $\mathcal{S}$ can be described as follows:
\begin{itemize}\setlength{\itemindent}{3cm}
\item[$P_{\{x\},\{y\},\{t\}}$:] type $\mathbb{A}_3$ with quadratic term $xy$;

\item[$P_{\{x\},\{z\},\{t\}}$:] type $\mathbb{A}_3$ with quadratic term $z(x+t)$;

\item[$P_{\{y\},\{z\},\{t\}}$:] type $\mathbb{A}_2$ with quadratic term $yz$;

\item[$P_{\{y\},\{t\},\{x,z\}}$:] type $\mathbb{A}_1$;

\item[$P_{\{z\},\{t\},\{x,y\}}$:] type $\mathbb{A}_2$ with quadratic term $z(x+y+z-t-\lambda t)$.
\end{itemize}
Therefore, it follows from Lemma~\ref{corollary:irreducible-fibers} that the fiber $\mathsf{f}^{-1}(\lambda)$ is irreducible for every $\lambda\in\mathbb{C}$.
This confirms \eqref{equation:main-1} in Main Theorem in this case, since $h^{1,2}(X)=0$.

To verify \eqref{equation:main-2} in Main Theorem, observe that $\mathrm{rk}\,\mathrm{Pic}(\widetilde{S}_{\Bbbk})=\mathrm{rk}\,\mathrm{Pic}(S_{\Bbbk})+11$.
This immediately follows from the description of singular points of the surface $S_\lambda$ given above.
Note also that
\begin{multline*}
L_{\{x\},\{y\}}+L_{\{x\},\{z\}}+L_{\{x\},\{t\}}+L_{\{x\},\{y,t\}}\sim L_{\{x\},\{y\}}+2L_{\{y\},\{t\}}+2L_{\{y\},\{z,t\}}\sim \\
\sim L_{\{x\},\{z\}}+3L_{\{z\},\{t\}}\sim L_{\{x\},\{t\}}+L_{\{y\},\{t\}}+L_{\{z\},\{t\}}+L_{\{t\},\{x,y,z\}}\sim H_\lambda
\end{multline*}
on the surface $S_\lambda$. This follows from \eqref{equation:4-9}.
Using this, we see that the intersection matrix of the lines
$L_{\{x\},\{y\}}$, $L_{\{x\},\{z\}}$, $L_{\{x\},\{t\}}$, $L_{\{y\},\{t\}}$, $L_{\{z\},\{t\}}$,
$L_{\{x\},\{y,t\}}$, $L_{\{y\},\{z,t\}}$, and $L_{\{t\},\{x,y,z\}}$ on the surface $S_\lambda$
has the same rank as the intersection matrix of the curves $L_{\{x\},\{y,t\}}$, $L_{\{y\},\{t\}}$, $L_{\{y\},\{z,t\}}$, $L_{\{t\},\{x,y,z\}}$, and $H_{\lambda}$.
The later matrix is not hard to compute:
\begin{center}\renewcommand\arraystretch{1.42}
\begin{tabular}{|c||c|c|c|c|c|}
\hline
 $\bullet$  & $L_{\{x\},\{y,t\}}$ & $L_{\{y\},\{t\}}$ & $L_{\{y\},\{z,t\}}$ & $L_{\{t\},\{x,y,z\}}$ &  $H_{\lambda}$ \\
\hline\hline
$L_{\{x\},\{y,t\}}$ & $-\frac{5}{4}$ & $\frac{1}{3}$ & $0$ & $0$ & $1$ \\
\hline
$L_{\{y\},\{t\}}$ & $\frac{1}{3}$ &  $-\frac{1}{12}$ & $\frac{2}{3}$ & $\frac{1}{2}$ & $1$ \\
\hline
$L_{\{y\},\{z,t\}}$ & $0$ &  $\frac{2}{3}$ & $-\frac{4}{3}$ & $0$ & $1$ \\
\hline
$L_{\{t\},\{x,y,z\}}$ & $0$ &  $\frac{1}{2}$ & $0$ & $-\frac{5}{6}$ & $1$ \\
\hline
 $H_{\lambda}$  & $1$ & $1$ & $1$ & $1$ & $4$ \\
\hline
\end{tabular}
\end{center}
The rank of this matrix is $5$.
Thus, we conclude that \eqref{equation:main-2-simple} holds in this case,
so that \eqref{equation:main-2} in Main Theorem also holds by Lemma~\ref{lemma:cokernel}.

\subsection{Family \textnumero $4.10$}
\label{section:r-4-n-10}

In this case, we have $X\cong\mathbb{P}^1\times\mathbf{S}_7$, where $\mathbf{S}_7$ is a smooth del Pezzo surface of degree $7$.
This shows that $h^{1,2}(X)=0$.
A~toric Landau--Ginzburg model of this family is given by Minkowski polynomial \textnumero $84$, which is
$$
\frac{y}{x}+\frac{1}{x}+y+z+\frac{1}{z}+\frac{1}{y}+x.
$$
Thus, the quartic pencil $\mathcal{S}$ is given by the following equation:
$$
y^2tz+t^2zy+y^2xz+z^2xy+t^2xy+t^2xz+x^2zy=\lambda xyzt.
$$

Suppose that $\lambda\ne\infty$. Then
\begin{equation}
\label{equation:4-10}
\begin{split}
H_{\{x\}}\cdot S_\lambda&=L_{\{x\},\{y\}}+L_{\{x\},\{z\}}+L_{\{x\},\{t\}}+L_{\{x\},\{y,t\}},\\
H_{\{y\}}\cdot S_\lambda&=L_{\{x\},\{y\}}+L_{\{y\},\{z\}}+2L_{\{y\},\{t\}},\\
H_{\{z\}}\cdot S_\lambda&=L_{\{x\},\{z\}}+L_{\{y\},\{z\}}+2L_{\{z\},\{t\}},\\
H_{\{t\}}\cdot S_\lambda&=L_{\{x\},\{t\}}+L_{\{y\},\{t\}}+L_{\{z\},\{t\}}+L_{\{t\},\{x,y,z\}}.
\end{split}
\end{equation}
Thus, the base locus of the pencil $\mathcal{S}$ consists of the lines
$L_{\{x\},\{y\}}$, $L_{\{x\},\{z\}}$, $L_{\{x\},\{t\}}$,
$L_{\{y\},\{z\}}$, $L_{\{y\},\{t\}}$, $L_{\{z\},\{t\}}$, $L_{\{x\},\{y,t\}}$, and $L_{\{t\},\{x,y,z\}}$.

For every $\lambda\in\mathbb{C}$, the surface $S_\lambda$ is irreducible and has isolated singularities.
Moreover, the singular points of the surface $S_\lambda$ contained in the base locus of the pencil $\mathcal{S}$ can be described as follows:
\begin{itemize}\setlength{\itemindent}{3cm}
\item[$P_{\{x\},\{y\},\{z\}}$:] type $\mathbb{A}_1$;

\item[$P_{\{x\},\{y\},\{t\}}$:] type $\mathbb{A}_3$ with quadratic term $xy$;

\item[$P_{\{x\},\{z\},\{t\}}$:] type $\mathbb{A}_2$ with quadratic term $z(x+t)$;

\item[$P_{\{y\},\{z\},\{t\}}$:] type $\mathbb{A}_3$ with quadratic term $yz$;

\item[$P_{\{y\},\{t\},\{x,z\}}$:] type $\mathbb{A}_1$;

\item[$P_{\{z\},\{t\},\{x,y\}}$:] type $\mathbb{A}_1$.
\end{itemize}
Therefore, using Lemma~\ref{corollary:irreducible-fibers}, we see that the fiber $\mathsf{f}^{-1}(\lambda)$ is irreducible for every $\lambda\in\mathbb{C}$.
This confirms \eqref{equation:main-1} in Main Theorem in this case, because $h^{1,2}(X)=0$.

Let us prove \eqref{equation:main-2} in Main Theorem.
Observe that the intersection matrix of the curves $L_{\{x\},\{y\}}$, $L_{\{x\},\{z\}}$, $L_{\{x\},\{y,t\}}$, $L_{\{t\},\{x,y,z\}}$, and $H_{\lambda}$
on the surface $S_\lambda$ is given by
 \begin{center}\renewcommand\arraystretch{1.42}
\begin{tabular}{|c||c|c|c|c|c|}
\hline
 $\bullet$  & $L_{\{x\},\{y\}}$ & $L_{\{x\},\{z\}}$ & $L_{\{x\},\{y,t\}}$ & $L_{\{t\},\{x,y,z\}}$ &  $H_{\lambda}$ \\
\hline\hline
$L_{\{x\},\{y\}}$ & $-\frac{1}{2}$ & $\frac{1}{2}$ & $\frac{1}{2}$& $0$ & $1$ \\
\hline
$L_{\{x\},\{z\}}$ & $\frac{1}{2}$ &  $-\frac{5}{6}$ & $1$ & $0$ & $1$ \\
\hline
$L_{\{x\},\{y,t\}}$ & $\frac{1}{2}$ &  $1$ & $-\frac{5}{4}$ & $0$ & $1$ \\
\hline
$L_{\{t\},\{x,y,z\}}$ & $0$ &  $0$ & $0$ & $-1$ & $1$ \\
\hline
 $H_{\lambda}$  & $1$ & $1$ & $1$ & $1$ & $4$ \\
\hline
\end{tabular}
\end{center}
The rank of this matrix is $5$.
On the other hand, it follows from \eqref{equation:3-30} that
\begin{multline*}
H_\lambda\sim L_{\{x\},\{y\}}+L_{\{x\},\{z\}}+L_{\{x\},\{t\}}+L_{\{x\},\{y,t\}}\sim L_{\{x\},\{y\}}+L_{\{y\},\{z\}}+2L_{\{y\},\{t\}}\sim\\
\sim L_{\{x\},\{z\}}+L_{\{y\},\{z\}}+2L_{\{z\},\{t\}}\sim L_{\{x\},\{t\}}+L_{\{y\},\{t\}}+L_{\{z\},\{t\}}+L_{\{t\},\{x,y,z\}}
\end{multline*}
on the surface $S_\lambda$.
Therefore, the intersection matrix of the lines
$L_{\{x\},\{y\}}$, $L_{\{x\},\{z\}}$, $L_{\{x\},\{t\}}$,
$L_{\{y\},\{z\}}$, $L_{\{y\},\{t\}}$, $L_{\{z\},\{t\}}$, $L_{\{x\},\{y,t\}}$, and $L_{\{t\},\{x,y,z\}}$
has the same rank as the intersection matrix of the curves $L_{\{x\},\{y\}}$, $L_{\{x\},\{z\}}$, $L_{\{x\},\{y,t\}}$, $L_{\{t\},\{x,y,z\}}$, and $H_{\lambda}$.
On the other hand, we also have $\mathrm{rk}\,\mathrm{Pic}(\widetilde{S}_{\Bbbk})=\mathrm{rk}\,\mathrm{Pic}(S_{\Bbbk})+11$.
Thus, we see that \eqref{equation:main-2-simple} holds.
Then we use Lemma~\ref{lemma:cokernel} to conclude that \eqref{equation:main-2} in Main Theorem also holds in this case.

\subsection{Family \textnumero $4.11$}
\label{section:r-4-n-11}

In this case, we have $h^{1,2}(X)=0$.
A~toric Landau--Ginzburg model of this family is given by Minkowski polynomial \textnumero $82$, which is
$$
\frac{y}{x}+\frac{1}{x}+y+z+{\frac {1}{xz}}+\frac{1}{y}+x.
$$
Then the quartic pencil $\mathcal{S}$ is given by the following equation:
$$
y^2tz+t^2zy+y^2xz+z^2xy+t^3y+t^2xz+x^2zy=\lambda xyzt.
$$
As usual, we assume that $\lambda\ne\infty$.

Let $\mathcal{C}$ be the conic in $\mathbb{P}^3$ that is given by $x=yz+zt+t^2=0$.
Then
\begin{equation}
\label{equation:4-11}
\begin{split}
H_{\{x\}}\cdot S_\lambda&=L_{\{x\},\{y\}}+L_{\{x\},\{t\}}+\mathcal{C},\\
H_{\{y\}}\cdot S_\lambda&=L_{\{x\},\{y\}}+L_{\{y\},\{z\}}+2L_{\{y\},\{t\}},\\
H_{\{z\}}\cdot S_\lambda&=L_{\{y\},\{z\}}+3L_{\{z\},\{t\}},\\
H_{\{t\}}\cdot S_\lambda&=L_{\{x\},\{t\}}+L_{\{y\},\{t\}}+L_{\{z\},\{t\}}+L_{\{t\},\{x,y,z\}}.
\end{split}
\end{equation}

For each $\lambda$, the surface $S_\lambda$ is irreducible, it has isolated singularities,
and its singular points contained in the base locus of the pencil $\mathcal{S}$ can be described as follows:
\begin{itemize}\setlength{\itemindent}{3cm}
\item[$P_{\{x\},\{y\},\{t\}}$:] type $\mathbb{A}_3$ with quadratic term $xy$;

\item[$P_{\{x\},\{z\},\{t\}}$:] type $\mathbb{A}_2$ with quadratic term $z(x+t)$;

\item[$P_{\{y\},\{z\},\{t\}}$:] type $\mathbb{A}_4$ with quadratic term $yz$;

\item[$P_{\{y\},\{t\},\{x,z\}}$:] type $\mathbb{A}_1$;

\item[$P_{\{z\},\{t\},\{x,y\}}$:] type $\mathbb{A}_2$ with quadratic term $z(x+y+z-t-\lambda t)$.
\end{itemize}
By Lemma~\ref{corollary:irreducible-fibers}, the fiber $\mathsf{f}^{-1}(\lambda)$ is irreducible for every $\lambda\in\mathbb{C}$.
This confirms \eqref{equation:main-1} in Main Theorem, since $h^{1,2}(X)=0$.

The description of the singular points of the surface $S_\lambda$ gives
$\mathrm{rk}\,\mathrm{Pic}(\widetilde{S}_{\Bbbk})=\mathrm{rk}\,\mathrm{Pic}(S_{\Bbbk})+12$.
On the other hand, it follows from \eqref{equation:4-11} that
\begin{multline*}
H_\lambda\sim L_{\{x\},\{y\}}+L_{\{x\},\{t\}}+\mathcal{C}\sim L_{\{x\},\{y\}}+L_{\{y\},\{z\}}+2L_{\{y\},\{t\}}\sim \\
\sim L_{\{y\},\{z\}}+3L_{\{z\},\{t\}}\sim L_{\{x\},\{t\}}+L_{\{y\},\{t\}}+L_{\{z\},\{t\}}+L_{\{t\},\{x,y,z\}}
\end{multline*}
on the surface $S_\lambda$.
Therefore, the intersection matrix of the lines
$L_{\{x\},\{y\}}$, $L_{\{x\},\{t\}}$, $L_{\{y\},\{z\}}$, $L_{\{y\},\{t\}}$, $L_{\{z\},\{t\}}$, $L_{\{t\},\{x,y,z\}}$, and $\mathcal{C}$
has the same rank as the intersection matrix of the curves $L_{\{x\},\{y\}}$, $L_{\{x\},\{t\}}$, $L_{\{t\},\{x,y,z\}}$, and $H_{\lambda}$.
The latter matrix is given by
\begin{center}\renewcommand\arraystretch{1.42}
\begin{tabular}{|c||c|c|c|c|}
\hline
 $\bullet$  & $L_{\{x\},\{y\}}$ & $L_{\{x\},\{t\}}$ & $L_{\{t\},\{x,y,z\}}$ &  $H_{\lambda}$ \\
\hline\hline
$L_{\{x\},\{y\}}$ & $-1$ & $\frac{1}{2}$ & $0$ & $1$ \\
\hline
$L_{\{x\},\{t\}}$ & $\frac{1}{2}$ &  $-\frac{7}{12}$ & $1$ & $1$ \\
\hline
$L_{\{t\},\{x,y,z\}}$ & $0$ &  $1$ & $-\frac{5}{6}$ & $1$ \\
\hline
 $H_{\lambda}$  & $1$ & $1$ & $1$ & $4$ \\
\hline
\end{tabular}
\end{center}
The rank of this matrix is $4$.
Thus, we see that \eqref{equation:main-2-simple} holds in this case.
Then \eqref{equation:main-2} in Main Theorem also holds by Lemma~\ref{lemma:cokernel}.

\subsection{Family \textnumero $4.12$}
\label{section:r-4-n-12}

In this case, we have~$h^{1,2}(X)=0$.
A~toric Landau--Ginzburg model is given by Minkowski polynomial \textnumero $83$, which is
$$
\frac{y}{x}+\frac{y}{xz}+\frac{1}{x}+y+z+\frac{1}{y}+x.
$$
Then the quartic pencil $\mathcal{S}$ is given by the following equation:
$$
y^2tz+t^2y^2+t^2zy+y^2xz+z^2xy+t^2xz+x^2zy=\lambda xyzt.
$$
Here, for simplicity, we suppose that $\lambda\ne\infty$.

Let $\mathcal{C}$ be the conic in $\mathbb{P}^3$ that is given by $x=yz+yt+zt=0$.
Then
\begin{equation}
\label{equation:4-12}
\begin{split}
H_{\{x\}}\cdot S_\lambda&=L_{\{x\},\{y\}}+L_{\{x\},\{t\}}+\mathcal{C},\\
H_{\{y\}}\cdot S_\lambda&=L_{\{x\},\{y\}}+L_{\{y\},\{z\}}+2L_{\{y\},\{t\}},\\
H_{\{z\}}\cdot S_\lambda&=2L_{\{y\},\{z\}}+2L_{\{z\},\{t\}},\\
H_{\{t\}}\cdot S_\lambda&=L_{\{x\},\{t\}}+L_{\{y\},\{t\}}+L_{\{z\},\{t\}}+L_{\{t\},\{x,y,z\}}.
\end{split}
\end{equation}
This shows that the base locus of the pencil $\mathcal{S}$ consists of the curves
$L_{\{x\},\{y\}}$, $L_{\{x\},\{t\}}$, $L_{\{y\},\{z\}}$, $L_{\{y\},\{t\}}$, $L_{\{z\},\{t\}}$, $L_{\{t\},\{x,y,z\}}$, and $\mathcal{C}$.

Every surface $S_\lambda$ is irreducible, it has isolated singularities,
and its singular points contained in the base locus of the pencil $\mathcal{S}$ can be described as follows:
\begin{itemize}\setlength{\itemindent}{3cm}
\item[$P_{\{x\},\{y\},\{z\}}$:] type $\mathbb{A}_1$;

\item[$P_{\{x\},\{y\},\{t\}}$:] type $\mathbb{A}_3$ with quadratic term $xy$;

\item[$P_{\{x\},\{z\},\{t\}}$:] type $\mathbb{A}_1$;

\item[$P_{\{y\},\{z\},\{t\}}$:] type $\mathbb{A}_5$ with quadratic term $yz$;

\item[$P_{\{y\},\{t\},\{x,z\}}$:] type $\mathbb{A}_1$;

\item[$P_{\{z\},\{t\},\{x,y\}}$:] type $\mathbb{A}_1$.
\end{itemize}
By Lemma~\ref{corollary:irreducible-fibers}, every fiber $\mathsf{f}^{-1}(\lambda)$ is irreducible.
This confirms \eqref{equation:main-1} in Main Theorem.

One has $\mathrm{rk}\,\mathrm{Pic}(\widetilde{S}_{\Bbbk})=\mathrm{rk}\,\mathrm{Pic}(S_{\Bbbk})+12$.
On the other hand, it follows from \eqref{equation:4-12} that
\begin{multline*}
H_\lambda\sim L_{\{x\},\{y\}}+L_{\{x\},\{t\}}+\mathcal{C}\sim  L_{\{x\},\{y\}}+L_{\{y\},\{z\}}+2L_{\{y\},\{t\}}\sim\\
\sim 2L_{\{y\},\{z\}}+2L_{\{z\},\{t\}}\sim L_{\{x\},\{t\}}+L_{\{y\},\{t\}}+L_{\{z\},\{t\}}+L_{\{t\},\{x,y,z\}}
\end{multline*}
on the surface $S_\lambda$.
Thus, the intersection matrix of the lines
$L_{\{x\},\{y\}}$, $L_{\{x\},\{t\}}$, $L_{\{y\},\{z\}}$, $L_{\{y\},\{t\}}$, $L_{\{z\},\{t\}}$, $L_{\{t\},\{x,y,z\}}$, and $\mathcal{C}$
on the surface $S_\lambda$ has the same rank as the intersection matrix of the curves $L_{\{x\},\{y\}}$, $L_{\{x\},\{t\}}$, $L_{\{t\},\{x,y,z\}}$, and $H_{\lambda}$.
The latter matrix is given by
\begin{center}\renewcommand\arraystretch{1.42}
\begin{tabular}{|c||c|c|c|c|}
\hline
 $\bullet$  & $L_{\{x\},\{y\}}$ & $L_{\{x\},\{t\}}$ & $L_{\{t\},\{x,y,z\}}$ &  $H_{\lambda}$ \\
\hline\hline
$L_{\{x\},\{y\}}$ & $-\frac{1}{2}$ & $\frac{1}{2}$ & $0$ & $1$ \\
\hline
$L_{\{x\},\{t\}}$ & $\frac{1}{2}$ &  $-\frac{3}{4}$ & $1$ & $1$ \\
\hline
$L_{\{t\},\{x,y,z\}}$ & $0$ &  $1$ & $-1$ & $1$ \\
\hline
 $H_{\lambda}$  & $1$ & $1$ & $1$ & $4$ \\
\hline
\end{tabular}
\end{center}
Its rank is $4$, so that \eqref{equation:main-2-simple} holds.
Then \eqref{equation:main-2} in Main Theorem holds by Lemma~\ref{lemma:cokernel}.

\subsection{Family \textnumero $4.13$}
\label{section:r-4-n-13}

In this case, the threefold $X$ is  a blow up of $\mathbb{P}^1\times\mathbb{P}^1\times\mathbb{P}^1$ along a smooth rational curve of tridegree $(1,1,3)$.
Thus, we have $h^{1,2}(X)=0$.
This family is missed in \cite{IP}.
A~toric Landau--Ginzburg model of this family is given by Minkowski polynomial \textnumero $1080$, which is
$$
x+y+z+\frac{x}{y}+\frac{y}{x}+\frac{x}{yz}+\frac{1}{z}+\frac{2}{y}+\frac{2}{x}+\frac{1}{xy}+\frac{1}{yz}.
$$
The quartic pencil $\mathcal{S}$ is given by
$$
x^2yz+xy^2z+xyz^2+x^2zt+y^2zt+x^2t^2+xyt^2+2xzt^2+2yzt^2+xt^3+zt^3=\lambda xyzt.
$$
As usual, we suppose that $\lambda\ne\infty$.

Let $\mathcal{C}$ be the conic in $\mathbb{P}^3$ that is given by $y=xz+xt+tz=0$. Then
\begin{equation}
\label{equation:4-13}
\begin{split}
H_{\{x\}}\cdot S_\lambda&=L_{\{x\},\{z\}}+L_{\{x\},\{t\}}+2L_{\{x\},\{y,t\}},\\
H_{\{y\}}\cdot S_\lambda&=L_{\{y\},\{t\}}+L_{\{y\},\{x,t\}}+\mathcal{C},\\
H_{\{z\}}\cdot S_\lambda&=L_{\{x\},\{z\}}+2L_{\{z\},\{t\}}+L_{\{z\},\{x,y,t\}},\\
H_{\{t\}}\cdot S_\lambda&=L_{\{x\},\{t\}}+L_{\{y\},\{t\}}+L_{\{z\},\{t\}}+L_{\{t\},\{x,y,z\}}.
\end{split}
\end{equation}
Therefore, we conclude that the base locus of the pencil $\mathcal{S}$ consists of the curves
$L_{\{x\},\{z\}}$, $L_{\{x\},\{t\}}$, $L_{\{y\},\{t\}}$, $L_{\{z\},\{t\}}$, $L_{\{x\},\{y,t\}}$, $L_{\{y\},\{x,t\}}$,
$L_{\{z\},\{x,y,t\}}$, $L_{\{t\},\{x,y,z\}}$, and $\mathcal{C}$.

For every $\lambda\in\mathbb{C}$, the surface $S_\lambda$ is irreducible,
it has isolated singularities, and its singular points contained in the base locus of the pencil $\mathcal{S}$ can be described as follows:
\begin{itemize}\setlength{\itemindent}{3cm}
\item[$P_{\{x\},\{y\},\{t\}}$:] type $\mathbb{A}_2$ with quadratic term $xy$;

\item[$P_{\{x\},\{z\},\{t\}}$:] type $\mathbb{A}_2$ with quadratic term $z(x+t)$;

\item[$P_{\{y\},\{z\},\{t\}}$:] type $\mathbb{A}_1$;

\item[{$P_{\{x\},\{z\},\{y,t\}}$}:] type $\mathbb{A}_2$ with quadratic term
$$
x(x+y+t+3z+\lambda z)
$$
for $\lambda\neq -3$, type $\mathbb{A}_4$ for $\lambda=-3$;

\item[$P_{\{z\},\{t\},\{x,y\}}$:] type $\mathbb{A}_2$ with quadratic term
$$
z(x+y+z-2t-\lambda t)
$$
for $\lambda\neq -3$, type $\mathbb{A}_3$ for $\lambda=-3$;

\item[{$[0:1:-3-\lambda:-1]$}:] type $\mathbb{A}_1$ for $\lambda\neq -3$, type $\mathbb{A}_4$ for $\lambda=-3$.
\end{itemize}
Thus, using Lemma~\ref{corollary:irreducible-fibers}, we conclude that the fiber $\mathsf{f}^{-1}(\lambda)$ is irreducible for every $\lambda\in\mathbb{C}$.
This confirms \eqref{equation:main-1} in Main Theorem, since $h^{1,2}(X)=0$.

Using the description of the singular points of the surface $S_\lambda$ we gave above, we see that
$\mathrm{rk}\,\mathrm{Pic}(\widetilde{S}_{\Bbbk})=\mathrm{rk}\,\mathrm{Pic}(S_{\Bbbk})+10$.
On the other hand, it follows from \eqref{equation:4-13} that
\begin{multline*}
H_\lambda\sim L_{\{x\},\{z\}}+L_{\{x\},\{t\}}+2L_{\{x\},\{y,t\}}\sim L_{\{y\},\{t\}}+L_{\{y\},\{x,t\}}+\mathcal{C}\sim\\
\sim L_{\{x\},\{z\}}+2L_{\{z\},\{t\}}+L_{\{z\},\{x,y,t\}}\sim L_{\{x\},\{t\}}+L_{\{y\},\{t\}}+L_{\{z\},\{t\}}+L_{\{t\},\{x,y,z\}}
\end{multline*}
on the surface $S_\lambda$.
Hence, the intersection matrix of the curves
$L_{\{x\},\{z\}}$, $L_{\{x\},\{t\}}$, $L_{\{y\},\{t\}}$, $L_{\{z\},\{t\}}$, $L_{\{x\},\{y,t\}}$, $L_{\{y\},\{x,t\}}$,
$L_{\{z\},\{x,y,t\}}$, $L_{\{t\},\{x,y,z\}}$, and $\mathcal{C}$ on the surface $S_\lambda$
has the same rank as the intersection matrix of the curves $L_{\{x\},\{y,t\}}$, $L_{\{y\},\{x,t\}}$, $L_{\{z\},\{t\}}$, $L_{\{z\},\{x,y,t\}}$, $L_{\{t\},\{x,y,z\}}$, and $H_{\lambda}$.
Using Propositions~\ref{proposition:du-Val-intersection} and \ref{proposition:du-Val-self-intersection},
we see that the latter matrix is
\begin{center}\renewcommand\arraystretch{1.42}
\begin{tabular}{|c||c|c|c|c|c|c|}
\hline
 $\bullet$  & $L_{\{x\},\{y,t\}}$ & $L_{\{y\},\{x,t\}}$ & $L_{\{z\},\{t\}}$ & $L_{\{z\},\{x,y,t\}}$ & $L_{\{t\},\{x,y,z\}}$ &  $H_{\lambda}$ \\
\hline\hline
$L_{\{x\},\{y,t\}}$ & $-\frac{5}{6}$ & $\frac{1}{3}$ & $1$ & $\frac{1}{3}$ & $0$ & $1$ \\
\hline
$L_{\{y\},\{x,t\}}$ & $\frac{1}{3}$ &  $-\frac{4}{3}$ & $0$ & $1$ & $0$ & $1$ \\
\hline
$L_{\{z\},\{t\}}$ & $1$ &  $0$ & $-\frac{1}{6}$ & $\frac{1}{3}$ & $\frac{1}{3}$ & $1$ \\
\hline
$L_{\{z\},\{x,y,t\}}$ & $\frac{1}{3}$ &  $1$ & $\frac{1}{3}$ & $-\frac{2}{3}$ & $\frac{1}{3}$ & $1$ \\
\hline
$L_{\{t\},\{x,y,z\}}$ & $0$ &  $0$ & $\frac{1}{3}$ & $\frac{1}{3}$ & $-\frac{4}{3}$ & $1$ \\
\hline
 $H_{\lambda}$  & $1$ & $1$ & $1$ & $1$ & $1$ & $4$ \\
\hline
\end{tabular}
\end{center}
Observe that the rank of this matrix is $6$. Thus, we see that \eqref{equation:main-2-simple} holds.
Thus, it follows from Lemma~\ref{lemma:cokernel} that \eqref{equation:main-2} in Main Theorem also holds in this case.

\section{Fano threefolds of Picard rank $5$}
\label{section:rank-5}

\subsection{Family \textnumero $5.1$}
\label{section:r-5-n-1}

In this case, we have $h^{1,2}(X)=0$.
A~toric Landau--Ginzburg model of this family is given by Minkowski polynomial \textnumero $1082$, which is
$$
x+y+\frac{1}{z}+\frac{x}{y}+\frac{y}{x}+\frac{2}{y}+\frac{2}{x}+\frac{z}{y}+\frac{z}{x}+\frac{1}{xy}+\frac{z}{xy}.
$$
The quartic pencil $\mathcal{S}$ is given by
$$
x^{2}zy+y^{2}xz+xyt^2+x^{2}zt+y^{2}zt+2zxt^2+2zyt^2+z^{2}xt+z^{2}yt+zt^3+z^2t^2=\lambda xyzt.
$$

Suppose that $\lambda\ne\infty$. Then
\begin{equation}
\label{equation:5-1}
\begin{split}
H_{\{x\}}\cdot S_\lambda&=L_{\{x\},\{z\}}+L_{\{x\},\{t\}}+L_{\{x\},\{y,t\}}+L_{\{x\},\{y,z,t\}},\\
H_{\{y\}}\cdot S_\lambda&=L_{\{y\},\{z\}}+L_{\{y\},\{t\}}+L_{\{y\},\{x,t\}}+L_{\{y\},\{x,z,t\}},\\
H_{\{z\}}\cdot S_\lambda&=L_{\{x\},\{z\}}+L_{\{y\},\{z\}}+2L_{\{z\},\{t\}},\\
H_{\{t\}}\cdot S_\lambda&=L_{\{x\},\{t\}}+L_{\{y\},\{t\}}+L_{\{z\},\{t\}}+L_{\{t\},\{x,y\}}.
\end{split}
\end{equation}
Thus, the base locus of the pencil $\mathcal{S}$ consists of the lines
$L_{\{x\},\{z\}}$, $L_{\{x\},\{t\}}$, $L_{\{y\},\{z\}}$, $L_{\{y\},\{t\}}$, $L_{\{z\},\{t\}}$,
$L_{\{x\},\{y,t\}}$,  $L_{\{y\},\{x,t\}}$, $L_{\{x\},\{y,z,t\}}$, $L_{\{y\},\{x,z,t\}}$, and $L_{\{t\},\{x,y\}}$.

For every $\lambda\in\mathbb{C}$, the surface $S_\lambda$ has isolated singularities, so that it is irreducible.
The singular points of the surface $S_\lambda$ contained in the base locus of the pencil $\mathcal{S}$ can be described as follows:
\begin{itemize}\setlength{\itemindent}{1.5cm}
\item[$P_{\{y\},\{z\},\{t\}}$:] type $\mathbb{A}_2$ with quadratic term $z(y+t)$;

\item[$P_{\{x\},\{z\},\{t\}}$:] type $\mathbb{A}_2$ with quadratic term $z(x+t)$;

\item[$P_{\{x\},\{y\},\{t\}}$:] type $\mathbb{A}_3$ with quadratic term $t(x+y+t)$ for $\lambda\neq -3$, type $\mathbb{A}_5$ for $\lambda=-3$;

\item[$P_{\{z\},\{t\},\{x,y\}}$:] type $\mathbb{A}_1$.
\end{itemize}
Thus, it follows from Lemma~\ref{corollary:irreducible-fibers} that the fiber $\mathsf{f}^{-1}(\lambda)$ is irreducible for every $\lambda\in\mathbb{C}$.
This confirms \eqref{equation:main-1} in Main Theorem, because $h^{1,2}(X)=0$.

To verify \eqref{equation:main-2} in Main Theorem, observe that
$\mathrm{rk}\,\mathrm{Pic}(\widetilde{S}_{\Bbbk})=\mathrm{rk}\,\mathrm{Pic}(S_{\Bbbk})+8$.
This follows from the description of the singular points of the surface $S_\lambda$ for $\lambda\ne -3$.
On the other hand, it follows from \eqref{equation:5-1} that
\begin{multline*}
L_{\{x\},\{z\}}+L_{\{x\},\{t\}}+L_{\{x\},\{y,t\}}+L_{\{x\},\{y,z,t\}}\sim L_{\{y\},\{z\}}+L_{\{y\},\{t\}}+L_{\{y\},\{x,t\}}+L_{\{y\},\{x,z,t\}}\sim \\
\sim L_{\{x\},\{z\}}+L_{\{y\},\{z\}}+2L_{\{z\},\{t\}}\sim L_{\{x\},\{t\}}+L_{\{y\},\{t\}}+L_{\{z\},\{t\}}+L_{\{t\},\{x,y\}}\sim H_\lambda
\end{multline*}
on the surface $S_\lambda$.
Thus, the intersection matrix of the lines
$L_{\{x\},\{z\}}$, $L_{\{x\},\{t\}}$, $L_{\{y\},\{z\}}$, $L_{\{y\},\{t\}}$, $L_{\{z\},\{t\}}$,
$L_{\{x\},\{y,t\}}$,  $L_{\{y\},\{x,t\}}$, $L_{\{x\},\{y,z,t\}}$, $L_{\{y\},\{x,z,t\}}$, and $L_{\{t\},\{x,y\}}$
on the surface $S_\lambda$ has the same rank as the intersection matrix of the curves
$L_{\{x\},\{y,t\}}$, $L_{\{x\},\{y,z,t\}}$, $L_{\{y\},\{x,t\}}$, $L_{\{y\},\{x,z,t\}}$, $L_{\{z\},\{t\}}$, $L_{\{y\},\{t\}}$, and $H_{\lambda}$.
If $\lambda\ne-3$, then the latter matrix is given by
 \begin{center}\renewcommand\arraystretch{1.42}
\begin{tabular}{|c||c|c|c|c|c|c|c|}
\hline
 $\bullet$  & $L_{\{x\},\{y,t\}}$ & $L_{\{x\},\{y,z,t\}}$ & $L_{\{y\},\{x,t\}}$ & $L_{\{y\},\{x,z,t\}}$ & $L_{\{z\},\{t\}}$ & $L_{\{y\},\{t\}}$ & $H_{\lambda}$ \\
\hline\hline
$L_{\{x\},\{y,t\}}$ & $-\frac{5}{4}$ & $1$ & $\frac{3}{4}$ & $0$ & $0$ & $\frac{1}{4}$ & $1$ \\
\hline
$L_{\{x\},\{y,z,t\}}$ & $1$ & $-2$ & $0$ &  $1$ & $0$ & $0$ & $1$ \\
\hline
$L_{\{y\},\{x,t\}}$ & $\frac{3}{4}$ & $0$ & $-\frac{5}{4}$ &  $1$ & $0$ & $\frac{1}{4}$ & $1$ \\
\hline
$L_{\{y\},\{x,z,t\}}$ & $0$ & $1$ & $1$ &  $-2$ & $0$ & $1$ & $1$ \\
\hline
$L_{\{z\},\{t\}}$ & $0$ & $0$ & $0$ &  $0$ & $-\frac{1}{6}$ & $\frac{1}{3}$ & $1$ \\
\hline
$L_{\{y\},\{t\}}$ & $\frac{1}{4}$ & $0$ & $\frac{1}{4}$ &  $1$ & $\frac{1}{3}$ & $-\frac{7}{12}$ & $1$ \\
\hline
 $H_{\lambda}$  & $1$ & $1$ & $1$ & $1$ & $1$ & $1$ & $4$ \\
\hline
\end{tabular}
\end{center}
Its determinant is $\frac{7}{9}$. This shows that \eqref{equation:main-2-simple} holds.
Thus, we can use Lemma~\ref{lemma:cokernel} to conclude that \eqref{equation:main-2} in Main Theorem also holds in this case.

\subsection{Family \textnumero $5.2$}
\label{section:r-5-n-2}

In this case, we have $h^{1,2}(X)=0$.
A~toric Landau--Ginzburg model of this family is given by Minkowski polynomial \textnumero $219$, which is
$$
x+y+z+\frac{x}{z}+\frac{x}{y}+\frac{y}{x}+\frac{1}{y}+\frac{1}{x}.
$$
Thus, the quartic pencil $\mathcal{S}$ is given by the equation
$$
x^2zy+y^2xz+z^2xy+x^2ty+x^2tz+y^2tz+t^2xz+t^2zy=\lambda xyzt.
$$

Suppose that $\lambda\ne\infty$. Then
\begin{equation}
\label{equation:5-2}
\begin{split}
H_{\{x\}}\cdot S_\lambda&=L_{\{x\},\{y\}}+L_{\{x\},\{z\}}+L_{\{x\},\{t\}}+L_{\{x\},\{y,t\}},\\
H_{\{y\}}\cdot S_\lambda&=L_{\{x\},\{y\}}+L_{\{y\},\{z\}}+L_{\{y\},\{t\}}+L_{\{y\},\{x,t\}},\\
H_{\{z\}}\cdot S_\lambda&=2L_{\{x\},\{z\}}+L_{\{y\},\{z\}}+L_{\{z\},\{t\}},\\
H_{\{t\}}\cdot S_\lambda&=L_{\{x\},\{t\}}+L_{\{y\},\{t\}}+L_{\{z\},\{t\}}+L_{\{t\},\{x,y,z\}}.
\end{split}
\end{equation}

For every $\lambda$, the surface $S_\lambda$ is irreducible, it has isolated singularities,
and its singular points contained in the base locus of the pencil $\mathcal{S}$ can be described as follows:
\begin{itemize}\setlength{\itemindent}{3cm}
\item[$P_{\{x\},\{y\},\{z\}}$:] type $\mathbb{A}_2$ with quadratic term $z(x+y)$;

\item[$P_{\{x\},\{y\},\{t\}}$:] type $\mathbb{A}_3$ with quadratic term $xy$;

\item[$P_{\{x\},\{z\},\{t\}}$:] type $\mathbb{A}_2$ with quadratic term $z(x+t)$;

\item[$P_{\{y\},\{z\},\{t\}}$:] type $\mathbb{A}_1$;

\item[$P_{\{x\},\{t\},\{y,z\}}$:] type $\mathbb{A}_1$.
\end{itemize}
Therefore, using Lemma~\ref{corollary:irreducible-fibers}, we see that the fiber $\mathsf{f}^{-1}(\lambda)$ is irreducible for every $\lambda\in\mathbb{C}$.
This confirms \eqref{equation:main-1} in Main Theorem, since $h^{1,2}(X)=0$.

Using \eqref{equation:5-2}, we see that the intersection matrix of the lines
$L_{\{x\},\{y\}}$, $L_{\{x\},\{z\}}$, $L_{\{x\},\{t\}}$, $L_{\{y\},\{z\}}$, $L_{\{y\},\{t\}}$,
$L_{\{z\},\{t\}}$, $L_{\{x\},\{y,t\}}$, $L_{\{y\},\{x,t\}}$, and $L_{\{t\},\{x,y,z\}}$ on the surface $S_\lambda$
has the same rank as the intersection matrix of the curves $L_{\{x\},\{y\}}$, $L_{\{x\},\{z\}}$, $L_{\{y\},\{z\}}$, $L_{\{y\},\{t\}}$,  $L_{\{x\},\{t\}}$, and $H_{\lambda}$.
The latter matrix is given by
 \begin{center}\renewcommand\arraystretch{1.42}
\begin{tabular}{|c||c|c|c|c|c|c|}
\hline
 $\bullet$  & $L_{\{x\},\{y\}}$ & $L_{\{x\},\{z\}}$ & $L_{\{y\},\{z\}}$ & $L_{\{y\},\{t\}}$ & $L_{\{x\},\{t\}}$ & $H_{\lambda}$ \\
\hline\hline
$L_{\{x\},\{y\}}$ & $-\frac{1}{3}$ & $\frac{2}{3}$ & $\frac{2}{3}$ & $\frac{1}{2}$ & $\frac{1}{2}$ & $1$ \\
\hline
$L_{\{x\},\{z\}}$ & $\frac{2}{3}$ & $-\frac{1}{6}$ & $\frac{2}{3}$ & $0$ & $\frac{2}{3}$ & $1$ \\
\hline
$L_{\{y\},\{z\}}$ & $\frac{2}{3}$ & $\frac{2}{3}$ & $-\frac{3}{2}$ & $\frac{1}{2}$ & $0$ & $1$ \\
\hline
$L_{\{y\},\{t\}}$ & $\frac{1}{2}$ & $0$ & $\frac{1}{2}$ & $-\frac{3}{4}$ & $\frac{1}{4}$ & $1$ \\
\hline
$L_{\{x\},\{t\}}$ & $\frac{1}{2}$ & $\frac{2}{3}$ & $0$ & $\frac{1}{4}$ & $-\frac{1}{12}$ & $1$ \\
\hline
$H_{\lambda}$ & $1$ & $1$ & $1$ & $1$ & $1$ & $4$ \\
\hline
\end{tabular}
\end{center}
Note that this matrix has rank~$6$.
Moreover, using the description of the singular points of the surface $S_\lambda$,
we see that $\mathrm{rk}\,\mathrm{Pic}(\widetilde{S}_{\Bbbk})=\mathrm{rk}\,\mathrm{Pic}(S_{\Bbbk})+9$.
This shows that \eqref{equation:main-2-simple} holds in this case.
Then \eqref{equation:main-2} in Main Theorem holds by Lemma~\ref{lemma:cokernel}.

\subsection{Family \textnumero $5.3$}
\label{section:r-5-n-3}

In this case, we have $X\cong\mathbb{P}^1\times\mathbf{S}_6$, where $\mathbf{S}_6$ is a smooth del Pezzo surface of degree $6$.
This implies that $h^{1,2}(X)=0$.
A~toric Landau--Ginzburg model of this family is given by Minkowski polynomial \textnumero $218$, which is
$$
x+y+z+\frac{y}{z}+\frac{z}{y}+\frac{1}{z}+\frac{1}{y}+\frac{1}{x}.
$$
Therefore, the corresponding pencil $\mathcal{S}$ is given by the following equation:
$$
x^2zy+y^2xz+z^2xy+y^2xt+z^2xt+t^2xy+t^2xz+t^2zy=\lambda xyzt.
$$

Suppose that $\lambda\ne\infty$. Then
\begin{equation}
\label{equation:5-3}
\begin{split}
H_{\{x\}}\cdot S_\lambda=L_{\{x\},\{y\}}+L_{\{x\},\{z\}}+2L_{\{x\},\{t\}},\\
H_{\{y\}}\cdot S_\lambda=L_{\{x\},\{y\}}+L_{\{y\},\{z\}}+L_{\{y\},\{t\}}+L_{\{y\},\{z,t\}},\\
H_{\{z\}}\cdot S_\lambda=L_{\{x\},\{z\}}+L_{\{y\},\{z\}}+L_{\{z\},\{t\}}+L_{\{z\},\{y,t\}},\\
H_{\{t\}}\cdot S_\lambda=L_{\{x\},\{t\}}+L_{\{y\},\{t\}}+L_{\{z\},\{t\}}+L_{\{t\},\{x,y,z\}}.
\end{split}
\end{equation}
Thus, the base locus of the pencil $\mathcal{S}$ consists of the lines
$L_{\{x\},\{y\}}$, $L_{\{x\},\{z\}}$, $L_{\{x\},\{t\}}$,
$L_{\{y\},\{z\}}$, $L_{\{y\},\{t\}}$, $L_{\{z\},\{t\}}$,
$L_{\{y\},\{z,t\}}$, $L_{\{z\},\{y,t\}}$, and $L_{\{t\},\{x,y,z\}}$.

For every $\lambda$, the surface $S_\lambda$ is irreducible, it has isolated singularities,
and its singular points contained in the base locus of the pencil $\mathcal{S}$ can be described as follows:
\begin{itemize}\setlength{\itemindent}{3cm}
\item[$P_{\{x\},\{y\},\{z\}}$:] type $\mathbb{A}_1$;

\item[$P_{\{x\},\{z\},\{t\}}$:] type $\mathbb{A}_2$ with quadratic term $x(z+t)$;

\item[$P_{\{x\},\{y\},\{t\}}$:] type $\mathbb{A}_2$ with quadratic term $x(y+t)$;

\item[$P_{\{y\},\{z\},\{t\}}$:] type $\mathbb{A}_3$ with quadratic term $yz$;

\item[$P_{\{x\},\{t\},\{y,z\}}$:] type $\mathbb{A}_1$.
\end{itemize}
Thus, by Lemma~\ref{corollary:irreducible-fibers}, the fiber $\mathsf{f}^{-1}(\lambda)$ is irreducible for every $\lambda\in\mathbb{C}$.
This confirms \eqref{equation:main-1} in Main Theorem, since $h^{1,2}(X)=0$.

Now let us verify \eqref{equation:main-2} in Main Theorem. On the surface $S_\lambda$, we have
\begin{multline*}
H_\lambda\sim L_{\{x\},\{y\}}+L_{\{x\},\{z\}}+2L_{\{x\},\{t\}}\sim L_{\{x\},\{y\}}+L_{\{y\},\{z\}}+L_{\{y\},\{t\}}+L_{\{y\},\{z,t\}}\sim\\
L_{\{x\},\{z\}}+L_{\{y\},\{z\}}+L_{\{z\},\{t\}}+L_{\{z\},\{y,t\}}\sim L_{\{x\},\{t\}}+L_{\{y\},\{t\}}+L_{\{z\},\{t\}}+L_{\{t\},\{x,y,z\}}.
\end{multline*}
This follows from  \eqref{equation:5-3}.
Thus, the intersection matrix of the lines
$L_{\{x\},\{y\}}$, $L_{\{x\},\{z\}}$, $L_{\{x\},\{t\}}$, $L_{\{y\},\{z\}}$, $L_{\{y\},\{t\}}$, $L_{\{z\},\{t\}}$, $L_{\{y\},\{z,t\}}$, $L_{\{z\},\{y,t\}}$, and $L_{\{t\},\{x,y,z\}}$
has the same rank as the intersection matrix of the curves $L_{\{x\},\{y\}}$, $L_{\{x\},\{z\}}$, $L_{\{y\},\{z\}}$, $L_{\{y\},\{t\}}$,  $L_{\{z\},\{t\}}$, and $H_{\lambda}$.
The latter matrix is given by
\begin{center}\renewcommand\arraystretch{1.42}
\begin{tabular}{|c||c|c|c|c|c|c|}
\hline
 $\bullet$  & $L_{\{x\},\{y\}}$ & $L_{\{x\},\{z\}}$ & $L_{\{y\},\{z\}}$ & $L_{\{y\},\{t\}}$ & $L_{\{z\},\{t\}}$ & $H_{\lambda}$ \\
\hline\hline
$L_{\{x\},\{y\}}$ & $-\frac{2}{3}$ & $\frac{1}{2}$ & $\frac{1}{2}$ & $\frac{1}{3}$ & $0$ & $1$ \\
\hline
$L_{\{x\},\{z\}}$ & $\frac{1}{2}$ & $-\frac{2}{3}$ & $\frac{1}{2}$ & $0$ & $\frac{1}{3}$ & $1$ \\
\hline
$L_{\{y\},\{z\}}$ & $\frac{1}{2}$ & $\frac{1}{2}$ & $\frac{1}{2}$ & $\frac{1}{2}$ & $\frac{1}{2}$ & $1$ \\
\hline
$L_{\{y\},\{t\}}$ & $\frac{1}{3}$ & $0$ & $\frac{1}{2}$ & $-\frac{7}{12}$ & $\frac{1}{4}$ & $1$ \\
\hline
$L_{\{z\},\{t\}}$ & $0$ & $\frac{1}{3}$ & $\frac{1}{2}$ & $\frac{1}{4}$ & $-\frac{7}{12}$ & $1$ \\
\hline
$H_{\lambda}$ & $1$ & $1$ & $1$ & $1$ & $1$ & $4$ \\
\hline
\end{tabular}
\end{center}
Its rank is $6$.
On the other hand, it follows from the description of the singular points of the surface $S_\lambda$ that
$$
\mathrm{rk}\,\mathrm{Pic}(\widetilde{S}_{\Bbbk})=\mathrm{rk}\,\mathrm{Pic}(S_{\Bbbk})+9,
$$
so that \eqref{equation:main-2-simple} holds in this case.
Thus, by Lemma~\ref{lemma:cokernel}, we see that \eqref{equation:main-2} in Main Theorem also holds in this case.

\section{Fano threefolds of Picard rank $6$}
\label{section:rank-6}

\subsection{Family \textnumero $6.1$}
\label{section:r-6-n-1}

We have $X\cong\mathbb{P}^1\times\mathbf{S}_5$, where $\mathbf{S}_5$ is the smooth del Pezzo surface of degree $5$.
In particular, we have $h^{1,2}(X)=0$.
A~toric Landau--Ginzburg model of this family is given by Minkowski polynomial \textnumero $283$, which is
$$
x+\frac{1}{y}+z+\frac{1}{xy}+\frac{1}{z}+2y+\frac{3}{x}+\frac{3y}{x}+\frac{y^2}{x}.
$$
Thus, the corresponding pencil $\mathcal{S}$ is given by the equation
$$
x^2yz+xzt^2+xyz^2+zt^3+xyt^2+2xy^2z+3yzt^2+3y^2zt+y^3z=\lambda xyzt.
$$
For simplicity, we suppose that $\lambda\ne\infty$.

Let $\mathcal{C}$ be the conic in $\mathbb{P}^3$ that is given by $t=(x+y)^2+xz=0$. Then
\begin{equation}
\label{equation:6-1}
\begin{split}
H_{\{x\}}\cdot S_\lambda&=L_{\{x\},\{z\}}+3L_{\{x\},\{y,t\}},\\
H_{\{y\}}\cdot S_\lambda&=L_{\{y\},\{z\}}+2L_{\{y\},\{t\}}+L_{\{y\},\{x,t\}},\\
H_{\{z\}}\cdot S_\lambda&=L_{\{x\},\{z\}}+L_{\{y\},\{z\}}+2L_{\{z\},\{t\}},\\
H_{\{t\}}\cdot S_\lambda&=L_{\{y\},\{t\}}+L_{\{z\},\{t\}}+\mathcal{C}.
\end{split}
\end{equation}
Thus, the base locus of the pencil $\mathcal{S}$ consists of the curves
$L_{\{x\},\{z\}}$, $L_{\{x\},\{y,t\}}$, $L_{\{y\},\{z\}}$, $L_{\{y\},\{t\}}$, $L_{\{y\},\{x,t\}}$, $L_{\{z\},\{t\}}$, and $\mathcal{C}$.

For every $\lambda\in\mathbb{C}$, the surface $S_\lambda$ is irreducible and has isolated singularities.
Moreover, if $\lambda\ne-1$ and $\lambda\ne-5$, then
the singular points of the surface $S_\lambda$ contained in the base locus of the pencil $\mathcal{S}$ can be described as follows:
\begin{itemize}\setlength{\itemindent}{6cm}
\item[$P_{\{x\},\{y\},\{t\}}$:] type $\mathbb{A}_2$ with quadratic term $xy$;

\item[$P_{\{y\},\{z\},\{t\}}$:] type $\mathbb{A}_3$ with quadratic term $yz$;

\item[$P_{\{y\},\{z\},\{x,t\}}$:] type $\mathbb{A}_1$;

\item[$P_{\{z\},\{t\},\{x,y\}}$:] type $\mathbb{A}_3$ with quadratic term $z^2+t^2+(\lambda+3)tz$;

\item[{$[0:-2:\lambda+3\pm\sqrt{\lambda^2+6\lambda+5}:2]$}:] type $\mathbb{A}_2$.
\end{itemize}
Thus, in the notation of Subsection~\ref{subsection:scheme-step-6}, the set $\Sigma$
consists of the points $P_{\{y\},\{z\},\{t\}}$, $P_{\{x\},\{y\},\{t\}}$, $P_{\{y\},\{z\},\{x,t\}}$, and $P_{\{z\},\{t\},\{x,y\}}$.

The description of the singular points of the surface $S_\lambda$ also gives
\begin{equation}
\label{equation:6-1-Pic}
\mathrm{rk}\,\mathrm{Pic}(\widetilde{S}_{\Bbbk})=\mathrm{rk}\,\mathrm{Pic}(S_{\Bbbk})+10.
\end{equation}
Observe that the singular point $P_{\{z\},\{t\},\{x,y\}}$ contributes $\textbf{\textcircled{2}}$ to this formula.
Similarly, the singular points $[0:-2:\lambda+3\pm\sqrt{\lambda^2+6\lambda+5}:2]$ also contribute $\textbf{\textcircled{2}}$ to \eqref{equation:6-1-Pic}.

To verify \eqref{equation:main-1} in Main Theorem, observe that
the surface $S_{\lambda}$ has du Val singularities in base points of the pencil~$\mathcal{S}$ provided that $\lambda\ne-1$ and $\lambda\ne-5$.
Thus, by Lemma~\ref{corollary:irreducible-fibers}, the fiber $\mathsf{f}^{-1}(\lambda)$ is irreducible for every $\lambda\in\mathbb{C}$ such that $\lambda\ne-1$ and $\lambda\ne-5$.
Moreover we have

\begin{lemma}
\label{lemma:6-1-defect}
One has $[\mathsf{f}^{-1}(-1)]=[\mathsf{f}^{-1}(-5)]=1$.
\end{lemma}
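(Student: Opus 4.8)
The plan is to compute both $[\mathsf{f}^{-1}(-1)]$ and $[\mathsf{f}^{-1}(-5)]$ from the refined formula \eqref{equation:equation:number-of-irredubicle-components-refined}, reducing everything to a single local defect. Since $S_\lambda$ is irreducible for every $\lambda\in\mathbb{C}$, we have $[S_{-1}]=[S_{-5}]=1$. Moreover $S_\lambda$ has only isolated singularities, so it is smooth at the general point of each base curve of $\mathcal{S}$; hence $\mathbf{M}_i^\lambda=\mathrm{mult}_{C_i}(S_\lambda)=1$ for every base curve $C_i$, and Lemma~\ref{lemma:main} gives $\mathbf{C}_i^\lambda=0$. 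Thus \eqref{equation:equation:number-of-irredubicle-components-refined} collapses to $[\mathsf{f}^{-1}(\lambda)]=1+\sum_{P\in\Sigma}\mathbf{D}_P^\lambda$ for $\lambda\in\{-1,-5\}$.

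Next I would dispose of three of the four points of $\Sigma$. One checks that at $\lambda=-1$ and $\lambda=-5$ the quadratic terms of $S_\lambda$ at $P_{\{y\},\{z\},\{t\}}$, $P_{\{x\},\{y\},\{t\}}$ and $P_{\{y\},\{z\},\{x,t\}}$ are still $yz$, $xy$ and a form of rank at least two, respectively (none of these involves the degenerating cross term). Consequently all three remain good double points, and their defects vanish by Lemma~\ref{lemma:normal-crossing}. Therefore $[\mathsf{f}^{-1}(\lambda)]=1+\mathbf{D}^\lambda_{P_{\{z\},\{t\},\{x,y\}}}$, and it remains to show that this last defect is zero.

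The heart of the argument is the point $P=P_{\{z\},\{t\},\{x,y\}}=[1:-1:0:0]$, where the quadratic term $z^2+t^2+(\lambda+3)zt$ becomes a perfect square at $\lambda=-1$ and $\lambda=-5$, so $P$ is no longer a good double point and Lemma~\ref{lemma:normal-crossing} no longer applies. I would compute $\mathbf{D}_P^\lambda$ by the recipe of Subsection~\ref{subsection:scheme-step-8}. Because $\mathrm{mult}_P(S_\lambda)=2$, Corollary~\ref{corollary:log-pull-back} gives $\mathbf{A}_P^\lambda=0$, so by \eqref{equation:D-A-B} the defect equals $\sum_{\alpha(\widehat{C}_i)=P}\mathbf{C}_i^\lambda$, a sum over the base curves of $\widehat{\mathcal{S}}$ lying over $P$. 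Writing $S_\lambda$ in the affine chart $x=1$ near $P$, I would blow up $P$ and observe that, since the tangent cone $z^2+t^2+(\lambda+3)zt$ genuinely depends on $\lambda$, the base locus of $\widehat{\mathcal{S}}$ inside the exceptional plane $\widehat{E}\cong\mathbb{P}^2$ is the single point $[1:0:0]$ rather than a curve; thus $\widehat{S}_\lambda$ meets $\widehat{E}$ along a line that is not fixed by the pencil. Blowing up the remaining base point and tracking the proper transform, one verifies that $\widehat{S}_\lambda$ is generically smooth along every base curve of $\widehat{\mathcal{S}}$ produced over $P$, so $\mathbf{M}_i^\lambda=1$ and $\mathbf{C}_i^\lambda=0$ for each of them by Lemma~\ref{lemma:main-2}. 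Hence $\mathbf{D}_P^\lambda=0$ and $[\mathsf{f}^{-1}(-1)]=[\mathsf{f}^{-1}(-5)]=1$.

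The main obstacle I anticipate is precisely this local resolution at $P$ for the two special values: the tangent cone collapses to a double line, so one cannot read off the answer from a single blow-up as in the good-double-point case, and one must carry out an explicit (though elementary) chain of blow-ups, keeping careful track of which exceptional lines are base curves of $\widehat{\mathcal{S}}$ and verifying their multiplicities. An alternative route, bypassing the defect bookkeeping, is to determine the exact singularity type of $S_\lambda$ at $P$ for $\lambda=-1,-5$ (and of the merged pair of floating singular points on $L_{\{x\},\{y,t\}}$, which collide exactly when $(\lambda+1)(\lambda+5)=0$) and to check directly that these singularities remain du Val; in that case Corollary~\ref{corollary:irreducible-fibers} yields irreducibility at once. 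Either way the computation is confined to a neighborhood of $P$ and is the only nontrivial input.
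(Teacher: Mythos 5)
Your proposal is correct and follows essentially the same route as the paper: reduce via \eqref{equation:equation:number-of-irredubicle-components-refined} to the single defect at $P_{\{z\},\{t\},\{x,y\}}$ (the other three fixed singular points being good double points, and all $\mathbf{C}_i^\lambda$ vanishing since $S_\lambda$ has isolated singularities), then kill that defect by observing that $\mathbf{A}_P^\lambda=0$ because $P$ is a double point and that no base curve of $\widehat{\mathcal{S}}$ lies over $P$ since the tangent cone $z^2+t^2+(\lambda+3)zt$ moves with $\lambda$. Your alternative remark is also consistent with the paper, which notes in passing that $S_{-1}$ has a du Val singularity of type $\mathbb{D}_4$ at this point.
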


\begin{proof}
It is enough to prove that $[\mathsf{f}^{-1}(-1)]=1$, since the proof is identical in the remaining case.
Observe that the points $P_{\{y\},\{z\},\{t\}}$, $P_{\{x\},\{y\},\{t\}}$, $P_{\{y\},\{z\},\{x,t\}}$.
are good double points of the surface $S_{-1}$.
Thus, it follows from
\eqref{equation:equation:number-of-irredubicle-components-refined} and
Lemmas~\ref{lemma:main} and \ref{lemma:normal-crossing} that
$$
\big[\mathsf{f}^{-1}(-1)\big]=\big[S_{-1}\big]+\mathbf{D}_{P_{\{z\},\{t\},\{x,y\}}}^{-1}=1+\mathbf{D}_{P_{\{z\},\{t\},\{x,y\}}}^{-1}.
$$

In the neighborhood of the point $P_{\{z\},\{t\},\{x,y\}}$ the morphism $\alpha$ in \eqref{equation:main-diagram} is just a blow up of this point.
Moreover, its exceptional surface that is mapped to $P_{\{z\},\{t\},\{x,y\}}$
does not contain base curves of the pencil $\widehat{\mathcal{S}}$,
because the quadratic term of the surface $S_\lambda$ at this point is $z^2+t^2+(\lambda+3)tz$.
Furthermore, the point $P_{\{z\},\{t\},\{x,y\}}$ is a double point of the surface $S_{-1}$.
In fact, the surface $S_{-1}$ has singularity of type $\mathbb{D}_4$ at  $P_{\{z\},\{t\},\{x,y\}}$.
We~see that $A_{P_{\{z\},\{t\},\{x,y\}}}^{-1}=0$.
Then $\mathbf{D}_{P_{\{z\},\{t\},\{x,y\}}}^{-1}=0$ by \eqref{equation:D-A-B},
so that $[\mathsf{f}^{-1}(-1)]=1$.
\end{proof}

Thus, we conclude that $\mathsf{f}^{-1}(\lambda)$ is irreducible for every $\lambda\in\mathbb{C}$.
This confirms \eqref{equation:main-1} in Main Theorem, since $h^{1,2}(X)=0$.
To verify \eqref{equation:main-2} in Main Theorem, observe that
\begin{multline*}
H_\lambda\sim L_{\{x\},\{z\}}+3L_{\{x\},\{y,t\}}\sim L_{\{y\},\{z\}}+2L_{\{y\},\{t\}}+L_{\{y\},\{x,t\}}\sim \\
\sim L_{\{x\},\{z\}}+L_{\{y\},\{z\}}+2L_{\{z\},\{t\}}\sim L_{\{y\},\{t\}}+L_{\{z\},\{t\}}+\mathcal{C}
\end{multline*}
on the surface $S_\lambda$. This follows from \eqref{equation:6-1}.
Thus, the intersection matrix of the curves
$L_{\{x\},\{z\}}$, $L_{\{x\},\{y,t\}}$, $L_{\{y\},\{z\}}$, $L_{\{y\},\{t\}}$, $L_{\{y\},\{x,t\}}$, $L_{\{z\},\{t\}}$, and $\mathcal{C}$
on the surface $S_\lambda$ has the same rank as the intersection matrix of the curves
$L_{\{x\},\{z\}}$, $L_{\{y\},\{z\}}$, $L_{\{y\},\{t\}}$, and $H_{\lambda}$.
If $\lambda\ne-1$ and $\lambda\ne-5$, then the latter matrix is given by
\begin{center}\renewcommand\arraystretch{1.42}
\begin{tabular}{|c||c|c|c|c|}
\hline
 $\bullet$  & $L_{\{x\},\{z\}}$ & $L_{\{y\},\{z\}}$ & $L_{\{y\},\{t\}}$ &  $H_{\lambda}$ \\
\hline\hline
$L_{\{x\},\{z\}}$ & $-2$ & $1$ & $0$ & $1$ \\
\hline
$L_{\{y\},\{z\}}$ & $1$ &  $-\frac{1}{2}$ & $\frac{1}{2}$ & $1$ \\
\hline
$L_{\{y\},\{t\}}$ & $0$ &  $\frac{1}{2}$ & $-\frac{7}{12}$ & $1$ \\
\hline
 $H_{\lambda}$  & $1$ & $1$ & $1$ & $4$ \\
\hline
\end{tabular}
\end{center}
The rank of this matrix is $4$.
Thus, using \eqref{equation:6-1-Pic}, we see  that \eqref{equation:main-2-simple} holds.
Then \eqref{equation:main-2} in Main Theorem holds by Lemma~\ref{lemma:cokernel}.

\section{Fano threefolds of Picard rank $7$}
\label{section:rank-7}

\subsection{Family \textnumero $7.1$}
\label{section:r-7-n-1}

In this case, we have $X\cong\mathbb{P}^1\times\mathbf{S}_4$, where $\mathbf{S}_4$ is a smooth del Pezzo surface of degree $4$.
This implies that $h^{1,2}(X)=0$.
A~toric Landau--Ginzburg model of this family is given by Minkowski polynomial \textnumero $505$, which is
$$
\frac{1}{x}+\frac{1}{y}+z+\frac{2y}{x}+\frac{2x}{y}+\frac{1}{z}+\frac{y^2}{x}+3y+3x+\frac{x^2}{y}.
$$
Hence, the corresponding pencil $\mathcal{S}$ is given by the following equation:
$$
t^2zy+t^2xz+z^2xy+2y^2tz+2x^2tz+t^2xy+y^3z+3y^2xz+3x^2zy+x^3z=\lambda xyzt.
$$
For simplicity, we suppose that $\lambda\ne\infty$.

Let $\mathcal{C}$ be the cubic curve in $\mathbb{P}^3$ that is given by $t=xyz+(x+y)^3=0$.
This curve is singular at the point $P_{\{x\},\{y\},\{t\}}$.
Moreover, we have
\begin{equation}
\label{equation:7-1}
\begin{split}
H_{\{x\}}\cdot S_\lambda&=L_{\{x\},\{y\}}+L_{\{x\},\{z\}}+2L_{\{x\},\{y,t\}},\\
H_{\{y\}}\cdot S_\lambda&=L_{\{x\},\{y\}}+L_{\{y\},\{z\}}+2L_{\{y\},\{x,t\}},\\
H_{\{z\}}\cdot S_\lambda&=L_{\{x\},\{z\}}+L_{\{y\},\{z\}}+2L_{\{z\},\{t\}},\\
H_{\{t\}}\cdot S_\lambda&=L_{\{z\},\{t\}}+\mathcal{C}.
\end{split}
\end{equation}
Thus, the base locus of the pencil $\mathcal{S}$ consists of the curves
$L_{\{x\},\{y\}}$, $L_{\{x\},\{z\}}$, $L_{\{y\},\{z\}}$, $L_{\{z\},\{t\}}$, $L_{\{x\},\{y,t\}}$, $L_{\{y\},\{x,t\}}$, and $\mathcal{C}$.

If $\lambda\ne-2$ and $\lambda\ne -6$, then $S_\lambda$ is irreducible and has isolated singularities.
In this case, the singular points of the surface $S_\lambda$ contained in the base locus of the pencil $\mathcal{S}$ can be described as follows:
\begin{itemize}\setlength{\itemindent}{6cm}
\item[$P_{\{x\},\{y\},\{z\}}$:] type $\mathbb{A}_1$;

\item[$P_{\{x\},\{y\},\{t\}}$:] type $\mathbb{A}_3$ with quadratic term $xy$;

\item[$P_{\{z\},\{t\},\{x,y\}}$:] type $\mathbb{A}_5$ with quadratic term $z^2+t^2-(\lambda+4)zt$;

\item[{$[0:-2:\lambda+4\pm\sqrt{\lambda^2+8\lambda+12}:2]$}:] type $\mathbb{A}_1$;

\item[{$[-2:0:\lambda+4\pm\sqrt{\lambda^2+8\lambda+12}:2]$}:] type $\mathbb{A}_1$.
\end{itemize}
Thus, the set $\Sigma$
consists of the points $P_{\{y\},\{z\},\{t\}}$, $P_{\{x\},\{y\},\{t\}}$, $P_{\{y\},\{z\},\{x,t\}}$, and $P_{\{z\},\{t\},\{x,y\}}$.

The description of the singular points of the surface $S_\lambda$ also gives
\begin{equation}
\label{equation:7-1-Pic}
\mathrm{rk}\,\mathrm{Pic}(\widetilde{S}_{\Bbbk})=\mathrm{rk}\,\mathrm{Pic}(S_{\Bbbk})+9.
\end{equation}
Note that the singular point $P_{\{z\},\{t\},\{x,y\}}$ contributes $\textbf{\textcircled{3}}$ to this formula.
Similarly, the singular points $[0:-2:\lambda+4\pm\sqrt{\lambda^2+8\lambda+12}:2]$ contribute $\textbf{\textcircled{1}}$ to this formula.
Likewise, the singular points $[-2:0:\lambda+4\pm\sqrt{\lambda^2+8\lambda+12}:2]$ also contribute $\textbf{\textcircled{1}}$ to \eqref{equation:7-1-Pic}.

To verify \eqref{equation:main-1} in Main Theorem, observe that
the surface $S_{\lambda}$ has du Val singularities at base points of the pencil~$\mathcal{S}$ provided that $\lambda\ne-2$ and $\lambda\ne-6$.
Thus, by Lemma~\ref{corollary:irreducible-fibers}, the fiber $\mathsf{f}^{-1}(\lambda)$ is irreducible for every $\lambda\in\mathbb{C}$ such that $\lambda\ne-2$ and $\lambda\ne-6$.
On the other hand, we have

\begin{lemma}
\label{lemma:7-1-defect}
One has $[\mathsf{f}^{-1}(-2)]=[\mathsf{f}^{-1}(-6)]=1$.
\end{lemma}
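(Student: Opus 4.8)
The plan is to mimic the proof of Lemma~\ref{lemma:6-1-defect} almost verbatim, exploiting the fact that $\lambda=-2$ and $\lambda=-6$ are precisely the two roots of the discriminant $\lambda^2+8\lambda+12=(\lambda+2)(\lambda+6)$ that controls the floating singular points, and that at these values the quadratic term $z^2+t^2-(\lambda+4)zt$ at $P_{\{z\},\{t\},\{x,y\}}$ degenerates to a perfect square. By the symmetry of the computation it suffices to treat $\lambda=-2$: the case $\lambda=-6$ is identical, with the square $(z-t)^2$ replaced by $(z+t)^2$. First I would confirm that $S_{-2}$ remains irreducible with only isolated singularities (just as $S_{-1}$ in Family~\textnumero$6.1$ acquires an isolated, non-du-Val singularity of type $\mathbb{D}_4$ at the analogous point), so that $[S_{-2}]=1$.

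Next I would run the standard reduction. Since $S_{-2}$ has isolated singularities it is smooth at the general point of every base curve of $\mathcal{S}$, whence $\mathbf{M}_i^{-2}=1$ for all $i$ and Lemma~\ref{lemma:main} gives $\mathbf{C}_i^{-2}=0$. Thus \eqref{equation:equation:number-of-irredubicle-components-refined} collapses to
$$
\big[\mathsf{f}^{-1}(-2)\big]=\big[S_{-2}\big]+\sum_{P\in\Sigma}\mathbf{D}_P^{-2}.
$$
Among the fixed singular points of $\mathcal{S}$, the point $P_{\{x\},\{y\},\{z\}}$ (of type $\mathbb{A}_1$) and the point $P_{\{x\},\{y\},\{t\}}$ (whose quadratic term is $xy$, independent of $\lambda$) remain \emph{good} double points of $S_{-2}$, so their defects vanish by Lemma~\ref{lemma:normal-crossing}. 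This reduces the whole computation to the single term $\mathbf{D}_{P_{\{z\},\{t\},\{x,y\}}}^{-2}$, which I must show is zero.

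The crux is therefore the local analysis at $P_{\{z\},\{t\},\{x,y\}}$, where the singularity is no longer du Val. The point still has multiplicity $2$ on $S_{-2}$, since the quadratic term $(z-t)^2$ is nonzero, so Corollary~\ref{corollary:log-pull-back} yields $\mathbf{A}_{P_{\{z\},\{t\},\{x,y\}}}^{-2}=0$. It then remains to check, exactly as in Lemma~\ref{lemma:6-1-defect}, that the exceptional divisor of the blow up of $P_{\{z\},\{t\},\{x,y\}}$ in \eqref{equation:main-diagram} contains no base curve of $\widehat{\mathcal{S}}$: the restriction of $\widehat{\mathcal{S}}$ to this $\mathbb{P}^2$ is the pencil of conics spanned by $z^2+t^2-4zt$ and $zt$, and since these two conics have no common linear factor the pencil has no fixed component, hence no base curve. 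By \eqref{equation:D-A-B} this forces $\mathbf{D}_{P_{\{z\},\{t\},\{x,y\}}}^{-2}=0$, and we conclude $[\mathsf{f}^{-1}(-2)]=[S_{-2}]=1$. The main obstacle is precisely this last step: one must confirm that the specialization of the quadratic term to a perfect square at $\lambda=-2$ (and $\lambda=-6$) does not produce a new base curve in the exceptional surface. I would settle this by writing the blow up in an explicit local chart near $P_{\{z\},\{t\},\{x,y\}}$, as done systematically in Section~\ref{section:scheme}, tracking the restriction of the pencil to the exceptional $\mathbb{P}^2$.
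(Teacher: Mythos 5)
Your strategy is the same as the paper's: use \eqref{equation:equation:number-of-irredubicle-components-refined} together with Lemmas~\ref{lemma:main} and \ref{lemma:normal-crossing} to reduce everything to the single defect $\mathbf{D}_{P_{\{z\},\{t\},\{x,y\}}}^{\lambda}$ for $\lambda\in\{-2,-6\}$, and then kill that defect by the argument of Lemma~\ref{lemma:6-1-defect}. (You even correctly identify the other fixed singular points as $P_{\{x\},\{y\},\{z\}}$ and $P_{\{x\},\{y\},\{t\}}$, where the list printed in the text contains a typo.)

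One assertion you plan to ``confirm'' is, however, false: $S_{-2}$ and $S_{-6}$ do \emph{not} have isolated singularities. The surface $S_{-2}$ is singular along the line $x+y+z=x+y+t=0$, and $S_{-6}$ along the line $x+y-z=x+y+t=0$; both lines pass through $P_{\{z\},\{t\},\{x,y\}}=[1:-1:0:0]$, which is consistent with the quadratic term there degenerating to $(z\mp t)^2$. This does not sink the proof: the two surfaces are still irreducible (one checks this on hyperplane sections), and the singular lines are not among the base curves of $\mathcal{S}$, so $\mathbf{M}_i^{-2}=\mathbf{M}_i^{-6}=1$ still holds for every base curve. But your justification --- ``isolated singularities imply smoothness at the general point of every base curve'' --- rests on a false premise and must be replaced by the direct observation that no base curve lies in the singular locus. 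With that repair the remainder of your argument matches the paper's: $P_{\{z\},\{t\},\{x,y\}}$ is still a point of multiplicity $2$, so $\mathbf{A}_{P_{\{z\},\{t\},\{x,y\}}}^{\lambda}=0$ by Corollary~\ref{corollary:log-pull-back}, and the restriction of the pencil to the exceptional plane is the pencil of conics spanned by $z^2+t^2-4zt$ and $zt$, which has no fixed component, so no base curve of $\widehat{\mathcal{S}}$ lies over this point and \eqref{equation:D-A-B} gives a vanishing defect. A minor slip: the $\mathbb{D}_4$ point acquired by $S_{-1}$ in Family~6.1 is du Val, not ``non-du-Val''.
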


\begin{proof}
Observe that both surfaces $S_{-2}$ and $S_{-6}$ have non-isolated singularities.
Namely, the surface $S_{-2}$ is singular along the line $x+y+z=x+y+t=0$,
and $S_{-6}$ is singular along the line $x+y-z=x+y+t=0$.
However, both these surfaces are irreducible. This can be checked by analyzing their hyperplane sections.

It is enough to prove that $[\mathsf{f}^{-1}(-2)]=1$, since the proof is identical in the remaining case.
Observe that the points $P_{\{y\},\{z\},\{t\}}$, $P_{\{x\},\{y\},\{t\}}$, and $P_{\{y\},\{z\},\{x,t\}}$
are good double points of the surface $S_{-2}$.
Thus, it follows from
\eqref{equation:equation:number-of-irredubicle-components-refined},
Lemma~\ref{lemma:main} and Lemma~\ref{lemma:normal-crossing} that
$$
\big[\mathsf{f}^{-1}(-2)\big]=1+\mathbf{D}_{P_{\{z\},\{t\},\{x,y\}}}^{-2}.
$$
Arguing as in the proof of Lemma~\ref{lemma:6-1-defect}, we get
$\mathbf{D}_{P_{\{z\},\{t\},\{x,y\}}}^{-2}=0$, so that $[\mathsf{f}^{-1}(-2)]=1$.
\end{proof}

We see that $\mathsf{f}^{-1}(\lambda)$ is irreducible for every $\lambda\in\mathbb{C}$.
This confirms \eqref{equation:main-1} in Main Theorem.

Let us verify \eqref{equation:main-2} in Main Theorem.
It follows from \eqref{equation:7-1} that the intersection matrix of the curves
$L_{\{x\},\{y\}}$, $L_{\{x\},\{z\}}$, $L_{\{y\},\{z\}}$, $L_{\{z\},\{t\}}$, $L_{\{x\},\{y,t\}}$, $L_{\{y\},\{x,t\}}$, $\mathcal{C}$
on the surface~$S_\lambda$ has the same rank as the intersection matrix of the curves
$L_{\{x\},\{z\}}$, $L_{\{y\},\{z\}}$, $L_{\{y\},\{x,t\}}$, $H_{\lambda}$.
If~$\lambda\ne-2$ and $\lambda\ne-6$, then the latter matrix is given by
\begin{center}\renewcommand\arraystretch{1.42}
\begin{tabular}{|c||c|c|c|c|}
\hline
 $\bullet$  & $L_{\{x\},\{z\}}$ & $L_{\{y\},\{z\}}$ & $L_{\{y\},\{x,t\}}$ &  $H_{\lambda}$ \\
\hline\hline
$L_{\{x\},\{z\}}$ & $-\frac{3}{2}$ & $\frac{1}{2}$ & $0$ & $1$ \\
\hline
$L_{\{y\},\{z\}}$ & $\frac{1}{2}$ &  $-2$ & $1$ & $1$ \\
\hline
$L_{\{y\},\{x,t\}}$ & $0$ &  $1$ & $-\frac{5}{4}$ & $1$ \\
\hline
 $H_{\lambda}$  & $1$ & $1$ & $1$ & $4$ \\
\hline
\end{tabular}
\end{center}
Its rank is $4$, so that \eqref{equation:main-2} in Main Theorem holds by \eqref{equation:7-1-Pic} and Lemma~\ref{lemma:cokernel}.

\section{Fano threefolds of Picard rank $8$}
\label{section:rank-8}

\subsection{Family \textnumero $8.1$}
\label{section:r-8-n-1}
We discussed this case in Example~\ref{example:r-8-n1}, where we also described the pencil $\mathcal{S}$.
Let us use the notation of this example and we assume that $\lambda\ne\infty$. Then
\begin{equation}
\label{equation:8-1}
\begin{split}
H_{\{x\}}\cdot S_\lambda&=L_{\{x\},\{y\}}+L_{\{x\},\{z\}}+2L_{\{x\},\{t\}},\\
H_{\{y\}}\cdot S_\lambda&=L_{\{x\},\{y\}}+3L_{\{y\},\{t,z\}},\\
H_{\{z\}}\cdot S_\lambda&=L_{\{x\},\{z\}}+3L_{\{z\},\{t,y\}},\\
H_{\{t\}}\cdot S_\lambda&=L_{\{x\},\{t\}}+\mathcal{C}.
\end{split}
\end{equation}

If $\lambda\ne -4$ and $\lambda\ne -8$, then the surface $S_\lambda$ is irreducible and has isolated singularities.
In fact, in this case, we can say more:

\begin{lemma}
\label{lemma:r8-n1-singular-points}
Suppose that $\lambda\ne-4$ and $\lambda\ne-8$.
Then the singular points of the surface $S_\lambda$ contained in the base locus can be described as follows:
\begin{itemize}\setlength{\itemindent}{7cm}
\item[$P_{\{y\},\{z\},\{t\}}$:] type $\mathbb{A}_2$;

\item[$P_{\{x\},\{t\},\{y,z\}}$:] type $\mathbb{A}_5$;

\item[{$[\lambda+3\pm\sqrt{\lambda^2+12\lambda+32}:0:-2:2]$}:] type $\mathbb{A}_2$;

\item[{$[\lambda+3\pm\sqrt{\lambda^2+12\lambda+32}:-2:0:2]$}:] type $\mathbb{A}_2$.
\end{itemize}
\end{lemma}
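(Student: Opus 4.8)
The plan is to follow the local analysis used systematically in this paper (cf. Lemmas \ref{lemma:r2-n4-singularities}, \ref{lemma:r2-n5-singularities} and \ref{lemma:r3-n10-singular-points}): first locate every singular point of $S_\lambda$ lying on the base locus of $\mathcal{S}$ by solving the gradient equations along the base curves, and then read off each du Val type from the quadratic part of a local equation, resolving by explicit blow ups whenever that part is degenerate. Concretely, I would write $F_\lambda$ for the quartic form cut out by \eqref{equation:quartic} in this case, compute its partial derivatives, and restrict the vanishing of $\nabla F_\lambda$ to the seven base curves $L_{\{x\},\{y\}}$, $L_{\{x\},\{z\}}$, $L_{\{x\},\{t\}}$, $L_{\{y\},\{t,z\}}$, $L_{\{z\},\{t,y\}}$ and the cubic $C_6$ described in Example~\ref{example:r-8-n1}.

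From this I expect to recover exactly the two fixed points $P_{\{y\},\{z\},\{t\}}$ and $P_{\{x\},\{t\},\{y,z\}}$ of the set $\Sigma$, together with four floating points whose coordinates solve a quadratic with discriminant $\lambda^2+12\lambda+32=(\lambda+4)(\lambda+8)$. The excluded values $\lambda=-4$ and $\lambda=-8$ are precisely where this discriminant vanishes, so that the two conjugate floating points in each pair collide; this is also where $S_\lambda$ acquires the non-isolated singularities already observed in Example~\ref{example:r-8-n1}, which provides a useful consistency check. For all other $\lambda$ the four floating points are distinct and the surface has only isolated singularities along the base locus.

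For the two fixed points I would simply invoke Example~\ref{example:r-8-n1}, where the explicit decomposition $\alpha=\alpha_1\circ\alpha_2\circ\alpha_3\circ\alpha_4$ already shows that $P_{\{y\},\{z\},\{t\}}$ is of type $\mathbb{A}_2$ (resolved by the single blow up $\alpha_1$) while $P_{\{x\},\{t\},\{y,z\}}$ is of type $\mathbb{A}_5$ (resolved by the chain $\alpha_2,\alpha_3,\alpha_4$, since one blow up lowers $\mathbb{A}_{n}$ to $\mathbb{A}_{n-2}$). For the floating points I would exploit the symmetry $y\leftrightarrow z$, under which the defining equation of $S_\lambda$ is invariant and which interchanges the pair $[\lambda+3\pm\sqrt{(\lambda+4)(\lambda+8)}:0:-2:2]$ with the pair $[\lambda+3\pm\sqrt{(\lambda+4)(\lambda+8)}:-2:0:2]$. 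It therefore suffices to treat one conjugate pair: passing to an affine chart centred at such a point, I would compute the quadratic part of the local equation, verify that it has rank $2$ (hence factors as a product of two distinct linear forms $uv$), and perform a single blow up; checking that the resulting surface is smooth then forces the type to be $\mathbb{A}_2$ rather than $\mathbb{A}_1$ or something worse, and the conjugate point is handled identically.

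The main obstacle I anticipate is the bookkeeping at the floating points, whose coordinates involve the algebraic quantity $\sqrt{(\lambda+4)(\lambda+8)}$, making the Taylor expansion unwieldy. To control this I would work over $\Bbbk=\mathbb{C}(\lambda)$ and treat each conjugate pair as a single degree-two point of $S_{\Bbbk}$, exactly as conjugate floating points are handled elsewhere in the paper. Alternatively, since the singularity type is constant on the open set of admissible $\lambda$, one may specialise $\lambda$ to a convenient numerical value to carry out the rank computation and the blow up, and then conclude that the generic type is $\mathbb{A}_2$. Either route reduces the verification to a short local calculation once the symmetry $y\leftrightarrow z$ has halved the work.
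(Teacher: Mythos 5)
Your overall strategy --- locate the singular points on the base locus from the partial derivatives, read off each type from the quadratic part of a local equation, and resolve by explicit blow ups when that part is degenerate --- is exactly the paper's, and your use of the $y\leftrightarrow z$ symmetry of the defining equation to reduce the two conjugate pairs of floating points to one is a legitimate simplification: the paper instead says the remaining cases are ``similar'' and, to tame the square root, parametrizes $\lambda=-\frac{4\mu^2-4\mu-1}{\mu(\mu-1)}$ so that the floating point becomes $[\mu-1:0:-\mu:\mu]$ and the local equation takes the weighted normal form $(2\mu-1)\bar{x}\bar{y}+(\mu-1)^2\bar{z}^3+\cdots=0$, from which $\mathbb{A}_2$ is immediate.

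There is, however, a genuine gap in your treatment of the two fixed points. Example~\ref{example:r-8-n1} does not \emph{prove} that $P_{\{y\},\{z\},\{t\}}$ is of type $\mathbb{A}_2$ and $P_{\{x\},\{t\},\{y,z\}}$ of type $\mathbb{A}_5$; it merely asserts these facts and then describes $\alpha$, and the present lemma is precisely where those assertions are to be justified, so invoking the example is circular. Nor does the shape of the decomposition $\alpha=\alpha_1\circ\alpha_2\circ\alpha_3\circ\alpha_4$ pin the types down: a single blow up resolves both $\mathbb{A}_1$ and $\mathbb{A}_2$, so ``resolved by $\alpha_1$'' does not distinguish them, and one must still check that the successive blow ups actually terminate in an ordinary double point. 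You need to run the local computation from your first paragraph at these two points as well: in the chart $x=1$ the surface is $yz+t^3+(\text{higher order terms})=0$ for the weights $\mathrm{wt}(y)=\mathrm{wt}(z)=3$, $\mathrm{wt}(t)=2$, giving $\mathbb{A}_2$ at $P_{\{y\},\{z\},\{t\}}$; at $P_{\{x\},\{t\},\{y,z\}}$ the quadratic part is $-\bar{x}^2+(\lambda+6)\bar{x}\bar{t}-\bar{t}^2$, whose discriminant $(\lambda+6)^2-4=\lambda^2+12\lambda+32$ shows it has rank $2$ exactly when $\lambda\ne-4,-8$, and two further explicit blow ups (which the paper writes out) are required before one reaches an ordinary double point and can conclude $\mathbb{A}_5$. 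A second, smaller gap: your fallback of specializing $\lambda$ to a numerical value for the floating points needs an argument that the singularity type cannot jump under specialization (e.g.\ semicontinuity of the Milnor number together with the generic rank of the quadratic part), which you do not supply; your first option of working over $\Bbbk=\mathbb{C}(\lambda)$, or the $\mu$-parametrization, avoids this entirely.
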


\begin{proof}
Taking partial derivatives, we see that the singular points of the surface $S_\lambda$ contained in the base locus of the pencil $\mathcal{S}$
are those described in the assertion of the lemma.
To describe their types, we start with  $P_{\{y\},\{z\},\{t\}}$.
In the chart $x=1$, the surface $S_\lambda$ is given by
$$
yz+t^3+\text{higher order terms}=0,
$$
where we order monomials with respect to the weights $\mathrm{wt}(y)=3$, $\mathrm{wt}(z)=3$, $\mathrm{wt}(t)=2$.
This implies that $P_{\{y\},\{z\},\{t\}}$ is a singular point of type $\mathbb{A}_2$.

To describe the type of the singular point $P_{\{x\},\{t\},\{y,z\}}$,
we consider the chart $y=1$ and change coordinates as follows: $\bar{x}=x$, $\bar{z}=z+1$, and $\bar{t}=t$.
Then $S_\lambda$ is given by
$$
-\bar{x}^2+(\lambda+6)\bar{x}\bar{t}-\bar{t}^2+\text{higher order terms}=0.
$$
Now that
$$
-\bar{x}^2+(\lambda+6)\bar{x}\bar{t}-\bar{t}^2=-\Big(\bar{x}-\big(\lambda+3+\sqrt{\lambda^2+12\lambda+32}\big)\bar{t}\Big)\Big(\bar{x}-\big(\lambda+3-\sqrt{\lambda^2+12\lambda+32}\big)\bar{t}\Big),
$$
and this quadratic form has rank $2$, because $\lambda\ne -4$ and $\lambda\ne -8$.
Introducing new coordinates $\hat{z}=\bar{z}$, $\hat{y}=\frac{\bar{y}}{\bar{z}}$, $\hat{t}=\frac{\bar{t}}{\bar{z}}$,
we obtain the equation of the blow up of the surface $S_\lambda$ at $P_{\{x\},\{t\},\{y,z\}}$.
It is
$$
\hat{x}^2-(\lambda+6)\hat{t}\hat{x}+\hat{t}^2=\hat{x}^2\hat{z}-(\lambda+6)\hat{t}\hat{x}\hat{z}+\hat{z}^2\hat{x}+\hat{t}^2\hat{z}+3\hat{t}\hat{x}\hat{z}^2+\hat{t}^3\hat{x}\hat{z}^2+3\hat{t}^2\hat{x}\hat{z}^2.
$$
The two exceptional curves of the blow up are given by  $\hat{z}=\hat{t}=0$ and $\hat{z}=\hat{y}=0$.
They intersect at the point $(0,0,0)$, which is singular point of the obtained surface.
To blow up the latter point, we introduce new coordinates $\tilde{z}=\hat{z}$, $\tilde{y}=\frac{\hat{y}}{\hat{z}}$, $\tilde{t}=\frac{\hat{t}}{\hat{z}}$.
After dividing by~$\hat{z}^2$, we rewrite the latter equation as
$$
\tilde{x}^2-\tilde{z}\tilde{x}-(\lambda+6)\tilde{t}\tilde{x}=\tilde{x}^2\tilde{z}-\tilde{t}^2+\tilde{t}^2\tilde{z}-(\lambda+6)\tilde{t}\tilde{x}\tilde{z}+3\tilde{t}\tilde{x}\tilde{z}^2+3\tilde{t}^2\tilde{x}\tilde{z}^3+\tilde{t}^3\tilde{x}\tilde{z}^4.
$$
The quadratic form of this equation has rank $3$, so that this surface as an ordinary double point at $(0,0,0)$.
This implies that $P_{\{x\},\{t\},\{y,z\}}$ is a singular point of type $\mathbb{A}_5$.

Now we describe the type of the {floating} singular points.
We will only consider the singular point $[\lambda+3+\sqrt{\lambda^2+12\lambda+32}:0:-2:2]$,
because computations in the remaining cases are similar.
Let us introduce an auxiliary parameter $\mu\in\mathbb{C}$ such that  $\lambda=-\frac{4\mu^2-4\mu-1}{\mu(\mu-1)}$.
We assume that $\mu\ne 0$ and $\mu\ne 1$. Then
$$
[\lambda+3+\sqrt{\lambda^2+12\lambda+32}:0:-2:2]=[\mu-1:0:-\mu:\mu].
$$
Taking the chart $t=1$ and introducing new coordinates $\bar{x}=x+\frac{\mu-1}{\mu}$, $\bar{y}=y$, and $\bar{z}=z-1$,
we see that $S_\lambda$ is given by
$$
(2\mu-1)\bar{x}\bar{y}+(\mu-1)^2\bar{z}^3+\text{higher order terms}=0.
$$
Here, as above, we order monomials with respect to the weights $\mathrm{wt}(\bar{x})=3$, $\mathrm{wt}(\bar{y})=3$, and $\mathrm{wt}(\bar{z})=2$.
This implies that $[\mu-1:0:-\mu:\mu]$ is a singular point of type $\mathbb{A}_2$.
\end{proof}

Note that the singular locus of the surface $S_{-4}$ consists of the point $P_{\{x\},\{y\},\{z\}}$ and the line $\{x-t=y+z+t=0\}$.
Similarly, the singular locus of the surface $S_{-8}$ consists of the point $P_{\{x\},\{y\},\{z\}}$ and the line $\{x+t=y+z+t=0\}$.
Moreover, we have

\begin{lemma}
\label{lemma:r8-n1-irreducible}
Both surfaces $S_{-8}$ and $S_{-4}$ are irreducible.
\end{lemma}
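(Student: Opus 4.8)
The plan is to reduce the irreducibility of each quartic surface to the irreducibility of a single plane section, exactly as in the proof of Lemma~\ref{equation:r2-n12-S-2-irreducible}. The point is that a quartic surface in $\mathbb{P}^3$ is irreducible as soon as one of its hyperplane sections is an irreducible plane quartic curve: any decomposition $S_\lambda = A\cup B$ into components of smaller degree would restrict to a decomposition of $S_\lambda\cap\Pi$, provided $\Pi$ contains no component of $S_\lambda$. So it suffices, for each $\lambda\in\{-4,-8\}$, to produce one plane $\Pi$ for which $S_\lambda\cap\Pi$ is an irreducible plane quartic.

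First I would choose $\Pi$ so that it meets the singular line transversally in a single point and avoids the isolated singular point $P_{\{x\},\{y\},\{z\}}=[0:0:0:1]$. For $\lambda=-4$ the plane $\Pi=\{x+t=0\}$ works: it misses $[0:0:0:1]$ and cuts the singular line $\{x-t=y+z+t=0\}$ in the single point $[0:1:-1:0]$; for $\lambda=-8$ the symmetric choice $\Pi=\{x-t=0\}$ does the same for the line $\{x+t=y+z+t=0\}$. Substituting the equation of $\Pi$ into the defining quartic of $S_\lambda$ produces a plane quartic $C=S_\lambda\cap\Pi$, and by construction $\Pi$ contains neither of the two distinguished singular points except the one transversal intersection with the line. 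The key step is then to check, by direct computation of the Taylor expansion of $C$ at $\Pi\cap L=[0:1:-1:0]$, that $C$ is singular only at this point and that the singularity there is an ordinary double point. Once this is established, irreducibility follows from the same numerical argument as in Subsection~\ref{section:r-2-n-12}: a reduced reducible plane quartic whose singularities are all nodes carries at least three of them (a line meets a smooth cubic in three points, two smooth conics in four), so a plane quartic with a single node and no other singular points must be irreducible; in particular $C$, and hence $S_\lambda$, is irreducible.

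The main obstacle is precisely this local computation: I must confirm that the transverse type of $S_\lambda$ along its singular line is $\mathbb{A}_1$, so that $C$ acquires a genuine node rather than a higher singularity. The danger is that the unique singular point of $C$ could be of type $\mathbb{A}_5$ or $\mathbb{A}_7$, the two singularity types that \emph{can} occur on a reducible quartic with only one singular point (a flex-tangent line joined to a smooth cubic gives $\mathbb{A}_5$, two conics with contact of order four give $\mathbb{A}_7$); these would correspond exactly to the decompositions I am trying to exclude. Thus the heart of the argument is to compute the quadratic term of $C$ at $[0:1:-1:0]$ and verify it has rank $2$ with two distinct factors, ruling out the tangent or hyperosculating configurations. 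Should the transverse type along the line turn out to be worse than $\mathbb{A}_1$, I would instead exhibit a second plane, or count the branches of $C$ at the singular point directly, and still conclude irreducibility whenever the branch structure is incompatible with a line-plus-cubic or conic-plus-conic splitting. The analysis for $\lambda=-4$ and $\lambda=-8$ runs in parallel, the two cases being interchanged by the symmetry $x\mapsto -x$ of the pencil together with the swap between the two singular lines, which should let me carry out the computation essentially once.
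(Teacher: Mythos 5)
Your overall strategy is the same as the paper's: cut $S_\lambda$ with a plane not containing a component of $S_\lambda$ and show the resulting plane quartic is irreducible. The problem is in your choice of planes, and it is fatal to the argument as written. Every monomial of the defining quartic of $\mathcal{S}$ except $t^2yz$ is divisible by $x$, so after substituting $t=-x$ the restriction of $S_{-4}$ to $\Pi=\{x+t=0\}$ becomes
$$
x\Bigl(-x^3+3x^2(y+z)-3x(y^2+z^2)+(\lambda+2)xyz+(y+z)^3\Bigr)=0 ,
$$
i.e.\ the section splits off the line $\{x=t=0\}=L_{\{x\},\{t\}}$, which is a base curve of the pencil lying on every $S_\lambda$ and is contained in your plane. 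So $C=S_{-4}\cap\Pi$ is \emph{reducible} (line plus cubic), and your key verification fails: the line $x=0$ meets the residual cubic where $(y+z)^3=0$, i.e.\ with contact of order three at the single point $[0:1:-1:0]$ — exactly the point where $\Pi$ meets the singular line of $S_{-4}$. This is precisely the $\mathbb{A}_5$-type configuration you identified as the danger, and it actually occurs; it reflects not the transverse type of $S_{-4}$ along its singular line but the fact that $\Pi$ contains a line of the surface through that point. The same happens for $\Pi=\{x-t=0\}$ and $\lambda=-8$. Your fallback ("exhibit a second plane, or count branches") is where the real work lies and is not carried out; also, the claimed symmetry $x\mapsto -x$ is not a symmetry of this pencil (only $t^2yz$ and $x^2yz$ are even in $x$), so the two cases do not formally reduce to one.

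The fix is to pick a plane meeting $L_{\{x\},\{t\}}$ and the singular line each in a point, away from their intersection. The paper uses $\Pi=\{t=z\}$: the section
$x^2yz+xy^3+6xy^2z+10xyz^2+8xz^3+yz^3=0$
has exactly two singular points, an ordinary node at $[1:0:0:0]$ and an ordinary cusp at $[1:-2:1:1]$, and a reduced reducible quartic cannot have only a node and a cusp (components of a reducible quartic meet in at least three points counted with multiplicity, and a cusp is unibranch so cannot arise as an intersection point of two components); hence the section, and therefore $S_{-4}$, is irreducible. If you replace your planes by one with this property and actually carry out the local computation at each singular point of the section, your argument goes through.
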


\begin{proof}
It is enough to prove $S_{-4}$ is irreducible, because the remaining case can be handled in a similar way.
Let $\Pi$ be the plane $\{t=z\}$. Denote by $C_4$ the intersection $S_{-4}\cap\Pi$.
Then $C_4$ is the quartic curve in $\Pi\cong\mathbb{P}^2$ that it is given by
$$
x^2yz+xy^3+6xy^2z+10xyz^2+8xz^3+yz^3=0.
$$
This curve has exactly two singular points: $[1:0:0:0]$ and $[1:-2:1:1]$.
Moreover, the point $[1:0:0:0]$ is an ordinary double point of the curve $C_4$,
and the point $[1:-2:1:1]$ is an ordinary cusp of the curve $C_4$.
This implies that the curve $C_4$ is irreducible, so that the surface $S_{-4}$ is also irreducible.
\end{proof}

In Example~\ref{example:r-8-n1}, we proved that $[\mathsf{f}^{-1}(\lambda)]=1$ for every $\lambda\in\mathbb{C}$.
Thus, we conclude that \eqref{equation:main-1} in Main Theorem holds in this case.

Let us verify \eqref{equation:main-2} in Main Theorem. It follows from \eqref{equation:8-1} that
the intersection matrix of the curves
$L_{\{x\},\{y\}}$, $L_{\{x\},\{z\}}$, $L_{\{x\},\{t\}}$, $L_{\{y\},\{t,z\}}$, $L_{\{z\},\{t,y\}}$, and~$\mathcal{C}$
on the surface $S_\lambda$ has the same rank as the intersection matrix of the curves $L_{\{x\},\{y\}}$, $L_{\{x\},\{z\}}$, and $H_\lambda$.
If~$\lambda\ne-4$ and $\lambda\ne-8$, then  the latter matrix is given by
\begin{center}\renewcommand\arraystretch{1.42}
\begin{tabular}{|c||c|c|c|}
\hline
 $\bullet$  & $L_{\{x\},\{y\}}$ & $L_{\{x\},\{z\}}$ & $H_\lambda$ \\
\hline\hline
 $L_{\{x\},\{y\}}$ &  $-2$ & $1$ & $1$ \\
\hline
 $L_{\{x\},\{z\}}$ &  $1$ & $-2$ & $1$ \\
\hline
 $H_\lambda$  & $1$ & $1$ & $4$ \\
\hline
\end{tabular}
\end{center}
Its determinant is $18\ne 0$.
On the other hand, we have $\mathrm{rk}\,\mathrm{Pic}(\widetilde{S}_{\Bbbk})=\mathrm{rk}\,\mathrm{Pic}(S_{\Bbbk})+9$.
Thus, we see  that \eqref{equation:main-2-simple} holds.
Then \eqref{equation:main-2} in Main Theorem holds by Lemma~\ref{lemma:cokernel}.

\section{Fano threefolds of Picard rank $9$}
\label{section:rank-9}

\subsection{Family \textnumero $9.1$}
\label{section:r-9-n-1}

In this case, we have $X\cong\mathbb{P}^1\times\mathbf{S}_2$, where $\mathbf{S}_2$ is a smooth del Pezzo surface of degree $2$.
In particular, we have $h^{1,2}(X)=0$.
This case is somehow similar to the cases we treated in Subsections~\ref{section:r-2-n-2} and \ref{section:r-2-n-3}.
As in these two cases, this family does not have toric Landau--Ginzburg models with reflexive Newton polytope.
Let $\mathsf{p}$ be the Laurent polynomial
$$
\frac{(a+b+1)^4}{ab}+c+\frac{1}{c}.
$$
Then $\mathsf{p}$ gives the commutative diagram \eqref{equation:CCGK-compactification} by \cite[Proposition 16]{P16}.

Let $\gamma\colon\mathbb{C}^3\dasharrow\mathbb{C}^\ast\times\mathbb{C}^\ast\times\mathbb{C}^\ast$
be a birational transformation that is given by the change of coordinates
$$
\left\{\aligned
&a=xz,\\
&b=x-xz-1,\\
&c=\frac{z}{y}.\\
\endaligned
\right.
$$
Like in Subsection~\ref{section:r-2-n-2}, we can use $\gamma$ to expand
\eqref{equation:CCGK-compactification} to the commutative diagram \eqref{equation:diagram-2-2}.
The only difference is that now the pencil $\mathcal{S}$ is given by the equation
\begin{equation}
\label{equation:9-1-pencil}
x^3y=(\lambda yz-y^2-z^2)(xt-xz-t^2)
\end{equation}
where $\lambda\in\mathbb{C}\cup\{\infty\}$.
As in Subsection~\ref{section:r-2-n-2}, we will follow the scheme described in~Section~\ref{section:scheme}.
The only difference is that now $S_\lambda$ is the quartic surface given by the equation \eqref{equation:9-1-pencil}.

Let $\mathbf{Q}$ be the quadric in $\mathbb{P}^3$ given by  $xt-xz-t^2=0$. Then
$$
S_\infty=H_{\{y\}}+H_{\{z\}}+\mathbf{Q}.
$$
One the other hand, if $\lambda\ne\infty$, then $S_\lambda$ is irreducible and has isolated singularities.

Let $\mathcal{C}_1$ be the conic in $\mathbb{P}^3$ that is given by $y=xt-xz-t^2=0$,
and let $\mathcal{C}_2$ be the cubic curve in $\mathbb{P}^3$ that is given by $z=x^3+yt(x+t)=0$.
If $\lambda\ne\infty$, then
\begin{equation}
\label{equation:9-1}
\begin{split}
H_{\{y\}}\cdot S_\lambda&=2L_{\{y\},\{z\}}+\mathcal{C}_1,\\
H_{\{z\}}\cdot S_\lambda&=L_{\{y\},\{z\}}+\mathcal{C}_2,\\
\mathbf{Q}\cdot S_\lambda&=6L_{\{x\},\{t\}}+\mathcal{C}_1.
\end{split}
\end{equation}
Thus, the base locus of the pencil $\mathcal{S}$ consists of the curves
$L_{\{x\},\{t\}}$, $L_{\{y\},\{z\}}$, $\mathcal{C}_1$, and $\mathcal{C}_2$.

If $\lambda\ne\infty$,
then the singular points of the surface $S_\lambda$ contained in the base locus of the pencil $\mathcal{S}$ can be described as follows:
\begin{itemize}\setlength{\itemindent}{5cm}
\item[$P_{\{x\},\{z\},\{t\}}$:] type $\mathbb{A}_1$;
\item[$P_{\{x\},\{y\},\{z\}}$:] type $\mathbb{A}_5$ for $\lambda\ne\pm 2$, non-du Val for $\lambda=\pm 2$;
\item[{$[0:\lambda\pm\sqrt{\lambda^2-4}:2:0]$:}] type $\mathbb{A}_5$ for $\lambda\ne\pm 2$, non-du Val for $\lambda=\pm 2$.
\end{itemize}
Thus, it follows from
\eqref{equation:equation:number-of-irredubicle-components-refined-2}
and Lemma~\ref{lemma:normal-crossing} that the fiber $\mathsf{f}^{-1}(\lambda)$ is irreducible for every $\lambda\in\mathbb{C}$.
This confirms \eqref{equation:main-1} in Main Theorem.

To verify \eqref{equation:main-2} in Main Theorem, observe that $\mathrm{rk}\,\mathrm{Pic}(\widetilde{S}_{\Bbbk})=\mathrm{rk}\,\mathrm{Pic}(S_{\Bbbk})+9$.
Indeed, the minimal resolutions $\widetilde{S}_{\Bbbk}\to S_{\Bbbk}$ of the point $P_{\{x\},\{y\},\{z\}}$
is given by three consecutive blow ups that has three irreducible (over $\Bbbk$) exceptional curves.
Two of them are geometrically reducible, and one is geometrically irreducible.
Similarly, the minimal resolution $\widetilde{S}_{\Bbbk}\to S_{\Bbbk}$ of the point $[0:\lambda\pm\sqrt{\lambda^2-4}:2:0]$ has $5$ exceptional curves,
and the minimal resolution of the point $P_{\{x\},\{z\},\{t\}}$ has $1$ exceptional curve.

If $\lambda\ne\infty$, then it follows from \eqref{equation:9-1} that
$$
H_\lambda\sim 2L_{\{y\},\{z\}}+\mathcal{C}_1\sim L_{\{y\},\{z\}}+\mathcal{C}_2\sim_{\mathbb{Q}} 3L_{\{x\},\{t\}}+\frac{1}{2}\mathcal{C}_1
$$
on the surface $S_\lambda$.
Thus, if $\lambda\ne\infty$, then the intersection matrix of the curves $L_{\{x\},\{t\}}$ and $H_\lambda$
on the surface $S_\lambda$ has the same rank as the intersection matrix of the curves  $L_{\{x\},\{t\}}$, $L_{\{y\},\{z\}}$, $\mathcal{C}_1$,  $\mathcal{C}_2$, and $H_\lambda$.
On the other hand, if $\lambda\ne\infty$ and $\lambda\ne\pm 2$, the latter matrix is given by
\begin{center}\renewcommand\arraystretch{1.42}
\begin{tabular}{|c||c|c|}
\hline
 $\bullet$  & $L_{\{x\},\{t\}}$ &  $H_{\lambda}$ \\
\hline\hline
$L_{\{x\},\{t\}}$ & $-\frac{3}{2}$ & $1$ \\
\hline
 $H_{\lambda}$  & $1$ & $4$ \\
\hline
\end{tabular}
\end{center}
The rank of this matrix is $2$. Thus, we see that \eqref{equation:main-2-simple} holds in this case.
By Lemma~\ref{lemma:cokernel}, this confirms \eqref{equation:main-2} in Main Theorem.

\section{Fano threefolds of Picard rank $10$}
\label{section:rank-10}

\subsection{Family \textnumero $10.1$}
\label{section:r-10-n-1}

In this case, we have $X\cong\mathbb{P}^1\times\mathbf{S}_1$, where $\mathbf{S}_1$ is a smooth del Pezzo surface of degree $1$.
In particular, we have $h^{1,2}(X)=0$.
This case is very similar to the case we discussed in Subsection~\ref{section:r-2-n-1}.
As in that case, this family does not have toric Landau--Ginzburg models with reflexive Newton polytope.
However, there are Laurent polynomials with non-reflexive Newton polytopes that give the commutative diagram \eqref{equation:CCGK-compactification}.
One of them is the Laurent polynomial
$$
\frac{(x+y+1)^6}{xy^2}+z+\frac{1}{z},
$$
which we also denote by $\mathsf{p}$.

Let $\gamma\colon\mathbb{C}^3\dasharrow\mathbb{C}^\ast\times\mathbb{C}^\ast\times\mathbb{C}^\ast$
be a birational transformation that is given by the change of coordinates
$$
\left\{\aligned
&x=\frac{1}{b}-\frac{1}{b^2c}-1,\\
&y=\frac{1}{b^2c},\\
&z=y.\\
\endaligned
\right.
$$
Arguing as in Subsection~\ref{subsection:scheme-step-7}, we can expand \eqref{equation:CCGK-compactification} to the commutative diagram~\eqref{equation:diagram-2-1}.
The only difference is that now the pencil $\mathcal{S}$ is given by the equation
\begin{equation}
\label{equation:10-1-pencil}
xyc^3=(\lambda xy-x^2-y^2)(abc-b^2c-a^3),
\end{equation}
where $\lambda\in\mathbb{C}\cup\{\infty\}$.
Here $([x:y],[a:b:c])$ is a point in $\mathbb{P}^1\times\mathbb{P}^2$.

As in Subsection~\ref{section:r-2-n-1}, we will follow the scheme described in~Section~\ref{section:scheme},
and we will use assumptions and the notation introduced in that section.
The only difference is that $\mathbb{P}^3$ is now replaced by $\mathbb{P}^1\times\mathbb{P}^2$,
and $S_\lambda$ now is the surface in $\mathbb{P}^1\times\mathbb{P}^2$ that is given by \eqref{equation:10-1-pencil}.
As~in Subsection~\ref{section:r-2-n-1}, we will extend our handy notation in Subsection~\ref{subsection:notations} to bilinear sections of $\mathbb{P}^1\times\mathbb{P}^2$.

Let $\mathsf{S}$ be the surface in $\mathbb{P}^1\times\mathbb{P}^2$ given by $abc-b^2c-a^3=0$.
Then $\mathsf{S}$ is irreducible. Moreover, we have
$$
S_\infty=H_{\{x\}}+H_{\{y\}}+\mathsf{S}.
$$
On the other hand, if $\lambda\ne\infty$, then $S_\lambda$ is irreducible and has isolated singularities.

Let $\mathcal{C}_1$ be the curve in $\mathbb{P}^1\times\mathbb{P}^2$ that is given by $x=abc-b^2c-a^3=0$,
and let $\mathcal{C}_2$ be the curve in $\mathbb{P}^1\times\mathbb{P}^2$ that is given by $y=abc-b^2c-a^3=0$.
Then
\begin{equation}
\label{equation:10-1}
\begin{split}
H_{\{x\}}\cdot S_\lambda&=\mathcal{C}_1,\\
H_{\{y\}}\cdot S_\lambda&=\mathcal{C}_2,\\
\mathsf{S}\cdot S_\lambda&=\mathcal{C}_1+\mathcal{C}_2+9L_{\{a\},\{c\}}.
\end{split}
\end{equation}
Thus, the base locus of the pencil $\mathcal{S}$ consists of the curves $\mathcal{C}_1$, $\mathcal{C}_2$, and $L_{\{a\},\{c\}}$.

If $\lambda\ne\infty$, then the only singular points of the surface $S_\lambda$ contained in the base locus of the pencil $\mathcal{S}$
are the points
\begin{equation}
\label{equation:10-1-singular-points}
\Big(\big[\lambda\pm\sqrt{\lambda^2-4}:2\big],\big[0:1:0\big]\Big).
\end{equation}
If $\lambda\ne\pm 2$, then the surface $S_\lambda$ has singularity of type $\mathbb{A}_9$ at each of the points \eqref{equation:10-1-singular-points}.
If~$\lambda=\pm 2$, then \eqref{equation:10-1-singular-points} gives the points $([\pm 1:1],[0:1:0])$.
One can check that the surface $S_{\pm 2}$ has triple singularity at these points.

\begin{remark}
\label{remark:r-10-n-1-singular-curve}
There exists a commutative diagram
$$
\xymatrix{
&&&&V_2\ar@{->}[ld]_{\beta_2}&&V_3\ar@{->}[ll]_{\beta_3}&&V_4\ar@{->}[ll]_{\beta_4}\\
&&&V_1\ar@{->}[d]_{\beta_1}&&&&&&V\ar@{->}[d]^{\mathbf{g}}\ar@{->}[lu]_{\gamma}\ar@{->}[lllllld]_{\pi}&&\\
&&&\mathbb{P}^1\times\mathbb{P}^2\ar@{-->}[rrrrrr]^{\phi}&&&&&&\mathbb{P}^1}
$$
where $\phi$ is a rational map that is given by the pencil $\mathcal{S}$,
the morphism~$\beta_1$ is the blow up of the curve $\mathcal{C}_1$,
the morphism~$\beta_2$ is the blow up of the proper transform of the curve $\mathcal{C}_2$,
the morphism~$\beta_3$ is the blow up of a curve that dominates the curve $\mathcal{C}_1$,
the morphism~$\beta_4$ is the blow up of a curve that dominates the curve $\mathcal{C}_2$,
and $\gamma$ is a birational morphism that is a composition of $9$ blow up of smooth curves
that dominate the curve $L_{\{a\},\{c\}}$.
Note that the curve $\mathcal{C}_1$ has a node at the point $P_{\{x\},\{a\},\{b\}}$.
Similarly, the curve $\mathcal{C}_1$ has a node at the point $P_{\{y\},\{a\},\{b\}}$.
Thus, both threefolds $V_1$ and $V_2$ are singular.
Moreover, the morphism $\beta_3$ blows up a nodal curve that is contained in the smooth locus of the threefold $V_2$.
Likewise, the morphism $\beta_4$ blows up a nodal curve that is contained in the smooth locus of the threefold $V_3$.
Thus, the threefold $V$ has four isolated ordinary double points.
However, they all are contained in the fiber $\mathbf{g}^{-1}(\infty)$,
which consists of the proper transforms on $V$ of the following surfaces: $H_{\{x\}}$, $H_{\{y\}}$, $\mathsf{S}$,
the exceptional surface of the morphism $\beta_1$, and the exceptional surface of the morphism $\beta_2$.
Thus, the singularities of the threefold $V$ are not important for the proof of Main Theorem in this case.
Note that
$$
-K_V\sim \mathbf{g}^{-1}\big(\infty\big).
$$
If we want to keep this condition and make $V$ smooth,
we must compose~$\pi$ with small resolution of singular points of the threefold $V$.
However, the resulting smooth threefold would not be projective (cf. the proof of \cite[Proposition~29]{P16}).
Indeed, by construction, the threefold $V$ is $\mathbb{Q}$-factorial, so that it does not admit projective small resolutions.
\end{remark}

Note that surfaces in the pencil $\mathcal{S}$ do not have fixed singular points, so that $\Sigma=\varnothing$.
Thus, using \eqref{equation:equation:number-of-irredubicle-components-refined}, we get $[\mathsf{f}^{-1}(\lambda)]=1$ for every $\lambda\in\mathbb{C}$.
This confirms \eqref{equation:main-1} in Main Theorem, since $h^{1,2}(X)=0$.

To verify \eqref{equation:main-2} in Main Theorem, observe that
$\mathrm{rk}\,\mathrm{Pic}(\widetilde{S}_{\Bbbk})=\mathrm{rk}\,\mathrm{Pic}(S_{\Bbbk})+9$.
One the other hand, if $\lambda\ne\infty$ and $\lambda\ne\pm 2$,
the rank of the intersection matrix of the curves  $\mathcal{C}_1$, $\mathcal{C}_2$, and $L_{\{a\},\{c\}}$ on the surface $S_\lambda$ is $1$.
This follows from \eqref{equation:10-1}.
Thus, we see that \eqref{equation:main-2-simple} holds in this case.
By Lemma~\ref{lemma:cokernel}, this confirms \eqref{equation:main-2} in Main Theorem.

\appendix

\section{Curves on singular surfaces}
\label{section:intersection}

Let $S$ be a normal surface, let $C$ and $Z$ be distinct irreducible curves in $S$.
For every point $P\in S$, one can define the intersection multiplicity $(C\cdot Z)_P\in\mathbb{Q}_{\geqslant 0}$ as in \cite{Sa84}.
As in the case when $S$ is smooth, one has
$$
C\cdot Z=\sum_{P\in C\cap Z}\Big(C\cdot Z\Big)_P.
$$
In this appendix, we present two (probably well-known to many experts) simple results that can be used to compute
the (local) intersection multiplicity $(C\cdot Z)_P$ and the (global) self-intersection $C^2$ in simple cases.
These results are Propositions~\ref{proposition:du-Val-intersection} and \ref{proposition:du-Val-self-intersection} below.

\subsection{Intersection multiplicity}
\label{subsection:intersection-multiplicity}

Fix a point $O\in C\cap Z$.
Let $\pi\colon\widetilde{S}\to S$ be the minimal resolution of singularity of the point $O$,
and let $G_1,\ldots,G_n$ be the exceptional curves of the birational morphism~$\pi$.
Denote by $\widetilde{C}$ and $\widetilde{Z}$ the proper transforms of the curves $C$ and $Z$ on the surface $\widetilde{S}$, respectively.
Following \cite{Sa84}, one can define $\pi^*(C)$ as
$$
\pi^*(C)=\widetilde{C}+\sum_{i=1}^{n}\mathbf{a}_iG_i
$$
for some positive rational numbers $\mathbf{a}_1,\ldots,\mathbf{a}_n$ such that
$$
\Big(\widetilde{C}+\sum_{i=1}^{n}\mathbf{a}_iG_i\Big)\cdot G_i=0.
$$
Similarly, we have
$$
\pi^*(Z)=\widetilde{Z}+\sum_{i=1}^{n}\mathbf{b}_iG_i
$$
for some positive rational numbers $\mathbf{b}_1,\ldots,\mathbf{b}_n$. We define
$$
C\cdot Z=\Big(\widetilde{C}+\sum_{i=1}^{n}\mathbf{a}_iG_i\Big)\cdot\Big(\widetilde{Z}+\sum_{i=1}^{n}\mathbf{b}_iG_i\Big)=\pi^*(C)\cdot\pi^*(Z)=\pi^*(C)\cdot\widetilde{Z}=\widetilde{C}\cdot\pi^*(Z).
$$
Let $\mathbf{G}=G_1\cup\cdots\cup G_n$. Then one can define $(C\cdot Z)_O$ as
\begin{equation}
\label{equation:intersection-multiplicity}
\Big(C\cdot Z\Big)_O=C\cdot Z-\widetilde{C}\cdot\widetilde{Z}+\sum_{P\in \widetilde{C}\cap\widetilde{Z}\cap \mathbf G}\Big(\widetilde{C}\cdot\widetilde{Z}\Big)_P.
\end{equation}

The main goal of this appendix is to prove the following two simple results.

\begin{proposition}
\label{proposition:du-Val-intersection}
Suppose that $O$ is a du Val singular point of the surface $S$, both curves $C$ and $Z$ are smooth at $O$,
and $C$ intersects $Z$ transversally at the point $O$.
Then the following assertions hold.
\begin{itemize}
\item[(i)] The point $O$ is a singular point of $S$ of type $\mathbb{A}_n$ or $\mathbb{D}_n$.

\item[(ii)] If $O$ is a singular point of type $\mathbb{A}_n$ and proper transforms of the curves $C$ and $Z$ on
the surface $\widetilde{S}$ intersect $k$-th and $r$-th exceptional curves in the chain of exceptional curves of the minimal resolution of~$O$,
then
$$
\Big(C\cdot Z\Big)_O=\left\{\aligned
&\frac{r(n+1-k)}{n+1}\ \text{for}\ r\leqslant k,\\
&\frac{k(n+1-r)}{n+1}\ \text{for}\ r>k.\\
\endaligned
\right.
$$

\item[(i)] If $O$ is of type $\mathbb D_n$, then $\Big(C\cdot Z\Big)_O=\frac{1}{2}$.
\end{itemize}
\end{proposition}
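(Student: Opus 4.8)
The plan is to reduce the computation of the local intersection number $(C\cdot Z)_O$ to a purely combinatorial computation on the minimal resolution $\pi\colon\widetilde S\to S$, and then to read off the answer from the inverse Cartan matrix of the associated root system. First I would localize: since the statement concerns $(C\cdot Z)_O$, I may assume $C\cap Z=\{O\}$, so that $(C\cdot Z)_O=C\cdot Z=\pi^*(C)\cdot\widetilde Z$ by the definitions recalled in Subsection~\ref{subsection:intersection-multiplicity}. Because $C$ and $Z$ are smooth at $O$ and meet transversally there, their strict transforms $\widetilde C$ and $\widetilde Z$ are separated already after the first blow-up, hence disjoint on $\widetilde S$; in particular $\widetilde C\cdot\widetilde Z=0$ and there are no intersection points lying over $O$. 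Writing $\pi^*(C)=\widetilde C+\sum_i\mathbf a_iG_i$, this gives $(C\cdot Z)_O=\sum_i\mathbf a_i(G_i\cdot\widetilde Z)$.

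The next step is to pin down which exceptional curves $\widetilde C$ and $\widetilde Z$ actually meet. Using the fundamental cycle $Z_{\mathrm{fund}}=\sum_i z_iG_i$ and the identity $\mathrm{mult}_O(C)=\sum_i z_i(\widetilde C\cdot G_i)$, the smoothness of $C$ forces $\widetilde C$ to meet a single component $G_k$ transversally, with $G_k$ a coefficient-one component of $Z_{\mathrm{fund}}$ (and likewise $\widetilde Z$ meets a single $G_r$). Then $G_i\cdot\widetilde Z=\delta_{ir}$, so $(C\cdot Z)_O=\mathbf a_r$, and the defining relations $\pi^*(C)\cdot G_j=0$ read $\sum_i\mathbf a_i(G_i\cdot G_j)=-\delta_{jk}$. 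Since $O$ is du Val, the intersection matrix $(G_i\cdot G_j)$ is the negative of the Cartan matrix of the associated $\mathrm{ADE}$ root system, whence $\mathbf a_r=(\mathrm{Cart}^{-1})_{kr}$. The formula in (ii) is then exactly the well-known closed form for the entries of the inverse Cartan matrix of $\mathbb A_n$, which I would verify by inverting the tridiagonal Cartan matrix directly.

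It remains to settle the classification in (i) and the value $\tfrac12$ in (iii), and this is where the main work lies. I would analyze the tangent directions of smooth branches through $O$ via the tangent cone. A $\mathbb D_n$ or $\mathbb E_n$ singularity has equation of the form $x^2+(\text{higher order})$, so its tangent cone is the double plane $\{x=0\}$; a lowest-order-term (Newton-polygon) analysis then shows that a smooth branch can be tangent only to finitely many special directions inside $\{x=0\}$. The crux is to show that for $\mathbb D_n$ there are exactly two such tangent types — one meeting the chain end $\alpha_1$ and one meeting a fork (spinor) node $\alpha_{n-1}$ or $\alpha_n$ — whereas for $\mathbb E_6,\mathbb E_7,\mathbb E_8$ all smooth branches share a single tangent direction (and for $\mathbb E_8$ no smooth branch exists at all). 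Granting this, two \emph{transversal} smooth branches cannot both be tangent to the same special direction, so at an $\mathbb E_n$ point no transversal pair exists, which excludes these types and proves (i); at a $\mathbb D_n$ point the pair must consist of one $\alpha_1$-branch and one spinor-branch, so $(C\cdot Z)_O=(\mathrm{Cart}^{-1})_{1,n-1}=(\mathrm{Cart}^{-1})_{1,n}=\tfrac12$, which is independent of $n$ and yields (iii). For $\mathbb A_1$ the tangent cone is a smooth conic and the unique exceptional curve is met by both branches, again giving $\tfrac12$, consistently with (ii) at $k=r=n=1$.

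The hardest part, where I expect to spend the most care, is the tangent-cone bookkeeping for $\mathbb D_n$ and $\mathbb E_n$: identifying precisely which exceptional component each admissible tangent direction meets, and confirming that the fork nodes of $\mathbb D_n$ are reachable only along a single tangent direction, so that a transversal pair can never realize the spinor–spinor entry $(n-2)/4$ of the inverse Cartan matrix. Everything else — the reduction via $\pi^*$, the coefficient-one criterion extracted from the fundamental cycle, and the inverse-Cartan computations — is routine once this geometric input is secured.
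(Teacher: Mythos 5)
Your proposal is correct, and for parts (i) and (iii) it takes a genuinely different route from the paper. The $\mathbb{A}_n$ computation is the same in substance: the paper solves the linear system $\pi^*(C)\cdot G_j=0$ by hand in Lemma~\ref{lemma:intersection} and deduces Corollary~\ref{corollary:intersection}, which is exactly your observation that $(C\cdot Z)_O=\mathbf{a}_r$ is an entry of the inverse Cartan matrix; your coefficient-one criterion extracted from the fundamental cycle is also the paper's starting point in Lemmas~\ref{lemma:D4}--\ref{lemma:E8}. Where you diverge is in excluding the $\mathbb{E}$-types and the spinor--spinor pairing for $\mathbb{D}_n$ with $n\geqslant 5$: you propose a tangent-cone/Newton-polygon analysis on normal forms such as $x^2+y^2z+z^{n-1}$ and $x^2+y^3+z^4$, showing that smooth branches admit exactly two tangent directions at a $\mathbb{D}_n$ point for $n\geqslant 5$ (three symmetric ones for $\mathbb{D}_4$), a single one at $\mathbb{E}_6$ and $\mathbb{E}_7$, and none at $\mathbb{E}_8$; these claims are all true and the lowest-order-term computations you describe do verify them. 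The paper instead blows up $O$ once, notes via the fundamental cycle that $\widetilde{C}$ and $\widetilde{Z}$ miss the exceptional curve of that single blow-up, and concludes that $\overline{C}$ and $\overline{Z}$ must each pass through a singular point of the partial resolution $\overline{S}$; transversality forces these to be distinct singular points, which kills the spinor--spinor configuration for $n\geqslant 5$ and shows that any two smooth branches at an $\mathbb{E}_6$ or $\mathbb{E}_7$ point are tangent (there is only one such singular point available). The paper's argument is read off the dual graph and needs no choice of local equation; yours is more explicit and makes transparent why $\mathbb{D}_4$ behaves uniformly and why the entry $(n-2)/4$ is never realized. The only piece you have deferred is the tangent-cone bookkeeping itself, but the computations you outline do close it, so I see no gap.
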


\begin{proposition}
\label{proposition:du-Val-self-intersection}
Suppose that $O$ is a du Val singular point of the surface $S$,
and the curve $C$ is smooth at the point $O$.
Then the following holds.
\begin{itemize}
\item[(i)]
The point $O$ is a singular point of the surface $S$ of type $\mathbb A_n$, $\mathbb D_n$, $\mathbb E_6$ or $\mathbb E_7$.

\item[(ii)] If $O$ is a singular point of type $\mathbb{A}_n$, and $\widetilde{C}$ intersects $k$-th exceptional curve in the chain of exceptional curves of the minimal resolution of~$O$, then
$$
C^2=\widetilde{C}^2+\frac{k(n+1-k)}{n+1}.
$$

\item[(iii)] If $O$ is a singular point of type $\mathbb{D}_n$, then $C^2=\widetilde{C}^2+1$ or $C^2=\widetilde{C}^2+\frac{n}{4}$.

\item[(iv)] If $O$ is a singular point of type $\mathbb{E}_6$, then $C^2=\widetilde{C}^2+\frac{4}{3}$.

\item[(iv)] If $O$ is a singular point of type $\mathbb{E}_7$, then $C^2=\widetilde{C}^2+\frac{3}{2}$.
\end{itemize}
\end{proposition}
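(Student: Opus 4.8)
The plan is to reduce the whole statement to a single linear-algebra computation with the inverse Cartan matrix of the corresponding ADE root system, and to control which exceptional curve the strict transform $\widetilde{C}$ is allowed to meet.

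First I would record the self-intersection in terms of the resolution data of Subsection~\ref{subsection:intersection-multiplicity}. Since $\pi^{*}(C)\cdot G_{i}=0$ for every $i$ by definition, one has
$$C^{2}=\pi^{*}(C)^{2}=\pi^{*}(C)\cdot\widetilde{C}=\widetilde{C}^{2}+\sum_{i=1}^{n}\mathbf{a}_{i}\bigl(\widetilde{C}\cdot G_{i}\bigr),$$
so everything is controlled by the numbers $\widetilde{C}\cdot G_{i}$ and the coefficients $\mathbf{a}_{i}$. Because $O$ is du Val, the classification of rational double points shows that $G_{1},\dots,G_{n}$ are $(-2)$-curves whose dual graph is a Dynkin diagram of type $\mathbb{A}_{n}$, $\mathbb{D}_{n}$, $\mathbb{E}_{6}$, $\mathbb{E}_{7}$ or $\mathbb{E}_{8}$; in particular the intersection matrix $(G_{k}\cdot G_{i})$ equals $-A$, where $A$ is the Cartan matrix. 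The defining relations $\pi^{*}(C)\cdot G_{i}=0$ then read $A\mathbf{a}=\bigl(\widetilde{C}\cdot G_{i}\bigr)_{i}$, whence $\mathbf{a}=A^{-1}\bigl(\widetilde{C}\cdot G_{i}\bigr)_{i}$, and positivity of the $\mathbf{a}_{i}$ is automatic because every entry of the inverse Cartan matrix is positive.

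Second, I would pin down the vector $\bigl(\widetilde{C}\cdot G_{i}\bigr)_{i}$. The essential geometric input is the fundamental-cycle formula $\operatorname{mult}_{O}(C)=Z\cdot\widetilde{C}$, valid for any curve through a rational double point, where $Z=\sum_{i}m_{i}G_{i}$ is the fundamental cycle and the $m_{i}$ are the marks of the highest root. Since $C$ is smooth, the left-hand side equals $1$; as every $m_{i}\geq 1$ and every $\widetilde{C}\cdot G_{i}\geq 0$, the equality $\sum_{i}m_{i}\bigl(\widetilde{C}\cdot G_{i}\bigr)=1$ forces a unique index $j$ with $m_{j}=1$ and $\widetilde{C}\cdot G_{j}=1$, all other intersections being zero. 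This is exactly where assertion (i) comes out: for $\mathbb{E}_{8}$ all marks satisfy $m_{i}\geq 2$, so no smooth curve passes through such a point; for $\mathbb{A}_{n}$ every mark equals $1$; for $\mathbb{D}_{n}$ the marks equal to $1$ sit at the far end of the long chain and at the two fork tips; for $\mathbb{E}_{6}$ they sit at the two ends of the length-five chain; and for $\mathbb{E}_{7}$ only at the end vertex farthest from the branch point.

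Finally, combining the two steps gives $C^{2}=\widetilde{C}^{2}+\mathbf{a}_{j}=\widetilde{C}^{2}+(A^{-1})_{jj}$, so it only remains to substitute the diagonal entries of the inverse Cartan matrices at the admissible vertices: $(A^{-1})_{kk}=k(n+1-k)/(n+1)$ for $\mathbb{A}_{n}$ at the $k$-th curve, which is (ii); the values $1$ at the chain end and $n/4$ at a fork tip for $\mathbb{D}_{n}$, which is (iii); and $4/3$ for $\mathbb{E}_{6}$ and $3/2$ for $\mathbb{E}_{7}$, which are (iv) and (v). The intersection-multiplicity statement of Proposition~\ref{proposition:du-Val-intersection} would be treated identically, using \eqref{equation:intersection-multiplicity} and the off-diagonal entries $(A^{-1})_{kr}$ in place of the diagonal ones (with the transversality hypothesis forcing both strict transforms to exit through mark-one end vertices, which is what rules out $\mathbb{E}_{6}$ and $\mathbb{E}_{7}$ there). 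I expect the main obstacle to be the careful justification of the fundamental-cycle identity $\operatorname{mult}_{O}(C)=Z\cdot\widetilde{C}$ together with the claim that $\widetilde{C}$ meets a single $G_{j}$ transversally at a smooth point of the exceptional locus; this is the only place where the theory of rational singularities, rather than pure linear algebra, is genuinely used, and it has to be invoked with the correct multiplicities in order to make the mark-one reduction rigorous.
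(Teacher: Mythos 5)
Your proposal is correct and follows essentially the same route as the paper: both use the fundamental-cycle identity $\mathrm{mult}_{O}(C)=Z\cdot\widetilde{C}=1$ to force $\widetilde{C}$ to meet a single mark-one exceptional curve transversally (which is also what rules out $\mathbb{E}_8$ and yields part (i)), and then read off $C^{2}-\widetilde{C}^{2}$ as the coefficient $\mathbf{a}_{j}$ determined by the relations $\pi^{*}(C)\cdot G_{i}=0$. The only difference is presentational: you package the linear algebra once and for all through the diagonal entries of the inverse Cartan matrix, whereas the paper (Lemmas~\ref{lemma:intersection}, \ref{lemma:D4}, \ref{lemma:Dn}, \ref{lemma:E6}, \ref{lemma:E7}, and \ref{lemma:E8}) solves the same linear systems explicitly type by type.
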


The assertion of Propositions~\ref{proposition:du-Val-intersection} and \ref{proposition:du-Val-self-intersection}
follows from  Corollaries~\ref{corollary:intersection} and \ref{corollary:self-intersection}
and Lemmas~\ref{lemma:D4}, \ref{lemma:Dn}, \ref{lemma:E6}, \ref{lemma:E7}, and \ref{lemma:E8},
which we will prove below.

\subsection{Singular points of type $\mathbb{A}$}
\label{subsection:A}

In this subsection, we suppose that the surface $S$ has du Val singularity of type $\mathbb{A}_n$ at the point $O$, where $n\geqslant 1$.
Then we may assume that
$$
G_i\cdot G_j=\left\{\aligned
&-2\ \text{if}\ i=j,\\
&0\ \text{if}\ |i-j|>1,\\
&1\ \text{if}\ |i-j|=1.\\
\endaligned
\right.
$$
If the curve $C$ is smooth at $O$, then $\widetilde{C}$ is smooth along $\mathbf{G}$,
it intersects exactly one curve among $G_1,\ldots,G_n$, this intersection is transversal and consists of one point.
The same holds for $\widetilde{Z}$ in the case when $Z$ is smooth at $O$.
This is well-known (see \cite{Ar66}).

\begin{lemma}
\label{lemma:intersection}
Suppose that $C$ is smooth at $O$, and $\widetilde{C}\cap G_k\ne\varnothing$.
Then
$$
\mathbf{a}_i=\left\{\aligned
&\frac{i(n+1-k)}{n+1}\ \text{for}\ i\leqslant k,\\
&\frac{k(n+1-i)}{n+1}\ \text{for}\ i>k.\\
\endaligned
\right.
$$
In particular, one has $\mathbf{a}_k=\frac{k(n+1-k)}{n+1}$.
\end{lemma}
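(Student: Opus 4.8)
The plan is to reduce the computation of the $\mathbf{a}_i$ to a tridiagonal linear system and solve it. By definition the $\mathbf{a}_i$ are the unique rational numbers with $\pi^*(C)\cdot G_i=0$ for every $i$, where $\pi^*(C)=\widetilde{C}+\sum_{j}\mathbf{a}_jG_j$. Since $C$ is smooth at $O$, its proper transform $\widetilde{C}$ is smooth along $\mathbf{G}$ and meets exactly one exceptional curve, namely $G_k$, transversally and in a single point; thus $\widetilde{C}\cdot G_i=\delta_{ik}$. Because the intersection form $(G_i\cdot G_j)$ is the negative of the $\mathbb{A}_n$ Cartan matrix, it is negative definite by Mumford's theorem, so the system $\sum_j\mathbf{a}_j(G_j\cdot G_i)=-\delta_{ik}$ has a unique solution; it therefore suffices to exhibit one and check that the proposed formula works.

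First I would write the equations $\pi^*(C)\cdot G_i=0$ explicitly using the chain structure $G_i\cdot G_i=-2$, $G_i\cdot G_{i\pm1}=1$, and $G_i\cdot G_j=0$ for $|i-j|>1$. Setting $\mathbf{a}_0=\mathbf{a}_{n+1}=0$, these read
$$
\delta_{ik}+\mathbf{a}_{i-1}-2\mathbf{a}_i+\mathbf{a}_{i+1}=0\qquad(1\le i\le n).
$$
Hence for $i\neq k$ the sequence $(\mathbf{a}_i)$ has vanishing second difference, i.e.\ it is affine-linear in $i$ on each of the ranges $0\le i\le k$ and $k\le i\le n+1$, while at $i=k$ the second difference equals $-1$. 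In other words, $\mathbf{a}_i$ is exactly the discrete Green's function of the path Laplacian (equivalently, the $(i,k)$ entry of the inverse of the $\mathbb{A}_n$ Cartan matrix), which is piecewise linear with a single downward kink of slope jump $1$ at $i=k$ and zero boundary values.

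It then remains to verify that the claimed closed form
$$
\mathbf{a}_i=\frac{i(n+1-k)}{n+1}\ \ (i\le k),\qquad \mathbf{a}_i=\frac{k(n+1-i)}{n+1}\ \ (i>k)
$$
satisfies all these conditions: linearity in $i$ on each range is manifest, the boundary values $\mathbf{a}_0=0$ and $\mathbf{a}_{n+1}=0$ hold, the two expressions agree at $i=k$ giving the continuous value $\mathbf{a}_k=\tfrac{k(n+1-k)}{n+1}$, and a direct evaluation of $\mathbf{a}_{k-1}-2\mathbf{a}_k+\mathbf{a}_{k+1}$ yields $\tfrac{-n-1}{n+1}=-1$, matching the inhomogeneous equation at $i=k$. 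By uniqueness this is the solution, proving the formula and, in particular, the value of $\mathbf{a}_k$.

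I do not expect a genuine obstacle here: once $\widetilde{C}\cdot G_i=\delta_{ik}$ and negative definiteness are in place, the argument is a short linear-algebra verification. The only point requiring a little care is the geometric input that $\widetilde{C}$ meets the exceptional chain transversally in the single curve $G_k$, which rests on the standard description of the minimal resolution of an $\mathbb{A}_n$ singularity together with the smoothness of $C$ at $O$; this is what licenses the clean right-hand side $-\delta_{ik}$ of the system.
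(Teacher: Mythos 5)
Your proof is correct and follows essentially the same route as the paper's: both set up the tridiagonal system coming from $\pi^*(C)\cdot G_i=0$ together with the geometric input $\widetilde{C}\cdot G_i=\delta_{ik}$, and then determine its unique solution. The only cosmetic difference is that you verify the closed form and appeal to negative definiteness for uniqueness, whereas the paper solves the system directly.
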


\begin{proof}
We may assume that $n\geqslant 2$, since the assertion is obvious for $n=1$.
Replacing $k$ by $n+1-l$, we may assume that $k\leqslant\frac{n+1}{2}$. Then
$$
0=\widetilde{C}\cdot G_n=-2\mathbf{a}_n+\mathbf{a}_{n-1}.
$$
If $k=1$, then $1=\widetilde{C}\cdot G_1=-2\mathbf{a}_1+\mathbf{a}_2$ and
$$
0=\widetilde{C}\cdot G_i=-2\mathbf{a}_i+\mathbf{a}_{i-1}+\mathbf{a}_{i+1}
$$
in the case when $n>i>1$. This gives $\mathbf{a}_i=\frac{n+1-i}{n+1}$ in this case.

Thus we may assume that $k\geqslant 2$, so that $n\geqslant 3$. Then $0=\widetilde{C}\cdot G_1=-2\mathbf{a}_1+\mathbf{a}_{2}$ and
$$
1=\widetilde{C}\cdot G_k=-2\mathbf{a}_k+\mathbf{a}_{k-1}+\mathbf{a}_{k+1}.
$$
For every $i\ne k$ such that $i\ne 1$ and $i\ne n-1$, we also have
$$
0=\widetilde{C}\cdot G_i=-2\mathbf{a}_i+\mathbf{a}_{i-1}+\mathbf{a}_{i+1}.
$$
Solving this system of equations, we obtain the required assertion.
\end{proof}

\begin{corollary}
\label{corollary:intersection}
Suppose that both $C$ and $Z$ are smooth at $O$. Suppose that $C$ intersects the curve $Z$ transversally at $O$.
Suppose also that $\widetilde{C}\cap G_k\ne\varnothing$ and $\widetilde{Z}\cap G_r\ne\varnothing$.
Then
$$
\Big(C\cdot Z\Big)_O=\left\{\aligned
&\frac{r(n+1-k)}{n+1}\ \text{for}\ r\leqslant k,\\
&\frac{k(n+1-r)}{n+1}\ \text{for}\ r>k.\\
\endaligned
\right.
$$
\end{corollary}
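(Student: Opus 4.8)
The plan is to reduce the local multiplicity $(C\cdot Z)_O$ to a single coefficient $\mathbf a_r$ of the log pull-back computed in Lemma~\ref{lemma:intersection}, and then to read off the answer. First I would record that, since $Z$ is smooth at $O$, its proper transform $\widetilde Z$ meets the chain $\mathbf G$ transversally in one point lying on $G_r$, so that $\widetilde Z\cdot G_i=\delta_{ir}$ for all $i$. Combining this with the projection-formula identity $C\cdot Z=\pi^*(C)\cdot\widetilde Z$ recorded in Subsection~\ref{subsection:intersection-multiplicity} and the expansion $\pi^*(C)=\widetilde C+\sum_{i=1}^n\mathbf a_iG_i$, I obtain
\[
C\cdot Z=\widetilde C\cdot\widetilde Z+\sum_{i=1}^n\mathbf a_i\big(G_i\cdot\widetilde Z\big)=\widetilde C\cdot\widetilde Z+\mathbf a_r.
\]

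Second, I would feed this into the defining formula~\eqref{equation:intersection-multiplicity}, which reads $(C\cdot Z)_O=C\cdot Z-\widetilde C\cdot\widetilde Z+\sum_{P\in\widetilde C\cap\widetilde Z\cap\mathbf G}(\widetilde C\cdot\widetilde Z)_P$. Once the correction sum is shown to be empty, the displayed identity gives $(C\cdot Z)_O=\mathbf a_r$. Applying Lemma~\ref{lemma:intersection} with $\widetilde C\cap G_k\neq\varnothing$ then yields $\mathbf a_r=\frac{r(n+1-k)}{n+1}$ when $r\leqslant k$ and $\mathbf a_r=\frac{k(n+1-r)}{n+1}$ when $r>k$, which is exactly the claimed expression. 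As a consistency check one may instead compute $\mathbf b_k$, the coefficient of $G_k$ in $\pi^*(Z)$; the symmetry of the two answers under $(k,C)\leftrightarrow(r,Z)$ confirms the result.

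The one step that is not purely formal, and the step I expect to be the main obstacle, is verifying that $\widetilde C\cap\widetilde Z\cap\mathbf G=\varnothing$, i.e.\ that the correction term in~\eqref{equation:intersection-multiplicity} vanishes. This is where both the smoothness and the transversality hypotheses enter. Since $C$ and $Z$ are smooth at the du Val point $O$, each of $\widetilde C,\widetilde Z$ meets $\mathbf G$ in a single point lying in the interior of one component ($G_k$, resp.\ $G_r$), away from the nodes $G_i\cap G_{i+1}$, by the standard description of strict transforms of smooth curves through an $\mathbb A_n$ point. When $|k-r|\geqslant 2$ the components $G_k,G_r$ are disjoint, and when $|k-r|=1$ they meet only at a node through which neither strict transform passes; in both cases the intersection is empty for free. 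The genuinely delicate case is $k=r$, where I must show that $\widetilde C$ and $\widetilde Z$ hit $G_k$ at \emph{distinct} points. I would establish this on the explicit cyclic-quotient (toric) model $\mathbb C^2/\mu_{n+1}$ of the $\mathbb A_n$ singularity: transversal smooth germs at $O$ carry distinct tangent data, which persists through the toric resolution and separates the two strict transforms on $G_k$. This local computation is the crux; everything else is bookkeeping with Lemma~\ref{lemma:intersection}.
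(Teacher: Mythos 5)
Your argument is the paper's own proof: transversality gives $\widetilde C\cap\widetilde Z\cap\mathbf G=\varnothing$, the projection formula gives $\widetilde C\cdot\widetilde Z=C\cdot Z-\mathbf a_r$, and then \eqref{equation:intersection-multiplicity} together with Lemma~\ref{lemma:intersection} yields the stated value. The only difference is that you spell out (via the toric model of the $\mathbb A_n$ point) why the correction term vanishes, a step the paper simply asserts, so your write-up is a slightly more detailed version of the same argument.
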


\begin{proof}
Since $C$ intersects $Z$ transversally at $O$, we have $\widetilde{C}\cap\widetilde{Z}\cap \mathbf{G}=\varnothing$.
But
$$
\widetilde{C}\cdot\widetilde{Z}=\Big(\pi^*(C)-\sum_{i=1}^{n}\mathbf{a}_iG_i\Big)\cdot\widetilde{Z}=C\cdot Z-\mathbf{a}_k.
$$
Thus, the required assertion follows from \eqref{equation:intersection-multiplicity} and Lemma~\ref{lemma:intersection}.
\end{proof}

\begin{corollary}
\label{corollary:self-intersection}
Suppose that $C$ is smooth at $O$, and $\widetilde{C}\cap G_k\ne\varnothing$. Then
$$
C^2=\widetilde{C}^2+\frac{k(n+1-k)}{n+1}.
$$
\end{corollary}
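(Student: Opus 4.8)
The plan is to compute $C^2$ directly from the definition of the intersection pairing on the normal surface $S$, exactly as $C\cdot Z$ was defined in Subsection~\ref{subsection:intersection-multiplicity} via the minimal resolution $\pi\colon\widetilde{S}\to S$. Following \cite{Sa84}, the self-intersection of $C$ is $C^2=\pi^*(C)^2$, where
$$
\pi^*(C)=\widetilde{C}+\sum_{i=1}^{n}\mathbf{a}_iG_i
$$
is the same $\mathbb{Q}$-divisor introduced earlier, characterized by $\pi^*(C)\cdot G_i=0$ for every $i$. First I would exploit this orthogonality: since $\pi^*(C)\cdot G_i=0$, expanding $\pi^*(C)^2=\pi^*(C)\cdot(\widetilde{C}+\sum_i\mathbf{a}_iG_i)$ kills all the $G_i$ contributions and leaves
$$
C^2=\pi^*(C)^2=\pi^*(C)\cdot\widetilde{C}=\widetilde{C}^2+\sum_{i=1}^{n}\mathbf{a}_i\big(G_i\cdot\widetilde{C}\big).
$$
So the whole problem reduces to identifying the numbers $G_i\cdot\widetilde{C}$ and reading off the relevant coefficient $\mathbf{a}_i$.

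The second step is the geometry of the resolution of the $\mathbb{A}_n$ point, which is recalled at the start of Subsection~\ref{subsection:A}: because $C$ is smooth at $O$, its proper transform $\widetilde{C}$ is smooth along $\mathbf{G}=G_1\cup\cdots\cup G_n$, meets exactly one of the exceptional curves, namely $G_k$, and does so transversally in a single point. Hence $G_i\cdot\widetilde{C}=\delta_{ik}$, and the sum above collapses to a single term:
$$
C^2=\widetilde{C}^2+\mathbf{a}_k.
$$
Finally I would invoke Lemma~\ref{lemma:intersection}, which gives $\mathbf{a}_k=\frac{k(n+1-k)}{n+1}$ precisely under the hypothesis $\widetilde{C}\cap G_k\ne\varnothing$, yielding the claimed formula. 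This is the natural self-intersection analogue of the computation of $\widetilde{C}\cdot\widetilde{Z}$ carried out in Corollary~\ref{corollary:intersection}.

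I expect no serious obstacle here: the argument is a one-line consequence of Lemma~\ref{lemma:intersection} once the bookkeeping is in place. The only points demanding care are purely foundational — confirming that $C^2=\pi^*(C)^2$ is the correct interpretation of the self-intersection of a Weil divisor on a normal surface in the sense of \cite{Sa84}, and checking the standard fact that a curve smooth at an $\mathbb{A}_n$ point meets the exceptional chain in exactly one curve transversally, so that $G_i\cdot\widetilde{C}=\delta_{ik}$. Both are well known for du Val singularities (cf.\ \cite{Ar66}), so the proof should be short and essentially computational.
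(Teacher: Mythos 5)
Your proof is correct and follows essentially the same route as the paper: expand using the orthogonality $\pi^*(C)\cdot G_i=0$, use that $\widetilde{C}$ meets only $G_k$ and does so transversally in one point, and read off $\mathbf{a}_k=\frac{k(n+1-k)}{n+1}$ from Lemma~\ref{lemma:intersection}. (The paper's own one-line proof writes $\mathbf{a}_1=\frac{n}{n+1}$ where it clearly means $\mathbf{a}_k=\frac{k(n+1-k)}{n+1}$, so your version is in fact the cleaner statement of the same computation.)
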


\begin{proof}
One has
$$
\widetilde{C}^2=\Big(\pi^*(C)-\sum_{i=1}^{n}\mathbf{a}_iG_i\Big)^2=C^2-\mathbf{a}_1=C^2-\frac{n}{n+1}
$$
by Lemma~\ref{lemma:intersection}.
\end{proof}

\begin{remark}
\label{remark:transversal}
Suppose that $n\geqslant 3$.
Then there exists a commutative diagram
$$
\xymatrix{
&&\overline{S}\ar@{->}[drr]^\beta&&\\
\widetilde{S}\ar@{->}[rru]^\alpha\ar@{->}[rrrr]_\pi &&&& S}
$$
such that $\beta$ is the blow up of the point $O$, and $\alpha$ is a birational morphism
that contracts the curves $G_2,\ldots,G_{n-1}$ to the singular point of type $\mathbb{A}_{n-2}$.
Denote by $\overline{G}_1$, $\overline{G}_n$, $\overline{C}$, and $\overline{Z}$
the proper transforms of the curves $G_1$, $G_n$, $\widetilde{C}$, and $\widetilde{Z}$
on the surface $\overline{S}$, respectively.
If $C$ and $Z$ are smooth at $O$, and the curve $C$ intersects $Z$ transversally at~$O$,
then the curves $\overline{C}$ and $\overline{Z}$ are smooth,
and at most one curve among $\overline{C}$ and $\overline{Z}$ passes through the intersection point $\overline{G}_1\cap\overline{G}_n$.
\end{remark}

\subsection{Singular points of type $\mathbb{D}$}
\label{subsection:Dn}

Now we suppose that the surface $S$ has du Val singularity of type $\mathbb{D}_n$ at the point $O$, where $n\geqslant 4$.
We start with the following.

\begin{lemma}
\label{lemma:D4}
Suppose that $n=4$, both $C$ and $Z$ are smooth at $O$, and $C$ intersects the curve $Z$ transversally at $O$.
Then $(C\cdot Z)_O=\frac{1}{2}$ and $C^2=\widetilde{C}^2+1$.
\end{lemma}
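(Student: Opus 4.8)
The plan is to reduce both equalities to a single linear-algebra computation, exactly as in the $\mathbb{A}_n$ case treated in Subsection~\ref{subsection:A}. First I would record the dual graph of the minimal resolution of a $\mathbb{D}_4$ point: a central $(-2)$-curve $G_0$ meeting three mutually disjoint $(-2)$-curves $G_1$, $G_2$, $G_3$ transversally, with $G_0\cdot G_i=1$ for $i\in\{1,2,3\}$ and $G_i\cdot G_j=0$ for $1\leqslant i<j\leqslant 3$. Following Subsection~\ref{subsection:intersection-multiplicity}, the coefficients $\mathbf{a}_0,\dots,\mathbf{a}_3$ in $\pi^*(C)=\widetilde{C}+\sum_{i=0}^{3}\mathbf{a}_iG_i$ are uniquely determined by the equations $\pi^*(C)\cdot G_i=0$ once we know the numbers $\widetilde{C}\cdot G_i$. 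Hence everything comes down to identifying which exceptional curve the proper transform $\widetilde{C}$ meets.

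The crux, and the step I expect to be the main obstacle, is the local claim that the proper transform of a \emph{smooth} curve germ through $O$ meets exactly one of the three leaves $G_1$, $G_2$, $G_3$ --- and never the central curve $G_0$ --- transversally and in a single point. I would prove this using the normal form $x^2+y^2z+z^3=0$ of the $\mathbb{D}_4$ singularity. A short order-of-vanishing computation on this equation shows that every smooth branch of $S$ through $O$ is tangent to one of the three lines cut out by $x=z(y^2+z^2)=0$ in the plane $\{x=0\}$. Blowing up $O$ once produces precisely the central curve $G_0$ carrying three $\mathbb{A}_1$ points, one for each of these three lines; resolving those points creates the three leaves, and a germ tangent to a given line has proper transform passing through the corresponding leaf rather than through $G_0$. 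Since $C$ and $Z$ meet transversally they have distinct tangent directions, so they are tangent to \emph{different} lines and $\widetilde{C}$, $\widetilde{Z}$ meet different leaves; transversality also gives $\widetilde{C}\cap\widetilde{Z}\cap\mathbf{G}=\varnothing$.

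Granting this, the remaining computation is routine. Normalizing so that $\widetilde{C}\cdot G_1=1$ and $\widetilde{C}\cdot G_i=0$ otherwise, the system $\pi^*(C)\cdot G_i=0$ yields $\mathbf{a}_0=\mathbf{a}_1=1$ and $\mathbf{a}_2=\mathbf{a}_3=\tfrac12$. The self-intersection then follows from $C^2=\pi^*(C)\cdot\widetilde{C}=\widetilde{C}^2+\mathbf{a}_1=\widetilde{C}^2+1$. For the local intersection number I would use that $\widetilde{C}\cap\widetilde{Z}\cap\mathbf{G}=\varnothing$ reduces \eqref{equation:intersection-multiplicity} to $(C\cdot Z)_O=C\cdot Z-\widetilde{C}\cdot\widetilde{Z}=\pi^*(C)\cdot\widetilde{Z}-\widetilde{C}\cdot\widetilde{Z}=\sum_{i}\mathbf{a}_i(G_i\cdot\widetilde{Z})$; as $\widetilde{Z}$ meets a leaf distinct from $G_1$, this sum equals $\mathbf{a}_2=\tfrac12$. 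Thus both assertions fall out of the one linear system, and the only genuinely geometric input is the tangent-cone/leaf correspondence of the second paragraph.
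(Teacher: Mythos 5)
Your proposal is correct and follows essentially the same route as the paper's proof: both reduce everything to the same linear system for the coefficients of $\pi^*(C)$ (yielding coefficient $1$ on the central curve and on the leaf met by $\widetilde{C}$, and $\frac{1}{2}$ on the other two leaves), and both pass through the intermediate surface obtained by blowing up $O$ once, on whose exceptional curve sit three $\mathbb{A}_1$ points that force $\widetilde{C}$ and $\widetilde{Z}$ onto distinct leaves. The only real difference is how the placement of $\widetilde{C}$ is justified: you compute the tangent cone explicitly from the normal form $x^2+y^2z+z^3=0$, whereas the paper gets the same facts from the fundamental cycle $2G_1+G_2+G_3+G_4$ together with the partial contraction $\widetilde{S}\to\overline{S}\to S$; both justifications are valid.
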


\begin{proof}
We may assume that
the intersection form of the curves $G_1$, $G_2$, $G_3$, $G_4$ is given~by
\begin{center}\renewcommand\arraystretch{1.42}
\begin{tabular}{|c||c|c|c|c|}
\hline
 $\bullet$  & $G_1$ & $G_2$& $G_3$ & $G_4$\\
\hline\hline
 $G_1$ &  $-2$ & $1$ & $1$ & $1$ \\
\hline
 $G_2$ &  $1$ & $-2$ & $0$ & $0$\\
\hline
 $G_3$ &  $1$ & $0$ & $-2$ & $0$\\
\hline
 $G_4$ &  $1$ & $0$ & $0$ & $-2$\\
\hline
\end{tabular}
\end{center}
Then $2G_1+G_2+G_3+G_4$ is the \emph{fundamental cycle} of the singular point $O$, see \cite{Ar66}.
This implies that
$$
\widetilde{C}\cdot\Big(2G_1+G_2+G_3+G_4\Big)=\mathrm{mult}_{O}\big(C\big)=1.
$$
Thus, we see that $\widetilde{C}\cap G_1=\varnothing$.
Hence, we may assume that $\widetilde{C}\cdot G_2=1$, which implies that $\widetilde{C}\cdot G_1=\widetilde{C}\cdot G_3=\widetilde{C}\cdot G_4=0$,
which gives
$$
\left\{\aligned
&1+\mathbf{a}_1-2\mathbf{a}_2=0,\\
&\mathbf{a}_2+\mathbf{a}_3+\mathbf{a}_4-2\mathbf{a}_1=0,\\
&\mathbf{a}_1-2\mathbf{a}_3=0,\\
&\mathbf{a}_1-2\mathbf{a}_4=0.\\
\endaligned
\right.
$$
Solving this system of equations, we see that $\mathbf{a}_1=1$, $\mathbf{a}_2=1$, $\mathbf{a}_3=\frac{1}{2}$, $\mathbf{a}_4=\frac{1}{2}$.
This implies that $C^2=\widetilde{C}^2+1$.
Note also that there exists a commutative diagram
$$
\xymatrix{
&&\overline{S}\ar@{->}[drr]^\beta&&\\
\widetilde{S}\ar@{->}[rru]^\alpha\ar@{->}[rrrr]_\pi &&&& S}
$$
such that $\beta$ is the blow up of the point $O$, and $\alpha$ is a birational morphism
that contracts the curves $G_2$, $G_3$, and $G_4$ to three ordinary double points of the surface $\overline{S}$.
Denote by $\overline{C}$ and $\overline{Z}$
the proper transforms of the curves $\widetilde{C}$ and $\widetilde{Z}$
on the surface $\overline{S}$, respectively.
If $C$ and $Z$ are smooth at $O$, and the curve $C$ intersects $Z$ transversally at~$O$,
then $(C\cdot Z)_O=\frac{1}{2}$,
because $\overline{C}\cap\overline{Z}\cap\alpha(G_1)=\varnothing$,
the curves $\overline{C}$ and $\overline{Z}$ are smooth along $\alpha(G_1)$,
and each of them contains a singular point of the surface $\overline{S}$ contained in $\alpha(G_1)$.
\end{proof}

Now we suppose that $n\geqslant 5$.
In this case, we may assume that the intersection form of the exceptional curves $G_1,\ldots,G_n$ is given by the following table:
\begin{center}\renewcommand\arraystretch{1.42}
\begin{tabular}{|c||c|c|c|c|c|c|c|c|}
\hline
 $\bullet$  & $G_1$ & $G_2$& $G_3$ & $G_4$ & $G_5$ & $\ldots$ & $G_{n-1}$ & $G_n$\\
\hline\hline
 $G_1$ &  $-2$ & $1$ & $1$ & $1$ & $0$ & $\ldots$ & $0$ & $0$\\
\hline
 $G_2$ &  $1$ & $-2$ & $0$ & $0$ & $0$ & $\ldots$ & $0$ & $0$\\
\hline
 $G_3$ &  $1$ & $0$ & $-2$ & $0$ & $0$ & $\ldots$ & $0$ & $0$\\
\hline
 $G_4$ &  $1$ & $0$ & $0$ & $-2$ & $1$ & $\ldots$ & $0$ & $0$\\
\hline
 $G_5$ &  $0$ & $0$ & $0$ & $1$ & $-2$ & $\ldots$ & $0$ & $0$ \\
\hline
 $\ldots$ & $\ldots$ & $\ldots$ & $\ldots$ & $\ldots$ & $\ldots$ & $\ddots$ & $\ldots$ & $\ldots$\\
\hline
 $G_{n-1}$ &  $0$ & $0$ & $0$ & $0$ & $0$ & $\ldots$ & $-2$ & $1$\\
\hline
 $G_n$ &  $0$ & $0$ & $0$ & $0$ & $0$ & $\ldots$ & $1$ & $-2$\\
\hline
\end{tabular}
\end{center}

\begin{lemma}
\label{lemma:Dn}
Suppose that $C$ and $Z$ are smooth at $O$, and $C$ intersects $Z$ transversally at the point $O$.
Then
$$
\Big(C\cdot Z\Big)_O=\frac{1}{2}.
$$
If $C\cap G_n\ne\varnothing$, then $C^2=\widetilde{C}^2+1$.
Otherwise, one has $C^2=\widetilde{C}^2+\frac{n}{4}$.
\end{lemma}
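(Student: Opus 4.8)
The plan is to reduce both computations to the coefficients $\mathbf a_1,\dots,\mathbf a_n$ of the numerical pullback $\pi^*(C)=\widetilde C+\sum_{i=1}^n\mathbf a_iG_i$, exactly as in Subsection~\ref{subsection:A}. Arguing as in the proof of Corollary~\ref{corollary:self-intersection}, one has $C^2=\widetilde C^2+\mathbf a_k$ whenever $\widetilde C\cap G_k\neq\varnothing$ with $\widetilde C\cdot G_k=1$; and, arguing as in Corollary~\ref{corollary:intersection}, $(C\cdot Z)_O=\mathbf a_l$ whenever $\widetilde Z\cap G_l\neq\varnothing$ and $\widetilde C\cap\widetilde Z\cap\mathbf G=\varnothing$. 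Thus everything comes down to (a) deciding which exceptional curves $\widetilde C$ and $\widetilde Z$ meet, and (b) solving the linear system $\pi^*(C)\cdot G_i=0$ for the $\mathbf a_i$.

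First I would pin down the incidence in step (a). The fundamental cycle of the $\mathbb D_n$ singularity is $Z_{\mathrm{fund}}=2G_1+G_2+G_3+2G_4+\cdots+2G_{n-1}+G_n$, as one checks directly from the intersection table by verifying $Z_{\mathrm{fund}}\cdot G_i=0$ for every $i$. Since $C$ is smooth at $O$, we have $\widetilde C\cdot Z_{\mathrm{fund}}=\operatorname{mult}_O(C)=1$; because every coefficient of $Z_{\mathrm{fund}}$ is at least $1$, with equality only at $G_2$, $G_3$ and $G_n$, the curve $\widetilde C$ must meet exactly one of $G_2$, $G_3$, $G_n$, transversally and in a single point, and be disjoint from all the other exceptional curves. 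The same holds for $\widetilde Z$.

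Next I would solve the system in step (b). Writing out $\pi^*(C)\cdot G_i=0$ gives a tridiagonal recurrence along the tail $G_4,\dots,G_n$ together with the two fork relations $\mathbf a_1=2\mathbf a_2=2\mathbf a_3$ and the branch relation $2\mathbf a_1=\mathbf a_2+\mathbf a_3+\mathbf a_4$. Solving it when $\widetilde C\cap G_n\neq\varnothing$ yields $\mathbf a_i=1$ for all $i\geq4$ and $\mathbf a_2=\mathbf a_3=\tfrac12$, so $\mathbf a_n=1$ and $C^2=\widetilde C^2+1$; solving it when $\widetilde C\cap G_2\neq\varnothing$ (the case $G_3$ being symmetric) yields $\mathbf a_2=\tfrac n4$, hence $C^2=\widetilde C^2+\tfrac n4$. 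This is precisely the dichotomy in the statement, and it is a routine, if slightly tedious, linear computation. In the second case the same solution also records $\mathbf a_n=\tfrac12$, so that if $\widetilde C$ meets a fork curve and $\widetilde Z$ meets $G_n$, then $(C\cdot Z)_O=\mathbf a_n=\tfrac12$.

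The hard part will be step (a) for the intersection multiplicity: I must show that two distinct smooth branches meeting transversally at $O$ cannot both meet fork curves (which would give $\tfrac{n-2}4$) nor both meet $G_n$ (which would give $1$), so that the configuration is always one fork curve together with $G_n$. I would establish this by a local analysis of the resolution, viewing $S$ near $O$ as the double cover of the $(y,z)$-plane branched over $z(y^2+z^{n-2})=0$ determined by a normal form $x^2+y^2z+z^{n-1}=0$: the branch line $\{z=0\}$ and the branch curve $\{y^2+z^{n-2}=0\}$ govern which component of the exceptional chain a given smooth branch is carried to, and transversality of $C$ and $Z$ forces them onto the two different distinguished components. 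For $n=4$ this is exactly the content of Lemma~\ref{lemma:D4}, and the argument there (explicit blow-up of $O$, followed by tracking the proper transforms) is the model I would follow; the only genuinely new work is the bookkeeping for the longer tail when $n\geq5$.
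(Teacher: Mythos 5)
Your proposal is correct and is essentially the paper's proof: the same fundamental-cycle computation pins each of $\widetilde C$, $\widetilde Z$ to exactly one of $G_2$, $G_3$, $G_n$, and the same linear systems give $\mathbf a_n=1$, $\mathbf a_2=\tfrac{n}{4}$ and the cross term $\tfrac{1}{2}$ (just note that when $\widetilde C$ meets $G_2$ the relation at $G_2$ becomes $2\mathbf a_2=\mathbf a_1+1$ rather than $2\mathbf a_2=\mathbf a_1$; your final values are nevertheless the right ones). For the step you flag as hard, the paper does precisely what you propose as your fallback model from Lemma~\ref{lemma:D4}, with no double-cover analysis: after one blow-up $\beta\colon\overline S\to S$ of $O$, the exceptional curve $\alpha(G_{n-1})$ carries exactly two singular points of $\overline S$, an $\mathbb A_1$ point over $G_n$ and a $\mathbb D_{n-2}$ point over $G_1,\ldots,G_{n-2}$; since $\widetilde C\cdot G_{n-1}=\widetilde Z\cdot G_{n-1}=0$, each of $\overline C$, $\overline Z$ must pass through one of these two points, and transversality of $C$ and $Z$ at $O$ makes $\overline C$ and $\overline Z$ disjoint, so they pass through different ones --- which is exactly the dichotomy (one curve on $G_n$, the other on a fork arm) that you need.
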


\begin{proof}
Recall from \cite{Ar66} that $2G_1+G_2+G_3+2G_4+\ldots+2G_{n-1}+G_n$ is the fundamental cycle of the singular point $O$.
Then
$$
\widetilde{C}\cdot\big(2G_1+G_2+G_3+2G_4+\ldots+2G_{n-1}+G_n\big)=\mathrm{mult}_{O}\big(C\big)=1.
$$
This shows that $\widetilde{C}\cdot G_1=\widetilde{C}\cdot G_4=\ldots=\widetilde{C}\cdot G_{n-1}=0$ and
$\widetilde{C}\cdot G_2+\widetilde{C}\cdot G_3+\widetilde{C}\cdot G_n=1$.
Hence, the curve $\widetilde C$ intersects exactly one of curves $G_2$, $G_3$ or $G_n$,
and it intersects this curve transversally at a single point.
Similarly, the same holds for the curve $Z$.

Let $\beta\colon\overline{S}\to S$ be the blow up the point $O$.
Then there exists the following commutative diagram:
$$
\xymatrix{
&&\overline{S}\ar@{->}[drr]^\beta&&\\
\widetilde{S}\ar@{->}[rru]^\alpha\ar@{->}[rrrr]_\pi &&&& S}
$$
where $\alpha$ is a birational morphism that contracts the curves $G_1,G_2,G_3,\ldots,G_{n-2}$, and $G_n$.
Thus, we see that $\alpha(G_{n-1})$ is the exceptional curve of the blow up $\beta$.
Note that $\alpha(G_n)$ is an isolated ordinary double point of the surface $\overline{S}$.
Similarly, we see that the surface $\overline{S}$ has a du Val singular point of type $\mathbb{D}_{n-2}$
at the point $\alpha(G_1)=\ldots=\alpha(G_{n-2})$.
Here, we assume that $\mathbb{D}_3=\mathbb{A}_3$.

Denote by $\overline{C}$ and $\overline{Z}$ the proper transforms on $\overline{S}$ of the curves $\widetilde{C}$ and $\widetilde{Z}$, respectively.
Since $\widetilde{C}$ and $\widetilde{Z}$ do not intersect the curve $G_{n-1}$,
each of the curves $\overline{C}$ and $\overline{Z}$ must pass through some singular point of the surface $\overline{S}$ contained in $\beta(G_{n-1})$.
Furthermore, we have
$\overline{C}\cap\overline{Z}=\varnothing$,
since the curve $C$ intersects the curve $Z$ transversally at $O$.
Thus, without loss of generality, one can assume $\widetilde{C}\cdot G_n=1$ and $\widetilde{Z}\cdot G_2=1$.
This gives us the following system of equations:
$$
\left\{\aligned
&2\mathbf{a}_1-\mathbf{a}_2-\mathbf{a}_3-\mathbf{a}_4=\widetilde{C}\cdot G_1=0,\\
&2\mathbf{a}_2-\mathbf{a}_1=\widetilde{C}\cdot G_2=0,\\
&2\mathbf{a}_3-\mathbf{a}_1=\widetilde{C}\cdot G_3=0,\\
&2\mathbf{a}_4-\mathbf{a}_1-\mathbf{a}_5=\widetilde{C}\cdot G_4=0,\\
&2\mathbf{a}_5-\mathbf{a}_4-\mathbf{a}_6=\widetilde{C}\cdot G_5=0,\\
&\ldots\\
&2\mathbf{a}_{n-1}-\mathbf{a}_{n-2}-\mathbf{a}_n=\widetilde{C}\cdot G_{n-1}=0,\\
&2\mathbf{a}_n-\mathbf{a}_{n-1}=\widetilde{C}\cdot G_n=1.
\endaligned
\right.
$$
Solving it, we obtain $\mathbf{a}_1=1$, $\mathbf{a}_2=\mathbf{a}_3=\frac{1}{2}$, $\mathbf{a}_4=\ldots=\mathbf{a}_n=1$.
In particular, we have
$$
\widetilde{C}\cdot\widetilde{Z}=\Big(\pi^*(C)-\sum_{i=1}^{n}\mathbf{a}_iG_i\Big)\cdot\widetilde{Z}=C\cdot Z-\mathbf{a}_2=C\cdot Z-\frac{1}{2}.
$$
Hence, we see that $(C\cdot Z)_O=\frac{1}{2}$. Likewise, we get $C^2=\widetilde{C}^2+1$.

Similarly, we have the following system of equations:
$$
\left\{\aligned
&2\mathbf{b}_1-\mathbf{b}_2-\mathbf{b}_3-\mathbf{b}_4=\widetilde{Z}\cdot G_1=0,\\
&2\mathbf{b}_2-\mathbf{b}_1=\widetilde{Z}\cdot G_2=1,\\
&2\mathbf{b}_3-\mathbf{b}_1=\widetilde{Z}\cdot G_3=0,\\
&2\mathbf{b}_4-\mathbf{b}_1-\mathbf{b}_5=\widetilde{Z}\cdot G_4=0,\\
&2\mathbf{b}_5-\mathbf{b}_4-\mathbf{b}_6=\widetilde{Z}\cdot G_5=0,\\
&\ldots\\
&2\mathbf{b}_{n-1}-\mathbf{b}_{n-2}-\mathbf{b}_n=\widetilde{Z}\cdot G_{n-1}=0,\\
&2\mathbf{b}_n-\mathbf{b}_{n-1}=\widetilde{Z}\cdot G_n=0.
\endaligned
\right.
$$
Solving it, we see that
$$
\mathbf{b}_1=\frac{n-2}{4},\ \mathbf{b}_2=\frac{n}{4},\ \mathbf{b}_3=\frac{n-2}{4},\ \mathbf{b}_4=\frac{n-3}{2},\ \mathbf{b}_5=\frac{n-4}{2},\ \ldots,\ \mathbf{b}_n=\frac{1}{2}.
$$
As above, this gives $Z^2=\widetilde{Z}^2+\frac{n}{4}$. This completes the proof of the lemma.
\end{proof}

\subsection{Singular points of type $\mathbb{E}$}
\label{subsection:E}

Now we consider the case when $S$ has du Val singularity of type $\mathbb{E}_6$, $\mathbb{E}_7$ or $\mathbb{E}_8$ at the point $O$.
We start with the following fact.

\begin{lemma}
\label{lemma:E6}
Suppose that $S$ has du Val singularity of type $\mathbb{E}_6$ at the point $O$,
and both curves $C$ and $Z$ are smooth at $O$.
Then $C$ is tangent to $Z$ at the point $O$, and
$$
C^2=\widetilde{C}^2+\frac{4}{3}.
$$
\end{lemma}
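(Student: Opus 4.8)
The plan is to follow the template of the proofs of Lemmas~\ref{lemma:intersection} and \ref{lemma:Dn}, combining Artin's fundamental cycle with a single explicit blow up. First I would fix the minimal resolution $\pi\colon\widetilde{S}\to S$ of the $\mathbb{E}_6$ point and label the six exceptional curves $G_1,\ldots,G_6$ so that $G_1-G_3-G_4-G_5-G_6$ is the long chain and $G_2$ is the branch attached to the central curve $G_4$. The fundamental cycle of an $\mathbb{E}_6$ singularity is $G_1+2G_3+3G_4+2G_5+G_6+2G_2$, so Artin's formula gives $\widetilde{C}\cdot\big(G_1+2G_3+3G_4+2G_5+G_6+2G_2\big)=\mathrm{mult}_O(C)=1$. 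Since each $\widetilde{C}\cdot G_i$ is a non-negative integer, $\widetilde{C}$ meets exactly one exceptional curve, transversally, and that curve must have fundamental-cycle coefficient $1$; hence $\widetilde{C}$ meets precisely one of $G_1$ or $G_6$. By the symmetry $G_1\leftrightarrow G_6$, $G_3\leftrightarrow G_5$ of the diagram I may assume $\widetilde{C}\cdot G_1=1$.

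Granting this, the self-intersection formula is a routine linear-algebra computation exactly as in Lemma~\ref{lemma:Dn}. Writing $\pi^*(C)=\widetilde{C}+\sum_{i=1}^{6}\mathbf{a}_iG_i$ and imposing $\pi^*(C)\cdot G_i=0$ for all $i$ yields a linear system whose solution is $\mathbf{a}_1=\tfrac{4}{3}$ (equivalently, $\mathbf{a}_1$ is the $(1,1)$-entry of the inverse Cartan matrix of $\mathbb{E}_6$, the squared length of a minuscule weight). Since $\widetilde{C}^2=\big(\pi^*(C)-\sum_{i=1}^{6}\mathbf{a}_iG_i\big)\cdot\widetilde{C}=C^2-\mathbf{a}_1$, this gives $C^2=\widetilde{C}^2+\tfrac{4}{3}$, as claimed.

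The main obstacle is the tangency assertion, which I would extract from a single blow up $\beta\colon S_1\to S$ of the point $O$. Working in the local analytic model $x^2+y^3+z^4=0$ of the $\mathbb{E}_6$ singularity, the tangent cone is the double plane $\{x=0\}$, and a short order-of-vanishing computation along any smooth branch forces its tangent direction to be $[0:0:1]$; equivalently, $\beta$ produces a single exceptional curve $E$ carrying a unique singular point $q$ of $S_1$, which is of type $\mathbb{A}_5$, with $E$ playing the role of the branch curve $G_2$. The key point is that the proper transform $\overline{C}$ of $C$ must pass through $q$: otherwise $\overline{C}$ would meet $E$ at a smooth point of $S_1$, and then $\widetilde{C}$ would meet $E=G_2$, a curve of fundamental-cycle coefficient $2$, contradicting the first paragraph. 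The same holds for the proper transform $\overline{Z}$ of $Z$. Thus $\overline{C}$ and $\overline{Z}$ both pass through the common point $q\in E$, i.e.\ $C$ and $Z$ have the same tangent direction at $O$, so they are tangent. I expect the delicate part to be the bookkeeping identifying $E$ with $G_2$ and verifying that the residual singularity is exactly $\mathbb{A}_5$, so that passage through $q$ is both necessary and forces $\widetilde{C}$ onto an end $G_1$ or $G_6$; this is checked directly in the three standard charts of $\beta$.
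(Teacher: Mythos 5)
Your proposal is correct and follows essentially the same route as the paper's proof: the fundamental cycle $G_1+2G_2+2G_3+3G_4+2G_5+G_6$ pins $\widetilde{C}$ transversally to one end of the long chain, the discrepancy $\mathbf{a}_1=\tfrac{4}{3}$ (the minuscule entry of the inverse $\mathbb{E}_6$ Cartan matrix) gives $C^2=\widetilde{C}^2+\tfrac{4}{3}$, and the single blow up of $O$ with its unique residual $\mathbb{A}_5$ point forces $\overline{C}$ and $\overline{Z}$ through the same point of the exceptional curve, which is exactly the paper's factorization $\pi=\beta\circ\alpha$. Your additional order-of-vanishing argument in the model $x^2+y^3+z^4=0$, showing every smooth branch has tangent direction $[0:0:1]$, is a clean self-contained alternative for the tangency that the paper does not use, but the overall structure is the same.
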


\begin{proof}
We have $n=6$. We may assume that the intersection form of the curves $G_1$, $G_2$, $G_3$, $G_4$, $G_5$, and $G_6$ is given by the following table:
\begin{center}\renewcommand\arraystretch{1.42}
\begin{tabular}{|c||c|c|c|c|c|c|}
\hline
 $\bullet$  & $G_1$ & $G_2$& $G_3$ & $G_4$ & $G_5$ & $G_6$\\
\hline\hline
 $G_1$ &  $-2$ & $1$ & $1$ & $1$ & $0$ & $0$\\
\hline
 $G_2$ &  $1$ & $-2$ & $0$ & $0$ & $0$ & $0$\\
\hline
 $G_3$ &  $1$ & $0$ & $-2$ & $0$ & $1$ & $0$\\
\hline
 $G_4$ &  $1$ & $0$ & $0$ & $-2$ & $0$ & $1$\\
\hline
 $G_5$ &  $0$ & $0$ & $1$ & $0$ & $-2$ & $0$\\
\hline
 $G_6$ &  $0$ & $0$ & $0$ & $1$ & $0$ & $-2$\\
\hline
\end{tabular}
\end{center}
Thus, the curve $G_1$ is the \emph{fork} curve.

Let $\beta\colon\overline{S}\to S$ be the blow up of the point $O$.
Then there exists a commutative diagram:
$$
\xymatrix{
&&\overline{S}\ar@{->}[drr]^\beta&&\\
\widetilde{S}\ar@{->}[rru]^\alpha\ar@{->}[rrrr]_\pi &&&& S,}
$$
where $\alpha$ is a contraction of the curves $G_1$, $G_3$, $G_4$, $G_5$, and $G_6$
We see that $\alpha(G_2)$ is the exceptional curve of the blow up $\beta$.
This curve contains one singular point of the surface $\overline{S}$.
Denote it by $P$. Then $P$ is the image of the curves $G_1$, $G_3$, $G_4$, $G_5$, and $G_6$.
Note that $\overline{S}$ has a du Val singular point of type $\mathbb{A}_5$ at the point $P$.

Let $\overline{C}$ and $\overline{Z}$ be the proper transforms on $\overline{S}$ of the curves $C$ and $Z$, respectively.
Then both $\overline{C}$ and $\overline{Z}$ are smooth along $\alpha(G_2)$.
We claim that $\overline{C}\cap\overline{Z}=P$.
Indeed, the fundamental cycle of the singular point $O$ is $G_5+G_6+2G_2+2G_3+2G_4+3G_1$.
Thus, the curve $\widetilde{C}$ does not intersect the curves $G_1$, $G_2$, $G_3$, and $G_4$.
Similarly, we see that the curve $\widetilde{Z}$ does not intersect the curves $G_1$, $G_2$, $G_3$, and $G_4$.
Hence, without loss of generality, we may assume that $\widetilde{C}\cap G_5\ne\varnothing$.
Then $\widetilde{C}\cap G_6=\varnothing$ and  $\widetilde{C}\cdot G_5=1$.
Similarly, we see that either $\widetilde{Z}\cap G_5\ne\varnothing$ or $\widetilde{Z}\cap G_6\ne\varnothing$.
In both cases, we have $\overline{C}\cap\overline{Z}=P$, so that the curve $C$ is tangent to $Z$ at the point $O$.

Since $\widetilde{C}\cdot G_5=1$ and $\widetilde{C}\cdot G_1=\widetilde{C}\cdot G_2=\widetilde{C}\cdot G_3=\widetilde{C}\cdot G_4=\widetilde{C}\cdot G_6$,
we get the following system of equations:
$$
\left\{\aligned
&2\mathbf{a}_1-\mathbf a_2-\mathbf a_3-\mathbf a_4=\widetilde{C}\cdot G_1=0,\\
&\mathbf{a}_2-\mathbf{a}_1=\widetilde{C}\cdot G_2=0,\\
&2\mathbf{a}_3-\mathbf{a}_1-\mathbf{a}_5=\widetilde{C}\cdot G_3=0,\\
&2\mathbf{a}_4-\mathbf{a}_1-\mathbf{a}_6=\widetilde{C}\cdot G_4=0,\\
&2\mathbf{a}_5-\mathbf{a}_3=\widetilde{C}\cdot G_5=1,\\
&2\mathbf{a}_6-\mathbf{a}_4=\widetilde{C}\cdot G_6=0.
\endaligned
\right.
$$
Solving it, we see that
$\mathbf{a}_1=2$, $\mathbf{a}_2=1$, $\mathbf{a}_3=\frac{5}{3}$, $\mathbf{a}_4=\frac{4}{3}$, $\mathbf{a}_5=\frac{4}{3}$, and $\mathbf{a}_6=\frac{2}{3}$.
Thus
$$
\widetilde{C}^2=\Big(\pi^*(C)-2G_1-G_2-\frac{5}{3}G_3-\frac{4}{3}G_4-\frac{4}{3}G_5-\frac{2}{3}G_6\Big)\cdot\widetilde{C}=C^2-\frac{4}{3},
$$
which gives
$C^2=\widetilde{C}^2+\frac{4}{3}$.
\end{proof}

\begin{lemma}
\label{lemma:E7}
Suppose that $S$ has du Val singularity of type $\mathbb{E}_7$ at the point $O$,
and both curves $C$ and $Z$ are smooth at $O$.
Then $C$ is tangent to $Z$ at the point $O$, and
$$
C^2=\widetilde{C}^2+\frac{3}{2}.
$$
\end{lemma}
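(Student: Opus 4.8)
The plan is to mirror the proof of Lemma~\ref{lemma:E6} verbatim, replacing the $\mathbb{E}_6$ configuration by the $\mathbb{E}_7$ one. First I would fix a labeling of the seven exceptional curves $G_1,\dots,G_7$ of $\pi$ so that $G_4$ is the trivalent (fork) curve, the three arms emanating from it are $\{G_2\}$, $\{G_3,G_1\}$ and $\{G_5,G_6,G_7\}$ (of lengths $1$, $2$, $3$), and the intersection form is the negative of the $\mathbb{E}_7$ Cartan matrix. Recalling from \cite{Ar66} that the fundamental cycle of $O$ is then $2G_1+2G_2+3G_3+4G_4+3G_5+2G_6+G_7$, and that $\widetilde{C}\cdot(2G_1+2G_2+3G_3+4G_4+3G_5+2G_6+G_7)=\mathrm{mult}_O(C)=1$ because $C$ is smooth at $O$, I conclude that $\widetilde{C}$ can meet only the unique multiplicity-one curve $G_7$ (the far end of the longest arm), and that it does so transversally in a single point; the same holds for $\widetilde{Z}$. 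So both $\widetilde{C}$ and $\widetilde{Z}$ meet $G_7$.

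Next I would establish tangency exactly as in Lemma~\ref{lemma:E6}. Factoring $\pi$ through the blow up $\beta\colon\overline{S}\to S$ of the point $O$ via a morphism $\alpha\colon\widetilde{S}\to\overline{S}$, the geometric input is that blowing up an $\mathbb{E}_7$ point yields a single $\mathbb{D}_6$ singular point $P$ on the exceptional $\mathbb{P}^1$; concretely, $\alpha$ contracts $G_2,G_3,G_4,G_5,G_6,G_7$ (a $\mathbb{D}_6$ chain-with-fork) to $P$, while $\alpha(G_1)$ is the exceptional curve $E$ of $\beta$, and $P\in E$ since $G_1$ meets $G_3$. Let $\overline{C}$ and $\overline{Z}$ denote the proper transforms of $C$ and $Z$ on $\overline{S}$. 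Since $\widetilde{C}$ meets $G_7$, which is contracted to $P$, the curve $\overline{C}$ passes through $P$; as $\widetilde{C}$ does not meet $G_1$, its unique intersection with $E$ is $P$, i.e.\ $\overline{C}\cap E=\{P\}$. The same argument gives $\overline{Z}\cap E=\{P\}$. Thus $\overline{C}$ and $\overline{Z}$ meet $E$ at the same point, which means $C$ and $Z$ have the same tangent direction at $O$, so $C$ is tangent to $Z$ at $O$.

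For the self-intersection I would solve the linear system $\bigl(\widetilde{C}+\sum_{i=1}^{7}\mathbf{a}_iG_i\bigr)\cdot G_j=0$ for $j=1,\dots,7$, using $\widetilde{C}\cdot G_j=0$ for $j\neq7$ and $\widetilde{C}\cdot G_7=1$; equivalently $A\mathbf{a}=e_7$ for the $\mathbb{E}_7$ Cartan matrix $A$, so $\mathbf{a}$ is the last column of $A^{-1}$. A short back-substitution along the arms (expressing $\mathbf{a}_6,\mathbf{a}_5,\mathbf{a}_4,\mathbf{a}_2,\mathbf{a}_3,\mathbf{a}_1$ in terms of $\mathbf{a}_7$ and feeding the result into the equation coming from the length-$2$ arm) yields $\mathbf{a}_7=\tfrac32$. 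Then, exactly as in Lemma~\ref{lemma:E6}, $\widetilde{C}^2=\bigl(\pi^*(C)-\sum_{i=1}^{7}\mathbf{a}_iG_i\bigr)\cdot\widetilde{C}=C^2-\mathbf{a}_7=C^2-\tfrac32$, giving $C^2=\widetilde{C}^2+\tfrac32$.

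The one step that needs genuine care, rather than routine computation, is the geometric claim in the second paragraph that the blow up of the $\mathbb{E}_7$ point is a $\mathbb{D}_6$ point and that the multiplicity-one end curve $G_7$ is contracted onto $P\in E$. I would secure this by writing $S$ near $O$ in the standard form $z^2+x(y^2+x^2y)=0$, blowing up the origin, and checking in charts that the strict transform has a single singular point, which after completing the square in $y$ is visibly of type $\mathbb{D}_6$ ($z^2+xy'^2-\tfrac14 x^5=0$); identifying $G_1$ with the strict transform of the exceptional line then pins down which components collapse to $P$. Once this normal-form bookkeeping is done, the tangency and the value $\tfrac32$ both follow formally.
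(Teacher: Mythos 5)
Your argument is the paper's argument: the fundamental cycle forces $\widetilde{C}$ and $\widetilde{Z}$ to meet only the unique coefficient-one exceptional curve at the far end of the long arm, transversally in one point each; the linear system $\bigl(\widetilde{C}+\sum_{i}\mathbf{a}_iG_i\bigr)\cdot G_j=0$ then gives coefficient $\frac{3}{2}$ at that curve, whence $C^2=\widetilde{C}^2+\frac{3}{2}$; and tangency comes from factoring $\pi$ through the blow up of $O$ and noting that both proper transforms pass through the unique singular point $P$ of $\overline{S}$ on the exceptional curve. After relabeling, your fundamental cycle and your solution $\mathbf{a}_7=\frac{3}{2}$ agree exactly with the paper's values $\mathbf{a}_1=3$, $\mathbf{a}_2=\frac{3}{2}$, $\mathbf{a}_3=2$, $\mathbf{a}_4=\frac{5}{2}$, $\mathbf{a}_5=1$, $\mathbf{a}_6=2$, $\mathbf{a}_7=\frac{3}{2}$, and the tangency step is precisely the one the paper imports from Lemma~\ref{lemma:E6}.

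The one concrete error is the normal form in your last paragraph: $z^2+x(y^2+x^2y)=0$ is not an $\mathbb{E}_7$ singularity. It factors as $z^2+xy(y+x^2)=0$, three smooth branches with pairwise intersection numbers $1,1,2$ and Milnor number $6$ --- it is already the $\mathbb{D}_6$ point you are trying to produce. The $\mathbb{E}_7$ normal form is $z^2+x(x^2+y^3)=0$, and blowing \emph{that} up in the chart $x=x_1y$, $z=z_1y$ yields $z_1^2+x_1(y^2+x_1^2y)=0$; in other words, you have written the output of the blow up where the input should stand, and your ``completing the square'' step merely re-verifies that the output is $\mathbb{D}_6$. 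This does not damage the lemma --- the fact you need (the blow up of an $\mathbb{E}_7$ point has a single $\mathbb{D}_6$ point on its exceptional curve, with the end of the length-two arm surviving) is true, and the rest of your proof uses only that --- but the verification as written would be blowing up the wrong singularity, so replace the starting equation by $z^2+x(x^2+y^3)=0$ before carrying it out.
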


\begin{proof}
We may assume that the intersection form of the curves $G_1$, $G_2$, $G_3$, $G_4$, $G_5$, $G_6$, and $G_7$ is given by the following table:
\begin{center}\renewcommand\arraystretch{1.42}
\begin{tabular}{|c||c|c|c|c|c|c|c|}
\hline
 $\bullet$  & $G_1$ & $G_2$& $G_3$ & $G_4$ & $G_5$ & $G_6$ & $G_7$\\
\hline\hline
 $G_1$ &  $-2$ & $1$ & $1$ & $1$ & $0$ & $0$ & $0$\\
\hline
 $G_2$ &  $1$ & $-2$ & $0$ & $0$ & $0$ & $0$ & $0$\\
\hline
 $G_3$ &  $1$ & $0$ & $-2$ & $0$ & $1$ & $0$ & $0$\\
\hline
 $G_4$ &  $1$ & $0$ & $0$ & $-2$ & $0$ & $1$ & $0$\\
\hline
 $G_5$ &  $0$ & $0$ & $1$ & $0$ & $-2$ & $0$ & $0$\\
\hline
 $G_6$ &  $0$ & $0$ & $0$ & $1$ & $0$ & $-2$ & $1$\\
\hline
 $G_7$ &  $0$ & $0$ & $0$ & $0$ & $0$ & $1$ & $-2$\\
\hline
\end{tabular}
\end{center}
Thus, the curve $G_1$ is the {fork} curve.

The fundamental cycle of the singular point $O$ is $2G_5+2G_6+2G_2+3G_3+3G_4+4G_1+G_7$.
This shows that $\widetilde{C}\cdot G_7=1$ and
$\widetilde{C}\cdot G_1=\widetilde{C}\cdot G_2=\widetilde{C}\cdot G_3=\widetilde{C}\cdot G_4=\widetilde{C}\cdot G_5=\widetilde{C}\cdot G_6=0$.
This gives us the following system of equations:
$$
\left\{\aligned
&2\mathbf{a}_1-\mathbf a_2-\mathbf a_3-\mathbf a_4=\widetilde{C}\cdot G_1=0,\\
&\mathbf{a}_2-\mathbf{a}_1=\widetilde{C}\cdot G_2=0,\\
&2\mathbf{a}_3-\mathbf{a}_1-\mathbf{a}_5=\widetilde{C}\cdot G_3=0,\\
&2\mathbf{a}_4-\mathbf{a}_1-\mathbf{a}_6=\widetilde{C}\cdot G_4=0,\\
&2\mathbf{a}_5-\mathbf{a}_3=\widetilde{C}\cdot G_5=0,\\
&2\mathbf{a}_6-\mathbf{a}_4-\mathbf{a}_7=\widetilde{C}\cdot G_6=0,\\
&2\mathbf{a}_7-\mathbf{a}_6=\widetilde{C}\cdot G_7=1.
\endaligned
\right.
$$
Then
$\mathbf{a}_1=3$, $\mathbf{a}_2=\frac{3}{2}$, $\mathbf{a}_3=2$, $\mathbf{a}_4=\frac{5}{2}$, $\mathbf{a}_5=1$, $\mathbf{a}_6=2$
and $\mathbf{a}_7=\frac{3}{2}$.
This gives $C^2=\widetilde{C}^2+\frac{3}{2}$.
Arguing as in the proof of Lemma~\ref{lemma:E6}, we see that $C$ tangents $Z$ at the point $O$.
\end{proof}

Finally, we conclude this appendix by proving the following.

\begin{lemma}
\label{lemma:E8}
If $S$ has du Val singularity of type $\mathbb{E}_8$ at $O$, then $C$ is singular at $O$.
\end{lemma}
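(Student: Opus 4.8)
The plan is to argue by contradiction using the fundamental cycle of the singularity, exactly as in the proofs of Lemmas~\ref{lemma:D4}, \ref{lemma:Dn}, \ref{lemma:E6}, and \ref{lemma:E7}. Suppose that $C$ were smooth at $O$, so that $\mathrm{mult}_O(C)=1$. Writing $Z_{\mathrm{fund}}=\sum_{i=1}^{8}m_iG_i$ for the fundamental cycle of the $\mathbb{E}_8$ point, I would invoke the same identity about rational double points already used above, namely $\widetilde{C}\cdot Z_{\mathrm{fund}}=\mathrm{mult}_O(C)=1$.

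First I would record $Z_{\mathrm{fund}}$. For a du Val singularity the fundamental cycle coincides with the highest root of the associated root system; in the $\mathbb{E}_8$ case its coefficients are $(m_1,\dots,m_8)=(2,3,4,6,5,4,3,2)$ in the appropriate labelling of the diagram. The only feature I actually need is the lower bound $m_i\geqslant 2$ for every $i$. This is precisely what distinguishes $\mathbb{E}_8$ from $\mathbb{E}_6$ and $\mathbb{E}_7$: in those cases the fundamental cycle has a coefficient equal to $1$ (the curves $G_5,G_6$ for $\mathbb{E}_6$ and the curve $G_7$ for $\mathbb{E}_7$), and it was exactly such a curve that the proper transform of a smooth $C$ was forced to meet.

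Next, since $C$ passes through $O$, its proper transform $\widetilde{C}$ meets the exceptional locus $\mathbf{G}$, so $\widetilde{C}\cdot G_j\geqslant 1$ for at least one index $j$, while $\widetilde{C}\cdot G_i\geqslant 0$ for all $i$ because $\widetilde{C}$ and the $G_i$ are distinct irreducible curves. Consequently
\[
1=\widetilde{C}\cdot Z_{\mathrm{fund}}=\sum_{i=1}^{8}m_i\big(\widetilde{C}\cdot G_i\big)\geqslant m_j\big(\widetilde{C}\cdot G_j\big)\geqslant 2,
\]
a contradiction. Hence $C$ cannot be smooth at $O$, which is the assertion of the lemma.

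There is no genuine obstacle here: the argument rests solely on the identity $\widetilde{C}\cdot Z_{\mathrm{fund}}=\mathrm{mult}_O(C)$, which is the same fact about fundamental cycles already exploited in the earlier lemmas, together with the elementary inequality $m_i\geqslant 2$. In particular, unlike the $\mathbb{E}_6$ and $\mathbb{E}_7$ cases, one needs neither to solve a linear system for the multiplicities $\mathbf{a}_i$ nor to analyse tangency, so the only care required is writing the coefficients of $Z_{\mathrm{fund}}$ correctly and noting that all of them are at least $2$.
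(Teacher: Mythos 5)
Your argument is correct and is exactly the paper's proof: the authors also observe that every coefficient of the fundamental cycle of an $\mathbb{E}_8$ point exceeds $1$, which contradicts $\widetilde{C}\cdot Z_{\mathrm{fund}}=\mathrm{mult}_O(C)=1$ for a curve smooth at $O$. You have merely written out the details (the explicit coefficients and the inequality) that the paper leaves implicit.
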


\begin{proof}
This follows from the fact that coefficients at all exceptional curves of the minimal resolution of $O$
in the fundamental cycle are greater than $1$.
\end{proof}

\end{document}